\newcommand{\h}{\mathfrak h}
\newcommand\m{\mathfrak m}
\newcommand\n{\mathfrak n}
\newcommand\p{\mathfrak{p}}
\newcommand{\Id}{\mathfrak{Id}}
\newcommand{\q}{\mathfrak{q}}
\newcommand{\z}{\mathfrak{z}}
\newcommand{\Hsh}{\mathfrak{H}}
\newcommand{\Zsh}{{\mathfrak{Z}}}
\newcommand{\Ash}{{\mathfrak{U}}}
\newcommand{\codim}{\operatorname{codim}}
\newcommand{\GL}{\operatorname{GL}}
\newcommand{\id}{\operatorname{id}}
\newcommand{\im}{\operatorname{im}}
\newcommand{\F}{\operatorname{F}}
\newcommand{\VA}{\operatorname{V}}
\newcommand{\HC}{\operatorname{HC}}
\newcommand{\LAnn}{\operatorname{LAnn}}
\newcommand{\RAnn}{\operatorname{RAnn}}
\newcommand\End{\operatorname{End}}
\newcommand{\ad}{\operatorname{ad}}
\newcommand{\rank}{\operatorname{rk}}
\newcommand{\Sp}{\operatorname{Sp}}
\newcommand{\gr}{\operatorname{gr}}
\newcommand\Der{\operatorname{Der}}
\newcommand\Mat{\operatorname{Mat}}
\newcommand\Spec{\operatorname{Spec}}
\newcommand\HH{\operatorname{HH}}
\newcommand\Span{\operatorname{Span}}
\newcommand\Hom{\operatorname{Hom}}
\newcommand\Ann{\operatorname{Ann}}
\newcommand\Bl{\operatorname{Bl}}
\newcommand\Fu{\operatorname{Fun}}
\newcommand\Fl{\operatorname{Fl}}
\newcommand\CC{\operatorname{C}}
\newcommand{\Halg}{{\mathbf{H}}}
\newcommand\W{\mathbf{A}}
\newcommand\cb{{\mathbf c}}
\newcommand{\tb}{{\mathbf{t}}}
\newcommand{\Zalg}{{\mathbf Z}}
\newcommand\Malg{{\mathbf M}}
\newcommand\Nalg{{\mathbf N}}
\newcommand\Ialg{{\mathbf I}}
\newcommand\Jalg{{\mathbf J}}
\newcommand\Aalg{{\mathbf U}}
\newcommand{\I}{\mathcal I}
\newcommand{\J}{\mathcal J}
\newcommand{\M}{\mathcal M}
\newcommand{\DCal}{\mathcal{D}}
\newcommand{\Dcal}{\mathcal{D}}
\newcommand{\Hrm}{\mathcal{H}}
\newcommand\A{\mathcal{A}}
\newcommand\Ncal{\mathcal{N}}
\newcommand\X{\mathcal{X}}
\newcommand\Fun{\mathcal{F}}
\newcommand\Gun{\mathcal{G}}
\newcommand{\Leaf}{\mathcal{L}}
\newcommand{\Zrm}{\mathcal{Z}}
\newcommand\Str{\mathcal{O}}
\newcommand\K{\mathbb{C}}
\newcommand\ZZ{\mathbb{Z}}
\newcommand\N{\mathbb{N}}
\newcommand\Q{\mathbb{Q}}
\newcommand\param{\mathfrak{c}}
\newcommand\Pol{S(\param)}
\newcommand\Res{\operatorname{Res}}
\newcommand\Ind{\operatorname{Ind}}
\newtheorem{Thm}{Theorem}[subsection]
\newtheorem{Prop}[Thm]{Proposition}
\newtheorem{Cor}[Thm]{Corollary}
\newtheorem{Lem}[Thm]{Lemma}
\theoremstyle{definition}
\newtheorem{defi}[Thm]{Definition}
\newtheorem{Rem}[Thm]{Remark}
\numberwithin{equation}{section}
\author{Ivan Losev}
\title{Completions of symplectic reflection algebras}
\thanks{Supported by the NSF grant DMS-0900907}
\thanks{MSC 2010: 16G99, 16S99}
\thanks{Address: Northeastern University, Dept. of Math., 360 Huntington Avenue, Boston MA02115, USA}
\thanks{E-mail: i.loseu@neu.edu}
\begin{document}
\begin{abstract}
In this paper we study the structure of completions of symplectic reflection
algebras. Our results provide a reduction to smaller algebras. We apply this reduction
to the study of two-sided ideals and Harish-Chandra bimodules.
\end{abstract}
\maketitle
\section{Introduction}
\subsection{Setting}\label{SUBSECTION_setting}
The goal of this paper is to describe the structure of completions of symplectic reflection algebras
(SRA) and to apply this description to the study of their two-sided ideals and Harish-Chandra bimodules
and to some other problems, as well.

SRA's were introduced by Etingof and Ginzburg, \cite{EG}. Let us recall their definition.

Our base field is the field $\K$ of complex numbers.
Let $V$ be a vector space  equipped with a symplectic (=non-degenerate skew-symmetric)
form $\omega$ and $\Gamma$ be a finite group of linear symplectomorphisms of $V$.
Recall that an element $\gamma\in \Gamma$ is said to be a {\it symplectic reflection}
if $\rank(\gamma-\id)=2$ (this is the smallest possible rank for a nontrivial element $\gamma\in \Sp(V)$).
For any symplectic reflection $s\in \Gamma$ let $\omega_s$ be the skew-symmetric form
on $V$ defined by
\begin{equation}\label{eq:omega_s}
\begin{split}
& \omega_s(x,y)=\omega(x,y) \text{ if }x,y\in \operatorname{im}(s-\id),\\
& \omega_s(x,y)=0\text{ if }x\text{ or }y\in \ker(s-\id).
\end{split}
\end{equation}
Let $S$ denote the
set of symplectic reflections in $\Gamma$. Note that $\Gamma$ acts on $S$ by conjugation,
let $S_1,\ldots,S_r$ be the orbits. Pick independent variables $\tb, \cb_1,\ldots,\cb_r$
and define $\cb(s)$ to be $\cb_i$ for $s\in S_i$. Below for convenience we will sometimes write $\cb_0$
instead of $\tb$. Let $\param$ denote the vector space with
basis $\cb_0,\ldots,\cb_r$. Consider the symmetric algebra $S(\param)=\K[\param^*]$
of the vector space $\param$. Then define the $\Pol$-algebra $\Halg(:=\Halg(V,\Gamma))$ as the quotient
of the smash-product $$\Pol\otimes_{\K} T(V)\#\Gamma$$
by the relations
\begin{equation}\label{eq:1}
[x,y]=\cb_0\omega(x,y)+\sum_{s\in S} \cb(s)\omega_s(x,y)s,\quad x,y\in V.
\end{equation}
Here $T(V)$ stands for the tensor algebra of $V$ and $T(V)\#\Gamma$ is the {\it smash-product} of $T(V)$ and $\K\Gamma$, i.e., is the algebra
that coincides with $T(V)\otimes \K\Gamma$ as a vector space with the product
$(a_1\otimes g_1)(a_2\otimes g_2)=a_1(g_1.a_2)\otimes g_1g_2$, where $g_1.a_2$ stands
for the image of $a_2$ under the action of $g_1$.  

Theorem 1.3 in \cite{EG} can be interpreted
in the following way.

\begin{Prop}\label{Prop:1.1} $\Halg$ is a flat $\Pol$-algebra.
\end{Prop}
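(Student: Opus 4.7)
The plan is to realize $\Halg$ as a filtered deformation of the Koszul algebra $S(V)\#\Gamma$ over the parameter ring $\Pol$ and to apply the Braverman--Gaitsgory PBW criterion. First I would filter $\Halg$ by placing $V$ in degree $1$ and $\Pol$, $\K\Gamma$ in degree $0$. The defining relations \eqref{eq:1} have quadratic leading term $[x,y]$ equated to terms of filtration degree $0$, so the standard universal argument yields a surjective graded $\Pol$-algebra map
\[
\varphi : \Pol \otimes_\K \bigl(S(V)\#\Gamma\bigr) \twoheadrightarrow \gr \Halg.
\]
Since the left-hand side is a free $\Pol$-module, flatness of $\Halg$ over $\Pol$ is equivalent to $\varphi$ being an isomorphism.

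Next, note that $S(V)\#\Gamma$ is Koszul (the smash product of a Koszul algebra with a finite group algebra is again Koszul), so Braverman--Gaitsgory's theorem for non-homogeneous quadratic deformations applies: $\varphi$ is an isomorphism if and only if the deformation datum $\kappa : V\wedge V \to \Pol\otimes \K\Gamma$, given by the right-hand side of \eqref{eq:1}, is $\Gamma$-equivariant and satisfies the Jacobi-type identity
\[
\sum_{\mathrm{cyc}(x,y,z)} \bigl(\kappa(x,y)\cdot z - z\cdot \kappa(x,y)\bigr) \;=\; 0
\]
in $\Pol\otimes V\otimes \K\Gamma$, where the products use the smash-product action of $\Gamma$ on $V$. $\Gamma$-equivariance is immediate from the $\Gamma$-invariance of $\omega$ and the fact that $\cb$ is constant on conjugacy classes. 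The summand $\cb_0\omega(x,y)\cdot 1$ of $\kappa$ contributes zero because $1\in \Gamma$ acts trivially on $V$, so the identity reduces to
\[
\sum_{s\in S}\cb(s)\sum_{\mathrm{cyc}(x,y,z)} \omega_s(x,y)\,\bigl(s\cdot z - z\bigr)\otimes s \;=\; 0,
\]
which, by linear independence of the $\cb(s)$ and the $\Gamma$-equivariance already established, may be checked one reflection $s\in S$ at a time.

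The main obstacle is this last identity for a fixed $s$. The key geometric input is that $\im(s-\id)$ is exactly two-dimensional and that $\omega_s$ is supported there by \eqref{eq:omega_s}. Decomposing $V = \ker(s-\id)\oplus\im(s-\id)$ and projecting $x,y,z$ onto the $2$-plane, one verifies the identity by a short direct computation: terms in which fewer than two of $x,y,z$ have nonzero projection vanish automatically because either the appropriate $\omega_s(\cdot,\cdot)$ or the appropriate $s\cdot z - z$ is zero, while the remaining case uses the linear dependence of any three vectors in a $2$-dimensional space to force the cyclic sum to cancel. This is precisely the calculation carried out by Etingof--Ginzburg in the proof of their Theorem~1.3.
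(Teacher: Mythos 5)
Your proposal is correct and matches the proof the paper implicitly invokes: Proposition~\ref{Prop:1.1} is stated in the text simply as a reinterpretation of Theorem~1.3 of \cite{EG}, and that theorem is proved by exactly the Koszul-deformation / Braverman--Gaitsgory route you outline, with the Jacobi-type identity checked reflection by reflection using the decomposition $V=\ker(s-\id)\oplus\im(s-\id)$ and the two-dimensionality of $\im(s-\id)$. No further commentary is needed.
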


Specializing
$\tb:=t,\cb_i:=c_i$, where $ i=1,\ldots,r,$ and $ t,c_i\in \K,$ we get the $\K$-algebra $\Hrm_{t,c}$. It is clear
that $\Hrm_{0,0}=SV\#\Gamma(=\K[V^*]\#\Gamma)$.
The algebra $\Hrm_{t,c}$ has a natural filtration,
where $\K\Gamma$ has degree 0, while $V$ has degree 1.
Proposition \ref{Prop:1.1} means that the associated graded algebra $\gr \Hrm_{t,c}$ coincides
with $\Hrm_{0,0}$. Also note that for any nonzero $a$ the algebras $\Hrm_{t,c}$ and
$\Hrm_{at,ac}$ are naturally isomorphic. So we get two (essentially) different cases: $t=1$
and $t=0$. One of the most crucial differences is that $\Hrm_{0,c}$ is finite over its center
$\Zrm_c$. Moreover, $\gr\Zrm_c$ is canonically identified with the invariant subalgebra $(SV)^\Gamma\subset SV$, see \cite{EG}, Theorem 3.3. On the other hand, the center
of $\Hrm_{1,c}$ is always trivial, see \cite{BG}, Proposition 7.2.

Let us describe briefly the content of the paper. It is divided into four  sections.
Section \ref{SECTION_completion} describes the structure of  completions of $\Halg$. The most straightforward
version of a completion we are interested in together with the statement of the corresponding result
is given in Subsection \ref{SUBSECTION_completions}. Our starting point here was a result
of Bezrukavnikov and Etingof, \cite{BE}, Theorem 3.2.

In Section \ref{SECTION_HC} we obtain some sort
of ``reduction theorems'' for ideals and Harish-Chandra bimodules of SRA's. Our results regarding ideals
are described in Subsection \ref{SUBSECTION_1c}.  This part is mostly inspired by the author's
work on W-algebras, \cite{Wquant},\cite{HC}.

In Section \ref{SECTION_Application} we give some miscellaneous applications of results of the first
two sections. These applications are described  in Subsection \ref{SUBSECTION_0c}.

Finally, in Section \ref{SECTION_Cherednik} we consider perhaps the most important special class of SRA, the so called
rational Cherednik algebras. We relate our work to that of Bezrukavnikov and Etingof, \cite{BE},
and strengthen some of our results. Also we show that the definition of a Harish-Chandra bimodule
for a rational Cherednik algebra given in \cite{BEG_HC} agrees with ours. Finally, we give a complete classification
of two-sided ideals in the rational Cherednik algebras of type $A$.

In the first subsections of Sections \ref{SECTION_completion},\ref{SECTION_HC},\ref{SECTION_Cherednik} their contents are described in more detail.

\subsection{Completions}\label{SUBSECTION_completions}
We are going to study  completions of the algebra $\Halg$. Let us explain what
kind of completions we are interested in.

Let $\pi$ denote
the quotient map $V^*\rightarrow V^*/\Gamma$. Pick a point $b\in V^*$. Let $I_{b}$ denote the ideal of $\K[V^*]$
generated by the maximal ideal of $\pi(b)$ in $\K[V^*]^{\Gamma}$. Then $I_b\#\Gamma$ is a  two-sided ideal
in $\Hrm_{0,0}=\K[V^*]\#\Gamma$. Let $\m_b$ denote the preimage of $I_b\# \Gamma$
under the canonical epimorphism $\Halg\twoheadrightarrow \Hrm_{0,0}$. This is a two-sided ideal
in $\Halg$. Consider the completion
\begin{equation}\label{eq:2}
\Halg^{\wedge_b}:=\varprojlim_n \Halg/\m_b^n.
\end{equation}
Let us remark that $\Halg^{\wedge_b}$ is a $\K[[\param^*]]$-algebra because $\m_b\cap S(\param)$
coincides with the augmentation ideal $\param S(\param)$.


We want to understand the structure of the algebra $\Halg^{\wedge_b}$ in terms of a similar but simpler algebra.
Define the algebra $\underline{\Halg}$ as the quotient of $\Pol\otimes_{\K} T(V)\# \Gamma_b$ by the relations
\begin{equation}\label{eq:1'}
[x,y]=\cb_0\omega(x,y)+\sum_{s\in S\cap\Gamma_b} \cb(s)\omega_s(x,y)s,\quad x,y\in V.
\end{equation}



We can define the algebra $\underline{\Halg}^{\wedge_0}$ analogously to $\Halg^{\wedge_b}$. In more detail, we can take
the maximal ideal $\underline{I}_0\subset \K[V^*]$ corresponding to the point $0\in V^*$. Then we consider
the preimage $\underline{\m}_0$ of $\underline{I}_0\# \Gamma_{b}$ in $\underline{\Halg}$ and form
the completion $\underline{\Halg}^{\wedge_0}:=\varprojlim_n \underline{\Halg}/\underline{\m}_0^n$.


Now we are ready to state one of  the main results of this paper comparing the completions $\Halg^{\wedge_b}$
and $\underline{\Halg}^{\wedge_0}$.

\begin{Thm}\label{Thm:1}
There is an isomorphism
$$\Theta^{b}: \Halg^{\wedge_b}\rightarrow \Mat_{|\Gamma/\Gamma_b|}(\underline{\Halg}^{\wedge_0})$$
of topological $\K[[\param^*]]$-algebras. Here $\Mat_{|\Gamma/\Gamma_b|}$ stands for the algebra of square matrices
of size $|\Gamma/\Gamma_b|$.
\end{Thm}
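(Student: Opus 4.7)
The strategy is to construct a complete system of pairwise orthogonal, pairwise conjugate idempotents in $\Halg^{\wedge_b}$ indexed by the $\Gamma$-orbit of $b$, realise $\Halg^{\wedge_b}$ as a matrix algebra over the corner cut out by the idempotent at $b$, and identify that corner with $\underline\Halg^{\wedge_0}$.

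Let $\{b_1=b, b_2, \ldots, b_n\}$ with $n=|\Gamma/\Gamma_b|$ be the $\Gamma$-orbit of $b$, and pick coset representatives $\gamma_i \in \Gamma$ with $\gamma_1=e$ and $\gamma_i b = b_i$. The Chinese-remainder splitting
$$\K[V^*]^{\wedge I_b} \;\cong\; \prod_{i=1}^{n}\K[V^*]^{\wedge}_{b_i}$$
produces orthogonal idempotents $\bar e_1,\ldots,\bar e_n$ summing to $1$ and satisfying $\gamma.\bar e_i = \bar e_{\sigma_\gamma(i)}$, where $\sigma_\gamma$ denotes the permutation of the orbit induced by $\gamma$. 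These sit inside the $\param{=}0$ specialisation $\Halg^{\wedge_b}/\param\Halg^{\wedge_b} = (\K[V^*]\#\Gamma)^{\wedge I_b\#\Gamma}$ (via Proposition~\ref{Prop:1.1}). Since $\Halg^{\wedge_b}$ is $\m_b$-adically complete and $\param \subset \m_b$, standard Hensel lifting produces a $\Gamma$-equivariant system of orthogonal idempotent lifts $e_1,\ldots,e_n \in \Halg^{\wedge_b}$, so that $\gamma e_i \gamma^{-1} = e_{\sigma_\gamma(i)}$. In particular $\gamma_i e_1 \gamma_i^{-1} = e_i$, so the elements $E_{ij} := \gamma_i e_1 \gamma_j^{-1}$ form a full system of matrix units and yield
$$\Halg^{\wedge_b} \;\cong\; \Mat_n\!\bigl(e_1 \Halg^{\wedge_b} e_1\bigr).$$

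To identify the corner with $\underline\Halg^{\wedge_0}$, define a map $\underline\Halg \to e_1\Halg^{\wedge_b} e_1$ by sending $\gamma \in \Gamma_b$ to $\gamma e_1 = e_1\gamma$, each $\cb_i$ to itself, and each $x \in V$ to $e_1(x - \langle x, b\rangle)e_1$, the point being to translate coordinates so that $b$ sits at the origin. For any $s \in S$,
$$e_1 s e_1 \;=\; e_1 (s.e_1)\, s \;=\; e_1 e_{\sigma_s(1)}\, s,$$
which vanishes unless $s \in \Gamma_b$, so
$$\bigl[e_1(x-\langle x,b\rangle)e_1,\, e_1(y-\langle y,b\rangle)e_1\bigr] = e_1[x,y]e_1 = \Bigl(\cb_0\omega(x,y) + \sum_{s \in S\cap\Gamma_b} \cb(s)\omega_s(x,y)\,s\Bigr)e_1,$$
which is precisely the defining relation (\ref{eq:1'}). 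The map extends continuously to $\underline\Halg^{\wedge_0}$ since the shifted generators land in the completion ideal at $b_1$. Bijectivity is then checked by passing to the associated graded with respect to the compatible $\m_b$- and $\underline\m_0$-adic filtrations: on the associated gradeds both sides reduce to the classical matrix-algebra description of the completion of the smash product $\K[V^*]\#\Gamma$ at an orbit. Proposition~\ref{Prop:1.1} combined with $\param$-adic completeness upgrades the graded isomorphism to a topological isomorphism $\Theta^b$ of $\K[[\param^*]]$-algebras by a Nakayama-style argument.

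\textbf{Main obstacle.} The delicate step is the bijectivity verification at the end. Since $V \subset \Halg$ is not a subalgebra for nonzero parameters, the elements $e_1(x-\langle x,b\rangle)e_1$ do not generate the corner in an obviously PBW-compatible fashion. Matching the $\m_b$-adic filtration on $\Halg^{\wedge_b}$ (and hence on its corner) with the $\underline\m_0$-adic filtration on $\underline\Halg^{\wedge_0}$, so that PBW for both $\Halg$ and $\underline\Halg$ (Proposition~\ref{Prop:1.1}) applied on the associated gradeds forces the isomorphism, is where the bulk of the technical work sits.
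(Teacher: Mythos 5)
Your overall architecture---a $\Gamma$-equivariant complete system of orthogonal idempotents indexed by the orbit of $b$, the matrix-unit decomposition $\Halg^{\wedge_b}\cong\Mat_{|\Gamma/\Gamma_b|}\bigl(e_1\Halg^{\wedge_b}e_1\bigr)$, and an identification of the corner with $\underline{\Halg}^{\wedge_0}$---matches the skeleton of the paper's local argument (Lemma \ref{Lem:0.0.12} and Proposition \ref{Prop:2.0.11}). But there is a genuine gap at the decisive step: the asserted identity $\bigl[e_1(x-\langle x,b\rangle)e_1,\,e_1(y-\langle y,b\rangle)e_1\bigr]=e_1[x,y]e_1$ does not hold for an arbitrary Hensel lift $e_1$. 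Writing $\tilde{x}=x-\langle x,b\rangle$, the left-hand side equals $e_1[x,y]e_1-e_1\tilde{x}(1-e_1)\tilde{y}e_1+e_1\tilde{y}(1-e_1)\tilde{x}e_1$, and the cross terms vanish only modulo $(\param)$, because $e_1$ commutes with $\K[V^*]$ only after reduction mod $\param$. So your map into the corner satisfies the relations (\ref{eq:1'}) only modulo $(\param)$, which is exactly the already-known part of the statement ($\theta_0$); the whole difficulty of the theorem is to correct the idempotent and the images of $V$ to all orders in $\param$ so that the relations hold exactly. This is precisely why the paper does not argue by bare idempotent lifting: it first shows that all $\param$-linear derivations of the completed algebra are ``almost inner'' and deduces the existence of an Euler derivation (Proposition \ref{Prop:2.3}), then defines the corner generators $\iota(v)$ as the Euler-degree-$1$ components of $e(\underline{\Gamma})\,v\,e(\underline{\Gamma})$ and verifies the relation (\ref{eq:2.5}) by a degree count---the error terms land in degrees $\geqslant 3$ while both sides live in degree $2$, see (\ref{eq:2.4})--(\ref{eq:2.7}). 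The paper moreover stresses that, unlike the rational Cherednik case of \cite{BE} where an explicit formula exists, no explicit correction is expected in general, so some structural input of this kind cannot be avoided.

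Two further points. First, the facts you invoke in passing---$\Halg^{\wedge_b}/(\param)\cong (SV\#\Gamma)^{\wedge}$, the $(\param)$-adic completeness needed for idempotent lifting, and the flatness/completeness used in your ``Nakayama-style'' conclusion---do not follow from Proposition \ref{Prop:1.1} alone; they rest on Noetherianity of blow-up algebras and Artin--Rees-type compatibility of filtrations in this noncommutative complete setting, which is the content of Subsections \ref{SUBSECTION_compatibility}--\ref{SUBSECTION_flatness} (Proposition \ref{Cor:1.31}, Corollary \ref{Cor:1.22}). Second, obtaining a lift of the idempotents that is simultaneously orthogonal, complete and $\Gamma$-conjugation-equivariant needs an argument (e.g.\ lift $e_1$ inside $(\Halg^{\wedge_b})^{\Gamma_b}$ and orthogonalize equivariantly); it is not automatic from ``standard Hensel lifting''. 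Once a genuine relation-preserving homomorphism exists, your concluding step (reduce mod $\param$, compare with the known isomorphism, use flatness and completeness) is fine and is how the paper also concludes; note, though, that the paper actually proves Theorem \ref{Thm:1} by establishing the sheafified statement over the whole leaf (Theorems \ref{Thm:2.0}, \ref{Thm:2.0I}) and only then completing at $b$, the Euler-derivation mechanism being what makes even the pointwise relation check go through.
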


In fact, we will prove a much more precise (but also much more technical) statement, Theorem \ref{Thm:2.0I}, which will ``globalize'' $\Theta^b$ and will give an explicit formula for $\Theta^{b}$ ``modulo $\param$''. 

One special case of Theorem \ref{Thm:1} was known before. Namely, Bezrukavnikov and Etingof
proved a similar statement in \cite{BE} for the special class of SRA called {\it rational Cherednik algebras}.
This special class is described as follows.
Suppose that we have a decomposition $V=\h\oplus \h^*$ into the sum of two pairwise
dual $\Gamma$-stable subspaces. The algebra $\Hrm_{t,c}$  in this case is  known as a
rational Cherednik algebra. This case is much simpler than the general one: one can even get an explicit formula for the isomorphism $\Theta^b$ (in fact, for some special points $b$). It seems to be unlikely that one can
find an explicit formula in the general case.

\subsection{General results on two-sided ideals}\label{SUBSECTION_1c}
We will apply Theorem \ref{Thm:1} (more precisely, its enhanced version, Theorem \ref{Thm:2.0I})
to the study of ideals and Harish-Chandra bimodules of the algebras $\Halg, \Hrm_{1,c},\Zrm_c$.
We remark that both our results
and our proofs are inspired by those   for W-algebras, see \cite{Wquant},\cite{HC}. The definition of a Harish-Chandra
bimodule is a bit technical so we postpone it (together with the statement of the corresponding result)
until Subsection \ref{SUBSECTION_HC_bimod}. The main result for Harish-Chandra bimodules
is Theorem \ref{Thm:5}.

Now let us describe our results on ideals.
Consider a two-sided ideal $\J\subset \Hrm_{1,c}$. Then $\Hrm_{1,c}/\J$ has a natural filtration. It is easy to see
that the actions of $(SV)^{\Gamma}$ on $\gr (\Hrm_{1,c}/\J)$ by left and by right multiplications coincide.  Let $\VA(\Hrm_{1,c}/\J)\subset V^*/\Gamma$ denote the support of the $(SV)^\Gamma$-module $\gr (\Hrm_{1,c}/\J)$. One can show (Subsection \ref{SUBSECTION_HC_bimod}) that $\VA(\Hrm_{1,c}/\J)$ is a Poisson subvariety
in $V^*/\Gamma$.  Similarly, for two two-sided ideals $\J_1\subset\J_2\subset\Hrm_{1,c}$ we can define
$\VA(\J_1/\J_2)\subset V^*/\Gamma$ and it is also a Poisson subvariety.

The Poisson variety $V^*/\Gamma$ has finitely many symplectic leaves. The leaves can be described as follows, see, for example, \cite{BG}, Subsection 7.4. Consider the
set of all conjugacy classes of subgroups in $\Gamma$ that are stabilizers of elements of $V$ (equivalently, of $V^*$).
Then there is a bijection between this set and the set of symplectic leaves in $V^*/\Gamma$. Namely, given
a conjugacy class, pick a subgroup $\underline{\Gamma}\subset \Gamma$ in it. Then the corresponding symplectic leaf
coincides with the $\pi\left((V_0^*)^{fr}\right)$, where $\pi:V^*\rightarrow V^*/\Gamma$
denotes the quotient morphism, $V_0:=V^{\underline{\Gamma}},(V_0^*)^{fr}:=\{v\in V^*| \Gamma_v=\underline{\Gamma}\}$.

Fix a symplectic leaf $\Leaf\subset V^*/\Gamma$ and let $\underline{\Gamma}\subset \Gamma$ be a  subgroup
in the conjugacy class of subgroups corresponding to $\Leaf$.
We  consider the set $\Id_\Leaf(\Hrm_{1,c})$ consisting of all two-sided
ideals $\J\subset \Hrm_{1,c}$ such that $\VA(\Hrm_{1,c}/\J)=\overline{\Leaf}$.
We will relate $\Id_\Leaf(\Hrm_{1,c})$ to a certain set of ideals of finite codimension in
a ``smaller'' SRA to be specified in a moment.

There is a unique $\underline{\Gamma}$-stable decomposition
$V=V_0\oplus V_+$. Let $\underline{\Halg}^+$ be the quotient of $S(\param)\otimes
TV_+\#\underline{\Gamma}$ modulo the relations (\ref{eq:2}). In particular, we have the decomposition
$\underline{\Halg}=\W_{\tb}(V_0)\otimes_{\K[\tb]}\underline{\Halg}^+$. Here
$\W_{\tb}(V_0)$ is the homogeneous Weyl algebra of the symplectic vector
space $V_0$, in other words, $\W_{\tb}(V_0):=\Halg(V_0,\{1\})$.

Note that the group $\widetilde{\Xi}:=N_\Gamma(\underline{\Gamma})$ acts on $\underline{\Halg}^+$
by automorphisms, for $V_+\subset V$ is $\widetilde{\Xi}$-stable. The action
of $\underline{\Gamma}\subset \widetilde{\Xi}$ coincides with that by inner automorphisms.
So we can consider the set $\Id_0(\underline{\Hrm}^+_{1,c})$ of two-sided
ideals of $\underline{\Hrm}_{1,c}^+$  of finite codimension, and its subset
$\Id_0^{\Xi}(\underline{\Hrm}^+_{1,c})$ consisting
of all $\Xi:=\widetilde{\Xi}/\underline{\Gamma}$-stable ideals (in fact, $\widetilde{\Xi}$
acts on $\Id(\underline{\Hrm}^+_{1,c})$ via its quotient $\Xi$).

We have the following result that is completely parallel to Theorem 1.2.2  from \cite{HC}.

\begin{Thm}\label{Thm:4}
There are maps $\Id_{\Leaf}(\Hrm_{1,c})\rightarrow \Id_0^{\Xi}(\underline{\Hrm}_{1,c}^+),\J\mapsto
\J_\dagger$, and $\Id_0(\underline{\Hrm}^+_{1,c})\rightarrow \Id_{\Leaf}(\Hrm_{1,c}),$ $
\I\mapsto \I^{\ddag},$ with the following properties:
\begin{enumerate}
\item The image of  $\J\mapsto \J_\dagger$ coincides with $\Id_0^{\Xi}(\underline{\Hrm}_{1,c}^+)$.
\item $\J\subset (\J_\dagger)^\ddag$ and $\I\supset (\I^\ddag)_\dagger$ for all $\J\in
\Id_{\Leaf}(\Hrm_{1,c}), \I\in \Id_0(\underline{\Hrm}_{1,c}^+)$.
\item We have $\VA((\J_\dagger)^{\ddag}/\J)\subset \partial\Leaf:=\overline{\Leaf}\setminus \Leaf$.
\item Consider the restriction of the map $\I\mapsto \I^{\ddag}$ to the set of all primitive (=maximal)
ideals in $\Id_0(\underline{\Hrm}_{1,c}^+)$. The image of this restriction is the set of all primitive
ideals in $\Id_{\Leaf}(\Hrm_{1,c})$ and each fiber  is a single $\Xi$-orbit.
\end{enumerate}
\end{Thm}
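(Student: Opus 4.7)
The plan is to reduce the theorem to Theorem \ref{Thm:1} (more precisely, its globalized form Theorem \ref{Thm:2.0I}) combined with a further splitting of $\underline{\Halg}^{\wedge_0}$ coming from the decomposition $V=V_0\oplus V_+$. Choose a point $b\in V^*$ whose stabilizer is $\underline{\Gamma}$ and whose image under $\pi$ lies in $\Leaf$. Specialize $\Theta^b$ at $\tb=1,\,\cb=c$ to obtain an isomorphism
$$\Theta^b_{1,c}:\Hrm_{1,c}^{\wedge_b}\xrightarrow{\sim}\Mat_{|\Gamma/\underline{\Gamma}|}(\underline{\Hrm}_{1,c}^{\wedge_0}),$$
so that closed two-sided ideals in the two sides correspond bijectively (matrix Morita). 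Since $\underline{\Hrm}_{1,c}=\W(V_0)\otimes_\K\underline{\Hrm}_{1,c}^+$ and $0\in V^*$ sits at the origin of both factors, the completion factors as $\underline{\Hrm}_{1,c}^{\wedge_0}\cong \W(V_0)^{\wedge_0}\,\widehat{\otimes}\,\underline{\Hrm}_{1,c}^{+,\wedge_0}$. The first factor is topologically simple (completed Weyl algebra at $t=1$), so every closed two-sided ideal in $\underline{\Hrm}_{1,c}^{\wedge_0}$ has the form $\W(V_0)^{\wedge_0}\,\widehat{\otimes}\,\I'$ for a unique closed ideal $\I'\subset\underline{\Hrm}_{1,c}^{+,\wedge_0}$.

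The two maps are then defined as follows. For $\J\in\Id_\Leaf(\Hrm_{1,c})$, form the closed ideal $\J^{\wedge_b}\subset\Hrm_{1,c}^{\wedge_b}$, transport it through $\Theta^b_{1,c}$ to a closed ideal in $\underline{\Hrm}_{1,c}^{\wedge_0}$, extract the corresponding $\I'_\J\subset\underline{\Hrm}_{1,c}^{+,\wedge_0}$, and set $\J_\dagger$ to be its preimage in $\underline{\Hrm}_{1,c}^+$ under the completion map. For $\I\in\Id_0(\underline{\Hrm}_{1,c}^+)$ one builds the reverse: complete $\I$, tensor with $\W(V_0)^{\wedge_0}$, transport via $\Theta^b_{1,c}$ to a closed ideal in $\Hrm_{1,c}^{\wedge_b}$, and take the preimage in $\Hrm_{1,c}$ to define $\I^\ddag$. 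The correspondence of associated varieties with supports on completion is the key geometric input: the formal slice to $\overline{\Leaf}$ at $b$ is realised by the $\W(V_0)$-factor, so ``supported on $\overline{\Leaf}$ near $b$'' translates to ``supported at $\{0\}\in V_+^*/\underline{\Gamma}$'', i.e., $\J_\dagger$ has finite codimension, and conversely $\VA(\Hrm_{1,c}/\I^\ddag)=\overline{\Leaf}$. The group $\widetilde{\Xi}=N_\Gamma(\underline{\Gamma})$ acts compatibly on all these objects (different $b'\in \Gamma\cdot b$ give the same outcome up to $\Xi$-action), yielding (1); the inclusions in (2) are automatic because pullback preserves containments and $\J,(\J_\dagger)^\ddag$ map to the same closed ideal in $\Hrm_{1,c}^{\wedge_b}$; (3) follows since any element of $(\J_\dagger)^\ddag\setminus\J$ becomes zero after completion at $b$, so its principal symbol vanishes near $\Leaf$, forcing its support into $\partial\Leaf$.

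The hard part is (4). That primitive (=maximal, in finite codimension) ideals in $\Id_0(\underline{\Hrm}_{1,c}^+)$ produce primitive ideals in $\Id_\Leaf(\Hrm_{1,c})$ requires identifying simple $\Hrm_{1,c}$-modules of associated variety $\overline{\Leaf}$ with simple finite-dimensional $\underline{\Hrm}_{1,c}^+$-modules, by lifting a simple module across the Morita/Weyl decomposition and checking that annihilators match under $\ddag$. Conversely, if $\I^\ddag_1=\I^\ddag_2$ are both primitive, transporting back gives that $\I_1\cdot\underline{\Hrm}_{1,c}^{+,\wedge_0}$ and $\I_2\cdot\underline{\Hrm}_{1,c}^{+,\wedge_0}$ are $\widetilde{\Xi}$-conjugate (this is where one uses that the matrix Morita identification is unique only up to inner automorphisms permuting the factors corresponding to $\Gamma/\underline{\Gamma}$, and that the $\widetilde{\Xi}$-action accounts for all such permutations); finite codimension then lets one descend the conjugacy from the completion back to $\underline{\Hrm}_{1,c}^+$ and conclude that $\I_1,\I_2$ lie in the same $\Xi$-orbit. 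This step is the Cherednik analogue of the key result in \cite{HC} for W-algebras and relies on careful control of the $\widetilde{\Xi}$-action implicit in $\Theta^b$, together with the Harish-Chandra bimodule machinery developed in Section \ref{SECTION_HC}.
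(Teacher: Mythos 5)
Your construction is built on a step that does not exist for a general SRA: ``specializing $\Theta^b$ at $\tb=1,\,\cb=c$.'' The isomorphism $\Theta^b$ of Theorem \ref{Thm:1} is an isomorphism of topological $\K[[\param^*]]$-algebras, and the parameters are part of the ideal one completes along ($\m_b\cap S(\param)=\param S(\param)$), so $\tb$ is topologically nilpotent in $\Halg^{\wedge_b}$ and cannot be set equal to $1$. Worse, for $t=1$ there is no ideal of $\Hrm_{1,c}$ playing the role of $\m_b$ at all: $I_b\#\Gamma$ lives in $\gr\Hrm_{1,c}=SV\#\Gamma$, not in a quotient of $\Hrm_{1,c}$, so the object ``$\Hrm_{1,c}^{\wedge_b}$'' and the closed ideal ``$\J^{\wedge_b}$'' on which your whole definition of $\J_\dagger$ and $\I^\ddag$ rests are not defined. (In the rational Cherednik case one can take a partial completion along $\K[\h/W]$ because of the triangular decomposition and local nilpotency of $\operatorname{ad}(S\h^*)^W$ --- this is what Bezrukavnikov--Etingof exploit --- but Theorem \ref{Thm:4} is about arbitrary SRA, where no such subalgebra exists.) This is exactly why the paper does not argue at $t=1$: it passes to $\tilde{\Halg}=\Halg[\hbar]/(\tb-\hbar^2)$, realizes $R_\hbar(\Hrm_{1,c})$ as its quotient, works with graded ($\K^\times$-equivariant) completions along $\Leaf$, and recovers ideals of $\Hrm_{1,c}$ only at the end by taking locally finite vectors and setting $\hbar=1$. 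Relatedly, the single-point isomorphism $\Theta^b$ does not see $\Xi$ (the paper says this explicitly in Subsection \ref{SUBSECTION_Functors_b}), so even granting your completions, the claim that $\J_\dagger$ is $\Xi$-stable needs the sheafified isomorphism of Theorem \ref{Thm:2.0I} over the whole leaf, not the fiberwise statement.

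Beyond this foundational issue, the parts of the theorem you treat as routine are where the actual work lies. Surjectivity in (1) is not a formal consequence of equivariance: it requires $(\I^\ddag)_\dagger=\I$ for $\Xi$-stable $\I$ of finite codimension, which in the paper is the content of Theorem \ref{Thm:3.16.1} (proved by a delicate induction with the modules $T_k$ and Lemma \ref{Lem:3.16.2}), applied through the Harish-Chandra bimodule functors $\bullet_\dagger,\bullet^\dagger$ of Theorem \ref{Thm:5I}. For (4), your sketch omits the three ingredients the paper actually uses: irreducibility of the associated variety of a prime Poisson graded ideal (via Gel'fand--Kirillov dimension and Borho--Kraft), the identification of maximal elements of $\Id_\Leaf$ with prime ones (via Dixmier's lemma that radicals are stable under derivations), and the identity $(\I^\ddag)_\dagger=\bigcap_{\gamma\in\Xi}\gamma.\I$ which is what makes each fiber a single $\Xi$-orbit; the appeal to ``uniqueness of the Morita identification up to inner automorphisms'' does not substitute for this. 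Finally, the passage from ``prime'' to ``primitive'' in $\Hrm_{1,c}$ is not automatic and is quoted from \cite{ES_appendix} in the paper; your proposal never addresses it.
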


Recall that a two-sided ideal $I$ in  an associative unital algebra $A$ is called {\it primitive}
if it is the annihilator of a simple module.

In \cite{Ginzburg_irr} Ginzburg proved that any primitive ideals of $\Hrm_{1,c}$ is contained in
some set $\Id_{\Leaf}(\Hrm_{1,c})$. This is an analog of the Joseph irreducibility
theorem, \cite{Joseph}, from the representation theory of universal enveloping algebras.
We will rederive Ginzburg's result in the present paper.
So Theorem \ref{Thm:4} allows one to reduce the study (in particular, the classification)
of  primitive ideals in  symplectic reflection algebras
to the study of the annihilators of  irreducible
finite dimensional representations.

We also have an analog of Theorem \ref{Thm:4} for $t=0$.
Recall that $\Zrm_{c}$ denotes the
center of $\Hrm_{0,c}$. It was shown by Etingof and Ginzburg in \cite{EG} that $\Zrm_{c}$ has a natural
Poisson bracket $\{\cdot,\cdot\}$ whose definition will be recalled in Subsection \ref{SUBSECTION_spherical}.
Recall that $\Zrm_{c}$ is a filtered
algebra, let $\F_i\Zrm_{c}$ denote the corresponding filtration. Then
$\{\F_i\Zrm_{c},\F_j\Zrm_{c}\}\subset \F_{i+j-2}\Zrm_{c}$ and  the induced
Poisson bracket on $\Zrm_{0}=(SV)^\Gamma$ coincides with the restriction of the bracket
from $SV$.

Again fix a symplectic leaf $\Leaf\subset V^*/\Gamma$. Consider the set $\Id_{\Leaf}(\Zrm_{c})$ consisting
of all {\it Poisson} ideals $\J\subset \Zrm_{c}$ whose associated variety (= the variety
of zeros of $\gr\J$) coincides  with $\overline{\Leaf}$. Recall that an ideal $J$
in a Poisson algebra $A$ is called Poisson if $\{A,J\}\subset J$.

 As above, we can form the subgroups $\underline{\Gamma},\widetilde{\Xi}\subset \Gamma$,
 and the algebras $\underline{\Hrm}_{0,c},
\underline{\Zrm}_{c},\underline{\Zrm}_c^+$. Note that
\begin{equation}\underline{\Hrm}_{0,c}=SV_0\otimes \underline{\Hrm}^+_{0,c},\end{equation}
\begin{equation}\label{eq:1.22}\underline{\Zrm}_{c}=SV_0\otimes \underline{\Zrm}^+_{c},\end{equation}
 (\ref{eq:1.22}) is an equality
of Poisson algebras, the Poisson structure on $SV_0$ is induced from the symplectic
form on $V_0$. Let $\Id^{\Xi}_0(\underline{\Zrm}^+_{c})$ denote the set of all
$\Xi$-stable Poisson ideals of finite codimension in $\underline{\Zrm}^+_c$.


The following theorem is, in a sense, a quasiclassical analogue of Theorem \ref{Thm:4}.

\begin{Thm}\label{Thm:4'}
There are maps $\Id_{\Leaf}(\Zrm_{c})\rightarrow \Id_0^{\Xi}(\underline{\Zrm}_{c}^+),\J\mapsto
\J_\dagger$, $\Id_0(\underline{\Zrm}^+_{c})\rightarrow \Id_{\Leaf}(\Zrm_{c}),
\I\mapsto \I^{\ddag},$ with the following properties:
\begin{enumerate}
\item The map $\J\mapsto \J_\dagger$ is surjective.
\item $\J\subset (\J_\dagger)^\ddag$ and $\I\supset (\I^\ddag)_\dagger$ for all $\J\in
\Id_{\Leaf}(\Zrm_{c}), \I\in \Id_0(\underline{\Zrm}_{c}^+)$.
\item We have $\VA((\J_\dagger)^{\ddag}/\J)\subset \partial\Leaf$.
\item Consider the restriction of the map $\I\mapsto \I^\ddag$ to the set of all prime (=maximal)
ideals in $\Id_0(\underline{\Zrm}_{c}^+)$. The image of this restriction is the set of all prime
ideals in $\Id_{\Leaf}(\Zrm_{c})$ and each fiber  is a single $\Xi$-orbit.
\end{enumerate}
\end{Thm}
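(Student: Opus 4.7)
The plan is to adapt the strategy used for Theorem \ref{Thm:4} to the quasiclassical ($t=0$) setting, with the completion isomorphism of Theorem \ref{Thm:1} replaced by its Poisson counterpart. Specializing the enhanced isomorphism Theorem \ref{Thm:2.0I} at $\tb=0$ and taking centers on both sides (the center of the matrix algebra $\Mat_{|\Gamma/\underline{\Gamma}|}(\underline{\Halg}^{\wedge_0})$ is just the center of $\underline{\Halg}^{\wedge_0}$) should produce a topological Poisson-algebra isomorphism
\begin{equation*}
\Zrm_{c}^{\wedge_{\pi(b)}} \;\cong\; \K[[V_0^*]]\,\widehat{\otimes}\,\underline{\Zrm}_{c}^{+,\wedge_0}
\end{equation*}
for any $b\in V^*$ with $\Gamma_b=\underline{\Gamma}$. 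Here the bracket on $\K[[V_0^*]]$ is the non-degenerate one induced by the symplectic form on $V_0$, and the two tensor factors Poisson-commute by (\ref{eq:1.22}).

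To construct $\J\mapsto \J_\dagger$ for $\J\in \Id_{\Leaf}(\Zrm_c)$, let $\widehat{\J}\subset \Zrm_{c}^{\wedge_{\pi(b)}}$ be its closure. Because $\widehat{\J}$ is a Poisson ideal and the bracket on $\K[[V_0^*]]$ is non-degenerate, $\widehat{\J}$ is stable under all translations along $V_0^*$, so it has the form $\K[[V_0^*]]\,\widehat{\otimes}\,\widehat{\Ish}$ for a Poisson ideal $\widehat{\Ish}\subset \underline{\Zrm}_{c}^{+,\wedge_0}$. The condition $\VA(\Zrm_c/\J)=\overline{\Leaf}$, combined with the description $\Leaf=\pi((V_0^*)^{fr})$, forces $\widehat{\Ish}$ to be supported at the origin of the transverse slice, hence to have finite codimension; contracting to $\underline{\Zrm}_{c}^+$ gives a canonical finite-codimension Poisson ideal $\J_\dagger\subset \underline{\Zrm}_{c}^+$ with $\J_\dagger\cdot \underline{\Zrm}_{c}^{+,\wedge_0}=\widehat{\Ish}$. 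Varying $b$ within $\Leaf$ produces $\widetilde{\Xi}$-conjugate ideals, and since the action of $\underline{\Gamma}\subset \widetilde{\Xi}$ on ideals is trivial (it is inner), $\J_\dagger$ is canonical up to the $\Xi$-action and is $\Xi$-stable.

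In the opposite direction I would set $\I^\ddag$ to be the preimage in $\Zrm_c$ of the ideal $\K[[V_0^*]]\,\widehat{\otimes}\,(\I\cdot \underline{\Zrm}_{c}^{+,\wedge_0})\subset \Zrm_{c}^{\wedge_{\pi(b)}}$, for $\I\in \Id_0(\underline{\Zrm}_{c}^+)$. By construction, $(\I^\ddag)_\dagger \supset\I$ and the completions of $\J$ and $(\J_\dagger)^\ddag$ at $\pi(b)$ coincide, which gives (2) and (3) (the difference being supported on $\partial\Leaf$). The associated variety of $\Zrm_c/\I^\ddag$ is locally $\overline{\Leaf}$ near $\pi(b)$, since in the decomposition above the factor $\underline{\Zrm}_c^+/\I$ is supported at a single point and the other factor contributes the $V_0^*$-direction; $\widetilde{\Xi}$-equivariance propagates this across all of $\Leaf$, so $\I^\ddag\in \Id_\Leaf(\Zrm_c)$. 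Since $(\I^\ddag)_\dagger=\I$, surjectivity (1) is immediate.

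The main obstacle will be part (4): transferring the prime (=maximal) property across the maps, showing the image of $\ddag$ picks out all primes in $\Id_\Leaf(\Zrm_c)$, and identifying the fibers with single $\Xi$-orbits. For primality transfer, I plan to use that in a complete local Noetherian Poisson algebra a finite-codimension Poisson ideal is prime iff the quotient is Poisson-simple, and that Poisson-simplicity is preserved under the tensoring with $\K[[V_0^*]]$ and under contraction to $\Zrm_c$ (since the Poisson structure on $\K[[V_0^*]]$ is symplectic). For exhaustion, a prime $\J\in \Id_\Leaf(\Zrm_c)$ should satisfy $(\J_\dagger)^\ddag=\J$ by combining (3) with the primality of $\J$, so every prime is in the image of $\ddag$. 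The $\Xi$-orbit description of the fibers then comes from the fact that two completions $\Zrm_{c}^{\wedge_{\pi(b)}}$ and $\Zrm_{c}^{\wedge_{\pi(b')}}$ with $b,b'\in (V_0^*)^{fr}$ in the same $\Gamma$-orbit differ precisely by the $\widetilde{\Xi}/\underline{\Gamma}=\Xi$-action on $\underline{\Zrm}_{c}^+$.
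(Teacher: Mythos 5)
Your route is genuinely different from the paper's. You work directly with the $\tb=0$ Poisson algebras, complete $\Zrm_c$ at a single point $\pi(b)\in\Leaf$, decompose the completed Poisson ideal as $\K[[V_0^*]]\widehat{\otimes}\widehat{\Ish}$ using the symplectic structure on $V_0^*$, and contract. The paper instead avoids working with $\tb=0$ directly: it picks the least $l$ with $c_l\ne 0$, passes to the Rees algebra $R_{\hbar_l}(\Zrm_c)$ (a quotient of $\tilde{\Zalg}_{(l)}=\Zalg_{(l)}[\hbar_l]/(\cb_l-\hbar_l^2)$), and then reruns the entire Harish-Chandra bimodule machinery of Theorems~\ref{Thm:5I},~\ref{Thm:4I} in the form adapted for $\tilde{\Zalg}_{(l)}$ via Remarks~\ref{Rem:3.10.2},~\ref{Rem:3.15.2},~\ref{Rem:3.16.2}, working sheaf-theoretically over all of $\Leaf$, not at a single point.

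The proposal has a genuine gap at the step you declare ``immediate'': you never justify $(\I^\ddag)_\dagger=\I$, which underlies your claim of surjectivity in~(1). In your setup $\I^\ddag$ is the preimage of $\K[[V_0^*]]\widehat{\otimes}(\I\cdot\underline{\Zrm}_c^{+,\wedge_0})$ under $\Zrm_c\to\Zrm_c^{\wedge_{\pi(b)}}$, and the construction of $\bullet_\dagger$ only gives $(\I^\ddag)_\dagger\subset\I$ for free. To get equality you must show the extended ideal $\K[[V_0^*]]\widehat{\otimes}(\I\cdot\underline{\Zrm}_c^{+,\wedge_0})$ is again the completion of its contraction $\I^\ddag$, i.e.\ that $\I^\ddag$ is dense in it in the $\mathfrak m_{\pi(b)}$-adic topology. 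This extension--contraction statement along a non-faithfully-flat map is not automatic, and it is exactly the content of (the commutative analog of) Theorem~\ref{Thm:3.16.1} applied to $\Xi$-invariant ideals. The proof of that theorem is the hardest step in the paper and uses the sheafified Rees-algebra formalism together with the $\K^\times$-grading (Noetherianity of the blow-up sheaves from Subsection~\ref{SUBSECTION_flatness}, the $T=\bigoplus_k T_k$ module over $\Halg_{(l+1)}[\cb_l]$, and finiteness of $(\Malg_\dagger/\Nalg_1)^\dagger$ as a $\K[\overline{\Leaf}]$-module). Completing at a single point and contracting does not circumvent this.

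Two secondary issues in the same vein. First, to see that $\I^\ddag\in\Id_\Leaf(\Zrm_c)$, and to get (3) from the coincidence of completions at $\pi(b)$, you invoke ``$\widetilde{\Xi}$-equivariance propagating across all of $\Leaf$'' --- but $\widetilde{\Xi}$ only permutes the preimages of the single point $\pi(b)$; it does not act transitively on $\Leaf$. What actually makes the pointwise check sufficient is that the associated variety of a Poisson $\Zrm_c$-module is a Poisson subvariety of $V^*/\Gamma$, hence a union of symplectic leaves, so vanishing at one point of $\Leaf$ forces $\Leaf\cap\VA(\cdot)=\emptyset$. This should be stated explicitly, and the Poisson-variety property of $\VA$ itself needs a word. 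Second, for the prime/maximal dichotomy and the exhaustion claim in~(4), the paper's proof of the analog (Theorem~\ref{Thm:4I}, assertion~(5)) uses Gelfand--Kirillov dimension \`a la Borho--Kraft and a careful prime-decomposition argument; your sketch takes these for granted.
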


Let us make a remark regarding a geometric interpretation of part (4). Maximal ideals in $\underline{\Zrm}^+_c$
that are Poisson are in one-to-one correspondence with zero-dimensional symplectic leaves of the variety
$\underline{C}^+_c:=\Spec(\underline{\Zrm}^+_c)$. On the other hand, the set of prime ideals in $\Id_{\Leaf}(\Zrm_{c})$ can be described as follows. Consider the variety $\Spec(R_\hbar(\Zrm_{c}))$. Here and below $R_\hbar(A)$
denotes the Rees algebra of a filtered algebra $A$. In more detail, if $\F_iA$ is an increasing filtration
on $A$, then, by definition, $R_\hbar(A):=\bigoplus_i \hbar^i \F_i A\subset A[\hbar^{-1},\hbar]$.

The inclusion $\K[\hbar]\hookrightarrow R_\hbar(\Zrm_c)$ gives rise to the dominant morphism  $\Spec(R_\hbar(\Zrm_c))\rightarrow\K$. The preimage of $\K^\times$ is naturally identified with
$\Spec(\Zrm_c)\times \K^\times$, while the preimage of 0 is $\Spec(\Zrm_0)=V^*/\Gamma$. The set  of prime ideals
in $\Id_{\Leaf}(\Zrm_c)$ is identified with the set of all symplectic leaves $Y$ such that
the closure of $Y\times \K^\times$ in $\Spec(R_\hbar(\Zrm_{c}))$ intersects $V^*/\Gamma$ exactly in
$\overline{\Leaf}$. As Martino checked in \cite{Martino}, for any symplectic leaf in $\Spec(\Zrm_{c})$
its ideal lies in $\Id_{\Leaf}(\Zrm_{c})$ for some $\Leaf$ (our techniques allows
to give an alternative proof of this result).

\subsection{Applications}\label{SUBSECTION_0c}
Our first result concerns the structure of $\Hrm_{0,c}$  as an algebra over its center
$\Zrm_c$. More precisely, since $\Hrm_{0,c}$ is finite over $\Zrm_{c}$, we can consider $\Hrm_{0,c}$ as the algebra of global sections of an appropriate coherent sheaf of  algebras over $\Spec(\Zrm_c)$. Let us describe the fibers of the restriction of this sheaf to a symplectic leaf.

\begin{Thm}\label{Thm:3}
Let $\hat{\Leaf}$ be a symplectic leaf of $\Spec(\Zrm_c)$ and $\q\subset \Zrm_c$ the prime ideal defining the closure of
$\hat{\Leaf}$. Let $\Leaf$ be a unique symplectic leaf  of $V^*/\Gamma$ such that $\q\in \Id_{\Leaf}(\Zrm_c)$.
Pick a point $y\in \hat{\Leaf}$, let $\n_y$ be its maximal ideal in $\Zrm_c$. Finally, let $\underline{\n}$ be a maximal
ideal of $\underline{\Zrm}^+_c$ containing $\q_\dagger$ (by Theorem \ref{Thm:4'}
all such ideals are $\Xi$-conjugate). Then $\Hrm_{0,c}/\Hrm_{0,c}\n_y$ is isomorphic to
$\Mat_{|\Gamma/\underline{\Gamma}|}(\underline{\Hrm}^+_{0,c}/\underline{\Hrm}^+_{0,c}\underline{\n})$.
\end{Thm}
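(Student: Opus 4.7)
The plan is to derive Theorem \ref{Thm:3} from the completion isomorphism (in its ``globalized'' form Theorem \ref{Thm:2.0I}) by specializing at $\tb=0,\cb=c$ and passing to the appropriate fiber over the center. Concretely, pick $b\in V^*$ with $\Gamma_b=\underline{\Gamma}$ and $\pi(b)\in\Leaf$; note that automatically $b\in V_0^*$. Specializing $\Theta^b$ at $\tb=0,\cb=c$ yields an isomorphism of topological $\Zrm_c$-algebras
$$\Hrm_{0,c}^{\wedge_b}\;\cong\;\Mat_{|\Gamma/\underline{\Gamma}|}\bigl(\underline{\Hrm}_{0,c}^{\wedge_0}\bigr),$$
where the completions are along the images of $\m_b$ and $\underline{\m}_0$.

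Next I would analyze both sides through their centers. Because $\Hrm_{0,c}$ is finite over $\Zrm_c$ and $\m_b\cap\Zrm_c$ has finite codimension, the left hand side splits as a finite product $\prod_{y'}(\Hrm_{0,c})^{\wedge_{\n_{y'}}}$ indexed by the maximal ideals of $\Zrm_c$ containing $\m_b\cap\Zrm_c$. On the right, by (\ref{eq:1.22}) the center of $\underline{\Hrm}_{0,c}$ is $SV_0\otimes\underline{\Zrm}_c^+$, and the contracted ideal of $\underline{\m}_0$ cuts out $\{0\}\times V(\underline{\q})\subset V_0^*\times\Spec(\underline{\Zrm}_c^+)$, where $V(\underline{\q})$ is the set of zero-dimensional symplectic leaves at the origin. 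By Theorem \ref{Thm:4'}(4) the leaves coming from $\q$ (equivalently from $\hat{\Leaf}$) form a single $\Xi$-orbit $\{\underline{\n}_1,\dots,\underline{\n}_k\}$; matching factors on the two sides via $\Theta^b$ identifies $(\Hrm_{0,c})^{\wedge_{\n_y}}$ with the matrix algebra over the local factor of $\underline{\Hrm}_{0,c}^{\wedge_0}$ at $(0,\underline{\n})$ for some $\underline{\n}$ in this orbit.

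To conclude, reduce modulo $\n_y$ on both sides. Finiteness of $\Hrm_{0,c}$ over $\Zrm_c$ gives $\Hrm_{0,c}/\Hrm_{0,c}\n_y=(\Hrm_{0,c})^{\wedge_{\n_y}}/\n_y(\Hrm_{0,c})^{\wedge_{\n_y}}$. On the matrix side, the corresponding maximal ideal of the center $SV_0\otimes\underline{\Zrm}_c^+$ is generated by $V_0$ together with $\underline{\n}$; using the tensor decomposition $\underline{\Hrm}_{0,c}=SV_0\otimes\underline{\Hrm}_{0,c}^+$, the reduction collapses the $SV_0$-factor to $\K$ and leaves $\Mat_{|\Gamma/\underline{\Gamma}|}\bigl(\underline{\Hrm}_{0,c}^+/\underline{\Hrm}_{0,c}^+\underline{\n}\bigr)$, as required.

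The main obstacle will be the matching of factors in the second paragraph: verifying that the central idempotents cutting out $(\Hrm_{0,c})^{\wedge_{\n_y}}$ on the left correspond under $\Theta^b$ to the idempotents cutting out the factor at $\underline{\n}$ on the right. This is where the globalized form Theorem \ref{Thm:2.0I} will be essential, together with the compatibility between the operations $\dagger,\ddagger$ of Theorem \ref{Thm:4'} and the completion isomorphism $\Theta^b$.
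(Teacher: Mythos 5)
Your first step does not work, and the failure is fatal to the whole approach. In $\Halg^{\wedge_b}=\varprojlim_n\Halg/\m_b^n$ the parameters $\cb_i$ are \emph{topologically nilpotent}: $\m_b\cap S(\param)=(\param)$, so $\cb_i\in\m_b$ for every $i$, and hence $\cb_i-c_i$ is a unit in $\Halg^{\wedge_b}$ whenever $c_i\neq 0$ (expand the geometric series). ``Specializing $\Theta^b$ at $\tb=0,\cb=c$'' with $c\neq 0$ therefore kills both sides: $\Halg^{\wedge_b}/(\tb,\cb_1-c_1,\dots)=0$, and in particular you do not obtain any isomorphism $\Hrm_{0,c}^{\wedge_b}\cong\Mat_{|\Gamma/\underline{\Gamma}|}(\underline{\Hrm}_{0,c}^{\wedge_0})$. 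The only specialization that is immediate from the completion theorem is at $c=0$, which is the classical limit and carries no information about $\Hrm_{0,c}$. The same difficulty affects Theorem \ref{Thm:2.0I}: there as well the $\cb_i$ live in $\K[[\param^*]]$, so one can only ``see'' a formal neighborhood of $\cb=0$.

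The paper's proof is precisely a workaround for this problem, and the workaround is nontrivial. One replaces $\Hrm_{0,c}$ by the Rees algebra $R_{\hbar_l}(\Hrm_{0,c})$, which introduces a genuine geometric parameter $\hbar_l$ (with $\cb_l=\hbar_l^2$) interpolating between $V^*/\Gamma$ at $\hbar_l=0$ and $\Spec(\Zrm_c)$ at $\hbar_l\neq 0$. Theorem \ref{Thm:2.0I} then gives an isomorphism of sheaves of algebras on the \emph{formal neighborhood} of the divisor $\{\hbar_l=0\}$ inside $\Spec(R_{\hbar_l}(\Zrm_c))$; the content of Lemma \ref{Lem:6.1.1} is to propagate the isomorphism type of the fibers from that formal neighborhood to a Zariski-open neighborhood, which then meets the $\hbar_l\neq 0$ locus. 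This yields the statement for a general point $y\in\hat{\Leaf}$, and the constancy of the fiber along the symplectic leaf $\hat{\Leaf}$ (\cite{BG}, Theorem 4.2) finishes the proof for all $y\in\hat{\Leaf}$. Your outline contains no substitute for either of these two steps — the propagation argument and the spreading along $\hat{\Leaf}$ — and without them the claim cannot be extracted from the completion theorem.
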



While this paper was in preparation, a proof in the case of rational Cherednik algebras
appeared, \cite{Bellamy}. The proof
was based on the Bezrukavnikov-Etingof theorem on isomorphisms of  completions. Also note that Theorem
\ref{Thm:3} is similar to the Kac-Weisfeiler conjecture on modular Lie algebras proved
by Premet, \cite{Premet}, and to the De Concini-Kac-Procesi conjecture on quantized universal enveloping algebras
at roots of unity proved by Kremnizer, \cite{Kremnitzer} (his proof is rather sketchy
and imposes some restrictions on  the central character).

Together Theorems \ref{Thm:4'},\ref{Thm:3} allow one to reduce the study of representations of
the algebras $\Hrm_{0,c}/\Hrm_{0,c}\n_y$ to the case when $y$ is a (zero dimensional) symplectic leaf.

Our second application is the following theorem which was communicated to us
(essentially with a the proof) by P. Etingof.

\begin{Thm}\label{Thm:6}
For general $c$ the algebra $\Hrm_{1,c}$ is simple.
\end{Thm}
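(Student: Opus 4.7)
The plan is to combine Theorem \ref{Thm:4} with an elementary trace identity: any putative nonzero proper ideal of $\Hrm_{1,c}$ forces an auxiliary SRA $\underline{\Hrm}^+_{1,c}$ to have a finite-dimensional representation, whereas the trace identity forces the parameters $c$ for which such representations exist into a meager subset of $\param^*$.

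Assume $\J\subsetneq\Hrm_{1,c}$ is a nonzero ideal, and enlarge it to a maximal (hence primitive) ideal $\J'$. By Ginzburg's irreducibility theorem, reproved in Section \ref{SECTION_HC}, $\J'\in\Id_\Leaf(\Hrm_{1,c})$ for some symplectic leaf $\Leaf$. The leaf must be proper: $\gr\Hrm_{1,c}=SV\#\Gamma$ is prime and module-finite over its center $(SV)^\Gamma$, so $\gr\J'$ meets $(SV)^\Gamma$ nontrivially, forcing $\VA(\Hrm_{1,c}/\J')\subsetneq V^*/\Gamma$. Hence the corresponding stabilizer $\underline{\Gamma}$ is nontrivial and $V_+\ne 0$, and Theorem \ref{Thm:4}(4) shows that $\underline{\Hrm}^+_{1,c}$ admits a finite-dimensional irreducible representation $M$.

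For such an $M$, taking the trace of the defining commutation relation and using $\operatorname{tr}[\rho(x),\rho(y)]=0$ gives, for all $x,y\in V_+$,
$$(\dim M)\,\omega|_{V_+} + \sum_{s\in S\cap\underline{\Gamma}} c(s)\operatorname{tr}_M\rho(s)\cdot\omega_s|_{V_+} \;=\; 0 \quad\text{in}\quad \Lambda^2 V_+^*.$$
Since each $\rho(s)$ is of finite order in $\operatorname{GL}(M)$, the trace $\operatorname{tr}_M\rho(s)$ is an algebraic number confined to a discrete set once $\dim M$ is fixed. As $M$ varies, this identity cuts out at most a countable collection of proper algebraic subsets of the parameters $\{c(s):s\in S\cap\underline{\Gamma}\}$---and is outright inconsistent when $\omega|_{V_+}$ fails to lie in the linear span of $\{\omega_s|_{V_+}\}_{s}$. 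Ranging over the finitely many conjugacy classes of stabilizer subgroups $\underline{\Gamma}$ yields an exceptional locus $Z\subset\param^*$ off which no $\underline{\Hrm}^+_{1,c}$ with $V_+\ne 0$ admits a finite-dimensional representation, whence $\Hrm_{1,c}$ is simple for $c\notin Z$.

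The main obstacle will be the genericity in the last step: to promote the exceptional locus from a countable union of proper subvarieties to the complement of a single proper algebraic subvariety, one needs a uniform bound on the dimensions of finite-dimensional irreducible representations of $\underline{\Hrm}^+_{1,c}$ in the relevant families, so that only finitely many algebraic constraints on $c$ are relevant. Absent such a bound the argument still establishes simplicity outside a countable union of proper subvarieties, which is a reasonable reading of ``general $c$''. A secondary subtlety is confirming that Ginzburg's theorem (as reproved here) indeed places $\J'$ in some $\Id_\Leaf(\Hrm_{1,c})$, so that Theorem \ref{Thm:4}(4) can be invoked.
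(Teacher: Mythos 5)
Your reduction step is sound and is essentially the paper's: a nonzero proper ideal forces, via the $\dagger$-machinery, a finite-dimensional representation of a slice algebra $\underline{\Hrm}^+_{1,c}$ with $\underline{\Gamma}\neq\{1\}$, and taking traces in the defining relation (\ref{eq:1}) then constrains $c$. (The paper does not even pass to a primitive ideal: it applies $\bullet_\dagger$ for a leaf open in $\VA(\Hrm_{1,c}/\J)$ and uses Proposition \ref{Prop:3.10.1} to see that $\J_\dagger$ is a proper ideal of finite codimension; your detour through a maximal ideal, Ginzburg's irreducibility and Theorem \ref{Thm:4}(4) also works, and your remark that the associated variety is proper because $\gr\J$ meets $(SV)^\Gamma$ is exactly the point the paper leaves implicit.)

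The genuine gap is in the genericity step, and your proposed remedy points in the wrong direction. A uniform bound on the dimensions of finite-dimensional irreducibles of the slice algebras does not exist, and the exceptional locus can never be a single proper subvariety: already in type $A$ (Theorem \ref{Thm:appl2}) the non-simple parameters are all $c=q/m$ with $1<m\leqslant n$, an infinite subset of $\K$. What the paper proves instead --- and this is the precise meaning of ``general $c$'' in Subsection \ref{SUBSECTION_simplicity} --- is that there is a \emph{finitely generated subgroup} $\Lambda\subset\overline{\Q}^r$ such that $\Hrm_{1,c}$ is simple whenever $\sum_i\lambda_ic_i\notin\mathbb{Z}$ for every nonzero $(\lambda_i)\in\Lambda$. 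The idea you are missing is that the coefficients in your trace identity depend only on the restriction of $M$ to the finite group: $\operatorname{tr}_M(s)$ is a non-negative integral combination of the traces of $s$ on the finitely many irreducible modules of the group, so the coefficient vectors range over a fixed finitely generated subgroup, with no control on $\dim M$ needed. To exploit this one first reduces, as in Step 1 of the paper's proof, to the case of a symplectically irreducible module over a group generated by symplectic reflections (using $\Hrm_{1,c}(V,\Gamma)=\Hrm_{1,c}(V,\Gamma')\#_{\Gamma'}\Gamma$ and the tensor product decomposition over the symplectically irreducible summands); then each $\omega_i:=\sum_{s\in S_i}\omega_s$ is $\Gamma$-invariant, hence a multiple $m_i\omega$ of $\omega$, and your identity in $\bigwedge^2V_+^*$ collapses to the scalar equation $\dim M+\sum_i n_i(M)m_ic_i=0$, where $n_i(M)$ is the trace on $M$ of an element of $S_i$ --- an integrality condition on $c$ with coefficient vector $(m_in_i(M))_i$ in the lattice spanned by the irreducible characters. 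Your fallback conclusion (simplicity off a countable union of proper affine subspaces) is correct, but strictly weaker than the statement the paper actually establishes.
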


The meaning of a ``general $c$'' will be made precise in Subsection \ref{SUBSECTION_simplicity}.


\subsection{Conventions and notation} In this subsection we gather some conventions and
notation we use.
Each of Sections \ref{SECTION_completion},\ref{SECTION_HC} contains its own list of conventions and notation,
Subsections \ref{SUBSECTION_completion_not}, \ref{SUBSECTION_HC_not}.

{\bf Sheaves.} Let $X$ be an algebraic variety and $\mathcal{S}$ be a sheaf (of abelian groups) on $X$.
For an open subset $U\subset X$ we denote by $\mathcal{S}(U)$ the group of  sections
of $\mathcal{S}$ on $U$. For a morphism $Y\rightarrow X$ we denote by $\mathcal{S}|_Y$ the sheaf-theoretic
pull-back of $\mathcal{S}$ to $Y$.

In this paper we mostly consider quasi-coherent and pro-coherent sheaves. By a pro-coherent sheaf we mean
a sheaf $\mathcal{S}$  of vector spaces (algebras, modules) on $X$ admitting a decreasing  filtration $\F_i\mathcal{S}$
(by subspaces, two-sided ideals, submodules) such that
$\mathcal{S}$ is complete with respect to this filtration, i.e.,  $\mathcal{S}=\varprojlim_{i}
\mathcal{S}/\F_i \mathcal{S}$, and $\F_i \mathcal{S}/\F_{i+1}\mathcal{S}$ is a coherent  $\Str_X$-module.

The following table contains the list of some standard notation used in the paper.

\begin{longtable}{p{2.5cm} p{13.5cm}}
$\widehat{\otimes}$&completed tensor product of complete topological vector spaces/modules.\\
$(V)$& the two-sided ideal in an associative algebra generated by a subset $V$.\\
$\Der(A)$& the Lie algebra of derivations of an algebra $A$.
\\
$G_x$& the stabilizer of $x$ in a group $G$.\\
$\gr \A$& the associated graded vector space of a filtered
vector space $\A$.\\
$R_\hbar(\A)$&$:=\bigoplus_{i\in
\mathbb{Z}}\hbar^i \F_i\A$ ,the Rees vector space of a filtered
vector space $\A$.\\
$R\Gamma$& the group algebra of a group $\Gamma$ with coefficients in a ring $R$.\\
$\VA(\M)$& the associated variety of $\M$.\\
\end{longtable}

{\bf Acknowledgements.} This paper would never have appeared without  P. Etingof's help
that is gratefully acknowledged. Apart from  inspiration and many fruitful ideas,
Etingof contributed several results to the present paper, including Theorem \ref{Thm:6}.
Also I would like to thank G. Bellamy, R. Bezrukavnikov, A. Braverman, I. Gordon and A. Premet for
stimulating discussions and for useful remarks on the previous versions of this paper.
Finally, I would like to thank the referee for numerous remarks that allowed me to improve the exposition.

\section{Completion theorem}\label{SECTION_completion}
\subsection{Structure of this section}
In this section we will study  completions of the algebra $\Halg$. The main result
of this section is Theorem \ref{Thm:2.0I}, which is an enhanced (in particular, ``globalized'') version
of Theorem \ref{Thm:1}.

In Subsection \ref{SUBSECTION_completion_not} we explain conventions and some notation used in the present
section. In Subsection \ref{SUBSECTION_centralizer} we gather various results on the centralizer construction
introduced in \cite{BE}. This construction provides a convenient language for our considerations.

The next two subsections, \ref{SUBSECTION_sheafifiedI},\ref{SUBSECTION_completion_I} are devoted to our principal
result, Theorem \ref{Thm:2.0I}. The theorem deals with certain sheaves on  symplectic leaves.
The sheaves of interest are ``localized completions'' of the algebras $\Halg, \Mat_{|\Gamma/\Gamma_b|}(\underline{\Halg})$.
These sheaves are defined in Subsection \ref{SUBSECTION_sheafifiedI}.
Theorem \ref{Thm:2.0I} claims that there is an isomorphism with some special properties
between (certain twists of) these sheaves.

The proof of Theorem \ref{Thm:2.0I} is rather indirect. As it happens, it is more convenient to work
with another version of sheafified completions. This version is introduced in Subsection \ref{SUBSECTION_sheafified_algebras}. For instance, on the $\Halg$ side we have a pro-coherent sheaf $\Hsh^\wedge$
on a symplectic leaf, whose fiber at $b$ is $\Halg^{\wedge_b}$. The sheaves under consideration  come
equipped with  flat connections. In Subsection \ref{SUBSECTION_completion_II} we state
an isomorphism theorem for the sheaves with flat connections of interest, Theorem \ref{Thm:2.0}.

In Subsection \ref{SUBSECTION_spherical} we mostly recall some basic and standard facts about the spherical subalgebras
and the centers of the SRA's. Subsections \ref{SUBSECTION_compatibility},\ref{SUBSECTION_flatness} are quite technical.
Their goal is to make sure that in our setting we have nice properties of completions that are standard
in the commutative situation.

The proof of Theorem \ref{Thm:2.0} occupies two subsections, \ref{SUBSECTION_derivations},\ref{SUBSECTION_ProofII}.
In the former we study derivations of algebras of sections of $\Hsh^\wedge$. The latter subsection
completes the proof.

Theorem \ref{Thm:2.0I}
is deduced from Theorem \ref{Thm:2.0} in the last Subsection \ref{SUBSECTION ProofI} of the section. The idea, roughly speaking, is that to get Theorem \ref{Thm:2.0I} from Theorem \ref{Thm:2.0} we just need to pass
to flat sections of the sheaves considered in the latter.

\subsection{Notation and conventions}\label{SUBSECTION_completion_not}
Let us introduce some notation to be used throughout the section.

{\bf Varieties and morphisms.}
Set $\X:=V^*/\Gamma$. This is a Poisson variety with finitely many symplectic leaves. Let $\pi:V^*\rightarrow \X$
denote the quotient morphism.
Let $\Leaf$ be a symplectic leaf of $\X$. Pick a point $b\in V^*$ with $\pi(b)\in \Leaf$.
Set $\underline{\Gamma}:=\Gamma_b, V_0:=V^{\underline{\Gamma}}$. Then $\overline{\Leaf}=\pi(V_0^*)$.
Set  $ \underline{\Leaf}:= V_0^{*}\cap \pi^{-1}(\Leaf), \underline{\X}:=V^*/\underline{\Gamma}$, then we can consider $\underline{\Leaf}$ as a subvariety in $\underline{\X}$. We remark that $\underline{\Leaf}$ is open in $V_0^*$.

Let $\underline{\pi}$ denote the quotient morphism $V^*\rightarrow \underline{\X}$
and $\pi':\underline{\X}\rightarrow \X$ be the natural morphism so that
$\pi=\pi'\circ \underline{\pi}$. Let $\underline{\X}^0$ be the open subset of $\underline{\X}$
consisting of all points where $\pi'$ is \'{e}tale. In other words,
$\underline{\X}^0=\underline{\pi}(\{v\in V| \Gamma_v\subset \underline{\Gamma}\})$. Further,
set $\X^0=\pi'(\underline{\X}^0)$.

It is clear that $\Leaf$ is a closed subvariety in $\X^0$. Let $\X^{\wedge}_{\Leaf}$ denote the formal neighborhood of $\Leaf$ in $\X^0$. Similarly,   $\underline{\X}^{\wedge}_{\underline{\Leaf}}$ denotes the formal neighborhood
of $\underline{\Leaf}$ in $\underline{\X}^0$.

{\bf Groups.}
We set $\widetilde{\Xi}:=N_\Gamma(\underline{\Gamma}), \Xi:=\widetilde{\Xi}/\underline{\Gamma}$.
We remark that $\Xi$ acts naturally on $\underline{\X}$. Moreover, the restriction
of $\pi'$ to $\underline{\Leaf}$ is a covering $\underline{\Leaf}\rightarrow \Leaf$ with fundamental
group $\Xi$.

{\bf Algebras.}

Recall the algebras $\Halg,\underline{\Halg}, \W_{\tb},\underline{\Halg}^+$ defined in the Introduction.

Let $\param^{(i)}$ (resp., $\param_{(i)}$), $i=0,\ldots,r+1,$ denote the subspace in $\param$
spanned by $\cb_0,\ldots,\cb_{i-1}$ (resp., by $\cb_{i},\ldots,\cb_r)$.
We write $\Halg_{(l)},l=0,\ldots,r+1,$ for the quotient
$\Halg/(\param^{(l)})$ (so $\Halg_{(0)}=\Halg, \Halg_{(r+1)}=SV\#\Gamma$).
We remark that $\Halg_{(l)}$ is a $S(\param_{(l)})$-algebra. Let  $\rho_l, l=0,\ldots,r,$ denote the natural projection $ \Halg_{(l)}\rightarrow \Halg_{(l+1)}$, set $\hat{\rho}=\rho_r\circ\ldots\circ\rho_0:\Halg\rightarrow \Halg_{(r+1)}$. The analogous projections for $\underline{\Halg}_{(l)}$ are denoted by $\underline{\rho}_l,\underline{\hat{\rho}}$.

{\bf Centers.}
Let $A$ be an associative unital algebra. We denote the center of $A$ by
$\z(A)$. Now suppose $A_\tb$ is an associative $\K[\tb]$-algebra.
By $\z^\tb(A_\tb)$ we denote the inverse image of $\z(A_\tb/(\tb))$ in $A_\tb$
under the natural projection.
In other words, $\z^\tb(A_\tb)$ consists of all elements $a\in A_\tb$ such that
$[a,b]\in  (\tb)$ for all $b\in A_\tb$.

\subsection{The centralizer construction}
\label{SUBSECTION_centralizer}
In this subsection we will recall the centralizer construction of Bezrukavnikov and Etingof, \cite{BE}.

Let $G\supset H$ be finite groups and $A$ be an algebra containing $\K H$. Set
\begin{equation*} \Fu_H(G,A):=\{f:G\rightarrow A| f(hg)=hf(g), \forall g\in G,h\in H\}.\end{equation*} Clearly, $\Fu_H(G,A)$ is a free right $A$-module of rank $|G/H|$.  By definition, the {\it centralizer algebra} $Z(G,H,A)$ is $\operatorname{End}_A(\Fu_H(G,A))$. In particular, $Z(G,H,A)$ is isomorphic to the matrix
algebra $\operatorname{Mat}_{|G/H|}(A)$. An isomorphism depends  on a choice of
representatives in the cosets from $H\backslash G$, and there is no canonical isomorphism.
We have a monomorphism $\K G\hookrightarrow Z(G,H,A)$ given by $(g.f)(g_1)=f(g_1g), f\in \Fu_H(G,A), g,g_1\in G$.
Also we have an embedding of $A^H$  into $Z(G,H,A)$: $A^H$ acts on $\Fu_H(G,A)$ by $(a.f)(g_1)=af(g_1)$. Under a choice of identification
$Z(G,H,A)\cong \operatorname{Mat}_{|G/H|}(A)$ as explained above, $A^H$ is identified with the subalgebra consisting
of all matrices of the form $\operatorname{diag}(a,a,\ldots,a)$, where $a\in A^H$. In particular, the center
$Z$ of $A$ is contained in $A^H$. So we get
the embedding of $Z$ into $Z(G,H,A)$ that identifies $Z$ with the center of $Z(G,H,A)$.

Now let $M$ be an $A$-bimodule. We want to construct a $Z(G,H,A)$-bimodule $Z(G,H,M)$ from $M$.
Set $$Z(G,H,M):=\Hom_{H\times H^{op}}(G\times G, M)=\Fu_H(G,A)\otimes_A M\otimes_A \Hom_A(\Fu_H(G,A),A).$$
The notation $H^{op}$ in the subscript indicates that the second copy of $H$ acts on the second copy of $G$
from the right.
If we choose representatives in the right cosets $H\backslash G$ and identify
$Z(G,H,A)$ with $\Mat_{|G/H|}(A)$, the $Z(G,H,A)$-bimodule $Z(G,H,M)$ gets identified
with $\Mat_{|G/H|}(M)$.

Let us explain how to recover $A$ from $Z(G,H,A)$ and $M$ from $Z(G,H,M)$.
Pick a left coset $x\in H\backslash G$. Define  $e(x)\in \End(\Fu_H(G,A))$ in the following way:
\begin{equation}\label{eq:1.1}
[e(x).f](g)=\begin{cases} f(g), &\text{ if }g\in x,\\
0,&\text{ else.}
\end{cases}
\end{equation}

Clearly, $e(x)\in Z(G,H,A)$ and $e(x)^2=e(x)$. Moreover, \begin{equation}\label{eq:0.0.new}\sum_{x\in H\backslash G}e(x)=1,\quad
g e(x) g^{-1}=e(xg).\end{equation} Further, we see that
\begin{equation}\label{eq:0.0.11}
a e(x)=e(x) a, \forall a\in A^H.
\end{equation}

We have a natural isomorphism $A\cong Z(H,H,A)=e(H)Z(G,H,A)e(H)$. On the other hand, $Z(G,H,A)e(H)$ is naturally
identified with $\Fu_H(G,A)$ as a $Z(G,H,A)$-$A$-bimodule. Namely, to $\varphi\in Z(G,H,A)e(H)$ we assign the map
$g\mapsto e(H)g\varphi\in \Fu_H(G,A)$. Therefore the bimodule $Z(G,H,A)e(H)$ gives rise to a Morita
equivalence between $A$ and $Z(G,H,A)$.
Thus the assignment $N\mapsto e(H)N e(H)$ is a quasi-inverse equivalence
to $M\mapsto Z(G,H,M)$.

\begin{Lem}\label{Lem:0.0.12}
Let $B$ be a unital associative algebra and $\iota:Z(G,H,\K H)\hookrightarrow B$
be an algebra monomorphism (mapping $1$ to $1$).
Then $B$ is naturally identified with $Z\left(G,H, \iota(e(H))B\iota(e(H))\right)$.
\end{Lem}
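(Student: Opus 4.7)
The plan is to use the idempotents and group elements transferred by $\iota$ to realize $B$ as a matrix algebra over $A:=\iota(e(H))B\iota(e(H))$ by a Morita-style argument. Concretely, we construct a canonical right-$A$-linear isomorphism $\Psi: P:=B\iota(e(H))\stackrel{\sim}{\to}\Fu_H(G,A)$; combined with the observation that $B$ acts faithfully on $P$ by left multiplication, this yields $B\cong\End_A(P)\cong\End_A(\Fu_H(G,A))=Z(G,H,A)$. The composite will not involve any choice of coset representatives, which accounts for the naturality.

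The map $\Psi$ will be defined by $\Psi(b\iota(e(H)))(g):=\iota(e(H))\iota(g)b\iota(e(H))$. From $hH=H$ and (\ref{eq:0.0.new}) we have $\iota(h)\iota(e(H))=\iota(e(H))\iota(h)$ for $h\in H$, so the image lies in $\Fu_H(G,A)$, and right $A$-linearity is immediate. The key computational input for bijectivity will be the identity
\[
\iota(e(H))\iota(g)\iota(e(H))=\begin{cases}\iota(g)\iota(e(H)),&g\in H,\\ 0,&g\notin H,\end{cases}
\]
which follows by direct inspection of the action of $e(H)ge(H)$ on $\Fu_H(G,\K H)$ using (\ref{eq:1.1}). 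Given this, for any choice of coset representatives $\{g_x\}_{x\in H\backslash G}$ the inverse is $f\mapsto\sum_x\iota(g_x^{-1})f(g_x)$: applying $\Psi$ and using the above identity with $g=g_xg_y^{-1}$ (which lies in $H$ iff $x=y$) recovers $f$ on the representatives, and $H$-equivariance pins down $f$ on all of $G$.

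Finally, left multiplication $L:B\to\End_A(P)$, $b\mapsto L_b$, is an algebra isomorphism. For injectivity, if $bP=0$ then $b\iota(g_x^{-1})\iota(e(H))=0$ for every coset representative, and since $\iota(g_x^{-1})\iota(e(H))=\iota(e(x))\iota(g_x^{-1})$ by (\ref{eq:0.0.new}), right-multiplying by $\iota(g_x)$ yields $b\iota(e(x))=0$; summing over $x$ then gives $b=0$. For surjectivity, the decomposition $P=\bigoplus_x\iota(e(x))P$ realizes $P$ as a free right $A$-module of rank $|G/H|$, since left multiplication by $\iota(g_x)$ identifies $\iota(e(x))P$ with $\iota(e(H))P=A$ as right $A$-modules. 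Hence $\End_A(P)\cong\Mat_{|G/H|}(A)$, and in these coordinates $L$ becomes the block decomposition $b\mapsto\bigl(\iota(e(H))\iota(g_x)b\iota(g_y^{-1})\iota(e(H))\bigr)_{x,y}$, which is plainly bijective. The main obstacle is purely bookkeeping with idempotents and group elements; the only nontrivial computational input is the orthogonality identity above.
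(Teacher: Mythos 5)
Your proof is correct and follows the same blueprint as the paper: identify $Be$ with $\Fu_H(G,eBe)$ via $\varphi\mapsto(g\mapsto e\iota(g)\varphi)$, and then show that left multiplication $B\to\End_{eBe}(Be)$ is an isomorphism. The one place where you genuinely diverge is the final step. The paper invokes Morita theory: it checks $1\in BeB$ (which follows from $1\in Z(G,H,\K H)\,e(H)\,Z(G,H,\K H)$), and that single fact then delivers $B\cong\End_{eBe}(Be)$ as a standard consequence. You instead unwind the Morita statement by hand via the Peirce decomposition $B=\bigoplus_{x,y}\iota(e(x))B\iota(e(y))$ and the identifications $\iota(e(x))P\cong A$, producing the explicit matrix formula for $L$ and checking bijectivity directly. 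Your route is more elementary and self-contained (no appeal to Morita theory); the paper's is slicker and more conceptual. Both are perfectly valid. The paper also writes the inverse of $\Psi$ as a basis-free average $f\mapsto\frac{1}{|H|}\sum_{g\in G}g^{-1}f(g)$ rather than a sum over coset representatives — the same map in disguise.

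Two small remarks. First, you verify $\Psi\circ\Phi=\id$ but not $\Phi\circ\Psi=\id$; the latter is the one-line computation $\Phi(\Psi(\varphi))=\sum_x\iota(g_x^{-1})\,e\,\iota(g_x)\varphi=\sum_x\iota(e(x))\varphi=\varphi$, using $\sum_x e(x)=1$ — worth stating since you assert $\Psi$ is an isomorphism. Second, you correctly use $g_x^{-1}e(H)=e(Hg_x)g_x^{-1}$; note that the displayed equation (\ref{eq:0.0.new}) as printed reads $ge(x)g^{-1}=e(xg)$, whereas the action conventions actually yield $ge(x)g^{-1}=e(xg^{-1})$. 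Your calculation matches the correct version, so no harm done, but be aware that taking (\ref{eq:0.0.new}) literally would flip a sign in the exponent.
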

\begin{proof}
Set $e:=\iota(e(H))$ for brevity.
Let us identify $Be$ with $\Fu_H(G,eBe)$. To $\varphi\in Be$ we assign the map $g\mapsto eg\varphi$. An inverse
map is given by $f\mapsto \frac{1}{|H|}\sum_{g\in G}g^{-1}f(g)$. The claim that these maps are mutually inverse
follows from (\ref{eq:0.0.new}).

This defines an action of $B$ on $\Fu_H(G, eBe)$ by right $eBe$-module
endomorphisms and hence gives rise to a homomorphism $B\rightarrow Z(G,H, eBe)$. To check that this homomorphism
is an isomorphism we need to show that the bimodule $Be$ produces a Morita equivalence between
$B$ and $eBe$, equivalently, that $1\in BeB$. This stems from the inclusion $1\in Z(G,H,\K H)e(H) Z(G,H,\K H)$.
\end{proof}

Let us discuss the compatibility of the centralizer construction with certain group actions.
Suppose that $\widetilde{H}$
is a subgroup of $G$ containing $H$ and that $A$ is equipped with an action of $\widetilde{H}$ by automorphisms,
whose restriction to $H$ coincides with the adjoint action of  $H\subset A$. Further, suppose that $H$
is normal in $\widetilde{H}$. Then $\widetilde{H}$ acts on $\Fu_H(G,A)$ by \begin{equation}\label{eq:1.2}(\widetilde{h}.f)(g)=\widetilde{h}.f(\widetilde{h}^{-1}g \widetilde{h}),\end{equation} where on the right hand side $\widetilde{h}.$ means the action of $\widetilde{h}$ on $A$.
 So we have also an $\widetilde{H}$-action on $Z(G,H,A)=\End_A(\Fu_H(G,A))$ by
\begin{equation}\label{eq:1.2'}
(\widetilde{h}.a)f=\widetilde{h}.(a(\widetilde{h}^{-1}.f)).
\end{equation}
Note that the restriction of the last $\widetilde{H}$-action to $H$ coincides with the adjoint $H$-action
on $Z(G,H,A)$. Also note that the element $e(H)$ is $\widetilde{H}$-invariant and the induced
$\widetilde{H}$-action on $A=e(H)Z(G,H,A)e(H)$ coincides with the initial one.


Also we can consider the action of $\widetilde{H}$ on $\Fu_H(G,A)$ given by $\widetilde{h}.f(g):=\widetilde{h}.f(\widetilde{h}^{-1}g)$ and  the induced action on $Z(G,H,A)$. Note that $H$ acts trivially. So we get the action of $\widetilde{H}/H$ on
$Z(G,H,A)$ by automorphisms.

Suppose now that $M$ is an $A$-bimodule. We say that $M$ is an $\widetilde{H}$-{\it equivariant} $A$-bimodule, if there is
an action of $\widetilde{H}$ on $M$ such that:
\begin{itemize}
\item The restriction of this action to $H$ coincides with the adjoint action of $H$ on $M$.
\item The natural map $A\otimes M\otimes A\rightarrow M$ is $\widetilde{H}$-equivariant.
\end{itemize}
Then we can define natural $\widetilde{H}$- and $\widetilde{H}/H$-actions on $Z(G,H,M)$
similarly to the above.

Now let $D$ be a $\K H$-linear derivation of $A$. Define the operator $D$ on $\Fu_H(G,A)$ by
$(D.f)(g)=D.(f(g))$ and the derivation $D$ of $\End(\Fu_H(G,A))$ by $D.\varphi=[D,\varphi]$. Clearly, $D.(fa)=(D.f)a+f(D.a)$ for $a\in A, f\in \Fu_H(G,A)$. Therefore $D$ preserves
$Z(G,H,A)\subset \End(\Fu_H(G,A))$. Note that the derivation
$D$ of $\End_A(\Fu_H(G,A))$ is $\K G$-linear. We remark that under an identification $Z(G,H,A)\cong \Mat_{|G/H|}(A)$
the derivation $D$ of this algebra corresponds to the entry-wise derivation, whose components
coincide with $D\in \Der(A)$.

Finally, let us provide an alternative description of $Z(G,H,A)$ in a special case.

Suppose that $A:=A_0\# H$ for some  algebra $A_0$ acted on by $H$.
Equip the space $\Fu(G,A_0)=\K[G]\otimes A_0$ with the algebra structure (with respect to the pointwise
multiplication of functions) and with the diagonal $H$-action, where the action
on $\K[G]$ is induced by the left $G$-action. The invariant subalgebra
$(\K[G]\otimes A_0)^H$ is equipped with a  $G$-action induced from the action of $G$ on itself
by right translations.

\begin{Lem}\label{Lem:new_iso}
There is a natural isomorphism $(\K[G]\otimes A_0)^H\# G\xrightarrow{\sim} Z(G,H, A_0\# H)$.
\end{Lem}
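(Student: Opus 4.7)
The plan is to construct an explicit algebra homomorphism
$$\Psi\colon (\K[G]\otimes A_0)^H\# G\;\longrightarrow\; Z(G,H,A_0\# H)$$
by having the source act on the free right $A_0\# H$-module $\Fu_H(G,A_0\# H)$, and then to verify bijectivity by a direct calculation in a basis. For $f\in(\K[G]\otimes A_0)^H$ (equivalently, a function $f\colon G\to A_0$ satisfying $f(hg)=h.f(g)$) and $g_0\in G$, I would set
$$[\Psi(f\otimes g_0)\phi](g):=f(g)\,\phi(gg_0),\qquad \phi\in\Fu_H(G,A_0\# H),\ g\in G,$$
interpreting $f(g)\in A_0\subset A_0\# H$ as a left multiplier of $\phi(gg_0)\in A_0\# H$. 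The key check that $\Psi(f\otimes g_0)\phi$ remains $H$-equivariant uses the smash-product identity $(h.a)h=ha$ in $A_0\# H$:
$$[\Psi(f\otimes g_0)\phi](hg)=f(hg)\phi(hgg_0)=(h.f(g))h\phi(gg_0)=hf(g)\phi(gg_0).$$
Right $A_0\# H$-linearity is immediate since $f(g)$ acts on the left.

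Next I would check that $\Psi$ respects multiplication. The smash-product rule $(f_1\otimes g_1)(f_2\otimes g_2)=f_1(g_1.f_2)\otimes g_1g_2$ with $(g_1.f_2)(g)=f_2(gg_1)$ gives
$$[\Psi((f_1\otimes g_1)(f_2\otimes g_2))\phi](g)=f_1(g)f_2(gg_1)\phi(gg_1g_2),$$
which matches $[\Psi(f_1\otimes g_1)\Psi(f_2\otimes g_2)\phi](g)$ after a one-line calculation.

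The real content is bijectivity. I would fix coset representatives $g_1,\dots,g_n$ for $H\backslash G$ (with $n=|G/H|$) and consider the standard basis $\phi_1,\dots,\phi_n$ of the free right $A_0\# H$-module $\Fu_H(G,A_0\# H)$ defined by $\phi_i(hg_j)=\delta_{ij}h$. Writing a domain element as $\sum_{g_0\in G}f_{g_0}\otimes g_0$, evaluating $\Psi(\sum f_{g_0}\otimes g_0)\phi_l$ at $g_i$ yields
$$\sum_{h\in H}f_{g_i^{-1}hg_l}(g_i)\,h\;\in\; A_0\otimes\K H=A_0\# H,$$
since $\phi_l(g_ig_0)\neq 0$ exactly when $g_ig_0=hg_l$ for some $h\in H$. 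Because $\{h\}_{h\in H}$ is linearly independent over $A_0$ and the map $(h,l)\mapsto g_i^{-1}hg_l$ bijects $H\times\{1,\dots,n\}$ with $G$ for each fixed $i$, this yields injectivity. The same calculation inverted gives surjectivity: given a matrix $(a_{il})\in\Mat_n(A_0\# H)$ with $a_{il}=\sum_h a_{il}^h h$, one defines $f_{g_0}(g_i):=a_{il}^h$ for the unique $(h,l)$ with $g_ig_0=hg_l$ and extends $H$-equivariantly. The main obstacle is purely bookkeeping---keeping the left/right conventions straight in the smash products and matching the coset parametrization of $G$ with the matrix indexing under the identification $Z(G,H,A_0\# H)\cong\Mat_n(A_0\# H)$.
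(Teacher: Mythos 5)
Your construction is exactly the paper's map $\vartheta$ (your $\Psi(f\otimes g_0)$ is $\vartheta(f)\vartheta(g_0)$, acting by $[\Psi(f\otimes g_0)\phi](g)=f(g)\phi(gg_0)$), and your bijectivity argument via coset representatives and the identification of both sides with $\Mat_{|G/H|}(A_0\# H)$ is the same route the paper takes, just with the "easy to see" steps written out. The proof is correct and essentially identical in approach.
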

\begin{proof}
Let us define  maps $\vartheta:G, (\K[G]\otimes A_0)^H\rightarrow Z(G,H,A_0\# H)$
 as follows:
\begin{equation}\label{eq:def_vartheta}
\begin{split}&[\vartheta(g)f](g'):=f(g'g),\\
&[\vartheta(F)f](g'):=F(g')f(g'),\\
& g,g'\in G, f\in \Fu_H(G,A_0\# H), F\in (\K[G]\otimes A_0)^H=\Fu_H(G,A_0).
\end{split}
\end{equation}
It is easy to see that $\vartheta$ is well-defined and extends to a homomorphism
$(\K[G]\otimes A_0)^H\# G\rightarrow Z(G,H,A_0\# H)$.

Let us show that this homomorphism is an isomorphism. Choose representatives $g_1,\ldots,g_k$
of the left $H$-cosets in $G$. This choice identifies $Z(G,H,A_0\# H)$ with $\Mat_{|G/H|}(A_0\# H)$.
It is easy to see that $\vartheta$ also gives rise to an identification of
$(\K[G]\otimes A_0)^H\# G$ with $\Mat_{|G/H|}(A_0\# H)$.
\end{proof}

\begin{Rem}
All constructions of this subsection deal with algebras and their bimodules. However, they
work  for sheaves of algebras and sheaves of bimodules without any noticeable modifications.
\end{Rem}

\subsection{Sheafified versions of algebras, I}\label{SUBSECTION_sheafifiedI}
The goal of this section is to introduce certain sheaves $\Halg_{(l)}^{\wedge_\Leaf}|_{\Leaf}$
and $\underline{\Halg}_{(l)}^{\wedge_{\underline{\Leaf}}}|_{\underline{\Leaf}}$ of topological algebras on $\Leaf$
and on $\underline{\Leaf}$, respectively.

First of all, let us define the sheaf of algebras $\Halg|_{\X}$ on $\X=V^*/\Gamma$.  The procedure we are
going to use is very similar to the microlocalization procedure, see, for example, \cite{Ginzburg_char}, Section 1,
or \cite{Ginzburg_irr}, the proof of Theorem 2.1.
It is enough to specify the algebras of sections on principal open subsets.

Namely, pick a principal open subset $U\subset \X$ corresponding to
$f\in \K[\X]$. Set $S:=\{f^k\}_{k\geqslant 0}$. Consider the algebra $\Halg/(\param)^k$. We have a natural
epimorphism $\hat{\rho}:\Halg/(\param)^k\rightarrow \Halg_{(r+1)}$.  Since $\ad a:\Halg/(\param)^k\rightarrow \Halg/(\param)^k$ is locally nilpotent for any $a\in \hat{\rho}^{-1}(S)$, we see that $\hat{\rho}^{-1}(S)$ is an Ore subset of $\Halg/(\param)^k$.
Consider the localization $\Halg/(\param)^k(U)$ of $\Halg/(\param)^k$ with respect to $\hat{\rho}^{-1}(S)$.
The algebras $\Halg/(\param)^k(U)$ glue to a sheaf $\Halg/(\param)^k|_\X$ in the Zariski
topology.
Then set $\Halg|_{\X}:=\varprojlim_{k}\Halg/(\param)^k|_{\X}$.

There is a natural $\K^\times$-action on $\Halg$: $t.\gamma=\gamma, t.v=tv, t.c=t^2c, \gamma\in \Gamma,
v\in V, c\in \param^*, t\in \K^\times$. This action naturally extends to an action on $\Halg|_\X$. Also we note that the filtration $(\param)^k$ on $\Halg|_\X$ is complete and the quotients of this filtration
are coherent $\Str_\X$-modules.

Similarly to the analogous properties of the microlocalization, see \cite{Ginzburg_char}, Section 1,  we have
the following lemma.

\begin{Lem}\label{Lem:2.12.1}
\begin{enumerate}
\item $\Halg|_\X(\X)$ is the $\param$-adic completion  $\Halg^{\wedge_\param}$ of
$\Halg$ in the $(\param)$-adic topology.
\item $\Halg(U)$ is a flat (left or right) $\Halg$-module for any principal open subset
$U\subset \X$.
\item The functor $(\Halg|_\X)\otimes_{\Halg}\bullet$ from the category of
finitely generated left $\Halg$-modules to the category of left $\Halg|_\X$-modules is exact.
\end{enumerate}
\end{Lem}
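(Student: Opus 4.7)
The plan is to mirror the standard microlocalization arguments (cf.\ \cite{Ginzburg_char},\cite{Ginzburg_irr}) by treating each finite $\param$-adic level first and then passing to the inverse limit. Part (1) is essentially a definitional check: the whole variety $\X$ is itself the principal open subset corresponding to $f=1$, so at each level $k$ the Ore subset $\hat\rho^{-1}(S)$ reduces to $\{1\}$ and no localization is performed. Hence $\Halg/(\param)^k|_\X(\X)=\Halg/(\param)^k$, and passing to the inverse limit over $k$ yields $\Halg|_\X(\X)=\varprojlim_k \Halg/(\param)^k=\Halg^{\wedge_\param}$, which is precisely the $(\param)$-adic completion.

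For part (2) I would first establish flatness at each finite level. The excerpt already observes that $\hat\rho^{-1}(S)$ is an Ore subset of $\Halg/(\param)^k$, so $\Halg/(\param)^k(U)$ is an Ore localization and therefore flat on both sides over $\Halg/(\param)^k$. To lift this to flatness of $\Halg(U):=\varprojlim_k \Halg/(\param)^k(U)$ over $\Halg$, I factor the structure map as $\Halg\to\Halg^{\wedge_\param}\to\Halg(U)$. The first arrow is flat: Proposition \ref{Prop:1.1} gives a PBW decomposition showing $\Halg$ is $S(\param)$-free, and the filtration by $(\param)^k$ is well-behaved (its associated graded is Noetherian because $SV\#\Gamma$ is), so standard Artin--Rees-type arguments apply in spite of the non-commutativity, since $\param$ is central. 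The second arrow is flat by a Mittag-Leffler argument: the transition maps $\Halg/(\param)^{k+1}(U)\to\Halg/(\param)^k(U)$ are surjective and each level is flat over the corresponding quotient $\Halg^{\wedge_\param}/(\param)^k$, so for finitely generated $M$ one has $\Halg(U)\otimes_{\Halg^{\wedge_\param}}M=\varprojlim_k (\Halg/(\param)^k(U))\otimes_{\Halg/(\param)^k}(M/(\param)^k M)$, and the inverse limit of the short exact sequences obtained by tensoring is still exact.

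Part (3) then follows formally: principal opens form a base for the Zariski topology on $\X$, and on such $U$ the sheaf $(\Halg|_\X)\otimes_{\Halg} M$ has sections $\Halg(U)\otimes_{\Halg}M$. Since each $\Halg(U)$ is a flat right $\Halg$-module by part (2), the functor is exact on each principal open and hence exact globally.

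The main obstacle is the technical flatness chain at the completion level: making precise that $\Halg\to\Halg^{\wedge_\param}$ is flat and that flatness propagates through the inverse limit defining $\Halg(U)$. The necessary ingredients---PBW freeness over $S(\param)$, Noetherianity of the associated graded, centrality of $\param$, and Mittag-Leffler for the inverse system---are all available in the present setting, so the usual commutative-style completion machinery can be adapted without substantial modification.
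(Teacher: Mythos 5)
The paper does not prove Lemma~\ref{Lem:2.12.1}: it is stated with a pointer to \cite{Ginzburg_char}, Section~1, and no proof appears in the text, so what you have written is a reconstruction rather than a reproduction.

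Part~(1) is correct and, as you say, definitional. Part~(3) follows formally from Part~(2) once one knows $(\Halg|_\X\otimes_{\Halg}M)(U)=\Halg(U)\otimes_{\Halg}M$ on principal opens $U$.

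The gap is in Part~(2), in the flatness of the arrow $\Halg^{\wedge_\param}\to\Halg(U)$. You speak of taking ``the inverse limit of the short exact sequences obtained by tensoring,'' but those short exact sequences do not exist at finite level. Given a short exact sequence $0\to M'\to M\to M''\to 0$ of finitely generated $\Halg^{\wedge_\param}$-modules, the reduction $M'/\param^k M'\to M/\param^k M\to M''/\param^k M''\to 0$ is only right-exact: the first map has kernel $K_k:=(\param^k M\cap M')/\param^k M'$, nonzero in general. Tensoring with the flat ring $\Halg/(\param)^k(U)$ preserves this failure, so at each level you have a four-term exact sequence, and appealing to Mittag--Leffler alone does not yield injectivity in the limit. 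What you actually need is Artin--Rees: it gives a fixed $c$ with $\param^{k+c}M\cap M'\subset \param^k M'$, hence $K_{k+c}\to K_k$ is the zero map, the system $\{K_k\}$ is pro-zero, and only then do both $\varprojlim$ and $\varprojlim^1$ of the leftmost terms vanish and the limit collapse to a short exact sequence. You invoke Artin--Rees for the arrow $\Halg\to\Halg^{\wedge_\param}$ but then drop it precisely where it is load-bearing. A secondary unaddressed step is the identification $\Halg(U)\otimes_{\Halg^{\wedge_\param}}M\cong\varprojlim_k\bigl(\Halg/(\param)^k(U)\otimes_{\Halg/(\param)^k}M/\param^k M\bigr)$, which presupposes $\param$-adic completeness of the left-hand side and hence Noetherianity of $\Halg(U)$, itself not free.

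A route that avoids both pitfalls and matches the spirit of the cited reference is to treat the whole arrow $\Halg\to\Halg(U)$ at once through the associated graded: the $\param$-adic filtrations have Noetherian associated gradeds, $\gr\Halg(U)$ is a central localization of $\gr\Halg$ and so flat over it, and a morphism of complete filtered rings with Noetherian associated gradeds whose induced map on $\gr$ is flat is itself flat. That packages the Artin--Rees bookkeeping once, and is the same pattern the author deploys in Subsection~\ref{SUBSECTION_flatness}.
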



Consider the ideal $J$ of $(SV)^\Gamma$ consisting of all functions vanishing on $\overline{\Leaf}$.
This is a Poisson ideal in $(SV)^\Gamma$.
Set $\p_{(r+1)}:=\Halg_{(r+1)}J, \p:=\hat{\rho}^{-1}(\p_{(r+1)})$.
Clearly, $\p$ is a two-sided ideal in $\Halg$.  Localizing $\p$ over $\X$, we get a sheaf $\p|_\X$
of two-sided ideals in $\Halg|_\X$. Formally, $\p|_\X:=\Halg|_\X\otimes_{\Halg}\p$.

Now consider the restriction $\Halg|_{\X^0}$ of the sheaf $\Halg|_\X$ to $\X^0$.
We have the sheaf of ideals $\p|_{\X^0}\subset \Halg|_{\X^0}$. We remark that $\p|_{\X_0}=\Halg|_{\X^0}\otimes_{\Halg}\p=\Halg|_{\X^0}\p$ and similarly $\p|_{\X^0}=\p\Halg|_{\X^0}$.
Introduce the completion $\Halg^{\wedge_\Leaf}:=\varprojlim_{k} \Halg|_{\X^0}/(\p|_{\X^0})^k=\varprojlim_{k}(\Halg/\p^k)|_{\X^0}$.
This is a  sheaf of algebras on $\X^\wedge_{\Leaf}$ complete in the $\p^{\wedge_\Leaf}$-adic topology,
where $\p^{\wedge_\Leaf}:=\varprojlim_{k}(\p/\p^k)|_{\X^0}$. Since
$\p$ is $\K^\times$-stable, the group $\K^\times$ still acts on $\Halg^{\wedge_\Leaf}$.
Finally, let $\Halg^{\wedge_\Leaf}|_{\Leaf},\p^{\wedge_\Leaf}|_{\Leaf} $ denote the (completed) sheaf-theoretic inverse images of the topological sheaves
$\Halg^{\wedge_\Leaf},\p^{\wedge_\Leaf}$ under the embedding  $\Leaf\hookrightarrow \X^{\wedge}_{\Leaf}$.
Alternatively, one can consider the pull-backs $\Halg|_{\Leaf},\p|_{\Leaf}$ of $\Halg|_{\X},\p|_{\X}$
to $\Leaf$. Then  $\Halg^{\wedge_\Leaf}|_{\Leaf}=\varprojlim_{k} \Halg|_{\Leaf}/(\p|_{\Leaf})^k$.

Again, $\Halg^{\wedge_\Leaf}|_{\Leaf}$ is a $\K^\times$-equivariant sheaf of algebras on $\Leaf$ complete in  the
$\p^{\wedge_\Leaf}|_{\Leaf}$-adic topology. We also remark that the quotients
for the $\p^{\wedge_\Leaf}|_{\Leaf}$-adic filtration are coherent $\Str_{\Leaf}$-modules.

We have the following standard property of completions. More subtle properties based on the
Artin-Rees lemma will be established later in Subsection \ref{SUBSECTION_flatness}.

\begin{Lem}\label{Lem:2.12.11}
The functor $\bullet^{\wedge_\Leaf}|_\Leaf: \M\mapsto (\varprojlim_{k}\M/\p^k\M)|_{\Leaf}$
from the category of finitely generated left $\Halg|_{\X^0}$-modules
to the category of $\Halg^{\wedge_\Leaf}|_{\Leaf}$-modules is right exact.
\end{Lem}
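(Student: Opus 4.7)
The strategy is to reduce to a Mittag--Leffler argument at the level of sections. Since right exactness of a functor of sheaves is a local property, it suffices to verify it after taking sections over a principal open $U\subset \X^0$ meeting $\Leaf$; here $\bullet^{\wedge_\Leaf}|_\Leaf(U\cap\Leaf)=\varprojlim_k \bullet(U)/\p(U)^k\bullet(U)$.

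First, for each $k\geqslant 1$, the functor $\M\mapsto \M/\p^k\M$ is right exact, being given by tensoring with $\Halg|_{\X^0}/(\p|_{\X^0})^k$. So an exact sequence $\M_1\xrightarrow{\alpha}\M_2\xrightarrow{\beta}\M_3\to 0$ of finitely generated $\Halg|_{\X^0}$-modules yields, for every $k$, an exact sequence of sections
\begin{equation*}
A_k\xrightarrow{\alpha_k} B_k\xrightarrow{\beta_k} C_k\to 0,
\end{equation*}
where $A_k=\M_1(U)/\p(U)^k\M_1(U)$, and similarly for $B_k,C_k$. The kernel
\begin{equation*}
K_k:=\ker(\beta_k)=\bigl(\alpha(\M_1(U))+\p(U)^k\M_2(U)\bigr)\big/\p(U)^k\M_2(U)
\end{equation*}
has surjective transition maps $K_{k+1}\twoheadrightarrow K_k$: any class in $K_k$ is represented by some $\alpha(m)$ with $m\in \M_1(U)$, and this same representative also yields a class in $K_{k+1}$. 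Consequently $\{K_k\}$ satisfies Mittag--Leffler, so $R^1\varprojlim K_k=0$, and passing to the inverse limit of $0\to K_k\to B_k\to C_k\to 0$ produces a short exact sequence
\begin{equation*}
0\to \varprojlim_k K_k\to \M_2^{\wedge_\Leaf}|_\Leaf(U\cap\Leaf)\to \M_3^{\wedge_\Leaf}|_\Leaf(U\cap\Leaf)\to 0.
\end{equation*}

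In particular $\M_2^{\wedge_\Leaf}|_\Leaf\twoheadrightarrow \M_3^{\wedge_\Leaf}|_\Leaf$ is surjective. To conclude full right exactness, one identifies $\varprojlim_k K_k$ with the image of $\M_1^{\wedge_\Leaf}|_\Leaf(U\cap\Leaf)\to \M_2^{\wedge_\Leaf}|_\Leaf(U\cap\Leaf)$. This amounts to lifting every compatible system $(y_k)\in \varprojlim_k K_k$ through the surjections $\alpha_k\colon A_k\twoheadrightarrow K_k$ to a compatible $(n_k)\in \varprojlim_k A_k$ with $\alpha_k(n_k)=y_k$; equivalently, it is a second Mittag--Leffler problem for the fibers $\alpha_k^{-1}(y_k)$. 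I expect this to be the main obstacle, since a na\"ive fibrewise lift need not be compatible in $k$: producing a compatible one is essentially an instance of the Artin--Rees property applied to the filtration $\alpha(\M_1)\cap \p^k\M_2$. One resolves it either by a direct $\p$-adic bookkeeping argument using finite generation of the $\M_i$ and a chosen presentation of $\alpha$, or by appealing to the more subtle Artin--Rees-based properties of the completion that the author defers to Subsection~\ref{SUBSECTION_flatness}.
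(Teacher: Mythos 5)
Your Mittag--Leffler reduction is the right opening move, and your diagnosis of where the argument gets delicate is accurate. But I would push back on the suggestion that a ``direct $\p$-adic bookkeeping argument'' can close the gap. The issue you isolate --- lifting a compatible system $(y_k)\in\varprojlim K_k$ along the surjections $A_k\twoheadrightarrow K_k$ --- amounts to showing $R^1\varprojlim N_k=0$ for the kernel system $N_k=\alpha^{-1}(\p^k\M_2)/\p^k\M_1$, since the long exact sequence for $\varprojlim$ identifies the cokernel of $\M_1^{\wedge_\Leaf}|_\Leaf\to\varprojlim K_k$ with exactly $R^1\varprojlim N_k$ (using that $\{A_k\}$ already satisfies Mittag--Leffler). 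This is not a bookkeeping matter: it is precisely the statement that $\{N_k\}$ is a Mittag--Leffler system, which in turn is the Artin--Rees condition that $\alpha^{-1}(\p^{k'}\M_2)+\p^k\M_1$ stabilizes in $k'$. The Stacks-project-style Cauchy-series construction you may have in mind gives only that $\Halg^{\wedge_\Leaf}|_\Leaf\otimes_{\Halg|_{\X^0}}\M\to\M^{\wedge_\Leaf}|_\Leaf$ is \emph{surjective} for finitely generated $\M$; it does not by itself force right exactness (a quotient of a right exact functor need not be right exact). Without a Noetherian or Artin--Rees input, the completion functor on finitely generated modules really can fail to preserve cokernels.

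What rescues the lemma in this paper is that the relevant Noetherianity \emph{is} available: Proposition~\ref{Prop:1.33I} and Lemma~\ref{Lem:1.23.1} in Subsection~\ref{SUBSECTION_flatness} show the blow-up algebra is Noetherian, and that in turn gives that the completion functor is exact (not merely right exact) and flat --- see Corollary~\ref{Cor:1.22I}. So Lemma~\ref{Lem:2.12.11} is most honestly read as a (weaker) consequence of the Subsection~\ref{SUBSECTION_flatness} machinery, notwithstanding the author's framing of it as a ``standard property'' preceding those results. The paper supplies no proof for the lemma, so there is no argument to compare yours against; your proposal correctly identifies what the omitted justification would have to contain, and correctly flags that the second step is not elementary.
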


Similarly, we can define the sheaves $\underline{\Halg}|_{\underline{\X}^0}, \underline{\p}|_{\underline{\X}^0}$
on $\underline{\X}^0$, $\underline{\Halg}^{\wedge_{\underline{\Leaf}}},
\underline{\p}^{\wedge_{\underline{\Leaf}}}$ on $\underline{\X}^\wedge_{\underline{\Leaf}}$
and $\underline{\Halg}^{\wedge_{\underline{\Leaf}}}|_{\underline{\Leaf}}$ on $\underline{\Leaf}$.
The latter is a $\K^\times$-equivariant sheaf of algebras. Also it has an action of $\widetilde{\Xi}$
induced from the action on $\underline{\Halg}$, as  described in the introduction.

In fact, thanks to the decomposition,
\begin{equation}\label{eq:2.12.1}\underline{\Halg}=\W_\tb\otimes_{\K[\tb]}\underline{\Halg}^+,\end{equation} the structure of $\underline{\Halg}^{\wedge_{\underline{\Leaf}}}|_{\underline{\Leaf}}$ is pretty simple.
Namely, we can sheafify the Weyl algebra $\W_{\tb}$ over $\underline{\Leaf}$ similarly to the above. As a sheaf of algebras
$\W_{\tb}|_{\underline{\Leaf}}$ is $\Str_{\underline{\Leaf}}[[\tb]]$ equipped with the Moyal-Weyl $*$-product.
Let us recall that the latter is defined as follows: $f*g:=\mu(\exp(\frac{\tb}{2}\omega|_{V_0})f\otimes g)$ for sections $f,g$ of $\Str_{\underline{\Leaf}}$. Here $\mu$ is the multiplication map $\Str_{\underline{\Leaf}}^{\otimes 2}\rightarrow \Str_{\underline{\Leaf}}$. The form $\omega|_{V_0}\in \bigwedge^2 V_0^*$  can be considered as
the contraction map $\Str_{\underline{\Leaf}}^{\otimes 2}\rightarrow \Str_{\underline{\Leaf}}^{\otimes 2}$.
The decomposition (\ref{eq:2.12.1}) implies \begin{equation}\label{eq:2.12.2}
\underline{\Halg}^{\wedge_{\underline{\Leaf}}}|_{\underline{\Leaf}}=
\W_{\tb}|_{\underline{\Leaf}}\widehat{\otimes}_{\K[[\tb]]}(\underline{\Halg}^+)^{\wedge_0}\end{equation}
Here and below $\widehat{\otimes}$ stands for the completed tensor product of topological algebras/sheaves.
We equip $\W_{\tb}|_{\underline{\Leaf}}$ with the $\tb$-adic topology and $(\underline{\Halg}^+)^{\wedge_0}$ with
the $\underline{\m}^+_0$-adic topology.

Below we write $\CC(\bullet)$ instead of $Z(\Gamma,\underline{\Gamma},\bullet)$.

Consider the sheaf $\CC(\underline{\Halg}^{\wedge_{\underline{\Leaf}}}|_{\underline{\Leaf}})=\W_\tb|_{\underline{\Leaf}}\widehat{\otimes}_{\K[[\tb]]}
\CC(\underline{\Halg}^{+\wedge_0})$.
As explained in Subsection \ref{SUBSECTION_centralizer}, $\Xi$ acts on this sheaf by automorphisms.
The action clearly preserves the tensor factors and the corresponding action on $\W_\tb|_{\underline{\Leaf}}$
is induced from the $\Xi$-action on $\underline{\Leaf}$.
Abusing the notation, we write $\CC(\underline{\Halg}^{\wedge_{\underline{\Leaf}}}|_{\underline{\Leaf}})^\Xi$
instead of $\left[\pi'_*\left(\CC(\underline{\Halg}^{\wedge_{\underline{\Leaf}}}|_{\underline{\Leaf}})\right)\right]^\Xi$.
This is a sheaf on $\Leaf$.

\begin{Rem}\label{Rem:2.12.12}
We can define the sheaves $\Halg_{(l)}|_{\X}, \Halg_{(l)}^{\wedge_\Leaf}, \Halg_{(l)}^{\wedge_\Leaf}|_{\Leaf}$ etc.
similarly to $\Halg|_{\X}, \Halg^{\wedge_\Leaf}, \Halg^{\wedge_\Leaf}|_{\Leaf}$. However, Lemmas \ref{Lem:2.12.1},\ref{Lem:2.12.11} imply that $\Halg_{(l)}|_{\X}=\Halg|_{\X}/(\param^{(l)}), \Halg_{(l)}^{\Leaf}=\Halg^{\wedge_\Leaf}/(\param^{(l)})$.
\end{Rem}

Our goal is to establish a relationship between the sheaves $\Halg^{\wedge_\Leaf}|_{\Leaf}$ and
$\CC(\underline{\Halg}^{\wedge_{\underline{\Leaf}}}|_{\underline{\Leaf}})^\Xi$. We will see below
that these sheaves  become isomorphic sheaves of topological algebras if we twist one of them by a 1-cocycle. The precise statement will be given below in Subsection \ref{SUBSECTION_completion_I}.
As we will explain there, this result is an enhanced version of Theorem \ref{Thm:1}.

\subsection{Isomorphism of completions theorem, I}\label{SUBSECTION_completion_I}
First of all, we will describe an isomorphism
$$\theta_0: \Halg_{(r+1)}^{\wedge_{\Leaf}}|_{\Leaf}\rightarrow \CC(\underline{\Halg}_{(r+1)}^{\wedge_{\underline{\Leaf}}}|_{\underline{\Leaf}})^\Xi.$$
Recall that the algebras $\Halg_{(r+1)},\underline{\Halg}_{(r+1)}$ are nothing else but $SV\#\Gamma$ and $SV\#\underline{\Gamma}$
and $\CC(\bullet)$ stands for $Z(\Gamma,\underline{\Gamma},\bullet)$.
The construction of $\theta_0$ will be given after some preliminary considerations.

The proof of the following lemma is straightforward.

\begin{Lem}\label{Lem:2.1.11}
There is  a unique homomorphism $\theta_0: \Halg_{(r+1)}\rightarrow \CC(\underline{\Halg}_{(r+1)})$ such that
\begin{equation}\label{eq:2.1.11}
\begin{split}
&[\theta_0(\gamma)f](\gamma')=f(\gamma'\gamma),\\
&[\theta_0(v)f](\gamma')=\gamma'(v)f(\gamma'),\\
&\gamma,\gamma'\in \Gamma, v\in V, f\in \Fu_{\underline{\Gamma}}(\Gamma,\underline{\Halg}_{(r+1)}),
\end{split}
\end{equation}
\end{Lem}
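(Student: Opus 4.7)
The plan is to verify the lemma by the universal property of the smash-product $SV\#\Gamma = \Halg_{(r+1)}$. Uniqueness is immediate because $V$ and $\Gamma$ generate this algebra, so the prescribed values on generators determine any homomorphism. For existence, it suffices to define operators $\theta_0(\gamma)$ and $\theta_0(v)$ on $\Fu_{\underline{\Gamma}}(\Gamma,\underline{\Halg}_{(r+1)})$ by the formulas (\ref{eq:2.1.11}), verify that they belong to the endomorphism algebra $\CC(\underline{\Halg}_{(r+1)}) = \End_{\underline{\Halg}_{(r+1)}}(\Fu_{\underline{\Gamma}}(\Gamma,\underline{\Halg}_{(r+1)}))$, and then check the defining relations of $SV\#\Gamma$.

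First I would check that $\theta_0(\gamma)f$ and $\theta_0(v)f$ still satisfy the equivariance $f(hg')=hf(g')$ for $h\in\underline{\Gamma}$. For $\theta_0(\gamma)$ this is trivial. For $\theta_0(v)$ one computes $[\theta_0(v)f](hg')=(hg')(v)f(hg')=(h.g'(v))\cdot hf(g')$, which equals $h\cdot(g'(v)f(g'))=h\cdot[\theta_0(v)f](g')$ because of the commutation rule $h\cdot x=(h.x)\cdot h$ in $\underline{\Halg}_{(r+1)}=SV\#\underline{\Gamma}$. Right $\underline{\Halg}_{(r+1)}$-linearity of both operators is then immediate from the definitions, so indeed $\theta_0(\gamma),\theta_0(v)\in\CC(\underline{\Halg}_{(r+1)})$.

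Next I would check the three families of relations. Associativity of right translation gives $\theta_0(\gamma_1)\theta_0(\gamma_2)=\theta_0(\gamma_1\gamma_2)$. For $v,w\in V$ we have $[\theta_0(v)\theta_0(w)f](\gamma')=\gamma'(v)\gamma'(w)f(\gamma')$, which is symmetric in $v,w$ because $\gamma'(v),\gamma'(w)\in V\subset SV$ commute, so $[\theta_0(v),\theta_0(w)]=0$. Finally, for the cross relation $\gamma v=(\gamma.v)\gamma$ one computes $[\theta_0(\gamma)\theta_0(v)f](\gamma')=(\gamma'\gamma)(v)f(\gamma'\gamma)=\gamma'(\gamma.v)f(\gamma'\gamma)=[\theta_0(\gamma.v)\theta_0(\gamma)f](\gamma')$. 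By the presentation of $SV\#\Gamma$, these relations extend $\theta_0$ to an algebra homomorphism.

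There is no real obstacle here; every step is a direct unwinding of (\ref{eq:2.1.11}) together with the basic identity $hx=(h.x)h$ in a smash-product. The only place where one must be careful is the equivariance check for $\theta_0(v)$, which is the sole point where the $\underline{\Gamma}$-action on $V\subset\underline{\Halg}_{(r+1)}$ enters nontrivially.
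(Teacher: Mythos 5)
Your proof is correct and is exactly the verification the paper has in mind; the paper states the lemma is "straightforward" and omits the details, and your computation supplies precisely those details (checking $\theta_0(\gamma),\theta_0(v)$ preserve $\underline{\Gamma}$-equivariance, are right $\underline{\Halg}_{(r+1)}$-linear, and satisfy the defining relations of $SV\#\Gamma$, with uniqueness from the fact that $V$ and $\Gamma$ generate).
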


The sheaf $\Str_{\underline{\X}^0}$ is the center of $\underline{\Halg}_{(r+1)}|_{\underline{\X}^0}$
and so also of $\CC(\underline{\Halg}_{(r+1)}|_{\underline{\X}^0})$.
Thanks to Lemma \ref{Lem:2.1.11}, we have a natural map \begin{equation}\label{eq:2.1.12}\pi'^*(\Halg_{(r+1)}|_{\X^0})=
\Str_{\underline{\X}^0}\otimes_{(SV)^\Gamma}\Halg_{(r+1)}\rightarrow \CC(\underline{\Halg}_{(r+1)}|_{\underline{\X}^0}).
\end{equation}

\begin{Lem}\label{Lem:2.1.12}
The homomorphism (\ref{eq:2.1.12}) is an isomorphism.
\end{Lem}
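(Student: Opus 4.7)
The plan is to verify \eqref{eq:2.1.12} on completed stalks at each closed point $\bar v\in\underline{\X}^0$, exploiting the étaleness of $\pi'$ on $\underline{\X}^0$. Both sides are coherent $\Str_{\underline{\X}^0}$-modules (since $SV\#\Gamma$ is finite over $(SV)^\Gamma$), so stalkwise verification suffices. Fix a lift $v\in V^*$ of $\bar v$, so $\Gamma_v\subset\underline{\Gamma}$ by definition of $\underline{\X}^0$. Étaleness of $\pi'$ at $\bar v$ gives $\widehat{\Str}_{\X,\pi(v)}\xrightarrow{\sim}\widehat{\Str}_{\underline{\X},\bar v}$, from which the completed stalk of the LHS at $\bar v$ is identified with $\widehat{SV}_{\Gamma v}\#\Gamma$, where $\widehat{SV}_{\Gamma v}:=\prod_{w\in\Gamma v}\widehat{SV}_w$. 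On the RHS, the completed stalk of $\underline{\Halg}_{(r+1)}|_{\underline{\X}^0}$ at $\bar v$ is $\widehat{SV}_{\underline{\Gamma}v}\#\underline{\Gamma}$, and Lemma \ref{Lem:new_iso} (with $A_0=\widehat{SV}_{\underline{\Gamma}v}$) rewrites the completed RHS as $(\K[\Gamma]\otimes\widehat{SV}_{\underline{\Gamma}v})^{\underline{\Gamma}}\#\Gamma$.

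By Lemma \ref{Lem:2.1.11}, the map sends $a\in\widehat{SV}_{\Gamma v}$ to the $\underline{\Gamma}$-equivariant function $F_a(\gamma')=\gamma'(a)|_{\underline{\Gamma}v}$, where the restriction is via the natural projection $\widehat{SV}_{\Gamma v}\twoheadrightarrow\widehat{SV}_{\underline{\Gamma}v}$. Picking coset representatives $\gamma_1,\dots,\gamma_k$ of $\underline{\Gamma}\backslash\Gamma$, the essential point is the disjoint decomposition
\begin{equation*}
\Gamma v\;=\;\bigsqcup_{i=1}^{k}\gamma_i^{-1}(\underline{\Gamma}v),
\end{equation*}
whose union part is obvious from $\Gamma=\bigsqcup_i\gamma_i^{-1}\underline{\Gamma}$ and whose disjointness uses $\Gamma_v\subset\underline{\Gamma}$: if $\gamma_i^{-1}(h_1v)=\gamma_j^{-1}(h_2v)$ with $h_1,h_2\in\underline{\Gamma}$, then $h_2^{-1}\gamma_j\gamma_i^{-1}h_1\in\Gamma_v\subset\underline{\Gamma}$, forcing $\gamma_j\gamma_i^{-1}\in\underline{\Gamma}$ and hence $i=j$. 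Via this bijection $(i,w')\mapsto\gamma_i^{-1}(w')$, together with the $\gamma_i$-induced isomorphisms $\widehat{SV}_{\gamma_i^{-1}(w')}\xrightarrow{\sim}\widehat{SV}_{w'}$, the assignment $a\mapsto(F_a(\gamma_i))_i$ is identified in coordinates with the tautological map, and is therefore a $\K$-linear bijection $\prod_{w\in\Gamma v}\widehat{SV}_w\xrightarrow{\sim}\bigoplus_{i=1}^k\widehat{SV}_{\underline{\Gamma}v}$.

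Smashing by $\Gamma$ on both sides preserves this bijection because the relation $\gamma a=\gamma(a)\gamma$ in $SV\#\Gamma$ corresponds to $\gamma F_a=F_{\gamma(a)}\gamma$ in $(\K[\Gamma]\otimes\widehat{SV}_{\underline{\Gamma}v})^{\underline{\Gamma}}\#\Gamma$, a direct calculation from the formulas of Lemma \ref{Lem:2.1.11}. The main technicality is not any single hard step but careful bookkeeping: checking that passage to completed stalks commutes with the centralizer construction $\CC$ and with taking $\underline{\Gamma}$-invariants of $\K[\Gamma]\otimes(\cdot)$, and that étaleness of $\pi'$ correctly matches the tensor product over $(SV)^\Gamma$ on the LHS with the one over $(SV)^{\underline{\Gamma}}$ implicit on the RHS.
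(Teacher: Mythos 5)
Your proposal is correct and takes essentially the same route as the paper. The paper verifies the map fiberwise (via the slice theorem) rather than on completed stalks, but the core of the argument is identical in both: decompose the $\Gamma$-orbit of a lift $v$ into $|\Gamma/\underline{\Gamma}|$ disjoint translates of the $\underline{\Gamma}$-orbit, using $\Gamma_v\subset\underline{\Gamma}$ for disjointness, and then recognize the resulting map as the isomorphism $\vartheta$ of Lemma \ref{Lem:new_iso}.
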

\begin{proof}
It is enough to check that (\ref{eq:2.1.12})
is an isomorphism fiberwise. Pick $b\in \X^0$ and $b'\in \pi^{-1}(b)$ such that $\Gamma_{b'}\subset \underline{\Gamma}$.

Let us describe the fiber of $\Halg_{(r+1)}$ at $b$. This fiber is naturally identified with
$A_{b}\# \Gamma$, where $A_b:=\K[V^*]/\K[V^*]\n_b$, $\n_b$ standing for the maximal
ideal of $b$ in $\K[V^*]^\Gamma$. Thanks to the slice theorem, $A_b$ is naturally identified with $\sum_{v\in \pi^{-1}(b)} A^0_v$,
where $A^0_v:=\K[V^*]/\K[V^*]\n^0_v$, with $\n^0_v$ being the maximal ideal of $v$ in $\K[V^*]^{\Gamma_v}$.
Set $A^0:=A^0_{b'}$. Let us identify $A_b$ with $\Fu_{\Gamma_{b'}}(\Gamma,A^0)$. Namely, to
$a\in A^0_v$ we assign a map $f:\Gamma\rightarrow A^0$ that maps $g$ to 0 if $g v\neq b'$
and to $g.a$ if $gv=b'$. It is easy to see that the assignment $a\mapsto f_a$ is a $\Gamma$-equivariant
isomorphism $A_b\xrightarrow{\sim} \Fu_{\Gamma_{b'}}(\Gamma, A^0)$. So $A_b\#\Gamma =\Fu_{\Gamma_{b'}}(\Gamma,A^0)\#\Gamma$. Further, we have a natural identification $$\Fu_{\Gamma_{b'}}(\Gamma,A^0)\cong \Fu_{\underline{\Gamma}}(\Gamma, \Fu_{\Gamma_{b'}}(\underline{\Gamma}, A^0))$$ induced by restricting a function $\Gamma\rightarrow A_0$ to the left $\underline{\Gamma}$-cosets.

Similarly, we see that the fiber of $\CC(\underline{\Halg}_{(r+1)})$ is identified with
$\CC(\Fu_{\Gamma_{b'}}(\underline{\Gamma}, A^0)\#\underline{\Gamma})$.  Set
$A_0:=\Fu_{\Gamma_{b'}}(\underline{\Gamma}, A^0)$ so that the fibers of interest
are identified with $\Fu_{\underline{\Gamma}}(\Gamma,A_0)\#\Gamma$ and $\CC(A_0\#\underline{\Gamma})$.
It is not difficult to see that under our identifications the map between the fibers
induced by (\ref{eq:2.1.12}) coincides with $\vartheta$ from Lemma \ref{Lem:new_iso}.
\end{proof}

So we get an isomorphism $\theta_0: \pi'^*(\Halg_{(r+1)}^{\wedge_\Leaf})\rightarrow \CC(\underline{\Halg}_{(r+1)}^{\wedge_{\underline{\Leaf}}})$ of coherent sheaves of $\Str_{\underline{\X}^\wedge_{\underline{\Leaf}}}$-algebras induced by (\ref{eq:2.1.12}).
But, since $\pi'$ is etale
on $\underline{\Leaf}$ and it induces a covering $\underline{\Leaf}\rightarrow\Leaf$ with fundamental
group $\Xi$, we see that $\pi': \underline{\X}^\wedge_{\underline{\Leaf}}\rightarrow \X^\wedge_{\Leaf}$
is just the quotient morphism for the (free) action of $\Xi$.
So by restricting $\theta_0$ to $\Xi$-invariants, we get an isomorphism
$\theta_0: \Halg_{(r+1)}^{\wedge_{\Leaf}}\rightarrow \CC(\underline{\Halg}_{(r+1)}^{\wedge_{\underline{\Leaf}}})^\Xi$.
Finally, restricting everything to $\Leaf$, we get the required isomorphism
\begin{equation}\label{eq:0.2.40}\theta_0:\Halg_{(r+1)}^{\wedge_{\Leaf}}|_{\Leaf}\rightarrow \CC(\underline{\Halg}_{(r+1)}^{\wedge_{\underline{\Leaf}}}|_{\underline{\Leaf}})^\Xi.\end{equation}

In fact, one can show that the sheaves $\Halg^{\wedge_{\Leaf}}|_{\Leaf}, \CC(\underline{\Halg}^{\wedge_{\underline{\Leaf}}}|_{\underline{\Leaf}})^\Xi$ cannot
be isomorphic for the open symplectic leaf $\Leaf\subset \X$.
To get an isomorphism we need to ``twist'' one of the sheaves with a cocycle.

Namely, let us choose an open $\K^\times$-stable affine covering  $\Leaf=\bigcup_i W_i$.
For all $i,j$ choose $\Xi\times \Gamma$-invariant elements
$X^{ij}\in \param\z^\tb\left(\CC(\underline{\Halg}^{\wedge_{\underline{\Leaf}}}|_{\underline{\Leaf}}(W_{ij})\right)$
having degree 2 with respect to $\K^\times$. Here $W_{ij}:=W_i\cap W_j$. We may (and will) assume that $X^{ij}=-X^{ji}$. Suppose that the elements $\exp(\frac{1}{\tb}X^{ij})$ form a 1-cocycle, i.e. \begin{equation}\label{eq:2.12.3}\exp(\frac{1}{\tb}X^{ij})\exp(\frac{1}{\tb}X^{jk})\exp(\frac{1}{\tb}X^{ki})=1.
\end{equation} A problem with (\ref{eq:2.12.3}) is that the  elements $\exp(\frac{1}{\tb}X^{ij})$ do not make sense, because, at least if we are working algebraically, the series defining $\exp(\frac{1}{\tb}X^{ij})$ diverges even if we extend the sheaf of algebras under consideration.
However, the expressions like $$\ln(\exp(\frac{1}{\tb}X^{ij})\exp(\frac{1}{\tb}X^{jk})\exp(\frac{1}{\tb}X^{ki}))$$
still make sense (thanks to the Campbell-Hausdorff formula), the resulting expression lies in  $\frac{1}{\tb}\param\z^\tb(\CC(\underline{\Halg}^{\wedge_{\underline{\Leaf}}}|_{\underline{\Leaf}})(W_{ijk})$
(where $W_{ijk}:=W_i\cap W_j\cap W_k$). And so when we write (\ref{eq:2.12.3}), we mean
that $$\ln\left(\exp(\frac{1}{\tb}X^{ij})\exp(\frac{1}{\tb}X^{jk})\exp(\frac{1}{\tb}X^{ki})\right)=0.$$

Given sections $X^{ij}$ as above we can form the twist $\CC(\underline{\Halg}^{\wedge_{\underline{\Leaf}}}|_{\underline{\Leaf}})^{tw}$ of $\CC(\underline{\Halg}^{\wedge_{\underline{\Leaf}}}|_{\underline{\Leaf}})$ by the cocycle
$\exp(\frac{1}{\tb}X^{ij})$. Namely, $\CC(\underline{\Halg}^{\wedge_{\underline{\Leaf}}}|_{\underline{\Leaf}})^{tw}(W_i):=
\CC(\underline{\Halg}^{\wedge_{\underline{\Leaf}}}|_{\underline{\Leaf}})(W_i)$ but the transition
function from $\CC(\underline{\Halg}^{\wedge_{\underline{\Leaf}}}|_{\underline{\Leaf}})(W_j)$
to $\CC(\underline{\Halg}^{\wedge_{\underline{\Leaf}}}|_{\underline{\Leaf}})(W_i)$ on $W_{ij}$
is $u\mapsto \exp(\frac{1}{\tb}\ad X^{ij})u(=\exp(\frac{1}{\tb}X^{ij})u\exp(-\frac{1}{\tb}X^{ij}))$.
We note that $\exp(\frac{1}{\tb}\ad X^{ij})u$ converges although $\exp(\frac{1}{\tb}X^{ij})$ does not.
We also remark that $\CC(\underline{\Halg}^{\wedge_{\underline{\Leaf}}}|_{\underline{\Leaf}})^{tw}/(\param)$
still identifies naturally with $\CC(\underline{\Halg}_{(r+1)}^{\wedge_{\underline{\Leaf}}}|_{\underline{\Leaf}})$.
This is because $\frac{1}{\tb}\ad X^{ij}$ is zero modulo $(\param)$.

\begin{Thm}\label{Thm:2.0I}
For a suitable 1-cocycle $X^{ij}$ as above
there is a $\K^\times$-equivariant isomorphism $\theta: \Halg^{\wedge_{\Leaf}}|_{\Leaf}\rightarrow \CC(\underline{\Halg}^{\wedge_{\underline{\Leaf}}}|_{\underline{\Leaf}}^{tw})^\Xi$ of sheaves of
$\K[[\param^*]]\Gamma$-algebras on $\Leaf$ making the following diagram commutative
and such that $\theta(\p^{\wedge_\Leaf}|_{\Leaf})$ coincides with the twist of
$\CC(\underline{\p}^{\wedge_{\underline{\Leaf}}}|_{\underline{\Leaf}})^\Xi$.

\begin{picture}(80,30)
\put(2,2){$\Halg_{(r+1)}^{\wedge_{\Leaf}}|_{\Leaf}$}
\put(2,22){$\Halg^{\wedge_{\Leaf}}|_{\Leaf}$}
\put(52,2){$\CC(\underline{\Halg}_{(r+1)}^{\wedge_{\underline{\Leaf}}}|^{tw}_{\underline{\Leaf}})^\Xi$}
\put(52,22){$\CC(\underline{\Halg}^{\wedge_{\underline{\Leaf}}}|_{\underline{\Leaf}})^\Xi$}
\put(5,20){\vector(0,-1){12}}
\put(58,20){\vector(0,-1){12}}
\put(17,4){\vector(1,0){34}}
\put(15,23){\vector(1,0){36}}
\put(28,5){\tiny $\theta_0$}
\put(28,24){\tiny $\theta$}
\put(6,13){\tiny $\hat{\rho}$}
\put(59,13){\tiny $\CC(\underline{\hat{\rho}})$}
\end{picture}
\end{Thm}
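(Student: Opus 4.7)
The strategy is to build $\theta$ by lifting $\theta_0$ locally on a sufficiently fine $\K^\times$-stable affine open cover $\{W_i\}$ of $\Leaf$, and then to identify the discrepancies between the local lifts on overlaps with the cocycle $\exp(\tb^{-1}X^{ij})$. Since both sheaves in the target diagram are complete in the $(\param)$-adic topology and $\theta_0$ already provides the isomorphism modulo $(\param)$, the whole problem reduces to lifting $\theta_0$ order-by-order in $(\param)$ while controlling the obstruction at each step.

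\emph{Local lifts.} After refining $\{W_i\}$ we may assume that on each $\underline{\pi}^{-1}(W_i)\cap\underline{\Leaf}$ the Weyl-algebra factor in the decomposition (\ref{eq:2.12.2}) admits a Darboux-style trivialization, so that $\CC(\underline{\Halg}^{\wedge_{\underline{\Leaf}}}|_{\underline{\Leaf}})^\Xi|_{W_i}$ acquires a concrete local model as a completed tensor product of a trivial Weyl algebra sheaf with the ``constant'' transverse factor $(\underline{\Halg}^+)^{\wedge_0}$. One now constructs a $\K^\times$-equivariant $\K[[\param^*]]\Gamma$-algebra isomorphism
$$\theta_i\colon \Halg^{\wedge_\Leaf}|_{W_i}\longrightarrow \CC(\underline{\Halg}^{\wedge_{\underline{\Leaf}}}|_{\underline{\Leaf}})^\Xi|_{W_i}$$
lifting $\theta_0|_{W_i}$ by induction on $(\param)$-adic order. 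The obstruction to extending a lift from $(\param)^n$ to $(\param)^{n+1}$ is a class in $\HH^2$ of the classical model (a smash product $SV\#\Gamma$ microlocalized on $W_i$) with coefficients in an adjoint bimodule, and it vanishes by combining rigidity of the Weyl tensor factor (Fedosov/HKR) with formal rigidity of the transverse algebra $(\underline{\Halg}^+)^{\wedge_0}$ as a flat deformation of $SV_+\#\underline{\Gamma}$; at each stage $\K^\times$-averaging yields an equivariant choice because the relevant weights for the $\K^\times$-action are strictly positive.

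\emph{Cocycle, gluing, and ideals.} On each overlap $W_{ij}$ the automorphism $\theta_j\theta_i^{-1}$ of $\CC(\underline{\Halg}^{\wedge_{\underline{\Leaf}}}|_{\underline{\Leaf}})^\Xi|_{W_{ij}}$ restricts to the identity modulo $(\param)$ and is therefore $(\param)$-adically unipotent; by $\HH^1$-vanishing it is inner, and after rescaling by a central unit it can be written uniquely in the form $\exp(\tb^{-1}X^{ij})$ for some $X^{ij}\in \param\,\z^\tb\bigl(\CC(\underline{\Halg}^{\wedge_{\underline{\Leaf}}}|_{\underline{\Leaf}})(W_{ij})\bigr)$, which is $\Xi\times\Gamma$-invariant and of $\K^\times$-degree $2$ by equivariance of the construction and is antisymmetric in $i,j$ by construction. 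The tautological identity $(\theta_i\theta_k^{-1})(\theta_k\theta_j^{-1})(\theta_j\theta_i^{-1})=\id$ on $W_{ijk}$ is then exactly the Campbell--Hausdorff cocycle relation (\ref{eq:2.12.3}), so the data $\{\theta_i\}$ descends to a well-defined morphism $\theta\colon \Halg^{\wedge_\Leaf}|_{\Leaf}\to \CC(\underline{\Halg}^{\wedge_{\underline{\Leaf}}}|_{\underline{\Leaf}}^{tw})^\Xi$, which is an isomorphism by Nakayama applied to the $(\param)$-adically complete graded-coherent sheaves involved. The identity $\theta(\p^{\wedge_\Leaf}|_{\Leaf})=$ twist of $\CC(\underline{\p}^{\wedge_{\underline{\Leaf}}}|_{\underline{\Leaf}})^\Xi$ follows from the corresponding mod-$(\param)$ statement (automatic from $\theta_0$ and the definition of $\p$) combined with $(\param)$-adic closedness of both ideals, since the twist by an inner cocycle preserves any two-sided ideal.

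\emph{Main obstacle.} The hardest step is the construction of the local lifts $\theta_i$. In the rational Cherednik setting of \cite{BE} one has an essentially explicit formula, but for a general SRA no such formula is expected, so one must establish vanishing of the relevant $\Gamma$-equivariant Hochschild obstructions on the transverse slice. Via the decomposition (\ref{eq:2.12.2}) and the centralizer picture of Subsection \ref{SUBSECTION_centralizer}, this amounts to a formal rigidity / PBW-type statement for $(\underline{\Halg}^+)^{\wedge_0}$ as a flat deformation of $SV_+\#\underline{\Gamma}$, together with a careful argument that any surviving obstruction classes can be killed by shrinking the cover $\{W_i\}$; getting these cohomological statements exactly right, and compatibly with the $\K^\times$- and $\Gamma$-actions, is where the technical work concentrates.
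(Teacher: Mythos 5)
The central step of your plan---constructing the local lifts $\theta_i$ of $\theta_0$ by an order-by-order argument in $(\param)$ with vanishing obstructions in $\HH^2$---contains a genuine gap, and the mechanism you propose for closing it would fail. The graded equivariant Hochschild group $\HH^2(SV_+\#\underline{\Gamma})$ does not vanish: it is precisely the space in which the deformation parameters of the slice algebra live (one class for the symplectic form on $V_+$ and one for each $\underline{\Gamma}$-conjugacy class of symplectic reflections in $\underline{\Gamma}$), and $(\underline{\Halg}^+)^{\wedge_0}$ is the corresponding (nontrivial, essentially universal) deformation of $SV_+\#\underline{\Gamma}$, so the ``formal rigidity of the transverse algebra'' you invoke is false. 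For two given flat formal deformations, the obstruction to lifting an isomorphism from order $n$ to order $n+1$ is the difference of their deformation classes at that order; showing this difference vanishes---with the parameters matched correctly, i.e.\ $\cb(s)\mapsto \cb(s)$ for $s\in S\cap\underline{\Gamma}$, which is what makes $\theta$ a map of $\K[[\param^*]]\Gamma$-algebras---is essentially the content of the theorem, so appealing to rigidity here is circular. Nor can surviving obstruction classes be ``killed by shrinking the cover'': they live in the cohomology of the transverse formal slice at points of $\Leaf$, which is unaffected by refining an open cover along $\Leaf$. (A secondary issue: even granting local lifts, the exact cocycle identity (\ref{eq:2.12.3}) for the elements $X^{ij}$, as opposed to the automatic cocycle identity for the inner automorphisms $\exp(\frac{1}{\tb}\ad X^{ij})$, requires killing a central \v{C}ech $2$-cocycle with values in $\param\K[W_{ijk}][[\param^*]]$; since $\Leaf$ need not be affine this is not free, and the paper handles it over $\underline{\Leaf}$, where $H^2_{DR}(\underline{\Leaf})=0$.)

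For comparison, the paper avoids any $\HH^2$ obstruction analysis altogether. It works with the auxiliary sheaf $\Hsh^\wedge=(\Str_{\underline{\Leaf}}\otimes\Halg)^\wedge$ over $\underline{\Leaf}$, equipped with a flat connection, and proves two derivation-level statements (Proposition \ref{Prop:2.3}): every $R[[\param^*]]$-linear derivation is ``almost inner'' of the form $\frac{1}{\cb_0}\ad(a)$ with $a\in\Zsh^\wedge(U)$, and an Euler derivation exists. The Euler derivation yields the eigenspace decomposition of Proposition \ref{Prop:2.4}, which reduces the construction of the local isomorphism to matching generators in Euler degrees $0$ and $1$ and checking the single SRA relation (\ref{eq:2.5}) in degree $2$, where a congruence modulo higher degrees is automatically an equality (Proposition \ref{Prop:2.0.11}); the parameter matching comes out of the computation $e(\underline{\Gamma})[u,v]e(\underline{\Gamma})$ in (\ref{eq:2.6}). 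The gluing data are then corrected by the \v{C}ech--De Rham argument of Lemmas \ref{Lem:2.0.1},\ref{Lem:2.0.2} so as to be compatible with the connection, and Theorem \ref{Thm:2.0I} is deduced by passing to flat sections. If you want to pursue your more direct route, you must replace the rigidity claim by an actual proof that the two deformations of $\Halg_{(r+1)}^{\wedge_\Leaf}|_{W_i}$ have equal period/Kodaira--Spencer data at every order with the stated parameter identification; some substitute for the Euler-derivation trick (or an explicit formula as in \cite{BE}) seems unavoidable, and the paper indicates that no explicit formula is expected for general SRA.
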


Let us see why this theorem implies Theorem \ref{Thm:1}.  For $b\in \Leaf$ the ideal $\m_b$ of $\Halg$ gives rise
to the sheaf of ideals $\m_b^{\wedge_\Leaf}|_{\Leaf}\subset\Halg^{\wedge_\Leaf}|_{\Leaf}$. The completion of $\Halg^{\wedge_\Leaf}|_{\Leaf}$ at $b$ (i.e., the completion with respect to
$(\m_b)^{\wedge_\Leaf}|_{\Leaf}$-adic topology)
is nothing else but $\Halg^{\wedge_b}$. Now let a point $\underline{b}$ lie in $\pi'^{-1}(b)\cap\underline{\Leaf}$.
The completion of $\CC(\underline{\Halg}^{\wedge_{\underline{\Leaf}}}|_{\underline{\Leaf}})^\Xi$ at $b$
is the same as the completion of $\CC(\underline{\Halg}^{\wedge_{\underline{\Leaf}}}|_{\underline{\Leaf}})$
at $\underline{b}$. The decomposition (\ref{eq:2.12.2}) implies that this completion is
naturally isomorphic to $\CC(\underline{\Halg}^{\wedge_0})$. Theorem \ref{Thm:1} follows.

The proof of Theorem \ref{Thm:2.0I} will be given  in Subsection \ref{SUBSECTION ProofI}.
It is rather indirect. We derive Theorem \ref{Thm:2.0I} from Theorem \ref{Thm:2.0} below. The latter
is another enhanced version of Theorem \ref{Thm:1}.

\subsection{Sheafified versions of algebras, II}
\label{SUBSECTION_sheafified_algebras}
Set $\Hsh:=\Str_{\underline{\Leaf}}\otimes_\K \Halg$. This is a quasi-coherent sheaf of $\Str_{\underline{\Leaf}}$-algebras.  Define a global counterpart
$\m$ of the ideals $\m_b$ as follows.  Let $I$ denote the sheaf of
 ideals in $\Str_{\underline{\Leaf}}\otimes_\K \K[V^*]$ vanishing in all points $(b,\gamma b),b\in \underline{\Leaf},
\gamma\in \Gamma$. Let $\m$ denote the inverse image  of $I\#\Gamma$ in $\Hsh$. Finally, we can define the completion $\Hsh^{\wedge}:=\varprojlim_{n}\Hsh/\m^n$. We can view $\Hsh^{\wedge}$ as a pro-coherent sheaf on $\underline{\Leaf}$ whose fiber at $b\in \underline{\Leaf}$ is $\Halg^{\wedge_b}$. This claim follows from the exact sequence $\m_b\otimes_{\Str_{\underline{\Leaf}}}\Hsh\rightarrow \Hsh\rightarrow \Halg\rightarrow 0$ and the right exactness of the completion functor, compare with Lemma \ref{Lem:2.12.11}. If $U\subset \underline{\Leaf}$ is an open affine subvariety then the space of sections $\Hsh^{\wedge}(U)$ is the completion of $\K[U]\otimes_\K \Halg$ in the $\m(U)$-adic topology. Define the sheaves $\Hsh_{(l)},\m_{(l)}, \Hsh^\wedge_{(l)}$ in a similar way.

The $\Xi$-action on $\underline{\Leaf}$  gives rise to an action of $\Xi$ on $\Hsh$ by $\xi.r\otimes h=(\xi.r)\otimes h, \xi\in \Xi, r\in \Str_{\underline{\Leaf}}, h\in \Halg$. This action extends to an action of $\Xi$ on $\Hsh^\wedge$.

Also we need a $\K^\times$-action on $\Hsh$. Namely, recall the $\K^\times$-action on $\Halg$. Next, we have a $\K^\times$-action on $V^*$: $t.\alpha=t^{-1}\alpha,\alpha\in V^*,$ that gives rise to the
$\K^\times$-action on $\Str_{\underline{\Leaf}}$. We consider the diagonal $\K^\times$-action on $\Hsh$.
We note that the ideal $I\subset\Str_{\underline{\Leaf}}\otimes_\K \K[V^*]$ is stable with respect to
this action, so $\m_{(l)}$ is also stable.

Now we introduce a sheafified version of $\underline{\Halg}^{\wedge_0}$. Namely, set $\underline{\Hsh}:=\Str_{\underline{\Leaf}}\otimes_\K \underline{\Halg}$.
We have the $S(\param)$-linear action of $\widetilde{\Xi}$ on $\underline{\Halg}$ given by
$\widetilde{\xi}.\underline{\gamma}=
\widetilde{\xi}\underline{\gamma}\widetilde{\xi}^{-1},\widetilde{\xi}.v=\widetilde{\xi} v$.
So we get an action of $\widetilde{\Xi}$ on $\underline{\Hsh}$, $\widetilde{\xi}.(r\otimes \underline{h})=(\widetilde{\xi}.r)\otimes (\widetilde{\xi}.\underline{h})$. Also analogously to
the above, we get a $\K^\times$-action on $\underline{\Hsh}$.

Define the completion $\underline{\Hsh}^\wedge$ of $\underline{\Hsh}$: consider the ideal $\underline{I}$ of the zero section $\underline{\Leaf}\rightarrow \underline{\Leaf}\times V^*$, construct the ideal $\underline{\m}\subset
\underline{\Hsh}$ from $\underline{I}$ by analogy with $\m\subset \Hsh$, and set $\underline{\Hsh}^\wedge:=\varprojlim_m \underline{\Hsh}/\underline{\m}^m$. We have a natural isomorphism $\underline{\Hsh}^\wedge=\Str_{\underline{\Leaf}}\widehat{\otimes}\underline{\Halg}^{\wedge_0}$.  The actions of $\widetilde{\Xi},\K^\times$ on $\underline{\Hsh}$ extend to $\underline{\Hsh}^\wedge$. As explained in Subsection \ref{SUBSECTION_centralizer}, we get an action of $\Xi$ on $\CC(\underline{\Hsh}^\wedge)$.

The sheaves of algebras we introduced have flat connections.
We have a linear map $\alpha\mapsto L_{\alpha,base}$ from $V_0^{*}$ to the space of derivations
of $\Str_{\underline{\Leaf}}$, mapping $\alpha$ to the Lie derivative $-\partial_\alpha$ corresponding to $\alpha$.
We can extend $L_{\alpha,base}$ to the derivation $L_{\alpha}$ of $\Hsh$ by $L_{\alpha}=L_{\alpha,base}\otimes 1$. Then we can extend $L_{\alpha,base}$ to the completion $\Hsh^\wedge$ uniquely.  Similarly,
we get derivations $\underline{L}_{\alpha}$ of $\underline{\Hsh},\underline{\Hsh}^\wedge,\CC(\underline{\Hsh}^\wedge)$.

\begin{Rem}\label{Rem:2.13.1}
 The algebras $\Hsh_{(l)},\Hsh^\wedge_{(l)}$ have the flat connections
$\alpha\mapsto L_{\alpha,(l)}$ defined analogously to $L_{\alpha}$. We have $\Hsh^\wedge_{(l)}=\Hsh^\wedge/(\param^{(l)})$, compare with Remark \ref{Rem:2.12.12}.
A similar remark applies to $\underline{\Hsh}_{(l)}$, etc.
\end{Rem}

\subsection{Isomorphism of completions theorem, II}\label{SUBSECTION_completion_II}
We are going to establish an isomorphism between $\Hsh^\wedge$ and  a certain twist of
$\CC(\underline{\Hsh}^\wedge)$. Again, we start with $\Hsh^\wedge_{(r+1)}, \CC(\underline{\Hsh}^\wedge_{(r+1)})$.
We will see that these two sheaves are isomorphic.
Their isomorphism  is  a baby version of the  isomorphism constructed in Theorem 3.2 from \cite{BE}.

Let $\beta$ denote the tautological
section of $V_0^{*}\otimes \Str_{\underline{\Leaf}}$ given by $b\mapsto b$. Note that
$\beta$ is $\K^\times$-invariant.

\begin{Prop}\label{Prop:2.1}
A  map $\Theta_0$ defined by
\begin{equation}\label{eq:2.1}
\begin{split}
&[\Theta_0(\gamma)f](\gamma')=f(\gamma'\gamma),\\
&[\Theta_0(v)f](\gamma')=(\gamma'(v)+\langle \gamma'(v),\beta\rangle)f(\gamma'),\\
&\gamma,\gamma'\in \Gamma, v\in V, f\in \Fu_{\underline{\Gamma}}(\Gamma,\underline{\Hsh}^\wedge_{(r+1)}),
\end{split}
\end{equation}
extends to a unique  homomorphism $\Hsh_{(r+1)}\rightarrow \CC(\underline{\Hsh}_{(r+1)})$
of sheaves of $\Str_{\underline{\Leaf}}$-algebras. This homomorphism
is $\Xi\times\K^\times$- equivariant. Moreover, its natural extension to the completions
$\Hsh^\wedge_{(r+1)}\rightarrow \CC(\underline{\Hsh}^\wedge_{(r+1)})$ is an isomorphism.
\end{Prop}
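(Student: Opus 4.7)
The argument has three parts: well-definedness of $\Theta_0$, its equivariance, and the isomorphism on completions. The crucial preliminary observation is that $\beta$ is $\underline{\Gamma}$-, $\widetilde{\Xi}$-, and $\K^\times$-invariant. The first two invariances come from $V_0^* = (V^*)^{\underline{\Gamma}}$ being $\widetilde{\Xi}$-stable and the $\underline{\Gamma}$-action on $\underline{\Leaf} \subset V_0^*$ being trivial, combined with the fact that $\beta$ is essentially the identity map of $V_0^*$; the third comes from the weights $-1$ on $V_0^* \subset V^*$ and $+1$ on the dual coordinate in $\Str_{\underline{\Leaf}}$ cancelling.

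\textit{Well-definedness and equivariance.} Granting the $\underline{\Gamma}$-invariance of $\beta$, a direct calculation confirms that $\Theta_0(v)f$ and $\Theta_0(\gamma)f$ are again $\underline{\Gamma}$-equivariant, so lie in $\Fu_{\underline{\Gamma}}(\Gamma, \underline{\Hsh}_{(r+1)})$. Next I would verify the relations of $\Halg_{(r+1)}=SV\#\Gamma$: the commutator $[\Theta_0(v),\Theta_0(w)]$ vanishes because the coefficients lie in the commutative algebra $SV \otimes \Str_{\underline{\Leaf}} \subset \underline{\Hsh}_{(r+1)}$ and are symmetric in $v,w$, and $\Theta_0(\gamma)\Theta_0(v)\Theta_0(\gamma)^{-1} = \Theta_0(\gamma(v))$ is immediate from the formulas. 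Equivariance under $\widetilde{\Xi}$ (descending to $\Xi$ since the inner $\underline{\Gamma}$-action is absorbed into $\CC$) follows from the $\widetilde{\Xi}$-invariance of $\beta$ via formula (\ref{eq:1.2'}); $\K^\times$-equivariance uses the weight-$0$ property of $\beta$ together with the fact that $v \in V \subset \Hsh_{(r+1)}$ and its image both have weight $+1$.

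\textit{Isomorphism on completions.} Both $\Hsh^\wedge_{(r+1)}$ and $\CC(\underline{\Hsh}^\wedge_{(r+1)})$ are pro-coherent sheaves on $\underline{\Leaf}$, complete with respect to the $\m_{(r+1)}$- and $\CC(\underline{\m}_{(r+1)})$-adic filtrations respectively, and $\Theta_0$ extends continuously. It then suffices to verify that $\Theta_0$ induces an isomorphism fiber-by-fiber. At $b' \in \underline{\Leaf}$, the LHS becomes the completion $\Halg_{(r+1)}^{\wedge_{b'}}$ of $SV\#\Gamma$ at the ideal of the orbit $\Gamma b'$, and the RHS becomes $\CC(\K[[V]]\#\underline{\Gamma})$. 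Using the slice-theorem decomposition $\K[V^*]^{\wedge \Gamma b'} = \prod_{\gamma b'\in\Gamma b'} \K[V^*]^{\wedge \gamma b'}$, identifying each factor with $\K[[V]]$ via the shift $v \mapsto v + \langle v, \gamma b'\rangle$, and applying Lemma~\ref{Lem:new_iso}, one sees that the fiber map $\Theta_0^{b'}$ is the expected isomorphism, with the shift terms $\langle \gamma'(v), b'\rangle$ precisely encoding the orbit-dependent shifts at each point of $\Gamma b'$.

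\textit{Main obstacle.} The fiber computation in the last step. The difficulty is bookkeeping: matching the orbit-dependent shifts in the formula for $\Theta_0^{b'}$ to the shift identifications at each point of $\Gamma b'$, while respecting the $\underline{\Gamma}$-equivariance and the $\Gamma$-permutation structure of the orbit. This amounts to a careful adaptation of the proof of Lemma~\ref{Lem:2.1.12} to the completed setting.
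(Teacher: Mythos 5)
Your proof follows the same outline as the paper's: verify the algebra relations of $SV\#\Gamma$ (the paper records only the conjugation relation $\Theta_0(\gamma)\Theta_0(v)=\Theta_0(\gamma.v)\Theta_0(\gamma)$ but the commutativity check is implicit in the same way you give it), check $\Xi\times\K^\times$-equivariance directly using the invariance properties of $\beta$, and prove that the extension to completions is an isomorphism by passing to fibers and adapting the slice-theorem argument of Lemma~\ref{Lem:2.1.12}. The paper's proof is only three sentences and you have supplied the substance it leaves implicit; the strategy is the same.
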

\begin{proof}
The claim about the extension to a homomorphism follows from the relation $\Theta_0(\gamma)\Theta_0(v)=\Theta_0(\gamma.v)\Theta_0(\gamma)$. The equivariance is checked
directly. To prove that the extension to the completions is an isomorphism it is enough
to show that it is an isomorphism fiberwise. This can be proved analogously to
Lemma \ref{Lem:2.1.12}.  
\end{proof}

In the sequel we will need a relation between $\underline{L}_{\alpha,(r+1)}$ and
$\Theta_0\circ L_{\alpha,(r+1)}\circ \Theta_0^{-1}$. For $\alpha\in V_0$ let
$\check{\alpha}=\omega(\alpha,\cdot)$. This is an element of $V_0^*$. By the definition of the Poisson
bracket on $\Str_{\underline{\Leaf}}$ we have $\{\check{\alpha},f\}=-\underline{L}_{\alpha,base}f$ for a
section $f$ of $\Str_{\underline{\Leaf}}$.  Set $\underline{\hat{L}}_{\alpha,(r+1)}:=\underline{L}_{\alpha,(r+1)}+ \{\check{\alpha},\cdot\}$. Extend $\underline{\hat{L}}_{\alpha,(r+1)}$ to $\underline{\Hsh}_{(r+1)},
\underline{\Hsh}^\wedge_{(r+1)},\CC(\underline{\Hsh}^\wedge_{(r+1)})$ in a natural way.

\begin{Lem}\label{Lem:2.40}
We have $\Theta_0\circ L_{\alpha,(r+1)}\circ \Theta_0^{-1}=\underline{\hat{L}}_{\alpha,(r+1)}$ as linear operators on $\CC(\underline{\Hsh}^\wedge_{(r+1)})$.
\end{Lem}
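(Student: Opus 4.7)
The plan is to verify the identity on a set of topological generators of $\CC(\underline{\Hsh}^\wedge_{(r+1)})$. Both sides are continuous $\K$-linear derivations of this sheaf of algebras: the right-hand side by construction, and the left-hand side since $L_{\alpha,(r+1)}$ is a continuous derivation of $\Hsh^\wedge_{(r+1)}$ and $\Theta_0$ is an isomorphism of topological algebras by Proposition \ref{Prop:2.1}. Two continuous derivations that agree on a topologically generating subset are equal, so it suffices to check equality on the elements $\Theta_0(v)$, $\Theta_0(\gamma)$ for $v\in V$, $\gamma\in \Gamma$, together with sections of $\Str_{\underline{\Leaf}}$.

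The cases $\Theta_0(\gamma)$ and $r\in \Str_{\underline{\Leaf}}$ are essentially formal. For $\gamma \in \Gamma$ one has $L_{\alpha,(r+1)}(\gamma)=0$ in $\Hsh_{(r+1)}$, since $L_{\alpha,(r+1)}=L_{\alpha,base}\otimes 1$ kills $1 \otimes \K\Gamma$; meanwhile $\Theta_0(\gamma)$ is a coset-translation operator whose matrix coefficients involve no sections of $\Str_{\underline{\Leaf}}$, so both $\underline{L}_{\alpha,(r+1)}$ and $\{\check{\alpha},\cdot\}$ annihilate it. For $r \in \Str_{\underline{\Leaf}}$ the comparison descends to a computation entirely on the base sheaf, and the required agreement is built into the defining relation $\{\check{\alpha}, f\} = -\underline{L}_{\alpha,base} f$.

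The main step is the check on $\Theta_0(v)$ for $v \in V$. By Proposition \ref{Prop:2.1}, $\Theta_0(v)$ acts on $f\in \Fu_{\underline{\Gamma}}(\Gamma, \underline{\Hsh}^\wedge_{(r+1)})$ at the coset $\gamma'$ by multiplication by $\gamma'(v) + \langle \gamma'(v),\beta\rangle$. The left-hand side applied to $\Theta_0(v)$ equals $\Theta_0(L_{\alpha,(r+1)} v) = 0$ since $v \in 1\otimes V$ is killed by $L_{\alpha,(r+1)}$. For the right-hand side, $\underline{L}_{\alpha,(r+1)}$ differentiates only the $\Str_{\underline{\Leaf}}$-piece $\langle \gamma'(v), \beta\rangle$, producing a term that, by the tautological nature of $\beta$ (the section $b \mapsto b$ of $V_0^*\otimes \Str_{\underline{\Leaf}}$), reduces to the pairing $\omega(\alpha, \gamma'(v))$; here one uses $V_0 \perp V_+$ with respect to $\omega$ to replace $\gamma'(v)$ by its $V_0$-component. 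The Poisson contribution $\{\check{\alpha},\cdot\}$ yields an equal and opposite quantity, so the two cancel and $\underline{\hat{L}}_{\alpha,(r+1)}(\Theta_0(v)) = 0$.

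The principal obstacle is this cancellation on $\Theta_0(v)$: it requires careful use of the tautological form of $\beta$, of the symplectic identification $V_0 \cong V_0^*$ via $\omega$ implicit in the notation $\check{\alpha}$, and of the orthogonality $V_0 \perp V_+$ inside $V$. Once the identity is verified on these generators, the derivation property and the continuity of both sides propagate it to all of $\CC(\underline{\Hsh}^\wedge_{(r+1)})$.
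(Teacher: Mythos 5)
The overall architecture matches the paper exactly: reduce the claim to a check that two (continuous) derivations of $\CC(\underline{\Hsh}^\wedge_{(r+1)})$ agree on $\Str_{\underline{\Leaf}}$ and on the topological generators $\Theta_0(\gamma)$ and $\Theta_0(v)$, which the paper leaves as ``follows directly from (\ref{eq:2.1}).''  Your treatment of $\Theta_0(\gamma)$ and the cancellation on $\Theta_0(v)$ -- the $\underline{L}_\alpha$-derivative of the tautological $\Str_{\underline{\Leaf}}$-piece $\langle\gamma'(v),\beta\rangle$ against the Poisson bracket of $\check{\alpha}$ with the fiber term $\gamma'(v)$, using $\omega(V_0,V_+)=0$ -- is the right computation and supplies detail the paper omits.

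The step on $\Str_{\underline{\Leaf}}$ is, however, misstated in a way that would sink the argument if taken literally.  You justify the agreement on $\Str_{\underline{\Leaf}}$ by the relation $\{\check{\alpha},f\}=-\underline{L}_{\alpha,base}f$.  But that relation describes the Poisson bracket \emph{of $\Str_{\underline{\Leaf}}$ with itself} (both $\check{\alpha}$ and $f$ viewed as functions on $\underline{\Leaf}$).  The bracket $\{\check{\alpha},\cdot\}$ entering the definition of $\underline{\hat{L}}_{\alpha,(r+1)}$ is a different operator: it is the $\Str_{\underline{\Leaf}}$-bilinear bracket of $\underline{\Zsh}_{(r+1)}$ with $\underline{\Hsh}_{(r+1)}$, with $\check{\alpha}$ placed in the \emph{fiber} $V_0\subset (SV)^{\underline{\Gamma}}\subset\underline{\Halg}_{(r+1)}$ (equivalently, it is the reduction mod $\param$ of $\tfrac1{\tb}\ad\check{\alpha}$ with $\check{\alpha}\in\W_{\tb}\subset\underline{\Halg}$, as in Theorem~\ref{Thm:2.0}(4)).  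By $\Str_{\underline{\Leaf}}$-bilinearity this bracket annihilates the central copy of $\Str_{\underline{\Leaf}}$, so $\underline{\hat{L}}_{\alpha,(r+1)}$ restricted to $\Str_{\underline{\Leaf}}$ is $\underline{L}_{\alpha,(r+1)}=L_{\alpha,base}$ -- and that is exactly what $\Theta_0\circ L_{\alpha,(r+1)}\circ\Theta_0^{-1}$ gives, since $\Theta_0$ is $\Str_{\underline{\Leaf}}$-linear.  If one instead applies your cited formula to the bracket in $\underline{\hat{L}}_{\alpha,(r+1)}$, one gets $\underline{\hat{L}}_{\alpha,(r+1)}|_{\Str_{\underline{\Leaf}}}=L_{\alpha,base}-L_{\alpha,base}=0$, whereas the left-hand side gives $L_{\alpha,base}\neq0$; worse, combined with the vanishing you establish on $\Theta_0(\gamma)$ and $\Theta_0(v)$, this would force $\Theta_0\circ L_{\alpha,(r+1)}\circ\Theta_0^{-1}=0$ and hence $L_{\alpha,(r+1)}=0$, which is absurd.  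Moreover your own $\Theta_0(v)$ computation already implicitly uses the fiberwise bracket (pairing $\check{\alpha}$ with $\gamma'(v)\in SV$), so the two steps as written are not consistent with each other.  Replace the $\Str_{\underline{\Leaf}}$ justification by the observation that the bracket kills the central $\Str_{\underline{\Leaf}}$, so both derivations reduce to $L_{\alpha,base}$ there, and the rest of the argument stands.
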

\begin{proof}
Note that both $\Theta_0\circ L_{\alpha,(r+1)}\circ \Theta_0^{-1}$ and $\underline{\hat{L}}_{\alpha,(r+1)}$ are  derivations
of $\CC(\underline{\Hsh}^\wedge_{(r+1)})$ that coincide on $\Str_{\underline{\Leaf}}$.
So it is enough to verify that the two derivations
coincide on some topological generators of the sheaf $\CC(\underline{\Hsh}^\wedge_{(r+1)})$ of
$\Str_{\underline{\Leaf}}$-algebras, for example, on
$\Theta_0(\gamma)$ and $\Theta_0(v)$. This follows directly from (\ref{eq:2.1}).
\end{proof}

Now we are ready to state the second version of our isomorphism of completions result, Theorem \ref{Thm:2.0}.
This theorem says that the flat (=equipped with flat connections) sheaves of algebras $\Hsh^\wedge, \CC(\underline{\Hsh}^\wedge)$ are $\Xi\times\Gamma\times
\K^\times$-equivariantly isomorphic up to a twist and, moreover, modulo $(\param)$ the isomorphism coincides
with $\Theta_0$. Pick an open  covering $U_i$ of $\underline{\Leaf}$ consisting
of $\K^\times\times \Xi$-stable affine subsets.

\begin{Thm}\label{Thm:2.0}
There are isomorphisms $\Theta^{i}: \Hsh^\wedge(U_i)\rightarrow \CC(\underline{\Hsh}^\wedge)(U_i)$
and elements $\overline{X}^{ij}\in \param\z^\tb\left(\CC(\underline{\Hsh}^\wedge(U_{ij}))\right)$, where $U_{ij}:=U_i\cap U_j$, with the following properties:
\begin{enumerate}
\item Modulo $\param$ the isomorphism $\Theta^i$ coincides with $\Theta_0$ for all $i$.
\item $\overline{X}^{ij}$ is $\Xi\times\Gamma$-invariant and has degree $2$ with respect to $\K^\times$.
\item $\Theta^i\circ (\Theta^j)^{-1}=\exp(\frac{1}{\tb}\ad \overline{X}^{ij})$.
\item $\Theta^i\circ L_\alpha\circ (\Theta^i)^{-1}= \underline{\hat{L}}_\alpha$, where $\underline{\hat{L}}_\alpha:=\underline{L}_\alpha+\frac{1}{\tb}\ad\check{\alpha}$.
\item $\overline{X}^{ij}=-\overline{X}^{ji}$.
\item $\exp(\frac{1}{\tb}\overline{X}^{ij})\exp(\frac{1}{\tb}\overline{X}^{jk})\exp(\frac{1}{\tb}\overline{X}^{ki})=1$
(see the discussion before Theorem \ref{Thm:2.0I}).
\item $\underline{\hat{L}}_\alpha \overline{X}^{ij}=0$.
\end{enumerate}
\end{Thm}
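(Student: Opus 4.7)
The plan is to prove Theorem \ref{Thm:2.0} by descending induction on the filtration parameter $l$, constructing data $(\Theta^i_{(l)}, \overline{X}^{ij}_{(l)})$ on $\Hsh^\wedge_{(l)}$ satisfying the seven listed properties modulo $(\param^{(l)})$, and then passing to the inverse limit via Remark \ref{Rem:2.13.1}. The base case is $l = r+1$, which is handled by Proposition \ref{Prop:2.1}: set $\Theta^i_{(r+1)} := \Theta_0|_{U_i}$ for every $i$ and $\overline{X}^{ij}_{(r+1)} := 0$. Conditions (1), (2), (3), (5), (6), (7) hold trivially, while (4) becomes Lemma \ref{Lem:2.40} after observing that modulo $\param$ (which contains $\tb$) the operator $\frac{1}{\tb}\ad\check{\alpha}$ specializes to the Poisson bracket $\{\check{\alpha},\cdot\}$, because on the Weyl-algebra factor $\W_{\tb}|_{\underline{\Leaf}}$ the commutator equals $\tb$ times the Moyal bracket.

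For the inductive step from $l+1$ down to $l$, the kernel of $\Hsh^\wedge_{(l)} \twoheadrightarrow \Hsh^\wedge_{(l+1)}$ is, by the flatness in Proposition \ref{Prop:1.1}, a principal deformation along $\cb_l$, and similarly for the centralizer side. On each affine $U_i$, I would lift $\Theta^i_{(l+1)}$ order by order in $\cb_l$ to an $\Str_{\underline{\Leaf}}$-linear map $\Theta^i_{(l)}$. The failure of such a lift to be a ring homomorphism at each successive order is a Hochschild $2$-cochain of $\CC(\underline{\Hsh}^\wedge_{(l+1)})(U_i)$ valued in a $\cb_l$-multiple; the relevant $\HH^2$ in the topological, equivariant, $\K^\times$-graded setting vanishes, because by the tensor decomposition (\ref{eq:2.12.2}) the sheaf is a Moyal--Weyl quantization of the symplectic leaf $\underline{\Leaf}$ and $U_i$ is affine. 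Averaging the correcting $1$-cochain over $\Xi \times \Gamma$ and projecting onto the correct $\K^\times$-weight component yields an equivariant lift.

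On the overlap $U_{ij}$, the composition $\Theta^i_{(l)} \circ (\Theta^j_{(l)})^{-1} \circ \exp(-\frac{1}{\tb}\ad \overline{X}^{ij}_{(l+1)})$ is an automorphism of $\CC(\underline{\Hsh}^\wedge_{(l)})(U_{ij})$ that is the identity modulo $\cb_l$. By the analogous vanishing of $\HH^1$ (closed Poisson $1$-cochains on an affine symplectic piece are Hamiltonian), every such automorphism is inner of the form $\exp(\frac{1}{\tb}\ad Y^{ij})$ with $Y^{ij}$ in the relevant $\cb_l$-multiple of $\param \z^\tb$, and we set $\overline{X}^{ij}_{(l)}$ to be the Campbell--Hausdorff combination of $Y^{ij}$ with $\overline{X}^{ij}_{(l+1)}$. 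Conditions (5) and (6) follow automatically from the $ij/jk/ki$ identity of compositions; condition (4) is imposed by a further inner adjustment, using that the defect $\Theta^i_{(l)}L_\alpha(\Theta^i_{(l)})^{-1} - \underline{\hat{L}}_\alpha$ is itself an inner derivation $\frac{1}{\tb}\ad Z^i_\alpha$ which is absorbed by replacing $\Theta^i_{(l)}$ by $\exp(\frac{1}{\tb}\ad W^i)\circ \Theta^i_{(l)}$ for a suitable primitive $W^i$ of $Z^i_\alpha$; and (7) follows by applying $\underline{\hat{L}}_\alpha$ to (3) and invoking (4) on both $U_i$ and $U_j$. The main obstacle will be the simultaneous bookkeeping of all equivariances, the centrality constraint $\overline{X}^{ij}\in\param\z^\tb$, and the flatness (7) at each inductive step; the crucial ingredient is that the $\Xi\times\Gamma\times\K^\times$-equivariant, $\underline{\hat{L}}$-closed, weight-$2$ first Hochschild cohomology of the quantization on each $U_{ij}$ vanishes, which ultimately rests on the affineness of the $U_i$ and the explicit Moyal--Weyl description of $\W_\tb|_{\underline{\Leaf}}$.
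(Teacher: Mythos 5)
Your proposal follows a genuinely different strategy from the paper. You propose to build the local isomorphisms $\Theta^i$ by a descending induction on $l$, lifting order by order in $\cb_l$, with the obstruction to each lift living in Hochschild $\HH^2$. The paper does not do this. Instead, Proposition \ref{Prop:2.0.11} constructs $\Theta^U$ ``in one shot'': it first proves existence of an Euler derivation $E$ of $\Hsh^\wedge(U)$ (Proposition \ref{Prop:2.3}, which is where the $\param$-filtration induction actually lives, and which only needs an $\HH^1$-type statement, namely (A$_l$) and (C$_l$)), then uses $E$ to split $\Hsh^\wedge(U)=\prod_{i\geqslant 0}\Hsh^i(U)$ with $\hat{\rho}\colon\Hsh^i(U)\to\Hsh^i_{(r+1)}(U)$ an isomorphism in degrees $0,1$, and finally defines $\iota\colon V\hookrightarrow e(\underline{\Gamma})\Hsh^1(U)e(\underline{\Gamma})$ and checks the SRA relation (\ref{eq:2.5}) by a degree argument: $[\iota(u),\iota(v)]$ sits in degree $2$, its degree-$\leqslant 2$ part is computed from (\ref{eq:2.4})--(\ref{eq:2.7}), and there is nothing else it could be. This bypasses any genuine $\HH^2$ computation.

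The central gap in your proposal is the claimed vanishing of the topological, equivariant $\HH^2$ of $\CC(\underline{\Hsh}^\wedge_{(l+1)})(U_i)$. You justify this by saying the sheaf ``is a Moyal--Weyl quantization of $\underline{\Leaf}$''. But the decomposition (\ref{eq:2.12.2}) exhibits $\underline{\Halg}^{\wedge_{\underline{\Leaf}}}|_{\underline{\Leaf}}$ as $\W_{\tb}|_{\underline{\Leaf}}\widehat{\otimes}_{\K[[\tb]]}(\underline{\Halg}^+)^{\wedge_0}$, and the second tensor factor is (a matrix algebra over) a completed symplectic reflection algebra for $\underline{\Gamma}$ acting on $V_+$. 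The degree-$2$ Hochschild cohomology of $SV_+\#\underline{\Gamma}$ (and of its completion) is precisely the space parametrizing the SRA deformations and is nonzero whenever $\underline{\Gamma}$ contains symplectic reflections. So the space where your obstruction lives is nonzero, and what needs to be shown is that the \emph{specific} class --- the difference of the two first-order deformation classes along $\cb_l$, transported by $\Theta^i_{(l+1)}$ --- vanishes. That matching of deformation classes is essentially the whole content of the local isomorphism, and asserting a blanket $\HH^2=0$ does not supply it; the paper supplies it through the Euler derivation and the degree computation in Proposition \ref{Prop:2.0.11}.

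Two secondary issues. First, your claim that ``(7) follows by applying $\underline{\hat{L}}_\alpha$ to (3) and invoking (4)'' only yields that $\underline{\hat{L}}_\alpha\overline{X}^{ij}$ is central (it commutes with everything via $\ad$), not that it is zero; the paper kills it by the \v{C}ech--De Rham cohomology vanishing $H^2_{DR}(\underline{\Leaf})=0$ in Lemma \ref{Lem:2.0.2} together with the normalization in Steps 2--3 of the final proof. Second, the existence of the ``suitable primitive $W^i$'' absorbing the defect $Z^i_\alpha$ is itself a cohomological statement (the $Y^i_\alpha$ form a $1$-cocycle in the flat-connection complex of Step 2 of the paper's proof), not a formal manipulation, and requires the vanishing result established there. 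Both of these would need to be spelled out to make your induction close.
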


%
%

\subsection{Spherical subalgebras and the centers}\label{SUBSECTION_spherical}
Let $e=\frac{1}{|\Gamma|}\sum_{\gamma\in \Gamma}\gamma\in \K\Gamma$
be the trivial idempotent. By definition, the {\it spherical
subalgebras} $\Aalg,\Aalg_{(l)}$ in $\Halg_{(l)}$ are $e\Halg e, e\Halg_{(l)} e$. Clearly, $\Aalg_{(l)}=\Aalg/(\param^{(l)})$. Similarly, we can define the spherical subalgebra $\underline{\Aalg}_{(l)}\subset \underline{\Halg}_{(l)}$ (using the trivial
idempotent $\underline{e}$ for $\underline{\Gamma}$).

Let $\Zalg_{(l)},l=1,\ldots,r+1,$ (resp., $\underline{\Zalg}_{(l)}$) denote the
center of $\Halg_{(l)}$ (resp., $\underline{\Halg}_{(l)}$). In particular,
$\Zalg_{(r+1)}=(SV)^\Gamma\subset SV\#\Gamma=\Halg_{(r+1)}$. Set $\Zalg(=\Zalg_{(0)}):=\z^\tb(\Halg)$.
Define the algebra $\underline{\Zalg}_{(0)}$ similarly.

Also we can introduce the sheafified versions $\Ash_{(l)}:=e\Hsh_{(l)}e=\Str_{\underline{\Leaf}}\otimes \Aalg_{(l)}, \Zsh_{(l)}:=\Str_{\underline{\Leaf}}\otimes \Zalg_{(l)}, \underline{\Ash}_{(l)},\underline{\Zsh}_{(l)}$
of the algebras under consideration.

\begin{Prop}\label{Prop:1.31}
\begin{enumerate}
\item For $l>0$ the map $z\mapsto ez$ induces  isomorphism  $\Zalg_{(l)}\rightarrow\Aalg_{(l)},
\Zsh_{(l)}$ $\rightarrow \Ash_{(l)}$ (the so called Satake isomorphisms).
Similarly, the map $z\mapsto \underline{e}z$ induces  isomorphisms $\underline{\Zalg}_{(l)}\rightarrow
\underline{\Aalg}_{(l)}, \underline{\Zsh}_{(l)}\rightarrow \underline{\Ash}_{(l)}$.
\item For $l=0,1,\ldots,r$ one has $\rho_l(\Zalg_{(l)})=\Zalg_{(l+1)}, \rho_l(\Zsh_{(l)})=\Zsh_{(l+1)}, \underline{\rho}_l(\underline{\Zalg}_{(l)})=\underline{\Zalg}_{(l+1)},
    \underline{\rho}_l(\underline{\Zsh}_{(l)})=\underline{\Zsh}_{(l+1)}$.
\end{enumerate}
\end{Prop}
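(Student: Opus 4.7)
The plan has two parts. For (1) I would use a filtered/associated-graded reduction, and for (2) I would bootstrap from (1) together with one direct argument at $l=0$.

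For part (1), I first check that $\Psi\colon z\mapsto ez$ defines a unital ring homomorphism $\Zalg_{(l)}\to\Aalg_{(l)}$. Centrality of $z$ gives $ez=ze$, and combined with $\gamma e=e$ for every $\gamma\in\Gamma$ this forces $ez=eze\in\Aalg_{(l)}$; multiplicativity follows from $(ez_1)(ez_2)=ez_1ez_2=e^2z_1z_2=ez_1z_2$ after moving $e$ past the central element $z_1$. Next, I equip $\Halg_{(l)}$ with the PBW filtration ($V$ in degree $1$, $\Gamma$ and $\param^*$ in degree $0$). By Proposition~\ref{Prop:1.1} one has $\gr\Halg_{(l)}=S(\param_{(l)})\otimes_\K SV\#\Gamma$. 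Since $\tb$ is killed in $\Halg_{(l)}$ for $l\geq 1$, the Etingof--Ginzburg theorem (\cite{EG}, Theorem~3.3) identifies $\gr\Zalg_{(l)}$ with $S(\param_{(l)})\otimes (SV)^\Gamma$, and consequently $\gr\Aalg_{(l)}=S(\param_{(l)})\otimes(SV)^\Gamma\cdot e$. The induced map $\gr\Psi\colon f\mapsto fe$ is then easily seen to be a bijection from $(SV)^\Gamma$ onto $e(SV\#\Gamma)e=(SV)^\Gamma e$ via a short direct computation using $e=|\Gamma|^{-1}\sum_\gamma\gamma$ and $\gamma e=e$. Thus $\gr\Psi$ is an isomorphism, hence so is $\Psi$. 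The sheaf statement $\Zsh_{(l)}\xrightarrow{\sim}\Ash_{(l)}$ follows by tensoring with $\Str_{\underline{\Leaf}}$, and the underlined versions are proved identically with $\Gamma$ replaced by $\underline{\Gamma}$ and $e$ by $\underline{e}$.

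For part (2), the case $l=0$ is almost immediate. The projection $\rho_0\colon\Halg\twoheadrightarrow\Halg_{(1)}=\Halg/(\tb)$ is surjective as a ring map, so any $\bar z\in\Zalg_{(1)}$ lifts to some $z\in\Halg$. For this $z$ and any $h\in\Halg$, centrality of $\bar z$ in $\Halg_{(1)}$ gives $[\bar z,\bar h]=0$, whence $[z,h]\in(\tb)$. This says precisely that $z\in\z^\tb(\Halg)=\Zalg_{(0)}$, so $\rho_0\colon\Zalg_{(0)}\to\Zalg_{(1)}$ is surjective. For $l\geq 1$, I bootstrap from part (1): the Satake isomorphisms at levels $l$ and $l+1$ intertwine the two occurrences of $\rho_l$, because both are realized as multiplication by the same idempotent $e$ and $\rho_l(e)=e$. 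So surjectivity of $\rho_l\colon\Zalg_{(l)}\to\Zalg_{(l+1)}$ becomes equivalent to surjectivity of $\rho_l\colon\Aalg_{(l)}\to\Aalg_{(l+1)}$, and the latter is immediate: any $e\bar h e\in\Aalg_{(l+1)}$ is the image of $ehe\in\Aalg_{(l)}$ for any lift $h$ of $\bar h$ through the surjection $\rho_l\colon\Halg_{(l)}\twoheadrightarrow\Halg_{(l+1)}$. The sheafified statements $\rho_l(\Zsh_{(l)})=\Zsh_{(l+1)}$ and the underlined analogues follow by exactly the same arguments applied locally over $\underline{\Leaf}$.

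The only nontrivial input is the identification $\gr\Zalg_{(l)}=S(\param_{(l)})\otimes(SV)^\Gamma$ in the family sense for $1\leq l\leq r$, not just at the fully specialized extreme $l=r+1$. This is where I would look most carefully at \cite{EG}: one needs that as $c$ varies inside $\param_{(l)}^*$ the Etingof--Ginzburg centers $\Zrm_{0,c}$ assemble into a flat family with fibre $(SV)^\Gamma$ at $c=0$. Once this PBW-type statement is in hand, the rest of the argument is formal — a matter of checking that idempotents, quotients, and tensor products behave as expected.
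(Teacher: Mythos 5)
Your part (2) and your reduction steps are fine, but part (1) has a genuine gap: the whole difficulty of assertion (1) is the \emph{surjectivity} of $z\mapsto ez$, and your argument outsources exactly that to the identification $\gr\Zalg_{(l)}=S(\param_{(l)})\otimes(SV)^\Gamma$ ``in the family sense''. That input is not available as a black box. The easy half is $\gr\Zalg_{(l)}\subseteq S(\param_{(l)})\otimes(SV)^\Gamma$ (symbols of central elements are central in $\gr\Halg_{(l)}=S(\param_{(l)})\otimes SV\#\Gamma$); the reverse inclusion says that every invariant polynomial admits a central lift depending polynomially on $\cb_l,\ldots,\cb_r$, and by your own filtered argument run backwards this is \emph{equivalent} to assertion (1). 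Moreover \cite{EG}, Theorem 3.3 is stated at a specialized parameter and is itself deduced there from the Satake isomorphism (\cite{EG}, Theorem 3.1), not the other way around; and inside the present paper the flatness of $\Zalg_{(l)}$ over $S(\param_{(l)})$ --- which is what your ``family sense'' amounts to --- is \emph{derived} from Proposition \ref{Prop:1.31} (see the proof of Corollary \ref{Cor:1.24}). So assuming it makes the proposed proof essentially circular: no independent argument for the hard half is given.

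For comparison, the paper's proof of (1) is a citation of \cite{EG}, Theorem 3.1: the Satake isomorphism there is proved via commutativity of the spherical subalgebra and a double-centralizer argument on the module $\Hrm_{0,c}e$, and that argument is insensitive to whether the parameters are specialized, which is why it is ``essentially verified'' in the family. If you want a self-contained treatment you should rerun that argument over the base $S(\param_{(l)})$ rather than try to compute $\gr\Zalg_{(l)}$ first. Your part (2) is correct and is the same ``easily implies'' step as in the paper: for $l=0$ the equality $\rho_0(\Zalg_{(0)})=\Zalg_{(1)}$ is immediate since $\z^{\tb}(\Halg)$ is by definition the preimage of the center of $\Halg_{(1)}$ under the surjection $\rho_0$, and for $l\geqslant 1$ the Satake maps intertwine $\rho_l$ and $\rho_l:\Aalg_{(l)}\rightarrow\Aalg_{(l+1)}$ is clearly surjective; the sheafified and underlined statements then follow by $\Str_{\underline{\Leaf}}$-linearity.
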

\begin{proof}
 Assertion (1) was essentially verified by Etingof and
Ginzburg in \cite{EG}, Theorem 3.1. Now assertion (1) easily implies assertion (2).
\end{proof}

\begin{Cor}\label{Lem:1.313}
Let $I$ be an ideal in $\Zalg_{(l)}, l>0$. Then $\Halg_{(l)}I\cap \Zalg_{(l)}=I$.
An analogous claim holds for $\Zsh_{(l)}\subset \Hsh_{(l)}$ etc.
\end{Cor}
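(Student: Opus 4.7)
The plan is to transfer the problem to the spherical subalgebra via the Satake isomorphism of Proposition \ref{Prop:1.31}(1). Write $\sigma: \Zalg_{(l)} \xrightarrow{\sim} \Aalg_{(l)}$, $z \mapsto ez$. Since $l > 0$, the algebra $\Aalg_{(l)}$ is commutative (being isomorphic to the commutative algebra $\Zalg_{(l)}$), and $\sigma(I) = eI$ is a two-sided ideal of $\Aalg_{(l)}$.

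The key computation is that for $h \in \Halg_{(l)}$ and $a \in I \subseteq \Zalg_{(l)}$, centrality of $a$ together with $e^2 = e$ yield
\begin{equation*}
e h a e \;=\; eh(ae) \;=\; eh(ea) \;=\; (ehe)(ea),
\end{equation*}
the last equality because $(ehe)(ea) = ehe^2 a = ehea = (eh)(ea)$. Consequently
\begin{equation*}
e\Halg_{(l)} I e \;=\; \Aalg_{(l)} \cdot (eI),
\end{equation*}
and since $eI$ is an ideal of the unital commutative algebra $\Aalg_{(l)}$, this set equals $eI$.

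Now let $z \in \Halg_{(l)} I \cap \Zalg_{(l)}$. Centrality of $z$ gives $ez = eze \in e\Halg_{(l)} I e = eI$, so $ez = ew$ for some $w \in I$; injectivity of $\sigma$ then forces $z = w \in I$. The reverse inclusion $I \subseteq \Halg_{(l)} I \cap \Zalg_{(l)}$ is immediate since $1 \in \Halg_{(l)}$ and $I \subseteq \Zalg_{(l)}$. The sheafified analogue for $\Zsh_{(l)} \subseteq \Hsh_{(l)}$ is proved by the same argument applied on each affine open $U \subseteq \underline{\Leaf}$, invoking the sheafified Satake isomorphism $\Zsh_{(l)} \xrightarrow{\sim} \Ash_{(l)}$ of Proposition \ref{Prop:1.31}(1). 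There is no serious obstacle here: the corollary is a direct consequence of Satake once the idempotent manipulations above are set down.
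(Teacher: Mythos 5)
Your proof is correct and follows essentially the same route as the paper: pass to the spherical subalgebra via Satake, use the idempotent identity to reduce $e\Halg_{(l)}Ie$ to $(e\Halg_{(l)}e)(eI)=eI$, and conclude by injectivity of $z\mapsto ez$. The only cosmetic difference is that you make explicit the step $ehae=(ehe)(ea)$, which the paper compresses into $e\Halg_{(l)}Ie=e\Halg_{(l)}e\,eI$.
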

\begin{proof}
Clearly, $I\subset \Halg_{(l)}I\cap \Zalg_{(l)}$.
Thanks to the Satake isomorphism, it is enough to show that $e I\supset e (\Halg_{(l)}I\cap \Zalg_{(l)})$.
But $e (\Halg_{(l)}I\cap \Zalg_{(l)})=e (\Halg_{(l)}I\cap \Zalg_{(l)})e\subset e\Halg_{(l)}Ie=
e\Halg_{(l)}e eI=eI$.
\end{proof}


\begin{Prop}\label{Prop:1.311}
\begin{enumerate}
\item The algebra $\Halg_{(l)}$ is finite over $\Zalg_{(l)}$ for $l>0$.
\item The center of $\Halg$ coincides with $S(\param)$.
\end{enumerate}
The similar claims hold for the algebras $\underline{\Halg}, \underline{\Halg}_{(l)}$.
\end{Prop}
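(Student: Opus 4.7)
The plan is to handle the two parts separately, in each case reducing to known facts about $SV \# \Gamma$ and the specializations $\Hrm_{t, c}$.

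For (1), I would work with the $V$-filtration on $\Halg_{(l)}$ (with $\K\Gamma$ and $\param_{(l)}$ in degree $0$, $V$ in degree $1$). By flatness of $\Halg$ over $\Pol$ (Proposition~\ref{Prop:1.1}), the associated graded is $\gr \Halg_{(l)} = S(\param_{(l)}) \otimes SV \# \Gamma$ as algebras. Since $SV \# \Gamma$ is a finite module over $(SV)^\Gamma$ (as $SV$ is finite over $(SV)^\Gamma$), the graded algebra $\gr \Halg_{(l)}$ is finite over $S(\param_{(l)}) \otimes (SV)^\Gamma$, a subalgebra of its center.

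To lift: $\cb_l, \ldots, \cb_r$ already lie in $\Zalg_{(l)}$. For a homogeneous $\K$-algebra generator $w \in (SV)^\Gamma$, iterating Proposition~\ref{Prop:1.31}(2) along $\Zalg_{(l)} \twoheadrightarrow \cdots \twoheadrightarrow \Zalg_{(r+1)} = (SV)^\Gamma$ yields a central lift $\tilde w \in \Zalg_{(l)}$. Everything is graded for the $\K^\times$-bi-grading ($V$ in degree $1$, $\cb$ in degree $2$), so $\tilde w$ may be chosen homogeneous, and its top $V$-symbol is then exactly $w$. These lifts, together with the $\cb_j$, generate a subalgebra $B \subseteq \Zalg_{(l)}$ with $\gr B = S(\param_{(l)}) \otimes (SV)^\Gamma$. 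Lifting $(SV)^\Gamma$-module generators of $SV \# \Gamma$ to homogeneous $\tilde\xi_1, \ldots, \tilde\xi_M \in \Halg_{(l)}$ and invoking the standard filtered Nakayama argument yields $\Halg_{(l)} = \sum_i B \tilde\xi_i$, so $\Halg_{(l)}$ is finite over $\Zalg_{(l)}$.

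For (2), the key input is the fact of Brown--Gordon that $Z(\Hrm_{1, c}) = \K$ for every $c \in \K^r$. I would first upgrade this to the whole family: the center of $\Halg/(\tb - 1)$ (a $\K[\cb]$-algebra) is $\K[\cb]$. Expanding a central element in a PBW $\K[\cb]$-basis of $\Halg/(\tb - 1)$, its specialization at any $c$ is a scalar in $\Hrm_{1, c}$; this forces each off-identity coefficient to vanish pointwise in $c$, hence identically. By $\K^\times$-scaling, the same holds with $\tb - 1$ replaced by $\tb - t_0$ for any $t_0 \in \K^\times$. Now for $z \in Z(\Halg)$, write $z = \sum_{\alpha, \gamma} q_{\alpha, \gamma}(\tb, \cb) \, x^\alpha \gamma$ in a PBW $\Pol$-basis of $\Halg$. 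For each $t_0 \in \K^\times$, the image of $z$ in $\Halg/(\tb - t_0)$ is central, hence lies in $\K[\cb]$; so $q_{\alpha, \gamma}(t_0, \cb) \equiv 0$ for every $(\alpha, \gamma) \neq (0, 1)$. Since $\K^\times \times \K^r$ is Zariski dense in $\Spec \Pol$, the polynomials $q_{\alpha, \gamma}$ vanish identically for such $(\alpha, \gamma)$, and $z = q_{0, 1}(\tb, \cb) \in S(\param)$.

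The analogous statements for $\underline\Halg$ and $\underline\Halg_{(l)}$ follow from identical arguments with $\underline\Gamma$ replacing $\Gamma$ and $S \cap \Gamma_b$ replacing $S$. The main technical care is in the lifting step of (1): the central lifts must be homogeneous for the bi-grading so that their $V$-top symbols equal the intended generators; once this is arranged, both the graded Nakayama argument (for $\gr B$) and the filtered Nakayama argument (for $\Halg_{(l)}$ as $B$-module) close the argument cleanly.
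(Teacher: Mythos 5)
Your argument is correct, and it is genuinely more than what the paper does: the paper's own proof of this proposition consists of two citations --- assertion~(1) is delegated to Etingof--Ginzburg, Theorem~3.3, and assertion~(2) to Brown--Gordon, Proposition~3.2 --- with no details supplied. You have instead produced a self-contained argument from ingredients already in the paper.

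For~(1), your route is a filtered Nakayama argument: identify $\gr\Halg_{(l)} \cong S(\param_{(l)}) \otimes SV\#\Gamma$ via the $V$-filtration and the PBW theorem coming from Proposition~\ref{Prop:1.1}, note finiteness of $SV\#\Gamma$ over $(SV)^\Gamma$, and then lift a homogeneous algebra generating set of $(SV)^\Gamma$ into $\Zalg_{(l)}$ by iterating the surjectivity $\rho_l(\Zalg_{(l)})=\Zalg_{(l+1)}$ from Proposition~\ref{Prop:1.31}(2). The bi-homogeneity point you flag is indeed the one place requiring care: because $\rho_l$ is $\K^\times$-equivariant, one may project a preimage onto its correct graded component, and since $(SV)^\Gamma$ is a domain the symbols of products of the lifts are the products of symbols, so the symbol algebra of your $B$ does contain $S(\param_{(l)})\otimes(SV)^\Gamma$ and the Nakayama step closes. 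The filtration is positive, so the induction terminates. This is in substance the EG argument made explicit inside the parameter family.

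For~(2), your specialization argument is clean: flatness gives a genuine $\Pol$-basis, so the coefficient polynomials $q_{\alpha,\gamma}$ are unambiguous; the triviality of $Z(\Hrm_{1,c})$ for all $c$ (Brown--Gordon, cited in the introduction of the paper) kills the off-identity coefficients on a Zariski-dense set; and $\K^\times$-scaling handles general $t_0\neq 0$. The resulting identical vanishing gives $Z(\Halg)=\Pol$. You use the $t=1$ simplicity-of-center result where the paper cites a different (parametric) statement from the same source, but the input and the conclusion agree. The $\underline\Halg$ variants go through verbatim, with the observation that the Weyl factor $\W_\tb(V_0)$ at $\tb=1$ also has trivial center, so the tensor decomposition $\underline\Halg=\W_\tb(V_0)\otimes_{\K[\tb]}\underline\Halg^+$ reduces this case to the SRA one for the smaller group. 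No gaps.
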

\begin{proof}
The first assertion is essentially due to Etingof and Ginzburg, \cite{EG}, Theorem 3.3. The second one
follows from results of Brown and Gordon, \cite{BG}, Proposition 3.2.
\end{proof}

We finish the subsection by recalling the Poisson structures on $\Zalg_{(l)},\Aalg_{(l)}$
that are essentially due to Etingof and Ginzburg.
We start with $\Zalg_{(1)}$. Choose an arbitrary
embedding $\iota:\Zalg_{(1)}\rightarrow \Halg$ such that $\rho_0\circ \iota=\id$. Then it is easy to
see that $\rho_0([\iota(a),\iota(b)])=0$ for any $a,b\in \Zalg_{(1)}$ and that
$\frac{1}{\tb}[\iota(a),\iota(b)]\in \z^{\tb}(\Halg)$. So an element $\{a,b\}:=\rho_0(\frac{1}{\tb}[\iota(a),\iota(b)])$ is well-defined. It is straightforward
to check that $\{\cdot,\cdot\}$ does not depend on the choice of $\iota$ and that $\{\cdot,\cdot\}$ is a
$S(\param_{(1)})$-bilinear Poisson bracket on $\Zalg_{(1)}$. In particular, we have the induced
brackets on $\Zalg_{(l)}$.

In the sequel we will need an embedding $\Zalg_{(1)}\rightarrow \Halg$ with some special properties.

\begin{Lem}\label{Lem:1.312}
We have a graded (=$\K^\times$-equivariant)
$S(\param_{(1)})$-linear section $\iota:\Halg_{(1)}\rightarrow \Halg$ of $\rho_0$.
\end{Lem}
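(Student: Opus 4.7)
The plan is to use the PBW theorem (Proposition \ref{Prop:1.1}) explicitly via the symmetrization map, which is the standard way to produce compatible $S(\param)$-linear splittings that respect the $\K^\times$-grading. The point is that since $\Halg$ is a free $S(\param)$-module of the expected size, any $\K^\times$-equivariant splitting of the underlying graded module structure will do; symmetrization gives the cleanest such splitting.

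First I would define the symmetrization $\sigma:SV\to T(V)$ by $v_1\cdots v_n\mapsto \frac{1}{n!}\sum_{\tau\in S_n}v_{\tau(1)}\otimes\cdots\otimes v_{\tau(n)}$. This is $\Gamma$- and $\K^\times$-equivariant, so composing with the canonical maps $T(V)\hookrightarrow T(V)\#\Gamma\hookrightarrow S(\param)\otimes T(V)\#\Gamma\twoheadrightarrow \Halg$ and extending $S(\param)$-linearly and $\K\Gamma$-linearly produces a $\K^\times$-equivariant $S(\param)$-module homomorphism
\[
\Phi:S(\param)\otimes_{\K}(SV\#\Gamma)\longrightarrow \Halg.
\]
By the PBW theorem $\Halg$ is a flat $S(\param)$-algebra with $\Halg/(\param)\cong SV\#\Gamma$; together with a standard filtration argument (the induced map on associated graded algebras is an isomorphism by Proposition \ref{Prop:1.1}) this forces $\Phi$ to be an isomorphism of $S(\param)$-modules. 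The same construction applied to $\Halg_{(1)}$ yields a $\K^\times$-equivariant $S(\param_{(1)})$-linear isomorphism $\Phi_{(1)}:S(\param_{(1)})\otimes_{\K}(SV\#\Gamma)\xrightarrow{\sim}\Halg_{(1)}$.

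Next, I would use the tautological $\K^\times$-equivariant $S(\param_{(1)})$-linear inclusion $\iota_0:S(\param_{(1)})\hookrightarrow S(\param)$ (identifying $S(\param_{(1)})$ with the $\cb_0$-free part) to set
\[
\iota \;:=\; \Phi\circ(\iota_0\otimes\id_{SV\#\Gamma})\circ\Phi_{(1)}^{-1}: \Halg_{(1)}\longrightarrow\Halg.
\]
Each factor is $\K^\times$-equivariant and $S(\param_{(1)})$-linear, hence so is $\iota$. To verify $\rho_0\circ\iota=\id$, I would observe that, because the defining relations and the symmetrization map are both compatible with the quotient $S(\param)\twoheadrightarrow S(\param)/(\cb_0)=S(\param_{(1)})$, the projection $\rho_0$ corresponds under $\Phi$ and $\Phi_{(1)}$ to $\pi_0\otimes\id$, where $\pi_0:S(\param)\twoheadrightarrow S(\param_{(1)})$ is the canonical projection; since $\pi_0\circ\iota_0=\id$, the claim follows.

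I do not anticipate a genuine obstacle: once Proposition \ref{Prop:1.1} is in hand the naturality of symmetrization with respect to specializations of the parameters reduces everything to a statement about splitting a $\K^\times$-equivariant surjection of free $S(\param_{(1)})$-modules. The only subtle point worth checking carefully is that $\rho_0$ really matches $\pi_0\otimes\id$ under the symmetrization identifications, but this is immediate because $\rho_0$ is defined by sending $\cb_0\mapsto 0$ on generators while leaving the generators $v\in V$ and $\gamma\in\Gamma$ untouched, and the symmetrization map only involves $V$.
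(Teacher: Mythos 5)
Your proof is correct and rests on the same underlying fact as the paper's: $\Halg$ is a graded-free $S(\param)$-module, which by Proposition \ref{Prop:1.1} (PBW) forces a $\K^\times$-equivariant $S(\param_{(1)})$-linear splitting. The only difference is presentational: the paper invokes graded freeness abstractly (a graded module over a non-negatively graded polynomial ring with finite-dimensional graded pieces is free once it is flat, giving $\Halg\cong\Halg_{(1)}[\tb]$ as graded modules), whereas you exhibit the free basis explicitly via symmetrization $SV\to T(V)$ and write the splitting concretely as $\Phi\circ(\iota_0\otimes\id)\circ\Phi_{(1)}^{-1}$. Your version is slightly longer but has the advantage of making the section canonical (the paper's splitting depends on an unspecified choice of graded complement), and your check that $\rho_0$ corresponds to $\pi_0\otimes\id$ under the PBW identifications is correct. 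No gap.
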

\begin{proof}
The $S(\param)$-algebra $\Halg$ is a free graded (=graded flat) $S(\param)$-module.
So $\Halg\cong \Halg_{(1)}[\tb]$ as a graded $S(\param)$-module. In particular, we
have a graded $S(\param_{(1)})$-linear embedding $\Halg_{(1)}\hookrightarrow \Halg$.
\end{proof}

This lemma allows us to define (non-canonical) maps $\Zalg_{(l)}\times \Halg_{(l)}\rightarrow \Halg_{(l)}$
induced from $(a,b)\mapsto \rho_0(\frac{1}{\tb}[\iota(a),\iota(b)]), a\in \Zalg_{(1)},b\in \Halg_{(1)}$.

Lemma \ref{Lem:1.312} shows that the induced bracket on $\Zalg_{(r+1)}=(SV)^\Gamma$ coincides with the one coming
from the symplectic form on $V$. Indeed, the bracket on $(SV)^\Gamma$ is obtained using a section
$SV^\Gamma\rightarrow \Halg/(\param_{(1)})=\W_{\tb}\#\Gamma$ of $\W_{\tb}\#\Gamma\rightarrow SV\#\Gamma$,
where $\W_{\tb}$ stands for the Weyl algebra of $V$.
With this interpretation of the bracket our claim is easy.

We can equip $\Aalg_{(1)}$ with a Poisson bracket using a section $\Aalg_{(1)}\rightarrow \Aalg$. It is easy
to see that the isomorphism in Proposition \ref{Prop:1.31} preserves the Poisson brackets.

Similarly, we can equip $\underline{\Zalg}_{(l)},\underline{\Aalg}_{(l)}, l>0,$ with Poisson structures.
Also the sheaves $\Zsh_{(l)},\Ash_{(l)},$ $\underline{\Zsh}_{(l)},\underline{\Ash}_{(l)}$
become sheaves of Poisson algebras. We also can extend the bracket $\{\cdot,\cdot\}:\Zalg_{(l)}\otimes
\Halg_{(l)}\rightarrow \Halg_{(l)}$ to $\Zsh_{(l)}\otimes \Hsh_{(l)}\rightarrow \Hsh_{(l)}$ by
$\Str_{\Leaf}$-bilinearity.

\subsection{Compatibility of filtrations}\label{SUBSECTION_compatibility}
Recall that two decreasing filtrations $\F_m V, \F'_m V$ of a vector space are called {\it compatible} if
for any $m$ there is $n$ with $\F_n V\subset \F'_m V$ and $\F_n' V\subset \F_m V$.

In this subsection
we will, roughly speaking, prove that all filtrations on $\Hsh,\Zsh$ (resp., on $\Halg,\Zalg$) related to the $\m$-adic filtration on $\Hsh$ (resp., the $\p$-adic filtration on $\Halg$) are compatible.

\begin{Prop}\label{Cor:1.31}
The following descending filtrations  are compatible:
\begin{enumerate}
\item $\F_m\Hsh_{(l)}:=\Hsh_{(l)} (\Zsh_{(l)}\cap \m_{(l)})^m, \F_m'\Hsh_{(l)}:=\m_{(l)}^m$ (for any
$l=0,1,\ldots,r+1$).
We remark that $\F_m\Hsh_{(l)}$ is actually a two-sided ideal
in $\Hsh$.
\item $\F_m\Zsh_{(l)}:=(\Zsh_{(l)}\cap\m_{(l)})^m, \F'_m\Zsh_{(l)}:=\Zsh_{(l)}\cap \m_{(l)}^m ,l\geqslant 1$.
\end{enumerate}
The similar claims hold for $\Halg_{(l)},\Zalg_{(l)}$ and the ideals related to $\m_b$.
\end{Prop}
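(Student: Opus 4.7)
The two filtrations in each claim evidently satisfy $\F_m\subset \F'_m$, so the content is to produce, for every $m$, an integer $n$ with $\F'_n\subset \F_m$. My plan is to first reduce part~(2) (which is stated only for $l\geqslant 1$) to part~(1): writing $I_{(l)}:=\Zsh_{(l)}\cap\m_{(l)}$, if $\m_{(l)}^n\subset \Hsh_{(l)}I_{(l)}^m$, then intersecting with $\Zsh_{(l)}$ and applying Corollary~\ref{Lem:1.313} gives $\Zsh_{(l)}\cap\m_{(l)}^n\subset \Zsh_{(l)}\cap \Hsh_{(l)}I_{(l)}^m = I_{(l)}^m = \F_m\Zsh_{(l)}$, which is exactly the nontrivial half of (2). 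Part~(1) I attack by downward induction on $l\in\{0,1,\ldots,r+1\}$, where at each stage it suffices to find $N$ with $\m_{(l)}^N\subset \Hsh_{(l)} I_{(l)}$: then $\m_{(l)}^{Nk}\subset (\Hsh_{(l)}I_{(l)})^k=\Hsh_{(l)}I_{(l)}^k$. The identity $(\Hsh_{(l)}I_{(l)})^k=\Hsh_{(l)}I_{(l)}^k$ is automatic for $l\geqslant 1$ because $I_{(l)}\subset\Zsh_{(l)}$ is central; for $l=0$ one uses that $\tb=\cb_0$ lies in $I_{(0)}$ (it is genuinely central and is killed by $\hat\rho$, hence lies in $\m_{(0)}$) and that $[\Zsh_{(0)},\Hsh_{(0)}]\subset \tb\,\Hsh_{(0)}\subset \Hsh_{(0)}I_{(0)}$ by the definition $\Zsh_{(0)}=\z^\tb$, so $I_{(0)}\Hsh_{(0)}\subset \Hsh_{(0)}I_{(0)}$ and the power identity follows by an easy induction.

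The base case $l=r+1$ is a commutative Nullstellensatz computation. Identify $\Hsh_{(r+1)}=\Str_{\underline{\Leaf}}\otimes SV\#\Gamma$ and $\m_{(r+1)}=J\#\Gamma$, where $J\subset \Str_{\underline{\Leaf}}\otimes\K[V^*]$ is the ideal of the incidence locus $\{(b,\gamma b):b\in \underline{\Leaf},\gamma\in\Gamma\}$. Taking $\Gamma$-invariants, $I_{(r+1)}=J^\Gamma$ is the ideal of the graph $\{(b,\pi(b))\}$ in $\underline{\Leaf}\times V^*/\Gamma$, and its extension to $\Str_{\underline{\Leaf}}\otimes SV$ cuts out the fibre product $\underline{\Leaf}\times_{V^*/\Gamma}V^*$, which equals $V(J)$ set-theoretically. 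By the Noetherian Nullstellensatz $J^N\subset I_{(r+1)}\cdot(\Str_{\underline{\Leaf}}\otimes SV)$ for some $N$, and smashing with $\Gamma$ (using $\Gamma$-stability of $J$) yields $\m_{(r+1)}^N = J^N\#\Gamma\subset \Hsh_{(r+1)}I_{(r+1)}$.

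For the inductive step $l+1\Rightarrow l$, the projection $\rho_l:\Hsh_{(l)}\to \Hsh_{(l+1)}$ has kernel $\cb_l\Hsh_{(l)}$, and $\cb_l\in I_{(l)}$ (central and killed by $\hat\rho$, hence in $\m_{(l)}$). Combined with the surjectivity $\rho_l(\Zsh_{(l)})=\Zsh_{(l+1)}$ from Proposition~\ref{Prop:1.31}, an easy diagram chase gives $\rho_l(I_{(l)})=I_{(l+1)}$, and since $\ker\rho_l\subset \Hsh_{(l)}I_{(l)}$ we obtain $\rho_l^{-1}(\Hsh_{(l+1)}I_{(l+1)})=\Hsh_{(l)}I_{(l)}$. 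Pulling the inductive hypothesis back through $\rho_l$ then yields $\m_{(l)}^N\subset \Hsh_{(l)}I_{(l)}$, completing the induction. The parallel statements for $\Halg_{(l)},\Zalg_{(l)}$ and the ideals $\m_b$ follow verbatim (with $I_b^\Gamma$ playing the role of $J^\Gamma$ as a maximal ideal in $(SV)^\Gamma$ at the base case). The main obstacle is the geometric input in the base case, namely the identification of the fibre product $\underline{\Leaf}\times_{V^*/\Gamma}V^*$ with the full incidence locus $V(J)$; this is what licenses the Nullstellensatz and converts a central ideal into a cofinal power of $\m_{(r+1)}$. Everything else is formal.
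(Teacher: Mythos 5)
Your proof is correct and follows essentially the same route as the paper's: the key single-power inclusion $\m_{(l)}^N\subset\Hsh_{(l)}(\Zsh_{(l)}\cap \m_{(l)})$ is obtained by reducing modulo the parameters to a commutative Nullstellensatz-type statement at the $SV\#\Gamma$ level (you do this by downward induction through the maps $\rho_l$, the paper in one jump modulo $(\param)$, both using that $\param\subset\Zsh\cap\m$ together with $\hat{\rho}(\Zsh)=\Zsh_{(r+1)}$ from Proposition \ref{Prop:1.31}), the passage to higher powers is exactly the paper's Lemma \ref{Prop:1.32}, which you rederive from $[\Zsh,\Hsh]\subset\tb\Hsh$, and part (2) is deduced from (1) via Corollary \ref{Lem:1.313} just as in the paper. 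The only cosmetic difference is that you verify the commutative base case globally via the fibre product $\underline{\Leaf}\times_{V^*/\Gamma}V^*$, whereas the paper checks the corresponding inclusion fiberwise.
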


In the proof we will need a technical lemma.
\begin{Lem}\label{Prop:1.32}
Let $I$ be a two-sided ideal in $\Zalg$ containing $\tb\Halg$. Then    $\Halg I^m=(\Halg I)^m$
for all $m$. The similar claim holds for $\Zsh\subset \Hsh$.
\end{Lem}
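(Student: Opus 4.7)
The inclusion $\Halg I^m\subseteq (\Halg I)^m$ is the easy direction: since $1\in\Halg$ we have $I\subseteq \Halg I$, and one iterates $\Halg I^m=\Halg I\cdot I^{m-1}\subseteq \Halg I\cdot(\Halg I)^{m-1}=(\Halg I)^m$. So the substance of the lemma is the reverse inclusion $(\Halg I)^m\subseteq \Halg I^m$.

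The key observation is that the hypothesis $\tb\Halg\subseteq I$ upgrades the defining ``commutators in $(\tb)$'' property of $\Zalg=\z^\tb(\Halg)$ into actual $\Halg$-stability of $I$. Indeed, for $a\in I\subseteq\Zalg$ and $b\in\Halg$ we have $[a,b]\in\tb\Halg$ by the definition of $\z^\tb(\Halg)$, and the hypothesis $\tb\Halg\subseteq I$ then gives $[a,b]\in I$. Rewriting $ba=ab-[a,b]$ yields $I\Halg\subseteq \Halg I+I=\Halg I$, so $\Halg I$ is in fact a two-sided ideal of $\Halg$.

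From this I would extend inductively to $I^k\Halg\subseteq \Halg I^k$ for all $k\geqslant 0$ (the inductive step being $I^{k+1}\Halg=I\cdot I^k\Halg\subseteq I\cdot \Halg I^k\subseteq \Halg I\cdot I^k=\Halg I^{k+1}$). Then the target inclusion follows by a short induction on $m$: assuming $(\Halg I)^{m-1}\subseteq \Halg I^{m-1}$,
$$(\Halg I)^m=(\Halg I)^{m-1}\cdot\Halg I\subseteq \Halg I^{m-1}\cdot\Halg I\subseteq \Halg\cdot\Halg I^{m-1}\cdot I=\Halg I^m,$$
where the middle step uses $I^{m-1}\Halg\subseteq\Halg I^{m-1}$.

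There is no real obstacle here beyond the identification of the right ``quasi-centrality to centrality'' mechanism described above; everything else is a bookkeeping induction. For the sheaf version ($\Zsh\subset\Hsh$) the same argument goes through verbatim after passing to sections over an affine open $U\subseteq\underline{\Leaf}$, since $\tb$ is still central in $\Hsh$ and the defining property of $\Zsh=\Str_{\underline{\Leaf}}\otimes\Zalg$ is local.
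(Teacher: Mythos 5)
Your proof is correct and rests on exactly the same mechanism as the paper's one-line argument, namely the chain $[\Halg,I]\subset[\Halg,\Zalg]\subset\tb\Halg\subset I$, which makes $\Halg I$ a two-sided ideal; the rest is the same bookkeeping induction, which you have spelled out explicitly.
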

\begin{proof}
This follows from $[\Halg,I]\subset [\Halg,\Zalg]\subset \tb\Halg\subset I$.
%
\end{proof}
\begin{proof}[Proof of Proposition \ref{Cor:1.31}]
We remark that the filtrations $\m^n$ and $(\Hsh (\m\cap \Zsh))^n$ on $\Hsh$
are compatible. Indeed, it is enough to show that $\m^n\subset \Hsh(\m\cap \Zsh)$ for some
$n$. Since the r.h.s. contains $\param$, it is enough to prove the analogous inclusion modulo $(\param)$.
There it is enough to prove the inclusion fiberwise. This reduces to checking that $I_b^n\subset SV (I_b)^\Gamma$
for sufficiently large $n$. But this is clear.

It is enough to prove (1) for $l=0$. Thanks to the previous paragraph, (1) follows from Lemma \ref{Prop:1.32}.

%

%

(2) follows from (1) and Corollary \ref{Lem:1.313}.
%
\end{proof}

We can sheafify $\Zalg_{(l)}$ on $\X$ analogously to Subsection \ref{SUBSECTION_sheafifiedI}.
The proof of the following proposition is completely analogous to that of Proposition \ref{Cor:1.31}.
\begin{Prop}\label{Cor:1.31I}
The following descending filtrations  are compatible:
\begin{enumerate}
\item $\F_m\Halg_{(l)}|_{\X^0}:=\Halg_{(l)} (\Zalg_{(l)}\cap \p_{(l)})^m|_{\X^0}, \F_m'\Halg_{(l)}|_{\X^0}:=\p_{(l)}^m|_{\X^0}$ (for any
$l=0,1,\ldots,r+1$).
\item $\F_m\Zalg_{(l)}|_{\X^0}:=(\Zalg_{(l)}\cap\p_{(l)})^m|_{\X^0}, \F'_m\Zalg_{(l)}|_{\X^0}:=\Zalg_{(l)}\cap \p_{(l)}^m|_{\X^0} ,l\geqslant 1$.
\end{enumerate}
\end{Prop}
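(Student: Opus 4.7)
The plan is to follow the proof of Proposition \ref{Cor:1.31} line by line, simply replacing the pair $(\Hsh,\m)$ on $\underline{\Leaf}$ with $(\Halg|_{\X^0},\p|_{\X^0})$ on $\X^0$, and verifying that each auxiliary result used along the way admits the obvious sheafified analogue over $\X^0$. The key observation is that the localization procedure defining $\Halg|_{\X^0}$ (Ore localization at preimages of elements of $\K[\X]$) is exact, preserves flatness over the center, and commutes with taking $\Gamma$-invariants, so all the ideal-theoretic manipulations from Proposition \ref{Cor:1.31} carry over unchanged.

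First I would prove the sheafified version of part (1) for $l=0$ (from which the cases $l\geq 1$ follow by quotienting by $(\param^{(l)})$). The trivial inclusion $\F_m \Halg|_{\X^0}\subset \F'_m\Halg|_{\X^0}$ is immediate. For the nontrivial direction, following the template of Proposition \ref{Cor:1.31}, it suffices to prove that $\p|_{\X^0}^n\subset \Halg|_{\X^0}(\Zalg|_{\X^0}\cap \p|_{\X^0})$ for some $n$. Since the right-hand side contains $\param$, one may reduce modulo $(\param)$ and work in the sheaf $\Halg_{(r+1)}|_{\X^0}$. But modulo $\param$ the ideal $\p$ is by definition $\Halg_{(r+1)}J$, where $J$ is the ideal of $\overline{\Leaf}$ in $(SV)^\Gamma=\Zalg_{(r+1)}$, and $J\subset \Zalg_{(r+1)}\cap \p_{(r+1)}$. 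Hence even $n=1$ works after reduction mod $\param$, which is exactly the reduction step used in the original proof (the fiberwise check $I_b^n\subset SV\cdot(I_b)^\Gamma$ collapses to a trivial observation once we are looking at the whole leaf rather than a single fiber).

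Next I would invoke Lemma \ref{Prop:1.32}, whose proof depends only on $[\Halg,\Zalg]\subset \tb\Halg$ and thus passes verbatim to the localization $\Halg|_{\X^0}$, with $I$ taken to be $\Zalg|_{\X^0}\cap \p|_{\X^0}$ (which contains $\tb$ because $\tb\in\param\subset\p$). This gives the identity $\Halg|_{\X^0}(\Zalg|_{\X^0}\cap \p|_{\X^0})^m = \bigl(\Halg|_{\X^0}(\Zalg|_{\X^0}\cap \p|_{\X^0})\bigr)^m$. Combined with the previous step, raising to the $m$-th power yields $\p|_{\X^0}^{nm}\subset \Halg|_{\X^0}(\Zalg|_{\X^0}\cap\p|_{\X^0})^m$, which is the nontrivial half of the compatibility of the filtrations in (1). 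Part (2) then follows by intersecting with $\Zalg|_{\X^0}$ and using the sheafified version of Corollary \ref{Lem:1.313}: since $\Halg|_{\X^0}I\cap\Zalg|_{\X^0}=I$ for every ideal $I\subset\Zalg|_{\X^0}$, applying this to $I=(\Zalg|_{\X^0}\cap\p|_{\X^0})^m$ translates the compatibility in (1) into the compatibility in (2).

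The only genuine point to check — and the place where one must be slightly careful — is that Corollary \ref{Lem:1.313} and Lemma \ref{Prop:1.32} indeed admit the sheafified forms needed over $\X^0$. Both ultimately rely on the Satake isomorphism $\Zalg_{(l)}\simeq e\Halg_{(l)}e$ for $l\geq 1$, on the fact that $\Halg_{(l)}$ is finite over its center (Proposition \ref{Prop:1.311}), and on the inclusion $[\Halg,\Halg]\subset \tb\Halg+(\param^{(1)})$; each of these survives localization at $\hat\rho^{-1}(S)$ for a multiplicative set $S\subset \K[\X]$ because the localization is central and flat. Once these sheafified versions are in hand, the two paragraphs above furnish the proof, and no genuinely new idea beyond Proposition \ref{Cor:1.31} is required.
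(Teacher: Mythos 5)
Your proposal is correct and matches the paper's own (one‑line) proof, which simply declares the argument ``completely analogous'' to that of Proposition \ref{Cor:1.31}; you have faithfully carried out the substitution of $(\Halg|_{\X^0},\p|_{\X^0})$ for $(\Hsh,\m)$, and you correctly note the small simplification that the inclusion $\p_{(r+1)}\subset\Halg_{(r+1)}(\Zalg_{(r+1)}\cap\p_{(r+1)})$ holds outright with $n=1$ (since $\p_{(r+1)}=\Halg_{(r+1)}J$ is generated by the central ideal $J\subset\Zalg_{(r+1)}\cap\p_{(r+1)}$), so that the fiberwise check needed for $\m$ in Proposition \ref{Cor:1.31} is unnecessary here. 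One small slip in your last paragraph: you wrote $[\Halg,\Halg]\subset\tb\Halg+(\param^{(1)})$, but $\param^{(1)}=\K\tb$, so this would falsely assert $[\Halg,\Halg]\subset\tb\Halg$ — you mean $\param_{(1)}$ — though the inclusion actually invoked by Lemma \ref{Prop:1.32} is the correct $[\Halg,\Zalg]\subset\tb\Halg$, so this does not affect the argument.
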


Define the completion  $\Zsh_{(l)}^\wedge$ of $\Zsh_{(l)}$  with respect to any of the equivalent
filtration of Proposition \ref{Cor:1.31}. Alternatively, $\Zsh_{(l)}^\wedge$ is the closure of
$\Zsh_{(l)}$ in $\Hsh_{(l)}$. Also we can define the completion $\Zalg^{\wedge_b}$ of $\Zalg$.

Similarly, we can define the sheaf $\Zalg_{(l)}^{\wedge_{\Leaf}}|_{\Leaf}
\subset \Halg_{(l)}^{\wedge_\Leaf}|_{\Leaf}$. By the construction and the properties of completions,
\cite{Eisenbud}, Chapter 7, we have the following lemma.

\begin{Lem}\label{Lem:2.12.41}
 For $l>0$ we have the following statements:
\begin{enumerate} \item The sheaf $\Zsh_{(l)}^\wedge$
is  flat over  $\Zsh_{(l)}$. Also $\Zalg_{(l)}^{\wedge_b}$ is  flat over
$\Zalg_{(l)}$.
\item $\Zsh_{(l)}^\wedge/(\cb_{l})\cong \Zsh_{(l+1)}^\wedge$ and $\Zalg^{\wedge_b}_{(l)}/(\cb_l)\cong \Zalg^{\wedge_b}_{(l+1)}$.
\item $\Zalg_{(l)}^{\wedge_{\Leaf}}|_{\Leaf}$
is  flat over $\Zalg_{(l)}$.
\item $\Zalg_{(l)}^{\wedge_{\Leaf}}|_{\Leaf}/(\cb_l)\cong \Zalg_{(l+1)}^{\wedge_{\Leaf}}|_{\Leaf}$.
\end{enumerate}
\end{Lem}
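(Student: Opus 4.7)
The plan is to deduce all four statements from standard commutative algebra of ideal-adic completions, once the relevant rings are shown to be commutative Noetherian. The key preliminary observation is that $\Zalg_{(l)}$ is commutative Noetherian for $l>0$: applying the (non-commutative) Artin--Tate argument to the chain $S(\param_{(l)})\subset \Zalg_{(l)}\subset \Halg_{(l)}$, where $\Halg_{(l)}$ is a finitely generated $S(\param_{(l)})$-algebra and is finite over $\Zalg_{(l)}$ by Proposition \ref{Prop:1.311}(1), shows that $\Zalg_{(l)}$ is itself a finitely generated $S(\param_{(l)})$-algebra, hence Noetherian. The same reasoning applies to the ring of sections $\Zsh_{(l)}(U) = \Str_{\underline{\Leaf}}(U)\otimes \Zalg_{(l)}$ over any affine open $U\subset \underline{\Leaf}$, and to $\Zalg_{(l)}(U)$ for $U\subset \X^0$ affine. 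In addition, since $\cb_l$ is central in $\Halg_{(l)}$, Proposition \ref{Prop:1.31}(2) together with Corollary \ref{Lem:1.313} yields the identifications $\Zalg_{(l+1)}=\Zalg_{(l)}/(\cb_l)$ and $\Zsh_{(l+1)}=\Zsh_{(l)}/(\cb_l)$.

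With this setup, parts (1) and (3) are immediate from the standard fact that the adic completion of a Noetherian commutative ring is flat (Eisenbud, Chapter 7). The only thing to check is that the ideals we are completing with respect to are indeed those coming from the relevant commutative subrings, and this is precisely what Propositions \ref{Cor:1.31} and \ref{Cor:1.31I} provide: the filtration on $\Zsh_{(l)}$ (respectively on $\Zalg_{(l)}|_{\X^0}$) induced by the $\m$-adic (respectively $\p$-adic) filtration is cofinal with the one defined by $\Zsh_{(l)}\cap \m_{(l)}$ (respectively $\Zalg_{(l)}\cap \p_{(l)}|_{\X^0}$). The sheaf-level flatness claims then follow by running the affine argument locally, since flatness is local on the source.

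For parts (2) and (4) I invoke the classical formula $\widehat{R/I}\cong \hat{R}/I\hat{R}$ for a Noetherian commutative ring $R$, an ideal $I\subset R$, and the completion at a fixed ambient ideal (again Eisenbud, Chapter 7). Applied to $R=\Zalg_{(l)}$ (or $\Zsh_{(l)}(U)$, or $\Zalg_{(l)}|_{\X^0}(U)$), the ambient ideal being $\Zalg_{(l)}\cap \m_b$ (respectively the sheaf analogues), and $I=(\cb_l)$, together with the identifications $\Zalg_{(l+1)}=\Zalg_{(l)}/(\cb_l)$ and $\Zsh_{(l+1)}=\Zsh_{(l)}/(\cb_l)$ noted above, this gives (2). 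Part (4) follows by the same argument on an affine cover of $\X^0$, using Proposition \ref{Cor:1.31I}. The only real technical point in the whole proof is the compatibility of the various adic filtrations, and this is precisely what the preceding subsection was designed to guarantee.
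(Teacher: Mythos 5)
Your proof is correct and fills in exactly the details the paper leaves implicit behind its citation of Eisenbud, Chapter~7: establish that the relevant centers are commutative Noetherian, identify $\Zalg_{(l+1)}$ with $\Zalg_{(l)}/(\cb_l)$ via Proposition~\ref{Prop:1.31}(2) and Corollary~\ref{Lem:1.313}, use Propositions~\ref{Cor:1.31} and~\ref{Cor:1.31I} to reduce everything to ideal-adic completions, and then invoke flatness and base-change of completion for Noetherian commutative rings. One small inaccuracy: Artin--Tate does give finite generation (hence Noetherianness) of $\Zalg_{(l)}$ itself and of $\Zsh_{(l)}(U)=\K[U]\otimes\Zalg_{(l)}$, but it does not literally ``apply in the same way'' to $\Zalg_{(l)}(U)$ for $U\subset\X^0$ affine, since that ring is by construction a $(\param_{(l)})$-adic completion of a localization and is therefore \emph{not} finitely generated as a $\K$-algebra; the Noetherianness of these sections instead follows from the observation used in the proof of Proposition~\ref{Prop:1.33I} (a ring complete in the $(\param_{(l)})$-adic topology with Noetherian quotient modulo $(\param_{(l)})$ is Noetherian), or just from the fact that an adic completion of a Noetherian ring is Noetherian. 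The conclusion is unaffected.
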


Also we remark that the analogs of Propositions \ref{Cor:1.31},\ref{Cor:1.31I} hold also
for $\underline{\Hsh},\underline{\Zsh},\underline{\Halg},\underline{\Zalg}$. There the corresponding
statements are even easier, thanks to the decompositions $\underline{\m}=\Str_{\underline{\Leaf}}\otimes \underline{\m}_0, \underline{\p}=\W_{\tb}\otimes_{\K[\tb]}\underline{\m}^+_0$.

\begin{Cor}\label{Cor:1.24}
\begin{enumerate}
\item
$\rho_l(\Zsh^\wedge_{(l)})=\Zsh^{\wedge}_{(l+1)}$ and $\rho_l(\Zalg^{\wedge_b}_{(l)})=\Zalg^{\wedge_b}_{(l+1)}$.
\item For $l>0$ we have $\Zsh_{(l)}^\wedge=\z(\Hsh^\wedge_{(l)}), \Zalg^{\wedge_b}_{(l)}=\z(\Halg^{\wedge_b}_{(l)})$.
\end{enumerate}
\end{Cor}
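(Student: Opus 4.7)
My plan is to tackle (2) first, as its content distills to flat base change for centers, and then derive (1) by combining Lemma \ref{Lem:2.12.41}(2) with a Cauchy-lifting argument for the case $l=0$.

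For (2), fix $l>0$. Proposition \ref{Prop:1.311}(1) gives $\Hsh_{(l)}$ finite over its center $\Zsh_{(l)}$, and an Artin-Tate argument shows $\Zsh_{(l)}$ is locally noetherian on $\underline{\Leaf}$. Proposition \ref{Cor:1.31}(1) identifies the $\m_{(l)}$-adic completion $\Hsh^\wedge_{(l)}$ with the $(\Zsh_{(l)}\cap\m_{(l)})$-adic completion of $\Hsh_{(l)}$ as a finite $\Zsh_{(l)}$-module, so by standard commutative algebra $\Hsh^\wedge_{(l)}=\Hsh_{(l)}\otimes_{\Zsh_{(l)}}\Zsh^\wedge_{(l)}$. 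Next I describe $\Zsh_{(l)}=\z(\Hsh_{(l)})$ as the kernel of the commutator map $\phi\colon\Hsh_{(l)}\to\End_{\Zsh_{(l)}}(\Hsh_{(l)}),\ a\mapsto[a,\cdot]$, a map between finitely presented $\Zsh_{(l)}$-modules. Tensoring the exact sequence $0\to\Zsh_{(l)}\to\Hsh_{(l)}\xrightarrow{\phi}\End_{\Zsh_{(l)}}(\Hsh_{(l)})$ with the flat extension $\Zsh^\wedge_{(l)}$ (Lemma \ref{Lem:2.12.41}(1)) and using the canonical isomorphism $\End_{\Zsh_{(l)}}(\Hsh_{(l)})\otimes_{\Zsh_{(l)}}\Zsh^\wedge_{(l)}\cong \End_{\Zsh^\wedge_{(l)}}(\Hsh^\wedge_{(l)})$ yields $\z(\Hsh^\wedge_{(l)})=\Zsh^\wedge_{(l)}$. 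The algebra statement $\Zalg^{\wedge_b}_{(l)}=\z(\Halg^{\wedge_b}_{(l)})$ is verbatim, substituting Proposition \ref{Cor:1.31I} for Proposition \ref{Cor:1.31}.

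For (1) with $l\geq 1$, Lemma \ref{Lem:2.12.41}(2) already provides $\Zsh^\wedge_{(l)}/(\cb_l)\cong\Zsh^\wedge_{(l+1)}$; this isomorphism is induced by $\rho_l$ (the quotient by $(\cb_l)$), so $\rho_l(\Zsh^\wedge_{(l)})=\Zsh^\wedge_{(l+1)}$, and analogously for $\Zalg^{\wedge_b}_{(l)}$. The case $l=0$ requires a short Cauchy-lifting argument based on the following lemma: for every $y\in\Zsh_{(1)}\cap\m_{(1)}^k$ there exists $\tilde y\in\Zsh_{(0)}\cap\m^k$ with $\rho_0(\tilde y)=y$. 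To prove it, start with any lift $\tilde y\in\Zsh_{(0)}$ supplied by Proposition \ref{Prop:1.31}(2); since $\rho_0(\tilde y)\in\m_{(1)}^k$ and $\rho_0^{-1}(\m_{(1)}^k)=\m^k+(\tb)$, write $\tilde y=\tilde y_0+\tb h$ with $\tilde y_0\in\m^k$ and $h\in\Hsh$. The identity $[\tilde y_0,\cdot]=[\tilde y,\cdot]-\tb[h,\cdot]\in(\tb)$ shows $\tilde y_0\in\z^\tb(\Hsh)=\Zsh_{(0)}$, and clearly $\rho_0(\tilde y_0)=y$. Given $z'\in\Zsh^\wedge_{(1)}$, write $z'=\sum_n\delta_n$ with $\delta_n\in\Zsh_{(1)}\cap\m_{(1)}^{k_n}$ and $k_n\nearrow\infty$, lift each $\delta_n$ to $\tilde\delta_n\in\Zsh_{(0)}\cap\m^{k_n}$, and set $z:=\sum_n\tilde\delta_n$; convergence in $\Hsh^\wedge$ lands $z$ in the closure $\Zsh^\wedge_{(0)}$ of $\Zsh_{(0)}$, and $\rho_0(z)=z'$. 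The algebra version (with $\Zalg$'s and $\m_b$ in place of $\Zsh$'s and $\m$) is parallel.

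The main obstacle is the verification in the lifting lemma above, since Lemma \ref{Lem:2.12.41} does not cover $l=0$ and the defining condition for $\Zsh_{(0)}=\z^\tb(\Hsh)$ is centrality only modulo $(\tb)$, not literal centrality. The key identity $[\tilde y_0,\cdot]=[\tilde y,\cdot]-\tb[h,\cdot]$ is a one-line check, but without it the corrected element $\tilde y_0$ would only sit in the inverse image of $\Zsh_{(1)}$ rather than in $\Zsh_{(0)}$ itself. Once this is granted, every remaining step reduces to the already-established compatibility and flatness results, so no new obstruction arises.
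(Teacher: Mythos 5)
Your proof is correct, but it takes a genuinely different route from the paper's, and the two differ in structure in an interesting way.

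For part (2), the paper first reduces to fibers $\Zalg^{\wedge_b}_{(l)} = \z(\Halg^{\wedge_b}_{(l)})$ (arguing that both sides are pro-coherent $\Str_{\underline{\Leaf}}$-sheaves), then runs a downward induction on $l$: the inductive step uses part (1) together with flatness over $\K[[\param^*_{(l)}]]$, and the base case $l=r+1$ is handled explicitly by pushing through the isomorphism $\Theta_0^b$ of Proposition~\ref{Prop:2.1}, under which $\Zalg^{\wedge_b}_{(r+1)}$ becomes $(SV^{\wedge_0})^{\underline{\Gamma}}$. Notably, this makes the paper's proof of (2) depend on (1). Your argument instead realizes the center as $\ker\bigl(\phi\colon \Hsh_{(l)}\to\End_{\Zsh_{(l)}}(\Hsh_{(l)})\bigr)$ and uses flat base change along $\Zsh_{(l)}\to\Zsh^\wedge_{(l)}$ (Lemma~\ref{Lem:2.12.41}(1)) together with the identification $\Hsh^\wedge_{(l)}=\Hsh_{(l)}\otimes_{\Zsh_{(l)}}\Zsh^\wedge_{(l)}$ coming from Proposition~\ref{Cor:1.31}(1) and Noetherianity of $\Zsh_{(l)}$ (Artin--Tate, via Proposition~\ref{Prop:1.311}(1)). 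This is a single-shot argument: no induction, no base case via $\Theta_0^b$, and no dependence on part (1). What it buys is conceptual transparency (the claim is pure flat base change for a kernel of a map of finitely presented modules); what the paper's approach buys is that it avoids needing the Hom-and-flat-base-change lemma, relying only on the explicit $\Theta_0^b$ description at the top.

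For part (1), you also go further than the paper. The paper only says (1) "follows from assertions (1) and (2) of Lemma~\ref{Lem:2.12.41}," but that lemma is stated for $l>0$ and does not directly cover $l=0$, where $\Zsh_{(0)}=\z^\tb(\Hsh)$ is not literally the center. Your Cauchy-lifting argument for $l=0$ — correcting an arbitrary lift $\tilde y$ of $y\in\Zsh_{(1)}\cap\m_{(1)}^k$ by a multiple of $\tb$, and using $[\tilde y_0,\cdot]=[\tilde y,\cdot]-\tb[h,\cdot]\subset(\tb)$ to see $\tilde y_0\in\z^\tb(\Hsh)$, then summing a filtered Cauchy series — is exactly the kind of argument needed to make the $l=0$ case rigorous, and it does work. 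The only blemish is a citation slip: for the $\Zalg^{\wedge_b}_{(l)}$ half you cite Proposition~\ref{Cor:1.31I}, but the compatibility "for $\Halg_{(l)}$ and the ideals related to $\m_b$" is the final sentence of Proposition~\ref{Cor:1.31}, not Proposition~\ref{Cor:1.31I} (the latter concerns $\p|_{\X^0}$). The argument itself is unaffected.
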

\begin{proof}
Assertion (1) follows from assertions (1) and (2) of Lemma \ref{Lem:2.12.41}.

Let us proceed to assertion 2.
We have the inclusion $\Zsh^\wedge_{(l)}\subset \z(\Hsh^\wedge_{(l)})$.
It is an inclusion of procoherent $\Str_{\underline{\Leaf}}$-sheaves.
Therefore it is enough to check the equality fiberwise.
By assertion 1,   $\rho_l(\Zalg^{\wedge_b}_{(l)})=\Zalg^{\wedge_b}_{(l-1)}$.
It follows easily from Proposition \ref{Prop:1.31} that the algebra $\Zalg_{(l)}$ is flat over
$S(\param_{(l)})$.  By assertion (1) of Lemma \ref{Lem:2.12.41},
the algebra $\Zalg^{\wedge_b}_{(l)}$ is flat over $\K[[\param_{(l)}^*]]$.

Suppose that we know that  $\Zalg^{\wedge_b}_{(l+1)}=\z(\Halg^{\wedge_b}_{(l+1)})$. Then we have
$\z(\Halg^{\wedge_b}_{(l)})=\Zalg^{\wedge_b}_{(l)}+ \cb_l \z(\Halg^{\wedge_b}_{(l)})$. Since $\z(\Halg^{\wedge_b}_{(l)})$ is closed in $\Halg^{\wedge_b}_{(l)}$, the last equality easily implies $\Zalg^{\wedge_b}_{(l)}=\z(\Halg^{\wedge_b}_{(l)})$.

This reduces the proof to the case $l=r+1$.
Recall the isomorphism
$\Theta_0^{b}: \Halg^{\wedge_b}_{(r+1)}\rightarrow \CC(\underline{\Halg}^{\wedge_0}_{(r+1)})$.
Under this isomorphism $\Zalg^{\wedge_b}_{(r+1)}$ goes to $(SV^{\wedge_0})^{\underline{\Gamma}}$.
Now our claim is clear.
\end{proof}

An analogous argument, of course, applies to $\Zalg_{(l)}|_\X\subset \Halg_{(l)}|_\X$, $\Zalg_{(l)}|_{\Leaf}\subset
\Halg_{(l)}|_{\Leaf}$, $\Zalg_{(l)}^{\wedge_\Leaf}|_{\Leaf}\subset \Halg_{(l)}^{\wedge_\Leaf}|_{\Leaf}$. We will need the following corollary of the analog of assertion (2).

\begin{Cor}\label{Cor:1.24.1}
$\Zalg|_{\X^0}$ (resp., $\Zalg|_{\Leaf}$) is dense in $\z^{\tb}(\Halg^{\wedge_\Leaf})$
(resp., $\z^{\tb}(\Halg^{\wedge_\Leaf}|_{\Leaf})$).
\end{Cor}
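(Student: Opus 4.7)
The plan is to prove density by first matching the image of $z$ in $\Halg_{(1)}^{\wedge_\Leaf}$ using the already-established description of its center, and then absorbing the remaining $(\tb)$-part via the trivial inclusion $\tb\Halg\subseteq \Zalg$. I will treat the version on $\X^0$; the $\Leaf$-version follows by restriction (restricting all sheaves and ideals along $\Leaf\hookrightarrow\X^\wedge_\Leaf$).

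Concretely, given a local section $z\in \z^\tb(\Halg^{\wedge_\Leaf})$ and an integer $n\geqslant 1$, I want to produce $z'\in \Zalg|_{\X^0}$ with $z-z'\in (\p^{\wedge_\Leaf})^n$. The first step is to observe that $\rho_0(z)$ is central in $\Halg_{(1)}^{\wedge_\Leaf}$ by the very definition of $\z^\tb$, and to invoke the sheafified analog of Corollary \ref{Cor:1.24}(2) (the parallel statement asserted in the paragraph immediately after that corollary) to conclude $\rho_0(z)\in \Zalg_{(1)}^{\wedge_\Leaf}$. Density of $\Zalg_{(1)}|_{\X^0}$ in its completion then lets me choose $z_1\in \Zalg_{(1)}|_{\X^0}$ with $\rho_0(z)-z_1\in (\p_{(1)}^{\wedge_\Leaf})^n$, which I lift to some $\tilde z_1\in \Zalg|_{\X^0}$ using the sheafified form of Proposition \ref{Prop:1.31}(2) (surjectivity being preserved by the exact Ore localization defining the microlocalized sheaves).

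For the second step, I write $z-\tilde z_1=\tb h+\epsilon$ with $h\in \Halg^{\wedge_\Leaf}$ and $\epsilon\in (\p^{\wedge_\Leaf})^n$, which is legitimate because $\rho_0^{-1}((\p_{(1)}^{\wedge_\Leaf})^n)=(\tb)\Halg^{\wedge_\Leaf}+(\p^{\wedge_\Leaf})^n$. The key elementary observation is the inclusion $\tb\Halg\subseteq \z^\tb(\Halg)=\Zalg$: indeed $[\tb h',v]=\tb[h',v]\in(\tb)$ for every $v\in\Halg$. So I approximate $h$ by some $h_0\in \Halg|_{\X^0}$ with $h-h_0\in (\p^{\wedge_\Leaf})^n$, note that $\tb h_0\in \tb\Halg|_{\X^0}\subseteq \Zalg|_{\X^0}$, and use $(\tb)\subseteq \p$ to conclude $\tb(h-h_0)\in (\p^{\wedge_\Leaf})^{n+1}$. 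The element $z':=\tilde z_1+\tb h_0\in \Zalg|_{\X^0}$ then satisfies $z-z'\in (\p^{\wedge_\Leaf})^n$, completing the approximation.

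The only substantive ingredient is the sheafified form of Corollary \ref{Cor:1.24}(2), namely $\z(\Halg_{(1)}^{\wedge_\Leaf})=\Zalg_{(1)}^{\wedge_\Leaf}$; I expect this to be the main obstacle, but it is exactly what the author covered with the ``analogous argument applies'' remark right before the corollary statement. Everything else in my plan is bookkeeping with the $\p$-adic filtration together with the fact that $\tb$ is central in $\Halg$.
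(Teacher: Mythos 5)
Your argument is correct and is exactly the route the paper intends: the corollary is stated as an immediate consequence of the (sheafified) analog of assertion (2) of Corollary \ref{Cor:1.24}, i.e. $\z(\Halg_{(1)}^{\wedge_\Leaf})=\Zalg_{(1)}^{\wedge_\Leaf}$, combined with the surjectivity $\rho_0(\Zalg)=\Zalg_{(1)}$ from Proposition \ref{Prop:1.31} and the trivial inclusion $\tb\Halg\subset\Zalg$, which is precisely your two-step approximation. The only unstated ingredient, as you note, is the "analogous argument applies" transfer of Corollary \ref{Cor:1.24} to the $\wedge_\Leaf$-setting, and your bookkeeping with the $\p$-adic filtration (using $\rho_0^{-1}((\p_{(1)}^{\wedge_\Leaf})^n)=\tb\Halg^{\wedge_\Leaf}+(\p^{\wedge_\Leaf})^n$ and density of $\Halg|_{\X^0}$ in $\Halg^{\wedge_\Leaf}$) is sound.
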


\subsection{Blow-ups and flatness}\label{SUBSECTION_flatness}
We are going to prove that $\Hsh_{(l)}^\wedge$ is flat over $\Hsh_{(l)}$, while $\Halg^{\wedge_\Leaf}|_{\Leaf}$
is flat over $\Halg|_{\Leaf}$ (=the sheaf-theoretic pull-back of $\Halg|_{\X^0}$ to $\Leaf$).

A standard technique to prove such statements is to consider the blow-up algebras.
For an associative algebra $A$ and a two-sided ideal $J\subset A$ one can form the blow-up
algebra $\Bl_J(A)=\bigoplus_{i=0}^\infty J^i$. This algebra is $\mathbb{Z}_{\geqslant 0}$-graded.  To ensure nice properties of the completion $A^\wedge:=\varprojlim_i A/J^i$ we need  the blow-up algebra
$\Bl_J(A)$ to be Noetherian.

More generally, given a sequence $J_i, i=1,2,\ldots,$ of two-sided ideals in
$A$ with $J_iJ_j\subset J_{i+j}$ one can consider the completion $A^\wedge:=\varprojlim_i A/J_i$
and the blow-up algebra $\Bl_{(J_i)}(A)=\bigoplus_{i=0}^\infty J_i$, where $J_0=A$.

The following lemma is proved completely analogously to the corresponding statements
for commutative algebras, see, for example,
\cite{Eisenbud}, Chapter 7.

\begin{Lem}\label{Lem:1.23.1}
Let $A,J_i$ be as above. Suppose $\Bl_{(J_i)}(A)$ is Noetherian. Then
\begin{enumerate}
\item The algebra $\bigoplus_{i=0}^\infty J_i/J_{i+1}$ is Noetherian.
\item The algebra $A^\wedge$ is Noetherian.
\item The algebra $A^\wedge$ is a flat (left or right) $A$-module.
\item The completion functor $M\mapsto M^\wedge:=\varprojlim_i M/J_i M$ from the category
of finitely generated left $A$-modules to the category of left $A^\wedge$-modules is exact.
Moreover, $M^\wedge$ is canonically isomorphic to $A^\wedge\otimes_A M$.
\end{enumerate}
\end{Lem}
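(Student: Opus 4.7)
The plan is to mimic the standard arguments from commutative algebra (as in \cite{Eisenbud}, Chapter 7, or Atiyah-Macdonald, Chapter 10), where the hypothesis that the Rees algebra is Noetherian is exactly what is needed to make the usual chain of implications go through in the noncommutative setting. All four assertions follow from one key input, a noncommutative Artin-Rees lemma, which in turn comes directly from the Noetherianness of $\Bl_{(J_i)}(A)$.

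First I would prove (1). Observe that $\gr A:=\bigoplus_{i\geqslant 0}J_i/J_{i+1}$ is the quotient of $\Bl_{(J_i)}(A)$ by the homogeneous two-sided ideal whose degree-$i$ part is $J_{i+1}\subset J_i$ (it is an ideal because $J_{i+1}\cdot J_j\subset J_{i+j+1}$ and symmetrically). A quotient of a Noetherian ring is Noetherian, giving (1). Next, given a finitely generated left $A$-module $M$ with the induced filtration $J_iM$, the Rees module $\bigoplus_{i\geqslant 0}J_iM$ is finitely generated over $\Bl_{(J_i)}(A)$, hence Noetherian. For a submodule $N\subset M$ the graded submodule $\bigoplus_{i\geqslant 0}(N\cap J_iM)$ is therefore finitely generated over $\Bl_{(J_i)}(A)$, which yields Artin-Rees: there exists $k=k(N,M)$ with
\begin{equation*}
N\cap J_{i+k}M\subset J_iN,\qquad \text{for all }i\geqslant 0.
\end{equation*}

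With Artin-Rees in hand, assertion (4) is standard. Given a short exact sequence $0\to M'\to M\to M''\to 0$ of finitely generated $A$-modules, Artin-Rees says the subspace topology on $M'$ induced from the $J_i$-adic topology on $M$ coincides (up to shift) with its own $J_i$-adic topology, so the inverse limit is exact on this sequence, giving exactness of $M\mapsto M^\wedge$. The identification $M^\wedge\cong A^\wedge\otimes_A M$ then follows from choosing a finite presentation $A^p\to A^q\to M\to 0$: completion is exact, so $M^\wedge$ is the cokernel of $(A^\wedge)^p\to (A^\wedge)^q$, which is precisely $A^\wedge\otimes_A M$ by right exactness of the tensor product. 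Assertion (3) (flatness) follows from (4) by the usual criterion: apply exactness of completion to $0\to I\to A\to A/I\to 0$ for any finitely generated left ideal $I$, combined with the identification from (4), to get that $\operatorname{Tor}^A_1(A^\wedge,A/I)=0$ for every such $I$, hence $A^\wedge$ is flat.

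Finally, for (2) I would argue as follows. Let $\Ish\subset A^\wedge$ be a left ideal, and form the associated graded $\gr\Ish:=\bigoplus_{i\geqslant 0}(\Ish\cap J_i A^\wedge)/(\Ish\cap J_{i+1}A^\wedge)$, a graded left ideal in $\gr(A^\wedge)$. But $\gr(A^\wedge)=\gr A$ (this uses completeness together with Artin-Rees applied in $A^\wedge$, which is automatic since $A^\wedge$ is complete in its filtration), and $\gr A$ is Noetherian by (1). So $\gr\Ish$ is finitely generated; lifting a finite set of homogeneous generators to elements $x_1,\ldots,x_n\in\Ish$ and using that $A^\wedge$ is complete with respect to the filtration $J_iA^\wedge$, a standard successive-approximation argument shows that $x_1,\ldots,x_n$ generate $\Ish$.

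The main obstacle, and the only nontrivial input, is the Artin-Rees lemma in the noncommutative filtered setting; everything else is formal once that is available. But Artin-Rees is immediate from the hypothesis that $\Bl_{(J_i)}(A)$ is Noetherian, so there is no real difficulty, only bookkeeping.
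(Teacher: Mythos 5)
Your proposal is correct and follows the same route the paper intends: the paper gives no detailed proof, citing only that the lemma is proved "completely analogously to the corresponding statements for commutative algebras" in \cite{Eisenbud}, Chapter 7, and your argument is precisely a careful transcription of that commutative chain of implications (Noetherian Rees algebra $\Rightarrow$ Artin--Rees $\Rightarrow$ exactness of completion and $M^\wedge\cong A^\wedge\otimes_A M$ $\Rightarrow$ flatness; plus the $\gr$-argument for Noetherianness of $A^\wedge$) into the filtered noncommutative setting. Two small points worth making explicit if you write this up: the Artin--Rees statement $N\cap J_{i+k}M\subset J_iN$ implicitly uses that the filtration $(J_i)$ is decreasing (which holds in the paper's applications but is not part of the stated hypotheses), and the identity $\gr(A^\wedge)=\gr A$ underlying part (2) is most cleanly obtained from parts (3)--(4) already proved (exactness of completion applied to $0\to J_i\to A\to A/J_i\to 0$ shows $A^\wedge/J_iA^\wedge\cong A/J_i$), rather than from "Artin--Rees in $A^\wedge$" as you phrase it.
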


Of course, Lemma \ref{Lem:1.23.1} can be generalized to sheaves in a straightforward way.

In \cite{HC}, Lemma 2.4.2, we have proved the following statement.

\begin{Lem}\label{Lem:1.23}
Let $A$ be a $\K[\tb]$-algebra and $I$ be a two-sided ideal of $A$ containing $\tb$.
Suppose that $A$ is complete and separated with respect to the $\tb$-adic topology. Further, suppose that
the algebra $A/(\tb)$ is commutative  and  Noetherian. Finally, suppose that
$[J,J]\subset \tb J$. Then the algebra $\Bl_J(A)$ is Noetherian.
\end{Lem}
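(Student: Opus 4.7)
Set $B := \Bl_J(A) = \bigoplus_{i\geq 0} J^i$ (with the convention $J^0 = A$), and let $\tilde{\tb}$ denote the element $\tb \in J$ regarded in degree $1$ of $B$. Since $\tb$ is central in $A$, the element $\tilde{\tb}$ is central in $B$. My plan is to work modulo $\tilde{\tb}$ to reduce to a commutative blow-up, then lift Noetherianness back to $B$ via the $\tilde{\tb}$-adic filtration.

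The first step is to iterate the hypothesis $[J,J]\subset \tb J$ to obtain $[J^i,J^j]\subset \tb J^{i+j-1}$ for all $i,j\geq 1$, by induction on $i+j$ using the Leibniz identities $[ab,c]=a[b,c]+[a,c]b$ and $[a,bc]=[a,b]c+b[a,c]$. Since the degree-$k$ component of $(\tilde{\tb})$ in $B$ is $\tb J^{k-1}$, this relation exactly expresses that $B/(\tilde{\tb})$ is commutative. The quotient $B/(\tilde{\tb})$ is a graded commutative algebra generated in degrees $0$ and $1$ over its degree-zero part $A/(\tb)$; the latter is Noetherian by hypothesis, and the degree-one part $J/\tb A$ is a finitely generated ideal of $A/(\tb)$. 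Hilbert's basis theorem then gives that $B/(\tilde{\tb})$ is Noetherian.

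To transfer this back to $B$, equip $B$ with the $\tilde{\tb}$-adic filtration $F^n B := \tilde{\tb}^n B$ by two-sided ideals. Multiplication by $\tilde{\tb}^n$ induces a surjection $B/(\tilde{\tb}) \twoheadrightarrow F^n B/F^{n+1} B$, so the associated graded $\gr^{\tilde{\tb}} B$ is a quotient of the polynomial ring $(B/(\tilde{\tb}))[\eta]$ in the symbol $\eta$ of $\tilde{\tb}$, and is therefore commutative Noetherian. The main obstacle is then to lift Noetherianness from $\gr^{\tilde{\tb}} B$ back to $B$: the textbook filtered-Noetherian criterion requires $\tilde{\tb}$-adic completeness, which $B$ does not possess (being a direct sum, not a direct product, of its graded pieces). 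One circumvents this using the extra $\mathbb{Z}_{\geq 0}$-grading of $B$: in each fixed graded degree $k$, the filtration $F^n B \cap B_k = \tb^n J^{k-n}$ vanishes for $n > k$, so completeness holds degree-wise and trivially. Consequently every homogeneous left ideal of $B$ has finitely generated associated graded in $\gr^{\tilde{\tb}} B$, and lifts of these generators generate the original ideal by a standard finite-step argument using degree-wise separatedness. Graded Noetherianness, combined with Noetherianness of $B_0 = A$ (itself obtained from $\tb$-adic completeness of $A$ and Noetherianness of $A/(\tb)$ by the same kind of filtered argument applied to the central element $\tb \in A$), then yields full left Noetherianness of $B$.
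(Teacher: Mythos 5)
Your overall strategy---pass to the quotient by the degree-one copy of $\tb$, show it is Noetherian, and then climb back up the $\tilde{\tb}$-adic filtration, which is finite in each graded degree---is sound and is in the spirit of the argument the paper relies on (the paper itself does not reprove the lemma but cites \cite{HC}, Lemma 2.4.2). However, there is a concrete error in your first step. The two-sided ideal $(\tilde{\tb})$ is generated by a homogeneous element of degree $1$, so its degree-zero component is $0$; hence the degree-zero part of $\Bl_J(A)/(\tilde{\tb})$ is all of $A$, not $A/(\tb)$, and this quotient is \emph{not} commutative in general: the relation $[J^i,J^j]\subset \tb J^{i+j-1}$ only kills commutators in which at least one factor has positive degree, while $[A,A]\subset \tb A$ is not contained in $(\tilde{\tb})_0=0$. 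Consequently the appeal to Hilbert's basis theorem over $A/(\tb)$ does not apply as written, and the Noetherianity of $B/(\tilde{\tb})$ (and hence of the associated graded of $B$) is left unproved at the point where you use it.

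The step is repairable, but the repair forces a reorganization. First establish that $A$ itself is Noetherian (complete and separated $\tb$-adically with $A/(\tb)$ Noetherian), which you relegate to a parenthetical at the very end. Then $J$ is generated as a left ideal by $\tb$ and finitely many lifts $x_1,\dots,x_n$ of generators of $J/\tb A$, and in $B/(\tilde{\tb})$ the images $\bar{x}_i$ are central, since $[x_i,a]\in \tb A=(\tilde{\tb})_1$ and $[x_i,x_j]\in \tb J=(\tilde{\tb})_2$; so $B/(\tilde{\tb})$ is a quotient of $A[X_1,\dots,X_n]$ with central indeterminates over a Noetherian ring, hence Noetherian, and $\gr^{\tilde{\tb}}B$ is a quotient of $(B/(\tilde{\tb}))[\eta]$ with $\eta$ central, hence Noetherian (not commutative, but only Noetherianity is used). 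Beware that the other natural fix---quotienting by the ideal generated by both $\tb\in B_0$ and $\tilde{\tb}$, which does yield a commutative ring generated in degree one over $A/(\tb)$---would push you toward the $(\tb,\tilde{\tb})$-adic filtration, and that filtration is no longer finite in each graded degree (the component $\tb^n A$ survives in degree $k$ for all $n>k$), so your degree-wise termination argument would then require genuine $\tb$-adic completeness of each $J^k$, which is not available. With the central-polynomial fix in place, the remainder of your argument (lifting generators of homogeneous left ideals degree by degree, then passing from graded to arbitrary left ideals using that each graded piece $J^k$ is a Noetherian left $A$-module) goes through.
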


Although we can prove that $\Bl_{\m}(\Hsh)$ is Noetherian, it is easier to do  this for a related  sheaf,
which is also enough for our purposes.

Set $\m'_{i}:=\Hsh (\Zsh\cap\m)^{2i}$. By Proposition \ref{Cor:1.31},
the $\m$-adic topology on $\Hsh$ coincides with the one defined by the sequence $\m'_{i}$. Remark that $\m'_i\m'_j\subset \m'_{i+j}$ because $[\Hsh,\Zsh\cap\m]\subset \tb\Hsh\subset \Zsh\cap\m$.

Let $\Hsh^{\wedge_\tb}$ denote
the $\tb$-adic completion of $\Hsh$. Abusing the notation, we write $\m'_i$
for the completion of $\m'_i\subset\Hsh$ in $\Hsh^{\wedge_\tb}$.

\begin{Prop}\label{Prop:1.33}
The sheaf of algebras $\bigoplus_{i\geqslant 0}\m'_i$ is Noetherian.
\end{Prop}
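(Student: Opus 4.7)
Plan. The strategy is to reduce to Lemma \ref{Lem:1.23} via the central subalgebra $\Zsh^{\wedge_\tb}$ and a finite-module argument, working locally on an affine open $U\subset\underline{\Leaf}$ so that all the sheaves become ordinary algebras.

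First, I would apply Lemma \ref{Lem:1.23} to $A=\Zsh^{\wedge_\tb}(U)$ with $J=(\Zsh\cap\m)(U)$. The quotient $A/(\tb)=\Str_U\otimes \Zalg_{(1)}$ is commutative and Noetherian: $\Zalg_{(1)}$ is a finitely generated commutative $\K$-algebra by Artin--Tate applied to the finiteness $\Halg_{(1)}$ over $\Zalg_{(1)}$ of Proposition \ref{Prop:1.311}(1). The element $\tb$ lies in $J$ by construction. The key condition $[J,J]\subset \tb J$ goes as follows. Since $\Zalg\subset\Halg$ is $\tb$-torsion-free and closed under commutator (by the Jacobi identity, using $[\Zalg,\Halg]\subset \tb\Halg$), $\Zsh$ is flat over $\K[\tb]$ and $[\Zsh,\Zsh]\subset \tb\Zsh$; so for $z,w\in \Zsh\cap\m$ we can write $[z,w]=\tb u$ with $u\in\Zsh$. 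The image of $u$ in $\Zsh/(\tb)$ is the Poisson bracket $\{\bar z,\bar w\}$, which lies in $(\Zsh\cap\m)/(\tb)$ because $\overline{\Leaf}\subset V^*/\Gamma$ is a Poisson subvariety, so $(\Zsh\cap\m)/(\tb)$ is a Poisson ideal in $\Zsh/(\tb)$. Writing $u=u_1+\tb u_2$ with $u_1\in \Zsh\cap\m$ and $u_2\in \Zsh$ gives $[z,w]\in \tb(\Zsh\cap\m)+\tb^2\Zsh$. The extra term is absorbed because $\tb\in\Zsh\cap\m$ and $\Zsh\cap\m$ is an ideal of $\Zsh$, so $\tb\Zsh\subset \Zsh\cap\m$ and hence $\tb^2\Zsh\subset \tb(\Zsh\cap\m)=\tb J$. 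Lemma \ref{Lem:1.23} now yields that $\bigoplus_i(\Zsh\cap\m)^i$ is Noetherian, and its degree-$2$ Veronese $B:=\bigoplus_i(\Zsh\cap\m)^{2i}$ is therefore also Noetherian, being generated in a single degree over its Noetherian degree-$0$ part $\Zsh^{\wedge_\tb}$.

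Second, I would exhibit $\bigoplus\m'_i$ as a finitely generated left $B$-module. By Proposition \ref{Prop:1.311}(1), $\Halg_{(1)}$ is finite over $\Zalg_{(1)}$, so $\Hsh/(\tb)$ is finite over $\Zsh/(\tb)$; a topological Nakayama argument in the $\tb$-adic topology (using that $\Hsh^{\wedge_\tb}$ is $\tb$-adically complete and $\tb$ is central) then lifts this to show that $\Hsh^{\wedge_\tb}$ is finitely generated as a left $\Zsh^{\wedge_\tb}$-module, say by $h_1,\ldots,h_n$. The left multiplication action of $B$ on $\bigoplus\m'_i$ preserves the grading: for $b\in (\Zsh\cap\m)^{2i}$ and $m\in\m'_j=\Hsh(\Zsh\cap\m)^{2j}$, commuting $b$ past $\Hsh$ produces errors in $\tb\Hsh\cdot(\Zsh\cap\m)^{2i-1}$, and the factor $\tb\in\Zsh\cap\m$ pushes these errors back into $\Hsh(\Zsh\cap\m)^{2i}$. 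Thus $\bigoplus\m'_i=\sum_j B h_j$ is a finitely generated left $B$-module. Since $B$ is Noetherian, any left ideal of $\bigoplus\m'_i$ is finitely generated as a $B$-module and hence as a $\bigoplus\m'_i$-module; the symmetric argument using $(\Zsh\cap\m)^{2i}\Hsh=\Hsh(\Zsh\cap\m)^{2i}$ yields right Noetherianity.

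The main obstacle is the non-commutativity of $\Hsh/(\tb)$, which prevents a direct application of Lemma \ref{Lem:1.23} to the blow-up $\bigoplus\m'_i$; this forces the detour through the commutative reduction $\Zsh/(\tb)$ together with the finite-module argument built on Proposition \ref{Prop:1.311}(1). The choice of exponent $2i$ in the definition of $\m'_i$ is precisely what makes the commutation corrections land back in the correct filtration piece, so that the $B$-action respects the grading.
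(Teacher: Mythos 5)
Your overall structure — apply Lemma \ref{Lem:1.23} to the algebra $\Zsh^{\wedge_\tb}(U)$ and then use finiteness of $\Hsh$ over $\Zsh$ to promote Noetherianity of the central blow-up to the whole graded algebra — matches the paper. But the verification of the hypothesis $[J,J]\subset\tb J$ is wrong, and fixing it is exactly the point where the exponent $2i$ in the definition of $\m'_i$ enters.

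You take $J=\Zsh\cap\m$ and argue that $(\Zsh\cap\m)/(\tb)$ is a Poisson ideal ``because $\overline\Leaf\subset V^*/\Gamma$ is a Poisson subvariety.'' This conflates $\m$ with $\p$. The ideal $\m$ lives in $\Hsh=\Str_{\underline\Leaf}\otimes\Halg$ and its intersection with $\Zsh_{(1)}$ cuts out the \emph{graph} $\{(b,\pi(b)):b\in\underline\Leaf\}\subset\underline\Leaf\times V^*/\Gamma$, not $\underline\Leaf\times\overline\Leaf$; and the bracket on $\Zsh_{(1)}$ is $\Str_{\underline\Leaf}$-bilinear, so only the $V^*/\Gamma$ factor carries a Poisson structure. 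The ideal of a graph is not a Poisson ideal for the bracket on one factor: already for $\Gamma=\{1\}$, $V^*=\K^2$ with coordinates $(x,y)$ and $\{x,y\}=1$, the ideal $(x_1-x_2,\,y_1-y_2)$ of the diagonal in $V^*\times V^*$ (bracket on the second factor only) satisfies $\{x_1-x_2,\,y_1-y_2\}=1$, which is not in the ideal. So $\{\Zsh_{(1)}\cap\m_{(1)},\Zsh_{(1)}\cap\m_{(1)}\}\not\subset\Zsh_{(1)}\cap\m_{(1)}$ in general, and $[J,J]\not\subset\tb J$ for your choice of $J$.

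The paper avoids this entirely by applying Lemma \ref{Lem:1.23} with $J:=(\Zsh(U)\cap\m(U))^2$ directly. Then $[J,J]\subset\tb J$ reduces to $\{[\Zsh_{(1)}\cap\m_{(1)}]^2,[\Zsh_{(1)}\cap\m_{(1)}]^2\}\subset[\Zsh_{(1)}\cap\m_{(1)}]^2$, which follows from the Leibniz rule alone: each term of $\{a_1a_2,b_1b_2\}$ still carries two factors from $\Zsh_{(1)}\cap\m_{(1)}$, so no Poisson-ideal property is needed. This is the actual reason for the exponent $2i$ in $\m'_i$; it is not, as you suggest at the end, about making commutation errors land in the right graded piece (that already follows from $\m'_i\m'_j\subset\m'_{i+j}$). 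A smaller inaccuracy: $A/\tb A$ is not $\Str_U\otimes\Zalg_{(1)}$ but a square-zero extension of it with kernel $\tb\Hsh(U)/\tb\Zsh(U)$, as the paper notes; this does not affect the Noetherianity conclusion but your identification is off.
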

\begin{proof}
The proof is analogous to that of Lemma A2 in \cite{ES_appendix}. For reader's convenience
we will provide the proof here.

Fix an open affine subset $U\subset \underline{\Leaf}$. We need to prove that $\bigoplus_{i\geqslant 0}\m'_i(U)$
is a Noetherian algebra. But this algebra equals $\Hsh(U)\Bl_I(A)$, where $A:=\Zsh(U), I:=(\Zsh(U)\cap\m(U))^2$. The algebra $\Hsh(U)$ is finite over $A\subset \Bl_I(A)$.
So $\bigoplus_{i\geqslant 0}\m'_i(U)$ is finite over $\Bl_I(A)$ and it is enough to show that
$\Bl_I(A)$ is Noetherian.

According to Lemma \ref{Lem:1.23}, to do this we need to check
that the algebra $A/\tb A$ is commutative and finitely generated and  that $[I,I]\subset \tb I$.

The fact that the algebra $A/\tb A$ is commutative has been already proved when we discussed the Poisson
bracket on $\Zalg_{(1)}$ (or $\Zsh_{(1)}$). To show that the algebra $A/\tb A$ is finitely generated consider the epimorphism
$A/\tb A\twoheadrightarrow \Zsh_{(1)}(U)$. Its kernel is naturally identified with $\tb \Hsh(U)/\tb \Zsh(U)$.
In particular, its square is zero so we can view
the kernel as a $\Zsh_{(1)}(U)$-module. This module is finitely generated.
The algebra $\Zsh_{(1)}(U)$ is finitely generated as well. So we see that $A/\tb A$ is finitely generated.

Let us check that $[I,I]\subset \tb I$. This boils down to checking that $\{[\Zsh_{(1)}(U)\cap \m_{(1)}(U)]^2,[\Zsh_{(1)}(U)\cap \m_{(1)}(U)]^2\}\subset [\Zsh_{(1)}(U)\cap \m_{(1)}(U)]^2$. But this is clear.
\end{proof}

We have the following corollary of Lemma \ref{Lem:1.23.1} and Proposition \ref{Prop:1.33}.

\begin{Cor}\label{Cor:1.22}
\begin{enumerate}
\item The sheaf $\Hsh_{(l)}^\wedge$ and the algebra $\Halg^{\wedge_b}$ are Noetherian.
\item $\Hsh_{(l)}^\wedge$ (resp., $\Halg_{(l)}^{\wedge_b}$) is a flat (left or right) $\Hsh_{(l)}$-module
(resp., $\Halg_{(l)}$-module).
\end{enumerate}
\end{Cor}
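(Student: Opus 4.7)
The plan is to invoke Lemma \ref{Lem:1.23.1} in each of the four settings, using Proposition \ref{Prop:1.33} (and its straightforward single-point analogue) as the Noetherian-blow-up input. I would begin with the sheaf statement for $l=0$. Writing $\m'_i := \Hsh(\Zsh\cap\m)^{2i}$, we have $\m'_i\m'_j\subset\m'_{i+j}$ from the inclusion $[\Hsh,\Zsh\cap\m]\subset\tb\Hsh\subset\Zsh\cap\m$ already used in the proof of Proposition \ref{Prop:1.33}, and by Proposition \ref{Cor:1.31}(1) the $(\m'_i)$- and $\m^i$-adic filtrations on $\Hsh$ define the same topology, so $\Hsh^\wedge=\varprojlim_i\Hsh/\m'_i$. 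Proposition \ref{Prop:1.33} says the blow-up $\bigoplus_{i\geq 0}\m'_i$ is Noetherian, and then the sheaf-theoretic version of Lemma \ref{Lem:1.23.1} (parts (2) and (3)), applied over an affine cover of $\underline{\Leaf}$, immediately yields Noetherianness of $\Hsh^\wedge$ and its flatness over $\Hsh$.

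The cases $l\geq 1$ of the sheaf statement reduce to $l=0$ at once. By Remark \ref{Rem:2.13.1}, $\Hsh^\wedge_{(l)}=\Hsh^\wedge/(\param^{(l)})$, which is Noetherian as a quotient of a Noetherian sheaf of algebras by a two-sided ideal, and flatness of $\Hsh^\wedge_{(l)}$ over $\Hsh_{(l)}=\Hsh/(\param^{(l)})$ follows from flatness of $\Hsh^\wedge$ over $\Hsh$ by base change along the surjection $\Hsh\twoheadrightarrow\Hsh_{(l)}$.

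The algebra statements about $\Halg^{\wedge_b}$ and $\Halg^{\wedge_b}_{(l)}$ proceed by the same pattern, the only genuinely new ingredient being a local version of Proposition \ref{Prop:1.33}. Setting $J_i:=\Halg(\Zalg\cap\m_b)^{2i}$, I would claim that $\bigoplus_{i\geq 0} J_i$ is Noetherian, with the proof a verbatim transcription of Proposition \ref{Prop:1.33}: the algebra is finite over the commutative blow-up of $\Zalg^{\wedge_\tb}$ along $(\Zalg\cap\m_b)^2$, and the three hypotheses of Lemma \ref{Lem:1.23} (commutativity of $A/\tb A$, its finite generation, and $[I,I]\subset \tb I$) are verified exactly as in the global case, using Proposition \ref{Prop:1.311} to ensure finite generation and the Poisson description of the bracket modulo $\tb$ for the last inclusion. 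Corollary \ref{Cor:1.31} then identifies the $J_i$- and $\m_b$-adic topologies on $\Halg$, and Lemma \ref{Lem:1.23.1} delivers the Noetherianness of $\Halg^{\wedge_b}$ and its flatness over $\Halg$. The cases $l\geq 1$ follow by the same quotient-and-base-change argument as in the sheaf case.

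The only substantive step is this local analogue of Proposition \ref{Prop:1.33}, but since that proposition is already proved section-wise on affine opens of $\underline{\Leaf}$, the adaptation to the single-point situation (where $\K[U]$ is replaced by $\K$) is mechanical; I do not expect any real obstacle beyond verifying that each piece of the sheaf argument survives this specialization.
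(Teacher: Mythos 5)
Your proposal is correct and follows essentially the same path as the paper, which simply cites Lemma \ref{Lem:1.23.1} and Proposition \ref{Prop:1.33} without spelling out any of the details that you fill in (the reduction of $l\geqslant 1$ to $l=0$ by base change, and the pointwise analogue of Proposition \ref{Prop:1.33}). The one step you (and the paper) leave tacit is that Proposition \ref{Prop:1.33} concerns the blow-up inside the $\tb$-adic completion $\Hsh^{\wedge_\tb}$, so Lemma \ref{Lem:1.23.1} first gives flatness of $\Hsh^\wedge$ over $\Hsh^{\wedge_\tb}$; one then needs the routine flatness of $\Hsh^{\wedge_\tb}$ over $\Hsh$ (whose $\tb$-adic blow-up is a quotient of a polynomial extension $\Hsh[t]$, hence Noetherian) and transitivity.
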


A similar construction can be applied to $\p|_{\X^0}\subset \Halg|_{\X^0}$.
Namely, set $\p_{i}:=\Halg (\Zalg\cap\p)^{2i}$. By Proposition \ref{Cor:1.31I},
the $\p|_{\X^0}$-adic topology on $\Halg|_{\X^0}$ coincides with the one defined by the sequence $\p_{i}|_{\X^0}$.

\begin{Prop}\label{Prop:1.33I}
The sheaf of algebras $\bigoplus_{i\geqslant 0}\p_i|_{\X^0}$ is Noetherian.
\end{Prop}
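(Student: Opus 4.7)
The plan is to closely mimic the proof of Proposition \ref{Prop:1.33}, replacing the pair $(\underline{\Leaf},\Hsh)$ with $(\X^0,\Halg|_{\X^0})$ and the ideal $\m$ with $\p$. Since being Noetherian for a sheaf of algebras is a local property, I would fix a principal affine open $U\subset\X^0$ and reduce to showing that the algebra $\bigoplus_{i\geqslant 0}\p_i|_{\X^0}(U)$ is Noetherian. Set $A:=\Zalg|_{\X^0}(U)$, which by Corollary \ref{Cor:1.24.1} coincides with $\z^\tb(\Halg|_{\X^0}(U))$, and $I:=(A\cap\p|_{\X^0}(U))^2$. Using Proposition \ref{Cor:1.31I} together with the definition $\p_i=\Halg(\Zalg\cap\p)^{2i}$, I would identify
\[
\bigoplus_{i\geqslant 0}\p_i|_{\X^0}(U)=\Halg|_{\X^0}(U)\cdot\Bl_I(A).
\]

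Next, I would show that $\Halg|_{\X^0}(U)$ is finite as a module over $A$. This relies on the fact that $\Halg|_{\X^0}(U)$ is $(\param)$-adically complete (by its construction as $\varprojlim_k \Halg/(\param)^k|_\X(U)$ in Subsection \ref{SUBSECTION_sheafifiedI}), combined with the finiteness of $\Halg_{(1)}(U)$ over $\Zalg_{(1)}(U)=A/\tb A$ guaranteed by Proposition \ref{Prop:1.311}(1); the topological Nakayama lemma then yields finiteness of $\Halg|_{\X^0}(U)$ over $A$. Consequently $\bigoplus_{i\geqslant 0}\p_i|_{\X^0}(U)$ is finite over $\Bl_I(A)$, and the problem reduces to showing that $\Bl_I(A)$ is Noetherian. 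Since $\Bl_I(A)$ is the degree-$2$ Veronese of $\Bl_{I'}(A)$ with $I':=A\cap\p|_{\X^0}(U)$, it suffices to establish Noetherianness of $\Bl_{I'}(A)$.

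For the last step I would apply Lemma \ref{Lem:1.23} to the pair $(A,I')$. The algebra $A$ is $\tb$-adically complete, and $\tb\in I'$ since $\tb\in\Zalg$ and $\tb\in\p$ (because $\hat{\rho}(\tb)=0\in\p_{(r+1)}$). It remains to check that $A/\tb A$ is commutative and finitely generated and that $[I',I']\subset\tb I'$, exactly as in the proof of Proposition \ref{Prop:1.33}: commutativity of $A/\tb A$ is automatic from $A\subset\z^\tb(\Halg|_{\X^0}(U))$; finite generation follows from the surjection $A/\tb A\twoheadrightarrow \Zalg_{(1)}(U)$ whose kernel is square-zero and finitely generated as an $A/\tb A$-module, while $\Zalg_{(1)}(U)=\K[U]$ is finitely generated; and $[I',I']\subset \tb I'$ boils down, after dividing by $\tb$, to the Poisson inclusion $\{\Zalg_{(1)}(U)\cap\p_{(1)}|_{\X^0}(U),\Zalg_{(1)}(U)\cap\p_{(1)}|_{\X^0}(U)\}\subset\Zalg_{(1)}(U)\cap\p_{(1)}|_{\X^0}(U)$, which holds because $\overline{\Leaf}$ is a Poisson subvariety of $V^*/\Gamma$. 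The main obstacle is tracking the ideals and their Poisson structure through the various localizations and $(\param)$-adic completions; beyond this careful bookkeeping, the argument is a routine adaptation of Proposition \ref{Prop:1.33}.
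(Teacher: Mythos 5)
There is a genuine gap at the crucial point of your argument, and it is exactly the point where this proposition differs from Proposition \ref{Prop:1.33}. You assert that $A/\tb A$ is finitely generated, on the grounds that it surjects onto $\Zalg_{(1)}(U)$ with square-zero finitely generated kernel and that ``$\Zalg_{(1)}(U)=\K[U]$ is finitely generated''. But the sections over $U\subset\X^0$ of the microlocalized sheaves are, by construction, complete in the $(\param)$-adic (respectively $(\param_{(1)})$-adic) topology: $\Halg|_{\X}(U)=\varprojlim_k \Halg/(\param)^k(U)$, so $\Zalg_{(1)}|_{\X^0}(U)$ contains power series in $\cb_1,\ldots,\cb_r$ and is \emph{not} a finitely generated $\K$-algebra; the algebra that equals $\K[U]$ is $\Zalg_{(r+1)}(U)$, not $\Zalg_{(1)}(U)$. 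This is precisely the ``most essential difference'' the paper's proof points out. The repair is that Lemma \ref{Lem:1.23} only requires $A/(\tb)$ to be commutative and \emph{Noetherian}, not finitely generated: one shows $\Zalg_{(1)}|_{\X^0}(U)$ is Noetherian because it is $(\param_{(1)})$-adically complete and its quotient by $(\param_{(1)})$, namely $\Zalg_{(r+1)}(U)$, is finitely generated; Noetherianity of $A/\tb A$ then follows since it is a module-finite extension of this algebra by a square-zero ideal. Without this step your verification of the hypotheses of Lemma \ref{Lem:1.23} collapses.

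A secondary point: your detour through $I'=A\cap\p|_{\X^0}(U)$ and the degree-two Veronese is both unnecessary and itself unjustified, since passing Noetherianity to a Veronese subalgebra in this noncommutative graded setting is not a formality, and the inclusion $[I',I']\subset\tb I'$ would require knowing that $\p\cap\Zalg$ is closed under the bracket. The paper (as in Proposition \ref{Prop:1.33}) applies Lemma \ref{Lem:1.23} directly to the squared ideal $I=(A\cap\p|_{\X^0}(U))^2$, for which $[I,I]\subset\tb I$ is immediate from the Leibniz rule (each term of $\{z_1z_2,w_1w_2\}$ carries a factor $z_iw_j\in I$), with no Poisson property of $\overline{\Leaf}$ needed at this point. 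The rest of your outline (locality, finiteness of $\Halg|_{\X^0}(U)$ over $A$, identification of $\bigoplus_i\p_i|_{\X^0}(U)$ with $\Halg|_{\X^0}(U)\Bl_I(A)$) matches the paper's strategy.
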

\begin{proof}
The proof basically repeats that of Proposition \ref{Prop:1.33}. The most essential
difference is that the algebra $\Zalg_{(1)}(U)$ is not finitely generated. However,
this algebra is still Noetherian. Indeed, $\Zalg_{(1)}(U)$ is complete in the $(\param_{(1)})$-adic
topology and its quotient by $(\param_{(1)})$, $\Zalg_{(r+1)}(U)$, is finitely generated.
\end{proof}

Again, this proposition implies the following corollary.

\begin{Cor}\label{Cor:1.22I}
\begin{enumerate}
\item $\Halg_{(l)}^{\wedge_{\Leaf}}|_{\Leaf}$  is a flat  sheaf of (left or right) $\Halg_{(l)}$-modules.
\item $\Halg_{(l)}^{\wedge_{\Leaf}}|_{\Leaf}/(\cb_l)\cong \Halg_{(l+1)}^{\wedge_{\Leaf}}|_{\Leaf}$.
\end{enumerate}
\end{Cor}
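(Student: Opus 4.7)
The proof follows the same template as Corollary \ref{Cor:1.22}, with Proposition \ref{Prop:1.33I} replacing the role of Proposition \ref{Prop:1.33}. To establish part (1) over $\X^0$ first, I note that by Proposition \ref{Prop:1.33I} the blow-up sheaf $\bigoplus_{i\geq 0}\p_i|_{\X^0}$ is Noetherian, and by Proposition \ref{Cor:1.31I} the filtration $\{\p_i|_{\X^0}\}$ is equivalent to the $\p|_{\X^0}$-adic one, so the induced completion of $\Halg|_{\X^0}$ is exactly $\Halg^{\wedge_\Leaf}$. The sheaf version of Lemma \ref{Lem:1.23.1}(3),(4) then yields flatness of $\Halg^{\wedge_\Leaf}$ as a sheaf of (left or right) $\Halg|_{\X^0}$-modules, together with exactness of the completion functor on finitely generated modules and the canonical identification $M^{\wedge_\Leaf}\cong\Halg^{\wedge_\Leaf}\otimes_{\Halg|_{\X^0}}M$. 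Combined with Lemma \ref{Lem:2.12.1}(2) (flatness of $\Halg|_{\X^0}$ over $\Halg$), this gives flatness of $\Halg^{\wedge_\Leaf}$ over $\Halg$.

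To descend to the sheaf $\Halg^{\wedge_\Leaf}|_\Leaf$ on $\Leaf$, I would reapply Lemma \ref{Lem:1.23.1} to the pulled-back filtration $\{\p_i|_\Leaf\}$ on $\Halg|_\Leaf$. Its blow-up is the image of $\bigoplus_{i}\p_i|_{\X^0}$ under the pullback along $\Leaf\hookrightarrow\X^0$, hence still Noetherian, and its completion is precisely $\Halg^{\wedge_\Leaf}|_\Leaf$. This gives the required flatness over $\Halg_{(l)}$ at each $l$, argued by induction on $l$ using the quotients by the central variables $\cb_0,\ldots,\cb_{l-1}$, all of which are compatible with both the blow-up and the completion constructions.

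For part (2), the flatness just established means that the central element $\cb_l$ is a non-zero-divisor on the completion, so quotienting by $\cb_l$ commutes with the completion functor. Since the projection $\rho_l:\Halg_{(l)}\to\Halg_{(l+1)}$ has kernel $(\cb_l)$ and $\rho_l(\p_{(l)})=\p_{(l+1)}$, the induced map identifies $\Halg_{(l)}^{\wedge_\Leaf}|_\Leaf/(\cb_l)$ with $\Halg_{(l+1)}^{\wedge_\Leaf}|_\Leaf$, in complete analogy with Lemma \ref{Lem:2.12.41}(2),(4).

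The main technical point to watch — the interchange of restriction to $\Leaf$ with completion along $\p$ — is precisely what the Noetherian blow-up input delivers via the Artin--Rees-type conclusions of Lemma \ref{Lem:1.23.1}. Once that is in place, the remainder is standard bookkeeping with exact sequences and flat base change, and no argument beyond Proposition \ref{Prop:1.33I} and the general machinery is required.
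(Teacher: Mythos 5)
Your fill-in matches the paper's intended derivation: Proposition \ref{Prop:1.33I} supplies the Noetherian blow-up sheaf, Lemma \ref{Lem:1.23.1} then delivers flatness, exactness of the completion functor and the identification $M^\wedge\cong A^\wedge\otimes_A M$, and chaining with Lemma \ref{Lem:2.12.1}(2) gives flatness over $\Halg_{(l)}$; part (2) then comes from exactness of completion applied to $0\to\Halg_{(l)}\xrightarrow{\cb_l\cdot}\Halg_{(l)}\to\Halg_{(l+1)}\to 0$ together with $\rho_l(\p_{(l)})=\p_{(l+1)}$. The paper's proof is the same elided pointer to Proposition \ref{Prop:1.33I} and Lemma \ref{Lem:1.23.1}, so you have reconstructed the intended argument.

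One small caveat concerns your second paragraph. Reapplying Lemma \ref{Lem:1.23.1} to $\Halg|_\Leaf$ with the pulled-back filtration is unnecessary, and the assertion that the pulled-back blow-up ``is still Noetherian'' is not automatic: the sheaf-theoretic inverse image $i^{-1}$ along the closed embedding $i:\Leaf\hookrightarrow\X^0$ produces section algebras as filtered colimits over shrinking opens in $\X^0$, and such colimits need not preserve Noetherianity. The cleaner route is to note that $\Halg^{\wedge_\Leaf}$ is already supported on $\Leaf$ as a sheaf on $\X^0$: at a point $u\notin\Leaf=\overline{\Leaf}\cap\X^0$, the ideal $\p_u$ contains $(\param)$ together with a unit modulo $(\param)$, and since the stalk is $\param$-adically complete, $\p_u$ is the whole local ring, so $\Halg_u/\p_u^k=0$. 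Consequently $\Halg^{\wedge_\Leaf}|_\Leaf=i^{-1}\Halg^{\wedge_\Leaf}$ has the same section algebras as $\Halg^{\wedge_\Leaf}$, and the flatness and exactness conclusions established on $\X^0$ carry over to $\Leaf$ without any further invocation of the blow-up machinery.
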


We will need some further corollaries in Subsection \ref{SUBSECTION_fun_low_dag}.

Finally, we remark that all results in this subsection hold also for $\underline{\Hsh}^\wedge, \underline{\Halg}^{\wedge_b}$ etc.
They are even simpler, compare with the remark at the end of the previous subsection.

\subsection{Derivations}\label{SUBSECTION_derivations}
Throughout this subsection $U$ is an open $\K^\times\times\Xi$-stable affine subvariety of $\underline{\Leaf}$.
We set $R:=\K[U]$. In this subsection we are going to study derivations of the  algebra $\Hsh^{\wedge}(U)$.
We will show that all $R$-linear derivations of this algebra are ``almost inner'' in the sense of (A)
of Proposition \ref{Prop:2.3}.
Also we will establish the existence of an {\it ``Euler'' derivation} of $\Hsh^\wedge(U)$.
This derivation is crucial in the ``local'' construction of an isomorphism $\Theta$ from Theorem \ref{Thm:2.0}.

First of all, one can consider the space $\Der_{R[[\param^*]]}(\Hsh^{\wedge}(U))$ of
$R[[\param^*]]$-linear derivations. Further, we say that a $R$-linear derivation
$d$ of $\Hsh^{\wedge}$ is {\it weakly Euler} if  $d|_{\param}=2\id$.
Clearly,  weakly Euler derivations form an affine space, whose associated vector space is
$\Der_{R[[\param^*]]}(\Hsh^{\wedge}(U))$.

This affine space is non-empty.
Indeed, let $E_{(l)}$ denote the derivation of $\Hsh_{(l)}$ induced by the $\K^\times$-action.
Set $E_{f,(l)}:=E_{(l)}-\sum_{i=1}^{\dim V_0}\alpha^i L_{\alpha_i,(l)}$, where $\alpha_1,\ldots,\alpha_{\dim V_0}$
is a basis of $V_0$, and $\alpha^i$ are the dual basis vectors in $V_0^*$. Then $E_{f,(l)}$ is $\Str_{\underline{\Leaf}}$-linear,
$E_{f,(l)} v=v, v\in V, E_{f,(l)} \cb_i=2\cb_i$.
Extend $E_{f,(l)}$ to $\Hsh^\wedge_{(l)}$. Clearly, $E_{f,(l)}$ is a weakly Euler
derivation of $\Hsh^\wedge_{(l)}$. Similarly, we can define the derivation $\underline{E}_{f,(l)}$
of $\underline{\Hsh}_{(l)}, \underline{\Hsh}^\wedge_{(l)}, \CC(\underline{\Hsh}^\wedge_{(l)})$.

Now let us define Euler derivations.
We say that a derivation $d$ of $\Hsh^\wedge(U)$ is {\it Euler} if it is weakly
Euler, $\K^\times\times\Gamma\times \Xi$-equivariant, and the induced derivation of $\Hsh^\wedge_{(r+1)}(U)$ coincides with $\Theta_0^{-1}\circ\underline{E}_{f,(r+1)}\circ \Theta_0$.


\begin{Prop}\label{Prop:2.3}
The following statements hold:
\begin{itemize}
\item[(A)] All elements of $\Der_{R[[\param^*]]}(\Hsh^{\wedge}(U))$ are of the form $\frac{1}{\cb_0}\ad a$,
where $a\in \Zsh^{\wedge}(U)$. Recall that, by definition, $\cb_0=\tb$.
\item[(B)] There is an Euler derivation of $\Hsh^{\wedge}(U)$.
\end{itemize}
\end{Prop}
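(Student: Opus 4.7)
The plan is to establish (A) first by an approximation argument reducing to a Hochschild-type vanishing at the classical limit, and then to deduce (B) by correcting the natural candidate $E_f$ with an inner derivation provided by (A).

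For (A), let $d\in \Der_{R[[\param^*]]}(\Hsh^\wedge(U))$. Flatness of $\Halg$ over $S(\param)$ (Proposition~\ref{Prop:1.1}) persists through $\Hsh^\wedge$ (Corollary~\ref{Cor:1.22}), so $\tb$ is a non-zero-divisor on $\Hsh^\wedge(U)$. It therefore suffices to find $a\in \Hsh^\wedge(U)$ with $\ad a=\tb d$: the inclusion $[a,\Hsh^\wedge(U)]\subset \tb\Hsh^\wedge(U)$ automatically forces $a\in \z^\tb(\Hsh^\wedge(U))=\Zsh^\wedge(U)$ by Corollary~\ref{Cor:1.24}(2). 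I would construct $a$ as an $\m$-adic limit $a=\lim_k a_k$: given $a_k$ with $\ad a_k\equiv \tb d\pmod{\m^{k+1}}$, the residue $\tb d-\ad a_k$ descends to a derivation $\Hsh_{(r+1)}^\wedge(U)\to \m^{k+1}/\m^{k+2}$ whose Hochschild obstruction class I need to show is a coboundary; this produces $a_{k+1}$, and completeness of $\Hsh^\wedge(U)$ (Corollary~\ref{Cor:1.22}) supplies the limit.

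The obstruction vanishing is the crux. Via $\Theta_0$ (Proposition~\ref{Prop:2.1}), $\Hsh_{(r+1)}^\wedge(U)$ is identified with the centralizer of $\underline{\Hsh}_{(r+1)}^\wedge(U)=\Str_{\underline{\Leaf}}\widehat\otimes \widehat{SV}\#\underline{\Gamma}$ at the zero section, a skew group algebra over a smooth formal symplectic scheme (the symplectic structure being assembled from the forms on $V_0$ and $V$). Morita invariance of Hochschild cohomology transfers the question to $\underline{\Hsh}_{(r+1)}^\wedge(U)$; on a smooth formal symplectic scheme every $R$-linear Poisson derivation is Hamiltonian, and the $\underline{\Gamma}$-twist is handled by the standard isotypic decomposition of $\HH^\bullet(A\#\underline{\Gamma})$, where only the diagonal sector contributes to derivations into the algebra itself and supplies the $\underline{\Gamma}$-invariant Hamiltonian generator.

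For (B), the natural candidate is $E_f$, which is weakly Euler by construction and $\K^\times\times\Gamma\times\widetilde{\Xi}$-equivariant: $E$ commutes with all three actions, the combination $\sum_i\alpha^iL_{\alpha_i}$ is coordinate-free (hence $\widetilde{\Xi}$-invariant), and $\Gamma$ acts trivially on $\Str_{\underline{\Leaf}}$ so the Lie derivatives are $\Gamma$-equivariant as well. Using Lemma~\ref{Lem:2.40} together with Proposition~\ref{Prop:2.1}, one computes
$$\Theta_0\circ E_{f,(r+1)}\circ \Theta_0^{-1}=\underline{E}_{f,(r+1)}-\sum_i\alpha^i\{\check\alpha_i,\cdot\}.$$
The residual term is the Hamiltonian vector field of an explicit element $a_0\in \Zsh_{(r+1)}^\wedge(U)$. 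Lifting $a_0$ to $\Zsh^\wedge(U)$ via Corollary~\ref{Cor:1.24}(1) and subtracting $\frac{1}{\tb}\ad a_0$ from $E_f$---a legitimate correction by (A) that preserves the weakly Euler property since $\ad a_0$ annihilates $\param$---produces the required Euler derivation. The main obstacle is the obstruction-vanishing step for (A): identifying the correct Hochschild cohomology of the completed skew group algebra and tracking the Hamiltonian lifts through both the $\m$-adic completion and the parameter filtration $\param^{(l)}$. Once (A) is in hand, (B) reduces to the explicit computation above.
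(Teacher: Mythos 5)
Your reduction of (A) to finding $a \in \Hsh^{\wedge}(U)$ with $\ad a = \tb d$, and the observation that Corollary~\ref{Cor:1.24}(2) then places $a$ in $\Zsh^{\wedge}(U)$, is exactly the right starting move. But the $\m$-adic Newton iteration that follows has a genuine gap. At each stage the residue $\tb d - \ad a_k$ is a derivation into $\m^{k+1}\Hsh^{\wedge}(U)$, and killing it modulo $\m^{k+2}$ requires showing the corresponding class vanishes in $\HH^1\bigl(\Hsh^{\wedge}_{(r+1)}(U),\, \m^{k+1}\Hsh^{\wedge}/\m^{k+2}\Hsh^{\wedge}\bigr)$ --- Hochschild cohomology with \emph{module} coefficients. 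Lemma~\ref{Lem:2.21}, which is what your Morita--Etingof transfer would ultimately be invoking, only computes $\HH^1$ with coefficients in the algebra itself, and it identifies that group with $\Der_R(\Zsh^{\wedge}_{(r+1)}(U))$, which is emphatically nonzero. So the naive obstruction does not vanish for cohomological reasons of the classical limit, and "Poisson derivations of a smooth formal symplectic scheme are Hamiltonian" cannot be applied directly, because the residue is a derivation into a module, not a Poisson derivation of a Poisson algebra. The paper avoids this entirely by filtering by the parameters $\cb_0,\ldots,\cb_r$ rather than by $\m$: it formulates statements (A$_l$), (B$_l$), (C$_l$) for the algebras $\Hsh^{\wedge}_{(l)}(U)$ (Poisson derivations decompose as $\{a_1,\cdot\}+[a_2,\cdot]$; derivations vanishing on the center are inner; Euler derivations exist) and proves them by descending induction in $l$. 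The $\cb_0$-adic iteration (A$_1$)\&(C$_1$)$\Rightarrow$(A) works because $\Hsh^{\wedge}(U)$ is $\K[[\cb_0]]$-flat, so the successive quotients stay algebras of the same kind, and the Poisson bracket $\{a_1,\cdot\}$ on $\Hsh^{\wedge}_{(1)}(U)$ is precisely the classical shadow of $\frac{1}{\cb_0}\ad a$. This is the essential structure your proposal misses: the $1/\tb$ enters through the Poisson bracket, not through a module-coefficient Hochschild vanishing.

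Your treatment of (B) is closer to workable. The computation via Lemma~\ref{Lem:2.40} that $\Theta_0\circ E_{f,(r+1)}\circ\Theta_0^{-1}$ differs from $\underline{E}_{f,(r+1)}$ by a term of the form $\{\text{quadratic element of } \underline{\Zsh}^{\wedge}_{(r+1)}(U),\cdot\}$ is correct (the $\alpha^i$ lie in the base $\Str_{\underline{\Leaf}}$, so $\{\alpha^i,\cdot\}=0$ and the sum is Hamiltonian), and the correction by the resulting $\frac{1}{\tb}\ad$ term preserves the weakly Euler and equivariance conditions. But note that the paper still runs the full $\cb_l$-induction for (B): the Euler condition is a statement about the reduction modulo $(\param)$, and the induction tracks how the correction survives successive lifts from $\Hsh^{\wedge}_{(l)}$ to $\Hsh^{\wedge}_{(l-1)}$ --- it is the lifting clause in each (A$_l$) that supplies this, not a one-shot computation at $l=r+1$. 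Finally, a minor point: the symplectic structure on $\Str_{\underline{\Leaf}}\widehat{\otimes}\widehat{SV}$ relevant to the Poisson bracket on $\underline{\Zsh}^{\wedge}_{(r+1)}$ is $\Str_{\underline{\Leaf}}$-bilinear (the $V_0$-direction enters trivially as the base), not "assembled from the forms on $V_0$ and $V$".
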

\begin{proof}
First, we reduce (A),(B) to the similar claims for the algebras $\Hsh^\wedge_{(l)}(U),l=1,\ldots,r+1$.
Recall that $\Zsh^{\wedge}_{(l)}(U)$ is a Poisson algebra for any $l>0$ and coincides with
the center of $\Hsh^\wedge_{(l)}(U)$ (the latter follows from Corollary \ref{Cor:1.24}).
We say that an $R$-linear derivation of $\Hsh^{\wedge}_{(l)}(U)$ is:
\begin{itemize}
\item Poisson, if it is $R[[\param_{(l)}^*]]$-linear and annihilates the Poisson bracket on $\Zsh^{\wedge}_{(l)}(U)$
(we say that a derivation $D$ of an algebra $A$ annihilates a bracket $\{\cdot,\cdot\}$ if
$D\{a,b\}-\{Da,b\}-\{a,Db\}=0$ for all $a,b\in A$).
\item weakly Euler, if it  acts on $\param_{(l)}$ by $2\cdot\id$ and
also multiplies the Poisson bracket  $\{\cdot,\cdot\}$ on $\Zsh_{(l)}^{\wedge}(U)$ by $-2$.
\end{itemize}

The notion of an Euler derivation of $\Hsh_{(l)}^\wedge$  is introduced exactly as above.

Any $R[[\param^*]]$-linear (resp., weakly Euler, Euler) derivation of $\Hsh^\wedge(U)$ induces a Poisson (resp.,
weakly Euler, Euler) derivation of $\Hsh^{\wedge}_{(l)}(U)$. 

Consider the following  statements, where $l=1,\ldots,r+1$,
\begin{itemize}
\item[(A$_l$)] Any Poisson derivation $d_0$ of $\Hsh^{\wedge}_{(l)}(U)$ has the form
$d_0(c)=\{a_1,c\}+[a_2,c], a_1\in \Zsh^{\wedge}_{(l)}(U), a_2\in \Hsh^{\wedge}_{(l)}(U)$.
Moreover, any such derivation lifts
to $\Hsh^{\wedge}_{(l-1)}(U)$. Here $\{\cdot,\cdot\}$ is defined as the continuous extension
of $\{\cdot,\cdot\}:\Zsh_{(l)}\otimes \Hsh_{(l)}\rightarrow \Hsh_{(l)}$ see the concluding
remarks in Subsection \ref{SUBSECTION_spherical}.
\item[(B$_l$)] There is an Euler derivation of $\Hsh^{\wedge}_{(l)}(U)$.
\item[(C$_l$)] Any  derivation $d_0$ of $\Hsh^{\wedge}_{(l)}(U)$ vanishing
on $\Zsh^\wedge_{(l)}(U)$ is inner.
\end{itemize}

(A$_1$)\&(C$_1$)$\Rightarrow$(A): Let $d\in \Der_{R[[\param^*]]}(\Hsh^{\wedge}(U))$ and let $d_0$ be the corresponding
derivation of $\Hsh^\wedge_{(1)}(U)$. Choose $a_1$ as in  (A$_1$) so that the restriction
of $d_0$ to $\Zsh_{(1)}^\wedge(U)$ coincides with $\{a_1,\cdot\}$. Lift $a_1$ to an element
$a'\in \Hsh^\wedge(U)$. Replacing $d$ with $d:=d-\frac{1}{\cb_0}\ad(a')$ we get that
$d_0$ vanishes on $\Zsh_{(1)}^\wedge(U)$. So by (C$_1$), $d_0$ is inner, i.e., there is $a_2$ such that
$d_0=\ad(a_2)$. Lift $a_2$ to an element $a''$ of $\Hsh^\wedge(U)$. Replacing $d$ with $d-\ad(a'')$ we get
$d_0=0$. In other words, $\im d\subset \cb_0 \Hsh^\wedge(U)$. Since $\Hsh^\wedge(U)$ is a flat $\K[[\cb_0]]$-module
(Corollary \ref{Cor:1.22}),
we see that $d=\cb_0 d_1$ for some $d_1\in \Der_{R[[\param^*]]}(\Hsh^\wedge(U))$, which completes the proof of the
lifting property. We can apply the same argument
to $d_1$ and replace $\cb_0 d_1$ with $\cb_0^2 d_2$ and so on.  Since $\Hsh^{\wedge}(U)$ is complete in
the $\cb_0$-adic topology, we see that $d$ has the required form.

The proofs of the implications (C$_l$)$\Rightarrow$(C$_{l-1}$) and (A$_l$)\&(C$_l$)$\Rightarrow$(A$_{l-1}$) for $l>1$ are very similar and so we omit them (one needs to consider $\{a',\cdot\}$ instead of $\frac{1}{\cb_0}\ad(a')$).

Let us now prove  (A$_{r+1}$) and (C$_{r+1}$). Both stem from the following result.
\begin{Lem}\label{Lem:2.21}
The following statements hold:
\begin{enumerate}
\item The restriction of derivations from $\Hsh^{\wedge}_{(r+1)}(U)$ to $\Zsh^{\wedge}_{(r+1)}(U)$ gives
rise to an isomorphism $\HH^1_{R}(\Hsh^{\wedge}_{(r+1)}(U))\rightarrow \HH^1_{R}(\Zsh^\wedge_{(r+1)}(U))=\Der_R(\Zsh^\wedge_{(r+1)}(U))$, where $\HH$ denote
the Hochschild cohomology.
\item Any Poisson derivation of $\Zsh^\wedge_{(r+1)}(U)$ is inner.
\end{enumerate}
\end{Lem}
\begin{proof}[Proof of Lemma \ref{Lem:2.21}]
Recall the isomorphism $\Theta_0^U:\Hsh^{\wedge}_{(r+1)}(U)\rightarrow \CC(
\underline{\Hsh}^\wedge_{(r+1)}(U))$ from Proposition \ref{Prop:2.1}. This isomorphism
restricts to an isomorphism $\Zsh^\wedge_{(r+1)}(U)\rightarrow \underline{\Zsh}^\wedge_{(r+1)}(U)$. As we explained
in Subsection \ref{SUBSECTION_centralizer}, there is a natural map \begin{equation}\label{eq:2.21}\Der_{R}(\underline{\Hsh}^\wedge_{(r+1)}(U))
\rightarrow \Der_R(\CC(\underline{\Hsh}^\wedge_{(r+1)}(U))).\end{equation}   The algebra
$\CC(\underline{\Hsh}^\wedge_{(r+1)}(U))$ is isomorphic to a matrix algebra
over $\underline{\Hsh}^\wedge_{(r+1)}(U)$. It follows that the map (\ref{eq:2.21}) yields an isomorphism
of the 1st Hochshild cohomology groups. Since this map intertwines the restriction maps
$\Der_R(\underline{\Hsh}^\wedge_{(r+1)}(U))\rightarrow \Der_R(\underline{\Zsh}^\wedge_{(r+1)}(U)),
\Der_R(\CC(\underline{\Hsh}^\wedge_{(r+1)}(U)))\rightarrow \Der_R(\underline{\Zsh}^\wedge_{(r+1)}(U))$,
we see that it is enough to prove the analogs of (1),(2) for $\underline{\Hsh}^\wedge_{(r+1)}(U),\underline{\Zsh}^\wedge_{(r+1)}(U)$.

Let us prove (1). Recall the fiberwise Euler derivation $\underline{E}_{f,(r+1)}$ of $\underline{\Hsh}^\wedge_{(r+1)}(U)$. Note that any element of $\Der_R(\underline{\Hsh}^\wedge_{(r+1)}(U))$ is uniquely determined by its restrictions to $\K\underline{\Gamma}$ and $V$. These subspaces of $\underline{\Hsh}^\wedge_{(r+1)}(U)$ have degrees 0,1 with respect to $\underline{E}_{f,(r+1)}$.
It follows that any element $d\in \Der_R(\underline{\Hsh}^\wedge_{(r+1)}(U))$
can be uniquely written as the (automatically converging) sum $\sum_{i=-1}^\infty d_i$, where $[\underline{E}_{f,(r+1)},d_i]=i \cdot d_i$. Note that each $d_i$ is a derivation of $\Hsh_{(r+1)}(U)=R\otimes SV\# \underline{\Gamma}$. It is easy to see that $\Der_R(R\otimes SV\#\underline{\Gamma})=R\otimes \Der(SV\#\underline{\Gamma})$, $\Der_R(R\otimes (SV)^{\underline{\Gamma}})=R\otimes \Der((SV)^{\underline{\Gamma}})$.

Analogously to the proof of Theorem 9.1 in \cite{Etingof}, one can show that the natural map
$(\Der(SV))^{\underline{\Gamma}}\rightarrow \Der(SV\#{\underline{\Gamma}})$ gives rise to an isomorphism
$(\Der(SV))^{\underline{\Gamma}}\rightarrow \HH^1(SV\#{\underline{\Gamma}})$. Since
the morphism $V\rightarrow V/\underline{\Gamma}$ is \'{e}tale in codimension 1, we see that
the restriction map $(\Der(SV))^{\underline{\Gamma}}\rightarrow \Der((SV)^{\underline{\Gamma}})$
is an isomorphism. (1) follows.

The proof of (2) is similar. Namely, we reduce the proof to checking that any Poisson
$R$-linear  derivation of $\underline{\Zsh}_{(r+1)}(U)=R\otimes (SV)^{\underline{\Gamma}}$ is Hamiltonian.
We have $\operatorname{PDer}(\underline{\Zsh}_{(r+1)}(U))=R\otimes \operatorname{PDer}((SV)^{\underline{\Gamma}})$,
where $\operatorname{PDer}$ denotes the space of $R$-linear Poisson derivations. The isomorphism
$(\Der(SV))^{\underline{\Gamma}}\rightarrow \Der((SV)^{\underline{\Gamma}})$ restricts to an isomorphism
$[\operatorname{PDer}(SV)]^{\underline{\Gamma}}\rightarrow \operatorname{PDer}((SV)^{\underline{\Gamma}})$.
However, any Poisson derivation of $SV$ is Hamiltonian.
\end{proof}

(B$_1$)$\Rightarrow$(B): Let $E_1$ be an Euler derivation of $\Hsh^{\wedge}_{(1)}(U)$.  Then
$E_{f,(1)}-E_1$ is a Poisson derivation of $\Hsh^\wedge_{(1)}(U)$.  By (A$_1$), being a Poisson derivation of $\Hsh^\wedge_{(1)}(U)$,  $E_{f,(1)}-E_1$ can be lifted to an element  $d\in\Der_{R[[\param^*]]\Gamma}(\Hsh^{\wedge}(U))$.  Replace $d$
with its $\K^\times\times\Gamma \times \Xi$-invariant component (this is possible
because the action of $\K^\times$ is pro-algebraic). Then $E_f+d$ is an Euler derivation of $\Hsh^{\wedge}(U)$.

The proof of (B$_l$)$\Rightarrow$(B$_{l-1}$) for $l>1$ is completely analogous.

The claim (B$_{r+1}$) is vacuous, for  $\Theta_0^{-1}\circ\underline{E}_{f,(r+1)}\circ \Theta_0$ is an Euler derivation of $\underline{\Hsh}^{\wedge}_{(r+1)}(U)$.
\end{proof}

The following proposition explains why the existence of an Euler derivation is important.

\begin{Prop}\label{Prop:2.4}
Let $E$ be an Euler derivation of $\Hsh^{\wedge}(U)$.
Set $\Hsh^i(U):=\{a\in \Hsh^{\wedge}(U): Ea=ia\}, i\in \ZZ, \Hsh^i_{(r+1)}(U):=\{a\in \Hsh_{(r+1)}^{\wedge}(U): \Theta_0^{-1}\circ \underline{E}_{f,(r+1)}\circ \Theta_0 (a)=ia\}$.
Then the following claims hold:
\begin{enumerate}
\item $\Hsh^i(U)=\{0\}$ for $i<0$.
\item $\hat{\rho}: \Hsh^\wedge(U)\rightarrow \Hsh^\wedge_{(r+1)}(U)$ gives  isomorphisms  $\Hsh^i(U)\rightarrow \Hsh^i_{(r+1)}(U), i=0,1$.
\item The map $\prod_{i=0}^\infty \Hsh^i(U)\rightarrow \Hsh^{\wedge}(U),\prod_{i=0}^\infty v^{(i)}\mapsto \sum_{i=0}^\infty v^{(i)},$ is well-defined (meaning that the r.h.s. converges) and is a bijection.
\end{enumerate}
\end{Prop}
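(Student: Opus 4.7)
The plan is to prove the three claims simultaneously by downward induction on $l$, treating at each step the analogous statements for $\Hsh^\wedge_{(l)}(U)$ equipped with the Euler derivation induced from $E$; the proposition itself is the case $l=0$. For the base case $l=r+1$, Proposition~\ref{Prop:2.1} identifies $\Hsh^\wedge_{(r+1)}(U)$ with $\CC(\underline{\Hsh}^\wedge_{(r+1)}(U))$, which is a matrix-type algebra over $R\,\widehat{\otimes}\,\K[[V^*]]\#\underline{\Gamma}$ with $R:=\K[U]$, and sends $E$ to the entrywise derivation $\underline{E}_{f,(r+1)}$. Because $\underline{E}_{f,(r+1)}$ is $R$-linear, trivial on $\underline{\Gamma}$, and scales $V$ by $1$, its weight-$i$ eigenspace is the centralizer version of $R\otimes S^iV\otimes\K\underline{\Gamma}$: zero for $i<0$, matching $\Hsh^i_{(r+1)}(U)$ tautologically for $i=0,1$, and combining into $\prod_i \Hsh^i_{(r+1)}(U)\cong \Hsh^\wedge_{(r+1)}(U)$ via the Taylor expansion in $V$. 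Thus all three claims hold at this level.

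For the inductive step $l+1\Rightarrow l$, the main tool is the short exact sequence $0\to(\cb_l)\Hsh^\wedge_{(l)}(U)\to \Hsh^\wedge_{(l)}(U)\to \Hsh^\wedge_{(l+1)}(U)\to 0$, together with the fact that $\cb_l$ is not a zero divisor in $\Hsh^\wedge_{(l)}(U)$, which follows from the flatness results of Corollary~\ref{Cor:1.22}. For claim $(1)$, if $a\in\Hsh^i_{(l)}(U)$ with $i<0$, its image in $\Hsh^\wedge_{(l+1)}(U)$ lies in $\Hsh^i_{(l+1)}(U)=0$, so $a=\cb_l b$; then $Ea=ia$ and $E\cb_l=2\cb_l$ force $Eb=(i-2)b$, and since $i-2<0$ as well the descent iterates, placing $a\in\bigcap_n(\cb_l)^n\Hsh^\wedge_{(l)}(U)\subset\bigcap_n\m_{(l)}^n\Hsh^\wedge_{(l)}(U)=0$ by $\m_{(l)}$-adic completeness. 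Injectivity of $\hat\rho$ on $\Hsh^i_{(l)}(U)$ for $i\in\{0,1\}$ reduces to the same argument, since $a=\cb_l b$ with $Eb=(i-2)b$ of weight $i-2<0$ kills $b$ by $(1)$ at level $l$.

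Surjectivity in $(2)$ is the heart of the inductive step. Given $\bar b\in\Hsh^i_{(l+1)}(U)$ with $i\in\{0,1\}$, pick any lift $a_0\in\Hsh^\wedge_{(l)}(U)$ and write $(E-i)(a_0)=\cb_l c_0$. Modifying $a_0$ to $a_0+\cb_l d_0$ reduces improvement modulo $(\cb_l)^2$ to solving $(E-(i-2))(d_0)\equiv -c_0\pmod{\cb_l}$ in $\Hsh^\wedge_{(l+1)}(U)$. Using claim $(3)$ at level $l+1$ to decompose $\bar c_0$ as a topological product of $E$-weight components $c_0^{(j)}\in\Hsh^j_{(l+1)}(U)$, the only possible obstruction sits in weight $j=i-2<0$ and vanishes by claim $(1)$ at level $l+1$; on every other weight the scalar $j-i+2$ is nonzero, making $E-(i-2)$ invertible and producing $\bar d_0$ weight-by-weight. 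Iterating this correction modulo successively higher powers of $\cb_l$ and passing to the $\m_{(l)}$-adic limit yields the desired lift $a\in\Hsh^i_{(l)}(U)$. For claim $(3)$ at level $l$, convergence of $\sum v^{(i)}$ in the $\m_{(l)}$-adic topology follows from the containment $\Hsh^i_{(l)}(U)\subset\m_{(l)}^{\lceil i/2\rceil}\Hsh^\wedge_{(l)}(U)$, valid because $V\subset\m_{(l)}$ contributes $E$-weight $1$ and $\param\subset\m_{(l)}$ contributes $E$-weight $2$. Exhaustiveness and injectivity are obtained by pairing the pro-algebraic $\K^\times$-action with $E$: since $E$ is $\K^\times$-equivariant by the definition of an Euler derivation, it preserves each $\K^\times$-weight component, on which $E$ has finite spectrum (at the base it is $\{0,1,\dots,k\}$ in $\K^\times$-weight $k$), and this finite-spectrum picture lifts across the $(\cb_l)$-extension. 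The main obstacle is precisely this last bookkeeping step: ensuring that the joint $\K^\times$-and-$E$-weight decomposition survives each $(\cb_l)$-adic lift so that the product decomposition in $(3)$ remains bijective and the weight spaces assemble into the correct topological completion.
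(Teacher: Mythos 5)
Your step-by-step induction on $l$ is a genuinely different organization from the paper's argument. The paper proves (1) and (3) once, at level $r+1$, and then constructs in one stroke an $E$-equivariant $\K[[\param^*]]$-linear isomorphism $\Hsh^\wedge(U)\cong\Hsh^\wedge_{(r+1)}(U)[[\param^*]]$ (with $E$ acting on $\param$ by $2$), from which all three claims are read off. Your arguments for (1) and for injectivity and surjectivity in (2) are sound and close in spirit to what such a one-shot construction would do order-by-order in $\param$; in particular the observation that the weight-$(i-2)$ obstruction vanishes because $i-2<0$ for $i\in\{0,1\}$ is exactly the place where hypothesis (1) at the previous level enters, just as it does in the paper.

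The gap is in (3), and it is more than bookkeeping; two of your supporting claims are false as stated. First, $V\not\subset\m_{(l)}$: an element $v\in V\subset\K[V^*]$ does not vanish at the points $(b,\gamma b)$ with $b\in\underline{\Leaf}$, so $1\otimes v$ is not in the ideal $I$ defining $\m$. What actually lies in $\m$ and has $E$-weight $1$ is the ``shifted'' copy $\iota(V)$ constructed in the proof of Proposition~\ref{Prop:2.0.11}, i.e.\ elements of $\Hsh^1(U)$ themselves. But asserting $\Hsh^i(U)\subset\m^{\lceil i/2\rceil}$ is then essentially equivalent to (3), so you cannot use it as a free input. Second, the claim that $E$ has finite spectrum $\{0,1,\dots,k\}$ on each $\K^\times$-weight-$k$ component is also wrong: $E_{f,(r+1)}$ is $\Str_{\underline{\Leaf}}$-linear, so all of $\K[U]$ sits in $E$-weight $0$, yet since $U$ is an affine chart obtained by localizing inside $\underline{\Leaf}$, $\K[U]$ contains nonconstant $\K^\times$-eigenvectors of arbitrary (in particular negative) weight. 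Multiplying these by $S^mV$ produces elements of any fixed $\K^\times$-weight $k$ with unbounded $E$-weight $m$. So the ``finite-spectrum picture'' you hope to lift across the $(\cb_l)$-extensions is not there even at the base. The paper avoids both pitfalls by building the $E$-equivariant isomorphism onto the explicit model $\Hsh^\wedge_{(r+1)}(U)[[\param^*]]$, where the weight decomposition, the containment $\Hsh^i\subset\m^{\lceil i/2\rceil}$, and convergence are transparent; you would need to supply a correct replacement for this piece, for instance by first proving that your partial $E$-eigenspace lifts in (2) actually land in $\m_{(l)}$ and then bootstrapping (3) from that, rather than invoking the two false statements above.
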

\begin{proof}
Let us define the action of $E$ on $\Hsh^\wedge_{(r+1)}(U)[[\param^*]]$ as follows: $E$ acts on $\Hsh^{\wedge}_{(r+1)}(U)$ by $\Theta_0^{-1}\circ \underline{E}_{f,(r+1)}\circ\Theta_0$ and by 2 on $\param$. We remark that the analogs of
claims (1),(3) hold for $\underline{\Hsh}^{\wedge}_{(r+1)}(U)$. Recall that $\Hsh^\wedge(U)$ is $\K[[\param^*]]$-flat
and we have the natural isomorphism $\Hsh^{\wedge}(U)/(\param)\cong \Hsh^{\wedge}_{(r+1)}(U)$ intertwining the
action of $E$. Now (1) for $\Hsh^{\wedge}_{(r+1)}(U)$ implies that there is a $E$-equivariant $\K[[\param^*]]$-linear isomorphism $\Hsh^\wedge(U)\cong \Hsh^\wedge_{(r+1)}(U)[[\param^*]]$.

All three claims follow easily from here.
\end{proof}

In the next subsection we will need the following result.

\begin{Lem}\label{Lem:auxil}
Let $U\subset \underline{\Leaf}$ be an open affine subset, and $X\in \z^{\tb}(\CC(\underline{\Hsh}^\wedge(U)))$.
Suppose that the image of the derivation $\frac{1}{\tb}\ad X$ lies in $\cb \z^{\tb}(\CC(\underline{\Hsh}^\wedge(U)))$.
Then $X\in \cb \z^{\tb}\left(\CC(\underline{\Hsh}^\wedge(U))\right)+\K[U]$.
\end{Lem}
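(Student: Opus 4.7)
Write $A := \CC(\underline{\Hsh}^\wedge(U))$ and $d := \frac{1}{\tb}\ad X$. The strategy is to first show that $X$ is central modulo $\param$ and then upgrade this to $X$ being a Poisson Casimir modulo $\param$, identify the Casimirs, and finally bootstrap using Euler weights.

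First I would show $\bar X := X\bmod \param$ is central in $A_0 := A/\param A$. Indeed, the hypothesis gives $[X,y]=\tb\cdot d(y)\in \tb\cdot\cb\z^{\tb}\subset \param^{2}A$ for every $y\in A$, since $\tb\in\param$. Consequently $\bar X$ lies in $\z(A_0)$, which by Morita equivalence with $\underline{\Hsh}^{\wedge}_{(r+1)}(U)$ and Corollary \ref{Cor:1.24} is the completion of $R\otimes (SV)^{\underline{\Gamma}}$ at the zero-section ideal; here $R=\K[U]$.

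Next I would upgrade centrality to a Poisson Casimir condition. Restricting $d$ to $\z^{\tb}$ and reducing modulo $\tb$ yields the Poisson derivation $\{X\bmod \tb,\cdot\}$ on $\z^{\tb}/\tb\z^{\tb}\cong \z(A/\tb A)=\underline{\Zsh}^{\wedge}_{(1)}(U)$ with image in $\param_{(1)}\cdot \z(A/\tb A)$. Reducing further modulo $\param_{(1)}$, the induced Poisson derivation on $\z(A_0)$ vanishes, so $\bar X$ is a Poisson Casimir. Now the Poisson bracket on $\z(A_0)=R\widehat{\otimes}(SV)^{\underline{\Gamma},\wedge_0}$ induced from the $\tb$-quantization acts trivially on $R$ (since $R$ and $\underline{\Halg}$ commute in $\underline{\Hsh}$) and is the standard symplectic bracket on $(SV)^{\underline{\Gamma}}$. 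The Poisson center of $(SV)^{\underline{\Gamma},\wedge_0}$ at the origin is only $\K$ because the generic symplectic leaf is open near $0$. Hence the Poisson Casimirs of $\z(A_0)$ form precisely $R\cdot \K=\K[U]$, so $\bar X\in \K[U]$.

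Finally I would bootstrap using the Euler weight decomposition of (the analog for $A$ of) Proposition \ref{Prop:2.4}. Writing $X=\sum_{i\geqslant 0}X^{(i)}$ with $X^{(i)}$ of Euler weight $i$, the hypothesis splits equivariantly: $\frac{1}{\tb}\ad X^{(i)}$ has image in the weight-$(i-2)$ component of $\cb\z^{\tb}$. Since $\cb\z^{\tb}$ has Euler weights $\geqslant 2$, for $i\leqslant 3$ this image is zero, forcing $X^{(i)}\in \z(A)=R[[\param^{*}]]$; combined with the previous step, this gives $X^{(0)}\in R=\K[U]$, $X^{(1)}=X^{(3)}=0$, and $X^{(2)}\in R\cdot\param\subset \cb\z^{\tb}$. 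For $i\geqslant 4$ the Poisson Casimir argument above (applied weight-by-weight) gives $X^{(i)}\bmod \param=0$ because $\K[U]$ has only weight $0$; combined with Proposition \ref{Prop:2.3}(A) applied to the $R[[\param^{*}]]$-linear derivation $\frac{1}{\tb}\ad X^{(i)}$ whose image lies in $\cb\z^{\tb}$, one finds another weight-$i$ element $Y^{(i)}\in \cb\z^{\tb}$ with $\frac{1}{\tb}\ad Y^{(i)}=\frac{1}{\tb}\ad X^{(i)}$, so $X^{(i)}-Y^{(i)}\in \z(A)$ at positive weight $i$, which lies in $R\cdot S^{\ast}\param\subset \cb\z^{\tb}$. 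Summing over $i$ yields $X\in \K[U]+\cb\z^{\tb}$.

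\textbf{Main obstacle.} The delicate point is the last step: concluding $X^{(i)}\in \cb\z^{\tb}$ for $i\geqslant 4$ rather than merely $X^{(i)}\in \param A\cap \z^{\tb}$. The intersection $\param A\cap \z^{\tb}$ is strictly larger than $\cb\z^{\tb}=\param\z^{\tb}$ in general, because $\tb A\subset \z^{\tb}$ automatically (so $\tb A\cap \z^{\tb}=\tb A$) whereas $\tb A\not\subset \tb\z^{\tb}$ whenever $\z^{\tb}\neq A$. The bootstrap has to exploit the stronger fact that $\frac{1}{\tb}\ad X^{(i)}$ itself lands in $\cb\z^{\tb}$ (not only in $\param A$), so that Proposition \ref{Prop:2.3}(A) can produce a generator in $\cb\z^{\tb}$ modulo the central kernel $\z(A)$, whose weight-zero part is exactly $\K[U]$.
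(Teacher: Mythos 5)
Your reduction modulo $\param$ (the Poisson Casimirs of the centre of $\CC(\underline{\Hsh}^{\wedge}_{(r+1)}(U))$ are exactly $\K[U]$, hence $\bar X\in\K[U]$) agrees with the base of the paper's argument, but the bootstrap that is supposed to handle the higher Euler weights has a genuine gap, and it sits exactly at the point you yourself flag as the main obstacle. Write $A:=\CC(\underline{\Hsh}^\wedge(U))$. Proposition \ref{Prop:2.3}(A) only asserts that an $\K[U][[\param^*]]$-linear derivation of $A$ is of the form $\frac{1}{\tb}\ad a$ with $a\in\z^{\tb}(A)$; it gives no control of $a$ modulo $\param$, and since your derivation is already $\frac{1}{\tb}\ad X^{(i)}$ it returns nothing new. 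So you do not obtain a generator $Y^{(i)}\in\cb\,\z^{\tb}(A)$; what the hypothesis plus the weight argument actually gives is only $X^{(i)}\in\param A\cap\z^{\tb}(A)$ (pure weight $i$, vanishing mod $\param$). Passing from $\param A\cap\z^{\tb}$ to $\param\z^{\tb}$ is the whole difficulty: writing $X^{(i)}=\sum_j\cb_jY_j$, the condition $\sum_j\cb_j[Y_j,y]\in\tb A$ does not force each $[Y_j,y]\in\tb A$ because of cancellations among the different $\cb_j$, and since every $\cb_j$ has Euler weight $2$ the grading cannot separate them. Moreover, the statement you would need — a derivation $\frac{1}{\tb}\ad X$ that is trivial modulo $(\param)$ admits a generator in $\K[U][[\param^*]]+\param\z^{\tb}$ — is precisely Lemma \ref{Lem:auxil} itself; this is exactly how the lemma is used in the proof of Lemma \ref{Lem:2.0.1}. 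So the key step is circular. (A smaller error: the vanishing claim for $i\leqslant 3$ fails at $i=3$, since the images of the weight-$1$ generators then have weight $2$, which is permitted in $\cb\,\z^{\tb}$; correspondingly $X^{(3)}$ need not vanish, it only has to lie in $\param\z^{\tb}$.)

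The paper avoids the multi-parameter cancellation by inducting on the parameters one at a time: it proves $X\in(\cb_0,\ldots,\cb_{i-1})\z^{\tb}+(\cb_i,\ldots,\cb_r)A$ by induction on $i$, using the surjectivity of $\z^{\tb}(A)\rightarrow\z^{\tb}\bigl(A/(\cb_{i+1},\ldots,\cb_r)\bigr)$ (compare Corollary \ref{Cor:1.24}) to lift, so that the discrepancy at each step is a single term $\cb_iY$ lying in $\z^{\tb}$ of the quotient; flatness of the quotient over $\K[\cb_0,\ldots,\cb_i]$ then yields $Y\in\z^{\tb}$, a one-variable division in which no cancellation can occur. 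Any repair of your weight-graded approach would still have to incorporate such a one-parameter-at-a-time argument (or an equivalent regular-sequence argument) to cross from $\param A\cap\z^{\tb}$ to $\param\z^{\tb}$.
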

\begin{proof}
We will prove by the  induction on $i=0,\ldots,r+1$   that $$X\in (\cb_0,\ldots,\cb_{i-1})\z^{\tb}(\CC(\underline{\Hsh}^\wedge(U)))+(\cb_i,\ldots,\cb_r)\CC(\underline{\Hsh}^\wedge(U)).$$

Let $X_i$ denote the image of $X$ in $\CC(\underline{\Hsh}^\wedge(U))/(\cb_i,\ldots,\cb_r)$.
Since $\K[U]$ coincides with the Poisson center of $\CC(\underline{\Hsh}_{(r+1)}^{\wedge}(U))$, we see that
$X_0\in \K[U]$ and so we have the base of induction.

Now suppose that we have proved our claim for $i$. Let us prove it for $i+1$. The natural
homomorphism $\z^{\tb}\left(\CC(\underline{\Hsh}^\wedge(U))\right)\rightarrow \z^{\tb}(\CC(\underline{\Hsh}^\wedge(U))/(\cb_{i+1},\ldots,\cb_r))$ is surjective (compare with Corollary \ref{Cor:1.24}). This reduces the claim for $i+1$ to checking
that $X_{i+1}\in (\cb_0,\ldots,\cb_{i})\z^{\tb}\left(\CC(\underline{\Hsh}^\wedge(U))/(\cb_{i+1},\ldots,\cb_r)\right)$.
Let $X_i'$ denote some lifting of $X_i$ to the algebra $\z^{\tb}\left(\CC(\underline{\Hsh}^\wedge(U))/(\cb_{i+1},\ldots,\cb_r)\right)$.
Then $X_i'-X_{i+1}=\cb_i Y$ for some $Y\in \CC(\underline{\Hsh}^\wedge(U))/(\cb_{i+1},\ldots,\cb_r)$.
Of course, $\cb_i Y\in \z^{\tb}(\CC(\underline{\Hsh}^\wedge(U))/(\cb_{i+1},\ldots,\cb_r))$.
Since $\CC(\underline{\Hsh}^\wedge(U))/(\cb_{i+1},\ldots,\cb_r)$ is flat over $\K[\cb_0,\ldots,\cb_{i}]$,
we see that $Y\in \z^{\tb}(\CC(\underline{\Hsh}^\wedge(U))/(\cb_{i+1},\ldots,\cb_r))$, and we are done.
\end{proof}

\subsection{Proof of Theorem \ref{Thm:2.0}}\label{SUBSECTION_ProofII}
In this subsection we will prove Theorem \ref{Thm:2.0}. The first step is to show that
$\Theta$ exists locally. As before, let
 $U$ be a $\K^\times\times \Xi$-stable open affine subvariety of $\underline{\Leaf}$
 and  $R:=\K[U]$.

\begin{Prop}\label{Prop:2.0.11}
There is an $R[[\param^*]]$-linear $\K^\times\times\Gamma\times\Xi$-equivariant isomorphism $\Theta^U:\Hsh^\wedge(U)\rightarrow \CC(\underline{\Hsh}^\wedge(U))$ lifting $\Theta_0^U:\Hsh^\wedge_{(r+1)}(U)\rightarrow
\CC(\underline{\Hsh}^\wedge_{(r+1)}(U))$.
\end{Prop}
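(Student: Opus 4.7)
The plan is to prove Proposition~\ref{Prop:2.0.11} by successive approximation, building $\Theta^U$ inductively modulo higher and higher powers of the augmentation ideal $(\param)\subset S(\param)$ and passing to the $(\param)$-adic limit. The base step is the isomorphism $\Theta_0^U\colon \Hsh_{(r+1)}^\wedge(U)\xrightarrow{\sim}\CC(\underline{\Hsh}_{(r+1)}^\wedge(U))$ of Proposition~\ref{Prop:2.1}; the completeness of both sheaves in the $(\param)$-adic topology will deliver $\Theta^U$ from a compatible family $\Theta^U_k$.

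For the inductive step, suppose $\Theta^U_k\colon\Hsh^\wedge(U)/(\param)^k\xrightarrow{\sim}\CC(\underline{\Hsh}^\wedge(U))/(\param)^k$ is a $\K^\times\times\Gamma\times\Xi$-equivariant $R[[\param^*]]$-linear algebra isomorphism lifting $\Theta_0^U$. I would first pick an arbitrary $\K^\times\times\Gamma\times\Xi$-equivariant $R[[\param^*]]$-linear $R$-module lift $\tilde\Theta$ of $\Theta^U_k$ to level $k+1$: both sides are topologically flat over $R[[\param^*]]$ (Corollary~\ref{Cor:1.22} and its centralizer counterpart), and invariance is achieved by averaging, which is legitimate because the $\K^\times$-action is pro-algebraic (compare with the proof of Proposition~\ref{Prop:2.3}(B)). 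The failure of multiplicativity
\[
\delta(a,b):=\tilde\Theta(ab)-\tilde\Theta(a)\,\tilde\Theta(b)
\]
takes values in $(\param)^k/(\param)^{k+1}\cdot\CC(\underline{\Hsh}^\wedge(U))$ and, by a direct computation, is a Hochschild $2$-cocycle on the algebra $\Hsh_{(r+1)}^\wedge(U)$ with appropriate coefficients. The aim is to replace $\tilde\Theta$ by $\tilde\Theta+\sigma$ for a suitable equivariant $R[[\param^*]]$-linear cochain $\sigma$ killing $\delta$.

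The main obstacle is exhibiting $\delta$ as an equivariant coboundary. This is where Proposition~\ref{Prop:2.3}(A) enters as the crucial input: it says every $R[[\param^*]]$-linear derivation of $\Hsh^\wedge(U)$ is of the form $\frac{1}{\cb_0}\ad a$ for some $a\in\Zsh^\wedge(U)$, which together with the centralizer-side analog of Lemma~\ref{Lem:2.21} (transporting Hochschild cohomology through the Morita equivalence of Subsection~\ref{SUBSECTION_centralizer}) provides enough cohomological control to trivialize $\delta$. The technical subtlety is that Proposition~\ref{Prop:2.3}(A) is an $\HH^1$-statement while $\delta$ is a priori a $2$-cocycle; the bridge is to reduce the two-variable obstruction to a one-variable derivation problem by the usual trick of combining $\delta$ with the already-constructed $\Theta^U_k$, then averaging the resulting cochain over $\K^\times\times\Gamma\times\Xi$ to preserve equivariance.

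Finally, bijectivity of the limiting map $\Theta^U$ follows from the fact that $\Theta^U$ reduces modulo $(\param)$ to the isomorphism $\Theta_0^U$, combined with the $(\param)$-adic completeness and flatness established in Corollary~\ref{Cor:1.22}, by a Nakayama-style argument applied in the complete-filtered setting.
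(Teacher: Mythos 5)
Your approach---successive approximation modulo powers of $(\param)$---is genuinely different from the paper's, and it has a real gap at the decisive step. The obstruction to lifting an algebra homomorphism from level $k$ to level $k+1$ is a Hochschild $2$-cocycle $\delta$, and to kill it you must know that the relevant (equivariant) $\HH^2$ with coefficients in $(\param)^k/(\param)^{k+1}\otimes\CC(\underline{\Hsh}^{\wedge}_{(r+1)}(U))$ vanishes. But Proposition~\ref{Prop:2.3}(A) and Lemma~\ref{Lem:2.21} are $\HH^1$ statements: they control derivations, and hence only the \emph{ambiguity} in a lift once one exists, not the \emph{existence} of a lift. There is no ``usual trick'' that reduces the $\HH^2$ obstruction for lifting algebra homomorphisms to an $\HH^1$ derivation problem---this is precisely the missing content, and the paper nowhere establishes the needed $\HH^2$ vanishing. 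As written, your inductive step stalls.

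The paper sidesteps the obstruction calculus entirely with a direct construction that exploits the Euler derivation from Proposition~\ref{Prop:2.3}(B) and the eigenspace decomposition $\Hsh^{\wedge}(U)=\prod_{i\geqslant 0}\Hsh^i(U)$ of Proposition~\ref{Prop:2.4}. The degree-$0$ piece $\Hsh^0(U)$ is isomorphic via $\hat\rho$ to $\Hsh^0_{(r+1)}(U)$, so $\CC(\K\underline{\Gamma})$ embeds into $\Hsh^{\wedge}(U)$, and Lemma~\ref{Lem:0.0.12} then identifies $\Hsh^{\wedge}(U)$ with $\CC\bigl(e(\underline{\Gamma})\Hsh^{\wedge}(U)e(\underline{\Gamma})\bigr)$. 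The degree-$1$ piece supplies a canonical lift $\iota\colon V\hookrightarrow e(\underline{\Gamma})\Hsh^1(U)e(\underline{\Gamma})$, and the SRA relations~(\ref{eq:2.5}) for $\iota(V)$ come for free because $[\iota(u),\iota(v)]$ lies in degree $2$ while both sides of~(\ref{eq:2.5}) already agree modulo $\prod_{i\geqslant 3}\Hsh^i(U)$. In other words, the Euler derivation does the work a $\HH^2$-vanishing theorem would otherwise have to do; your plan needs either to invoke that derivation (in which case it collapses to the paper's argument) or to actually establish the cohomology vanishing it tacitly assumes.
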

\begin{proof}
Recall the idempotent $e(\underline{\Gamma})\in \CC(\underline{\Hsh}^\wedge_{(r+1)}(U))$. In the notation of
Proposition \ref{Prop:2.4}, we have
$\CC(\K\underline{\Gamma})\subset \Hsh^0_{(r+1)}(U)$. So we can use $\hat{\rho}$ to get an embedding $\CC(\K\underline{\Gamma})\hookrightarrow\Hsh^0(U)$. It is clear that $\hat{\rho}: \Hsh^0(U)\rightarrow \Hsh^0_{(r+1)}(U)$ is an isomorphism of algebras. So we may consider $\CC(\K\underline{\Gamma})$
as a subalgebra in $\Hsh^0(U)\subset \Hsh^\wedge(U)$.
Thanks to Lemma \ref{Lem:0.0.12}, the algebra
$\Hsh^{\wedge}(U)$ is naturally identified with
$\CC\left(e(\underline{\Gamma})\Hsh^{\wedge}(U)e(\underline{\Gamma})\right)$.
So we need to establish an isomorphism $e(\underline{\Gamma})\Hsh^{\wedge}(U)e(\underline{\Gamma})\rightarrow\underline{\Hsh}^\wedge(U)$.
Consider the subspace $e(\underline{\Gamma}) \Hsh^{1}(U)e(\underline{\Gamma})\subset e(\underline{\Gamma}) \Hsh^{\wedge}(U)e(\underline{\Gamma})$.
It is identified with $R\otimes (V\otimes \K\underline{\Gamma})$ by means of $\hat{\rho}$. Let $\iota:V\hookrightarrow e(\underline{\Gamma})\Hsh^{1}(U)e(\underline{\Gamma})$ be the corresponding embedding.

Let us check that
\begin{equation}\label{eq:2.5}
[\iota(u),\iota(v)]=\cb_0\omega(u,v)+\sum_{s\in S\cap\underline{\Gamma}}\cb(s)\omega_s(u,v)s.
\end{equation}
Consider $v\in V\hookrightarrow \Hsh(U)$. We can write the decomposition $v=\sum_{i=0}^\infty v^{(i)}$ with
$v^{(i)}\in \Hsh^i(U)$. The formulas (\ref{eq:2.1}) imply that $e(\underline{\Gamma})\Theta_0(v)=\Theta_0(v)e(\underline{\Gamma})=v+\langle\beta,v\rangle$ for all $v\in V$.
Therefore
\begin{equation}\label{eq:2.4}
e(\underline{\Gamma})ve(\underline{\Gamma})\equiv e(\underline{\Gamma})v\equiv ve(\underline{\Gamma})\equiv \langle\beta,v\rangle+\iota(v) \mod \prod_{i\geqslant 2}\Hsh^i(U).
\end{equation}
Recall that $[u,v]=\cb_0\omega(u,v)+\sum_{s\in S}\cb(s)\omega_s(u,v)s$. It follows that
\begin{equation}\label{eq:2.6}
e(\underline{\Gamma})[u,v]e(\underline{\Gamma})=\cb_0\omega(u,v)+\sum_{s\in S\cap\underline{\Gamma}}\cb(s)\omega_s(u,v)s.
\end{equation}
On the other hand, from (\ref{eq:2.4}) we deduce that
\begin{equation}\label{eq:2.7}
[\iota(u),\iota(v)]\equiv e(\underline{\Gamma})[u,v]e(\underline{\Gamma})\equiv \cb_0\omega(u,v)+\sum_{s\in S\cap\underline{\Gamma}}\cb_s\omega_s(u,v)s\mod \prod_{i\geqslant 3}\Hsh^i(U).
\end{equation}
Since $[\iota(u),\iota(v)]\in \Hsh^2(U)$, (\ref{eq:2.5}) is proved.

(\ref{eq:2.5}) implies that  there is a unique $R[[\param^*]]\underline{\Gamma}$-linear isomorphism $\Theta^U:e(\underline{\Gamma})\Hsh^\wedge(U)
e(\underline{\Gamma})\rightarrow \underline{\Hsh}^\wedge(U)$ coinciding with $\iota$ on $V$. The natural
extension of this isomorphism
$$\Theta^U:\Hsh^\wedge(U)=\CC(e(\underline{\Gamma})\Hsh^\wedge(U) e(\underline{\Gamma}))\rightarrow
\CC(\underline{\Hsh}^\wedge(U))$$
is the isomorphism we need in the proposition. This isomorphism is $\K^\times\times\Gamma\times\Xi$-equivariant
because the Euler derivation $E$ is  $\K^\times\times\Gamma\times \Xi$-equivariant, by definition.
\end{proof}

To finish the proof of Theorem \ref{Thm:2.0} (glue the isomorphisms $\Theta^U$
together in an appropriate way) we need two technical lemmas.

\begin{Lem}\label{Lem:2.0.1}
There are
\begin{itemize}\item[(i)] Elements  $X^{ij}\in \param \z^\tb(\CC(\underline{\Hsh}^\wedge(U_{ij})))$ satisfying (2),(3),(5) of Theorem \ref{Thm:2.0},
\item[(ii)] and a $\Gamma\times \Xi$-equivariant map $V_0^*\rightarrow \param\z^\tb(\CC(\underline{\Hsh}^\wedge))(U_{i}), \alpha\mapsto Y^i_\alpha,$
     of degree 2 with respect to $\K^\times$
\end{itemize}
such that for all $\alpha,\beta\in V_0^*$ the following hold
\begin{align}\label{eq:5.1.3}
&\Theta^{U_i}\circ L_\alpha\circ (\Theta^{U_i})^{-1}=\underline{\hat{L}}_\alpha+\frac{1}{\tb}\ad(Y^i_\alpha).\\
\label{eq:5.1.0}
&  c^{ijk}:=\tb\ln\left(\exp(\frac{1}{\tb}X^{ij})\exp(\frac{1}{\tb}X^{jk})\exp(\frac{1}{\tb}X^{ki})\right)\in \param\K[U_{ijk}][[\param^*]],\\\label{eq:5.1.5}
&c^{ij}(\alpha):=\check{\alpha}+Y^{i}_\alpha- \exp(\frac{1}{\tb}X^{ij})(\check{\alpha}+Y^{j}_\alpha) \exp(-\frac{1}{\tb}X^{ij})-\exp(\frac{1}{\tb}X^{ij})[\underline{L}_\alpha \exp(-\frac{1}{\tb}X^{ij})]\\\nonumber&\in \param\K[U_{ij}][[\param^*]],\\\label{eq:5.1.4}
&c^i(\alpha,\beta):=\underline{L}_\alpha(\check{\beta}+Y^i_\beta)-\underline{L}_\beta(\check{\alpha}+Y^i_\alpha)+
\frac{1}{\tb}[\check{\alpha}+Y_\alpha^i,\check{\beta}+Y_\beta^i]-\omega(\alpha,\beta)\in \param\K[U_i][[\param^*]].
\end{align}
\end{Lem}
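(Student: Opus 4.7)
The strategy is to produce $X^{ij}$ and $Y^i_\alpha$ from the local isomorphisms $\Theta^{U_i}$ of Proposition \ref{Prop:2.0.11} via the ``almost-inner derivation'' statement Proposition \ref{Prop:2.3}(A), then to adjust them by scalars and by $\K^\times\times\Gamma\times\Xi$-averaging so that invariance, weight and skew-symmetry hold, and finally to verify each of (\ref{eq:5.1.0})--(\ref{eq:5.1.4}) by recognising its left-hand side as an element of the actual centre of $\CC(\underline{\Hsh}^\wedge(U))$, which by Proposition \ref{Prop:1.311}(2) and Morita equivalence with $\underline{\Hsh}^\wedge(U)=\Str_U\,\widehat{\otimes}\,\underline{\Halg}^{\wedge_0}$ is the scalar sheaf $\K[U][[\param^*]]$.

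For $X^{ij}$: on $U_{ij}$ both $\Theta^{U_i}$ and $\Theta^{U_j}$ lift $\Theta_0$, so $\Phi^{ij}:=\Theta^{U_i}\circ(\Theta^{U_j})^{-1}$ is an $R[[\param^*]]$-linear automorphism of $\CC(\underline{\Hsh}^\wedge(U_{ij}))$ equal to the identity modulo $\param$; its logarithm in the $\param$-adic completion is an $R[[\param^*]]$-linear derivation valued in $\param$. Transport it through $\Theta^{U_j}$ to $\Hsh^\wedge(U_{ij})$ and apply Proposition \ref{Prop:2.3}(A) to write the result as $\frac{1}{\tb}\ad X^{ij}$ with $X^{ij}\in\z^\tb$; the mod-$\param$ vanishing of the derivation places $X^{ij}$ in $\param\z^\tb+R$ (either directly or via Lemma \ref{Lem:auxil}), and I subtract the scalar component to get $X^{ij}\in\param\z^\tb$. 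Equivariance and $\K^\times$-weight $2$ are achieved by averaging over $\K^\times\times\Gamma\times\Xi$ (the $\K^\times$-action is pro-algebraic and $\Theta^{U_i}$ is equivariant) and keeping the weight-$2$ component; these adjustments are by central elements and do not alter $\ad X^{ij}$. Skew-symmetry comes from $\exp(\frac{1}{\tb}\ad X^{ji})=(\Phi^{ij})^{-1}=\exp(-\frac{1}{\tb}\ad X^{ij})$, forcing $X^{ij}+X^{ji}$ to be central and hence scalar, so subtract it from $X^{ji}$.

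For $Y^i_\alpha$: form $D^i_\alpha:=\Theta^{U_i}\circ L_\alpha\circ(\Theta^{U_i})^{-1}-\underline{\hat{L}}_\alpha$. Lemma \ref{Lem:2.40} gives $D^i_\alpha\equiv0\pmod\param$, and both $L_\alpha$ and $\underline{\hat{L}}_\alpha$ act on $R$ as $-\partial_\alpha$, so $D^i_\alpha|_R=0$ and $D^i_\alpha$ is $R[[\param^*]]$-linear. Proposition \ref{Prop:2.3}(A) transported through $\Theta^{U_i}$ gives $D^i_\alpha=\frac{1}{\tb}\ad Y^i_\alpha$; after a scalar adjustment $Y^i_\alpha\in\param\z^\tb$, and a basis of $V_0^*$ together with averaging yields the required linearity, equivariance and weight. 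It remains to verify the three scalar identities. For (\ref{eq:5.1.0}) the cocycle $\Phi^{ij}\Phi^{jk}\Phi^{ki}=\id$ gives $\exp(\frac{1}{\tb}\ad c^{ijk})=\id$, so taking the logarithm in the completion forces $\frac{1}{\tb}\ad c^{ijk}=0$; hence $c^{ijk}$ is actually central and, being in $\param\z^\tb$ by construction, lies in $\param\K[U_{ijk}][[\param^*]]$. For (\ref{eq:5.1.5}) conjugating (\ref{eq:5.1.3}) for $j$ by $\Phi^{ij}$ and comparing with (\ref{eq:5.1.3}) for $i$ yields $\frac{1}{\tb}\ad c^{ij}(\alpha)=0$; the explicit term $\exp(\frac{1}{\tb}X^{ij})\underline{L}_\alpha\exp(-\frac{1}{\tb}X^{ij})$ in the definition of $c^{ij}(\alpha)$ is exactly the Baker--Campbell--Hausdorff correction arising when one conjugates the non-inner derivation $\underline{\hat{L}}_\alpha$ by an inner automorphism. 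For (\ref{eq:5.1.4}) commutativity $[L_\alpha,L_\beta]=0$ on $\Hsh^\wedge(U_i)$ transports through $\Theta^{U_i}$ into $[\underline{\hat{L}}_\alpha+\frac{1}{\tb}\ad Y^i_\alpha,\underline{\hat{L}}_\beta+\frac{1}{\tb}\ad Y^i_\beta]=0$; expanding using the canonical computation of $[\underline{\hat{L}}_\alpha,\underline{\hat{L}}_\beta]$, whose scalar discrepancy from zero is precisely $\omega(\alpha,\beta)$, collects the remainder into $\frac{1}{\tb}\ad c^i(\alpha,\beta)=0$.

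The main obstacle is the bookkeeping in the last two verifications: (\ref{eq:5.1.5}) and (\ref{eq:5.1.4}) are conceptually straightforward (``two equal maps have equal scalar parts'' and ``flat connections have zero curvature''), but matching the residual scalar with the explicit formulas in the lemma requires careful expansion of the conjugation-of-derivation series and of the bracket of two twisted connections, keeping track of which terms contribute genuine inner derivations and which only inner-modulo-central ones. Lemma \ref{Lem:auxil} provides a safety net for promoting central-modulo-$\tb$ elements to genuinely scalar ones if the logarithm step needs to be iterated through successive layers of the $\param$-filtration.
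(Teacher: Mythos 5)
Your proposal is correct and follows essentially the same route as the paper: take the logarithm of $\Theta^{U_i}\circ(\Theta^{U_j})^{-1}$ (resp.\ the difference $\Theta^{U_i}\circ L_\alpha\circ(\Theta^{U_i})^{-1}-\underline{\hat{L}}_\alpha$), write it as $\frac{1}{\tb}\ad$ of an element of $\z^{\tb}$ via Proposition \ref{Prop:2.3}(A), normalize into $\param\z^{\tb}$ using Lemma \ref{Lem:auxil} and subtraction of scalars, impose equivariance, weight $2$ and skew-symmetry by isotypic projection/averaging (harmless since the corrections are central), and then deduce (\ref{eq:5.1.0}), (\ref{eq:5.1.5}), (\ref{eq:5.1.4}) by observing that each expression has vanishing $\ad$, hence is central, and the center of $\CC(\underline{\Hsh}^\wedge(U))$ is $\K[U][[\param^*]]$ by Proposition \ref{Prop:1.311}. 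The extra details you supply (transport through $\Theta^{U_j}$, the centrality argument for $X^{ij}+X^{ji}$, the bookkeeping for (\ref{eq:5.1.5}) and (\ref{eq:5.1.4})) are exactly the steps the paper leaves implicit, so no substantive difference remains.
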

\begin{proof}
The automorphism $\Theta^{U_i}\circ (\Theta^{U_j})^{-1}$ of $\CC(\underline{\Hsh}^\wedge(U_{ij}))$ is the identity modulo $(\param)$. Therefore $d:=\ln(\Theta^{U_i}\circ (\Theta^{U_j})^{-1})$ converges and is a $\K^\times\times \Gamma\times \Xi$-equivariant $\K[U_{ij}][[\param^*]]$-linear derivation of $\CC(\underline{\Hsh}^\wedge(U_{ij}))$. Clearly, $d$ is zero modulo $(\param)$, and $\Theta^{U_i}\circ (\Theta^{U_j})^{-1}=\exp(d)$. By Proposition \ref{Prop:2.3}, $d=\frac{1}{\tb}\ad(X^{ij})$ for some $X^{ij}\in \z^{\tb}(\CC(\underline{\Hsh}^{\wedge}))(U_{ij})$.
By Lemma \ref{Lem:auxil}, since $d$ is trivial modulo $(\param)$, we have  $X^{ij}\in \K[U_{ij}][[\param^*]]+\param\z^{\tb}(\CC(\underline{\Hsh}^{\wedge}))(U_{ij})$.
Since $\K[U_{ij}][[\param^*]]\subset \z(\CC(\underline{\Hsh}^{\wedge}))(U_{ij})$,
we may assume that actually  $X^{ij}\in \param\z^{\tb}\left(\CC(\underline{\Hsh}^{\wedge})(U_{ij})\right)$.
Further, after replacing $X^{ij}$ with its appropriate isotypic component for
the action of $\K^\times\times\Gamma\times \Xi $, we may assume that $X^{ij}$ satisfies (2) of Theorem \ref{Thm:2.0}.
Also we may assume that $X^{ij}=-X^{ji}$.
Since  \begin{equation}\label{eq:5.1.1}\exp(\frac{1}{\tb}\ad X^{ij})=\Theta^{U_i}\circ (\Theta^{U_j})^{-1},\end{equation} we see that $\exp(\frac{1}{\tb}\ad X^{ki})\exp(\frac{1}{\tb}\ad X^{jk})\exp(\frac{1}{\tb}\ad X^{ij})=\id$. Therefore the left hand side of (\ref{eq:5.1.0}) lies in the center of $\CC(\underline{\Hsh}^\wedge(U_{ijk}))$.
Proposition \ref{Prop:1.311} implies that  the center of $\CC(\underline{\Hsh}^\wedge(U_{ij}))$
coincides with $\K[U_{ij}][[\param^*]]$ hence (\ref{eq:5.1.0}).

Similarly, we see that there is a map $\alpha\mapsto Y^i_\alpha$ satisfying (ii) and (\ref{eq:5.1.3}).
To prove (\ref{eq:5.1.4}) we notice that $[L_\alpha,L_\beta]=0$ and repeat the argument in the previous
paragraph. (\ref{eq:5.1.5}) follows in a similar way from (\ref{eq:5.1.1}) and (\ref{eq:5.1.3}) together.
\end{proof}


It is pretty easy to see that $c^{ij}(\alpha)=-c^{ji}(\alpha)$ and $c^{ijk}=-c^{jik}=-c^{ikj}$.
So the triple $c=(c^{i}(\bullet,\bullet), c^{ij}(\bullet), c^{ijk})$ is a 2-cochain in the \v{C}ech-De Rham complex of $\underline{\Leaf}$.

\begin{Lem}\label{Lem:2.0.2}
The 2-cochain $c$ is a coboundary.
\end{Lem}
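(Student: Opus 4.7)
The plan is to verify that $c = (c^{ijk}, c^{ij}(\bullet), c^i(\bullet,\bullet))$ is a $2$-cocycle in the total \v{C}ech--De Rham bicomplex of $\underline{\Leaf}$ (with \v{C}ech differential $\delta$ from the cover $\{U_i\}$ and De Rham differential $\underline{d}$ built from the flat connection $\alpha \mapsto \underline{L}_\alpha$), and then to exhibit an explicit contracting homotopy built from the Euler vector field on $V_0^*$ that exploits the positive $\K^\times$-weight of $c$.

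First, I would check the cocycle conditions: the four identities $\delta c^{ijk} = 0$, $\delta c^{ij} = \pm \underline{d} c^{ijk}$, $\delta c^i = \pm \underline{d} c^{ij}$, and $\underline{d} c^i = 0$ (where $\underline{d} c^i$ denotes the cyclic alternating sum of $\underline{L}_\alpha c^i(\beta,\gamma)$, and similarly for $\underline{d} c^{ij}$). Each follows from the defining formulas (\ref{eq:5.1.0})--(\ref{eq:5.1.4}) together with the flatness $[\underline{L}_\alpha,\underline{L}_\beta] = 0$, the associativity of the product in $\CC(\underline{\Hsh}^\wedge)$, and the Baker--Campbell--Hausdorff identity applied to products of the exponentials $\exp(\frac{1}{\tb}X^{ij})$ on triple and fourfold intersections; the resulting expressions land in the center $\K[U_\bullet][[\param^*]]$ and give the required identities. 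I would omit the routine manipulations.

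For the cohomology of the bicomplex: since $\{U_i\}$ is an affine cover of the smooth variety $\underline{\Leaf}$, higher \v{C}ech cohomology of each coherent sheaf $\Omega^q_{\underline{\Leaf}}$ vanishes, so the spectral sequence of the bicomplex collapses and the total cohomology is $H^\bullet_{DR}(\underline{\Leaf}) \otimes \param \K[[\param^*]]$. By construction each component of $c$ is $\K^\times$-homogeneous of weight $2$. Let $E$ denote the Euler vector field on $V_0^*$, which is tangent to the $\K^\times$-stable open $\underline{\Leaf}$. Cartan's formula $L_E = \underline{d}\iota_E + \iota_E \underline{d}$ acts as multiplication by $2$ on the weight-$2$ subcomplex, so $\tfrac{1}{2}\iota_E$ provides an explicit contracting homotopy: every closed weight-$2$ form is exact. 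Hence $c$ is a coboundary. Projecting the primitive onto its $\K^\times \times \Gamma \times \Xi$-invariant isotypic component preserves all equivariance needed later.

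The principal obstacle will be the cocycle verification: the $\frac{1}{\tb}$ inside the exponents makes the Baker--Campbell--Hausdorff series a priori divergent, but each $X^{ij}$ lies in $\param \z^\tb$, so $\ad X^{ij}$ lowers $\param$-order by one and the series converges term by term. One must carefully track both the \v{C}ech-index signs and the interplay with $\underline{L}_\alpha$, and confirm that the central corrections land in $\param \K[U_\bullet][[\param^*]]$ as promised by Lemma \ref{Lem:2.0.1}.
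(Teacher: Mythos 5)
Your plan to first verify that $c$ is a $2$-cocycle in the \v{C}ech--De Rham bicomplex and then argue that the cohomology class vanishes is the same overall strategy as the paper, and your setup of the four cocycle identities is correct (the paper carries out the computations in detail, whereas you defer them, but that is not the issue). The problem is the vanishing step.

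You argue that each component of $c$ has $\K^\times$-weight $2$ and conclude that the Euler field $E$ on $V_0^*$ gives, via Cartan's formula, a contracting homotopy $\tfrac12\iota_E$. This conflates two different gradings. The weight-$2$ homogeneity of the $X^{ij}$, $Y^i_\alpha$ (hence of $c$) refers to the \emph{total} $\K^\times$-action on $\CC(\underline{\Hsh}^\wedge)$, which acts both geometrically on $\underline{\Leaf}$ and with weight $2$ on $\param$. The Lie derivative $L_E$ along the Euler field only records the \emph{geometric} part of this action. Since every component of $c$ lies in $\param\K[U_\bullet][[\param^*]]$, each has $\param$-degree $\geq 1$; writing a piece as $\omega\otimes p$ with $p$ of $\param$-degree $k\geq 1$, the identity (infinitesimal $\K^\times$-action) $= L_E + (\text{weight on }\param) = 2\cdot\mathrm{id}$ gives $L_E\omega = (2-2k)\omega$, which is $0$ for $k=1$ and negative for $k\geq 2$. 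So $L_E$ does \emph{not} act by $2$ on $c$; in the leading $\param$-degree it acts by $0$, and Cartan's formula then gives no information. Put differently: $H^2_{DR}(\underline{\Leaf})\otimes\param\K[[\param^*]]$ does contain weight-$2$ classes (in $H^2_{DR}(\underline{\Leaf})\otimes\param$), so homogeneity of weight $2$ by itself cannot force vanishing.

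The paper closes this gap by a geometric argument that makes the $\K^\times$-weight irrelevant: $\underline{\Leaf}$ is the complement in $V_0^*$ of a finite union of linear subspaces of \emph{even} codimension (the fixed-point sets of subgroups properly containing $\underline{\Gamma}$, which are symplectic subspaces), and for such complements $H^2_{DR}(\underline{\Leaf})=0$. Once this is in hand, the spectral-sequence identification you already wrote down shows directly that every total $2$-cocycle is a coboundary. You should replace the Euler/Cartan step by this vanishing of $H^2_{DR}(\underline{\Leaf})$; the rest of your outline (the cocycle verification via BCH, and averaging the primitive over $\K^\times\times\Gamma\times\Xi$ to restore equivariance) is fine.
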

\begin{proof}
Below we write $\hat{Y}^i_\alpha$ for $\check{\alpha}+Y^i_\alpha$.

The variety $\underline{\Leaf}$ is the complement  to the union of certain subspaces of even codimension
in $V_0^*$.
It follows that $\operatorname{H}^2_{DR}(\underline{\Leaf})=0$. Therefore it is enough to check that
$c$ is a 2-cocycle. This reduces to checking the following four equalities:
\begin{align}\label{eq:5.2.1}
&\underline{L}_\alpha c^i(\beta,\gamma)+ \underline{L}_\beta c^i(\gamma,\alpha)+ \underline{L}_\gamma c^i(\alpha,\beta)=0,\\\label{eq:5.2.2}
&c^i(\alpha,\beta)-c^j(\alpha,\beta)= \underline{L}_\alpha c^{ij}(\beta)-\underline{L}_\beta c^{ij}(\alpha),\\\label{eq:5.2.3}
&c^{ij}(\alpha)+c^{jk}(\alpha)+c^{ki}(\alpha)=\underline{L}_\alpha c^{ijk},\\\label{eq:5.2.4}
&c^{ijk}-c^{ijl}+c^{ikl}-c^{jkl}=0.
\end{align}

(\ref{eq:5.2.1}) reduces to the Jacobi identity for $Y_\alpha^i, Y_\beta^i, Y_\gamma^i$.

Below we write $A^{ij}$ instead of $\exp(\frac{1}{\tb}X^{ij})$. Again, although $A^{ij}$ is not well-defined
(because it diverges) all expressions involving $A^{ij}$ below will be well-defined. More precisely, we will
need expressions of the form $A^{ij}x A^{ji}, A^{ji} D(A^{ij})$ for some derivation $D$.
The former just equals $\exp(\frac{1}{\tb}\ad X^{ij})x$, while the latter is expressed as some convergent
series in $X^{ij}, DX^{ij}$ and their brackets,  with $DX^{ij}$ appearing only ones, thanks to the
Campbell-Haussdorff formula.

Let us prove (\ref{eq:5.2.2}). Rewrite the l.h.s. as
\begin{align*} &c^i(\alpha,\beta)-A^{ij} c^j(\alpha,\beta) A^{ji}=\\= &\underline{L}_\alpha \hat{Y}^i_\beta - \underline{L}_\beta \hat{Y}^i_\alpha +\frac{1}{\tb}[\hat{Y}^i_\alpha,\hat{Y}^i_\beta] - A^{ij}\underline{L}_\alpha \hat{Y}^j_\beta A^{ji}+ A^{ij}\underline{L}_\beta \hat{Y}^j_\alpha A^{ji}-\frac{1}{\tb}[A^{ij} \hat{Y}_\alpha^j A^{ji}, A^{ij} \hat{Y}_\beta^j A^{ji}].\end{align*}
From (\ref{eq:5.1.5}) it follows that $$[\hat{Y}^i_\alpha, \hat{Y}^i_\beta]=[A^{ij} \hat{Y}_\alpha^j A^{ji}+ A^{ij} \underline{L}_\alpha A^{ji}, A^{ij} \hat{Y}_\beta^j A^{ji}+ A^{ij}\underline{L}_\beta A^{ji}].$$
So the l.h.s. of (\ref{eq:5.2.2}) equals
\begin{align*}
&\underline{L}_\alpha \hat{Y}^i_\beta - \underline{L}_\beta \hat{Y}^i_\alpha- A^{ij}\underline{L}_\alpha \hat{Y}^j_\beta A^{ji}
+ A^{ij}\underline{L}_\beta \hat{Y}^j_\alpha A^{ji} +\\+ &\frac{1}{\tb}[A^{ij} \hat{Y}_\alpha^j A^{ji},A^{ij}\underline{L}_\beta A^{ji}]
+\frac{1}{\tb}[A^{ij} \underline{L}_\alpha A^{ji}, A^{ij} \hat{Y}_\beta^j A^{ji}]+\frac{1}{\tb}[A^{ij} \underline{L}_\alpha A^{ji}, A^{ij}\underline{L}_\beta A^{ji}].
\end{align*}
It is pretty straightforward to see that the last expression coincides with the r.h.s. of (\ref{eq:5.2.2})
(we remark that $A^{ij}\underline{L}_\alpha A^{ji}=-(\underline{L}_\alpha A^{ij}) A^{ji}$ because
$A^{ij}A^{ji}=1$).

Let us check (\ref{eq:5.2.3}). We can rewrite the l.h.s. as
\begin{align*}
&c^{ij}(\alpha)+A^{ij}c^{jk}(\alpha)A^{ji}+A^{ij}A^{jk}c^{ki}(\alpha)A^{kj}A^{ji}=\\
&-A^{ij}\underline{L}_\alpha A^{ji}- A^{ij}A^{jk}(\underline{L}_\alpha A^{kj})A^{ji}-
A^{ij}A^{jk}A^{ki}(\underline{L}_\alpha A^{ik})A^{kj}A^{ji}.
\end{align*}
On the other hand, the r.h.s. of (\ref{eq:5.2.3}) equals
\begin{align*}
&[\underline{L}_\alpha(A^{ij}A^{jk}A^{ki})]A^{ik}A^{kj}A^{ji}=\\
&=(\underline{L}_\alpha A^{ij})A^{ji}+ A^{ij}(\underline{L}_\alpha A^{jk})A^{kj}A^{ji}+ A^{ij}A^{jk}(\underline{L}_\alpha A^{ki})A^{ik}A^{kj}A^{ji}.
\end{align*}
Again, it is easy to see that these two expressions are the same.

Finally, let us prove (\ref{eq:5.2.4}).
\begin{align*}
&\ln(A^{ij}A^{jk}A^{ki})+\ln(A^{ik}A^{kl}A^{li})-\ln(A^{ij}A^{jl}A^{li})-\ln(A^{jk}A^{kl}A^{lj})=\\
&=\ln(A^{ij}A^{jk}A^{kl}A^{li})-\ln(A^{jk}A^{kl}A^{lj})-\ln(A^{jl}A^{li}A^{ij})=\\
&=\ln(A^{ij}A^{jk}A^{kl}A^{li})-\ln(A^{jk}A^{kl}A^{li}A^{ij})=0.
\end{align*}
\end{proof}

\begin{proof}[Proof of Theorem \ref{Thm:2.0}]
Let $X^{ij},\alpha\mapsto Y_\alpha^i$ be  as in  Lemma \ref{Lem:2.0.1}. Thanks to Lemma \ref{Lem:2.0.2},
we may assume that  $c^{ijk},c^{ij}(\alpha),c^i(\alpha,\beta)$ vanish. We are going to show that
there are
$\Gamma\times\Xi$-invariant elements $X^i\in \param\z^\tb(\CC(\underline{\Hsh}^\wedge))(U_i)$
of degree 2 with respect to $\K^\times$-action
such that the following condition (*) holds
\begin{itemize}
\item[(*)] $\Theta^{i}:=\exp(\frac{1}{\tb}\ad X^i)\Theta^{U_i}, \overline{X}^{ij}:=\tb\ln(\exp(\frac{1}{\tb}X^i)\exp(\frac{1}{\tb}X^{ij})\exp(-\frac{1}{\tb}X^j))$
    satisfy conditions (1)-(7) of Theorem \ref{Thm:2.0}, while $\overline{Y}^i_\alpha:= \exp(\frac{1}{\tb}\ad X^i)Y^i_\alpha+ \exp(\frac{1}{\tb}X^i)\underline{\hat{L}}_\alpha \exp(-\frac{1}{\tb} X^i)$ is zero.
\end{itemize}

The proof is in several  steps.

{\it Step 1.}
We can rewrite (\ref{eq:5.1.5}),(\ref{eq:5.1.4}) as
\begin{align}
&c^{ij}(\alpha)=Y^i_\alpha -\exp(\frac{1}{\tb}\ad X^{ij})Y^j_\alpha+ \exp(\frac{1}{\tb}X^{ij})
\underline{\hat{L}}_\alpha \exp(-\frac{1}{\tb}X^{ij}),\\\label{eq:5.1.new}
& c^i(\alpha,\beta)=\underline{\hat{L}}_\alpha Y^\beta- \underline{\hat{L}}_\beta Y^i_\alpha+ \frac{1}{\tb}[Y^i_\alpha,Y^i_\beta].
\end{align}
We remark that
$\Theta^{i},\overline{X}^{ij},\overline{Y}^i_\alpha$ obtained from $\Theta^{U_i},X^{ij}, Y^i_\alpha$ by using the formulas above still satisfy  conditions (2),(3),(5) of Theorem  \ref{Thm:2.0},
equality (\ref{eq:5.1.3}), and the vanishing conditions $c^{ijk}=0,c^{ij}(\alpha)=0,c^i(\alpha,\beta)=0$. This is checked by computations similar in spirit to those of Lemma \ref{Lem:2.0.2}. Clearly, the equality $c^{ijk}=0$
is just condition (6) of Theorem \ref{Thm:2.0}.

{\it Step 2.}
We consider $\alpha\mapsto \underline{\hat{L}}_\alpha$ as a connection $\underline{\hat{L}}$ on the sheaf
$\CC(\underline{\Hsh}^\wedge)$, i.e., as a sequence of maps $\underline{\hat{L}}^i: \CC(\underline{\Hsh}^\wedge)\otimes \Omega^i_{\underline{\Leaf}}\rightarrow \CC(\underline{\Hsh}^\wedge)\otimes \Omega^{i+1}_{\underline{\Leaf}}$.
The connection $\underline{\hat{L}}$ is flat.

The flat sheaf $\CC(\underline{\Hsh}^\wedge)$ is just the product of several copies of the sheaf $\prod_{i=0}^\infty S^i \Omega^1_{\underline{\Leaf}}$ with the ``diagonal'' connection: $\underline{\hat{L}}_\alpha$ is the difference of the ``fiberwise'' and the  ``base'' derivations in the direction of $\alpha$.  We claim that all higher cohomology of $\underline{\hat{L}}$ on $\prod_{i=0}^\infty S^i \Omega^1_{\underline{\Leaf}}$ vanish, while the 0th cohomology is identified with $\Str_{\underline{\Leaf}}$, an isomorphism $\Str_{\underline{\Leaf}}\rightarrow H^0(\prod_{i=0}^\infty S^i \Omega^1_{\underline{\Leaf}},\underline{\hat{L}})$ maps a section $f$ of $\Str_{\underline{\Leaf}}$
to its ``Taylor expansion''
\begin{equation}\label{eq:5.1.9}
T(f):=\sum_{\mathbf{i}} \frac{1}{\mathbf{i}!}\frac{\partial^{\mathbf{i}}f}{\partial\alpha^{\mathbf{i}}}\alpha^{\mathbf{i}},
\end{equation}
where $\mathbf{i}=(i_1,\ldots,i_n), \mathbf{i}!:=\prod_{j=1}^n i_j!,\alpha^{\mathbf{i}}=\prod_{j=1}^n (\alpha^j)^{i_j}$, etc.

To compute the cohomology we extend the map $T:\K[U]\rightarrow  \prod_{i=0}^\infty S^i \Omega^1_{\underline{\Leaf}}(U)$ to a continuous automorphism of $\prod_{i=0}^\infty S^i \Omega^1_{\underline{\Leaf}}(U)$ by requiring that it is the identity on the subspace $V_0\subset S^1\Omega^1_{\underline{\Leaf}}(U)$ of constant 1-forms. Then $T\circ \underline{\hat{L}}_\alpha\circ T^{-1}$  is (up to a sign)  the fiberwise derivation in the direction of $\alpha$. Now the claim is clear.

{\it Step 3.} Let us suppose that $Y^i_\alpha\in \param^k \z^\tb(\CC(\underline{\Hsh}^\wedge))(U_i), k\geqslant 1$.
Then $\frac{1}{\tb}[Y^i_\alpha, Y^i_\beta]\in \param^{k+1}\z^\tb(\CC(\underline{\Hsh}^\wedge))(U_i)$.
From (\ref{eq:5.1.new}) it follows that modulo
$\param^{k+1} \z^\tb(\CC(\underline{\Hsh}^\wedge))(U_i)$ we have
$\underline{\hat{L}}_\alpha Y^i_\beta-\underline{\hat{L}}_\beta Y^i_\alpha=0$. This means that modulo
$\param^{k+1} \z^\tb(\CC(\underline{\Hsh}^\wedge))(U_i)$
the map $\alpha\mapsto Y^i_\alpha$ is a 1-cocycle in the complex considered on Step 2. Thus
there is $X^i_k\in \param^k \z^\tb(\CC(\underline{\Hsh}^\wedge))(U_i))$
such that $Y^i_\alpha+\underline{\hat{L}}_\alpha X^i_k\in \param^{k+1}\z^\tb(\CC(\underline{\Hsh}^\wedge))(U_i)$.
In addition, we can assume
that $X^i_k$ satisfies the required equivariance conditions. Let us replace
$Y^i_\alpha$ with the expression analogous to $\overline{Y}^i_\alpha$ with $X^i_k$ instead of $X^i$.
So we achieve $Y^i_\alpha\in \param^{k+1} \z^\tb(\CC(\underline{\Hsh}^\wedge))(U_i)$.

Set $X^i=\tb\ln(\ldots\exp(\frac{1}{\tb}X^i_2)\exp(\frac{1}{\tb}X^i_1))$, the
expression converges because we have chosen $X^i_k$ in $ \param^k \z^\tb(\CC(\underline{\Hsh}^\wedge))(U_i))$.
With this $X^i$ the element $\overline{Y}^i_\alpha$ is 0. Also it is clear from the construction that
(\ref{eq:5.1.3}) becomes condition (4) of Theorem \ref{Thm:2.0}.

{\it Step 4.} It remains to show that (7) of Theorem \ref{Thm:2.0} holds, i.e., $\underline{\hat{L}}_\alpha \overline{X}^{ij}=0$.
The equalities (\ref{eq:5.1.3})  imply that $b^{ij}_\alpha:=\underline{\hat{L}}_\alpha\overline{X}^{ij}\in \param \K[U_{ij}][[\param^*]]$. The equality $c^{ij}(\alpha)=0$
can be rewritten as $b^{ij}_\alpha=0$.
\end{proof}

\subsection{Proof of Theorem \ref{Thm:2.0I}}\label{SUBSECTION ProofI}
Define the sheaf of algebras $\Fl(\Hsh^\wedge)$ on $\underline{\Leaf}$ as follows.
For an open subset $U\subset \underline{\Leaf}$ let $\Fl(\Hsh^\wedge)(U)$ denote the space
of flat (with respect to the connection $L_\bullet$) sections of $\Hsh^\wedge(U)$. This is
a $\K^\times\times \Xi$-equivariant sheaf of $\K$-algebras (but not of $\Str_{\underline{\Leaf}}$-algebras)
on $\underline{\Leaf}$.

Similarly, define the sheaf $\Fl(\underline{\Hsh}^\wedge)$ using the connection $\alpha\mapsto
\underline{\hat{L}}_\alpha$.

We remark that $\underline{\Halg}^{\wedge_{\underline{\Leaf}}}|_{\underline{\Leaf}}=\W_{\tb}|_{\underline{\Leaf}}
\widehat{\otimes}_{\K[[\tb]]} \underline{\Halg}^{+\wedge_0}, \Fl(\underline{\Hsh}^\wedge)=\Fl(\Str_{\underline{\Leaf}}\widehat{\otimes} \W_\tb|_{\underline{\Leaf}})\widehat{\otimes}_{\K[[\tb]]}\underline{\Halg}^{+\wedge_0}$. Following the argument
of Step 2 of the proof of Theorem \ref{Thm:2.0}, we get an isomorphism $\W_\tb|_{\underline{\Leaf}}=\Str_{\underline{\Leaf}}[[\tb]]\xrightarrow{\sim}\Fl(\Str_{\underline{\Leaf}}\widehat{\otimes} \W_\tb^{\wedge_\tb})$. This gives rise to an isomorphism
\begin{equation}\label{eq:2.0.4} T\otimes \operatorname{id}: \underline{\Halg}^{\wedge_{\underline{\Leaf}}}=\W^{\wedge_\tb}_{\tb}\widehat{\otimes}_{\K[[\tb]]} \underline{\Halg}^{+\wedge_0}\rightarrow \Fl(\Str_{\underline{\Leaf}}\widehat{\otimes} \W_{\tb}^{\wedge_\tb})\widehat{\otimes}_{\K[[\tb]]}\underline{\Halg}^{+\wedge_0}=
\Fl(\underline{\Hsh}^\wedge),\end{equation}
which will also be denoted by $T$.


Recall the elements $\overline{X}^{ij}\in \Fl(\CC(\underline{\Hsh}^\wedge(U_{ij})))$ from Theorem \ref{Thm:2.0}.
To simplify the notation we will write $X^{ij}$ instead of $\overline{X}^{ij}$.
Use these elements to twist the sheaf $\CC(\underline{\Halg}^{\wedge_{\underline{\Leaf}}})|_{\underline{\Leaf}}$
as explained in the discussion preceding Theorem \ref{Thm:2.0I}. Theorem \ref{Thm:2.0}
implies that the maps $T^{-1}\circ\Theta^U$ induce an isomorphism $\eta:\Fl(\Hsh^\wedge)\rightarrow \CC(\underline{\Halg}^{\wedge_{\underline{\Leaf}}})|_{\underline{\Leaf}}^{tw}$. This isomorphism
is $\K^\times\times\Gamma\times\Xi$-equivariant.

We have a natural embedding $\Halg\hookrightarrow  \Hsh(\underline{\Leaf})$.
Its image consists of flat and $\Xi$-invariant sections. So
$\Halg$ embeds into $\Fl(\Hsh^\wedge)(\underline{\Leaf})^\Xi$. Composing this homomorphism
with $\eta$, we obtain a homomorphism \begin{equation}\label{eq:2.2.51}\theta:\Halg\rightarrow \left[\CC(\underline{\Halg}^{\wedge_{\underline{\Leaf}}})|_{\underline{\Leaf}}^{tw}(\underline{\Leaf})\right]^\Xi.\end{equation}
We are going to prove that this homomorphism extends to an isomorphism $\theta: \Halg^{\wedge_{\Leaf}}|_{\Leaf}\rightarrow \left[\CC(\underline{\Halg}^{\wedge_{\underline{\Leaf}}})|_{\underline{\Leaf}}^{tw}\right]^{\Xi}$ that equals $\theta_0$
modulo $(\param)$.

First of all, we remark that $\theta_0:\Halg_{(r+1)}\rightarrow \CC(\underline{\Halg}^{\wedge_{\underline{\Leaf}}}_{(r+1)}|_{\underline{\Leaf}})$ coincides
with $T^{-1}\circ\Theta_0$ and so also coincides with the map induced by (\ref{eq:2.2.51}).
Localizing the  map (\ref{eq:2.2.51}) over $\Leaf$, we get a  homomorphism
\begin{equation}\label{eq:2.2.52}\theta: \Halg|_{\Leaf}\rightarrow \left[\CC(\underline{\Halg}^{\wedge_{\underline{\Leaf}}})|_{\underline{\Leaf}}^{tw}\right]^{\Xi}\end{equation}
of sheaves of algebras that coincides with $\theta_0:\Halg_{(r+1)}|_{\Leaf}\rightarrow \left[\CC(\underline{\Halg}_{(r+1)}^{\wedge_{\underline{\Leaf}}})|_{\underline{\Leaf}}^{tw}\right]^{\Xi}$
modulo $(\param)$.

We claim that (\ref{eq:2.2.52}) is continuous with respect to the $\p|_{\Leaf}$-adic topology
on $\Halg|_{\Leaf}$. To show this, it is enough to verify that (\ref{eq:2.2.51}) is continuous
(with respect to the $\p$-adic topology on $\Halg$ and the topology on $\Fl(\CC(\underline{\Hsh}^\wedge)^{tw})^\Xi$ induced from the $\CC(\underline{\m})$-adic topology on $\CC(\underline{\Hsh}^\wedge)$). This will follow if we check that $\theta(\p)\subset \CC(\underline{\m})(\underline{\Leaf})$.
Since both ideals contain $\param$, it is enough to show that $\theta_0(\p_{(r+1)})\subset \CC(\underline{\m}_{(r+1)})(\underline{\Leaf})$.
But this is straightforward from the definition of $\theta_0$.

The sheaf $\Halg^{\wedge_\Leaf}|_{\Leaf}$ is the completion of $\Halg|_{\Leaf}$ with respect to the
ideal $\p|_{\Leaf}$.
So we see that (\ref{eq:2.2.52}) extends to a homomorphism $\theta:\Halg^{\wedge_\Leaf}|_{\Leaf}\rightarrow \left[\CC(\underline{\Halg}^{\wedge_{\underline{\Leaf}}})|_{\underline{\Leaf}}^{tw}\right]^{\Xi}$ that coincides with
$\theta_0: \Halg_{(r+1)}^{\wedge_\Leaf}|_{\Leaf}\rightarrow \left[\CC(\underline{\Halg}_{(r+1)}^{\wedge_{\underline{\Leaf}}})|_{\underline{\Leaf}}^{tw}\right]^{\Xi}$
modulo $(\param)$. But we have seen that $\theta_0$ is an isomorphism. Since
$(\CC(\underline{\Halg}^{\wedge_{\underline{\Leaf}}})|_{\underline{\Leaf}}^{tw})^\Xi$
is $\K[[\param^*]]$-flat and $\Halg^{\wedge_\Leaf}|_{\Leaf}$ is complete in the $(\param)$-adic topology,
we see that $\theta$ is an isomorphism. The equality $\theta(\p^{\wedge_\Leaf}|_{\Leaf})
=[\CC(\underline{\p}^{\wedge_{\underline{\Leaf}}})|^{tw}_{\underline{\Leaf}}]^{\Xi}$ follows from the construction.

\section{Harish-Chandra bimodules}\label{SECTION_HC}
\subsection{Content of the section}
The goal of this section is to study Harish-Chandra bimodules over SRA's.

In Subsection \ref{SUBSECTION_Poisson} we introduce notions of {\it noncommutative}
Poisson algebras and their Poisson bimodules. These are algebras and bimodules equipped with an
additional structure: a Poisson bracket. A similar notion already appeared, for instance,
in \cite{BeKa}. After giving all necessary definitions we study some simple properties
of Poisson bimodules.

In Subsection \ref{SUBSECTION_HC_bimod} we define Harish-Chandra (shortly, HC) $\Halg$-bimodules
as graded finitely generated Poisson $\Halg$-bimodules. Then we define HC $\Hrm_{1,c}$-bimodules.
Also in this subsection we state the main result in our study of HC bimodules, Theorem \ref{Thm:5I},
and its version for the $\Hrm$-algebras: Theorem \ref{Thm:5}. These theorems claim that there are functors
 between certain categories of Harish-Chandra bimodules similar to the functors in \cite{HC},
 Theorem 1.3.1.

The proof of Theorem \ref{Thm:5I} occupies the next four subsections whose content
will be described in Subsection \ref{SUBSECTION_HC_bimod}. In the Subsection \ref{SUBSECTION_Thm5_proof}
we derive Theorem \ref{Thm:5} as well as Theorems \ref{Thm:4},\ref{Thm:4'} from
Theorem \ref{Thm:5I}. Finally, in Subsection \ref{SUBSECTION_Functors_b} we will provide
more simple minded versions of our functors. The constructions of this subsection will be used
in Section \ref{SECTION_Cherednik}.



\subsection{Notation and conventions}\label{SUBSECTION_HC_not}
{\it Algebras.}
Set $\tilde{\Halg}:=\Halg[\hbar]/(\tb-\hbar^2)$.
This is a flat $S(\tilde{\param})$-algebra, where $\tilde{\param}$ is a vector space
with basis $\hbar,\cb_1,\ldots,\cb_r$. Similarly define $\underline{\tilde{\Halg}},\underline{\tilde{\Halg}}^+,\W_\hbar:=\W_{\tb}[\hbar]/(\tb-\hbar^2)$. Recall that
$\CC(\underline{\Halg})$ stands for $Z(\Gamma,\underline{\Gamma}, \underline{\Halg})$.
We have the two-sided ideal $\tilde{\p}$ in $\tilde{\Halg}$ generated by $\p$
and $\hbar$. Similarly, we have the ideals $\underline{\tilde{\p}}\subset \underline{\tilde{\Halg}},
\underline{\tilde{\p}}^+\subset \underline{\tilde{\Halg}}^+$.

Then we can form the
sheafified (and completed) versions  $\tilde{\Halg}^{\wedge_\Leaf}|_{\Leaf}:=
\Halg^{\wedge_\Leaf}|_{\Leaf}[\hbar]/(\tb-\hbar^2), \underline{\tilde{\Halg}}^{\wedge_{\underline{\Leaf}}}|_{\underline{\Leaf}}\cong \W_\hbar|_{\underline{\Leaf}}\widehat{\otimes}_{\K[[\hbar]]}\underline{\tilde{\Halg}}^{+\wedge_0}$
of $\tilde{\Halg},\underline{\tilde{\Halg}}$.
We have an isomorphism $$\tilde{\theta}:\tilde{\Halg}^{\wedge_\Leaf}|_{\Leaf}\rightarrow \left(\CC(\W_\hbar|_{\underline{\Leaf}}\widehat{\otimes}_{\K[[\hbar]]}\underline{\tilde{\Halg}}^{+\wedge_0})^{tw}\right)^{\Xi}$$
induced by $\theta$ from  Theorem \ref{Thm:2.0I}.
Modulo $(\tilde{\param})$ the isomorphism $\tilde{\theta}$ coincides with $$\theta_0:\Halg^{\wedge_\Leaf}_{(r+1)}|_{\Leaf}\rightarrow \CC(\underline{\Halg}^{\wedge_{\underline{\Leaf}}}_{(r+1)}|_{\underline{\Leaf}})^\Xi,$$ see Subsection \ref{SUBSECTION_sheafifiedI}.

The reason why we need to consider the extensions $\tilde{\Halg}$ etc. is the following.
Recall that the algebra $\Hrm_{1,c}$ is filtered. So we can form the Rees algebra
$R_\hbar(\Hrm_{1,c})$. There is a unique homomorphism $\tilde{\Halg}\rightarrow R_\hbar(\Hrm_{1,c})$
given by $\gamma\mapsto \gamma, \hbar\mapsto \hbar, v\mapsto \hbar v, \cb_i\mapsto c_i \hbar^2, i=1,\ldots,r$.
It is clear that this homomorphism is surjective. So $R_\hbar(\Hrm_{1,c})$ is represented as
a quotient of $\tilde{\Halg}$.

{\it $D$-modules.} For a smooth algebraic variety $X$ let $\DCal_X$ denote the sheaf of linear differential
operators on $X$. For a $\DCal_X$-module $\M$ let $\Fl(\M)$ denote the space of flat sections
of $\M$. Recall that a section is called flat if it is  annihilated by all vector fields.

Let $\M$ be a $\DCal_X$-module and $X$ be equipped with an action of a group $G$. Recall that $\M$
is said to be weakly $G$-equivariant if $\M$ is equipped with an action of $G$ making the action
map $\DCal_X\otimes \M\rightarrow \M$ equivariant.

{\it Annihilators.} For an $\A$-bimodule $\M$ let $\LAnn(\M),\RAnn(\M)$ denote the left and the right
annihilators of $\M$ in $\A$.

\subsection{Poisson algebras and bimodules}\label{SUBSECTION_Poisson}
In this subsection we will introduce the notions of (not necessarily commutative) Poisson algebras
and their Poisson bimodules.

\begin{defi}\label{defi:3.9.1}
Let $\A$ be an associative unital $\K[\tb]$-algebra. We say that $\A$ is
{\it Poisson} if it is equipped with
a $\K[\tb]$-bilinear map $\z^{\tb}(\A)\otimes \A\rightarrow \A, z\otimes a\mapsto \{z,a\}$ such that
$\z^{\tb}(\A)$ is closed with respect to $\{\cdot,\cdot\}$ and
\begin{align}\label{eq:4.10.0}
& \{z,z\}=0, \\
\label{eq:4.10.1}
& \{\tb a,b\}=[a,b],\\\label{eq:4.10.2}
& \{z,ab\}=\{z,a\}b+ a\{z,b\}, \\\label{eq:4.10.3}
& \{z_1z_2,a\}=\{z_1,a\}z_2+z_1\{z_2,a\}, \\\label{eq:4.10.4}
& \{\{z_1,z_2\},a\}=\{z_1,\{z_2,a\}\}-\{z_2,\{z_1,a\}\},\\\nonumber
& \forall z,z_1,z_2\in \z^{\tb}(\A), a,b\in \A.
\end{align}
\end{defi}

\begin{defi}\label{defi:3.9.2}
Let $\A$ be a Poisson $\K[\tb]$-algebra and $\M$ be an $\A$-bimodule such that the left and right
actions of $\K[\tb]$ on $\M$ coincide. We say that $\M$ is a Poisson $\A$-bimodule if it is equipped
with a $\K[\tb]$-bilinear map $\z^{\tb}(\A)\otimes \M\rightarrow \M$ satisfying the following equalities:
\begin{align}\label{eq:4.11.1}
& \{\tb a,m\}=[a,m],\\\label{eq:4.11.2}
& \{z,am\}=\{z,a\}m+ a\{z,m\}, \{z,ma\}=\{z,m\}a+ m\{z,a\},\\\label{eq:4.11.3}
& \{z_1z_2,m\}=\{z_1,m\}z_2+z_1\{z_2,m\}, \\\label{eq:4.11.4}
& \{\{z_1,z_2\},m\}=\{z_1,\{z_2,m\}\}-\{z_2,\{z_1,m\}\}, \\\nonumber
& \forall z,z_1,z_2\in \z^{\tb}(\A), a\in \A, m\in \M.
\end{align}
\end{defi}

For instance, suppose $\A$ is commutative with zero action of $\tb$. Then we get the usual
definition of a Poisson algebra. Further, if $\M$ is an $\A$-module (that can be considered as
an $\A$-bimodule, where the left and  right actions coincide) and $\tb$ acts  on $\M$ by 0,
then we get a more standard notion of a Poisson $\A$-module.

As the other extreme, suppose $\A$ is $\K[\tb]$-flat. Then the bracket on $\A$ is uniquely
recovered from the multiplication: $\{z,a\}=\frac{1}{\tb}[z,a]$ (compare with Subsection \ref{SUBSECTION_spherical}).
Similarly, if $\M$ is $\K[\tb]$-flat, then $\{z,m\}=\frac{1}{\tb}[z,m]$.

Examples of Poisson algebras (in our sense) include $\Halg, \underline{\Halg},\CC(\underline{\Halg})$
and also Weyl algebras.

By a Poisson ideal in a Poisson algebra $\A$ we mean a two-sided ideal $\I$ that is a Poisson
sub-bimodule in $\A$. We remark that the quotient $\A/\I$ is, of course, a Poisson $\A$-bimodule
but  has no natural structure of a Poisson algebra.

\begin{Lem}\label{Lem:4.2.2}
Let $\A$ be a Poisson $\K[\tb]$-algebra and  $\M$ be a Poisson $\A$-bimodule.
\begin{enumerate}
\item If $\Ncal$ is another Poisson $\A$-bimodule, then $\M\otimes_\A\Ncal$ has a natural structure
of a Poisson $\A$-bimodule.
\item If $\I\subset \A$ is a Poisson two-sided ideal, then $\I\M\subset \M$ is a Poisson sub-bimodule.
\item Left and right annihilators of $\M$ in $\A$  are Poisson ideals.
\item The left and right actions of $\z^{\tb}(\A)/\tb\A$ on $\Malg/\tb \Malg$  coincide.
\end{enumerate}
\end{Lem}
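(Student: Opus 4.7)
The plan is to verify each part by a direct manipulation of the axioms (\ref{eq:4.11.1})--(\ref{eq:4.11.4}).

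For (1), I would define
\[
\{z, m\otimes n\} := \{z, m\}\otimes n + m\otimes\{z, n\}
\]
and check compatibility with the tensor relation $ma\otimes n = m\otimes an$. Expanding $\{z, ma\otimes n\}$ by Leibniz, applying (\ref{eq:4.11.2}) to the inner term $\{z, ma\}$, and shifting $a$ across the tensor product gives
$\{z, m\}\otimes an + m\{z, a\}\otimes n + m\otimes a\{z, n\}$. Doing the symmetric expansion of $\{z, m\otimes an\}$ (applying (\ref{eq:4.11.2}) to $\{z, an\}$ and then shifting $a$) produces the same three-term sum, so the bracket descends. Once well-definedness is settled, the axioms (\ref{eq:4.11.2})--(\ref{eq:4.11.4}) on the tensor product reduce termwise to those on the factors, and (\ref{eq:4.11.1}) follows after using the tensor relation a second time to collapse the middle terms $-mb\otimes n + m\otimes bn = 0$, producing the desired $[b, m\otimes n] = bm\otimes n - m\otimes nb$.

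Parts (2) and (3) are immediate applications of (\ref{eq:4.11.2}). For (2), if $a\in\I$, $m\in\M$, $z\in\z^\tb(\A)$, then $\{z, am\} = \{z, a\}m + a\{z, m\}$ lies in $\I\M$ because $\{z, a\}\in\I$ by the Poisson hypothesis on $\I$. For (3), let $I = \LAnn(\M)$ and take $a\in I$. Applying (\ref{eq:4.11.2}) to $0 = \{z, am\}$ yields $\{z, a\}m = -a\{z, m\} = 0$ for every $m$, so $\{z, a\}\in I$. The argument for $\RAnn(\M)$ is symmetric, and both annihilators are automatically two-sided ideals.

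For (4), the key identity combines (\ref{eq:4.11.1}) with (\ref{eq:4.11.3}). Since $\tb$ and $z$ both lie in $\z^\tb(\A)$, so does $\tb z$, and
\[
[z, m] = \{\tb z, m\} = \{\tb, m\}z + \tb\{z, m\} = \tb\{z, m\}\in\tb\M,
\]
where $\{\tb, m\} = \{\tb\cdot 1, m\} = [1, m] = 0$ by (\ref{eq:4.11.1}) with $a = 1$. Hence $zm - mz\in\tb\M$, which is exactly the claim that the two actions of $\z^\tb(\A)/\tb\A$ on $\M/\tb\M$ agree. The only real obstacle in the whole lemma is the well-definedness check in (1); once that is in hand, everything else is a one-line manipulation of the Leibniz rule.
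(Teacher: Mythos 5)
Your proof is correct and takes essentially the same (and only reasonable) approach as the paper, which simply writes down the bracket $\{z,m\otimes n\}=\{z,m\}\otimes n+m\otimes\{z,n\}$ and declares the verifications ``easy.'' One minor slip worth noting: axiom (\ref{eq:4.11.3}) does not quite reduce termwise either --- applying it on each factor leaves the mismatched pairs $\{z_1,m\}z_2\otimes n$ versus $\{z_1,m\}\otimes nz_2$ and $z_1m\otimes\{z_2,n\}$ versus $m\otimes z_1\{z_2,n\}$, whose discrepancies cancel only after moving factors across $\otimes_\A$ and invoking the identity $[z,m]=\tb\{z,m\}$ that you establish in part (4); this does not affect the validity of your argument.
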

\begin{proof}
(1): Define the bracket on $\M\otimes_\A\Ncal$ by $\{z,m\otimes n\}=\{z,m\}\otimes n+m\otimes \{z,n\}$.
It is easy to see that the bracket is well-defined and turns $\M\otimes_\A\Ncal$ into a Poisson bimodule.
The proofs of (2)-(4) follow directly from the definitions.
%
\end{proof}

Now suppose that $\K^\times$ acts on a Poisson algebra $\A$ by a $\K$-algebra automorphisms
such that $\tb$ has degree 2, while the bracket has degree $-2$. We say that a Poisson
$\A$-bimodule $\M$ is $\K^\times$-equivariant if it is equipped with a $\K^\times$-action
such that the multiplication map $\A\otimes\M\otimes \A\rightarrow \M$ is $\K^\times$-equivariant,
and the bracket map $\z^{\tb}(\A)\otimes \M\rightarrow \M$ has degree $-2$. We say that
a $\K^\times$-equivariant Poisson bimodule is graded if the $\K^\times$-action comes from a
grading.


Let $\A$ be a Poisson algebra. Set $\tilde{\A}:=\A[\hbar]/(\tb-\hbar^2)$. Then we still
say that $\tilde{\A}$ is a Poisson algebra (the bracket is extended to $\z^{\tb}(\A)\otimes \tilde{\A}\rightarrow
\tilde{\A}$ by the right $\K[\hbar]$-linearity). By a Poisson $\tilde{\A}$-bimodule we mean
a bimodule $\M$, where the left and right actions of $\K[\hbar]$ coincide, equipped with a map $\z^{\tb}(\A)\otimes\M\rightarrow\M$ that is $\K[\tb]$-linear in the
first argument, $\K[\hbar]$-linear in the second one and satisfies (\ref{eq:4.11.1})-(\ref{eq:4.11.4}).


Now suppose that $\A$ is a flat (Poisson) $\K[\tb]$-algebra and $e\in \A$ is an idempotent
such that we have a ``Satake isomorphism'', i.e., the map $\z(\A/\tb \A)\xrightarrow{e\cdot} \z(e(\A/\tb \A)e)$  is an isomorphism. It follows that $\z^{\tb}(e\A e)=e\z^{\tb}(\A)e$.

 We are going to relate Poisson $e\A e$- and
$\A$-bimodules.

First of all, let us note that $e\A e$ has a natural Poisson bracket. Namely, we set
\begin{equation}\label{eq:4.10.21}\{eze, eae\}=e\{z,eae\}e, z\in \z^{\tb}(\A), a\in \A.\end{equation}
The bracket on $e\A e$ is well-defined.
To check this let $z,z'\in \z^{\tb}(\A)$ be such that $eze=ez'e$. We need to show that $e\{z,eae\}e=e\{z',eae\}e$. Thanks to the Satake
isomorphism above, we see that $z-z'\in \tb \A$. Our claim follows easily from $\{\tb a,b\}=[a,b]$.
Also it is straightforward
to check that (\ref{eq:4.10.21})  satisfies (\ref{eq:4.10.0})-(\ref{eq:4.10.4}).

Now let $\M$ be a Poisson $\A$-bimodule. We claim that $e\M e$ can be  equipped with a natural Poisson bracket.
Namely, we set
\begin{equation}\label{eq:idemp_bracket}\{eze,eme\}=e\{z,eme\}e\end{equation} for any $z\in \z^{\tb}(\A), m\in \M$. Let us check that this bracket is well-defined.
%
It is straightforward to verify that the bracket satisfies (\ref{eq:4.11.1})-(\ref{eq:4.11.2}).
So we get a functor $\M\mapsto e\M e$ from the category of Poisson $\A$-bimodules to the category
of Poisson $e\A e$-bimodules. This functor is exact because $\M$ is completely reducible as a
$\Span_\K(1,e)$-bimodule.

The functor $\M\mapsto e\M e$ has a right inverse. 
Let $\Ncal$ be a Poisson $e\A e$-bimodule. Set $\widetilde{\Ncal}:=\A e_{\otimes e\A e} \Ncal_{\otimes e \A e} e\A$.
Let us equip this bimodule with a Poisson bracket. 
For $z\in \z^{\tb}(\A)$ we set $\{z, ae\otimes n\otimes eb\}=\{z,ae\}e\otimes n\otimes eb+ ae\otimes \{eze,n\}\otimes eb+ ae\otimes n\otimes e\{z,eb\}$. The verification that this bracket is well-defined
is analogous to the previous paragraph. 
The bracket on $\widetilde{\Ncal}$ satisfies (\ref{eq:4.11.1})-(\ref{eq:4.11.4}). Also it is easy to see that $e\widetilde{\Ncal}e=e\A\otimes_{\A}\widetilde{\Ncal}\otimes_\A \A e$ is canonically identified with $\Ncal$.
On the other hand, $\widetilde{e\M e}$ is naturally embedded into $\M$ with image $\A e\M e \A$.

\subsection{Harish-Chandra bimodules}\label{SUBSECTION_HC_bimod}
In this subsection we will introduce the notion of a Harish-Chandra bimodule over $\tilde{\Halg}$.
Using this notion we will define  Harish-Chandra $\Hrm_{1,c}$-$\Hrm_{1,c'}$ bimodules.

Recall that the algebra $\tilde{\Halg}$ is a graded Poisson algebra in the sense of the previous subsection.

\begin{defi}\label{defi_HC1}
By a Harish-Chandra (shortly, HC) $\tilde{\Halg}$-bimodule we mean a graded Poisson $\tilde{\Halg}$-bimodule
$\Malg$  that is finitely generated as an $\tilde{\Halg}$-bimodule.
\end{defi}

The category of HC bimodules is denoted by $\HC(\tilde{\Halg})$.
By definition, a morphism between two objects in $\HC(\tilde{\Halg})$ is a grading preserving
homomorphism of graded Poisson bimodules.

\begin{Rem}\label{Rem_HC}
Since the grading on $\tilde{\Halg}$ is positive, each graded subspace $\Malg_i$ is finite dimensional, and is zero for $i$ sufficiently small.
\end{Rem}

In the sequel we will also need the following simple lemma.

\begin{Lem}\label{Lem:3.10.2}
$\Malg\in \HC(\tilde{\Halg})$ is finitely generated both as a left and as a
right $\tilde{\Halg}$-module.
\end{Lem}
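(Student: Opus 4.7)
The plan is to prove $\Malg$ is finitely generated as a left $\tilde\Halg$-module (the right case is symmetric) via a graded Nakayama reduction to $\Malg/\hbar\Malg$, combined with the fact that $\Halg_{(1)}=\tilde\Halg/(\hbar)$ is finite over its center $\Zalg_{(1)}$ and that the left and right $\Zalg_{(1)}$-actions on $\Malg/\hbar\Malg$ coincide.

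I would first choose homogeneous bimodule generators $m_1,\dots,m_k$ of $\Malg$ and argue as follows. Suppose we know that $\Malg/\hbar\Malg$ is finitely generated as a left $\Halg_{(1)}$-module, and pick homogeneous lifts $n_1,\dots,n_\ell\in\Malg$ of a generating set. Setting $\Malg':=\sum_i\tilde\Halg n_i$, the choice gives $\Malg=\Malg'+\hbar\Malg$, and iterating yields $\Malg=\Malg'+\hbar^N\Malg$ for every $N$. By Remark \ref{Rem_HC}, each graded component of $\Malg$ is finite-dimensional and vanishes in sufficiently low degree, while $\hbar^N\Malg$ is supported in degree at least $N$ plus the minimal degree of a generator; comparing degrees forces $\Malg=\Malg'$.

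To establish the finite generation of $\Malg/\hbar\Malg$ as a left $\Halg_{(1)}$-module, I would invoke Proposition \ref{Prop:1.311}(1), which gives that $\Halg_{(1)}$ is finite over $\Zalg_{(1)}$, reducing the problem to showing $\Malg/\hbar\Malg$ is finitely generated as a one-sided $\Zalg_{(1)}$-module. The images $\bar m_i$ generate $\Malg/\hbar\Malg$ as an $\Halg_{(1)}$-bimodule, and sandwiching them by a finite $\Zalg_{(1)}$-module generating set of $\Halg_{(1)}$ gives a finite $\Zalg_{(1)}$-bimodule generating set. The key point is then that the left and right $\Zalg_{(1)}$-actions on $\Malg/\hbar\Malg$ coincide: Lemma \ref{Lem:4.2.2}(4) asserts that left and right actions of $\z^\tb(\tilde\Halg)/\tb\tilde\Halg$ on $\Malg/\tb\Malg$ coincide, and since $\tb=\hbar^2$ the same holds a fortiori on the further quotient modulo $\hbar$. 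Moreover, any central element of $\Halg_{(1)}$ lifts to $\z^\tb(\Halg)\subset\z^\tb(\tilde\Halg)$ essentially by the very definition of $\z^\tb$, so the surjection $\z^\tb(\tilde\Halg)\twoheadrightarrow\Zalg_{(1)}$ transports the coincidence of actions to the desired statement. The action on $\Malg/\hbar\Malg$ then factors through the multiplication $\Zalg_{(1)}\otimes\Zalg_{(1)}\to\Zalg_{(1)}$, yielding one-sided finite generation.

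The main obstacle is purely bookkeeping: keeping track of the distinction $\tb=\hbar^2$ versus $\hbar$ when transferring Lemma \ref{Lem:4.2.2}(4) (stated modulo $\tb$) to its consequence modulo $\hbar$, and verifying the surjectivity $\z^\tb(\tilde\Halg)\twoheadrightarrow\Zalg_{(1)}$. There is no genuine conceptual difficulty beyond what the Poisson bimodule formalism of Subsection \ref{SUBSECTION_Poisson} already provides.
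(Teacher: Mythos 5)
Your proof is correct and takes essentially the same route as the paper. The paper argues directly at the level of $\Malg$: choosing a finite graded bimodule generating set $M$, using that $\tilde\Halg$ is finite over $\Zalg=\z^\tb(\Halg)$ to write $\Malg=\tilde\Halg M\Zalg$, invoking $[\Zalg,\Malg]\subset\tb\Malg$ (which is exactly the content behind Lemma \ref{Lem:4.2.2}(4)) to get $\Malg=\tilde\Halg M+\tb\Malg$, iterating, and concluding by the grading via Remark \ref{Rem_HC}; you route the identical ingredients through the quotient $\Malg/\hbar\Malg$ first and then lift, which if anything is slightly cleaner because it appeals only to Proposition \ref{Prop:1.311}(1) (finiteness of $\Halg_{(1)}$ over $\Zalg_{(1)}$) rather than to the finiteness of $\tilde\Halg$ over $\Zalg$, which the paper uses without separate justification. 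Your verification that the coincidence of left and right $\z^\tb(\Halg)/\tb\Halg$-actions modulo $\tb=\hbar^2$ descends to the further quotient modulo $\hbar$ is the correct bookkeeping step.
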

\begin{proof}
By definition, $\Malg$ is finitely generated as an $\tilde{\Halg}$-bimodule.
But $\tilde{\Halg}$ is finite as a left (or right) $\Zalg$-module.
So there is a finite dimensional graded subspace $M\subset \Malg$ with
$\Malg= \tilde{\Halg} M\Zalg$. Thanks to (\ref{eq:4.11.1}), $[\Zalg,\Malg]\subset \tb\Malg$.
So $\Malg= \tilde{\Halg} M+\tb\Malg$. It follows that $\Malg=\tilde{\Halg} M+\tb^n\Malg$ for any
$n$. Since $\tilde{\Halg} M$ is a graded subspace in $\Malg$, we deduce that any graded component
of $\Malg$ lies in $\tilde{\Halg} M$. So $\tilde{\Halg} M=\Malg$. Similarly, $\Malg=M\tilde{\Halg}$.
\end{proof}


\begin{defi}\label{defi_HC2}
Let $\M$ be an $\Hrm_{1,c}$-$\Hrm_{1,c'}$-bimodule. We say that $\M$ is HC if there is
a filtration $\F_i\M$ compatible with the filtrations $\F_j\Hrm_{1,c},\F_j\Hrm_{1,c'}$ and such that
$R_\hbar(\M)$ considered as an $\tilde{\Halg}$-bimodule (recall that $R_\hbar(\Hrm_{1,c}), R_\hbar(\Hrm_{1,c'})$ are quotients of $\tilde{\Halg}$) is HC,  where  $\hbar^i m$ has degree $i$,
and the bracket is uniquely recovered from the bimodule structure, see remarks after Definition \ref{defi:3.9.2}.
\end{defi}


The category of HC $\Hrm_{1,c}$-$\Hrm_{1,c'}$-bimodules will be denoted by
$_{\,c\!}\HC(\Hrm)_{c'}$.

The category $\HC(\tilde{\Halg})$ is monoidal, the tensor product functor is just tensoring over
$\tilde{\Halg}$. Similarly, we have the  bifunctor $$_{\,c\!}\HC(\Hrm)_{c'}\times _{\,c'\!}\HC(\Hrm)_{c''}\rightarrow _{\,c\!}\HC(\Hrm)_{c''}$$
given by tensoring over $\Hrm_{1,c'}$. To see that the product is HC one needs to notice that
for $\M_1\in _{\,c\!}\HC(\Hrm)_{c'}, \M_2\in _{\,c'\!}\HC(\Hrm)_{c''}$  the natural morphism  $R_\hbar(\M_1)\otimes_{\tilde{\Halg}}R_\hbar(\M_2)/(\hbar-1)
\rightarrow \M_1\otimes_{\Hrm_{1,c'}}\M_2$ is an isomorphism, compare with \cite{HC}, Proposition 3.4.1.


The category $\HC(\tilde{\Halg})$ has a direct sum decomposition that will be of importance later.
Namely, let $\Malg\in \HC(\tilde{\Halg})$. Consider the linear operators $C_i:=\{\cb_i,\cdot\}$ on $\Malg$.
Since $\{\cb_i,x\}=0$ for all $x\in \tilde{\Halg}$, we see that $C_i$ is an endomorphism of the
Poisson bimodule $\Malg$. Moreover,
the degree of $\cb_i$ is 2, while $\{\cdot,\cdot\}$ decreases degrees by 2. So $C_i$ is a graded
endomorphism of $\Malg$. Since $\Malg$ is generated by a finite number of the
graded components, we see that $\Malg$ decomposes into the finite direct sum $\Malg=\bigoplus_\lambda \Malg^\lambda$, where $\lambda\in \param_{(1)}^*$ and $$\Malg^\lambda:=\{m\in \Malg| (C_i-\langle\lambda,\cb_i\rangle)^n m=0 \text{ for some }n\}.$$ This decomposition produces the category decomposition
\begin{equation}\label{eq:3.13.1}\HC(\tilde{\Halg})=\bigoplus_\lambda \HC(\tilde{\Halg})^\lambda.\end{equation}

Let $\Lambda_\Halg$ denote the subspace in  $\param_{(1)}^*$ generated by all $\lambda\in \param_{(1)}^*$
such that $\HC(\tilde{\Halg})^\lambda$ is non-zero.

Let us proceed to defining the associated varieties.
Now let $\Malg\in \HC(\tilde{\Halg})$. Thanks to assertion 4 of Lemma \ref{Lem:4.2.2},  we can consider $\Malg/\tilde{\param}\Malg$ as a  $\Zalg_{(r+1)}=(SV)^\Gamma$-module. Since $\Halg_{(r+1)}$ is finite over $\Zalg_{(r+1)}$, we get that $\Malg/\tilde{\param}\Malg$ is  finitely generated
as a $\Zalg_{(r+1)}$-module. So we can define the associated variety $\VA(\Malg)$ as the support
of $\Malg/\tilde{\param}\Malg$ in $\Spec(\Zalg_{(r+1)})=V^*/\Gamma$
(the corresponding ideal in $\Zalg_{(r+1)}$ is the radical of the annihilator of $\Malg/\tilde{\param}\Malg$). Assertion (3) of Lemma \ref{Lem:4.2.2}  implies that
the annihilator of $\Malg/\tilde{\param}\Malg$ in $\tilde{\Halg}$  is a Poisson ideal.
From this it is easy to deduce that the annihilator of $\Malg/\tilde{\param}\Malg$
in $\Zalg_{(r+1)}$ is a Poisson ideal. In particular, $\VA(\Malg)$ is a Poisson subvariety in $V^*/\Gamma$.

For  $\Malg_1,\Malg_2\in \HC(\tilde{\Halg})$ we have $\Malg_1\otimes_{\tilde{\Halg}}\Malg_2/
\tilde{\param}(\Malg_1\otimes_{\tilde{\Halg}}\Malg_2)= (\Malg_1/\tilde{\param}\Malg_1)\otimes_{SV\#\Gamma}
(\Malg_2/\tilde{\param}\Malg_2)$. In particular, $\Malg_1\otimes_{\tilde{\Halg}}\Malg_2/
\tilde{\param}\Malg_1\otimes_{\tilde{\Halg}}\Malg_2$ is a quotient of
$(\Malg_1/\tilde{\param}\Malg_1)\otimes_{SV^\Gamma}
(\Malg_1/\tilde{\param}\Malg_1)$.
%
%
%
It follows that
\begin{equation}\label{eq:2.31}\VA(\Malg_1\otimes_{\tilde{\Halg}}\Malg_2)\subset \VA(\Malg_1)\cap \VA(\Malg_2).\end{equation}

Similarly, we can give the definitions of HC bimodules for the algebras $\underline{\tilde{\Halg}},\underline{\Hrm}_{1,c},
\underline{\tilde{\Halg}}^+,\underline{\Hrm}^+_{1,c}$.
Also we consider the category $\HC^{\Xi}(\underline{\tilde{\Halg}})$
consisting of all $\widetilde{\Xi}$-equivariant  bimodules $\Malg$, i.e., those equipped with
an action of $\widetilde{\Xi}$ enjoying  the following two properties:
\begin{itemize}
\item the structure maps $\underline{\tilde{\Halg}}\otimes \Malg\otimes \underline{\tilde{\Halg}}\rightarrow
\Malg, \underline{\Zalg}\otimes \Malg\rightarrow \Malg$ are $\widetilde{\Xi}$-equivariant,
\item the restriction of the $\widetilde{\Xi}$-action to $\underline{\Gamma}\subset \widetilde{\Xi}$
coincides with the adjoint action of $\underline{\Gamma}$.
\end{itemize}

For instance, $\underline{\tilde{\Halg}}\in \HC^\Xi(\underline{\tilde{\Halg}})$.

Let us state our main results concerning Harish-Chandra bimodules.

For a Poisson subvariety $Y\subset V^*/\Gamma$ define the full-subcategory
$\HC_Y(\tilde{\Halg})\subset \HC(\tilde{\Halg})$ consisting of all $\Malg\in \HC(\tilde{\Halg})$
such that $\VA(\Malg)\subset Y$. Define the full subcategory
$_{\,c\!}\HC_Y(\Hrm)_{c'}$ in ${\,_c\!}\HC(\Hrm)_{c'}$ similarly.
By (\ref{eq:2.31}), the categories $\HC_Y(\tilde{\Halg}), _{\,c\!}\HC_Y(\Hrm)_{c'}$ are closed
with respect to the tensor product functor.

Recall that we have  fixed a symplectic leaf $\Leaf\subset V^*/\Gamma$. Consider
the category $\HC_{\overline{\Leaf}}(\tilde{\Halg})$. It has the Serre subcategory
$\HC_{\partial \Leaf}(\tilde{\Halg})$, where $\partial\Leaf:=\overline{\Leaf}\setminus\Leaf$.
Form the quotient category
$$\HC_{\Leaf}(\tilde{\Halg})=\HC_{\overline{\Leaf}}(\tilde{\Halg})/\HC_{\partial \Leaf}(\tilde{\Halg}).$$
The category $\HC_{\Leaf}(\tilde{\Halg})$ inherits the tensor product functor from
$\HC_{\overline{\Leaf}}(\tilde{\Halg})$. Also the decomposition (\ref{eq:3.13.1}) descends to a decomposition $\HC_\Leaf(\tilde{\Halg})=\bigoplus_\lambda \HC_\Leaf(\tilde{\Halg})^\lambda$.

Similarly, we can define the category $_{\,c\!}\HC_{\Leaf}(\Hrm)_{c'}$.
Also we can consider categories of the form $\HC^\Xi_{Y_+}(\underline{\tilde{\Halg}}^+),_{\,c\!}\HC^\Xi_{Y_+}(\underline{\Hrm}^+)_{c'}$, where $Y_+$
is a Poisson subvariety in $V_+^*/\underline{\Gamma}$. We are particularly
interested in the case when $Y_+=\{0\}$. The category $\HC^\Xi_{0}(\underline{\tilde{\Halg}}^+)$
consists of all $\Malg\in \HC^\Xi(\underline{\tilde{\Halg}}^+)$ that are finitely generated over
$S(\tilde{\param})$, while $_{\,c\!}\HC^\Xi_{0}(\underline{\Hrm}^+)_{c'}$ consists of all finite dimensional
$\widetilde{\Xi}$-equivariant $\underline{\Hrm}^+_{1,c}$-$\underline{\Hrm}^+_{1,c'}$-bimodules.

As in (\ref{eq:3.13.1}), we have the decomposition
$\HC^\Xi(\underline{\tilde{\Halg}}^+)=\bigoplus_\lambda \HC^\Xi(\underline{\tilde{\Halg}}^+)^\lambda$.
Set $\HC^\Xi_{0}(\underline{\tilde{\Halg}}^+)^{\Lambda_\Halg}:=\bigoplus_{\lambda\in \Lambda_\Halg}\HC^\Xi_{0}(\underline{\tilde{\Halg}}^+)^\lambda$.

\begin{Thm}\label{Thm:5I}
There are functors $$\bullet_\dagger: \HC_{\overline{\Leaf}}(\tilde{\Halg})\rightarrow \HC_{0}^{\Xi}(\underline{\tilde{\Halg}}^+)^{\Lambda_\Halg},$$
$$\bullet^\dagger: \HC_0^{\Xi}(\underline{\tilde{\Halg}}^+)^{\Lambda_\Halg}\rightarrow
\HC_{\Leaf}(\tilde{\Halg})$$
with the following properties:
\begin{enumerate}
\item $\bullet_\dagger$ is  exact and descends to $\HC_{\Leaf}(\tilde{\Halg})$.
\item $\bullet^\dagger$ is right adjoint to $\bullet_\dagger$  and
$(\bullet_{\dagger})^{\dagger}\cong \id$ in $\HC_{\Leaf}(\tilde{\Halg})$.
\item The functor $\bullet_{\dagger}$  intertwines the tensor
product functors.
\item $\LAnn(\Malg_\dagger)=\LAnn(\Malg)_\dagger$ and $\RAnn(\Malg_\dagger)=\RAnn(\Malg)_{\dagger}$
for any $\Malg\in \HC_{\overline{\Leaf}}(\underline{\tilde{\Halg}})$.
\item  $\bullet_{\dagger}$ is an equivalence from $\HC_{\Leaf}(\tilde{\Halg})$
to some full subcategory of $\HC_0^{\Xi}(\underline{\tilde{\Halg}}^+)^{\Lambda_\Halg}$
closed under taking subquotients.
\end{enumerate}
\end{Thm}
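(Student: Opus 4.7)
The plan is to model the construction on the W-algebra case treated in \cite{HC}, Theorem 1.3.1, with the isomorphism $\tilde\theta$ of Theorem \ref{Thm:2.0I} playing the role that the Fedosov-style isomorphism plays there. At the technical heart is the decomposition $\underline{\tilde\Halg}^{\wedge_{\underline\Leaf}}|_{\underline\Leaf} \cong \W_\hbar|_{\underline\Leaf}\widehat{\otimes}_{\K[[\hbar]]}\underline{\tilde\Halg}^{+\wedge_0}$ which, together with the centralizer construction $\CC$, lets one separate a ``transverse'' piece (living over $\underline{\tilde\Halg}^+$) from a ``tangential'' Weyl-algebra factor.

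For the functor $\bullet_\dagger$, given $\Malg\in\HC_{\overline\Leaf}(\tilde\Halg)$ I would first sheafify and complete along $\Leaf$, forming $\Malg^{\wedge_\Leaf}|_\Leaf$ as a coherent module over $\tilde\Halg^{\wedge_\Leaf}|_\Leaf$; right exactness and flatness are guaranteed by Lemma \ref{Lem:2.12.11} and Corollary \ref{Cor:1.22I}. Transporting across $\tilde\theta$ produces a module over $\CC(\W_\hbar|_{\underline\Leaf}\widehat{\otimes}_{\K[[\hbar]]}\underline{\tilde\Halg}^{+\wedge_0})^{tw,\Xi}$. The centralizer construction is undone by the idempotent $e(\underline\Gamma)$, and the Weyl-algebra factor is stripped off by taking a Whittaker-type quotient (or, equivalently, by evaluating at the zero section of $V_0^*$ on the left and right actions of $V_0\subset\W_\hbar$). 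The output is a $\widetilde\Xi$-equivariant $\underline{\tilde\Halg}^{+\wedge_0}$-bimodule, which is finite-dimensional precisely because $\VA(\Malg)\subset\overline\Leaf$ forces the $V_0^*$-support to lie over the fiber $\{0\}$. Taking Poisson structure into account, the decomposition (\ref{eq:3.13.1}) shows that $\Malg_\dagger$ lands in $\HC_0^\Xi(\underline{\tilde\Halg}^+)^{\Lambda_\Halg}$.

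The functor $\bullet^\dagger$ is built by reversing each of these steps. Starting with $\Ncal\in\HC_0^\Xi(\underline{\tilde\Halg}^+)^{\Lambda_\Halg}$, one forms $\W_\hbar|_{\underline\Leaf}\widehat{\otimes}_{\K[[\hbar]]}\Ncal$, applies $\CC(-)$, passes to $\Xi$-invariants, transports through $\tilde\theta^{-1}$, and finally extracts the subspace of $\K^\times$-finite global sections; the Euler derivation from Proposition \ref{Prop:2.3} makes this last step meaningful and ensures an honest graded object (the constraint $\lambda\in\Lambda_\Halg$ is what guarantees the $\K^\times$-finite vectors form a finitely generated $\tilde\Halg$-bimodule rather than merely a formal one). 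The bimodule $\Ncal^\dagger$ is automatically supported on $\overline\Leaf$. Properties (1) and (3) follow from flatness (Corollaries \ref{Cor:1.22}, \ref{Cor:1.22I}) and from the fact that $\tilde\theta$ is a monoidal algebra isomorphism, so the construction commutes with $\otimes_{\tilde\Halg}$ up to the corresponding tensor product over $\underline{\tilde\Halg}^+$. Property (4) follows because $\LAnn, \RAnn$ commute with each of the faithfully flat/completion passages used to build $\bullet_\dagger$. Property (2): adjointness is an instance of the completion-versus-fiber adjunction, and the isomorphism $(\Malg_\dagger)^\dagger\cong\Malg$ in $\HC_\Leaf(\tilde\Halg)$ asserts that rebuilding $\Malg$ from its transverse fiber recovers it up to something supported on $\partial\Leaf$, which is precisely the content of the completion theorem applied fiberwise. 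Property (5) is then formal from (1)--(4).

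The main obstacle, in my view, is the construction of $\bullet^\dagger$: one must verify that after inducing up through $\tilde\theta^{-1}$ the resulting object, \emph{a priori} only a module over a completion of $\tilde\Halg$, actually admits enough algebraic/$\K^\times$-finite structure to descend to a finitely generated graded $\tilde\Halg$-bimodule. This is where the restriction to weights in $\Lambda_\Halg$ becomes essential, and where the ``grading'' part of the hypothesis (Remark \ref{Rem_HC}) interacts with the fact that $\tilde\theta$ is $\K^\times$-equivariant. A secondary technical point is to check that the annihilators computed in (4) are honest Poisson ideals on both sides and that the surjectivity/containment claims of Theorems \ref{Thm:4}, \ref{Thm:4'} fall out of (4) applied to regular bimodules, which is how Theorem \ref{Thm:5I} should be leveraged to derive the ideal-theoretic statements.
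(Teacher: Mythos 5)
Your proposal correctly identifies the broad strategy---completing along $\Leaf$, transporting through $\tilde\theta$, separating the Weyl-algebra factor, undoing the centralizer construction, and taking $\K^\times$-finite vectors---and correctly flags the role of the Euler derivation and the $\K^\times$-equivariance of $\tilde\theta$. However, there are two genuine gaps.

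First, you omit the untwisting step entirely. The isomorphism $\tilde\theta$ of Theorem \ref{Thm:2.0I} does not land in $\CC(\underline{\tilde\Halg}^{\wedge_{\underline\Leaf}}|_{\underline\Leaf})^\Xi$ but in the \emph{twist} $\CC(\underline{\tilde\Halg}^{\wedge_{\underline\Leaf}}|^{tw}_{\underline\Leaf})^\Xi$ by the cocycle $\exp(\frac{1}{\tb}X^{ij})$. Removing this twist on the bimodule side (the functor $\Fun_4$ in the paper) is not automatic: one needs the operators $\exp(\{\hat X_{ij},\cdot\})$ to converge, which requires the auxiliary Lemma \ref{Lem:3.10.4} producing regularizing elements $b_i^l$, and that lemma in turn leans on Proposition \ref{Prop:0.11.1} about sheaves of twisted differential operators admitting $\Str$-coherent modules. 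This step is where the $\Lambda_\Halg$-weight bookkeeping actually does its work, and passing over it silently leaves a real hole in the construction of $\bullet_\dagger$.

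Second, and more importantly, your claim that property (5) ``is then formal from (1)--(4)'' is not correct. Full faithfulness of $\bullet_\dagger$ on $\HC_\Leaf(\tilde\Halg)$ does follow from (2), but the statement that its essential image is closed under taking subquotients in $\HC_0^\Xi(\underline{\tilde\Halg}^+)^{\Lambda_\Halg}$ is exactly the content of Theorem \ref{Thm:3.16.1}, which asserts that for any $\Malg$ and any finite-codimension sub-bimodule $\Nalg\subset\Malg_\dagger$ one has $\Nalg=(\Nalg^{\dagger_\Malg})_\dagger$. The paper proves this by a delicate descending induction on which $\cb_l$ act trivially, constructing a graded module $T$ over $\Halg_{(l+1)}[\cb_l]$ whose finite generation (via Lemma \ref{Lem:3.16.2}) forces stabilization of the sequence $\Nalg_n^\dagger$. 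Nothing in (1)--(4) by itself gives this; without it the ``closed under subquotients'' clause of (5) is unproved, and since this clause is exactly what drives the ideal-theoretic Theorems \ref{Thm:4} and \ref{Thm:4'}, the omission is substantial.

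As a lesser point, you describe the Weyl-algebra reduction as a Whittaker-type quotient (or evaluation at a zero section). The paper instead takes \emph{flat sections} with respect to the connection $\{V_0,\cdot\}$ (the functor $\Fun_5$, relying on Proposition \ref{Prop:0.4} / Corollary \ref{Cor:0.3}). These are different operations; working over $\underline\Leaf$ rather than at a single point is precisely why a Whittaker-style quotient would not directly yield the needed equivalence here, and why the $\DCal$-module machinery of Subsection \ref{SUBSECTION_DMod} is invoked instead.
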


\begin{Thm}\label{Thm:5}
Suppose that $c,c'\in \param_{(1)}^*$ are such that $c-c'\in \Lambda_\Halg$.
There are functors $$\bullet_\dagger: _{\,c\!}\HC_{\overline{\Leaf}}(\Hrm)_{c'}\rightarrow _{\,c\!}\HC_0^{\Xi}(\underline{\Hrm}^+)_{c'},$$
$$\bullet^\dagger: _{\,c\!}\HC_0^{\Xi}(\underline{\Hrm}^+)_{c'}\rightarrow
_{\,c\!}\HC_{\Leaf}(\Hrm)_{c'}.$$
with the following properties:
\begin{enumerate}
\item $\bullet_\dagger$ is  exact and descends to $_{\,c\!}\HC_{\Leaf}(\Hrm)_{c'}$.
\item $\bullet^\dagger$ is right adjoint to $\bullet_\dagger$  and
$(\bullet_{\dagger})^{\dagger}\cong \id$ in $_{\,c\!}\HC_{\Leaf}(\Hrm)_{c'}$.
\item The functor $\bullet_{\dagger}$  intertwines the tensor
product functors.
\item $\LAnn(\M_\dagger)=\LAnn(\M)_\dagger$ and $\RAnn(\M_\dagger)=\RAnn(\M)_{\dagger}$
for any $\M\in _{\,c\!}\HC_{\overline{\Leaf}}(\Hrm)_{c'}$.
\item  $\bullet_{\dagger}$ is an equivalence from $_{\,c\!}\HC_{\Leaf}(\Hrm)_{c'}$
to some full subcategory of $_{\,c\!}\HC_0^{\Xi}(\underline{\Hrm}^+)_{c'}$
closed under taking subquotients.
\end{enumerate}
\end{Thm}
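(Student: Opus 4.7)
The plan is to deduce Theorem \ref{Thm:5} from its graded/global counterpart, Theorem \ref{Thm:5I}, by passing through the Rees algebra construction and then specializing at $\hbar = 1$.

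First I would observe that, given $\M \in {}_{\,c\!}\HC_{\overline{\Leaf}}(\Hrm)_{c'}$ with a good filtration as in Definition \ref{defi_HC2}, the Rees bimodule $R_\hbar(\M)$ is naturally a graded Poisson $\tilde{\Halg}$-bimodule in which $\cb_i$ acts on the left as $c_i \hbar^2$ and on the right as $c'_i \hbar^2$. A direct computation using $\{\tb a, m\} = [a,m]$ shows that the operator $\{\cb_i, \cdot\}$ on $R_\hbar(\M)$ acts as multiplication by the scalar $c_i - c'_i$, so $R_\hbar(\M)$ lies in $\HC_{\overline{\Leaf}}(\tilde{\Halg})^{c - c'}$ in the decomposition (\ref{eq:3.13.1}). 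The hypothesis $c - c' \in \Lambda_\Halg$ ensures that this isotypic component participates in the image of the functor $\bullet_\dagger$ from Theorem \ref{Thm:5I}. I would then set $\M_\dagger := (R_\hbar(\M))_\dagger / (\hbar - 1)$. Since $(R_\hbar(\M))_\dagger$ is $\widetilde{\Xi}$-equivariant and finitely generated over $S(\tilde{\param})$, its quotient by $\hbar - 1$ is a finite-dimensional $\widetilde{\Xi}$-equivariant $\underline{\Hrm}^+_{1,c}$-$\underline{\Hrm}^+_{1,c'}$-bimodule, as required.

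For the opposite direction, given $\Ncal \in {}_{\,c\!}\HC_0^{\Xi}(\underline{\Hrm}^+)_{c'}$, I would equip $\Ncal[\hbar]$ with the trivial filtration to view it as a graded object of $\HC_0^{\Xi}(\underline{\tilde{\Halg}}^+)^{c - c'}$, apply the functor $\bullet^\dagger$ from Theorem \ref{Thm:5I}, and specialize at $\hbar = 1$ to obtain $\Ncal^\dagger$. Properties (1)--(5) should then be transferred from Theorem \ref{Thm:5I}: exactness in (1) reduces to exactness of $\hbar$-specialization on the $\hbar$-flat subcategory hit by $R_\hbar(\bullet)_\dagger$; the tensor identity (3) follows from the natural isomorphism $R_\hbar(\M_1) \otimes_{\tilde{\Halg}} R_\hbar(\M_2) / (\hbar - 1) \cong \M_1 \otimes_{\Hrm_{1,c'}} \M_2$ noted after Definition \ref{defi_HC2}; and (4), (5) follow by tracking annihilators and subquotients through the Rees-and-specialize procedure, using that $\LAnn$ and $\RAnn$ commute with both the Rees construction and with reduction modulo $\hbar - 1$.

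The main obstacle will be establishing compatibility of the Theorem \ref{Thm:5I} functors with the Rees-then-specialize procedure; concretely, one must verify that (a) $\bullet_\dagger$ and $\bullet^\dagger$ preserve $\hbar$-torsion-freeness (equivalently, $\K[\hbar]$-flatness), so that the quotient by $\hbar - 1$ is exact on the relevant image, and (b) the output $\M_\dagger$ is independent of the choice of good filtration on $\M$ up to canonical isomorphism. Both should follow from the construction of $\bullet_\dagger$ as a localized-completion-and-restriction functor on the formal neighborhood of $\Leaf$, combined with the Noetherianity established by the blow-up technique of Subsection \ref{SUBSECTION_flatness}. A further subtlety is that the adjunction in (2) must descend: this should be automatic, as the unit and counit from Theorem \ref{Thm:5I} are natural transformations of $\K[\hbar]$-linear functors and therefore remain well-defined and adjoint after inverting (or equivalently killing) $\hbar - 1$.
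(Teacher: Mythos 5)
Your proposal reproduces the paper's own proof: pick a good filtration, form the Rees bimodule $R_\hbar(\M)$, apply the graded functors $\bullet_\dagger$ and $\bullet^\dagger$ of Theorem \ref{Thm:5I}, and specialize at $\hbar=1$, with $\Ncal^\dagger$ obtained from $\Ncal[\hbar]$ in the same way. The technical points you flag (independence of the filtration, preservation of $\K[\hbar]$-flatness so that specialization stays exact, and compatibility with tensor products) are exactly the ones the paper delegates to Subsection 3.4 of \cite{HC}, so no genuinely different route is being taken.
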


The proof of Theorem \ref{Thm:5I} occupies basically the next four subsections.
Subsection \ref{SUBSECTION_DMod} is preparatory, there we discuss some facts about
$D$-modules we need in our construction. In (very long) Subsection \ref{SUBSECTION_fun_low_dag}
we construct the functor $\bullet_\dagger$ and study some of its properties.
In Subsection \ref{SUBSECTION_Fun_up_dag} we construct the functor $\bullet^\dagger$
and study its properties and its relation to $\bullet_\dagger$.
The constructions are very similar to the constructions of the analogous functors
in \cite{HC}, but they are more involved technically. The reason is that
in \cite{HC} we considered completions at a point, while here we need to deal with
completions along subvarieties. In Subsection \ref{Proof 5I} we prove
Theorem \ref{Thm:5I}. A key ingredient in the proof
is  Theorem \ref{Thm:3.16.1}, which is a complete analog
of Theorem 4.1.1 from \cite{HC}.


\subsection{D-modules and related objects}\label{SUBSECTION_DMod}
We will need some facts about weakly $\K^\times$-equivariant $D$-modules on certain $\K^\times$-varieties.
Namely, let $U$ be a vector space and $U^0$ be a $\K^\times$-stable open subset of $U$.


\begin{Prop}\label{Prop:0.1}
Suppose that  $\codim_U U\setminus U^0>1$.  Let $\M$ be a weakly $\K^\times$-equivariant $\DCal_{U^0}$-module
that is coherent as a $\Str_{U^0}$-module. Then there is an isomorphism $\M\cong \Str_{U^0}\otimes \Fl(\M)$
of $\DCal_{U^0}$-modules.
\end{Prop}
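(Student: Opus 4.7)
The strategy is to trivialize $\M$ as a $\DCal_{U^0}$-module by producing enough algebraic flat sections; the $\K^\times$-equivariance provides the needed rigidity and the codimension hypothesis lets us use a Hartogs-type argument.

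First, since $U^0$ is smooth and $\M$ is coherent with a flat connection, $\M$ is locally free on $U^0$; moreover, since $\codim_U(U\setminus U^0)>1$ and $U$ is affine, the natural map identifies $M:=\Gamma(U^0,\M)$ with the global sections of the coherent reflexive extension $j_*\M$ on $U$ (where $j:U^0\hookrightarrow U$), so $M$ is a finitely generated graded $\K[U]$-module. The grading on $M$ is bounded below because $\K[U]$ is positively graded.

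The essential algebraic device is the endomorphism $\phi:=L_E-\theta(E)$ of $\M$, where $L_E$ is the infinitesimal generator of the $\K^\times$-action and $\theta(E)$ is the action of the Euler vector field $E$ through the $\DCal_{U^0}$-module structure. Since both $L_E$ and $\theta(E)$ are derivations of $\M$ lifting the same derivation of $\Str_{U^0}$ (by strong equivariance of $\Str_{U^0}$), their difference $\phi$ is $\Str_{U^0}$-linear. A short calculation, using $[E,\xi]=\mathrm{wt}(\xi)\,\xi$ for homogeneous vector fields $\xi$ and $[L_E,\theta(\xi)]=\mathrm{wt}(\xi)\,\theta(\xi)$, yields $[\phi,\theta(\xi)]=0$ for every $\xi$. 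Thus $\phi$ is a horizontal endomorphism of $\M$, and its generalized eigenspace decomposition is a direct sum decomposition of $\M$ as a weakly $\K^\times$-equivariant $\DCal_{U^0}$-module. Since $\phi$ preserves weights and $L_E$ has integer eigenvalues on $M$, the generalized eigenvalues of $\phi$ are integers, and shifting the grading reduces us to the case where $\phi$ is nilpotent.

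With this reduction in place, I would study the natural $\DCal_{U^0}$-module morphism
$$\mu:\Str_{U^0}\otimes_\K \Fl(\M)\longrightarrow \M.$$
Injectivity is the standard fact that any $\Str_{U^0}$-linear relation among flat sections must have constant coefficients. For surjectivity, the plan is an induction on weight exploiting the bounded-below grading: elements of minimal weight $w_0$ are killed by every $\theta(\partial_i)$ (which decreases weight) and hence are automatically flat. For a homogeneous section $m$ of weight $w>w_0$, one produces a canonical flat lift $\widetilde m\in \Fl(\M)$ by iteratively inverting $\theta(E)$ on the subspace on which $L_E-w$ vanishes up to its nilpotent correction from $\phi$. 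Concretely, the identity $\theta(E)=L_E-\phi$ shows that $\theta(E)$ differs from multiplication by $w$ by a nilpotent operator on the weight-$w$ piece, so one can solve $\theta(E)\, m'=m-\widetilde m_0$ inductively on the nilpotency length; the series terminates at each fixed weight because of the bounded-below grading, and one obtains an algebraic identity $m=\widetilde m+\sum_i x_i\,r_i$ with $\widetilde m\in\Fl(\M)$ and $r_i$ of strictly lower weight, so that the inductive hypothesis applies to the $r_i$.

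The chief obstacle will be the inductive construction of $\widetilde m$: one must verify that the iterative procedure produces a genuinely flat algebraic section rather than a merely formal one, and that the resulting $\mu$ is an isomorphism of $\DCal_{U^0}$-modules (not just of $\Str_{U^0}$-modules). The content of this step is essentially an equivariant algebraic Poincar\'e lemma on $U^0$, and it is precisely the codimension-two hypothesis that kills the potential obstructions: it reduces the computation of higher de Rham cohomology to that on the affine space $U$, where $\K^\times$-equivariance together with the bounded-below positive grading forces vanishing of any would-be obstruction class.
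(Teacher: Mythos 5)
Your proposal is correct in essence, and it takes a genuinely different route from the paper. The paper's proof is analytic: it invokes the Riemann--Hilbert machinery from \cite{HHT} to reduce to showing that the connection on $\M$ has regular singularities (on the simply connected $U^0$ this forces the $\DCal_{U^0}$-module to be trivial), and then checks regularity along the divisor at infinity of $\mathbb{P}(U\oplus\K)$ by restricting the connection to lines through the origin, where $\K^\times$-invariance forces a first-order pole. Your argument is purely algebraic: the codimension hypothesis is used via Serre's extension theorem to ensure $M=\Gamma(U^0,\M)$ is a finitely generated graded $\K[U]$-module (hence bounded below), and the trivialization is extracted from the horizontal $\Str_{U^0}$-linear endomorphism $\phi=L_E-\theta(E)$ together with the Euler identity $\theta(E)=\sum_i z_i\,\theta(\partial_i)$; this buys a proof that avoids the transcendental black boxes. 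One step needs a real argument: the integrality of the generalized eigenvalues of $\phi$ does not follow just from ``$\phi$ preserves weights and $L_E$ has integer eigenvalues,'' since on each weight space $M_w$ one has $\phi=w\cdot\mathrm{id}-\theta(E)$, and the spectrum of $\theta(E)$ is not a priori integral. The fix is exactly the minimal-weight observation you use a sentence later: if $\M_\lambda$ is a generalized eigensheaf of $\phi$ and $m\in\Gamma(U^0,\M_\lambda)$ is a nonzero section of minimal weight $w_0$, then each $\theta(\partial_i)$ (which lowers weight by $1$ and preserves $\M_\lambda$) kills $m$, so $\phi m=L_E m-\theta(E)m=w_0 m$ and hence $\lambda=w_0\in\mathbb{Z}$. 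Given this, the ``chief obstacle'' you anticipate does not materialize: after shifting the grading on each $\M_\lambda$ so that $\phi$ is nilpotent and the minimal weight is $0$, every weight-$0$ section is flat, and for $w>0$ the operator $\theta(E)=w-\phi$ is invertible on $M_w$, so $m=\sum_i z_i\,\theta(\partial_i)\bigl(\theta(E)^{-1}m\bigr)$ passes the induction to weight $w-1$; moreover $\mu$ is automatically a morphism of $\DCal_{U^0}$-modules because its source carries the trivial connection, and no Poincar\'e-lemma or $H^1$-vanishing input is needed beyond the Hartogs identity $\Gamma(U^0,\Str_{U^0})=\K[U]$.
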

\begin{proof}
We remark that $U^0$ is simply connected because $\codim_{U}U\setminus U^0>1$.

By \cite{HHT}, Corollary 5.3.10,
it is enough to check  that any weakly $\K^\times$-equivariant $\DCal_{U^0}$-module that is coherent as
a $\Str_{U^0}$-module has regular singularities.
Set $\overline{U}:=\mathbb{P}(U\oplus \K)$. Equip $\overline{U}$ with a $\K^\times$-action induced
from the following action on $U\oplus \K$: $t.(u,x)=(tu,x)$. So we have  $\K^\times$-equivariant
inclusions $U^0\subset U\subset \overline{U}$.

Being a coherent $\Str_{U^0}$-module, a $\DCal_{U^0}$-module $\M$ is a vector bundle
equipped with a flat ($\K^\times$-invariant) connection, say, $\nabla$. We need to check that $\nabla$ has regular singularities. The only divisor in $\overline{U}\setminus U^0$ is $\mathbb{P}(U)=\overline{U}\setminus U$.
Pick a point $x\in \mathbb{P}(U)$ and let $[x]$ by the line in $U$  corresponding to $x$. By \cite{HHT}, Theorem 5.3.7, we only need to check that
the restriction $\nabla|_{[x]}$ of $\nabla$ to $[x]$ has regular singularities.
The restriction $\M|_{[x]^\times}$ of $\M$ to $[x]^\times=[x]\setminus \{0\}$
equivariantly trivializes. So the connection $\nabla|_{[x]^\times}$ is given
by $\nabla|_{[x]^\times}:=d+A(y)dy$, where $y$ denotes the coordinate on $[x]\cong \K$ and $A(y)$
is a $\K[y,y^{-1}]$-valued matrix. Since $\nabla|_{[x]}$ is $\K^\times$-invariant, we see
that $A(y)=y^{-1}A$, where $A$ is a constant matrix.
Therefore $\nabla|_{[x]}$ has regular singularities.
\end{proof}

We will mostly use a corollary of this proposition.


\begin{Cor}\label{Cor:0.3}
Let $U^0$ be as in Proposition \ref{Prop:0.1} and $\M$ be a weakly $\K^\times$-equivariant $\DCal_{U^0}$-module.
Suppose that there is a $\K^\times$-stable decreasing $\DCal_{U^0}$-module filtration $\F_i\M$ such that
$\M/\F_i\M$ is a coherent $\Str_{U^0}$-module and $\M$ is complete with respect to the filtration.
Then $\M\cong \Str_{U^0}\widehat{\otimes}_\K\Fl(\M)$.
\end{Cor}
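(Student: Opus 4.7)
The plan is to reduce the statement to Proposition \ref{Prop:0.1} term by term along the filtration, and then pass to the inverse limit. First I would observe that each quotient $\M/\F_i\M$ is a weakly $\K^\times$-equivariant $\DCal_{U^0}$-module that is coherent as an $\Str_{U^0}$-module, so Proposition \ref{Prop:0.1} applies and yields a $\DCal_{U^0}$-module isomorphism $\M/\F_i\M \cong \Str_{U^0}\otimes \Fl(\M/\F_i\M)$. The isomorphism in Proposition \ref{Prop:0.1} is given by the canonical evaluation map $\Str_{U^0}\otimes \Fl(\cdot)\to (\cdot)$, which is functorial in the $\DCal$-module; so these isomorphisms fit into a compatible inverse system as $i$ varies.

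Writing $V_i:=\Fl(\M/\F_i\M)$, the next step is to control the transition maps $V_i\to V_{i-1}$. Apply $\Fl$ to the short exact sequence
\begin{equation*}
0\to \F_{i-1}\M/\F_i\M\to \M/\F_i\M\to \M/\F_{i-1}\M\to 0.
\end{equation*}
By Proposition \ref{Prop:0.1}, all three terms are trivial $\DCal_{U^0}$-modules of the form $\Str_{U^0}\otimes W$ for some finite-dimensional $W$, so each $V_i$ is finite dimensional and equals the generic rank of $\M/\F_i\M$ as an $\Str_{U^0}$-module. Additivity of ranks in the short exact sequence of coherent sheaves, combined with left exactness of $\Fl$, forces the map $V_i\to V_{i-1}$ to be surjective. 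In particular, the inverse system $\{V_i\}$ satisfies the Mittag-Leffler condition, and $V:=\varprojlim V_i$ surjects onto each $V_i$.

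Now I would pass to the inverse limit in the compatible isomorphisms $\M/\F_i\M\cong \Str_{U^0}\otimes V_i$. Since $\M$ is complete with respect to $\F_i\M$, we have $\M=\varprojlim \M/\F_i\M$. On the right-hand side, equipping $\Str_{U^0}\otimes V_i$ with the filtration induced from the surjection $V\twoheadrightarrow V_i$ (and using that the $V_i$ are finite dimensional so that tensor products with $\Str_{U^0}$ behave well under $\varprojlim$), we obtain $\varprojlim (\Str_{U^0}\otimes V_i)=\Str_{U^0}\widehat{\otimes}_\K V$. Combining these yields a $\DCal_{U^0}$-module isomorphism $\M\cong \Str_{U^0}\widehat{\otimes}_\K V$.

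Finally, I would identify $V$ with $\Fl(\M)$. The natural map $\Fl(\M)\to \varprojlim V_i=V$ is obtained by projecting a flat section of $\M$ to each quotient; it is injective by completeness of $\M$, and surjective because under the isomorphism $\M\cong \Str_{U^0}\widehat{\otimes}_\K V$ every element $v\in V$ corresponds to the evidently flat section $1\otimes v$. This gives the desired isomorphism. The main obstacle I anticipate is making the interchange of $\varprojlim$ with the tensor product precise in the pro-coherent setting, which is why the finite-dimensionality of each $V_i$ and the Mittag-Leffler property established in the second step are essential.
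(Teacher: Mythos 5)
Your argument is correct and is exactly the standard reduction the paper has in mind: the corollary is stated without proof, and the intended argument is precisely to apply Proposition \ref{Prop:0.1} to the coherent quotients $\M/\F_i\M$, note via the rank count (or vanishing of flat cohomology on $U^0$) that the transition maps on flat sections are surjective, and pass to the inverse limit using completeness. Your handling of the Mittag--Leffler issue and the identification $\varprojlim \Fl(\M/\F_i\M)\cong\Fl(\M)$ is sound, so nothing essential is missing.
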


%
%

In the sequel we will need some results about sheaves of TDO.
By a sheaf of twisted differential operators (TDO)  on a smooth
algebraic variety $X$ we mean a sheaf of algebras $\DCal'$ equipped with a monomorphism
$\Str_X\hookrightarrow \DCal'$ that is locally isomorphic to the monomorphism $\Str_X\hookrightarrow
\DCal_X$. ``Locally isomorphic'' means that
 there is a covering $X^i$ of $X$ by open subsets such that there is an isomorphism
$\iota^i:\DCal'|_{X^i}\rightarrow \DCal_{X^i}$ intertwining
the embeddings $\Str_{X^i}\hookrightarrow \DCal_{X^i},
\Str_{X^i}\hookrightarrow \DCal'|_{X^i}$.

It is known (and is easy to show) that the set of isomorphism classes of the sheaves $\DCal'$ of
TDO  (an isomorphism is supposed to intertwine the embeddings
of $\Str_{X}$) is naturally identified with $H^1(X,\Omega^1_{cl})$, where $\Omega^1_{cl}$
denotes the sheaf of closed 1-forms on $X$.

Namely, pick an open cover $X=\bigcup X^i$ and consider a 1-cocycle $\tilde{\alpha}=(\tilde{\alpha}_{ij})$ in $\Omega^1_{cl}$. Define the sheaf
$\DCal^{\tilde{\alpha}}_X$ as follows: $\DCal^{\tilde{\alpha}}_X|_{X^i}=\DCal_{X_i}$ and the transition function from $X^j$ to $X^i$
maps a vector filed $\xi$ to $\xi+\langle\xi, \tilde{\alpha}_{ij}\rangle$. Up to  an isomorphism $\DCal^{\tilde{\alpha}}_X$ depends only on the class $\alpha$ of $\tilde{\alpha}$ in $H^1(X,\Omega^{1}_{cl})$
and we write $\DCal^\alpha_X$ instead of $\DCal^{\tilde{\alpha}}_X$.

A result about TDO we need  is the following proposition.

\begin{Prop}\label{Prop:0.11.1}
Let $U$ be a  vector space and $U^0$ be its open subset such that
$\codim_U U\setminus U^0>1$. Suppose that a sheaf $\DCal^\alpha_{U^0}$ of TDO
has a $\Str_{U^0}$-coherent module $\M$. Then $\alpha=0$.
\end{Prop}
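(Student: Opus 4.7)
The plan is to reduce to the rank-one case by taking determinants and then to exploit the triviality of $\operatorname{Pic}(U^0)$ coming from Hartogs. First, since $\M$ is a nonzero $\Str_{U^0}$-coherent module over a TDO on a smooth variety, it is automatically locally free of some positive rank $n$: this is local, and on a trivializing cover $\{U^i\}$ of $\DCal^\alpha_{U^0}$ it reduces to the classical fact that an $\Str$-coherent $\DCal$-module on a smooth variety is a vector bundle. Set $n:=\rk\M>0$.

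Next I would form the line bundle $\det\M=\bigwedge^n\M$. On each $U^i$ the identification $\iota^i\colon\DCal^\alpha_{U^0}|_{U^i}\cong\DCal_{U^i}$ turns the $\DCal^\alpha$-action on $\M$ into a flat connection $\nabla^i$; comparing via the transition $\iota^i\circ(\iota^j)^{-1}\colon\xi\mapsto\xi+\langle\xi,\tilde\alpha_{ij}\rangle$ for a \v{C}ech representative $\tilde\alpha$ of $\alpha$, one obtains $\nabla^j-\nabla^i=\tilde\alpha_{ij}\cdot\id_\M$ on overlaps. Taking traces, the induced rank-one connections $\operatorname{tr}\nabla^i$ on $\det\M|_{U^i}$ satisfy $\operatorname{tr}\nabla^j-\operatorname{tr}\nabla^i=n\tilde\alpha_{ij}$, which exhibits $\det\M$ as a module over the TDO $\DCal^{n\alpha}_{U^0}$.

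Finally, since $\codim_U(U\setminus U^0)>1$, Hartogs gives $\operatorname{Pic}(U^0)=\operatorname{Pic}(U)=0$ (as $U$ is affine space), so $\det\M\cong\Str_{U^0}$ as a line bundle. A $\DCal^{n\alpha}$-structure on $\Str_{U^0}$ is then, locally on $U^i$, a flat rank-one connection $d+\omega^i$; flatness in rank one forces $d\omega^i=0$, while on overlaps $\omega^j-\omega^i=n\tilde\alpha_{ij}$. Thus $n\tilde\alpha$ is a \v{C}ech coboundary in $\Omega^1_{cl}$, so $n\alpha=0$ in $H^1(U^0,\Omega^1_{cl})$. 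Since this cohomology group is a $\K$-vector space and $n>0$, we conclude $\alpha=0$. The only mildly delicate point is verifying that the induced TDO class on $\det\M$ is exactly $n\alpha$ and not some modification of it, but this is a direct unwinding of the cocycle description; I do not anticipate a serious obstacle here.
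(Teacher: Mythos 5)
Your proof is correct and essentially the same as the paper's: both pass to the determinant line bundle (the paper via $\M^{\otimes m}\supset\bigwedge^m\M$, you via the trace connection, which is the same object), invoke $\operatorname{Pic}(U^0)=\{1\}$ from the codimension hypothesis to trivialize it, deduce $n\alpha=0$, and finish using that $H^1(U^0,\Omega^1_{cl})$ is a $\K$-vector space. The one small point you flag as "mildly delicate" — that the determinant line bundle carries exactly the class $n\alpha$ — is handled cleanly by your trace computation and poses no gap.
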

\begin{proof}
The proof is in two steps.

{\it Step 1.}
 Let $\M_1,\M_2$ be   $\DCal^{\alpha_1}_{U^0},\DCal^{\alpha_2}_{U^0}$-modules.
 Then $\M_1\otimes \M_2$ is a  $\DCal^{\alpha_1+\alpha_2}_{U^0}$-module.
 Indeed, pick an affine covering $X=\bigcup X_i$ and fix identifications $\iota^l_i:\DCal^{\alpha_l}_{X_i}\rightarrow \DCal_{X_i}$. Define the action of a vector field $\xi$ on $\M_1\otimes \M_2|_{X_i}$ by
 $\iota_i(\xi)=\iota_i^1(\xi)\otimes 1+ 1\otimes \iota_i^2(\xi)$. Pick a cocycle
 $\tilde{\alpha}^l_{ij}$ representing $\alpha^l$. If $\iota^l_i(\xi)-\iota^l_j(\xi)=\langle\widetilde{\alpha}^l_{ij},\xi\rangle$
for $l=1,2$, then $\iota_i(\xi)-\iota_j(\xi)=\langle\widetilde{\alpha}^1_{ij}+\widetilde{\alpha}^2_{ij},\xi\rangle$.

{\it Step 2.} Now let $\M$ be as in the statement of the proposition
and let $m$ denote the rank of $\M$. Then $\M^{\otimes m}$ is a $\DCal^{m\alpha}_{U^0}$-module.
We remark that $\bigwedge^m \M$ (the sheaf of anti-symmetric sections of $\M^{\otimes m}$)
is a $\DCal^{m\alpha}_{U^0}$-submodule in $\M^{\otimes m}$. But $\bigwedge^m \M$
is a line bundle on $U^0$. However $\operatorname{Pic}(U^0)=\{1\}$ because
$\codim_U U\setminus U^0\geqslant 2$. So $\Str_{U^0}$ is a $\DCal^{m\alpha}_{U^0}$-module.
From this it is easy to deduce that $m\alpha=0$. Since $H^1(X,\Omega^1_{cl})$ is a complex vector space, we see that  $\alpha=0$.
\end{proof}

Till the end of the subsection suppose $U$ is a symplectic vector space
and let $U^0$ be an arbitrary open subset (without restrictions on $\codim_U U\setminus U^0$).

Let $\M$ be a  Poisson $\W_\tb|_{U^0}$-bimodule (as usual, $\tb=\hbar^2$).
Suppose that $\M$  admits a decreasing filtration $\F_i\M$ by $\W_\tb|_{U^0}$-submodules such that
\begin{itemize} \item $\M$ is complete with respect to
this filtration, \item $\tb \F_i\M\subset \F_{i+1}\M$,
\item and $\F_i\M/\F_{i+1}\M$ is a coherent $\Str_{U^0}$-module for all $i$.
\end{itemize}
For instance, if $\M$ is  finitely generated
as a left $\W_\tb|_{U^0}$-module, then we can take the $\tb$-adic filtration. The claim that
this filtration is complete is proved analogously to  \cite{HC}, Lemma 2.4.4.

We want to equip
$\M$ with a structure of a $\DCal_{U^0}$-bimodule. Denote the product map $\W_\tb|_{U^0}\otimes \M\otimes \W_{\tb}|_{U^0}$ by $a\otimes m\otimes b\mapsto a*m*b$. For $u,v\in U$ and a section $m$ of $\M$ we set
$u\cdot m=\frac{1}{2}(u*m+m*u), \partial_v m=\{\check{v},m\}$, where $\check{v}$ is defined by
$\langle\check{v},u\rangle=\omega(v,u), v,u\in U$. Then the operators $u\cdot, v\cdot, \partial_u,\partial_v$ satisfy the commutation relations $[u\cdot,v\cdot]=0=[\partial_u,\partial_v], [\partial_u, v\cdot]=\langle u,v\rangle \operatorname{id}$,
so $\M$ becomes a sheaf of $\DCal_U(U)$-modules. Now let $U'$ be a principal open subset
of $U$ contained in $U^0$ defined by a polynomial $f$. We have $f\cdot m \equiv  f*m \operatorname{mod}\tb$
and so $f$ is invertible on $\M(U^0)$. It follows that $\M$ is a $\DCal_{U^0}$-module.
We remark that $\F_i\M$ is a $\DCal_{U^0}$-submodule of $\M$ and the quotient $\M/\F_i\M$ is a coherent
$\Str_{U^0}$-module.

This discussion has an interesting corollary.

\begin{Prop}\label{Prop:0.4}
Suppose that $U^0$ is stable with respect to the usual action of $\K^\times$ on $U$ and $\codim_U U\setminus U^0\geqslant 2$. Let $\M$ be a $\K^\times$-equivariant Poisson $\W_\tb|_{U^0}$-bimodule equipped with a
filtration $\F_i\M$ as above. Then the natural map $\W_\tb|_{U^0}\widehat{\otimes}_{\K[[\tb]]} \M(U^0)^{\ad U}\rightarrow \M$ is an isomorphism; here $\M(U^0)^{\ad U}$ stands for the kernel of $\{U,\cdot\}$ in
$\M(U^0)$.
\end{Prop}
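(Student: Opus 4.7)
The strategy is to view $\M$ as a weakly $\K^\times$-equivariant $\DCal_{U^0}$-module via the construction immediately preceding the proposition, apply the trivialization result Corollary \ref{Cor:0.3}, and then identify the resulting decomposition with the one asserted here. The construction gives $\M$ a $\DCal_{U^0}$-action in which a vector field $\partial_v$ acts as $m \mapsto \{\check v, m\}$, and the filtration $\F_i\M$ consists of $\DCal_{U^0}$-submodules with $\F_i\M/\F_{i+1}\M$ coherent over $\Str_{U^0}$; by iterated extension each $\M/\F_i\M$ is coherent as well, and $\M$ is complete in the filtration. The $\DCal$-module structure is weakly $\K^\times$-equivariant because $\check v$ has $\K^\times$-weight $1$ and the Poisson bracket has weight $-2$, so $\partial_v$ has weight $-1$ as a vector field should. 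After replacing $\F_i\M$ by a cofinal $\K^\times$-stable filtration (for example $\tilde \F_i\M = \sum_{t\in\K^\times}t.\F_i\M$, which is a $\DCal_{U^0}$-submodule with $\M/\tilde\F_i\M$ a quotient of $\M/\F_i\M$, hence coherent; completeness is inherited using the isotypic decomposition of the pro-algebraic $\K^\times$-action), the hypotheses of Corollary \ref{Cor:0.3} hold, and its codimension condition $\codim_U(U\setminus U^0)\geqslant 2$ is precisely ours.

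Corollary \ref{Cor:0.3} then yields a $\DCal_{U^0}$-module isomorphism $\M \cong \Str_{U^0}\widehat{\otimes}_\K \Fl(\M)$ whose inverse is $f\otimes s \mapsto f\cdot s$. A section $m$ is flat iff $\{\check v,m\}=0$ for every $v\in U$, and since $v\mapsto \check v$ is an isomorphism $U\to U^*$ via $\omega$, this is equivalent to $m\in \M(U^0)^{\ad U}$. Hence $\Fl(\M)=\M(U^0)^{\ad U}$.

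It remains to identify the map $\varphi\colon \W_{\tb}|_{U^0}\widehat{\otimes}_{\K[[\tb]]}\M(U^0)^{\ad U}\to \M$, $a\otimes m\mapsto a*m$, with this trivialization. As a $\DCal_{U^0}$-module, $\W_{\tb}|_{U^0}=(\Str_{U^0}[[\tb]],*_{\mathrm{MW}})$ is just $\Str_{U^0}[[\tb]]$ with its standard flat connection, because Moyal--Weyl gives $\{\check v,a\}=\partial_v a$ on $\Str_{U^0}[[\tb]]$. Therefore the source of $\varphi$ identifies $\DCal_{U^0}$-equivariantly with $\Str_{U^0}\widehat{\otimes}_\K \M(U^0)^{\ad U}$. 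The map $\varphi$ is itself a $\DCal_{U^0}$-module homomorphism by the Leibniz rule $\{\check v,a*m\}=\{\check v,a\}*m+a*\{\check v,m\}$, the second term vanishing on $\M^{\ad U}$. Moreover, for $m\in \M^{\ad U}$ and any polynomial $f\in \Str_{U^0}$ an easy induction on degree using $\{fg,m\}=\{f,m\}g+f\{g,m\}$ shows $\{f,m\}=0$, so the Moyal--Weyl expansion collapses to $f*m=f\cdot m$. Consequently, under the two trivializations $\varphi$ becomes the identity on $\Str_{U^0}\widehat{\otimes}_\K \M(U^0)^{\ad U}$, and is in particular an isomorphism.

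The main technical obstacle is the passage to a $\K^\times$-stable filtration needed to invoke Corollary \ref{Cor:0.3}; once that is handled, the rest of the argument is a direct comparison of two $\DCal_{U^0}$-module trivializations together with the fact that Moyal--Weyl multiplication by a polynomial reduces to ordinary multiplication on $\ad U$-invariants.
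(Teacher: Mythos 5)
Your proof is correct and takes exactly the route the paper does; the paper's own proof is the single line ``View $\M$ as a $\DCal_{U^0}$-module and apply Corollary \ref{Cor:0.3},'' and you have simply filled in the details of that line. The one extra worry you raise---$\K^\times$-stability of the filtration $\F_i\M$---is implicitly built into the setup (in every application of the proposition the filtration is the $\tb$-adic one, or one defined by powers of a $\K^\times$-homogeneous ideal, and hence is $\K^\times$-stable already), whereas your proposed saturation $\sum_{t\in\K^\times} t.\F_i\M$ would in fact require a further argument that $\M$ remains separated and complete for the coarser topology it defines, a point your appeal to isotypic components does not quite settle.
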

\begin{proof}
View $\M$ as a $\DCal_{U^0}$-module and apply Corollary \ref{Cor:0.3}.
\end{proof}

\subsection{Functor $\bullet_\dagger:\HC(\tilde{\Halg})\rightarrow\HC^\Xi(\underline{\tilde{\Halg}}^+)^{\Lambda_\Halg}$}\label{SUBSECTION_fun_low_dag}
 This functor is obtained as a composition of several functors
between some intermediate categories. Let us list these functors. The definitions of the categories involved
will be given later.
\begin{align*}
& \Fun_1: \HC(\tilde{\Halg})\rightarrow \HC(\tilde{\Halg}^{\wedge_\Leaf}|_{\Leaf})^{\Lambda_\Halg},\\
& \Fun_2: \HC(\tilde{\Halg}^{\wedge_\Leaf}|_{\Leaf})^{\Lambda_\Halg}\rightarrow \HC\left((\CC(\underline{\tilde{\Halg}}^{\wedge_{\underline{\Leaf}}}|_{\underline{\Leaf}})^{tw})^\Xi\right)^{\Lambda_\Halg},\\
&\Fun_3:\HC\left((\CC(\underline{\tilde{\Halg}}^{\wedge_{\underline{\Leaf}}}|_{\underline{\Leaf}})^{tw})^\Xi\right)^{\Lambda_\Halg}\rightarrow
\HC^\Xi\left(\CC(\underline{\tilde{\Halg}}^{\wedge_{\underline{\Leaf}}}|_{\underline{\Leaf}})^{tw}\right)^{\Lambda_\Halg},\\
&\Fun_4:\HC^\Xi\left(\CC(\underline{\tilde{\Halg}}^{\wedge_{\underline{\Leaf}}}|_{\underline{\Leaf}})^{tw}\right)^{\Lambda_\Halg}\rightarrow
\HC^\Xi\left(\CC(\underline{\tilde{\Halg}}^{\wedge_{\underline{\Leaf}}}|_{\underline{\Leaf}})\right)^{\Lambda_\Halg},\\
&\Fun_5: \HC^\Xi\left(\CC(\underline{\tilde{\Halg}}^{\wedge_{\underline{\Leaf}}}|_{\underline{\Leaf}})\right)^{\Lambda_\Halg}
\rightarrow \HC^\Xi\left(\CC(\underline{\tilde{\Halg}}^{+\wedge_0})\right)^{\Lambda_\Halg},\\
&\Fun_6:\HC^\Xi\left(\CC(\underline{\tilde{\Halg}}^{+\wedge_0})\right)^{\Lambda_\Halg}\rightarrow\HC^\Xi(\underline{\tilde{\Halg}}^{+\wedge_0})^{\Lambda_\Halg},\\
&\Fun_7:\HC^\Xi(\underline{\tilde{\Halg}}^{+\wedge_0})^{\Lambda_\Halg}\rightarrow
\HC^\Xi(\underline{\tilde{\Halg}}^{+})^{\Lambda_\Halg}.
\end{align*}

All these functors but $\Fun_1$ turn out to be category equivalences.

{\bf The functor $\Fun_1$: the (localized) completion.}

Pick $\Malg\in \HC(\tilde{\Halg})$. Define the (micro)localization $\Malg|_{\X^0}$ similarly to
$\Halg|_{\X^0}$ in Subsection \ref{SUBSECTION_sheafifiedI}. According to the discussion preceding
Corollary \ref{Cor:1.24.1}, $\Zalg|_{\X^0}=\z^{\tb}(\Halg|_{\X^0})$.
Therefore the localization $\Malg|_{\X^0}$ of $\Malg$
has a natural structure of a Poisson $\tilde{\Halg}|_{\X^0}$-bimodule. We can form the completion
$\Malg^{\wedge_\Leaf}:=\varprojlim_{k} \Malg|_{\X^0}/\Malg|_{\X^0}(\tilde{\p}|_{\X^0})^k$.
Then, by definition, $\Fun_1(\Malg):=\Malg^{\wedge_\Leaf}|_{\Leaf}$ is the sheaf-theoretic restriction of
$\Malg^{\wedge_\Leaf}$ to $\Leaf$.

The following lemma describes the properties of $\Fun_1(\Malg)$ we need.

\begin{Lem}\label{Lem:3.10.1}
Let $\Malg\in \HC(\tilde{\Halg})$.
\begin{enumerate}
\item $\Fun_1(\Malg)$ has a natural structure of a $\K^\times$-equivariant Poisson $\tilde{\Halg}^{\wedge_\Leaf}|_{\Leaf}$-bimodule.
This module is finitely generated both as a left and as a right $\tilde{\Halg}^{\wedge_\Leaf}|_{\Leaf}$-module.
\item The natural maps $\tilde{\Halg}^{\wedge_\Leaf}|_{\Leaf}\otimes_{\tilde{\Halg}} \Malg, \Malg\otimes_{\tilde{\Halg}} \tilde{\Halg}^{\wedge_\Leaf}|_{\Leaf}\rightarrow \Malg^{\wedge_{\Leaf}}|_{\Leaf}$ are isomorphisms.
\item The completion functor $\Malg\mapsto \Fun_1(\Malg)$ is exact and intertwines
the tensor products (of Poisson bimodules).
\item As a $\Zalg_{(r+1)}^{\wedge_\Leaf}|_{\Leaf}$-module $\Fun_1(\Malg)/\tilde{\param}\Fun_1(\Malg)$
coincides with the usual (commutative) completion of the $\Zalg_{(r+1)}$-module $\Malg/\tilde{\param}\Malg$.
In particular, $\Fun_1(\Malg)=0$ if and only if $\VA(\Malg)\cap \Leaf=\varnothing$.
\end{enumerate}
\end{Lem}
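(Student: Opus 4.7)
My approach is to build everything from the localization $\Malg|_{\X^0}$ and then pass to the $\tilde{\p}$-adic completion, reusing the flatness/Noetherian machinery already established in Subsections \ref{SUBSECTION_compatibility}--\ref{SUBSECTION_flatness}.

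First I would establish (1). Since $\Zalg|_{\X^0}=\z^{\tb}(\tilde{\Halg}|_{\X^0})$ by Corollary~\ref{Cor:1.24.1}, the Poisson bracket $\{\cdot,\cdot\}:\Zalg\otimes \Malg\to\Malg$ extends uniquely by $\Str_{\X^0}$-bilinearity and continuity to a bracket on $\Malg|_{\X^0}$, making it a Poisson $\tilde{\Halg}|_{\X^0}$-bimodule. The ideal $\tilde{\p}$ is Poisson and $\K^\times$-stable, so the $\tilde{\p}$-adic completion $\Malg^{\wedge_\Leaf}|_{\Leaf}$ inherits both structures, giving the $\K^\times$-equivariant Poisson $\tilde{\Halg}^{\wedge_\Leaf}|_{\Leaf}$-bimodule we need. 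For finite generation I would invoke Lemma~\ref{Lem:3.10.2}: a finite set of left (resp.\ right) generators of $\Malg$ over $\tilde{\Halg}$ yields generators of $\Malg|_{\X^0}$ over $\tilde{\Halg}|_{\X^0}$, and the image of these generators remains a generating set after completion because $\tilde{\Halg}^{\wedge_\Leaf}|_{\Leaf}/\tilde{\p}^{\wedge_\Leaf}|_{\Leaf}^n$ is a pro-coherent sheaf on $\Leaf$ and completions of finitely generated modules over a Noetherian filtered ring are finitely generated.

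For (2) the main point is to extend the blow-up argument of Proposition~\ref{Prop:1.33I} from the algebra to the bimodule. Set $\tilde{\p}_i:=\tilde{\Halg}(\Zalg\cap\tilde{\p})^{2i}$ as before. I would consider the Rees bimodule $\bigoplus_{i\ge 0}\tilde{\p}_i\Malg|_{\X^0}$, which is finitely generated over the Noetherian blow-up algebra $\bigoplus_{i\ge 0}\tilde{\p}_i|_{\X^0}$ because $\Malg|_{\X^0}$ is finitely generated over $\tilde{\Halg}|_{\X^0}$. The general form of Lemma~\ref{Lem:1.23.1}(4) then gives exactness of the completion functor on finitely generated left (or right) $\tilde{\Halg}|_{\X^0}$-modules and the canonical isomorphism $\tilde{\Halg}^{\wedge_\Leaf}|_{\Leaf}\otimes_{\tilde{\Halg}|_{\X^0}}\Malg|_{\X^0}\xrightarrow{\sim}\Malg^{\wedge_\Leaf}|_{\Leaf}$. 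Composing with the flat localization isomorphism $\tilde{\Halg}|_{\X^0}\otimes_{\tilde{\Halg}}\Malg\xrightarrow{\sim}\Malg|_{\X^0}$ (cf.\ Lemma~\ref{Lem:2.12.1}) yields (2), and the right-module version is symmetric.

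Item (3) now follows formally. Exactness of $\Fun_1$ is exactness of localization followed by completion, both of which are exact on finitely generated modules by the previous paragraph (equivalently, by the flatness of $\tilde{\Halg}^{\wedge_\Leaf}|_{\Leaf}$ over $\tilde{\Halg}$, which is contained in Corollary~\ref{Cor:1.22I}). For the tensor-product compatibility, given $\Malg_1,\Malg_2\in\HC(\tilde{\Halg})$, I would write
\[
\Fun_1(\Malg_1)\otimes_{\tilde{\Halg}^{\wedge_\Leaf}|_{\Leaf}}\Fun_1(\Malg_2)
=(\tilde{\Halg}^{\wedge_\Leaf}|_{\Leaf}\otimes_{\tilde{\Halg}}\Malg_1)\otimes_{\tilde{\Halg}^{\wedge_\Leaf}|_{\Leaf}}(\Malg_2\otimes_{\tilde{\Halg}}\tilde{\Halg}^{\wedge_\Leaf}|_{\Leaf})
\]
using (2) in two ways, and then collapse the middle via associativity to obtain $\tilde{\Halg}^{\wedge_\Leaf}|_{\Leaf}\otimes_{\tilde{\Halg}}(\Malg_1\otimes_{\tilde{\Halg}}\Malg_2)=\Fun_1(\Malg_1\otimes_{\tilde{\Halg}}\Malg_2)$; the Poisson bracket agrees on both sides because it is determined by the bimodule structure together with the $\frac{1}{\tb}$-commutator identity on the $\tb$-torsionfree part.

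Finally, (4) is obtained by reducing modulo $\tilde{\param}$. Since $\tilde{\Halg}/(\tilde{\param})=\Halg_{(r+1)}=SV\#\Gamma$ is finite over $\Zalg_{(r+1)}=(SV)^\Gamma$, Proposition~\ref{Cor:1.31}(1) implies that the $\tilde{\p}$-adic and the $\Zalg_{(r+1)}\cap\p_{(r+1)}$-adic topologies agree on $\Malg/\tilde{\param}\Malg$, so $\Fun_1(\Malg)/\tilde{\param}\Fun_1(\Malg)$ is precisely the usual commutative completion of the $\Zalg_{(r+1)}$-module $\Malg/\tilde{\param}\Malg$ along $\overline{\Leaf}$, restricted to $\Leaf$. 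Its vanishing is equivalent (by Nakayama on each stalk) to the support of $\Malg/\tilde{\param}\Malg$ missing $\Leaf$, i.e., to $\VA(\Malg)\cap\Leaf=\varnothing$; then by the completeness of $\Fun_1(\Malg)$ in the $\tilde{\param}$-adic topology this in turn forces $\Fun_1(\Malg)=0$.

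The main technical obstacle I expect is packaging the blow-up/Noetherian machinery in the bimodule setting cleanly enough that the two completions $\tilde{\Halg}^{\wedge_\Leaf}|_{\Leaf}\otimes_{\tilde{\Halg}}\Malg$ and $\Malg\otimes_{\tilde{\Halg}}\tilde{\Halg}^{\wedge_\Leaf}|_{\Leaf}$ produce the same object as $\Malg^{\wedge_\Leaf}|_{\Leaf}$; the compatibility of the two-sided and one-sided completions ultimately rests on the fact that modulo a sufficiently high power of the Poisson-central ideal $\Zalg\cap\tilde{\p}$ the bimodule structure is genuinely central up to the $\tb$-action, which is what Proposition~\ref{Cor:1.31} and Lemma~\ref{Prop:1.32} were designed to guarantee.
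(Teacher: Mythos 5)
Your proposal is correct and follows essentially the same route as the paper's proof: localize, complete along $\tilde{\p}$, and invoke the Noetherian blow-up machinery of Proposition~\ref{Prop:1.33I} together with Lemma~\ref{Lem:1.23.1} to get the flatness/tensor identities, with the Poisson bracket extended by continuity via Corollary~\ref{Cor:1.24.1}. The only point worth foregrounding (which you relegate to the final paragraph) is the paper's opening observation that $[\tilde{\p},\Malg]|_{\X^0}\subset \hbar\Malg|_{\X^0}$, which is precisely what makes the left and right $\tilde{\p}$-adic filtrations on $\Malg|_{\X^0}$ coincide and therefore licenses treating the two one-sided completions in (2) as the same bimodule.
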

\begin{proof}
We have $[\tilde{\p},\Malg]|_{\X^0}\subset \hbar \Malg|_{\X^0}$. From this it follows that the filtrations
$\Malg\tilde{\p}^n|_{\X^0}$ and $\tilde{\p}^n\Malg|_{\X^0}$ coincide. So $\Malg^{\wedge_\Leaf}$ has a natural structure of a $\tilde{\Halg}^{\wedge_\Leaf}$-bimodule (a unique structure extended from $\Halg|_{\X^0}\otimes\Malg|_{\X^0}\otimes \Halg|_{\X^0}\rightarrow \Malg|_{\X^0}$ by continuity). Also, thanks to Corollary \ref{Cor:1.24.1},
we can extend the bracket from $\Zalg|_{\X^0}\otimes \Malg|_{\X^0}\rightarrow \Malg|_{\X^0}$
to $\z^{\tb}(\Halg^{\wedge_\Leaf})\otimes \Malg^{\wedge_\Leaf}\rightarrow \Malg^{\wedge_\Leaf}$ by continuity.
This bracket then transfers to the pull-backs to $\Leaf$.



(2) and the exactness part in (3) are standard corollaries of Proposition \ref{Prop:1.33I}, see Lemma \ref{Lem:1.23.1}.
The compatibility with tensor products in (3) follows from (2).

(4) follows from the construction and Proposition \ref{Cor:1.31I}.
\end{proof}


Now let us define the category $\HC(\tilde{\Halg}^{\wedge_\Leaf}|_{\Leaf})$. By definition
it consists of $\K^\times$-equivariant Poisson $\tilde{\Halg}^{\wedge_\Leaf}|_{\Leaf}$-bimodules
that are finitely generated as left $\tilde{\Halg}^{\wedge_\Leaf}|_{\Leaf}$-modules. Since $\tilde{\Halg}^{\wedge_\Leaf}|_{\Leaf}$ is a sheaf of Noetherian algebras complete in the
$\p^{\wedge_\Leaf}|_{\Leaf}$-adic topology, we can use the claim similar to Lemma 2.4.2 from \cite{HC}
to show that $\HC(\tilde{\Halg}^{\wedge_\Leaf}|_{\Leaf})$ is an abelian category and
that any object in  $\HC(\tilde{\Halg}^{\wedge_\Leaf}|_{\Leaf})$ is complete and separated in
the $\p^{\wedge_\Leaf}|_{\Leaf}$-adic topology. The category
$\HC(\tilde{\Halg}^{\wedge_\Leaf}|_{\Leaf})$ is equipped with a tensor product functor:
the tensor product of Poisson bimodules.
So  the   functor $\Fun_1:\Malg\mapsto \Malg^{\wedge_\Leaf}|_{\Leaf}:\HC(\tilde{\Halg})\rightarrow\HC(\tilde{\Halg}^{\wedge_\Leaf}|_{\underline{\Leaf}})$
is monoidal.

We want to introduce an analog of the decomposition (\ref{eq:3.13.1}) for $\HC(\tilde{\Halg}^{\wedge_\Leaf}|_{\Leaf})$. Let $\M\in\HC(\tilde{\Halg}^{\wedge_\Leaf}|_{\Leaf})$ and set $\M_n:=\M/\M(\tilde{\p}^{\wedge_\Leaf}|_{\Leaf})^n$.
We have the endomorphisms $C_i=\{\cb_i,\cdot\}$ of a Poisson $\tilde{\Halg}^{\wedge_\Leaf}|_{\Leaf}$-bimodule. The quotients $\M_n/\M_{n+1}$ are coherent $\Str_{\Leaf}$-modules and Poisson $\tilde{\Halg}^{\wedge_\Leaf}|_{\Leaf}$-bimodules. The annihilator of such a module in $\Halg^{\wedge_\Leaf}|_{\Leaf}$ and hence in $\Str_{\Leaf}$ is a Poisson ideal. But $\Str_{\Leaf}$ has no nontrivial
Poisson ideals for $\Leaf$ is a symplectic variety. It follows that any quotient $\M_n/\M_{n+1}$ has finite length.
Therefore we have the decomposition $\M_n/\M_{n+1}:=\bigoplus_\lambda (\M_n/\M_{n+1})^\lambda$ into the sum
of the generalized eigensheaves for the operators $C_i$. We remark that the natural homomorphism
$(\tilde{\p}^{\wedge_\Leaf}|_{\Leaf})^n/(\tilde{\p}^{\wedge_\Leaf}|_{\Leaf})^{n+1}\otimes \M/\M_1\rightarrow \M_n/\M_{n+1}$ is surjective. Hence $(\tilde{\p}^{\wedge_\Leaf}|_{\Leaf})^n/(\tilde{\p}^{\wedge_\Leaf}|_{\Leaf})^{n+1}\otimes (\M/\M_1)^\lambda\rightarrow (\M_n/\M_{n+1})^\lambda$ is surjective. Set $\M^\lambda:=\varprojlim_{n} (\M_n)^\lambda$.
Then we have the direct sum  decomposition $\M=\bigoplus_\lambda \M^\lambda$ and the corresponding category
decomposition $\HC(\tilde{\Halg}^{\wedge_\Leaf}|_{\Leaf})= \bigoplus_\lambda
\HC(\tilde{\Halg}^{\wedge_\Leaf}|_{\Leaf})^\lambda$. The subcategory $\HC(\tilde{\Halg}^{\wedge_\Leaf}|_{\Leaf})^{\Lambda_\Halg}\subset \HC(\tilde{\Halg}^{\wedge_\Leaf}|_{\Leaf})$ is now defined in an obvious way. Clearly,
the image of $\Fun_1$ lies in $\HC(\tilde{\Halg}^{\wedge_\Leaf}|_{\Leaf})^{\Lambda_\Halg}$.

\begin{Rem}
In fact, one can show that $\HC(\tilde{\Halg}^{\wedge_\Leaf}|_{\underline{\Leaf}})^{\Lambda_\Halg}$
coincides with the whole category $\HC(\tilde{\Halg}^{\wedge_\Leaf}|_{\underline{\Leaf}})$.
This follows from the construction above and Proposition \ref{Prop:3.15.2} below.
\end{Rem}

{\bf The functor $\Fun_2$: the push-forward with respect to $\tilde{\theta}$.}

Recall the isomorphism $\tilde{\theta}:\tilde{\Halg}^{\wedge_\Leaf}|_{\Leaf}\rightarrow (\CC(\underline{\tilde{\Halg}}^{\wedge_{\underline{\Leaf}}}|_{\underline{\Leaf}})^{tw})^\Xi$.
Define the category $\HC((\CC(\underline{\tilde{\Halg}}^{\wedge_{\underline{\Leaf}}}|_{\underline{\Leaf}})^{tw})^\Xi)$
analogously to $\HC(\tilde{\Halg}^{\wedge_\Leaf}|_{\Leaf})$.
We set $\Fun_2:=\tilde{\theta}_*$. The functor $\Fun_2$ restricts to an equivalence
$\HC(\tilde{\Halg}^{\wedge_\Leaf}|_{\underline{\Leaf}})^{\Lambda_\Halg}\rightarrow \HC((\CC(\underline{\tilde{\Halg}}^{\wedge_{\underline{\Leaf}}}|_{\underline{\Leaf}})^{tw})^\Xi)^{\Lambda_\Halg}$.

{\bf The functor $\Fun_3$: lifting to $\underline{\Leaf}$.}

Let $\M\in \HC((\CC(\underline{\tilde{\Halg}}^{\wedge_{\underline{\Leaf}}}|_{\underline{\Leaf}})^{tw})^\Xi)$.
Set
\begin{equation}\label{eq:3.13.3}
\M|_{\underline{\Leaf}}:=
\CC(\underline{\tilde{\Halg}}^{\wedge_{\underline{\Leaf}}}|_{\underline{\Leaf}})^{tw}\otimes_
{(\CC(\underline{\tilde{\Halg}}^{\wedge_{\underline{\Leaf}}}|_{\underline{\Leaf}})^{tw})^\Xi}\M.
\end{equation}
By definition, $\M|_{\underline{\Leaf}}$ is a $\CC(\underline{\tilde{\Halg}}^{\wedge_{\underline{\Leaf}}}|_{\underline{\Leaf}})^{tw}$-
$(\CC(\underline{\tilde{\Halg}}^{\wedge_{\underline{\Leaf}}}|_{\underline{\Leaf}})^{tw})^\Xi$-bimodule.
It is equipped with a bracket with $\z^{\tb}((\CC(\underline{\Halg}^{\wedge_{\underline{\Leaf}}}|_{\underline{\Leaf}})^{tw})^\Xi)$
defined using the Leibnitz rule.

\begin{Lem}
\begin{enumerate}
\item There is a unique bracket map $\z^{\tb}(\CC(\underline{\Halg}^{\wedge_{\underline{\Leaf}}}|_{\underline{\Leaf}})^{tw})\otimes
    \M|_{\underline{\Leaf}}\rightarrow \M|_{\underline{\Leaf}}$
    extending the bracket with $\z^{\tb}((\CC(\underline{\Halg}^{\wedge_{\underline{\Leaf}}}|_{\underline{\Leaf}})^{tw})^\Xi)$
    and satisfying (\ref{eq:4.11.2})-(\ref{eq:4.11.4}).
\item There is a unique right multiplication map
$ \CC(\underline{\tilde{\Halg}}^{\wedge_{\underline{\Leaf}}}|_{\underline{\Leaf}})^{tw}
\otimes \M|_{\underline{\Leaf}}\rightarrow \M|_{\underline{\Leaf}}$ satisfying (\ref{eq:4.11.1})
so that $\M|_{\underline{\Leaf}}$ becomes a Poisson $\CC(\underline{\tilde{\Halg}}^{\wedge_{\underline{\Leaf}}}|_{\underline{\Leaf}})^{tw}$-bimodule.
\item The natural map $\M\rightarrow (\M|_{\underline{\Leaf}})^\Xi$ is an isomorphism.
\end{enumerate}
\end{Lem}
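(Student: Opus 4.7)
The strategy is Galois descent for the étale $\Xi$-covering $\pi': \underline{\Leaf} \to \Leaf$, which makes $T := \CC(\underline{\tilde{\Halg}}^{\wedge_{\underline{\Leaf}}}|_{\underline{\Leaf}})^{tw}$ a sheaf-theoretic Galois $\Xi$-extension of $T^\Xi$. I would first record the local Galois structure: for any $\Xi$-stable affine open $U \subset \underline{\Leaf}$ with image $V = \pi'(U)$, the ring $\K[U]$ is Galois over $\K[V]$ with group $\Xi$ (since $\pi'|_U$ is étale Galois of degree $|\Xi|$), and this propagates through the centralizer-and-twist construction so that $T|_U$ is finite free of rank $|\Xi|$ as a (left or right) $T^\Xi|_V$-module, with the canonical Galois map
\[
T|_U \otimes_{T^\Xi|_V} T|_U \longrightarrow \prod_{\xi \in \Xi} T|_U, \qquad a \otimes b \mapsto \bigl(a \cdot (\xi \cdot b)\bigr)_\xi,
\]
an algebra isomorphism.

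Part (3) is then immediate: the $\Xi$-action $\xi \cdot (a \otimes m) := (\xi \cdot a) \otimes m$ on $\M|_{\underline{\Leaf}}$ is $T^\Xi$-linear on the first factor, and faithful flatness yields $(T|_U \otimes_{T^\Xi|_V} \M|_V)^\Xi = \M|_V$, which sheafifies to the required isomorphism $\M \cong (\M|_{\underline{\Leaf}})^\Xi$.

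For parts (1) and (2), my plan is to view Galois descent as an equivalence between Poisson $T^\Xi$-bimodules on $\Leaf$ and $\Xi$-equivariant Poisson $T$-bimodules on $\underline{\Leaf}$, with inverse pair $\M \mapsto \M|_{\underline{\Leaf}} = T \otimes_{T^\Xi} \M$ and $\Ncal \mapsto \Ncal^\Xi$. Concretely, I would construct the bracket $\z^\tb(T) \otimes \M|_{\underline{\Leaf}} \to \M|_{\underline{\Leaf}}$ in (1) by $\Xi$-equivariant extension: decomposing any $z \in \z^\tb(T)$ locally via the $\Xi$-isotypic splitting of $T$ over $T^\Xi$, the invariant component lies in $\z^\tb(T^\Xi)$ and acts via the Leibnitz extension of the given bracket through the left $T$-action, while each non-invariant isotypic component $z$ is written (using local freeness) as a $T^\Xi$-linear combination of products involving elements of $T^\Xi$ and fixed $\Xi$-equivariant generators of $T$, and the Leibnitz rule (\ref{eq:4.11.3}) then forces $\{z, 1 \otimes m\}$. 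Once the bracket is in hand, condition (\ref{eq:4.11.1}) rearranges to the formula $m' \cdot a = a \cdot m' - \{\hbar^2 a, m'\}$, which defines the right $T$-action in (2) uniquely and extends the existing right $T^\Xi$-action. Uniqueness in both (1) and (2) follows because any candidate structure is determined by its restriction to $\Xi$-invariants (fixed by the original structure on $\M$) combined with $\Xi$-equivariance, which is automatic given (\ref{eq:4.11.1}) since $\hbar^2 = \tb$ is $\Xi$-invariant.

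The main technical obstacle I anticipate is verifying the Jacobi-type identity (\ref{eq:4.11.4}) for the extended bracket. My plan is to reduce this to a local statement on each $U$ and check it on a topologically generating set of $T|_U$ consisting of sections of $\Str_{\underline{\Leaf}}|_U$ together with the elements $\Theta_0(\gamma)$ and $\Theta_0(v)$ from Proposition~\ref{Prop:2.1}; on such generators the Leibnitz rules (\ref{eq:4.11.2})--(\ref{eq:4.11.3}) propagate the identity from its known validity on $\M$ along the $\Xi$-equivariance, yielding (\ref{eq:4.11.4}) throughout $\M|_{\underline{\Leaf}}$.
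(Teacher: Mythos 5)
Your part (3) and the reduction ``(1) $\Rightarrow$ (2)'' are fine and agree with the paper: the right action is recovered from the left action and the bracket via (\ref{eq:4.11.1}), and (3) follows from the isomorphism $\W_\tb|_{U_i}\otimes_{(\W_\tb|_{U_i})^\Xi}\CC(\underline{\tilde{\Halg}}^{\wedge_{\underline{\Leaf}}}|_{U_i})^\Xi\rightarrow \CC(\underline{\tilde{\Halg}}^{\wedge_{\underline{\Leaf}}}|_{U_i})$, i.e.\ exactly the Galois-descent statement you invoke.

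The gap is in part (1), which is the heart of the lemma. Writing a non-invariant section $z$ as a $T^\Xi$-linear combination of module generators of $T$ over $T^\Xi$ does \emph{not} force $\{z,m\}$: the Leibnitz rule (\ref{eq:4.11.3}) constrains brackets of \emph{products}, so a module-theoretic decomposition leaves the brackets with the generators themselves completely undetermined. What actually pins them down is the \'etale structure of $\underline{\Leaf}\rightarrow\Leaf$ used multiplicatively: one chooses a basis $f_1,\ldots,f_k$ of $\K[U]$ over $\K[U]^\Xi$ together with monic relations $P_i(f_i)=0$ with invariant coefficients and $P_i'(f_i)$ invertible; in the quantization one has $\hat{P}_i(f_i)=\tb g_i$ with $\hat{P}_i'(f_i)$ invertible, and applying the (noncommutative) Leibnitz identities to this relation yields the formula (\ref{eq:4.11.6}) for $\{f_i,m\}$ — but only up to a term $\tb\{g_i,m\}$ with $g_i$ again a general element, so the bracket is \emph{constructed} by a $\tb$-adically convergent recursion, which in turn requires first reducing (via the filtration by powers of $\CC(\underline{\p}^{\wedge_{\underline{\Leaf}}}|_{U_i})^\Xi$) to the case where $\M$ is finitely generated over $(\W_\tb|_{U_i})^\Xi$ so that completeness is available. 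None of this is visible in your ``local freeness plus Leibnitz'' step. Moreover, verifying (\ref{eq:4.11.2})--(\ref{eq:4.11.4}) for the bracket so constructed is not a formal propagation on topological generators: the identities are only \emph{forced by}, not automatically \emph{satisfied by}, the recursive formula. The paper checks them by comparing with the canonical extension over the completions $\A^{\wedge_b}\cong(\A^\Xi)^{\wedge_{\pi'(b)}}$ at points $b\in U$ (where the covering trivializes and the bracket extends by continuity with all axioms), and this comparison requires the nontrivial injectivity of $\widetilde{\M}\rightarrow\widetilde{\M}^{\wedge_b}$, proved by reducing modulo $\tb$ and observing that a Poisson $\W_\tb|_{\underline{\Leaf}}$-bimodule killed by $\tb$ carries a $\DCal_{\underline{\Leaf}}$-module structure, hence is a vector bundle, so vanishing of its completion at a point implies vanishing. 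Your proposal would need to supply both the recursive construction with its convergence and this faithfulness-of-completion argument (or substitutes for them) before it could be considered a proof of (1).
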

\begin{proof}
First of all, we remark that the right product map is recovered uniquely from the left product and the bracket
map. So (1) implies (2).

(1) and (3) are local so it is enough to prove the similar claims for the restrictions
of the sheaves of interest to a $\Xi$-stable affine open subset $U_i\subset \underline{\Leaf}$.
But there $$\CC(\underline{\tilde{\Halg}}^{\wedge_{\underline{\Leaf}}}|_{\underline{\Leaf}})^{tw}|_{U_i}\cong
\CC(\underline{\tilde{\Halg}}^{\wedge_{\underline{\Leaf}}}|_{U_i}).$$

We also remark that the natural homomorphism
$$\W_{\tb}|_{U_i}\otimes_{(\W_\tb|_{U_i})^\Xi}
\CC(\underline{\tilde{\Halg}}^{\wedge_{\underline{\Leaf}}}|_{U_i})^\Xi\rightarrow \CC(\underline{\tilde{\Halg}}^{\wedge_{\underline{\Leaf}}}|_{U_i})$$
is an isomorphism. This again follows from the fact that
$\pi':\underline{\X}^{\wedge}_{\underline{\Leaf}}\rightarrow \X^\wedge_\Leaf$ is the
quotient map for the free action of $\Xi$. (3) follows. Also to prove (1) it is enough to show that the bracket can be uniquely extended from $(\W_\tb|_{U_i})^\Xi\otimes \M|_{U_i}\rightarrow \M|_{U_i}$ to $\W_\tb|_{U_i}\otimes \M|_{U_i}\rightarrow \M|_{U_i}$.

Further, consider the filtration $\M|_{U_i}\left(\CC(\underline{\p}^{\wedge_{\underline{\Leaf}}}|_{U_i})^\Xi\right)^n$
of $\M|_{U_i}$. The quotient $$\M_n:=\M|_{U_i}/\M|_{U_i}\left(\CC(\underline{\p}^{\wedge_{\underline{\Leaf}}}|_{U_i})^\Xi\right)^n$$ is an
$(\W_\tb|_{U_i})^\Xi/(\tb)^n$-bimodule that is finitely generated as a left (or right) module.
If we prove assertion (1)
for  each $\M_m$, then (1) for $\M$ will follow. So it is enough to consider
the case when $\M$ is finitely generated as a left module over $(\W_\tb|_{U_i})^\Xi$.

Pick a point $u\in \underline{\Leaf}$. Choose a $\Xi$-stable affine open neighborhood $U$ of $u$ in $U_i$.
To simplify the notation we will write $\M$ instead of $\M(U)$
and $\A$ instead of $\W_\tb(U)$.
Shrinking $U$ if necessary (so that still $u\in U$) we may assume that  $\K[U]$ is a free  $\K[U]^\Xi$-module.
Pick a free basis $f_1,\ldots,f_k\in \K[U]$ over $\K[U]^\Xi$. We can shrink $U$ again to find elements $a^i_j\in \K[U]^\Xi, i=1,\ldots,k, j=0,1,\ldots,k-1$ such that
$P_i(f_i):=f_i^k+a^i_{k-1}f^{k-1}+\ldots+ a_0^i$ vanishes in $\K[U]$ but $P_i'(f_i)$ is invertible in $\K[U]$.
 We remark that $f_1,\ldots,f_k$ is still a  basis of the (left or right)
$\A^\Xi$-module $\A$. Set $\widetilde{\M}:=\A\otimes_{\A^\Xi}\M$ so that $\M=\widetilde{\M}^\Xi$.

Now recall an identification $\A\cong \K[U][[\tb]]$, see Subsection \ref{SUBSECTION_sheafifiedI}.
Let $\hat{P}_i(f_i),\hat{P}'_i(f_i)$ be the polynomials,
where the commutative products are replaced with $*$ (the order of the factors is the same). So in $\A$ we have
$\hat{P}_i(f_i)=\tb g_i$ for some $g_i\in \A$ and $\hat{P}'_i(f_i)$ is still invertible.
For $m\in \widetilde{\M}$ set $Q(f_i,m)=f_i^{k-1}\{a_{k-1}^i,m\}+\ldots+ f_i\{a_1^i,m\}+ \{a^i_0,m\}$.

%


Suppose for a moment that there is
lifting of $\{\cdot,\cdot\}:\A^\Xi\otimes \widetilde{\M}\rightarrow \widetilde{\M}$ to
$\A\otimes \widetilde{\M}\rightarrow \widetilde{\M}$ such that
\begin{align}\label{eq:4.11.2'}&\{a,bm\}=\{a,b\}m+b\{a,m\}, \forall a,b\in \A,\\\label{eq:4.11.3'}
    &\{ab,m\}=a\{b,m\}+b\{a,m\}-\tb\{b,\{a,m\}\},\\\nonumber \forall a,b\in \A, m\in \widetilde{\M}.
\end{align}

Then we have $\tb\{g_i,m\}=\hat{P}_i'(f_i)\{f_i,m\}+Q(f_i,m)+\tb F(f_i,(a_i^j)_{j=0}^{k-1},m)$
where  $F(f_i,(a_i^j)_{j=0}^{k-1},m)$ is an expression involving $f_i,a_i^j,m$, various brackets and products.
Then we clearly have
\begin{equation}\label{eq:4.11.6}
\{f_i,m\}=\hat{P}_i'(f_i)^{-1}[\tb(\{g_i,m\}- F(f_i,(a_i^j)_{j=0}^{k-1},m))-Q(f_i,m)].
\end{equation}
Conversely, given (\ref{eq:4.11.6}) we can extend $\{\cdot,\cdot\}:\A^\Xi\otimes \widetilde{\M}\rightarrow
\widetilde{\M}$ to a unique map $\A\otimes\widetilde{\M}\rightarrow \widetilde{\M}$
using the equalities
\begin{equation}\label{eq:4.11.7}\{f_ia, m\}=f_i\{a,m\}+a\{f_i,m\}-\tb\{a,\{f_i,m\}\}\end{equation} and the $\A^\Xi$-linearity (recall that $f_1,\ldots,f_k$
is a basis of the right $\A^\Xi$-module $\A$).

 We can use (\ref{eq:4.11.6}),(\ref{eq:4.11.7}) to define
a  map $\{\cdot,\cdot\}:\A\otimes \M\rightarrow \M$ recursively. This is possible
because $\M$ is complete and separated in the $\tb$-adic topology.  For $f\in \A$ we get
$\{f,m\}$ in the form
\begin{equation}\label{eq:4.11.8}
\{f,m\}=\sum_i F_i\{b_i,m\},
\end{equation}
where $b_i\in \A^\Xi$ and the sequence $F_i$ converges to $0$ in the $\tb$-adic topology.

It remains to prove that the  map $\{\cdot,\cdot\}:\A\otimes\widetilde{\M}\rightarrow \widetilde{\M}$
defined in this way satisfies (\ref{eq:4.11.2})-(\ref{eq:4.11.4}).

Pick a point $b\in U$. It follows from the definition of the map $m\mapsto \{f,m\}, f\in \A,$ that
this map extends to  $\A^{\wedge_b}\otimes \widetilde{\M}^{\wedge_b}\rightarrow \widetilde{\M}^{\wedge_b}$, where
 $\A^{\wedge_b}, \widetilde{\M}^{\wedge_b}$ denote the completions of $\A,\widetilde{\M}$ at $b$. These completions are defined as the completions above. Namely, let $\n_b \subset\A=\K[U][[\tb]]$ denote the maximal
ideal  of $b$. Then we can form the completions $\A^{\wedge_b}:=\varprojlim_{k} \A/\n_b^k, \widetilde{\M}^{\wedge_b}:=\varprojlim_{k} \widetilde{\M}/\n_b^k\widetilde{\M}$. Using the machinery of Subsection \ref{SUBSECTION_flatness}, we see that the functor $\widetilde{\M}\mapsto \widetilde{\M}^{\wedge_b}$ is exact and $\widetilde{\M}^\wedge_b=\A^{\wedge_b}\otimes_\A \widetilde{\M}$.

Similarly, we can consider the completion $(\A^\Xi)^{\wedge_{\pi'(b)}}$ of $\A^\Xi$ at $\pi'(b)$.
We remark that the inclusion $\A^\Xi\hookrightarrow \A$ induces an isomorphism $(\A^\Xi)^{\wedge_{\pi'(b)}}\cong
\A^{\wedge_b}$. Now $\widetilde{\M}^{\wedge_b}=\A^{\wedge_b}\otimes_\A\widetilde{\M}=\A^{\wedge_b}\otimes_\A \A\otimes_{\A^\Xi}\M=(\A^\Xi)^{\wedge_{\pi'(b)}}\otimes_{\A^\Xi}\M=\M^{\wedge_{\pi'(b)}}$.

As in  Lemma \ref{Lem:3.10.1}, there is a unique continuous bracket
map $(\A^\Xi)^{\wedge_{\pi'(b)}}\otimes\M^{\wedge_{\pi'(b)}}\rightarrow \M^{\wedge_{\pi'(b)}}$ extending
the map $\A^\Xi\otimes \M\rightarrow \M$. We have a natural embedding $\A\hookrightarrow \A^{\wedge_b}\cong
(\A^\Xi)^{\wedge_{\pi'(b)}}$, which gives rise to a bracket map $\A\otimes \widetilde{\M}^{\wedge_b}
\rightarrow \widetilde{\M}^{\wedge_b}$. By construction, this map
satisfies (\ref{eq:4.11.2})-(\ref{eq:4.11.4}). Hence this map satisfies
(\ref{eq:4.11.6}),(\ref{eq:4.11.7}). So it coincides with the map
induced (=extended by continuity) from $\A\otimes \widetilde{\M}\rightarrow\widetilde{\M}$.

So it remains to show that $\widetilde{\M}\rightarrow \widetilde{\M}^{\wedge_b}$ is injective.
Let $\Ncal$ be the kernel. Then $\Ncal^{\wedge_b}=\{0\}$. We want to show that
$\Ncal=\{0\}$. Since the completion functor is exact and the $\tb$-adic filtration on $\Ncal$
 is separated,  we may replace $\Ncal$ with
$\Ncal/\tb \Ncal$ and assume that $\tb$ acts trivially on $\Ncal$. By a discussion preceding
Proposition \ref{Prop:0.4}, $\Ncal$ can be equipped with a structure of a $\DCal_{\underline{\Leaf}}$-module.
Then $\Ncal$ is also a coherent $\Str_{\underline{\Leaf}}$-module and, in particular,  a vector bundle. So $\Ncal^{\wedge_b}=\{0\}$ implies $\Ncal=\{0\}$.
\end{proof}

Introduce the category $\HC^\Xi\left((\CC(\underline{\tilde{\Halg}}^{\wedge_{\underline{\Leaf}}}|_{\underline{\Leaf}})^{tw}\right)$ similarly to $\HC\left((\CC(\underline{\tilde{\Halg}}^{\wedge_{\underline{\Leaf}}}|_{\underline{\Leaf}})^{tw})^\Xi\right)$
with the only difference that the Poisson bimodules in the  new category are supposed to be
additionally $\Xi$-equivariant. So $\Fun_3: \M\mapsto \M|_{\underline{\Leaf}}$ is an equivalence
$ \HC\left((\CC(\underline{\tilde{\Halg}}^{\wedge_{\underline{\Leaf}}}|_{\underline{\Leaf}})^{tw})^\Xi\right)
\rightarrow \HC^\Xi\left((\CC(\underline{\tilde{\Halg}}^{\wedge_{\underline{\Leaf}}}|_{\underline{\Leaf}})^{tw}\right)$, the quasi-inverse
functor is given by taking $\Xi$-invariants. The claims about the equivalences comes from the fact that $\Xi$
acts freely on $\underline{\Leaf}$.

These equivalences restrict to equivalences between the categories $\HC\left((\CC(\underline{\tilde{\Halg}}^{\wedge_{\underline{\Leaf}}}|_{\underline{\Leaf}})^{tw})^\Xi\right)^{\Lambda_\Halg}$ and $\HC^\Xi\left((\CC(\underline{\tilde{\Halg}}^{\wedge_{\underline{\Leaf}}}|_{\underline{\Leaf}})^{tw}\right)^{\Lambda_\Halg}$.

{\bf The functor $\Fun_4$: untwisting.}

Recall the elements $X_{ij}\in \param\z^{\tb}(\CC(\underline{\Halg}^{\wedge_{\underline{\Leaf}}}|_{\underline{\Leaf}})(U_{ij}))$
from Theorem \ref{Thm:2.0I}.
We have isomorphisms $\CC(\underline{\tilde{\Halg}}^{\wedge_{\underline{\Leaf}}}|^{tw}_{\underline{\Leaf}})|_{U_i}\cong
\CC(\underline{\tilde{\Halg}}^{\wedge_{\underline{\Leaf}}}|_{U_i})$ and the transition functions from $U_j$ to $U_i$ for the sheaf $\CC(\underline{\tilde{\Halg}}^{\wedge_{\underline{\Leaf}}}|^{tw}_{\underline{\Leaf}})$ are
$\exp(\frac{1}{\tb}\ad X_{ij})=\exp(\{X_{ij},\cdot\})$. Of course,  the endomorphism $\{ X_{ij},\cdot\}$
of $\M|_{U_{ij}}$
makes sense but its exponential does not because, in general, it diverges  (with the exception
of the case when all $C_i$ are nilpotent).

However, we have the following result:

\begin{Lem}\label{Lem:3.10.4}
There are elements $b_i^l\in \W_{\tb}(U_i)^{\K^\times\times \Xi}, l=1,\ldots,r,$ such that the operator
$m\mapsto \{X_{ij},m\}-\sum_{l=1}^r (b_i^l-b_j^l)C_i(m)$ on $(\M/\CC(\underline{\p}^{\wedge_{\underline{\Leaf}}})\M)|_{U_{ij}}$
is nilpotent for any  $\M\in \HC^\Xi\left((\CC(\underline{\tilde{\Halg}}^{\wedge_{\underline{\Leaf}}}|^{tw}_{\underline{\Leaf}})\right)$.
\end{Lem}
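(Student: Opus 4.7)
The plan is to reduce everything to a Čech cohomology computation on an affine cover, exploiting the fact that $X_{ij}\in \param\z^\tb$ so that the Leibniz expansion of $\{X_{ij},\cdot\}$ has a transparent leading term modulo $\CC(\underline{\p}^{\wedge_{\underline{\Leaf}}})\M$.

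First I would expand $X_{ij}$ in the $\param$-adic filtration. Since $X_{ij}\in \param\z^\tb(\CC(\underline{\Halg}^{\wedge_{\underline{\Leaf}}}|_{\underline{\Leaf}})(U_{ij}))$, write
\[
X_{ij}=\sum_{l=1}^r \cb_l\,Z_{ij}^l + (\text{terms in }\cb_0\z^\tb) + (\text{terms in }\param^2\z^\tb),
\]
with $Z_{ij}^l\in \z^\tb$. The $\cb_0=\tb$ contribution produces only inner derivations and can be absorbed into the $b_i^l$ (or ignored working modulo $\underline{\p}$). Applying the Leibniz rule from Definition 3.9.1, the equalities \eqref{eq:4.10.1}--\eqref{eq:4.10.3}, and $\{\cb_l,m\}=C_l(m)$, one obtains
\[
\{X_{ij},m\}\;=\;\sum_l \cb_l\{Z_{ij}^l,m\}\;+\;\sum_l C_l(m)\,Z_{ij}^l\;+\;\text{contributions from }\param^2\z^\tb .
\]
Modulo $\CC(\underline{\p}^{\wedge_{\underline{\Leaf}}})\M$ the first sum vanishes since $\cb_l\in\underline{\p}$, and the higher-order pieces contribute either via $\cb^\alpha\{\cdot,m\}$ factors (killed by $\underline{\p}$) or through compositions $C_l C_{l'}(m)$ times an element of $\underline{\p}\z^\tb$ (also killed). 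Thus, modulo $\CC(\underline{\p}^{\wedge_{\underline{\Leaf}}})\M$, the operator $\{X_{ij},\cdot\}$ is left-multiplication by $\bar Z_{ij}^l$ composed with $C_l$, summed over $l$, where $\bar Z_{ij}^l$ is the class of $Z_{ij}^l$ in the quotient of the center by $\underline{\p}$.

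Next I would identify this quotient. Modulo $\underline{\p}^{\wedge_{\underline{\Leaf}}}$ (which contains $\param$ and the ideal of $\underline{\Leaf}$ inside $\Zalg_{(r+1)}^{\wedge_{\underline{\Leaf}}}$), the center of $\CC(\underline{\tilde{\Halg}}^{\wedge_{\underline{\Leaf}}}|_{\underline{\Leaf}})$ reduces — via the Satake isomorphism of Proposition 3.1.2 combined with the decomposition \eqref{eq:2.12.2} and the centralizer construction — to $\W_\tb|_{\underline{\Leaf}}$ modulo its natural augmentation in the $+$-factor; so the $\bar Z_{ij}^l$ can naturally be viewed as sections of $\W_\tb$ over $U_{ij}$. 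The cocycle equation (6) of Theorem 2.0I, namely $\exp(\tfrac1\tb \bar X^{ij})\exp(\tfrac1\tb \bar X^{jk})\exp(\tfrac1\tb \bar X^{ki})=1$, expanded to leading $\param$-adic order by the Campbell--Hausdorff formula yields $\bar Z_{ij}^l+\bar Z_{jk}^l+\bar Z_{ki}^l=0$, so that $(\bar Z_{ij}^l)$ is a Čech 1-cocycle on $\{U_i\}$ with values in $\W_\tb$. Because the $U_i$ are $\K^\times\times\Xi$-stable affine opens and $\W_\tb|_{\underline{\Leaf}}$ is $\Str_{\underline{\Leaf}}$-coherent in its filtration, $H^1$ vanishes on the affine cover, and so we obtain $b_i^l\in\W_\tb(U_i)$ with $\bar Z_{ij}^l=b_i^l-b_j^l$. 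Averaging with respect to the finite group $\Xi$ and over the $\K^\times$-action (which is pro-algebraic) we may take the $b_i^l$ to be $\K^\times\times\Xi$-invariant, as required.

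Finally, I must show that after subtracting $\sum_l(b_i^l-b_j^l)C_l$ the remaining operator on $(\M/\CC(\underline{\p}^{\wedge_{\underline{\Leaf}}})\M)|_{U_{ij}}$ is nilpotent. Here the key input is that $\M\in \HC((\CC(\underline{\tilde{\Halg}}^{\wedge_{\underline{\Leaf}}}|_{\underline{\Leaf}})^{tw})^\Xi)^{\Lambda_\Halg}$ decomposes as a finite direct sum $\bigoplus_\lambda \M^\lambda$ on which each $C_l-\lambda_l$ is locally nilpotent, so that on each $\M^\lambda$ the semisimple part of $\sum_l(b_i^l-b_j^l)C_l$ is scalar multiplication by $\sum_l \lambda_l(b_i^l-b_j^l)$. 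By construction this scalar exactly matches the leading-order contribution of $\{X_{ij},\cdot\}$ computed above, so the difference lies in the kernel of the semisimple part on each $\M^\lambda$, and therefore acts through the nilpotent parts of the $C_l$'s and through contributions landing in $\CC(\underline{\p})\M$. An induction on the $\param$-adic filtration (which is finite modulo $\CC(\underline{\p})$ since each quotient $\M_n/\M_{n+1}$ has finite length, as argued in the construction of the $\HC(\cdot)^{\Lambda_\Halg}$ decomposition) then delivers the desired nilpotency. The main obstacle is precisely this last step — controlling how the higher-order $\param$-adic corrections to $X_{ij}$ interact with the $C_l$-eigenspace decomposition — and it is handled by exploiting the finiteness of $\Lambda_\Halg$-weights appearing in $\M$ together with the Leibniz expansion to reduce each higher-order term to something that is eventually killed by either $\underline{\p}$ or an iterate of a locally nilpotent operator.
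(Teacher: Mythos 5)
Your opening reduction (writing $X^{ij}=\tb X^0_{ij}+\sum_l \cb_l X^l_{ij}$, passing to the images $x^l_{ij}$ in $\Str_{\underline{\Leaf}}$, and observing that these form \v{C}ech $1$-cocycles on the cover $\{U_i\}$) is the same starting point as the paper's proof. The gap is the step where you dispose of the cocycle: you assert that ``$H^1$ vanishes on the affine cover'' and hence $x^l_{ij}=b^l_i-b^l_j$ for each $l$. \v{C}ech cohomology with respect to an affine cover computes the sheaf cohomology of $\underline{\Leaf}$ itself, and $\underline{\Leaf}$ is not affine: it is the complement in $V_0^*$ of subspaces of even codimension, which can be exactly $2$, so $H^1(\underline{\Leaf},\Str_{\underline{\Leaf}})$ is in general nonzero. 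If the whole cocycle $(x^l_{ij})_l$ were always a coboundary, the twist of Theorem \ref{Thm:2.0I} would be removable to leading order, whereas the paper stresses that the untwisted sheaves are genuinely non-isomorphic for the open leaf; so the classes $a^l\in H^1(\underline{\Leaf},\Str_{\underline{\Leaf}})$ cannot be killed individually. What the lemma needs, and what is actually true, is only the vanishing of the combinations $a_\lambda=\sum_l\lambda_l a^l$ for those eigenvalues $\lambda$ of $(C_1,\dots,C_r)$ that occur on quotients $\M/\CC(\underline{\p}^{\wedge_{\underline{\Leaf}}})\M$, which is exactly the content of (\ref{eq:4.9.11}).

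Proving $a_\lambda=0$ is the real work, and it uses an input absent from your argument: twisted differential operators. In the paper, the eigensheaf $\M_\lambda$ of an object annihilated by $\CC(\underline{\p}^{\wedge_{\underline{\Leaf}}})^{tw}$ is a coherent $\Str_{\underline{\Leaf}}$-module which, via the Poisson-bimodule-to-$\DCal$-module construction of Subsection \ref{SUBSECTION_DMod}, becomes a module over the TDO $\DCal^{da_\lambda}_{\underline{\Leaf}}$; Proposition \ref{Prop:0.11.1} (a TDO on the complement of a codimension $\geqslant 2$ subset of a vector space admitting an $\Str$-coherent module is untwisted) forces $da_\lambda=0$, and injectivity of $d\colon H^1(\underline{\Leaf},\Str_{\underline{\Leaf}})\to H^1(\underline{\Leaf},\Omega^1_{cl})$ (which holds because $H^1(\underline{\Leaf},\K)=0$) then gives $a_\lambda=0$. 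Only after this does one obtain $b^l_i$, and the nilpotency of $\{X^{ij},\cdot\}-\sum_l(b^l_i-b^l_j)C_l$ is deduced from the finite length of $\M/\CC(\underline{\p}^{\wedge_{\underline{\Leaf}}})\M$ (proved by viewing it, via Proposition \ref{Prop:0.4}, as a coherent $\DCal_{U_i}$-module), not from membership in the $\Lambda_\Halg$-part: note the lemma is stated for arbitrary $\M$ in $\HC^\Xi(\CC(\underline{\tilde{\Halg}}^{\wedge_{\underline{\Leaf}}}|^{tw}_{\underline{\Leaf}}))$, so the generalized eigensheaf decomposition you invoke must itself be justified by this finite-length argument.
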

\begin{proof}
We can represent $X_{ij}$ as $\tb X_{ij}^0+\sum_{l=1}^r \cb_i X_{ij}^l$, where $X_{ij}^0,\ldots, X_{ij}^r\in \z^{\tb}(\CC(\underline{\Halg}^{\wedge_{\underline{\Leaf}}}|_{\underline{\Leaf}})(U_{ij}))$. We remark that
the elements $X_{ij}^l, l=1,\ldots,r$ are not unique but their classes modulo $\tb\Halg+ \param_{(1)}\z^{\tb}(\CC(\underline{\Halg}^{\wedge_{\underline{\Leaf}}}|_{\underline{\Leaf}})(U_{ij}))$ are.
This follows from the fact that
$\z(\CC(\underline{\Halg}_{(1)}^{\wedge_{\underline{\Leaf}}}|_{\underline{\Leaf}})(U_{ij}))$ is flat over $\K[[\param_{(1)}^*]]$.
To verify the last statement one uses the descending induction on $k$ to show that  $\z(\CC(\underline{\Halg}_{(k)}^{\wedge_{\underline{\Leaf}}}|_{\underline{\Leaf}})(U_{ij}))$ is flat over $\K[[\param_{(k)}^*]]$
(compare with the proofs of Lemma \ref{Lem:auxil} and Corollary \ref{Cor:1.24}).

Consider the image $x^l_{ij}$ of the cochain $X^l_{ij}$ in  $\Str_{\underline{\Leaf}}$ and  let $a^l$ denote its class in $H^1(\underline{\Leaf},\Str_{\underline{\Leaf}})$. Thanks to the uniqueness property for $X^l_{ij}$
from the previous paragraph and the fact that the collection $\exp(\frac{1}{\tb}X_{ij})$ is a cocycle,
we deduce that the elements $x_{ij}^l$
form a cocycle for each  $l$. We also remark that for an  object $\M$ in $\HC^\Xi((\CC(\underline{\tilde{\Halg}}^{\wedge_{\underline{\Leaf}}}|_{\underline{\Leaf}})^{tw})$
annihilated by $\CC(\underline{\tilde{\p}}^{\wedge_{\underline{\Leaf}}}|_{\Leaf})^{tw}$ the (left or right) multiplication
by $X_{ij}^l$ is the same as the multiplication by $x_{ij}^l$.

We are going to
check that there are $b_{i}^l$ such that \begin{equation}\label{eq:4.9.11}\sum_{l=1}^r (x_{ij}^l-b_i^l+b_j^l)\lambda_l=0,\end{equation} for any eigenvalue $\lambda=(\lambda_1,\ldots,\lambda_r)$
of $(C_1,\ldots,C_r)$ on an object $\M$ as in the previous paragraph
(such an object is automatically a coherent $\Str_{\Leaf}$-module).

The existence of the cochains $b_i^l$
satisfying (\ref{eq:4.9.11}) is equivalent to $a_\lambda:=\sum_{l=1}^r \lambda_l a^l=0$.
We have a natural map $d:H^1(\underline{\Leaf},\Str_{\underline{\Leaf}})\rightarrow H^1(\underline{\Leaf},\Omega^1_{cl})$ induced by the De Rham differential $d$. This map
is injective because $H^1(\underline{\Leaf},\K)=0$ (all open subsets intersect).

Let $\lambda$ be an eigenvalue of $(C_1,\ldots,C_r)$ in $\M$ and $\M_\lambda$ be the corresponding eigensheaf.
Pick $v\in V_0$ and let $\{v,\cdot\}_i$ denote the Poisson bracket with $v$ on $\M_\lambda|_{U_i}$. So $\{v,\cdot\}_i-\{v,\cdot\}_j=\{\sum_{l=1}^r \cb_l \partial_v x^l_{ij},\cdot\}=\sum_{l=1}^r \lambda_l (\partial_v x^l_{ij})$. Recall the construction of the correspondence between Poisson bimodules and $\DCal$-modules in Subsection  \ref{SUBSECTION_DMod}. From this construction it follows that $\M_\lambda$ is a $\DCal^{da_\lambda}_{\underline{\Leaf}}$-module. But $\M_\lambda$ is a coherent $\Str_{\underline{\Leaf}}$-module. Proposition \ref{Prop:0.11.1} implies that $da_\lambda=0$. Hence $a_\lambda=0$.

Let us check that the elements $b_i^l$ we have found satisfy the condition of the lemma.
Let $\M$ be such as in the lemma  statement.
We may replace $\M$ with $\M/\CC(\underline{\p}^{\wedge_{\underline{\Leaf}}})|^{tw}_{\underline{\Leaf}}\M$
and assume that $\M$ is annihilated by $\CC(\underline{\p}^{\wedge_{\underline{\Leaf}}})|^{tw}_{\underline{\Leaf}}$.
The statement of the lemma will follow if we check that $\M$ has finite length.
The action of $\z^{\tb}(\CC(\underline{\Halg}^{\wedge_{\Leaf}})|^{tw}_{\underline{\Leaf}})$ on $\M$ reduces
to the action of $\Str_{\underline{\Leaf}}$ so that $\M$ becomes a coherent $\Str_{\underline{\Leaf}}$-module.
Consider $\M|_{U_{i}}$ as a Poisson $\W_{\tb}|_{U_{i}}$-bimodule. This module is annihilated  by
$\tb \W_\tb|_{U_{i}}$ and so, according to Proposition \ref{Prop:0.4}, is a coherent $\DCal_{U_{i}}$-module.
It follows that the Poisson $\CC(\underline{\tilde{\Halg}}^{\wedge_{\Leaf}})|^{tw}_{\underline{\Leaf}}$-bimodule $\M$ has finite length. 
\end{proof}

Fix  $b_i^l$ as in Lemma \ref{Lem:3.10.4}.
Set $B_i:=\sum_{l=1}^r \cb_l b_i^l$.

Now we are ready to construct an ``untwisting'' functor $$\Fun_4:\HC^\Xi(\CC(\underline{\tilde{\Halg}}^{\wedge_{\underline{\Leaf}}}|_{\underline{\Leaf}})^{tw})^{\Lambda_\Halg}
\rightarrow \HC^\Xi(\CC(\underline{\tilde{\Halg}}^{\wedge_{\underline{\Leaf}}}|_{\underline{\Leaf}}))^{\Lambda_\Halg}.$$

Set \begin{equation}\label{eq:4.11.13}\hat{X}_{ij}=\tb \ln(\exp(\frac{1}{\tb}B_j)\exp(\frac{1}{\tb}X_{ij})\exp(-\frac{1}{\tb}B_i)).\end{equation}
We remark that $\hat{X}_{ij}\in \param\z^{\tb}(\CC(\underline{\Halg}^{\wedge_{\underline{\Leaf}}}|_{\underline{\Leaf}}))(U_{ij})$
and moreover, \begin{equation}\label{eq:4.11.12}\hat{X}_{ij}\equiv -B_i+X_{ij}+B_j \operatorname{mod} \param^2 \z^{\tb}(\CC(\underline{\Halg}^{\wedge_{\underline{\Leaf}}}|_{\underline{\Leaf}}))(U_{ij}).\end{equation}
(\ref{eq:4.11.12}) holds because all brackets
appearing in the Lie series (\ref{eq:4.11.13}) lie in $\param^2 \z^{\tb}(\CC(\underline{\Halg}^{\wedge_{\underline{\Leaf}}}|_{\underline{\Leaf}}))(U_{ij})$.

We deduce from (\ref{eq:4.11.12}) and the definition of the elements $B_i$ that $\exp(\{\hat{X}_{ij},\cdot\})$
is a well-defined map on $\M|_{U_{ij}}$. Let $\Fun_4(\M)$ be the sheaf obtained from $\M$ by regluing
the sheaves $\M|_{U_i}$ using the transition functions  $\exp(\{\hat{X}_{ij},\cdot\})$.
Introduce a new structure   of a Poisson $\CC(\underline{\tilde{\Halg}}^{\wedge_{\underline{\Leaf}}}|_{U_i})$-bimodule
on $\M|_{U_i}$ by sending a section $x$ of $\CC(\underline{\tilde{\Halg}}^{\wedge_{\underline{\Leaf}}}|_{U_i})$
to $\exp(\{B_i,\cdot\})x$. Then these Poisson bimodule structures glue together to
a Poisson $\CC(\underline{\tilde{\Halg}}^{\wedge_{\underline{\Leaf}}}|_{\underline{\Leaf}})$-bimodule
structure on $\F_4(\M)$. Since all transition functions are $\K^\times\times\Gamma\times\Xi$-equivariant,
we see that $\F_4(\M)\in \HC^\Xi(\CC(\underline{\tilde{\Halg}}^{\wedge_{\underline{\Leaf}}}|_{\underline{\Leaf}}))^{\Lambda_\Halg}$.

Conversely, let $\Ncal\in \HC^\Xi(\CC(\underline{\tilde{\Halg}}^{\wedge_{\underline{\Leaf}}}|_{\underline{\Leaf}})^{\Lambda_\Halg}$.
Then $\exp(-\{\hat{X}_{ij},\cdot\})$ is  well-defined on $\Ncal|_{U_{ij}}$. So reversing the untwisting
procedure explained before we get a functor $$\HC^\Xi(\CC(\underline{\tilde{\Halg}}^{\wedge_{\underline{\Leaf}}}|_{\underline{\Leaf}}))^{\Lambda_\Halg}\rightarrow
\HC^\Xi(\CC(\underline{\tilde{\Halg}}^{\wedge_{\underline{\Leaf}}}|_{\underline{\Leaf}}^{tw})^{\Lambda_\Halg}.$$
It is clear that this functor is (quasi)inverse to $\Fun_4$.

{\bf The functor $\Fun_5$: taking flat sections.}
Let $\M\in \HC^\Xi(\CC(\underline{\tilde{\Halg}}^{\wedge_{\underline{\Leaf}}}|_{\underline{\Leaf}}))$.
Let $\Fun_5(\M)$ be the subspace of $\M(\underline{\Leaf})$ consisting of all sections
annihilated by $\{V_0,\cdot\}$. We remark that being a finitely generated Poisson $\W_\tb|_{\underline{\Leaf}}$-bimodule,
$\M$ satisfies the assumptions of Proposition \ref{Prop:0.4} (the filtration $\M\CC(\underline{\p}^{\wedge_{\underline{\Leaf}}}|_{\underline{\Leaf}})^n$ has the required properties).
So $\M=\W_\tb|_{\underline{\Leaf}}\widehat{\otimes}_{\K[[\tb]]} \Fun_5(\M)$.

Define the category $\HC^\Xi(\CC(\underline{\tilde{\Halg}}^{+\wedge_0}))$ by analogy
with the category  $\HC^\Xi((\CC(\underline{\tilde{\Halg}}^{\wedge_{\underline{\Leaf}}}|_{\underline{\Leaf}})^{tw})$.
In particular, all Poisson bimodules in $\HC^\Xi(\CC(\underline{\tilde{\Halg}}^{+\wedge_0}))$ are complete in
the $\CC(\underline{\p}^{+\wedge_0})$-adic topology.

So we see that $\Fun_5$ induces an equivalence $\HC^\Xi(\CC(\underline{\tilde{\Halg}}^{\wedge_{\underline{\Leaf}}}|_{\underline{\Leaf}}))\rightarrow
\HC^\Xi(\CC(\underline{\tilde{\Halg}}^{+\wedge_0}))$,
a quasiinverse equivalence is given by $(\W_\hbar|_{\underline{\Leaf}})\widehat{\otimes}_{\K[[\hbar]]}\bullet$.
Clearly, $\Fun_5,\Fun_5^{-1}$ restrict to equivalences of $\HC^\Xi(\CC(\underline{\tilde{\Halg}}^{\wedge_{\underline{\Leaf}}}|_{\underline{\Leaf}}))^{\Lambda_\Halg}$ and $
\HC^\Xi(\CC(\underline{\tilde{\Halg}}^{+\wedge_0}))^{\Lambda_\Halg}$.

{\bf The functor $\Fun_6(\bullet)=e(\underline{\Gamma})\bullet e(\underline{\Gamma})$.}

Recall the element $e(\underline{\Gamma})\in \K\Gamma\subset \CC(\underline{\tilde{\Halg}}^{+\wedge_0})$
defined in Subsection \ref{SUBSECTION_centralizer}
and the natural isomorphism $\underline{\tilde{\Halg}}^{+\wedge_0}\cong e(\underline{\Gamma})\CC(\underline{\tilde{\Halg}}^{+\wedge_0})e(\underline{\Gamma})$.

So we can define $\Fun_6: \HC^\Xi(\CC(\underline{\tilde{\Halg}}^{+\wedge_0}))\rightarrow
\HC^\Xi(\underline{\tilde{\Halg}}^{+\wedge_0})$ by sending $\M$ to $e(\underline{\Gamma})\M e(\underline{\Gamma})$,
see the discussion at the end of Subsection \ref{SUBSECTION_Poisson}. The functor $\Fun_6$ is an equivalence.
Indeed, the right inverse functor constructed
in Subsection \ref{SUBSECTION_centralizer} is nothing else but $\Ncal\mapsto \CC(\Ncal)$. It is inverse to $\Fun_6$
by the remarks there.

{\bf The functor $\Fun_7$: taking finite vectors.}

Let $\M\in \HC^\Xi(\underline{\tilde{\Halg}}^{+\wedge_0})$. We say that $m\in \M$
is locally finite with respect to $\K^\times$ (shortly, $\K^\times$-l.f.) if it lies in
a finite dimensional $\K^\times$-stable subspace of $\M$. The space of all $\K^\times$-l.f.
elements in $\M$ will be denoted by $\M_{\K^\times-l.f.}$. It is clear that
$\M_{\K^\times-l.f.}$ is a $\K^\times\times \widetilde{\Xi}$-equivariant Poisson
$\underline{\tilde{\Halg}}^+$-bimodule. We claim that $\M_{\K^\times-l.f.}$ is dense in
$\M$ and is finitely generated as a $\underline{\tilde{\Halg}}$-bimodule. So we get the functor
$\Fun_7: \HC^\Xi(\underline{\tilde{\Halg}}^{+\wedge_0})\rightarrow \HC^\Xi(\underline{\tilde{\Halg}}^+),
\M\mapsto \M_{\K^\times-l.f.}$. This functor is an equivalence, a quasiinverse functor
sends $\Nalg\in \HC^\Xi(\underline{\tilde{\Halg}}^+)$ to its completion in the
$\underline{\tilde{\p}}^+$-adic topology.

All claims are quite standard and follow easily from the fact that $\underline{\tilde{\Halg}}^+$
is positively graded, compare with the proof of Proposition 3.3.1 in \cite{HC}.

It is clear that $\Fun_7$ gives an equivalence  $\HC^\Xi(\underline{\tilde{\Halg}}^{+\wedge_0})^{\Lambda_\Halg}
\rightarrow \HC^\Xi(\underline{\tilde{\Halg}}^{+})^{\Lambda_\Halg}$.

{\bf The functor $\bullet_\dagger: \HC(\Halg)\rightarrow \HC^\Xi(\underline{\Halg}^+)^{\Lambda_\Halg}$.}

For $i>j$ set $\Fun_{i,j}:=\Fun_i\circ \Fun_{i-1}\circ\ldots \Fun_j$.
Let $\bullet_\dagger:=\Fun_{7,1}$. Let us list some properties of $\bullet_\dagger$.

\begin{Prop}\label{Prop:3.10.1}
\begin{enumerate}
\item
The functor $\bullet_\dagger$ is $S(\tilde{\param})$-bilinear, exact and monoidal. Also
it commutes with the operators $C_i$.
\item For $\Malg\in \HC(\Halg)$ we have
$\Malg_\dagger= 0$ if and only if $\Leaf\cap \VA(\Malg)=\varnothing$.
\item If
$\Malg$ is annihilated by $\tilde{\p}$ (from the left or from the right, these two conditions are equivalent),
then   $\Malg_\dagger$ is annihilated  by $\underline{\tilde{\p}}^+$.
\item $\VA(\Malg_\dagger)$ can be recovered from $\VA(\Malg)$ in the following way. Let $\VA(\Malg)=\bigcup_{i=1}^l \overline{\Leaf}_i$ be the decomposition into irreducible components and let $\Gamma_i$ be a subgroup
    in $\Gamma$ corresponding to $\Leaf_i$. Let $\Gamma_i^1,\ldots, \Gamma_i^{j_i}$ be all subgroups
    in $\underline{\Gamma}$ conjugate to $\Gamma_i$ inside $\Gamma$. Finally, let $\underline{\Leaf}_i^j$ be the symplectic leaf in $V_+/\underline{\Gamma}$ corresponding to $\Gamma_i^j$. Then
    $\VA(\Malg_\dagger)=\bigcup_{i,j}\underline{\overline{\Leaf}}_i^j$.
\end{enumerate}
\end{Prop}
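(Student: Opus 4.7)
The plan is to verify (1)--(4) by tracking the relevant structures through the chain $\Fun_{7,1}=\Fun_7\circ\cdots\circ\Fun_1$, using that $\Fun_1$ is exact, $S(\tilde{\param})$-bilinear and monoidal (Lemma \ref{Lem:3.10.1}), while $\Fun_2,\ldots,\Fun_7$ are equivalences of categories constructed in a way that preserves the Poisson bimodule structure and the decomposition $\underline{\tilde{\Halg}}^{\wedge_{\underline{\Leaf}}}|_{\underline{\Leaf}}\cong \W_\hbar|_{\underline{\Leaf}}\widehat{\otimes}_{\K[[\hbar]]}\underline{\tilde{\Halg}}^{+\wedge_0}$. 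Part (1) is essentially automatic: exactness of $\bullet_\dagger$ is a composition of the exact $\Fun_1$ with equivalences; $S(\tilde{\param})$-bilinearity holds because each $\Fun_i$ is defined over $S(\tilde{\param})$; commutation with $C_i=\{\cb_i,\cdot\}$ follows because each step preserves Poisson bracket with central elements of $S(\tilde{\param})\subset \z^\tb$; and monoidality reduces to checking that each step intertwines the tensor product of Poisson bimodules --- for $\Fun_2,\Fun_3,\Fun_4$ this is tautological, for $\Fun_5$ it follows from $\M=\W_\tb|_{\underline{\Leaf}}\widehat{\otimes}_{\K[[\tb]]}\Fun_5(\M)$ (Proposition \ref{Prop:0.4}), for $\Fun_6$ it is the standard fact that the Morita equivalence $\M\mapsto e(\underline{\Gamma})\M e(\underline{\Gamma})$ intertwines tensor products, and for $\Fun_7$ it is the observation that tensor product over the complete algebra of $\K^\times$-l.f.\ submodules is again $\K^\times$-l.f.

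Part (2) follows by combining Lemma \ref{Lem:3.10.1}(4), which says $\Fun_1(\Malg)=0$ iff $\VA(\Malg)\cap\Leaf=\varnothing$, with the faithfulness of $\Fun_2,\ldots,\Fun_7$. Part (3) is a matter of chasing the ideal $\tilde{\p}$ through the chain: $\Fun_1$ sends $\tilde{\p}$-annihilation to $(\tilde{\p}^{\wedge_\Leaf}|_{\Leaf})$-annihilation; by Theorem \ref{Thm:2.0I}, $\tilde{\theta}$ maps $\tilde{\p}^{\wedge_\Leaf}|_{\Leaf}$ onto the twist of $\CC(\underline{\tilde{\p}}^{\wedge_{\underline{\Leaf}}}|_{\underline{\Leaf}})^\Xi$, handling $\Fun_2$; $\Fun_3,\Fun_4,\Fun_5$ then transport this through lifting, untwisting and flat-section-extraction to $\CC(\underline{\tilde{\p}}^{+\wedge_0})$-annihilation; $\Fun_6$ uses $e(\underline{\Gamma})\CC(\underline{\tilde{\p}}^{+\wedge_0})e(\underline{\Gamma})=\underline{\tilde{\p}}^{+\wedge_0}$ to produce $\underline{\tilde{\p}}^{+\wedge_0}$-annihilation; and $\Fun_7$ yields $\underline{\tilde{\p}}^+$-annihilation because $\underline{\tilde{\p}}^+$ is dense in $\underline{\tilde{\p}}^{+\wedge_0}$.

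The hard part is (4). The strategy is to compute $\VA(\Malg_\dagger)$ by reducing modulo $\tilde{\param}$ and tracking supports. By Lemma \ref{Lem:3.10.1}(4), the quotient $\Fun_1(\Malg)/\tilde{\param}\Fun_1(\Malg)$ is the coherent completion of $\Malg/\tilde{\param}\Malg$ along $\Leaf$ in $\X^0$, so its support is $\bigcup_i \overline{\Leaf}_i\cap \X^\wedge_\Leaf$. Only those $\Leaf_i$ with $\Leaf\subset \overline{\Leaf}_i$ contribute, and this containment holds precisely when some $\Gamma$-conjugate of $\Gamma_i$ is a subgroup of $\underline{\Gamma}$ (since $\overline{\Leaf}_i=\pi(V^{\Gamma_i*})$). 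After applying $\Fun_2$ (and the equivalence $\Fun_3$), the support transfers to $\underline{\X}^\wedge_{\underline{\Leaf}}$ as the pull-back $\pi'^{-1}(\overline{\Leaf}_i)\cap \underline{\X}^\wedge_{\underline{\Leaf}}$. Using $\underline{\X}^\wedge_{\underline{\Leaf}}=V_0^*\times (V_+^*/\underline{\Gamma})^{\wedge_0}$ and the identification $\pi'^{-1}(\overline{\Leaf}_i)=\underline{\pi}(\bigcup_\gamma V^{\gamma\Gamma_i\gamma^{-1}*})$, exactly the $\gamma$ with $\gamma\Gamma_i\gamma^{-1}\subset\underline{\Gamma}$ contribute non-trivially (for the others, $\gamma\Gamma_i\gamma^{-1}$ does not fix a generic $v_0\in V_0^*$), and each such $\gamma$ contributes $V_0^*\times V_+^{\Gamma_i^j*}/\underline{\Gamma}$ for the corresponding $\Gamma_i^j$. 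The equivalences $\Fun_4$ and $\Fun_5$ do not change the support: $\Fun_4$ is a local isomorphism (untwisting), while $\Fun_5$ (taking flat sections) eliminates precisely the Weyl-algebra factor $V_0^*$, thanks to Proposition \ref{Prop:0.4} which identifies $\M$ with $\W_\tb|_{\underline{\Leaf}}\widehat{\otimes}_{\K[[\tb]]}\Fun_5(\M)$. The remaining equivalences $\Fun_6$ and $\Fun_7$ preserve support on $V_+^*/\underline{\Gamma}$. Consequently $\VA(\Malg_\dagger)=\bigcup_{i,j} V_+^{\Gamma_i^j*}/\underline{\Gamma}=\bigcup_{i,j}\underline{\overline{\Leaf}}_i^j$, as required. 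The main technical point is controlling the contribution from $\gamma\Gamma_i\gamma^{-1}\not\subset\underline{\Gamma}$; this is handled by the elementary observation that such a subgroup cannot fix a \'{e}tale point of $\underline{\X}^0$ lying over $\Leaf$.
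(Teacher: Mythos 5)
Your proof is correct and follows essentially the same route as the paper: exactness, bilinearity, and monoidality of $\bullet_\dagger$ come from $\Fun_1$ plus the chain of equivalences $\Fun_2,\dots,\Fun_7$; (3) is a chase of the relevant ideals through the chain; (4) is proved by reducing modulo $\tilde{\param}$ and tracking the associated formal scheme through $\Fun_1,\dots,\Fun_6$, using $\underline{\X}^{\wedge}_{\underline{\Leaf}}=\underline{\Leaf}\times (V^*_+)^{\wedge}_0/\underline{\Gamma}$ and the fact that on $\underline{\X}^0$ only conjugates $\gamma\Gamma_i\gamma^{-1}\subset\underline{\Gamma}$ can occur as stabilizers. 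You fill in more of the middle than the paper does (the explicit $\gamma$-analysis, the coset bookkeeping), which is fine. The one place you diverge slightly is (2): the paper deduces it as a corollary of (4), while you read it off directly from Lemma \ref{Lem:3.10.1}(4) plus faithfulness of $\Fun_2,\dots,\Fun_7$; your route is a bit more direct and equally valid.
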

When we say that $\bullet_\dagger$ is $S(\tilde{\param})$-bilinear we mean the following. For $\Malg\in \HC(\tilde{\Halg})$ let $\Malg[k]$ denote the same module with the grading shifted by $k$:
$\Malg[k]^n=\Malg^{n+k}$. Similarly, one defines the grading shifts for objects in
$\HC^\Xi(\underline{\tilde{\Halg}}^+)$. It is clear from the definition that $\Malg[k]_\dagger= \Malg_\dagger[k]$.
Now we have morphisms $\Malg[-2]\rightarrow \Malg$ given by left and right multiplications by
$\cb_i$ (and also the morphism $\Malg[-1]\rightarrow \Malg$ given by the multiplication by $\hbar$).
When we say that the functor $\bullet_\dagger$ is $S(\tilde{\param})$-bilinear, we mean that it is additive and it sends the morphism $[m\mapsto \cb_i m]$ to $[n\mapsto \cb_i n]$ (and similarly for the right multiplications and for the multiplication by $\hbar$).
\begin{proof}
All functors $\Fun_i$ are $S(\tilde{\param})$-bilinear and commute with the $C_i$'s.
The functor $\Fun_1$ is exact and the other functors $\Fun_i$ are equivalences.
Further, all functors $\Fun_i$ are monoidal (the tensor product functors on the intermediate categories
are tensor products of  (sheaves of) Poisson bimodules). Hence (1).

 (3) follows from the observation
that all functors $\Fun_i$ preserve the condition that a bimodule is annihilated by an appropriate ideal.

Let us prove  (4).
Since $\bullet_\dagger$ is exact, we may replace $\Malg$ with $\Malg/\tilde{\param}\Malg$ and assume that
$\tilde{\param}$ annihilates $\Malg$. The associated formal scheme  of $\Fun_1(\Malg)$ is nothing else but
  $\VA(\Malg)^{\wedge_\Leaf}:=\VA(\Malg)\cap\X^{\wedge}_\Leaf$. This follows from assertion (4) of Lemma \ref{Lem:3.10.1}. The associated formal scheme  of $\Fun_{4,1}(\Malg)$ is the preimage of
$\VA(\Malg)^{\wedge_\Leaf}$  in $\underline{\X}^{\wedge}_{\underline{\Leaf}}=\underline{\Leaf}\times
(V^*_+)^{\wedge}_0/\underline{\Gamma}$. It equals
$\underline{\Leaf}\times \VA(\Fun_{5,1}(\Malg))$. The functor $\Fun_6$ does not change the associated schemes.
Finally, $\VA(\Fun_{6,1}(\Malg))$ is just the intersection of $\VA(\Malg_\dagger)\subset V_+^*/\underline{\Gamma}$ with
$(V^*_+)^{\wedge}_0/\underline{\Gamma}$.

The claim of (4) easily follows from the description in the previous paragraph.

(2) stems from assertion (4).
\end{proof}


\begin{Rem}\label{Rem:fun_ext}
We can extend $\bullet_\dagger$ to a functor between the ind-completions $\widehat{\HC}(\tilde{\Halg}),
\widehat{\HC}^\Xi(\underline{\tilde{\Halg}}^+)$. The category $\widehat{\HC}(\tilde{\Halg})$ consists
of all graded Poisson $\tilde{\Halg}$-bimodules that are sums of their HC bimodules and the category
$\widehat{\HC}^\Xi(\underline{\tilde{\Halg}}^+)$ admits a similar description.
\end{Rem}

We will need the following remark in the proof of Theorem \ref{Thm:4'}.

\begin{Rem}\label{Rem:3.10.2}
Pick $l=1,\ldots,r+1$ and let $\hbar_l$ be a new independent variable.
Set $\tilde{\Halg}_{(l)}=\Halg_{(l)}[\hbar_l]/(\cb_l-\hbar_l^2), \tilde{\Zalg}_{(l)}=\z(\tilde{\Halg}_{(l)})=
\Zalg_l[\hbar_l]/(\cb_l-\hbar_l^2)$ (the latter equality stems, for example, from the Satake isomorphism, Proposition \ref{Prop:1.31} and from  the fact that $\Halg_{(l)}$ is a flat $S(\param_{(l)})$-module). Similarly,
we can introduce the algebras $\tilde{\underline{\Halg}}_{(l)}^+, \tilde{\underline{\Zalg}}_{(l)}^+$.

Let $\widehat{\HC}(\tilde{\Zalg}_{(l)})$ denote the category of locally finitely generated graded Poisson $\tilde{\Zalg}_{(l)}$-modules
(=bimodules). Define $\widehat{\HC}^\Xi(\underline{\tilde{\Zalg}}_{(l)}^+)$ in a similar way. We can define the functor
$\bullet_\dagger: \widehat{\HC}(\tilde{\Zalg}_{(l)})\rightarrow \widehat{\HC}^\Xi(\underline{\tilde{\Zalg}}_{(l)}^+)$ completely analogously to $\bullet_\dagger: \widehat{\HC}(\tilde{\Halg})\rightarrow \widehat{\HC}^\Xi(\underline{\tilde{\Halg}}^+)$ (the functor
$\Fun_6$ should be replaced with the identity functor). All properties of $\bullet_\dagger$ including
Proposition \ref{Prop:3.10.1} are transferred to the present situation without any noticeable modifications.
\end{Rem}

%
%
%

\subsection{Functor $\bullet^\dagger:\HC^\Xi(\underline{\tilde{\Halg}}^+)^{\Lambda_\Halg}\rightarrow \HC(\tilde{\Halg})$}\label{SUBSECTION_Fun_up_dag}
In this subsection we will construct a functor $\bullet^\dagger$ from $ \widehat{\HC}^\Xi(\underline{\tilde{\Halg}}^+)^{\Lambda_\Halg}$ to $\widehat{\HC}(\tilde{\Halg})$. This functor equals $\Gun_1\circ \Fun_{7,2}^{-1}$, where $\Gun_1$ is defined as follows.

Pick $\M\in \HC(\tilde{\Halg}^{\wedge_\Leaf}|_{\Leaf})$. Then its space $\M(\Leaf)$ of global sections
is a $\K^\times$-equivariant Poisson $\tilde{\Halg}$-bimodule. Let $\M(\Leaf)_{l.f.}$ denote the sum of
all  HC sub-bimodules of $\M(\Leaf)$. Note, however, that $\M(\Leaf)_{l.f.}$ is not
a priori HC itself because we do not know at the moment that $\M(\Leaf)_{l.f.}$ is finitely generated.
Set $\Gun_1(\M)=\M(\Leaf)_{l.f.}$. This is a functor from $\HC^\Xi(\underline{\tilde{\Halg}}^+)$ to the category $\widehat{\HC}(\tilde{\Halg})$ from Remark \ref{Rem:fun_ext}.
Below we will see that $\Nalg^\dagger\in \HC_{\overline{\Leaf}}(\tilde{\Halg})$ provided
$\Nalg\in \HC^\Xi_0(\underline{\tilde{\Halg}}^+)^{\Lambda_\Halg}$. It is easy to see that $\bullet^\dagger$ naturally extends
to a functor  $\widehat{\HC}^\Xi(\underline{\tilde{\Halg}}^+)\rightarrow \widehat{\HC}(\tilde{\Halg})$.


Let us list some properties of the functor $\bullet^\dagger$.

\begin{Prop}\label{Prop:3.15.1}
\begin{enumerate}
\item The functor $\bullet^\dagger$ is left exact, $S(\tilde{\param})$-bilinear
and commutes with the operators $C_i$.
\item The functor $\bullet^\dagger: \widehat{\HC}^\Xi(\underline{\tilde{\Halg}}^+)^{\Lambda_\Halg}\rightarrow
\widehat{\HC}(\tilde{\Halg})$ is right adjoint to $\bullet_\dagger: \widehat{\HC}(\tilde{\Halg})
\rightarrow \widehat{\HC}^\Xi(\underline{\tilde{\Halg}}^+)^{\Lambda_{\Halg}}$.
\end{enumerate}
\end{Prop}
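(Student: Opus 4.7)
\emph{Part (1).} For each of $\Fun_2,\ldots,\Fun_7$ we have an equivalence of categories which, upon inspection of its definition (respectively: push-forward along $\tilde\theta$, extension of scalars along $\Xi$-invariants over the free covering $\underline{\Leaf}\to\Leaf$, regluing by $\exp(\frac{1}{\tb}\ad \hat X^{ij})$, taking flat sections of a connection, the Morita equivalence given by the idempotent $e(\underline\Gamma)$, and passage to $\K^\times$-l.f.\ vectors) is $S(\tilde{\param})$-bilinear, intertwines the operators $C_i=\{\cb_i,\cdot\}$, and is exact. Hence $\Fun_{7,2}^{-1}$ inherits all three properties. Now $\Gun_1$ is the composition of $\Gamma(\Leaf,-)$ with the subfunctor ``largest locally finite sub-bimodule'' of the identity on $\tilde{\Halg}$-bimodules; both are left exact, $S(\tilde{\param})$-bilinear, and commute with $C_i$ (the $C_i$ are Poisson bimodule endomorphisms, so preserve every HC sub-bimodule). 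Composing proves (1).

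\emph{Part (2).} Because $\Fun_{7,2}$ is an equivalence, the adjointness $\bullet_\dagger\dashv\bullet^\dagger$ is equivalent to the statement that $\Gun_1$ is right adjoint to $\Fun_1$ between the appropriate ind-categories. I will construct the unit and counit. The unit $\eta_\Malg\colon \Malg\to \Gun_1(\Fun_1(\Malg))$ is the composition of the localization map $\Malg\to (\Malg|_{\X^0})(\Leaf)$ with the completion map into $\Fun_1(\Malg)(\Leaf)$; its image is a quotient of $\Malg$, hence a HC sub-bimodule, so lies in $\Gun_1(\Fun_1(\Malg))$. The counit $\varepsilon_\Ncal\colon \Fun_1(\Gun_1(\Ncal))\to\Ncal$ is built using the identification $\Fun_1(\Gun_1(\Ncal))\cong \tilde{\Halg}^{\wedge_\Leaf}|_\Leaf\otimes_{\tilde{\Halg}}\Gun_1(\Ncal)$ of Lemma \ref{Lem:3.10.1}(2): the evaluation map $\Gun_1(\Ncal)\hookrightarrow \Ncal(\Leaf)$ extends first $\tilde{\Halg}$-bilinearly to a sheaf morphism $\tilde{\Halg}|_\Leaf\otimes_{\tilde{\Halg}}\Gun_1(\Ncal)\to\Ncal$, and then continuously to the $\tilde{\p}^{\wedge_\Leaf}|_\Leaf$-adic completion, using that every object of $\HC(\tilde{\Halg}^{\wedge_\Leaf}|_\Leaf)$ is separated and complete in this topology (a consequence of Proposition \ref{Prop:1.33I} together with Lemma \ref{Lem:1.23.1}). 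The triangle identities then reduce to tautologies on the images of $\Malg$ and $\Gun_1(\Ncal)$.

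\emph{Main obstacle.} The delicate point is the compatibility of $\varepsilon_\Ncal$ with the Poisson bracket: when extending by continuity from $\tilde{\Halg}|_\Leaf\otimes_{\tilde{\Halg}}\Gun_1(\Ncal)$ to its $\tilde{\p}^{\wedge_\Leaf}|_\Leaf$-adic completion, one must know that the resulting map intertwines the action of the whole Poisson center $\z^\tb(\tilde{\Halg}^{\wedge_\Leaf}|_\Leaf)$, not merely of $\Zalg$. This is supplied by Corollary \ref{Cor:1.24.1}, which asserts that $\Zalg|_\Leaf$ is dense in $\z^\tb(\Halg^{\wedge_\Leaf}|_\Leaf)$, combined with the separatedness of $\Ncal$. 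Once $\varepsilon_\Ncal$ is verified to be a morphism of Poisson bimodules, the rest of the adjunction package, and hence (2), is routine; left exactness of $\bullet^\dagger$ already follows from (1).
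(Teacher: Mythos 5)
Your argument is correct and is essentially the paper's own: part (1) is handled identically, and part (2) reduces, via the equivalences $\Fun_2,\ldots,\Fun_7$, to the adjunction $\Fun_1\dashv\Gun_1$, which the paper proves by exactly the two maps you package as unit and counit (localize and extend by $\tilde{\p}|_{\Leaf}$-adic continuity in one direction; take global sections and precompose with the natural map $\Malg\rightarrow\Fun_1(\Malg)(\Leaf)$ in the other). Your explicit verification of Poisson-bracket compatibility via the density statement of Corollary \ref{Cor:1.24.1} is a detail the paper leaves implicit in its continuity argument, not a genuinely different route.
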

\begin{proof}
To prove (1) we remark that $\Gun_1$ is easily seen to be left exact, while $\Fun_i, i=2,\ldots,7$
are equivalences (of abelian categories). Also all functors under consideration are $S(\tilde{\param})$-bilinear
and commute with the $C_i$'s.

Let us prove (2). It is enough to show that for $\Malg\in \HC(\tilde{\Halg})$ and $\M\in \HC(\tilde{\Halg}^{\wedge_\Leaf}|_{\Leaf})^{\Lambda_\Halg}$ the Hom-spaces
$\Hom(\Malg, \M(\Leaf)_{l.f.})$ and $\Hom(\Fun_1(\Malg), \M)$ are naturally
isomorphic.

Let us remark that $\Hom(\Malg,\M(\Leaf)_{l.f})=\Hom(\Malg, \M(\Leaf))$. Any homomorphism
$\varphi\in \Hom(\Malg, \M(\Leaf))$  can be localized to a unique homomorphism  $\varphi|_\Leaf:\Malg|_{\Leaf}\rightarrow \M$ (here $\Malg|_{\Leaf}$ stands for the sheaf
theoretic pull-back of $\Malg|_{\X}$ to $\Leaf$). It is clear that $\varphi|_\Leaf$  is continuous in
the $\tilde{\p}|_{\Leaf}$-adic topology. The completion of $\Malg|_\Leaf$  with respect to the
$\tilde{\p}|_{\Leaf}$-adic topology coincides with $\Fun_1(\Malg)$. So $\varphi|_\Leaf$
extends to a unique continuous homomorphism $\Fun_1(\Malg)\rightarrow \M$. Summarizing, we get a
natural map  $\Hom(\Malg,\M(\Leaf)_{l.f})\rightarrow \Hom(\Malg^{\wedge_\Leaf}|_{\Leaf},\M)$.

Conversely, pick $\psi\in \Hom(\Fun_1(\Malg),\M)$. Then we get the corresponding
homomorphism $\psi(\Leaf): \Fun_1(\Malg)(\Leaf)\rightarrow \M(\Leaf)$
between the spaces of global sections. Compose
$\psi(\Leaf)$ with a natural homomorphism $\Malg\rightarrow \Fun_1(\Malg)(\Leaf)$.
So we get the natural map $\Hom(\Fun_1(\Malg),\M)\rightarrow \Hom(\Malg, \M(\Leaf)_{l.f.})$.
It is clear that the maps we have constructed are inverse to each other.
\end{proof}

Now we are going to study the restriction of  $\bullet^\dagger$ to a certain subcategory of $\HC^\Xi(\underline{\tilde{\Halg}}^+)^{\Lambda_\Halg}$. Namely,
for $\lambda\in \Lambda_{\Halg}$ let $\HC^\Xi_{\underline{\tilde{\p}}^+}(\underline{\tilde{\Halg}}^+)_\lambda$
denote the full subcategory of $\HC^\Xi(\underline{\tilde{\Halg}}^+)^{\Lambda_\Halg}$
consisting of all bimodules satisfying the following two conditions
\begin{itemize}
\item[(i)] they are annihilated by $\underline{\tilde{\p}}^+$ from the left
(and then automatically also from the right).
\item[(ii)] The operator $C_i=\{\cb_i,\cdot\}$ is the scalar $\lambda_i$ for every $i=1,\ldots,r$.
\end{itemize}
 Also let $\HC_{\tilde{\p}}(\tilde{\Halg})_\lambda$
be the full subcategory of $\HC(\tilde{\Halg})$ consisting of all modules annihilated
by $\tilde{\p}$ from the left (and from the right) and satisfying (ii). Tautologically,  $\HC_{\tilde{\p}}(\tilde{\Halg})_\lambda\subset
\HC_{\overline{\Leaf}}(\tilde{\Halg})$. Set $\HC_{\tilde{\p},\partial\Leaf}(\tilde{\Halg})_\lambda:=\HC_{\tilde{\p}}(\tilde{\Halg})_\lambda\cap
\HC_{\partial\Leaf}(\tilde{\Halg}), \HC_{\tilde{\p},\Leaf}(\tilde{\Halg})_\lambda:=\HC_{\tilde{\p}}(\tilde{\Halg})_\lambda/\HC_{\tilde{\p},\partial\Leaf}(\tilde{\Halg})_\lambda$.
According to Proposition \ref{Prop:3.10.1}, $\bullet_\dagger$ induces a functor
$\HC_{\tilde{\p},\Leaf}(\tilde{\Halg})_\lambda\rightarrow \HC_{\underline{\tilde{\p}}^+}(\underline{\tilde{\Halg}}^+)_\lambda$.
The induced functor will also be denoted by $\bullet_\dagger$. We remark that $\underline{\tilde{\p}}^+$ has finite
codimension in $\underline{\tilde{\Halg}}^+$ and so all modules in $\HC_{\underline{\tilde{\p}}^+}(\underline{\tilde{\Halg}}^+)_\lambda$
are finite dimensional.

\begin{Prop}\label{Prop:3.15.2}
The image of $\HC_{\underline{\tilde{\p}}^+}(\underline{\tilde{\Halg}}^+)_\lambda$ under $\bullet^\dagger$
lies in $\HC_{\tilde{\p}}(\tilde{\Halg})_\lambda$. The induced functor $\bullet^\dagger: \HC_{\underline{\tilde{\p}}^+}(\underline{\tilde{\Halg}}^+)_\lambda\rightarrow
\HC_{\tilde{\p},\Leaf}(\tilde{\Halg})_\lambda$ is quasiinverse to $\bullet_\dagger:
\HC_{\tilde{\p},\Leaf}(\tilde{\Halg})_\lambda\rightarrow \HC_{\underline{\tilde{\p}}}(\underline{\tilde{\Halg}}^+)_\lambda$.
\end{Prop}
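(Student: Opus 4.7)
The plan is to verify the two assertions in turn, using the explicit realisations $\bullet_\dagger=\Fun_{7,1}$ and $\bullet^\dagger=\Gun_1\circ\Fun_{7,2}^{-1}$. For $\Nalg\in\HC^\Xi_{\underline{\tilde{\p}}^+}(\underline{\tilde{\Halg}}^+)_\lambda$ set $\M:=\Fun_{7,2}^{-1}(\Nalg)$. Each of the equivalences $\Fun_2,\ldots,\Fun_7$ is $S(\tilde{\param})$-bilinear and commutes with the operators $C_i$ (the same observation underlying Proposition \ref{Prop:3.10.1}(1) and Proposition \ref{Prop:3.15.1}(1)), and moreover matches $\underline{\tilde{\p}}^+$-annihilation with $\tilde{\p}^{\wedge_\Leaf}|_\Leaf$-annihilation, the inverse analogue of Proposition \ref{Prop:3.10.1}(3). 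Consequently $\M$ is annihilated on both sides by $\tilde{\p}^{\wedge_\Leaf}|_\Leaf$ with each $C_i$ acting by $\lambda_i$, so $\M$ is a coherent $\tilde{\Halg}^{\wedge_\Leaf}|_\Leaf/\tilde{\p}^{\wedge_\Leaf}|_\Leaf$-module, in particular a coherent $\Str_\Leaf$-module. Since $\overline{\Leaf}\subset\X=V^*/\Gamma$ is affine and $\K^\times$-stable and the $\K^\times$-action on $V^*$ is contracting with positive weights on the coordinate ring, $\M$ admits a canonical $\K^\times$-equivariant coherent extension to $\overline{\Leaf}$ whose $\K^\times$-locally finite global sections coincide with $\Nalg^\dagger=\M(\Leaf)_{l.f.}$. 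Thus $\Nalg^\dagger$ is finitely generated over $\K[\overline{\Leaf}]=(SV)^\Gamma/J\subset\tilde{\Halg}$, inherits $\tilde{\p}$-annihilation and the eigenvalue conditions from $\M$, and belongs to $\HC_{\tilde{\p}}(\tilde{\Halg})_\lambda$.

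For the quasi-inverse property, Proposition \ref{Prop:3.15.1}(2) reduces matters to checking that the unit $\eta\colon\Nalg\to(\Nalg^\dagger)_\dagger$ and the counit $\epsilon\colon(\Malg_\dagger)^\dagger\to\Malg$ are isomorphisms, the latter in $\HC_{\tilde{\p},\Leaf}(\tilde{\Halg})_\lambda$. Since $\Nalg^\dagger$ is annihilated by $\tilde{\p}$, the $\tilde{\p}|_\Leaf$-adic topology on the localisation $\Nalg^\dagger|_\Leaf$ is already discrete, so $\Fun_1(\Nalg^\dagger)$ coincides with $\Nalg^\dagger|_\Leaf$; the coherent-extension argument from the previous paragraph, read in reverse, identifies this sheaf with $\M$, and applying $\Fun_{7,2}$ returns $\Nalg$ (here $\Fun_7$ acts as the identity because $\Nalg$, being $\underline{\tilde{\p}}^+$-annihilated, equals its own $\underline{\tilde{\p}}^+$-adic completion). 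This shows that $\eta$ is an isomorphism. For $\epsilon$, the canonical map $\Malg\to(\Malg_\dagger)^\dagger=\Fun_1(\Malg)(\Leaf)_{l.f.}$ induces an isomorphism after localisation to $\Leaf$, because $\Fun_1(\Malg)=\Malg|_\Leaf$ whenever $\Malg$ is $\tilde{\p}$-annihilated. Hence both $\ker\epsilon$ and $\operatorname{coker}\epsilon$ are HC bimodules whose localisations to $\Leaf$ vanish, placing them in $\HC_{\tilde{\p},\partial\Leaf}(\tilde{\Halg})_\lambda$, so $\epsilon$ becomes an isomorphism in $\HC_{\tilde{\p},\Leaf}(\tilde{\Halg})_\lambda$.

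The main technical obstacle I anticipate is justifying the existence of the coherent $\K^\times$-equivariant extension of $\M$ to $\overline{\Leaf}$, compatibly with the $\Gamma$- and $\Xi$-actions and with the full $\tilde{\Halg}$-bimodule structure. Once this extension is in hand, finite generation of $\Nalg^\dagger$ over $\K[\overline{\Leaf}]$ reduces to standard coherent-sheaf arguments, with the contracting $\K^\times$-action (coming from the positive grading on $\tilde{\Halg}$) ensuring that $\K^\times$-locally finite global sections form a finitely generated module rather than merely a filtered direct limit. The remainder of the proof is a careful but routine diagram chase through the six equivalences $\Fun_2,\ldots,\Fun_7$.
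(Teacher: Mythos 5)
Your overall architecture matches the paper's: trace $\Nalg$ backwards through the chain of equivalences into the subcategory of objects killed by $\tilde{\p}^{\wedge_\Leaf}|_{\Leaf}$, and then reduce both quasi-inverse statements to push-forward/pull-back along the open embedding $\Leaf\hookrightarrow\overline{\Leaf}$. But the decisive step is precisely the one you dispatch in a single sentence: the claim that $\M:=\Fun_{7,2}^{-1}(\Nalg)$ ``admits a canonical $\K^\times$-equivariant coherent extension to $\overline{\Leaf}$ whose $\K^\times$-locally finite global sections coincide with $\Nalg^\dagger$''. A contracting $\K^\times$-action does not give this: the locally finite global sections of a coherent sheaf on a $\K^\times$-stable open subset need not be finitely generated over $\K[\overline{\Leaf}]$ (already $\Str_{\K^\times}$ on $\K^\times\subset\K$ has section space $\K[t,t^{-1}]$, graded with finite-dimensional pieces but not finitely generated over $\K[t]$), and your justification uses neither the codimension of $\partial\Leaf$ nor any special structure of $\M$. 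In effect you are assuming the finite generation of $\Nalg^\dagger$, which is exactly what has to be proved for the first assertion, and your second paragraph (the unit being an isomorphism, ``read in reverse'') rests on the same unproven identification.

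The paper closes this gap by an explicit trivialization, and this is also where the eigenvalue condition (ii) — which you record but never actually use — is genuinely needed. One first shows that the two multiplication actions on $\Fun_{7,5}^{-1}(\Nalg)$ factor through $\CC(\Str_{\underline{\Leaf}}\#\underline{\Gamma})\cong\Str_{\underline{\Leaf}}\otimes\CC(\K\underline{\Gamma})$ and that the underlying equivariant bimodule is the \emph{free} sheaf $\Str_{\underline{\Leaf}}\otimes\CC(\Nalg)$; then, because each $C_i$ acts by the fixed scalar $\lambda_i$, the untwisting $\Fun_4^{-1}$ is a regluing by a coboundary (this is \eqref{eq:4.9.11}, i.e. Lemma~\ref{Lem:3.10.4}), so $\Fun_{7,4}^{-1}(\Nalg)$ is still $\Str_{\underline{\Leaf}}\otimes\CC(\Nalg)$. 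Taking $\Xi$-invariants and transporting along $\theta_0$, the global sections become $(SV_0\otimes\CC(\Nalg))^{\Xi}$, because $\underline{\Leaf}$ has complement of codimension at least $2$ in $V_0^*$ so that $\Gamma(\underline{\Leaf},\Str_{\underline{\Leaf}})=SV_0$; since $SV_0$ is finite over $\K[\overline{\Leaf}]$, this is finitely generated, identifies $\Nalg^\dagger$ concretely, and shows that the $\tilde{\Halg}$-actions factor through $\overline{\iota}^*(SV\#\Gamma)$ — the facts on which Steps 2 and 3 of the argument (your counit and unit computations) rely. Without this trivialization, or some substitute (for instance, first proving that $\M$ is a vector bundle on $\Leaf$ via the Poisson/$\DCal$-module mechanism of Subsection \ref{SUBSECTION_DMod} and then using normality of $\overline{\Leaf}$ together with $\codim\partial\Leaf\geqslant 2$), your proof does not go through.
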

\begin{proof}
The proof is in several steps.

{\it Step 1.} Let $\Nalg\in \HC_{\underline{\tilde{\p}}^+}(\underline{\tilde{\Halg}}^+)_\lambda$.
The  left and right multiplication actions of the sheaf $\CC(\W_{\hbar}|_{\underline{\Leaf}}\widehat{\otimes}_{\K[[\hbar]]}\underline{\tilde{\Halg}}^{+\wedge_0})$
on $\Fun_{7,5}^{-1}(\Nalg)$ factor through $\CC(\Str_{\underline{\Leaf}}\#\underline{\Gamma})$. The latter sheaf is naturally identified with $\Str_{\underline{\Leaf}}\otimes \CC(\K\underline{\Gamma})$ because $\underline{\Gamma}$
acts trivially on $\Str_{\underline{\Leaf}}$.
Furthermore, as a $\Xi\times \K^\times$-equivariant $\CC(\Str_{\underline{\Leaf}}\#\underline{\Gamma})$-bimodule, $\Fun_{7,5}^{-1}(\Nalg)$
is just $\Str_{\underline{\Leaf}}\otimes \CC(\Nalg)$.

The multiplication actions of $\Halg^{\wedge_\Leaf}|_{\underline{\Leaf}}$ on $\Fun_{7,4}^{-1}(\Nalg)$ still
factors through   $\Str_{\underline{\Leaf}}\otimes \CC(\K\underline{\Gamma})$. This follows directly from
the construction of the functor $\Fun_4$. Moreover, we claim that $\Fun_{7,4}^{-1}(\Nalg)$ is still
isomorphic to $\Str_{\underline{\Leaf}}\otimes \CC(\Nalg)$ as a
$\Xi\times \K^\times$-equivariant $\Str_{\underline{\Leaf}}\otimes \CC(\K\underline{\Gamma})$-bimodule.
This again follows from the construction of $\Fun_4$ and (ii): indeed, (\ref{eq:4.9.11}) implies that
$\Fun_4^{-1}$ is just a regluing by means of a coboundary with the necessary invariance properties.

So as a
$(\Str_{\underline{\Leaf}}\otimes \CC(\K\underline{\Gamma}))^\Xi$-module $\Fun_{7,3}^{-1}(\Nalg)$ is just $(\Str_{\underline{\Leaf}}\otimes \CC(\Nalg))^\Xi$. Now $\Fun_2^{-1}$ is just the pull-back
with respect to the isomorphism $$\theta_0:\iota^*(SV\#\Gamma|_{\X})\xrightarrow{\sim} (\Str_{\underline{\Leaf}}\otimes \CC(\K\underline{\Gamma}))^\Xi$$ induced from (\ref{eq:0.2.40}); here $\iota$ is the inclusion $\Leaf\hookrightarrow \X$.
In particular, we see that the space of global sections of  $\Fun_{7,2}^{-1}(\Nalg)$ is just
$(SV_0\otimes \CC(\Nalg))^\Xi$. The algebra $SV_0$ is finite over $\K[\overline{\Leaf}]$
and so $(SV_0\otimes \CC(\Nalg))^\Xi$ is a finitely generated $\K[\overline{\Leaf}]$-module.
So we have checked that
\begin{itemize}
\item
$\Nalg^\dagger$  is naturally identified with $(SV_0\otimes \CC(\Nalg))^\Xi$,
\item  the left and right multiplication  actions  of $\tilde{\Halg}$ on $\Nalg^\dagger$ factor through
$\overline{\iota}^*(SV\#\Gamma)$, where $\overline{\iota}: \overline{\Leaf}\hookrightarrow
\X$ is the inclusion,
\item the multiplication action of $\overline{\iota}^*(SV\#\Gamma)$ on $(SV_0\otimes \CC(\Nalg))^\Xi$
is obtained from the natural homomorphism $\overline{\iota}^*(SV\#\Gamma)\hookrightarrow
(SV_0\otimes \CC(\K\underline{\Gamma}))^\Xi$.
\item $\Nalg^\dagger$ is a finitely generated  $\overline{\iota}^*(SV\#\Gamma)$-module.
\end{itemize}

In particular, we have checked that $\bullet^\dagger$ maps $\HC_{\underline{\tilde{\p}}^+}(\underline{\tilde{\Halg}}^+)_\lambda$
 to $\HC_{\tilde{\p}}(\tilde{\Halg})_\lambda$.

{\it Step 2.} Now let $\Malg\in \HC_{\tilde{\p}}(\tilde{\Halg})_\lambda$. The action of $\tilde{\Halg}$ on $\Malg$ factors
through $\overline{\iota}^*(SV\#\Gamma)$. The natural map $\Malg\rightarrow \overline{\iota}^*(\Malg)$ is therefore
an isomorphism. Then $\Fun_1(\Malg)$ is just the pull-back  $\iota^*(\Malg)$. Since $\Fun_{7,2}$ is a category
equivalence, we deduce from the previous step that $\Gun_1(\Fun_1(\Malg))=\iota_*(\iota^*(\Malg))$ is a  finitely
generated $\K[\overline{\Leaf}]$-module. We are going to prove that both the kernel and the cokernel
of the natural map $\Malg\rightarrow \iota_*(\iota^*(\Malg))$ are supported on $\partial \Leaf$. This homomorphism is nothing else but the natural
homomorphism $\Malg\rightarrow \kappa_*(\kappa^*(\Malg))$, where $\kappa$ stands for the inclusion
$\Leaf\hookrightarrow \overline{\Leaf}$. Its (sheaf-theoretic) restriction to $\Leaf$ is just
the isomorphism $\kappa^*(\Malg)\rightarrow \kappa^*(\Malg)$.

So we see that $(\bullet_\dagger)^\dagger$ is isomorphic to the identity functor of $\HC_{\tilde{\p},\Leaf}(\tilde{\Halg})_\lambda$.

{\it Step 3.} Now it remains to prove that $(\bullet^\dagger)_\dagger$ is isomorphic to
the identity functor of $\HC_{\underline{\tilde{\p}}^+}(\underline{\tilde{\Halg}}^+)_\lambda$.
Again, this reduces to checking that the natural homomorphism $\Fun_1(\Gun_1(\M))\rightarrow
\M$ is an isomorphism for any $\M\in \HC(\tilde{\Halg}^{\wedge_\Leaf}|_{\Leaf})_{\lambda}$ annihilated
by $\tilde{\p}^{\wedge_\Leaf}|_{\Leaf}$. But on the level of $\Str_{\Leaf}$-modules the functor
$\Fun_1\circ\Gun_1$ is just $\kappa^*\circ \kappa_*$. Since  $\kappa_*(\M)$ is a finitely generated
$\K[\overline{\Leaf}]$-module (see step 1), we see that the natural homomorphism $\M\rightarrow
\kappa^*(\kappa_*(\M))$ is an isomorphism.
\end{proof}

Proposition \ref{Prop:3.10.1} implies that $\bullet_\dagger$ restricts to a functor $\HC_{\overline{\Leaf}}(\tilde{\Halg})\rightarrow \HC_{0}^\Xi(\underline{\tilde{\Halg}}^+)^{\Lambda_\Halg}$.
The latter induces a functor $\bullet_\dagger: \HC_{\Leaf}(\tilde{\Halg})\rightarrow \HC_{0}^\Xi(\underline{\tilde{\Halg}}^+)^{\Lambda_\Halg}$.

\begin{Prop}\label{Cor:3.15.2}
\begin{enumerate}
\item $\Nalg^\dagger\in \HC_{\overline{\Leaf}}(\tilde{\Halg})$ for $\Nalg\in \HC_{0}^\Xi(\underline{\tilde{\Halg}}^+)^{\Lambda_\Halg}$.
\item For any $\Malg\in \HC_{\overline{\Leaf}}(\tilde{\Halg})$ the kernel and the cokernel of the natural homomorphism $\Malg\rightarrow (\Malg_\dagger)^\dagger$ lie in $\HC_{\partial\Leaf}(\tilde{\Halg})$.
    In other words, $\bullet^\dagger: \HC^\Xi_0(\underline{\tilde{\Halg}}^+)^{\Lambda_\Halg}\rightarrow
    \HC_\Leaf(\tilde{\Halg})$ is the left inverse to $\bullet_\dagger:
    \HC_{\Leaf}(\tilde{\Halg})\rightarrow \HC_{0}^\Xi(\underline{\tilde{\Halg}}^+)^{\Lambda_\Halg}$.
\item The functor $\bullet_\dagger: \HC_\Leaf(\tilde{\Halg})\rightarrow \HC^\Xi_0(\underline{\tilde{\Halg}})^{\Lambda_\Halg}$
is a fully faithful embedding.
\end{enumerate}
\end{Prop}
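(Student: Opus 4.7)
My plan is to prove the three assertions sequentially, each relying on the previous.

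For (1), I would reduce to Proposition \ref{Prop:3.15.2} by a filtration argument. Decompose $\Nalg = \bigoplus_{\lambda\in\Lambda_\Halg} \Nalg^\lambda$ according to generalized $C_i$-eigenvalues, then refine using the $\tilde{\param}$-adic, $\underline{\tilde{\p}}^+$-adic, and $(C_i-\lambda_i)^k$-nilpotent filtrations so that all subquotients lie in $\HC^\Xi_{\underline{\tilde{\p}}^+}(\underline{\tilde{\Halg}}^+)_\lambda$ for $\lambda \in \Lambda_\Halg$. Proposition \ref{Prop:3.15.2} then sends each subquotient into $\HC_{\tilde{\p}}(\tilde{\Halg})_\lambda \subset \HC_{\overline{\Leaf}}(\tilde{\Halg})$. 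Left exactness of $\bullet^\dagger$ (Proposition \ref{Prop:3.15.1}(1)) combined with $S(\tilde{\param})$-bilinearity, plus Noetherianness from Corollary \ref{Cor:1.22I} (so that submodules of HC bimodules remain HC), assembles these pieces into $\Nalg^\dagger \in \HC_{\overline{\Leaf}}(\tilde{\Halg})$.

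For (2), observe that by (1) the natural map $\Malg \to (\Malg_\dagger)^\dagger$ (the adjunction unit) lies in $\HC_{\overline{\Leaf}}(\tilde{\Halg})$. By Proposition \ref{Prop:3.10.1}(2) and exactness of $\bullet_\dagger$, it suffices to show that applying $\bullet_\dagger$ to this map yields an isomorphism. The triangle identities for the adjunction $(\bullet_\dagger,\bullet^\dagger)$ already give that the resulting map $\Malg_\dagger \to ((\Malg_\dagger)^\dagger)_\dagger$ is split injective, with retraction the adjunction counit. Surjectivity I would verify by unfolding: via the equivalences $\Fun_{7,2}$, this map corresponds to the natural sheaf map $\Fun_1(\Malg) \to \Fun_1(\Gun_1(\Fun_1(\Malg)))$, and this is an isomorphism because $\Fun_1(\Malg)$ is pro-coherent on $\Leaf$ (Lemma \ref{Lem:3.10.1}), so any $\tilde{\Halg}$-sub-bimodule of $\Fun_1(\Malg)(\Leaf)$ containing the image of $\Malg$ produces the same $\tilde{\p}^{\wedge_\Leaf}|_\Leaf$-adic completion. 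This extends Step 3 of the proof of Proposition \ref{Prop:3.15.2} from the $\tilde{\p}$-annihilated case to general $\Malg$, using the Noetherianness from Subsection \ref{SUBSECTION_flatness}.

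Part (3) then follows formally from (2): since the adjunction unit becomes an isomorphism in $\HC_\Leaf(\tilde{\Halg})$, composing the adjunction bijection $\Hom(\Malg_\dagger, \Malg'_\dagger) \cong \Hom(\Malg, (\Malg'_\dagger)^\dagger)$ with the inverse of the unit $\Malg' \xrightarrow{\sim} (\Malg'_\dagger)^\dagger$ gives a natural bijection $\Hom_{\HC_\Leaf}(\Malg, \Malg') \cong \Hom_{\HC^\Xi_0}(\Malg_\dagger, \Malg'_\dagger)$. The main obstacle is the surjectivity half of the sheaf isomorphism in (2): extending $\Fun_1(\Gun_1(\Fun_1(\Malg))) \cong \Fun_1(\Malg)$ from the $\tilde{\p}$-annihilated case (where it reduces to a direct $\kappa_*\kappa^*$ computation on $\overline{\Leaf}$) to arbitrary $\Malg \in \HC_{\overline{\Leaf}}$ requires controlling how taking locally finite global sections interacts with pro-coherent completion along $\Leaf$.
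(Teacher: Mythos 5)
Your overall strategy is reasonable and part (3) is handled exactly as the paper does it: it is a formal consequence of (2). The places where you diverge from the paper both contain genuine gaps.

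For (1): your filtration with subquotients in $\HC^\Xi_{\underline{\tilde{\p}}^+}(\underline{\tilde{\Halg}}^+)_\lambda$ is necessarily \emph{infinite} whenever $\Nalg$ is not killed by $\param$, since $\Nalg$ is only finitely generated over $S(\tilde{\param})$. Left exactness of $\bullet^\dagger$ together with Noetherianity of $\tilde{\Halg}$ does not then yield finite generation of $\Nalg^\dagger$: the problem is to bound $\Nalg^\dagger$ inside some finitely generated bimodule, and no such bound comes for free from an infinite filtration (a priori $\Nalg^\dagger$ is only a union of HC sub-bimodules of the global sections). The paper resolves this by a descending induction $(*_l)$ on $l$: after killing $\cb_l$-torsion (handled by $(*_{l+1})$ and left exactness) one is left with the $\K[\cb_l]$-flat case, where multiplication by $\cb_l$ gives a monomorphism $\Nalg^\dagger[-2]\hookrightarrow\Nalg^\dagger$ with cokernel sitting inside $(\Nalg/\cb_l\Nalg)^\dagger$, and a \emph{separate} finite-generation argument (quoted from \cite{HC}, Lemma 3.3.3) closes the induction. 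Your outline skips this finite-generation step entirely.

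For (2): you correctly reduce to showing that $\bullet_\dagger$ applied to the adjunction unit is an isomorphism, and the triangle identity does give split injectivity. But your surjectivity argument — that $\Fun_1(\Malg)\to\Fun_1(\Gun_1(\Fun_1(\Malg)))$ is an isomorphism for \emph{arbitrary} $\Malg\in\HC_{\overline{\Leaf}}(\tilde{\Halg})$ — is exactly what Step 3 of Proposition \ref{Prop:3.15.2} establishes only in the $\tilde{\p}$-annihilated case, where $\Fun_1(\Malg)$ is $\Str_\Leaf$-\emph{coherent} and the assertion is a concrete $\kappa^*\kappa_*$ statement on $\overline{\Leaf}$. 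For general $\Malg$ the sheaf $\Fun_1(\Malg)$ is only pro-coherent, $\Gun_1$ is not a completion operation, and the claim that the locally finite global sections regenerate the completion requires proof. You recognize this obstacle but propose no way around it. The paper sidesteps it entirely: it handles the $\tilde{\p}$-annihilated case via Proposition \ref{Prop:3.15.2}, the $\param$-annihilated case via a finite filtration and left exactness of both $\bullet_\dagger$ and $[(\bullet_\dagger)^\dagger]_\dagger$, and the general case via a Nakayama-type reduction modulo $\param$ — from the $\param$-annihilated case one extracts $\param\Malg''_\dagger=\Malg''_\dagger$, which together with the grading forces $\Malg''_\dagger=0$. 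This cleanly avoids any statement about pro-coherent sheaves that you would otherwise need.
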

\begin{proof}
(1). We are going to prove the following statement using the descending induction on $l$.
\begin{itemize}
\item[($*_l$)] Suppose $\Nalg$ is annihilated by $\param^{(l)}$. Then $\Nalg^\dagger\in \HC_{\overline{\Leaf}}(\tilde{\Halg})$.
\end{itemize}

The base is $l=r$. In this case $\Nalg$ is annihilated by $\param$ and hence is finite dimensional.
So it has a finite filtration, whose quotients are objects from $\HC^\Xi_{\underline{\p}^+}(\underline{\tilde{\Halg}}^+)_\lambda$
for various $\lambda\in \Lambda_\Halg$. To prove $(*_r)$ use Proposition \ref{Prop:3.15.2}
and the left exactness of $\bullet^\dagger$.

Now suppose that $(*_{l+1})$ is proved and let $\Nalg$ be annihilated by $\param^{(l)}$.
Consider the  homomorphism $\Nalg[-2]\rightarrow \Nalg$ given by the left multiplication by $\cb_l$ (recall that $\cb_0:=\tb$). Clearly, $\bullet^\dagger$ commutes with the grading
shifts and is a $S(\param)$-bilinear functor, Proposition \ref{Prop:3.15.1}.
Let $\Nalg^0$ stand for the $\cb_l$-torsion in $\Nalg$. Applying $(*_{l+1})$ and a filtration
argument, we see that $\Nalg^{0\dagger}\in \HC_{\overline{\Leaf}}(\tilde{\Halg})$. Since $\bullet^\dagger$
is left exact, this reduces the proof to the case when $\Nalg$ is $\K[\cb_l]$-flat.

So we get a monomorphism $\Nalg^{\dagger}[-2]\xrightarrow{\cb_l\cdot} \Nalg^\dagger$.
Its  cokernel is contained in $(\Nalg/\cb_l \Nalg)^\dagger$. The latter lies in $\HC_{\overline{\Leaf}}(\tilde{\Halg})$
by $(*_{l+1})$. It remains to show that $\Nalg^\dagger$ is a finitely generated $\Halg$-bimodule
(or a left module). This is done completely analogously to the proof of Lemma 3.3.3 in \cite{HC}.

(2). Let $\Malg',\Malg''$ denote the kernel and the cokernel of the natural homomorphism
$\Malg\rightarrow (\Malg_\dagger)^\dagger$. First  of all, let us check  that $\Malg'\in
\HC_{\partial \Leaf}(\tilde{\Halg})$. By Proposition \ref{Prop:3.10.1}, the latter is equivalent
to $\Malg'_\dagger=0$. But since $\bullet_\dagger$ is exact, we have the following exact
sequence: $0\rightarrow \Malg'_\dagger\rightarrow \Malg_\dagger\rightarrow [(\Malg_\dagger)^\dagger]_\dagger$.
From Proposition \ref{Prop:3.15.1} it follows that $\bullet^\dagger: \HC^\Xi_0(\underline{\tilde{\Halg}}^+)
\rightarrow \HC_{\overline{\Leaf}}(\tilde{\Halg})$ is right adjoint to $\bullet_\dagger:
\HC_{\overline{\Leaf}}(\tilde{\Halg})\rightarrow \HC^\Xi_0(\underline{\tilde{\Halg}}^+)$. In particular,
the natural homomorphism $\Malg_\dagger\rightarrow [(\Malg_\dagger)^\dagger]_\dagger$ is injective.
So $\Malg'_\dagger=0$.

It remains to show that $\Malg''_\dagger=0$. First of all, suppose that $\Malg$ is annihilated by
$\tilde{\p}$. Then the claim follows from Proposition \ref{Prop:3.15.2}. Next, consider the case
when $\Malg$ is annihilated by $\param$. In this case $\Malg$ admits a finite filtration
whose quotients lie in $\HC_{\underline{\tilde{\p}}^+}(\underline{\tilde{\Halg}}^+)_\lambda$. Since both functors $\bullet_\dagger, [(\bullet_\dagger)^\dagger]_\dagger$ are left exact, we see that   $\Malg''_\dagger=0$,
compare with the proof of  \cite{HC}, Proposition 3.3.4.

Consider the general case. By the previous paragraph,  the natural morphism
$\Malg_\dagger/\param\Malg_\dagger\cong [\Malg/\param\Malg]_\dagger\rightarrow
[([\Malg/\param\Malg]_\dagger)^\dagger]_\dagger$ is an isomorphism. Now a simple diagram chase shows  that the projection
of $\param[(\Malg_\dagger)^\dagger]_\dagger$ to $\Malg''_\dagger$ is surjective. Therefore $\param \Malg''_\dagger=\Malg''_\dagger$ and so $\Malg''_\dagger=0$.

(3) follows from (2) and some standard abstract nonsense.
\end{proof}


\begin{Rem}\label{Rem:3.15.2}
We use the notation introduced in Remark \ref{Rem:3.10.2}.

Similarly to the above we can define a functor
$\bullet^\dagger: \HC^\Xi(\underline{\tilde{\Zalg}}^+_{(l)})\rightarrow \widehat{\HC}(\tilde{\Zalg}_{(l)})$.  Propositions \ref{Prop:3.15.1},\ref{Prop:3.15.2} and Corollary \ref{Cor:3.15.2} generalize to the present situation
in a straightforward way. In particular, we have a functor $\bullet^\dagger:
\HC^\Xi_0(\underline{\tilde{\Zalg}}^+_{(l)})\rightarrow \HC_\Leaf(\tilde{\Zalg}_{(l)})$.
\end{Rem}

\subsection{Proof of Theorem \ref{Thm:5I}}\label{Proof 5I}
Let $\Malg\in \HC(\tilde{\Halg})$. Let $\Nalg\subset \Malg_\dagger$ be an HC sub-bimodule.
Let $\Nalg^{\dagger_\Malg}$ denote the preimage of $\Nalg^\dagger\subset (\Malg_\dagger)^{\dagger}$ under the natural
homomorphism $\Malg\rightarrow (\Malg_\dagger)^\dagger$.

The following lemma describes the properties of $\Nalg^{\dagger_\Malg}$.

\begin{Lem}\label{Lem:3.16.1}
\begin{enumerate}
\item $\Malg':=\Nalg^{\dagger_\Malg}$ is the largest (with respect to inclusion) HC sub-bimodule
in $\Malg$ with $\Malg'_\dagger\subset \Nalg$.
\item Suppose $\Malg_1\subset \Nalg^{\dagger_\Malg}$. Then $\Nalg^{\dagger_\Malg}/\Malg_1=(\Nalg/\Malg_{1\dagger})^{\dagger_{\Malg/\Malg_1}}$.
\end{enumerate}
\end{Lem}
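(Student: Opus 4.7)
The plan is to treat both parts as formal consequences of the adjunction $\bullet_\dagger \dashv \bullet^\dagger$ provided by Proposition \ref{Prop:3.15.1}, combined with the exactness of $\bullet_\dagger$ from Proposition \ref{Prop:3.10.1} and the left exactness of $\bullet^\dagger$. Write $\eta$ for the unit of the adjunction, so that the natural map $\Malg \to (\Malg_\dagger)^\dagger$ appearing in the lemma is $\eta_\Malg$, and write $\varepsilon$ for the counit; recall that the adjunction bijection $\Hom(X,Y^\dagger) \xrightarrow{\sim} \Hom(X_\dagger,Y)$ sends $f$ to $\varepsilon_Y \circ f_\dagger$.

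For part (1) I would first verify that $\Malg'_\dagger \subset \Nalg$. By construction $\eta_\Malg|_{\Malg'}$ factors through $\Nalg^\dagger \hookrightarrow (\Malg_\dagger)^\dagger$, say as $\eta_\Malg|_{\Malg'} = \iota_{\Nalg^\dagger} \circ \tilde\eta$ for some $\tilde\eta: \Malg' \to \Nalg^\dagger$. Naturality of $\varepsilon$ applied to the inclusion $\Nalg \hookrightarrow \Malg_\dagger$ gives $\varepsilon_{\Malg_\dagger}\circ (\iota_{\Nalg^\dagger})_\dagger = \iota_\Nalg \circ \varepsilon_\Nalg$, while a triangle identity forces $\varepsilon_{\Malg_\dagger}\circ(\eta_\Malg|_{\Malg'})_\dagger$ to coincide with the inclusion $\Malg'_\dagger \hookrightarrow \Malg_\dagger$. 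Combining, the inclusion $\Malg'_\dagger \hookrightarrow \Malg_\dagger$ factors through $\Nalg$, proving the claim. For the maximality assertion, given an HC sub-bimodule $\Malg'' \subset \Malg$ with $\Malg''_\dagger \subset \Nalg$, naturality of $\eta$ at $\Malg''\hookrightarrow\Malg$ factors $\eta_\Malg|_{\Malg''}$ through $(\Malg''_\dagger)^\dagger$, and left exactness of $\bullet^\dagger$ embeds $(\Malg''_\dagger)^\dagger$ in $\Nalg^\dagger$; hence $\eta_\Malg(\Malg'') \subset \Nalg^\dagger$, i.e.\ $\Malg'' \subset \eta_\Malg^{-1}(\Nalg^\dagger) = \Malg'$.

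For part (2), exactness of $\bullet_\dagger$ identifies $(\Malg/\Malg_1)_\dagger$ with $\Malg_\dagger/\Malg_{1\dagger}$; the hypothesis $\Malg_1 \subset \Nalg^{\dagger_\Malg}$ together with part (1) yields $\Malg_{1\dagger} \subset \Nalg$, so $\Nalg/\Malg_{1\dagger}$ is a well-defined sub-bimodule of $(\Malg/\Malg_1)_\dagger$ and the right-hand side of the asserted equality makes sense. Sub-bimodules of $\Malg/\Malg_1$ biject with sub-bimodules of $\Malg$ containing $\Malg_1$, and under this bijection the operation $\bullet_\dagger$ on $\Malg/\Malg_1$ corresponds to $\bullet_\dagger/\Malg_{1\dagger}$ on $\Malg$, by exactness again. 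Consequently $\overline{\Malg}' \subset \Malg/\Malg_1$ satisfies $\overline{\Malg}'_\dagger \subset \Nalg/\Malg_{1\dagger}$ if and only if its preimage $\Malg^* \subset \Malg$ satisfies $\Malg^*_\dagger \subset \Nalg$. Applying the maximality part of (1) on both $\Malg$ and $\Malg/\Malg_1$ identifies the two largest sub-bimodules in question, which are $\Nalg^{\dagger_\Malg}/\Malg_1$ and $(\Nalg/\Malg_{1\dagger})^{\dagger_{\Malg/\Malg_1}}$ respectively.

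The whole argument is abstract-nonsense about an adjoint pair, so I do not foresee a genuine obstacle; the only technical prerequisite to confirm is that the sub-bimodule $\Malg' = \Nalg^{\dagger_\Malg}$ is itself HC, not merely a sub-bimodule of $\Malg$. This follows from Lemma \ref{Lem:3.10.2} plus the Noetherianity of $\tilde{\Halg}$, which together ensure that every sub-bimodule of an HC bimodule is finitely generated; the Poisson and grading structures then restrict automatically.
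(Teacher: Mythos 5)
Your proposal is correct and follows essentially the same route as the paper: both parts reduce to formal manipulations with the adjunction $\bullet_\dagger \dashv \bullet^\dagger$, exactness of $\bullet_\dagger$, and left exactness of $\bullet^\dagger$. You spell out the unit/counit/triangle-identity bookkeeping more explicitly (and give more detail for part (2), which the paper disposes of in one line), but the underlying argument is the same.
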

\begin{proof}
(1):
Let us check that $(\Nalg^{\dagger_\Malg})_\dagger\subset \Nalg$ or, in other words,
that the composition $(\Nalg^{\dagger_\Malg})_\dagger\hookrightarrow \Malg_\dagger\twoheadrightarrow \Malg_\dagger/\Nalg$ is zero. By the adjointness property, the composition $(\Nalg^{\dagger_\Malg})_\dagger\hookrightarrow \Malg_\dagger\twoheadrightarrow \Malg_\dagger/\Nalg$ gives rise to a morphism $\Nalg^{\dagger_\Malg}\rightarrow
[\Malg_\dagger/\Nalg]^\dagger$ and it is enough to check that the latter is zero. By the construction,
the last morphism factors as $$\Nalg^{\dagger_\Malg}\rightarrow \Malg\rightarrow [\Malg_\dagger]^\dagger\rightarrow [\Malg_\dagger]^\dagger/\Nalg^\dagger\rightarrow [\Malg_\dagger/\Nalg]^\dagger.$$ The composition of the first three
morphisms in the sequence above is zero by the definition of $\Nalg^{\dagger_\Malg}$. So $(\Nalg^{\dagger_\Malg})_\dagger\subset \Nalg$.

Now let $\Malg'\subset \Malg$ be such that $\Malg'_\dagger\subset \Nalg$. We need to show that
the image of $\Malg'$ in $(\Malg_\dagger)^\dagger$ lies in $\Nalg^\dagger$. This follows easily
from the adjointness property similarly to the previous paragraph.

(2): This follows from the exactness of $\bullet_\dagger$ and assertion (1).
\end{proof}

The main result of this subsection
and a crucial step in the proof of Theorem \ref{Thm:5I} is the following result.
This theorem (together with a proof) is a complete analog of Theorem 4.1.1 from \cite{HC}.

\begin{Thm}\label{Thm:3.16.1}
Let $\Malg\in \HC(\tilde{\Halg})$ and $\Nalg\subset \Malg_\dagger$ be such that
$\Malg_\dagger/\Nalg\in \HC_0^\Xi(\underline{\tilde{\Halg}}^+)$. Then
$\Nalg=(\Nalg^{\dagger_\Malg})_\dagger$.
\end{Thm}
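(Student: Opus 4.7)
The plan is to settle this via a two-step reduction followed by a base case argument. Only $\Nalg \subset (\Nalg^{\dagger_\Malg})_\dagger$ requires proof; the reverse inclusion is Lemma~\ref{Lem:3.16.1}(1). Applying Lemma~\ref{Lem:3.16.1}(2) with $\Malg_1 := \Nalg^{\dagger_\Malg}$, I may replace the pair $(\Malg, \Nalg)$ by $(\Malg/\Nalg^{\dagger_\Malg}, \Nalg/(\Nalg^{\dagger_\Malg})_\dagger)$ and thereby reduce the theorem to showing: if $\Nalg^{\dagger_\Malg} = 0$, then $\Nalg = 0$.

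For the base case I treat $\Malg_\dagger/\Nalg$ lying in a single atomic block $\HC_{\underline{\tilde{\p}}^+}^\Xi(\underline{\tilde{\Halg}}^+)_\lambda$. Corollary~\ref{Cor:3.15.2}(2) ensures the kernel of the unit $\eta\colon\Malg\to(\Malg_\dagger)^\dagger$ lies in $\HC_{\partial\Leaf}(\tilde{\Halg})$, and so is contained in $\Nalg^{\dagger_\Malg}=0$; hence $\eta$ is injective. Applying the left-exact $\bullet^\dagger$ to $0\to\Nalg\to\Malg_\dagger\to\Malg_\dagger/\Nalg\to 0$ and composing with $\eta$ yields a map $\psi\colon\Malg\to(\Malg_\dagger/\Nalg)^\dagger$ whose kernel is precisely $\Nalg^{\dagger_\Malg}=0$, so $\psi$ is injective as well. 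Now apply the exact functor $\bullet_\dagger$. Since $\Malg_\dagger/\Nalg$ lies in the block of the equivalence of Proposition~\ref{Prop:3.15.2}, the counit $((\Malg_\dagger/\Nalg)^\dagger)_\dagger \xrightarrow{\sim} \Malg_\dagger/\Nalg$ is an isomorphism, and the triangle identities of the adjunction identify $\psi_\dagger$ (up to this counit isomorphism) with the natural projection $\Malg_\dagger\twoheadrightarrow\Malg_\dagger/\Nalg$. Injectivity of this projection forces $\Nalg=0$.

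For the general case, $\Malg_\dagger/\Nalg\in\HC_0^\Xi(\underline{\tilde{\Halg}}^+)^{\Lambda_\Halg}$ is a graded finitely generated Noetherian bimodule (cf.\ Lemma~\ref{Lem:3.10.2}). A graded Nakayama argument — using that $\underline{\tilde{\Halg}}^+/\underline{\tilde{\p}}^+ = \K\underline{\Gamma}$ is finite dimensional, together with the decomposition (\ref{eq:3.13.1}) which partitions the result into finitely many $\lambda$-blocks on which $C_i-\lambda_i$ is nilpotent — shows that any nonzero such bimodule admits an atomic quotient, i.e.\ one lying in some $\HC_{\underline{\tilde{\p}}^+}^\Xi(\underline{\tilde{\Halg}}^+)_\lambda$. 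Iterating yields a finite strict filtration $\Nalg=\Nalg_0\subsetneq\Nalg_1\subsetneq\cdots\subsetneq\Nalg_n=\Malg_\dagger$ with each $\Nalg_{i+1}/\Nalg_i$ atomic. Induct on $n$. For $n>1$, apply the theorem to $(\Malg,\Nalg_1)$ — whose filtration length $n-1$ is shorter — to obtain $\Nalg_1 = (\Nalg_1^{\dagger_\Malg})_\dagger$. Setting $\Malg':=\Nalg_1^{\dagger_\Malg}$, one has $\Malg'_\dagger=\Nalg_1$, while $\Malg'_\dagger/\Nalg=\Nalg_1/\Nalg$ is atomic. Since any HC sub-bimodule of $\Malg'$ is automatically one of $\Malg$, $\Nalg^{\dagger_{\Malg'}}\subset\Nalg^{\dagger_\Malg}=0$; the base case applied to $(\Malg',\Nalg)$ then yields $\Nalg=0$.

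The substance of the argument lies in the base case, where the adjunction $\bullet_\dagger\dashv\bullet^\dagger$ constructed in Subsections~\ref{SUBSECTION_fun_low_dag}--\ref{SUBSECTION_Fun_up_dag} and the equivalence of Proposition~\ref{Prop:3.15.2} conspire to identify $\psi_\dagger$ with the natural projection; this is the sheafified-along-$\Leaf$ analog of the central argument of Theorem~4.1.1 in \cite{HC}. A secondary technical point, but also nontrivial, is verifying that the atomic filtration exists for every $\HC_0^\Xi$-object arising here — the graded structure is what rescues the naive Nakayama approach from failing as it would for an arbitrary finitely generated module over the center.
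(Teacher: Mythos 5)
Your base-case argument (where $\Malg_\dagger/\Nalg$ lies in a single block $\HC_{\underline{\tilde{\p}}^+}^\Xi(\underline{\tilde{\Halg}}^+)_\lambda$) is a clean use of the triangle identities and is essentially sound, modulo the minor point that you should first observe $\Malg\in\HC_{\overline{\Leaf}}(\tilde{\Halg})$ (it embeds in $(\Malg_\dagger/\Nalg)^\dagger$ by Proposition~\ref{Cor:3.15.2}(1)) before invoking Corollary~\ref{Cor:3.15.2}(2).

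The gap is in the general case. You claim that any $\Malg_\dagger/\Nalg\in\HC_0^\Xi(\underline{\tilde{\Halg}}^+)^{\Lambda_\Halg}$ admits a \emph{finite} strict filtration with atomic quotients, and then induct on its length. But $\HC_0^\Xi$ consists of objects finitely generated over $S(\tilde{\param})$, not of finite-dimensional ones. An atomic object is annihilated by $\underline{\tilde{\p}}^+\supset(\tilde{\param})$, hence finite dimensional; so a module admitting a finite atomic filtration must itself be finite dimensional, equivalently annihilated by a power of $\tilde{\param}$. This fails in general: iterating ``peel off an atomic quotient'' on a module such as $S(\tilde{\param})$ produces the non-terminating decreasing chain $(\tilde{\param})^k S(\tilde{\param})$. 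Noetherianness does not rescue this, because the chain is decreasing, not increasing. The case you would be reducing to is therefore strictly special, and it is precisely the case the theorem must handle to be useful --- for instance in the proof of Theorem~\ref{Thm:4I}(3) one applies Theorem~\ref{Thm:3.16.1} with $\Nalg=\Ialg/(\Ialg^{\ddag})_\dagger$ for $\Ialg\in\Id_0^\Xi$, which need not be finite dimensional.

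The paper's proof tackles exactly this obstacle by a descending induction on the index $l$ of the surviving parameters $\cb_l,\ldots,\cb_r$, not on a filtration length. At each step it splits into the $\cb_l$-torsion case and the $\K[\cb_l]$-flat case. The flat case is the hard one: after arranging $\Nalg$ to be $\cb_l$-saturated, one forms $\Nalg_n:=\Nalg+\cb_l^n\Malg_\dagger$, sets $T_k:=\Nalg_k^\dagger/\Nalg_{k+1}^\dagger$, and shows via Lemma~\ref{Lem:3.16.2} that $T=\bigoplus_k T_k$ embeds in $C[\cb_l]$ for $C:=(\Malg_\dagger/\Nalg_1)^\dagger$. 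Finite generation of $T$ over $\Halg_{(l+1)}[\cb_l]$ then forces the stabilization $\Nalg_{N+n}^\dagger=\cb_l^n\Nalg_N^\dagger+\Nalg_{N+n+k}^\dagger$, from which one extracts a nonzero $\Malg'=\bigcap_n\Nalg_n^\dagger\subset\Nalg^{\dagger_\Malg}$, a contradiction. This whole mechanism has no analogue in your outline; it is not an omittable technicality but the substance of the result. If you want to salvage your approach, you would need to replace the atomic filtration by the paper's two-case ($\cb_l$-nilpotent versus $\cb_l$-flat) descending induction, and prove a version of Lemma~\ref{Lem:3.16.2} or some substitute that controls the growth of $\Nalg_n^\dagger$; your triangle-identity base case could then be plugged in where the paper invokes Proposition~\ref{Prop:3.15.2}.
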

\begin{proof}
Thanks to the exactness of $\bullet_\dagger$ and assertion (2) of Lemma \ref{Lem:3.16.1}, we may  assume that $\Nalg^{\dagger_\Malg}=0$.
So $\Malg$ embeds into $(\Malg_\dagger/\Nalg)^\dagger$. The latter is an object in
$\HC_{\overline{\Leaf}}(\tilde{\Halg})$ by Proposition \ref{Cor:3.15.2}. So
$\Malg$ lies in $\HC_{\overline{\Leaf}}(\tilde{\Halg})$. Since $\Malg_\dagger=
[(\Malg_\dagger)^\dagger]_\dagger$, below in the proof we may (and will) assume that $\Malg=(\Malg_\dagger)^\dagger$.
 So $\Nalg^\dagger=0$. We want to show that
$\Nalg=0$.

We will prove the following claim using the descending induction on $l$.
\begin{itemize}
\item[($*_l$)]
$\Nalg=0$ provided $\Malg$ is annihilated by the left multiplication by $\param^{(l)}$.
\end{itemize}

The base is the case $l=r$. In this case $\Malg$ is annihilated by $\param$.

We remark that since $\Nalg$ lies in $\HC^\Xi_0(\underline{\tilde{\Halg}}^+)$ and is annihilated
by $\param$, there is a nonzero HC sub-bimodule $\Nalg_1$ in $\Nalg$ annihilated by $\underline{\tilde{\p}}^+$.
But then $\Nalg_1^\dagger$ is nonzero by Proposition \ref{Prop:3.15.2} and embeds into $\Nalg^\dagger$.
Contradiction.

Now suppose we have proved $(*_{l+1})$. Let $\Malg$ be annihilated by $\param^{(l)}$.
First of all, consider the case when $\Malg$ is annihilated, in addition, by the left multiplication by
$\cb_l^n$ for some $n$. Let $\Malg_1$ be the annihilator of $\cb_l$ in $\Malg$. Since
 $\bullet_\dagger$ is exact and $\cb_l$-linear,  we see that $\Malg_{1\dagger}$ is the annihilator of
$\cb_l$ in $\Malg_\dagger$, compare with Corollary 2.4.3 in \cite{HC} (it does not matter
whether we consider left or right actions). Also $\cb_l$ acts locally nilpotently on $\Nalg$. So
if $\Nalg\neq 0$, then $\Nalg\cap \Malg_{1\dagger}\neq 0$. It follows that $(\Nalg\cap \Malg_{1\dagger})^\dagger\neq
0$ by $(*_{l+1})$ and hence $\Nalg^\dagger\neq 0$. So $(*_l)$ is proved provided $\Malg$ is annihilated
by $\cb_l^n$ for some $n$.

Consider the general case. By the above, we may assume that $\Nalg$ is a flat left $\K[\cb_l]$-module. For sufficiently
large $n$ the module $\cb_l^n \Malg$ has no torsion.  So we may assume that
$\Malg$ is $\K[\cb_l]$-flat. Therefore $\Nalg_1^\dagger=0$ for any HC sub-bimodule $\Nalg_1\subset \Malg_\dagger$
with $\cb_l \Nalg_1\subset \Nalg$.
So we may assume, in addition, that $\Nalg\subset \Malg^\dagger$ is $\cb_l$-saturated, i.e., $\cb_l m\in \Nalg$
implies $m\in \Nalg$, equivalently $\Malg_\dagger/\Nalg$ is $\K[\cb_l]$-flat.

Set $\Nalg_n:=\Nalg+ \cb_l^{n} \Malg_\dagger$. We remark that $(\Nalg_n^{\dagger_\Malg})_\dagger=\Nalg_n$. Indeed,
apply assertion (2) of Lemma \ref{Lem:3.16.1} to $\cb_l^{n}\Malg\subset \Nalg_n^\dagger$ and use the equality $[(\Nalg_n/\cb_l^n\Malg_\dagger)^{\dagger_{\Malg/\cb_l^n\Malg}}]_\dagger=\Nalg_n/\cb_l^n \Malg_\dagger$
that follows from the above.


Set $T_k:=\Nalg_k^{\dagger}/\Nalg_{k+1}^{\dagger}$. We have the natural maps
$\Nalg_{k}[-2]\rightarrow \Nalg_{k+1}$ given by the left multiplication by
$\cb_l$.  Applying
$\bullet^{\dagger}$, we get a morphism
\begin{equation}\label{eq:3.16.1} \Nalg_{k}^{\dagger}[-2]\rightarrow \Nalg^{\dagger}_{k+1}\end{equation}
again given by the multiplication by $\cb_l$.
We deduce that  $\Nalg_{k+1}^{\dagger}$
contains $\cb_l\Nalg_k^{\dagger}$. In other words, $\cb_l$ acts trivially on $T_k$.
Also the map (\ref{eq:3.16.1}) gives rise to a morphism
\begin{equation}\label{eq:3.16.2}T_{k}[-2]\rightarrow T_{k+1}.\end{equation}
We can form the graded $\Halg_{(l+1)}$-module $T:=\bigoplus_{k=0}^\infty T_k$.
Equip $T$ with the structure of $\K[\cb_l]$-module by setting $\cb_l m$ for $m\in T_k$ to be the
image of $m$ under the homomorphism (\ref{eq:3.16.2}). So $T$ becomes an $\Halg_{(l+1)}[\cb_l]$-module.

Suppose that we know that $T$ is  finitely generated as an $\Halg_{(l+1)}[\cb_l]$-module.
In particular, there is $N\in \N$ such that $T_{N+n}=\cb_l^n T_N$ for any $n\geqslant 0$
or, equivalently, $\Nalg_{N+n}^{\dagger}= \cb_l^n\Nalg_{N}^{\dagger}+
\Nalg_{N+n+1}^\dagger$. From this using the induction on $k$ we can check  that \begin{equation}\label{eq:3.6.13}\Nalg_{N+n}^{\dagger}= \cb_l^n \Nalg_N^{\dagger}+ \Nalg_{N+n+k}^\dagger.\end{equation}
Set $\Malg':=\bigcap_n \Nalg_n^\dagger$.  Then $\Malg'$ is an  HC sub-bimodule in $\Malg$, whose image in $\Malg/\cb_l^n\Malg$ coincides with the image of
$\Nalg_m^\dagger, m\gg 0$. The latter is nonzero since $(\Nalg_m^\dagger)_\dagger=\Nalg_m$. Therefore $\Malg'$ is nonzero. Applying assertion (1) of Lemma \ref{Lem:3.16.1},  we see that $\Malg'_\dagger\subset \Nalg_m$
 for any $m$. But the intersection of $\Nalg_m$ is just $\Nalg$, so $\Malg'_\dagger\subset\Nalg$. Applying
Lemma \ref{Lem:3.16.1}  again, we get $\Malg'\subset \Nalg^\dagger$, contradiction with $\Nalg^\dagger=0$.

It remains to verify that $T$ is  finitely generated as an $\Halg_{(l+1)}[\cb_l]$-module.
Set $C:=(\Malg_{\dagger}/\Nalg_1)^\dagger$.
This is an object in $\HC_{\overline{\Leaf}}(\tilde{\Halg})$
annihilated by $\param^{(l+1)}$ and, in particular, a graded $\Halg_{(l+1)}$-bimodule that is finitely
generated as a left $\Halg_{(l+1)}$-module. The claim that $T$ is finitely generated stems from
the following lemma. The proof of the lemma will complete that of the theorem.
\end{proof}
\begin{Lem}\label{Lem:3.16.2}
There is an embedding $T\hookrightarrow C[\cb_l]$ of $\Halg_{(l+1)}[\cb_l]$-modules.
\end{Lem}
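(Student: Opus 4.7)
The plan is to exploit the $\K[\cb_l]$-flatness of $\Malg_\dagger/\Nalg$ (which was arranged in the reduction inside the proof of Theorem \ref{Thm:3.16.1}) to identify each successive quotient $\Nalg_k/\Nalg_{k+1}$ with a shift of $\Malg_\dagger/\Nalg_1$, apply $\bullet^\dagger$ componentwise, and then glue the resulting inclusions into a $\cb_l$-equivariant embedding.

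First I would observe that, because multiplication by $\cb_l$ acts injectively on $\Malg_\dagger/\Nalg$, the multiplication map gives an isomorphism $(\Malg_\dagger/\Nalg)[-2k]\xrightarrow{\sim} \cb_l^k(\Malg_\dagger/\Nalg) = \Nalg_k/\Nalg$ in $\HC_0^\Xi(\underline{\tilde{\Halg}}^+)^{\Lambda_\Halg}$. Taking the further quotient by $\cb_l^{k+1}(\Malg_\dagger/\Nalg)$ yields a canonical isomorphism
\begin{equation*}
\mu_k:(\Malg_\dagger/\Nalg_1)[-2k] \xrightarrow{\sim} \Nalg_k/\Nalg_{k+1},\qquad \bar m\mapsto \cb_l^k m,
\end{equation*}
of graded objects annihilated (from the left, hence from the right) by $\cb_l$, so both sides are bimodules over $\underline{\tilde{\Halg}}^+/(\cb_l)$, i.e., over the image of $\underline{\Halg}_{(l+1)}^+$.

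Next, applying the left exact, $S(\tilde{\param})$-bilinear functor $\bullet^\dagger$ to the short exact sequence $0\to \Nalg_{k+1}\to \Nalg_k\to \Nalg_k/\Nalg_{k+1}\to 0$ and composing with $\mu_k^\dagger$, I obtain an injection of graded $\Halg_{(l+1)}$-bimodules
\begin{equation*}
\iota_k:T_k\hookrightarrow (\Nalg_k/\Nalg_{k+1})^\dagger\cong C[-2k].
\end{equation*}
I then define $\phi:T\to C[\cb_l]$ by $\phi(t):=\iota_k(t)\cdot\cb_l^k$ for $t\in T_k$. Because the summands $C\cdot\cb_l^k$ of $C[\cb_l]$ are independent and each $\iota_k$ is injective, $\phi$ is injective, and it is plainly a morphism of graded $\Halg_{(l+1)}$-bimodules.

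The main technical nuisance will be $\cb_l$-equivariance. Under the identifications $\mu_k,\mu_{k+1}$, the multiplication map $\Nalg_k/\Nalg_{k+1}\xrightarrow{\cb_l}\Nalg_{k+1}/\Nalg_{k+2}$ becomes the shift identity $(\Malg_\dagger/\Nalg_1)[-2k]\to (\Malg_\dagger/\Nalg_1)[-2(k+1)]$, sending $\cb_l^k m$ to $\cb_l^{k+1}m$ (both representing the same class $m\in\Malg_\dagger/\Nalg_1$). Applying $\bullet^\dagger$ and using the naturality of $\iota_k$ in $\Nalg_k$, this gives $\iota_{k+1}(\cb_l t) = \iota_k(t)$ as elements of $C$, so
\begin{equation*}
\phi(\cb_l t) = \iota_{k+1}(\cb_l t)\cdot\cb_l^{k+1} = \iota_k(t)\cdot\cb_l^{k+1} = \cb_l\cdot\phi(t),
\end{equation*}
which completes the argument. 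Everything beyond the flatness reduction is formal; the only step that requires care is bookkeeping the degree shifts and verifying that the $\mu_k$ form a compatible system with respect to the multiplication-by-$\cb_l$ maps, so that $\bullet^\dagger$ outputs a commutative ladder.
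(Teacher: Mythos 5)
Your argument is correct and essentially matches the paper's. The only stylistic difference is in how you extract $\iota_k$: you apply $\bullet^\dagger$ to the short exact sequence $0\to\Nalg_{k+1}\to\Nalg_k\to\Nalg_k/\Nalg_{k+1}\to 0$ and use the $\K[\cb_l]$-flatness of $\Malg_\dagger/\Nalg$ to identify $\Nalg_k/\Nalg_{k+1}$ with a grading shift of $\Malg_\dagger/\Nalg_1$, whereas the paper applies $\bullet^\dagger$ to the exact sequence $0\to\Malg_\dagger/\Nalg_1\xrightarrow{\cb_l^n}\Malg_\dagger/\Nalg_{n+1}\to\Malg_\dagger/\Nalg_n\to 0$ and realizes $T_n$ inside $(\Malg_\dagger/\Nalg_{n+1})^\dagger$ via $\Malg/\Nalg_{n+1}^\dagger$. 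Both routes rest on exactly the same inputs (flatness of $\Malg_\dagger/\Nalg$, left exactness and $S(\tilde{\param})$-bilinearity of $\bullet^\dagger$, and compatibility with degree shifts), and the commuting square you invoke for $\cb_l$-equivariance is the same ladder the paper records diagrammatically.
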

\begin{proof}
We will construct embeddings $\iota_i: T_i\rightarrow C$ such that
$\iota_{i+1}(\cb_l m)=\iota_i(m)$. Since $\Malg_\dagger/\Nalg$ is a flat $\K[\cb_l]$-module, we have the following exact sequence
\begin{equation}\label{eq:3.16.4}
0\rightarrow  \Malg_{\dagger}/\Nalg_1\xrightarrow{\cb_l^n\cdot} \Malg_{\dagger}/\Nalg_{n+1}\rightarrow \Malg_{\dagger}/\Nalg_n\rightarrow 0.
\end{equation}
 Applying $\bullet^\dagger$ to
(\ref{eq:3.16.4}), we get the exact sequence
\begin{equation}\label{eq:3.16.4'}
0\rightarrow  (\Malg_{\dagger}/\Nalg_1)^\dagger\xrightarrow{\cb_l^n\cdot} (\Malg_{\dagger}/\Nalg_{n+1})^\dagger\rightarrow (\Malg_{\dagger}/\Nalg_n)^\dagger.
\end{equation}
We have a natural embedding $T_n\hookrightarrow \Malg/\Nalg_{n+1}^\dagger\hookrightarrow (\Malg_\dagger/\Nalg_{n+1})^\dagger$, whose image in $(\Malg_{\dagger}/\Nalg_n)^\dagger$
vanishes. This gives rise to an embedding $\iota_n:T_n\hookrightarrow C$.
The claim on the compatibility with the multiplication by $\cb_l$ stems from the following commutative diagram.

\begin{picture}(120,70)
\put(15,62){$0$}\put(15,42){$C$}
\put(4,22){$(\Malg_\dagger/\Nalg_{n+1})^\dagger$}
\put(5,2){$(\Malg_\dagger/\Nalg_{n})^\dagger$}
\put(85,62){$0$}\put(85,42){$C$}
\put(77,22){$(\Malg_\dagger/\Nalg_{n+2})^\dagger$}
\put(77,2){$(\Malg_\dagger/\Nalg_{n+1})^\dagger$}
\put(30,35){$T_n$} \put(64,35){$T_{n+1}$}
\put(16,60){\vector(0,-1){12}} \put(16,40){\vector(0,-1){12}}
\put(16,20){\vector(0,-1){12}} \put(86,60){\vector(0,-1){12}}
\put(86,40){\vector(0,-1){12}} \put(86,20){\vector(0,-1){12}}
\put(20,44){\vector(1,0){63}} \put(35,36){\vector(1,0){27}}
\put(28,24){\vector(1,0){48}} \put(25,4){\vector(1,0){50}}
\put(29,34){\vector(-1,-1){8}} \put(72,34){\vector(1,-1){8}}
\put(29,37){\vector(-2,1){9}} \put(72,37){\vector(2,1){9}}
\put(50,45){\tiny $\operatorname{id}$}
\put(50,37){\tiny $\cb_l\cdot$}
\put(50,25){\tiny $\cb_l\cdot$}
\put(50,5){\tiny $\cb_l\cdot$}
\end{picture}
\end{proof}

Let us complete the proof of Theorem \ref{Thm:5I}.

\begin{proof}[Proof of Theorem \ref{Thm:5I}]
(1) and (3) follow from Proposition \ref{Prop:3.10.1}. (2) follows  from Propositions \ref{Prop:3.15.1}, \ref{Cor:3.15.2}. The claim that the image of $\bullet_\dagger$ is closed under
taking quotients follows from Theorem \ref{Thm:3.16.1}. Now (5) follows easily from (2)
and the exactness of $\bullet_\dagger$.

The proof of (4) is completely analogous to the proof of assertion (4) of Theorem 1.3.1
in \cite{HC}.
\end{proof}

\begin{Rem}\label{Rem:3.16.2}
We preserve the notation of Remarks \ref{Rem:3.10.2}, \ref{Rem:3.15.2}.
Again, all constructions of this subsection  apply to the functors $\bullet_\dagger: \HC(\tilde{\Zalg}_{(l)})
\rightarrow \HC^\Xi(\underline{\tilde{\Zalg}}_{(l)}^+), \bullet^\dagger: \HC^\Xi(\underline{\tilde{\Zalg}}^+_{(l)})\rightarrow
\widehat{\HC}(\tilde{\Zalg}_{(l)})$. The straightforward analogs of Theorems \ref{Thm:3.16.1}, \ref{Thm:5I}
hold for these functors.
\end{Rem}

\subsection{Proofs of Theorems  \ref{Thm:4},\ref{Thm:4'},\ref{Thm:5}}\label{SUBSECTION_Thm5_proof}
We start by constructing functors required in Theorem \ref{Thm:5}.

Let $\M\in _{\,c\!}\HC(\Hrm)_{c'}$. Pick a filtration $\F_i\M$ as in Definition \ref{defi_HC2}
and set $\Malg:=R_\hbar(\M)$. Then set $\M_\dagger:=\Malg_\dagger/(\hbar-1)\Malg_{\dagger}$. It is easy
to see that $\M_\dagger\in _{\,c\!}\HC^\Xi(\underline{\Hrm}^+)_{c'}$. As in Subsection 3.4 of \cite{HC}, we see that $\M_\dagger$ does not depend on the choice
of the filtration $\F_i\M$ (or more precisely, the objects $\M_\dagger^{\F},\M_\dagger^{\F'}$
constructed from different filtrations $\F,\F'$ are isomorphic via a distinguished isomorphism)
so the assignment $\M\mapsto \M_\dagger$ does define a functor $_{\,c\!}\HC(\Hrm)_{c'}\rightarrow
_{\,c\!}\HC^\Xi(\underline{\Hrm}^+)_{c'}$. We remark that $\bullet_\dagger$ is a tensor
functor, this is proved completely analogously to assertion (1) of Proposition 3.4.1 in \cite{HC}.

Similarly, for $\Ncal\in _{\,c\!}\HC_0^\Xi(\underline{\Hrm}^+)_{c'}$ one can define
$\Ncal^\dagger\in _{\,c\!}\HC_{\overline{\Leaf}}(\Hrm)_{c'}$.

Now Theorem \ref{Thm:5}  can be deduced  from Theorem \ref{Thm:5I} in a straight-forward way.

To prove Theorems \ref{Thm:4},\ref{Thm:4'} we will need their analogs
for the algebras $\tilde{\Halg},\underline{\tilde{\Halg}}^+$ and $\tilde{\Zalg}_{(l)},\underline{\tilde{\Zalg}}^+_{(l)}$.
For a  Poisson
two-sided ideal  $\Ialg\subset\underline{\tilde{\Halg}}^+$ set
 $\Ialg^{\ddag}:=(\bigcap_{\gamma\in \Xi} \gamma.\Ialg)^{\dagger_{\tilde{\Halg}}}$.
 In other words, $\Ialg^{\ddag}$ is the kernel of the map $\tilde{\Halg}\rightarrow
 (\underline{\tilde{\Halg}}^+/\bigcap_{\gamma\in \Xi} \gamma.\Ialg)^\dagger$.
Let $\Id(\tilde{\Halg})$ denote the set of graded Poisson two-sided ideals of $\tilde{\Halg}$.
Let $\Id_\Leaf(\tilde{\Halg})$ stand for the subset of $\Id(\tilde{\Halg})$ consisting of all $\Jalg\in \Id(\tilde{\Halg})$
with $\VA(\tilde{\Halg}/\Jalg)=\overline{\Leaf}$. Define the sets $\Id(\underline{\tilde{\Halg}}^+),\Id_0(\underline{\tilde{\Halg}}^+)$
in a similar way. Finally, let $\Id_0^\Xi(\underline{\tilde{\Halg}}^+)$ denote the set of
$\Xi$-invariants in $\Id_0(\underline{\tilde{\Halg}}^+)$. It is clear that
$\Jalg_\dagger\in \Id^\Xi_0(\underline{\tilde{\Halg}}^+)$ provided
$\Jalg\in \Id_{\Leaf}(\tilde{\Halg})$ and that $\Ialg^{\ddag}\in \Id_{\Leaf}(\tilde{\Halg})$
whenever $\Ialg\in\Id_0(\underline{\tilde{\Halg}}^+)$.

\begin{Thm}\label{Thm:4I}
The maps introduced above enjoy the following properties.
\begin{enumerate}
\item $\Jalg\cap S(\tilde{\param})\subset \Jalg_\dagger\cap S(\tilde{\param})$ and
$\Ialg\cap S(\tilde{\param})\subset \Ialg^\ddag\cap S(\tilde{\param})$.
\item $\Jalg\subset (\Jalg_\dagger)^{\ddag}, \Ialg\supset (\Ialg^\ddag)_{\dagger}$ for
any $\Jalg\in \Id(\tilde{\Halg}), \Ialg\in \Id^\Xi(\underline{\tilde{\Halg}}^+)$.
\item $\{\Jalg_\dagger: \Jalg\subset \Id_{\Leaf}(\Halg)\}=\Id_0^{\Xi}(\underline{\tilde{\Halg}}^+)$.
\item We have $\VA((\Jalg_\dagger)^{\ddag}/\Jalg)\subset \partial\Leaf$.
\item Consider the restriction of the map $\Ialg\mapsto \Ialg^{\ddag}$ to the set of all maximal
ideals in $\Id_{0}^\Xi(\underline{\tilde{\Halg}}^+)$. The image of this restriction is the set of all prime
ideals $\Jalg\in\Id_{\Leaf}(\tilde{\Halg})$ such that $S(\tilde{\param})\cap\Jalg$ is a maximal
homogeneous ideal in $S(\tilde{\param})$.\footnote{Of course, $S(\tilde{\param})$ has the augmentation ideal $(\tilde{\param})$ that contains any
other homogeneous ideal. When we speak about maximal homogeneous ideals we mean homogeneous ideals
that are strictly contained in $(\tilde{\param})$ and are maximal among such.} Further, each fiber of the restriction is a single $\Xi$-orbit.
\end{enumerate}
\end{Thm}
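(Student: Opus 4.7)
The plan is to recast every statement about ideals into a statement about the Harish--Chandra bimodule $\tilde{\Halg}/\Jalg$ (respectively $\underline{\tilde{\Halg}}^+/\Ialg$) and then invoke Theorem~\ref{Thm:5I} together with Theorem~\ref{Thm:3.16.1}. First I would verify the base identifications: $\tilde{\Halg}_\dagger=\underline{\tilde{\Halg}}^+$ (by chasing the regular bimodule through $\Fun_1,\ldots,\Fun_7$), and $\Jalg_\dagger$ is $\Xi$-stable because $\bullet_\dagger$ takes values in $\HC^\Xi$. Exactness of $\bullet_\dagger$ (Proposition~\ref{Prop:3.10.1}(1)) then yields a short exact sequence $0\to\Jalg_\dagger\to\underline{\tilde{\Halg}}^+\to(\tilde{\Halg}/\Jalg)_\dagger\to 0$, and Theorem~\ref{Thm:5I}(4) identifies $\Jalg_\dagger$ with the left and right annihilator of $(\tilde{\Halg}/\Jalg)_\dagger$. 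Assertion (1) is then an immediate consequence of $S(\tilde{\param})$-bilinearity (Propositions~\ref{Prop:3.10.1}(1),~\ref{Prop:3.15.1}(1)): any $f\in S(\tilde{\param})\cap\Jalg$ acts as zero on $\tilde{\Halg}/\Jalg$, hence on $(\tilde{\Halg}/\Jalg)_\dagger$, and so lies in $\Jalg_\dagger$; the second inclusion is dual.

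For (2), since $\Jalg_\dagger$ is already $\Xi$-stable, $\bigcap_{\gamma\in\Xi}\gamma.\Jalg_\dagger=\Jalg_\dagger$, and $(\Jalg_\dagger)^\ddag$ is the kernel of the unit $\tilde{\Halg}\to(\underline{\tilde{\Halg}}^+/\Jalg_\dagger)^\dagger$ supplied by the adjunction of Proposition~\ref{Prop:3.15.1}(2); since this unit factors through $\tilde{\Halg}/\Jalg$, we get $\Jalg\subset(\Jalg_\dagger)^\ddag$. The dual inclusion $(\Ialg^\ddag)_\dagger\subset\Ialg$ will follow by applying the exact functor $\bullet_\dagger$ to $\tilde{\Halg}/\Ialg^\ddag\hookrightarrow(\underline{\tilde{\Halg}}^+/\bigcap_\gamma\gamma.\Ialg)^\dagger$ and composing with the counit. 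Statement (4) is then immediate from Corollary~\ref{Cor:3.15.2}(2) applied with $\Malg=\tilde{\Halg}/\Jalg$: the kernel of $\Malg\to(\Malg_\dagger)^\dagger$ is precisely $(\Jalg_\dagger)^\ddag/\Jalg$, and it lies in $\HC_{\partial\Leaf}$.

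For (3), given $\Ialg\in\Id_0^\Xi(\underline{\tilde{\Halg}}^+)$, I would apply Theorem~\ref{Thm:3.16.1} with $\Malg=\tilde{\Halg}$ and $\Nalg=\Ialg$ (the hypothesis being the finite-dimensionality of $\underline{\tilde{\Halg}}^+/\Ialg$), obtaining $\Ialg=(\Ialg^{\dagger_{\tilde{\Halg}}})_\dagger=(\Ialg^\ddag)_\dagger$ by $\Xi$-invariance. To see $\Ialg^\ddag\in\Id_\Leaf(\tilde{\Halg})$, the embedding $\tilde{\Halg}/\Ialg^\ddag\hookrightarrow(\underline{\tilde{\Halg}}^+/\Ialg)^\dagger\in\HC_{\overline{\Leaf}}$ (Corollary~\ref{Cor:3.15.2}(1)) gives $\VA(\tilde{\Halg}/\Ialg^\ddag)\subset\overline{\Leaf}$, while $(\tilde{\Halg}/\Ialg^\ddag)_\dagger=\Ialg\ne 0$ together with Proposition~\ref{Prop:3.10.1}(2) and Poisson-stability of the associated variety forces $\overline{\Leaf}\subset\VA(\tilde{\Halg}/\Ialg^\ddag)$.

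The hard part will be (5). Here the plan is to exploit the fully faithful embedding $\bullet_\dagger\colon\HC_\Leaf(\tilde{\Halg})\hookrightarrow\HC_0^\Xi(\underline{\tilde{\Halg}}^+)^{\Lambda_\Halg}$ of Theorem~\ref{Thm:5I}(5), which transports composition-series and sub/quotient information between the two categories. Maximality of $\Ialg$ in $\Id_0^\Xi$ should be recast as simplicity of $\underline{\tilde{\Halg}}^+/\Ialg$ as a $\Xi$-equivariant bimodule; via the equivalence this translates to primeness of $\tilde{\Halg}/\Ialg^\ddag$ modulo $\HC_{\partial\Leaf}$, hence to $\Ialg^\ddag$ being a prime ideal in $\Id_\Leaf$. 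The condition that $\Jalg\cap S(\tilde{\param})$ is a maximal homogeneous ideal will come from combining (1) with the compatibility of $\bullet_\dagger$ with the operators $C_i$ (Proposition~\ref{Prop:3.10.1}(1)), which pins down the $S(\tilde{\param})$-actions on both sides. The fiber description will be obtained by a Chinese Remainder decomposition of $\underline{\tilde{\Halg}}^+/\bigcap_{\gamma\in\Xi}\gamma.\Ialg_0$ into $\Xi$-conjugate simple factors when $\Ialg_0$ is a non-$\Xi$-stable maximal constituent, with $\Xi$ acting freely by permutation. The overall scheme parallels Theorem~1.2.2 of~\cite{HC}; the main technical burden is the bookkeeping for the $S(\tilde{\param})$-gradings, ensuring that maximality of homogeneous ideals is preserved across the correspondence.
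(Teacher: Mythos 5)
Your treatment of assertions (1)--(4) follows the same route as the paper: $S(\tilde{\param})$-bilinearity for (1), Lemma \ref{Lem:3.16.1} (equivalently the adjunction unit/counit) for (2), Theorem \ref{Thm:3.16.1} for (3), and Corollary \ref{Cor:3.15.2}(2) for (4). These parts are sound.

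Your plan for (5) has a genuine gap. You propose to ``recast maximality of $\Ialg$ in $\Id_0^\Xi(\underline{\tilde{\Halg}}^+)$ as simplicity of $\underline{\tilde{\Halg}}^+/\Ialg$ as a $\Xi$-equivariant bimodule'' and then deduce primeness of $\Ialg^\ddag$ from the fully faithful embedding $\bullet_\dagger$. This translation is not available: maximality of $\Ialg$ among $\Xi$-stable graded Poisson two-sided ideals of finite codimension says nothing about the sub-\emph{bimodule} lattice of $\underline{\tilde{\Halg}}^+/\Ialg$ (arbitrary Poisson sub-bimodules need not be ideals), and more fundamentally primeness of a two-sided ideal $\Jalg$ is a statement about the ring structure of $\tilde{\Halg}/\Jalg$ ($I_1 I_2\subset\Jalg$ forces $I_1\subset\Jalg$ or $I_2\subset\Jalg$), which the abelian/monoidal data seen by $\bullet_\dagger$ does not encode. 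Being a simple object in $\HC_\Leaf(\tilde{\Halg})$ is neither necessary nor sufficient for $\Jalg$ to be prime. The paper's argument for (5) instead proceeds ring-theoretically: one first shows that for a prime $\Jalg\in\Id(\tilde{\Halg})_{cc}$ the variety $\VA(\tilde{\Halg}/\Jalg)$ is irreducible, by picking the maximal-dimension leaf $\Leaf_1$, noting $\Jalg\subset(\Jalg_\dagger)^\ddag=:\Jalg_1$, and invoking the Borho--Kraft result (Corollary 3.6 of \cite{BoKr}) on Gelfand--Kirillov dimension to conclude $\Jalg=\Jalg_1$; next one proves ``prime iff maximal in $\Id_\Leaf(\tilde{\Halg})_{cc}$'' by showing the radical of a maximal element is again graded Poisson (via \cite{Dixmier}, Lemma 3.3.3, since radicals of ideals stable under a derivation are stable under it) and then passing to a minimal prime; finally, for the fiber description one uses that $\Jalg_\dagger=(\Ialg^\ddag)_\dagger=\bigcap_{\gamma\in\Xi}\gamma.\Ialg$, so any two maximal $\Ialg_1,\Ialg_2$ over $\Jalg$ have the same $\Xi$-saturation and are therefore conjugate. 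None of this is replaceable by the ``transport composition series across the equivalence'' strategy, and your Chinese-remainder argument for the fiber replicates only a shadow of the last step. Similarly, the claim that the maximality of $S(\tilde{\param})\cap\Jalg$ ``comes from (1) and compatibility with $C_i$'' is too weak: (1) only gives an inclusion, and the needed equality requires the surjectivity $(\Ialg^\ddag)_\dagger=\Ialg$ established above.
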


Recall that a two-sided ideal $I$ in an associative algebra $A$
is called prime if $aAb\subset I$ implies $a\in I$ or $b\in I$.

\begin{proof}[Proof of Theorem \ref{Thm:4}]
Assertion (1) follows easily from the fact that the functors $\bullet_\dagger$
and $\bullet^\dagger$ are $S(\tilde{\param})$-bilinear.
Assertion (2) follows from Lemma \ref{Lem:3.16.1}.
Assertion (3) follows from assertion (3) of Theorem \ref{Thm:5I} applied to
$\Ialg/(\Ialg^{\ddag})_\dagger\subset (\tilde{\Halg}/\Ialg^{\ddag})_\dagger$. In fact,
Theorem \ref{Thm:3.16.1} implies that  $(\Ialg^{\ddag})_\dagger=\Ialg$ provided $\Ialg\in \Id_0^\Xi(\tilde{\Halg})$.
Assertion (4) follows from assertion (2) of Theorem \ref{Thm:5I}.

Let us prove assertion (5). Our argument closely follows the lines of the proof of the similar statements for W-algebras,
see \cite{Wquant}, the proof of assertion (viii) of Theorem 1.3.1, and \cite{HC}, the proof of
Conjecture 1.2.1.  The proof is in several steps.

 Let $\Id(\tilde{\Halg})_{cc},\Id(\underline{\tilde{\Halg}}^+)_{cc}$
denote the subsets in $\Id(\tilde{\Halg}),\Id(\underline{\tilde{\Halg}}^+)$
consisting of all ideals whose intersections with $S(\tilde{\param})$
are maximal homogeneous.

{\it Step 1.} We claim that for any prime Poisson ideal $\Jalg\in \Id(\tilde{\Halg})_{cc}$ the associated variety
$\VA(\tilde{\Halg}/\Jalg)$ is irreducible.
Our claim can be easily  deduced from Ginzburg's results, \cite{Ginzburg_irr},
but we will present a proof based on our techniques. Let $\Leaf_1$ be a symplectic
leaf of maximal dimension in $\VA(\tilde{\Halg}/\Jalg)$. Then we have an inclusion $\Jalg\subset \Jalg_1:=(\Jalg_\dagger)^{\ddag}$, where $\bullet_\dagger, \bullet^{\ddag}$ are taken with respect to $\Leaf_1$. Since the Gelfand-Kirillov dimensions of $\tilde{\Halg}/\Jalg$ and $\tilde{\Halg}/\Jalg_1$ coincide (and coincide with $\dim \Leaf_1+1$), Corollar 3.6. from \cite{BoKr} implies $\Jalg=\Jalg_1$. This argument also implies the
surjectivity statement in (5).


{\it Step 2.}
Now we claim that an element $\Jalg\in \Id_\Leaf(\tilde{\Halg})_{cc}$ is a prime ideal
if and only if $\Jalg$ is maximal in $\Id_\Leaf(\tilde{\Halg})_{cc}$ with respect to inclusion.
Indeed, a non-maximal element cannot
be prime thanks to Corollar 3.6. from \cite{BoKr}. Now let $\Jalg$ be a maximal element in
$\Id_\Leaf(\tilde{\Halg})_{cc}$. Consider the radical $\sqrt{\Jalg}$.  For any $z\in \Zalg$ the map
 $b\mapsto \{z,b\}$ is a  derivation of $\tilde{\Halg}$. If an ideal is stable with respect
  to some derivation $d$, then its radical is also $d$-stable, see \cite{Dixmier}, Lemma 3.3.3.
So we see that $\sqrt{\Jalg}$ is a Poisson graded ideal and hence  $\sqrt{\Jalg}\in \Id_{\Leaf}(\tilde{\Halg})_{cc}$.
It follows that
$\Jalg=\sqrt{\Jalg}$. Therefore we have the prime decomposition $\Jalg=\bigcap_{i=1}^n \Jalg_i$.
All $\Jalg_i$ are again Poisson and graded, this is proved as above using \cite{Dixmier}, Lemma 3.3.3.
So $\VA(\tilde{\Halg}/\Jalg_i)$ is the closure of a  symplectic leaf, thanks to Step 1. It follows that
some $\Jalg_i$ lies in $\Id_\Leaf(\tilde{\Halg})_{cc}$ and hence $\Jalg=\Jalg_i$.

{\it Step 3.} To complete the proof it remains to show that for a given maximal (=prime) ideal $\Jalg\in
\Id_\Leaf(\tilde{\Halg})_{cc}$ the set $A$ of the maximal ideals $\Ialg\in \Id_{0}(\underline{\tilde{\Halg}}^+)_{cc}$
with $\Ialg^\ddag=\Jalg$ is a single $\Xi$-orbit (it is clear from the definition that this
set is $\Xi$-stable). Pick $\Ialg\in A$. Then, as we have noticed in the proof of (3), $\Jalg_\dagger=(\Ialg^{\ddag})_\dagger$. The r.h.s. coincides with $\bigcap_{\gamma\in \Xi}\gamma.\Ialg$. It follows that
$\bigcap_{\gamma\in \Xi} \gamma.\Ialg_1=\bigcap_{\gamma\in \Xi}\gamma.\Ialg_2$ for any $\Ialg_1,\Ialg_2\in A$.
So $\Ialg_1,\Ialg_2$ are conjugate under the $\Xi$-action.
\end{proof}

\begin{proof}[Proof of Theorem \ref{Thm:4}]
Embed $\Id(\Hrm_{1,c}),\Id(\underline{\Hrm}_{1,c}^+)$ into $\Id(\tilde{\Halg}),\Id(\underline{\tilde{\Halg}}^+)$,
respectively, by sending, say, $\J\in \Id(\Hrm_{1,c})$ to $R_\hbar(\J)$.
Assertion (1) of Theorem \ref{Thm:4I} implies that the maps in Theorem \ref{Thm:4I}
restrict to maps between $\Id_\Leaf(\Hrm_{1,c}),\Id_0(\underline{\Hrm}_{1,c}^+)$.
The proofs of assertions (1),(2),(3) of Theorem \ref{Thm:4} now follow from the corresponding
assertions of Theorem \ref{Thm:4I}. It is easy to see that $\J\in \Id(\Hrm_{1,c})$ is prime provided $R_\hbar(\J)$
  is. Assertion (5) of Theorem \ref{Thm:4I} implies the analog of assertion (4) of Theorem \ref{Thm:4}, where
``primitive'' is replaced with ``prime''.
Now  it remains to use the fact that any prime two-sided ideal in $\Hrm_{1,c}$ is primitive,
see Theorem A1 in \cite{ES_appendix}.
\end{proof}

\begin{proof}[Proof of Theorem \ref{Thm:4'}]
Choose the minimal number $l$ such that $c_l\neq 0$. We may assume that $c_l=1$. Then $R_{\hbar_l}(\Zrm_{c})$ is the quotient of $\tilde{\Zalg}_{(l)}$ by the ideal generated by $\cb_i-\hbar_l^2 c_i,i=l,\ldots,r$.

Thanks to Remarks \ref{Rem:3.10.2}, \ref{Rem:3.15.2}, \ref{Rem:3.16.2} we can follow the proof
of Theorem \ref{Thm:4I} to see that there are maps
$\bullet_\dagger: \Id_{\Leaf}(\Zrm_c)\rightarrow \Id_0^\Xi(\underline{\Zrm}_{c}^+),
\bullet^\ddag: \Id_0(\underline{\Zrm}_{c}^+)\rightarrow \Id_{\Leaf}(\Zrm_{c})$ enjoying the following properties.
\begin{enumerate}
\item The image of  $\J\mapsto \J_\dagger$ coincides with $\Id_{0}^{\Xi}(\underline{\Zrm}^+_{c})$.
\item We have $\VA((\J_\dagger)^{\ddag}/\J)\subset \partial\Leaf$.
\item Consider the restriction of the map $\I\mapsto \I^{\ddag}$ to the set of all maximal
ideals in $\Id_{0}(\underline{\Zrm}^+_{c})$. The image of this restriction is the set of all prime
ideals $\J\in\Id_{\Leaf}(\Zrm_{c})$. Further, each fiber of the restriction is a single $\Xi$-orbit.
\end{enumerate}
Now the proof of Theorem \ref{Thm:4'} basically repeats that of Theorem \ref{Thm:4}.
\end{proof}
\begin{Rem}
As in the proof of Theorem \ref{Thm:4I}, $\VA(\J)$ is irreducible for
any prime ideal $\J\in \Id(\Zrm_c)$. Alternatively, this follows from Martino's
results, \cite{Martino}.
\end{Rem}

\subsection{Definitions of functors using $\Theta^b$}\label{SUBSECTION_Functors_b}
For applications in Section \ref{SECTION_Cherednik} we will need more simple minded versions of the functors
$\bullet_\dagger,\bullet^\dagger$. Namely, above we used the isomorphism $\theta:\tilde{\Halg}^{\wedge_\Leaf}|_{\Leaf}\xrightarrow{\sim} \CC(\underline{\Halg}^{\wedge_{\underline{\Leaf}}}|_{\underline{\Leaf}}^{tw})^\Xi$
to pass between the $\Halg$- and $\underline{\Halg}$-sides. Now we are going to use the isomorphism
$\tilde{\Theta}^b: \tilde{\Halg}^{\wedge_b}\xrightarrow{\sim} \CC(\underline{\tilde{\Halg}}^{\wedge_0})$,
where $b\in \Leaf$.
The last isomorphism can be obtained from $\tilde{\theta}$ by passing to completions, compare
with the discussion in the end of Subsection \ref{SUBSECTION_completion_I}.
The main disadvantage of using $\tilde{\Theta}^b$ is that we do not see the action of $\Xi$.
An advantage, however, is that the construction using $\tilde{\Theta}^b$ is easier to deal with.

We are going to construct  functors $\bullet_{\dagger,b}: \HC(\tilde{\Halg})\rightarrow \HC(\underline{\tilde{\Halg}}^+)$
and $\bullet^{\dagger,b}: \HC(\underline{\tilde{\Halg}})\rightarrow \widehat{\HC}(\tilde{\Halg}^+)$.

The functor $\bullet_{\dagger,b}$ is given by $\bullet_{\dagger,b}=\Fun_7\circ \Fun_6\circ \Fun_5'\circ
\tilde{\Theta}^b_*(\bullet^{\wedge_b})$. Here $\bullet^{\wedge_b}$ is the completion functor at $b$
mapping from $\HC(\tilde{\Halg})$ to $\HC(\tilde{\Halg}^{\wedge_b})$, where the latter
is defined analogously to $\HC(\tilde{\Halg}^{\wedge_\Leaf}|_{\Leaf})$ (the $\K^\times$-equivariance
condition is replaced with the equivariance with respect to the derivation $E$ induced
from the Euler derivation on $\tilde{\Halg}$). The functor
$\Fun_5': \HC(\CC(\underline{\tilde{\Halg}})^{\wedge_0})\rightarrow
\HC(\CC(\underline{\tilde{\Halg}}^+)^{\wedge_0})$ is constructed analogously to $\Fun_5$ (see Subsection \ref{SUBSECTION_fun_low_dag}) and is a category equivalence.

The functor $\bullet^{\dagger,b}$ is defined as $[(\tilde{\Theta}^{b}_*)^{-1}\circ\Fun_5'^{-1}\circ\Fun_6^{-1}\circ \Fun_{7}^{-1}(\bullet)]_{l.f.}$, where the subscript ``$l.f.$'' has the same meaning as in the beginning of  Subsection \ref{SUBSECTION_Fun_up_dag}.

The following lemma describes some properties of the functors $\bullet_{\dagger,b},\bullet^{\dagger,b}$.

\begin{Lem}\label{Lem:fun_b}
\begin{enumerate}
\item The bifunctors $\Hom(\bullet_{\dagger,b}, \diamond)$ and $\Hom(\bullet,\diamond^{\dagger,b})$ from
$\widehat{\HC}(\tilde{\Halg})^{op}\times \widehat{\HC}(\underline{\tilde{\Halg}}^+)$ to the category of vector spaces are isomorphic.
\item There is a functor isomorphism between $\bullet_{\dagger,b}$ and the composition of
$\bullet_\dagger$ with the forgetful functor $\HC^\Xi(\underline{\tilde{\Halg}}^+)\rightarrow \HC(\underline{\tilde{\Halg}}^+)$.
\item There is an embedding of $\bullet^{\dagger}$ (or, more precisely, of the composition
of $\bullet^{\dagger}$ with the corresponding
forgetful functor) into $\bullet^{\dagger,b}$.
\end{enumerate}
\end{Lem}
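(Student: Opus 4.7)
The plan is to handle each part by reducing it to properties of the building-block functors, using the observation that $\tilde{\Theta}^b$ arises from $\tilde{\theta}$ by completing at the point $b$, paralleling the derivation of Theorem \ref{Thm:1} from Theorem \ref{Thm:2.0I} at the end of Subsection \ref{SUBSECTION_completion_I}. This allows me to compare the four-step construction of $\bullet_{\dagger,b}, \bullet^{\dagger,b}$ with the seven-step construction of $\bullet_\dagger, \bullet^\dagger$ from Subsections \ref{SUBSECTION_fun_low_dag}, \ref{SUBSECTION_Fun_up_dag}.

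For (1), I would first note that $\Fun_5'$ is a category equivalence by exactly the argument that established the equivalence of $\Fun_5$, that $\Fun_6$ and $\Fun_7$ are equivalences by Subsection \ref{SUBSECTION_fun_low_dag}, and that $\tilde{\Theta}^b_*$ is an equivalence since $\tilde{\Theta}^b$ is an algebra isomorphism. The asserted adjunction then reduces to the adjunction between the completion functor $\bullet^{\wedge_b}: \widehat{\HC}(\tilde{\Halg}) \to \HC(\tilde{\Halg}^{\wedge_b})$ and the functor of passing to $\K^\times$-locally finite vectors. This is a standard formality: any morphism from a positively graded finitely generated HC bimodule into a complete bimodule automatically lands in the locally finite part (each graded piece of a HC bimodule is finite dimensional), and any morphism into the locally finite part extends uniquely to the completion by continuity, cf.\ the proof of Proposition 3.3.1 in \cite{HC}.

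For (2), I would trace the two compositions side-by-side. Given $\Malg \in \HC(\tilde{\Halg})$, the completion $\Malg^{\wedge_b}$ factors through $\Fun_1(\Malg) = \Malg^{\wedge_\Leaf}|_\Leaf$ because $\tilde{\p} \subset \m_b$. Since $\Xi$ acts freely on $\pi'^{-1}(b) \cap \underline{\Leaf}$, completing the $\Xi$-invariants at $b$ coincides with completing the original sheaf at a single preimage $\underline{b}$, and inside a chart $U_i \ni \underline{b}$ the cocycle twist $\exp(\frac{1}{\tb}X_{ij})$ never appears, so $\Fun_3$ and $\Fun_4$ become trivial after completion. Moreover $\Fun_5$ specializes to $\Fun_5'$ since $\W_\hbar|_{\underline{\Leaf}}$ completes at $\underline{b}$ to $\W_\hbar^{\wedge_0}$. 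Combined with the unchanged $\Fun_6, \Fun_7$, this yields the desired isomorphism $\bullet_{\dagger,b} \cong F \circ \bullet_\dagger$, where $F$ is the forgetful functor from $\widehat{\HC}^\Xi(\underline{\tilde{\Halg}}^+)$ to $\widehat{\HC}(\underline{\tilde{\Halg}}^+)$.

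For (3), I would combine the adjunctions. By (1), $\bullet^{\dagger,b}$ is right adjoint to $\bullet_{\dagger,b}$; by Proposition \ref{Prop:3.15.1}(2), $\bullet^\dagger$ is right adjoint to $\bullet_\dagger$. Using the identification $\bullet_{\dagger,b} \cong F \circ \bullet_\dagger$ from (2) together with the fact that the finite-group forgetful functor $F$ has an exact right adjoint $R$ (coinduction, which agrees with induction for finite $\Xi$), composition of adjoints gives $\bullet^{\dagger,b} \cong \bullet^\dagger \circ R$. The unit of the $F \dashv R$ adjunction is a natural monomorphism $\Nalg \hookrightarrow R F(\Nalg)$ in $\widehat{\HC}^\Xi(\underline{\tilde{\Halg}}^+)$, and applying the left exact functor $\bullet^\dagger$ (Proposition \ref{Prop:3.15.1}(1)) produces the natural embedding $\bullet^\dagger(\Nalg) \hookrightarrow \bullet^{\dagger,b}(F(\Nalg))$. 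The main obstacle is the bookkeeping needed to verify that $R$ preserves the relevant subcategory constraints ($\Lambda_\Halg$-component, finite generation, Poisson bimodule structure) and that the functor isomorphism $\bullet^{\dagger,b} \cong \bullet^\dagger \circ R$ holds at the level of functors rather than merely on individual objects, although both are essentially formal consequences of uniqueness of adjoints.
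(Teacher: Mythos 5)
Your treatment of parts (1) and (2) tracks the paper's own argument closely. For (1) the paper indeed reduces to the same completion-versus-locally-finite-vectors adjunction as in Proposition \ref{Prop:3.15.1}(2). For (2) the paper phrases the comparison a bit differently — it identifies $\tilde{\Theta}^b_*(\Malg^{\wedge_b})$ with the completion of $\Fun_{4,1}(\Malg)$ at $\underline{b}\in\pi'^{-1}(b)\cap\underline{\Leaf}$, producing a natural map $\Fun_{5,1}(\Malg)\rightarrow\Fun_5'\circ\tilde{\Theta}^b_*(\Malg^{\wedge_b})$ and checking it is an isomorphism by the Step-2 argument used for $\Fun_5$ — but your observation that the cocycle twist becomes invisible after completing at a single chart, and that $\Fun_5$ specializes to $\Fun_5'$, is the same underlying mechanism.

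Part (3) is where you depart from the paper, and the departure is genuine. You reconstruct the embedding by pure abstract nonsense: $\bullet^{\dagger,b}\cong\bullet^\dagger\circ R$ as the right adjoint of $\bullet_{\dagger,b}=F\circ\bullet_\dagger$, then push the unit $\Nalg\hookrightarrow RF(\Nalg)$ through the left exact functor $\bullet^\dagger$. The paper instead observes directly that $(\tilde{\Theta}^b_*)^{-1}\circ\Fun_5'^{-1}\circ\Fun_6^{-1}\circ\Fun_7^{-1}(\Nalg)$ is the completion at $b$ of $\Fun_{7,2}^{-1}(\Nalg)$, and the map from $\Nalg^\dagger=\Gun_1(\Fun_{7,2}^{-1}(\Nalg))$ to that completion is injective because a nonzero global section of a sheaf on (a chart of) $\underline{\Leaf}$ cannot vanish on an entire formal neighbourhood of a point. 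The paper's route is shorter and entirely self-contained. Your route is plausible, but it carries a real burden that you only gesture at: you must establish that the forgetful functor $F$ from $\widehat{\HC}^\Xi(\underline{\tilde{\Halg}}^+)$ to $\widehat{\HC}(\underline{\tilde{\Halg}}^+)$ has a right adjoint $R$ on \emph{these} categories, i.e., that $\operatorname{Fun}(\Xi,\cdot)$ carries natural graded Poisson $\underline{\tilde{\Halg}}^+$-bimodule and $\widetilde{\Xi}$-equivariance structures compatible with the requirement that the $\underline{\Gamma}\subset\widetilde{\Xi}$ action be inner, that $R$ preserves the property of being a sum of HC sub-bimodules, and that the adjunction unit is a monomorphism in this enriched setting. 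None of this is automatic from the finite-group statement in module categories, and without it the composition-of-adjoints identification $\bullet^{\dagger,b}\cong\bullet^\dagger\circ R$ is not available. This is not a small footnote: it is most of the work in your version of (3), and the paper's geometric argument sidesteps it entirely. So your approach buys a cleaner categorical picture at the cost of a substantial verification you should carry out before calling the proof complete, while the paper's argument is more elementary but buries the adjunction formalism.
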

\begin{proof}
The first assertion is proved in the same way as assertion (2) of Proposition \ref{Prop:3.15.1}.

Let us prove the second assertion. For $\Malg\in \HC(\tilde{\Halg})$ the object $\Theta^{b}_*(\Malg^{\wedge_b})$ is naturally identified with the completion of $\Fun_{4,1}(\Malg)$ at a point $\underline{b}\in \underline{\Leaf}$ lying over $b$. So we have a natural map $\Fun_{5,1}(\Malg)\rightarrow \Fun_5'\circ \Theta^{b}_*(\Malg^{\wedge_b})$.
The proof that $\Fun_5$ is an equivalence given in Subsection \ref{SUBSECTION_fun_low_dag}
implies that the map under consideration is an isomorphism. Assertion (2) follows.

To prove the third assertion we notice that $(\tilde{\Theta}^{b}_*)^{-1}\circ\Fun_5'^{-1}\circ\Fun_6^{-1}\circ \Fun_{7}^{-1}(\Nalg)$ is naturally identified with the completion of $\Fun_{7,2}^{-1}(\Nalg)$ at $b$.
The only global section of $\Fun_{7,2}^{-1}(\Nalg)$ that vanishes at the formal neighborhood of $b$
is zero. This implies assertion (3).
\end{proof}

\begin{Rem}\label{Rem:fin_gen}
Proposition \ref{Cor:3.15.2} implies that $\Nalg^{\dagger}$ is a finitely generated bimodule
provided $\Nalg\in \HC^{\Xi}_0(\underline{\Halg}^+)$. In fact, $\Nalg^{\dagger,b}$ is finitely
generated for any finite dimensional $\Nalg$ as well. The proof is completely analogous to that given in Propositions
\ref{Prop:3.15.2},\ref{Cor:3.15.2}.
\end{Rem}

For an ideal $\I\subset \underline{\tilde{\Halg}}^+$ we can define the ideal $\I^{\ddag,b}\subset \tilde{\Halg}$ similarly to $\I^{\ddag}$, see the beginning of Subsection \ref{Proof 5I}.
The following statement follows from assertions (2),(3) of Lemma \ref{Lem:fun_b} and the constructions of
$\I^{\ddag,b},\I^{\ddag}$.

\begin{Cor}\label{Cor:fun_b}
$\I^{\ddag,b}=\I^{\ddag}$
 for any ideal $\I\subset \underline{\tilde{\Halg}}^+$.  \end{Cor}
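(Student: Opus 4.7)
The plan is to reduce both constructions $\bullet^\ddag$ and $\bullet^{\ddag,b}$ to kernels of natural ``unit'' maps out of $\tilde\Halg$, and then identify those kernels using Lemma~\ref{Lem:fun_b}.

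First I would unpack the two definitions. By the sentence introducing $\I^{\ddag,b}$, this ideal is the kernel of the natural map $\tilde\Halg\rightarrow (\underline{\tilde{\Halg}}^+/\I)^{\dagger,b}$, and by the ``In other words'' clause at the start of Subsection~\ref{Proof 5I}, $\I^\ddag$ is the kernel of the corresponding map $\tilde\Halg\rightarrow \bigl(\underline{\tilde{\Halg}}^+/\bigcap_{\gamma\in\Xi}\gamma.\I\bigr)^\dagger$. Setting $\Nalg:=\underline{\tilde{\Halg}}^+/\bigcap_{\gamma\in\Xi}\gamma.\I$, which is a $\Xi$-equivariant bimodule because $\bigcap_\gamma\gamma.\I$ is $\Xi$-stable, both ideals are determined by maps with target $\Nalg^\dagger$ or a $\bullet^{\dagger,b}$-type object.

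Next I would apply assertion~(3) of Lemma~\ref{Lem:fun_b}, which gives an injective natural transformation $\Nalg^\dagger\hookrightarrow \Nalg^{\dagger,b}$ (compatible with the $\tilde\Halg$-bimodule structures, since both sides receive the same unit map after forgetting the $\Xi$-equivariance). Because the transformation is injective, the unit map $\tilde\Halg\to\Nalg^{\dagger,b}$ factors as $\tilde\Halg\to\Nalg^\dagger\hookrightarrow\Nalg^{\dagger,b}$, so the two have the same kernel. Hence
\[
\I^\ddag \;=\; \ker\bigl(\tilde\Halg\to\Nalg^\dagger\bigr) \;=\; \ker\bigl(\tilde\Halg\to\Nalg^{\dagger,b}\bigr).
\]
This identifies $\I^\ddag$ with a ``$\dagger,b$-type'' kernel, but still using the $\Xi$-averaged ideal $\bigcap_\gamma\gamma.\I$ rather than $\I$ itself.

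The remaining step, which I expect to be the main obstacle, is to show $\ker(\tilde\Halg\to\Nalg^{\dagger,b}) = \I^{\ddag,b}$, i.e.\ that the $\dagger,b$-style kernel does not see the difference between $\I$ and $\bigcap_\gamma\gamma.\I$. I would do this by using the canonical embedding $\underline{\tilde\Halg}^+/\bigcap_\gamma\gamma.\I\hookrightarrow\prod_{\gamma\in\Xi}\underline{\tilde\Halg}^+/\gamma.\I$ and the fact that $\bullet^{\dagger,b}$, being a right adjoint by Lemma~\ref{Lem:fun_b}(1), is left exact and preserves finite products; this yields an injection $\Nalg^{\dagger,b}\hookrightarrow\prod_\gamma(\underline{\tilde\Halg}^+/\gamma.\I)^{\dagger,b}$, so
\[
\ker(\tilde\Halg\to\Nalg^{\dagger,b}) \;=\; \bigcap_{\gamma\in\Xi}\ker\bigl(\tilde\Halg\to(\underline{\tilde\Halg}^+/\gamma.\I)^{\dagger,b}\bigr).
\]
Then I would argue that each of the kernels on the right coincides with $\I^{\ddag,b}$. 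The point is that different choices of the base point $b$ within the $\Xi$-orbit in $\underline\pi^{-1}(\pi(b))\cap\underline{\Leaf}$ yield canonically isomorphic functors $\bullet^{\dagger,\xi.b}$, and conjugation by a lift $\widetilde\xi\in\widetilde\Xi\subset\Gamma$ intertwines the construction of $(\underline{\tilde\Halg}^+/\gamma.\I)^{\dagger,b}$ with $(\underline{\tilde\Halg}^+/\I)^{\dagger,\xi.b}$; since $\widetilde\xi\in\Gamma\subset\tilde\Halg$, conjugation by $\widetilde\xi$ is an inner automorphism of $\tilde\Halg$ and therefore fixes every two-sided ideal. This identifies all the kernels on the right with $\I^{\ddag,b}$, intersects trivially, and completes the equality $\I^\ddag=\I^{\ddag,b}$. (In the case where $\I$ is already $\Xi$-stable, the third paragraph is immediate and only the argument of the second paragraph is needed.)
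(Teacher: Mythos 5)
Your reduction of both sides to kernels of unit maps out of $\tilde\Halg$ is the right skeleton, and Steps 1 and 2 are sound: Step 1 correctly uses assertion (3) of Lemma~\ref{Lem:fun_b} to identify $\I^\ddag$ with $\ker\bigl(\tilde\Halg\to\Nalg^{\dagger,b}\bigr)$ for $\Nalg=\underline{\tilde{\Halg}}^+/\bigcap_\gamma\gamma.\I$, and Step 2's product decomposition together with left exactness (from the adjunction in assertion (1)) is fine.

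The gap is in Step 3, which you yourself flag as ``the main obstacle.'' You assert that conjugation by a lift $\widetilde\xi\in\widetilde\Xi$ intertwines $(\underline{\tilde\Halg}^+/\gamma.\I)^{\dagger,b}$ with $(\underline{\tilde\Halg}^+/\I)^{\dagger,\xi.b}$, and that the functors $\bullet^{\dagger,b}$ and $\bullet^{\dagger,\xi.b}$ agree up to canonical isomorphism compatibly with the unit maps. Neither claim is justified, and making them precise requires unwinding the definition of $\tilde\Theta^b$ and tracking the $\widetilde\Xi$-action through the chain $\Fun_7^{-1},\Fun_6^{-1},\Fun_5'^{-1},(\tilde\Theta^b_*)^{-1}$; this is non-trivial and is exactly the kind of base-point dependence the sheafified formalism was designed to avoid. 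You are also not using assertion (2) of Lemma~\ref{Lem:fun_b} anywhere, even though the paper's proof explicitly cites it, which is a hint that you are missing the intended shortcut.

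The missing piece is precisely assertion (2): for every two-sided ideal $\Jalg\subset\tilde\Halg$, the isomorphism $\bullet_{\dagger,b}\cong F\circ\bullet_\dagger$ identifies $\Jalg_{\dagger,b}\subset\tilde\Halg_{\dagger,b}=\underline{\tilde\Halg}^+$ with the $\Xi$-stable sub-bimodule $\Jalg_\dagger\subset\tilde\Halg_\dagger=\underline{\tilde\Halg}^+$. Consequently $\Jalg_{\dagger,b}\subset\gamma.\I$ holds for one $\gamma$ if and only if it holds for all, i.e.\ if and only if $\Jalg_\dagger\subset\bigcap_\gamma\gamma.\I$. Since, by the adjunction (Lemma~\ref{Lem:3.16.1} and its $\bullet_{\dagger,b}$-analogue), $\I^{\ddag,b}$ is the largest two-sided ideal $\Jalg$ with $\Jalg_{\dagger,b}\subset\I$ and $\I^\ddag=(\bigcap_\gamma\gamma.\I)^{\dagger_{\tilde\Halg}}$ is the largest one with $\Jalg_\dagger\subset\bigcap_\gamma\gamma.\I$, these two conditions coincide and the equality $\I^{\ddag,b}=\I^\ddag$ follows at once. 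This yields both inclusions simultaneously, makes Step 2 and Step 3 of your argument unnecessary, and matches the paper's one-line citation of assertions (2) and (3).
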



\begin{Cor}\label{Cor:up_dag_primit}
If $\I\subset \underline{\Hrm}^+_{1,c}$ is a primitive ideal, then so is $\I^\ddag\subset \Hrm_{1,c}$.
\end{Cor}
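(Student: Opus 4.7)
The plan is to exhibit a simple $\Hrm_{1,c}$-module annihilated exactly by $\I^\ddag$, using the completion isomorphism $\Theta^b$ to transport a simple $\underline{\Hrm}^+_{1,c}$-module to a simple $\Hrm_{1,c}^{\wedge_b}$-module and then to descend. Since $\I\in\Id_0(\underline{\Hrm}^+_{1,c})$ is primitive, the quotient $\underline{\Hrm}^+_{1,c}/\I$ is a finite-dimensional matrix algebra and $\I=\Ann_{\underline{\Hrm}^+_{1,c}}(N)$ for a finite-dimensional simple module $N$. The strategy is to push $N$ through the Morita-type equivalence $\Theta^b:\Hrm_{1,c}^{\wedge_b}\xrightarrow{\sim}\Mat_{|\Gamma/\underline{\Gamma}|}(\underline{\Hrm}_{1,c}^{\wedge_0})$ of Theorem~\ref{Thm:1}.

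First, using $\underline{\Hrm}_{1,c}^{\wedge_0}\cong \W(V_0)^{\wedge_0}\,\widehat{\otimes}\,\underline{\Hrm}_{1,c}^{+\wedge_0}$, where $\W(V_0)$ denotes the $\tb=1$ specialization of $\W_{\tb}(V_0)$, I promote $N$ to a simple $\underline{\Hrm}_{1,c}^{\wedge_0}$-module. Concretely, choose a Lagrangian $L\subset V_0$ and let $F:=\K[[L]]$ be the corresponding Fock-type simple module over the completed Weyl algebra $\W(V_0)^{\wedge_0}$. Because $\I$ has finite codimension, it contains some power of $\underline{\m}^+_0$, so $N$ lifts uniquely to a module over $\underline{\Hrm}_{1,c}^{+\wedge_0}$, and I set $M_0:=F\,\widehat{\otimes}_{\K}\,N$. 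Simplicity of both tensor factors and finite-dimensionality of $N$ yield simplicity of $M_0$. The centralizer construction then produces $\widetilde{M}:=\Fu_{\underline{\Gamma}}(\Gamma,M_0)$, a simple module over $\CC(\underline{\Hrm}_{1,c}^{\wedge_0})$; via $\Theta^b$ I view it as a simple $\Hrm_{1,c}^{\wedge_b}$-module. Restricting scalars along $\Hrm_{1,c}\hookrightarrow \Hrm_{1,c}^{\wedge_b}$ turns $\widetilde{M}$ into a (generally non-simple) $\Hrm_{1,c}$-module. A direct annihilator computation — using the formula $\Ann_{\CC(A)}(\CC(N_0))=\CC(\Ann_A(N_0))$ for the centralizer construction together with Corollary~\ref{Cor:fun_b} — gives $\Ann_{\Hrm_{1,c}}(\widetilde{M})=\I^{\ddag,b}=\I^\ddag$.

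The remaining step, which is the main obstacle, is to extract a genuinely simple $\Hrm_{1,c}$-module with annihilator still exactly $\I^\ddag$. I pick $0\neq v\in\widetilde{M}$, form the cyclic $\Hrm_{1,c}$-submodule $\Hrm_{1,c}v$, and take a simple quotient $M'$ of $\Hrm_{1,c}v$ by a maximal proper submodule (whose existence is ensured by finite generation). The annihilator of $M'$ contains $\I^\ddag$, and equality has to be argued carefully. The natural mechanism is that $\Hrm_{1,c}/\I^\ddag$ embeds densely into the primitive ring $\Hrm_{1,c}^{\wedge_b}/\Ann_{\Hrm_{1,c}^{\wedge_b}}(\widetilde{M})$ with respect to the $\m_b$-adic topology, and this primitive ring acts faithfully on the simple module $\widetilde{M}$; a Jacobson-density-style argument applied to the dense image then guarantees that some simple $\Hrm_{1,c}$-subquotient of $\widetilde{M}$ is faithful over $\Hrm_{1,c}/\I^\ddag$, completing the proof.

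The density argument in the final paragraph is the most delicate part; everything else is bookkeeping through the chain of equivalences of Subsection~\ref{SUBSECTION_Functors_b}. As a backup, one may instead invoke that $\I^\ddag$ is prime (this is established en route to Theorem~\ref{Thm:4} via assertion~(5) of Theorem~\ref{Thm:4I}) together with Theorem~A1 of~\cite{ES_appendix} which asserts prime~$=$~primitive in $\Hrm_{1,c}$; but the direct route above avoids the prime/primitive detour and stays within the framework of the $\bullet^{\dagger,b}$ machinery developed in this subsection.
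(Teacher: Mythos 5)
Your main approach has a genuine gap precisely at the step you flag as ``delicate.'' Having restricted the simple $\Hrm_{1,c}^{\wedge_b}$-module $\widetilde{M}$ to $\Hrm_{1,c}$ and chosen a simple quotient $M'$ of some cyclic submodule, you only get $\Ann_{\Hrm_{1,c}}(M')\supset\I^\ddag$, and no mechanism in your argument forces equality for \emph{any} choice of $v$. ``Jacobson density'' does not apply here: a subring dense in a primitive topological ring with respect to an adic topology need not itself be primitive, and the restriction of a faithful simple module over the completion can easily have every simple $\Hrm_{1,c}$-subquotient annihilated by something outside $\I^\ddag$. To make your route work you would need, at a minimum, to already know $\I^\ddag$ is semiprime (so the intersection of primitive ideals over it is $\I^\ddag$ itself), and then one more argument to get a single faithful simple; at that point one might as well prove primeness directly.

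Your ``backup'' is in fact the paper's actual proof, and it is short and clean. The paper reduces to showing $\I^\ddag$ is \emph{prime}, then invokes Theorem A1 of \cite{ES_appendix} (prime $=$ primitive in $\Hrm_{1,c}$). Primeness is proved at the Rees-algebra level: $R_\hbar(\I^\ddag)=R_\hbar(\I)^\ddag=R_\hbar(\Hrm_{1,c})\cap\J'_\hbar$, where $\J'_\hbar\subset R_\hbar(\Hrm_{1,c})^{\wedge_b}$ is the prime ideal corresponding to $R_\hbar(\I)$ under the completion equivalences. If $\J^1_\hbar\J^2_\hbar\subset\J_\hbar:=R_\hbar(\Hrm_{1,c})\cap\J'_\hbar$ with both containments strict, then flatness of the completion (Corollary \ref{Cor:1.22}) gives $(\J^1_\hbar)^{\wedge_b}(\J^2_\hbar)^{\wedge_b}\subset\J'_\hbar$, so by primeness of $\J'_\hbar$ one of $(\J^i_\hbar)^{\wedge_b}\subset\J'_\hbar$, whence $\J^i_\hbar\subset(\J^i_\hbar)^{\wedge_b}\cap R_\hbar(\Hrm_{1,c})\subset\J_\hbar$, a contradiction. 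So the ``detour'' through prime $=$ primitive is not a detour at all; it is the intended proof, and the module-theoretic direct construction you sketch does not close without supplying essentially the same primeness input.
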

\begin{proof}
It is enough to show that $\I^\dagger$ is prime, see the main theorem from \cite{ES_appendix}.
The proof basically repeats that of assertion (iv) of Theorem 1.2.2 in \cite{Wquant}.
We will provide the proof for reader's convenience.

First of all, we note that $\I^\ddag$ is prime provided $R_\hbar(\I^\ddag)=R_\hbar(\I)^\ddag$
is. The ideal in $R_\hbar(\Hrm_{1,c})^{\wedge_b}$ corresponding to $R_\hbar(\I)$ under the bijections
explained above is prime. So we need to check that $\J_\hbar:=R_\hbar(\Hrm_{1,c})\cap \mathcal{J}_\hbar'$ is prime for
any prime ideal $\J'_\hbar\subset R_\hbar(\Hrm_{1,c})^{\wedge_b}$. Assume the contrary: let $\J^1_\hbar,\J^2_\hbar$
be two-sided ideals with $\J^1_\hbar \J^2_\hbar\subset \J_\hbar$ and $\J_\hbar\subsetneq \J^1_\hbar,\J^2_\hbar$.
Then $(\J^{1}_\hbar)^{\wedge_b}(\J^{2}_\hbar)^{\wedge_b}\subset \J'_\hbar$ hence, say, $(\J^{1}_\hbar)^{\wedge_b}
\subset \J'_\hbar$.
Therefore $\J^1_\hbar\subset (\J^1)^{\wedge_b}_\hbar\cap R_\hbar(\Hrm_{1,c})\subset \J_\hbar$,
a contradiction. So $\J_\hbar$ is a prime ideal.
\end{proof}

\section{Applications to general SRA}\label{SECTION_Application}
\subsection{Proof of Theorem \ref{Thm:3}}
We begin the proof with the following algebro-geometric lemma that should be standard,
compare with \cite{BG}, the proof of Theorem 4.2. The proof was essentially suggested to me by I. Gordon.

\begin{Lem}\label{Lem:6.1.1}
Let $X_1,X_2$ be varieties, $Y_1,Y_2$ irreducible divisors of $X_1,X_2$ contained in
all irreducible components of $X_1,X_2$, respectively, and
$\A_i$ be an $\Str_{X_i}$-coherent sheaf of associative algebras on $X_i, i=1,2$. Suppose that:
\begin{enumerate}
\item $\A_2$ is a trivial sheaf of algebras on $X_2\setminus Y_2$.
\item There is an isomorphism $\vartheta:(X_1)^{\wedge}_{Y_1}\rightarrow (X_2)^\wedge_{Y_2}$ of the completions such that the sheaves of algebras $\vartheta^*(\iota_2^*(\A_2)),\iota_1^*(\A_1)$ are isomorphic.
Here $\iota_i$ is the canonical embedding $(X_i)^{\wedge}_{Y_i}\hookrightarrow X_i$.
\end{enumerate}
Then there is a neighborhood $U_1$ of $Y_1$ in $X_1$ such that the fiber of $\A_1$
at any point $u\in U_1\setminus Y_1$ is isomorphic as an algebra to a fiber of $\A_2$.
\end{Lem}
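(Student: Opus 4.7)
My plan is to reduce to the affine setting and transfer the triviality of $\A_2$ off $Y_2$ to a Zariski neighborhood of $Y_1$ through the completion isomorphism. Writing $X_i = \Spec R_i$, $Y_i = V(I_i)$, and $\A_i = \widetilde{M_i}$ for a finitely generated $R_i$-algebra $M_i$, assumption (1) furnishes a finite-dimensional $\K$-algebra $A_0$ with $M_2 \otimes_{R_2} R_2[f^{-1}] \cong A_0 \otimes_\K R_2[f^{-1}]$ for every $f \in I_2$, while assumption (2) gives $\hat{R}_1 \cong \hat{R}_2 =: \hat{R}$ (with $I_1 \hat{R} = I_2 \hat{R} =: \hat{I}$) and a compatible isomorphism $\hat{M}_1 \cong \hat{M}_2 =: \hat{M}$ of $\hat{R}$-algebras, where the hat denotes the $I_i$-adic completion. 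The goal is to produce a Zariski open $U_1 \supset Y_1$ such that $(\A_1)_u \cong A_0$ as $\K$-algebras for every closed $u \in U_1 \setminus Y_1$, noting that $A_0$ is itself the fiber of $\A_2$ at any closed point of $X_2 \setminus Y_2$.

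Base-changing the first identification to $\hat{R}$ and transporting via $\vartheta$ yields the generic triviality of the completion: $\hat{M}[f^{-1}] \cong A_0 \otimes_\K \hat{R}[f^{-1}]$ for every $f \in \hat{I}$. To descend this to $X_1$, I fix a closed point $y \in Y_1$ and choose an irreducible curve $C \subset X_1$ through $y$ meeting $Y_1$ only at $y$ locally and transversely (such $C$ exists because $Y_1$ is a divisor in every component of $X_1$ and $\K$ is infinite; in the degenerate case $\dim X_1 = 1$ one takes $C = X_1$ directly). Transversality at $y$ means that a local equation $f \in I_1$ of $Y_1$ near $y$ restricts to a uniformizer on $C$ at $y$, so that the pullback $R_1 \to \widehat{\Str}_{C, y} \cong \K[[t]]$ is continuous in the $I_1$-adic topology and extends to $\hat{R}_1 \to \K[[t]]$ sending $f$ to $t$ up to a unit. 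Base-changing the generic triviality along this map gives $(M_1 \otimes_{R_1} \K[[t]])[t^{-1}] \cong A_0 \otimes_\K \K((t))$. Applying Artin approximation to the system of polynomial equations encoding the existence of an algebra isomorphism $M_1 \otimes S \cong A_0 \otimes S$ (a finite system because $A_0$ and the generic fiber of $M_1|_C$ are finite-dimensional), one obtains an algebraic isomorphism over an étale open neighborhood of $y$ in $C \setminus \{y\}$; evaluating at closed points $u$ of this neighborhood gives $(\A_1)_u \cong A_0$. Taking the union over closed $y \in Y_1$ of the resulting Zariski open subsets of $X_1$ produces the required $U_1$.

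The main obstacle is the descent from the formal identification over $\K((t))$ to one over an étale neighborhood, which is precisely where Artin approximation is invoked; it applies because the rings involved are localizations of finitely generated $\K$-algebras, hence excellent. An alternative route avoiding Artin: rescale the formal isomorphism so that its matrix of structure constants lies in $\K[[t]]$ with determinant a unit there; since isomorphisms of finite-dimensional algebras satisfy only finitely many polynomial relations, one can adjust this matrix to lie in the Henselization of $\Str_{C,y}$ at $y$, after which reduction modulo any maximal ideal $(t - \alpha)$ with small $\alpha \in \K^\times$ yields a $\K$-algebra isomorphism $(\A_1)_u \cong A_0$ at the corresponding closed point $u$ of $C$. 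A secondary subtlety — producing the transverse curve $C$ at each $y \in Y_1$ when $X_1$ is singular there — is handled by passing to an étale chart, which does not affect the conclusion because étale maps preserve closed-point fibers over $\K = \mathbb{C}$.
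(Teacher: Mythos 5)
There is a genuine gap, and it is the reduction to curves. For each closed $y\in Y_1$ you produce a curve $C$ transverse to $Y_1$ at $y$ and (granting the Artin step) points of $C$ near $y$ where the fiber of $\A_1$ is $\cong A_0$. But the sets you obtain are one-dimensional; the phrase ``taking the union over closed $y\in Y_1$ of the resulting Zariski open subsets of $X_1$'' is a non sequitur, since an open subset of a curve is not an open subset of $X_1$, and a union of such over all $y$ need not contain a punctured neighborhood of $Y_1$. What the lemma actually requires is that the \emph{bad locus} $T=\{u\in X_1\setminus Y_1: (\A_1)_u\not\cong A_0\}$ — a closed subset of $X_1\setminus Y_1$, being the preimage of $\partial Z$ where $Z\subset M\cong\K^{r^3}$ is the $\GL_r$-orbit of the structure constants of $A_0$ — has closure disjoint from $Y_1$. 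Nothing in your argument addresses this: a transverse curve through $y$ can simply miss $\overline T$ even when $y\in\overline T$. To make a curve argument work you would have to argue by contradiction and choose $C$ \emph{inside} a component of $\overline T$ meeting $Y_1$, which is not what you do. The paper instead globalizes directly: it considers the morphism $s:X_1\setminus Y_1\to M$ given by structure constants of a trivialization of $\A_1$, interprets hypothesis (2) as producing $g\in\GL_r\bigl((\K[X_1]^\wedge_{Y_1})_{f_1}\bigr)$ with $s^*(f)=f(gz)$, and approximates $g$ by matrices with entries in $\K[X_1]$ in the $I_{Y_1}$-adic topology to show $s^*(I_{\overline Z})=0$ while the invertibility of $g$ over the full formal punctured neighborhood forces $s^*(I_{\partial Z})$ to generate the unit ideal there, giving disjointness from $Y_1$. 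That argument never passes through curves and never needs Artin approximation.

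Two secondary points. First, the Artin step as written does not apply: the isomorphism you obtain lives over $\K((t))$, not over $\K[[t]]$, and Artin approximation for the Henselization of $\K[t]_{(t)}$ approximates solutions over the completion $\K[[t]]$, not over its fraction field. The fix is to clear denominators, getting $g'\in\Mat_r(\K[[t]])$ with $g'\cdot\mu_1=t^N\mu_{A_0}$ and $\det g'\neq 0$, apply Artin to that system, and then note that at a nearby closed $u\neq 0$ the resulting algebra structure is a nonzero scalar multiple of $\mu_{A_0}$ — hence isomorphic to $A_0$ — while $\det g'(u)\neq 0$. Second, your ``rescale so that the matrix lies in $\K[[t]]$ with determinant a unit'' cannot be correct as stated: if it were, the isomorphism would extend over $t=0$ and the fiber of $\A_1$ at $y\in Y_1$ would also be $\cong A_0$, which is exactly what is false in the intended applications. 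Scalar rescaling of $g\in\GL_r(\K((t)))$ changes $\det g$ by $t^{rN}$ and does not in general produce a unit determinant in $\K[[t]]$.
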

\begin{proof}
Shrinking $X_1,X_2$ if necessary we may (and will) assume that $\A_1$ is a free  sheaf
on $X_1\setminus Y_1$ of rank, say, $r$.  Also we can assume that $X_1$ is affine. Let $f_1$ denote a function in $\K[X_1]$ that vanishes on $Y_1$.

Consider the space
$M$ of all algebra structures on $\K^r$, i.e.,  maps $\K^{r}\otimes \K^{r}\rightarrow \K^{r}$.
Let $Z$ denote the locally closed subvariety in $M$ consisting of all algebra structures isomorphic
to a fiber of $\A_2$ (at any point of $X_2\setminus Y_2$). Let $I,\tilde{I}\subset \K[M]$ denote the ideals of the closure $\overline{Z}$ and of the boundary $\partial Z$. Let $z$ be a point in $Z$. It corresponds to
a choice of a basis in some fiber of $\A_2$.

Fix a section $s$ of $\A_1$. It defines a morphism $s:X_1\setminus Y_1\rightarrow M$.  We claim that $s^*(I)$ is zero and $s^*(\tilde{I})$ is not. The algebra $\K[X_1]_{f_1}$ maps to  $A:=(\K[X_1]^{\wedge}_{Y_1})_{f_1}$.
(2) can be interpreted as follows: there is an element $g\in \GL_r(A)$ such that $s^*(f)=f(gz)$ (the equality in
$A$).

First of all, let us show that $f(gz)=0$ for $f\in I$.
 We have $f(gz)=0$ (in $\K$) for all $f\in \GL_r(\K)$
and hence $f(gz)=0$ (in $\K[X]_{f_1}$) for all  $g$ with coefficients in
$\K[X]_{f_1}$ (not necessarily invertible).
Any $g\in \GL_r(A)$ can be written as $f_1^Ng_1$, where $g_1$ has entries in $\K[X_1]^\wedge_{Y_1}$.
Now any $g_1\in \GL_r(\K[X_1]^\wedge_{Y_1})$ is
the limit of a sequence of elements with entries in $\K[X]$. It follows that
$f(gz)=0$ for all $g\in \GL_r(A)$. So $s^*(f)=0$ and hence $s^*(I)=\{0\}$.

The proof that $s^*(\tilde{I})\neq \{0\}$ is similar.
\end{proof}

\begin{proof}[Proof of Theorem \ref{Thm:3}]
We preserve the notation used in the proof of Theorem \ref{Thm:4'}.
Let $\Leaf$ be such that  $\q\in \Id_\Leaf(\Zrm_c)$.

Set $\tilde{X}_1=\Spec(R_{\hbar_l}(\Zrm_c)),
\tilde{X}_2=\Spec(R_{\hbar_l}(\underline{\Zrm}_c)^\Xi)=V_0^*\times \Spec(R_{\hbar_l}(\underline{\Zrm}_c^+)^\Xi)$.
Further, let $\tilde{\A}_1,\tilde{\A}_2$ be the sheaves on $\tilde{X}_1,\tilde{X}_2$ corresponding
to $R_{\hbar_l}(\Hrm_{0,c}), \CC(R_{\hbar_l}(\underline{\Hrm}_{0,c}))^\Xi$. The latter sheaf is just the product
of $\Str_{V_0^*}$ and the sheaf on  $\Spec(R_{\hbar_l}(\underline{\Zrm}_c^+)^\Xi)$ corresponding to
$\CC(R_{\hbar_l}(\underline{\Hrm}^+_{0,c}))^\Xi$.  Theorem \ref{Thm:2.0I} implies that there are open subsets
$\tilde{X}^0_1, \tilde{X}^0_2$ of $\tilde{X}_1,\tilde{X}_2$ such that there is an isomorphism
$\vartheta:(\tilde{X}^0_1)^\wedge_{\Leaf}\rightarrow (\tilde{X}^0_2)^\wedge_{\Leaf}$ satisfying
$\vartheta^*(\iota_2^*(\A_2))\cong \iota_1^*(\A_1)$. Here $\iota_1,\iota_2$ have  similar meanings to
Lemma \ref{Lem:6.1.1} and $(\tilde{X}^0_i)^\wedge_{\Leaf}$ stand for the formal neighborhood of $\Leaf\cap \tilde{X}^0_i$ in $\tilde{X}^0_i$  (we assume that $\tilde{X}^0_i\cap \overline{\Leaf}\subset \Leaf$).

Now let us introduce $X_1,X_2,Y_1,Y_2,\A_1,\A_2$ for which we are going to apply Lemma \ref{Lem:6.1.1}.
Let $X_1$  be the intersection of $\tilde{X}^0_1$ with the set of zeros of $R_{\hbar_l}(\q)$.
Set $X_2:=(V_0^*\times X_2')/\Xi$, where $X_2'$ is the set of zeros of $R_{\hbar_l}(\q_\dagger)$.
For $Y_i$ we take the divisor of zeros
of $\hbar_l$ in $X_i$. These divisors are isomorphic to open subsets in $\Leaf$ hence are irreducible.
Finally, let $\A_i, i=1,2,$ be the restrictions of $\tilde{\A}_i$ to $X_i$.

The claim that $\vartheta$ restricts to an isomorphism $(X_1)^\wedge_{Y_1}\rightarrow (X_2)^\wedge_{Y_2}$
follows directly from the construction of $R_{\hbar_l}(\q)_\dagger$.

So we see that for a general point $y$ of $\Leaf$ we have   $$\Hrm_{0,c}/\Hrm_{0,c}\n_y\cong \CC(\underline{\Hrm}^+_{0,c}/\underline{\Hrm}^+_{0,c}\underline{\n})$$ (the r.h.s. is a typical fiber of
$\A_2$). Thanks to \cite{BG}, Theorem 4.2, the algebras in the l.h.s. are pairwise isomorphic for all $y\in \hat{\Leaf}$.
 This proves Theorem \ref{Thm:3} for any $y\in \hat{\Leaf}$.
\end{proof}

\subsection{Simplicity of $\Hrm_{1,c}$ for general $c$}\label{SUBSECTION_simplicity}
The following theorem and its proof (modulo Theorem \ref{Thm:1}) were communicated to me by P. Etingof. This was the first motivation to state Theorem \ref{Thm:1}.

Let $\overline{\Q}$ denote the field of algebraic numbers.

\begin{Thm}
There is a finitely generated subgroup  $\Lambda\subset \overline{\Q}^r$ such that
the algebra $\Hrm_{1,c}$ is simple whenever $\sum_{i=1}^r \lambda_i c_i\not\in \mathbb{Z}$ for all $(\lambda_i)_{i=1}^r\in \Lambda\setminus\{0\}$.
\end{Thm}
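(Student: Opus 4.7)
The plan is to combine Theorem~\ref{Thm:4} with a trace calculation on the defining commutation relation in order to classify the possible obstructions to simplicity as integrality conditions on $c$.

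Suppose $\J\subsetneq\Hrm_{1,c}$ is a nonzero two-sided ideal. Since $\gr\Hrm_{1,c}=SV\#\Gamma$, the associated graded $\gr\J$ is a nonzero two-sided ideal, corresponding to a nonzero $\Gamma$-stable ideal of $SV$; the finiteness of the extension $(SV)^\Gamma\subset SV$ forces this ideal to have a nonzero contraction to $(SV)^\Gamma$, so $\VA(\Hrm_{1,c}/\J)\subsetneq V^*/\Gamma$. Extend $\J$ to a maximal (hence primitive) ideal $\J'$. By Ginzburg's irreducibility theorem \cite{Ginzburg_irr} (rederived in this paper), $\J'\in\Id_\Leaf(\Hrm_{1,c})$ for a unique symplectic leaf $\Leaf$; and $\overline\Leaf=\VA(\Hrm_{1,c}/\J')\subset\VA(\Hrm_{1,c}/\J)\subsetneq V^*/\Gamma$ shows that $\Leaf$ is a proper leaf. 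The corresponding stabilizer $\underline\Gamma\subset\Gamma$ is therefore nontrivial, $V_+\neq 0$, and Theorem~\ref{Thm:4}(4) yields a finite-dimensional irreducible $\underline{\Hrm}^+_{1,c}$-module $M$.

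The trace argument now gives the desired integrality relation. Summing the relation $[e_j,f_j]=\omega(e_j,f_j)+\sum_{s\in S\cap\underline\Gamma}c(s)\,\omega_s(e_j,f_j)\,s$ over a symplectic basis $\{e_j,f_j\}_{j=1}^n$ of $V_+$ and taking the trace in $\End(M)$, one obtains
\[
\sum_{i=1}^r c_i\,\gamma_i(M)\;=\;-n\,\dim M\;\in\;\mathbb{Z},
\qquad
\gamma_i(M):=\sum_{s\in S_i\cap\underline\Gamma}\delta_s\,\operatorname{tr}_M(s),
\]
with $n=\tfrac12\dim V_+>0$ and $\delta_s:=\sum_j\omega_s(e_j,f_j)$ a basis-independent invariant of $s$: writing $\omega_s(x,y)=\omega(P_sx,y)$ for the projection $P_s$ of $V_+$ onto $\im(s-\id)$ along its symplectic complement, $P_s$ is self-adjoint with respect to $\omega$, so $\delta_s=\tfrac12\operatorname{tr}_{V_+}(P_s)=1$ (using that $\im(s-\id)\subset V_+$ is a $2$-dimensional symplectic subspace). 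Since $n>0$ and $\dim M>0$, the vector $(\gamma_1(M),\ldots,\gamma_r(M))$ is forced to be nonzero.

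Finally, since $\gamma_i$ depends only on the class of $M$ in the representation ring $R(\underline\Gamma)$ and takes algebraic values (sums of character values of a finite group), it defines a $\mathbb{Z}$-linear map $(\gamma_1,\ldots,\gamma_r)\colon R(\underline\Gamma)\to\overline{\Q}^r$ whose image $\Lambda_{\underline\Gamma}$ is a finitely generated subgroup. Taking $\Lambda:=\sum_{\underline\Gamma}\Lambda_{\underline\Gamma}$ over the finitely many conjugacy classes of nontrivial stabilizer subgroups of $\Gamma$ yields a finitely generated subgroup of $\overline{\Q}^r$ with the required property: whenever $\sum_i\lambda_ic_i\notin\mathbb{Z}$ for every $(\lambda_i)\in\Lambda\setminus\{0\}$, no $\underline{\Hrm}^+_{1,c}$ with $\underline\Gamma\neq\{1\}$ admits a finite-dimensional module, so by the reduction step $\Hrm_{1,c}$ has no nonzero proper two-sided ideal. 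The main subtle inputs are the appeal to Ginzburg's theorem (together with the $\VA$-argument ruling out the open leaf for nonzero ideals) and the invariant computation $\delta_s=1$; beyond these, the argument is a clean trace calculation combined with the reduction machinery of Theorem~\ref{Thm:4}.
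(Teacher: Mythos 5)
Your argument is correct and follows the same overall strategy as the paper's proof: reduce simplicity to the non-existence of finite-dimensional modules over the slice algebras $\underline{\Hrm}^+_{1,c}$ attached to nontrivial stabilizers, then take traces in the defining relation to extract an integrality constraint, and assemble the lattice $\Lambda$ from the resulting character vectors. Two execution differences are worth recording. First, your route to a finite-dimensional module is heavier than needed: you pass to a maximal ideal $\J'\supset\J$, invoke Ginzburg's theorem \cite{Ginzburg_irr} and part (4) of Theorem \ref{Thm:4}; the paper simply notes that for a leaf $\Leaf$ open in $\VA(\Hrm_{1,c}/\J)$ the ideal $\J_\dagger$ is a proper ideal of finite codimension in $\underline{\Hrm}^+_{1,c}$ (proper because $(\Hrm_{1,c}/\J)_\dagger\neq 0$ when $\Leaf$ meets the associated variety), which already forces a finite-dimensional representation -- your detour is valid but avoidable. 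Second, your trace step is genuinely more direct: summing over a symplectic basis of $V_+$ and computing the basis-independent invariant $\delta_s=\tfrac12\operatorname{tr}_{V_+}(P_s)=1$ (your self-adjointness argument for $P_s$ is correct) bypasses the paper's reduction to stabilizers generated by symplectic reflections acting symplectically irreducibly, which the paper needs in order to write $\sum_{s\in S_i}\omega_s=m_i\omega$; this is a cleaner way to get the identity $\sum_i c_i\gamma_i(M)=-n\dim M$ and the nonvanishing of $(\gamma_i(M))_i$. One small point to tighten: the claim that a nonzero two-sided ideal $\J$ has $\VA(\Hrm_{1,c}/\J)\subsetneq V^*/\Gamma$ requires, before the norm/integrality argument for $(SV)^\Gamma\subset SV$, the standard crossed-product step showing that a nonzero two-sided ideal of $SV\#\Gamma$ meets $SV$ nontrivially (minimal-support argument using faithfulness of the $\Gamma$-action on $V$); as written you assert the intermediate statement without this step, though the fact is standard and the paper itself leaves it implicit.
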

\begin{proof}
{\it Step 1.} First of all, let us study the set of all parameters $c$ such that $\Hrm_{1,c}$
has a finite dimensional representation. We claim that there is a lattice
$\Lambda^0\subset \overline{\Q}^r$ such that
the algebra $\Hrm_{1,c}$ has no finite dimensional representations whenever
$\sum_{i=1}^r \lambda_i c_i\not\in \mathbb{Z}$ for all $ (\lambda_i)_{i=1}^r\in \Lambda^0$.

Let us reduce the proof to the case when $\Gamma$ is generated by symplectic reflections and
is symplectically irreducible. The reduction is standard but we provide it for reader's convenience.

First, we claim that  $\Hrm_{1,c}(V,\Gamma)$ has a finite dimensional representation if and only if $\Hrm_{1,c}(V,\Gamma')$
does, where $\Gamma'$ is the subgroup of $\Gamma$ generated by all symplectic reflections.  Indeed,
it is easy to see from (\ref{eq:1}) that $\Hrm_{1,c}(V,\Gamma)=\Hrm_{1,c}(V,\Gamma')\#_{\Gamma'}\Gamma$.
Here the meaning of the right hand side is as follows. Let $\A$ be an associative algebra containing
$\K \Gamma'$ and acted on by $\Gamma$ in such a way that the restriction of the $\Gamma$-action to
$\Gamma'$ coincides with the adjoint action of $\Gamma'\subset \A$. Then as a vector space $\A\#_{\Gamma'}\Gamma$
is just $\A\otimes_{\K\Gamma'}\K\Gamma$ and the product is defined in the same way as for $\A\#\Gamma$.
Now if $V$ is a non-zero left $\A$-module, then $\K\Gamma\otimes_{\K\Gamma'}V$ is a non-zero $\A\#_{\Gamma'}\Gamma$-module. Since $\A$ is included into $\A\#_{\Gamma'}\Gamma$, this shows the equivalence
stated in the beginning of the paragraph.


Let us now explain the reduction to the case when the $\Gamma$-module $V$ is symplectically irreducible.
The $\Gamma$-module $V$ is uniquely decomposed into the direct sum of symplectically irreducible $\Gamma$-submodules
$V=\bigoplus_{i=1}^l V_i$ ($V_i$ is said to be symplectically irreducible if it cannot be decomposed into
the direct sum of two proper {\it symplectic} submodules). Moreover, since $\Gamma$ is generated by symplectic reflections, it  decomposes into the
direct product $\prod_{i=1}^l \Gamma_i$, where $\Gamma_i\subset \Sp(V_i)$ is generated by symplectic reflections.
In particular, $S=\coprod_{i=1}^l (S\cap \Gamma_i)$.
As a consequence of this discussion, we have a tensor product decomposition
$\Hrm_{1,c}(V,\Gamma)=\bigotimes_{i=1}^l \Hrm_{1,c^i}(V_i,\Gamma_i)$, where
$c^i$ is the restriction of $c$ to $S\cap\Gamma_i$. Clearly, $\Hrm_{1,c}$ has a finite dimensional
module if and only if every $\Hrm_{1,c^i}(V_i,\Gamma_i)$ does. So it is enough to assume that
$V$ is symplectically irreducible.

Let $M$ be  a finite dimensional $H_{1,c}(V,\Gamma)$-module.
Then the trace of $[x,y]$ on $M$ is 0 for any $x,y\in V\subset H_{1,c}(V,\Gamma)$.
Therefore (\ref{eq:1}) implies that
\begin{equation}\label{eq:7.1}
(\dim M)\omega(x,y)+ \sum_{i=1}^r n_i(M) c_i\omega_i(x,y)=0,
\end{equation}
where $n_i(M)$ denotes the trace of $s\in S_i$ in $M$, and $\omega_i:=\sum_{s\in S_i} \omega_s$.
We remark that $\omega_i$ is a $\Gamma$-invariant skew-symmetric form.
Therefore $\omega_i$ is proportional to $\omega$, say $\omega_i=m_i\omega$. So we can rewrite (\ref{eq:7.1})
as
\begin{equation}\label{eq:7.2}
\dim M+\sum_{i=1}^r n_i(M)m_i c_i=0.
\end{equation}

Consider the subgroup $\Lambda^0\subset \overline{\Q}^{r}$ generated by $(m_i n_i(M'))_{i=1}^r$, where
$M'$ runs over all irreducible $\Gamma$-modules. (\ref{eq:7.2}) implies that $\Lambda^0$ satisfies
the condition in the beginning of the step.

{\it Step 2.} Now let $\Leaf$ be a symplectic leaf of $V^*/\Gamma$ and $\underline{\Gamma}\subset\Gamma$ be
such as above. Let $\param^{\Leaf}$ denote the
quotient of $\param_{(1)}$ by the space of all $\cb_i$ with $S_i\cap \underline{\Gamma}= \varnothing$. The space
$(\param^{\Leaf})^*$ is a subspace of $\param^{*}_{(1)}$ spanned by some basis vectors. So the form
$(\param^{\Leaf})^*(\overline{\Q})$ makes sense and it is a subspace in $\param_{(1)}^{*}(\overline{\Q})=\overline{\Q}^r$.
Consider the algebra $\underline{\Hrm}^+_{1,c}$ constructed from $\underline{\Leaf}$.
According to step 1, there is a finitely generated subgroup
$\Lambda^{\Leaf}\subset (\param^{\Leaf})^*(\overline{\Q})$ such that
$\underline{\Hrm}_{1,c}$ has no finite dimensional representations provided $\sum_{i=1}^r c_i\lambda_i\not\in \mathbb{Z}$ for every $(\lambda_i)_{i=1}^r \in \Lambda^{\Leaf}$. Let $\Lambda$ be the subgroup in
$\overline{\Q}^r$ generated by  $\Lambda^{\Leaf}$ for all symplectic leaves $\Leaf\subset V^*/\Gamma$.
Let us prove that this $\Lambda$ satisfies  the conditions of the theorem.

Let $\J\subset \Hrm_{1,c}$ be a two-sided ideal. Pick a symplectic leaf $\Leaf$ such that $\overline{\Leaf}$ is an irreducible
component of $\VA(\Hrm_{1,c}/\J)$. Then $\J_\dagger\subset \underline{\Hrm}_{1,c}$ is of finite codimension.
It follows that $\underline{\Hrm}_{1,c}$ has a finite dimensional module. So some linear combination
like in the statement of the theorem should be integral.
\end{proof}

\section{Rational Cherednik algebras}\label{SECTION_Cherednik}
\subsection{Content}
In this section we consider the case of rational Cherednik algebras in more detail.
Recall that a rational Cherednik algebra is a special case of an SRA corresponding to a special
pair $(V,\Gamma)$. Namely, let $\h$ be a complex vector space and $W$ be a finite subgroup in
$\GL(\h)$. Then we set $V:=\h\oplus \h^*$ and take the image of $W$ under a natural embedding
$\GL(\h)\hookrightarrow \Sp(V)$ for $\Gamma$. The corresponding SRA is given by the following relations,
see \cite{EG}.

\begin{equation}\label{eq:Cherednik_relations}
\begin{split}
&[x,x']=[y,y']=0,\quad x,x'\in \h^*, y,y'\in \h.\\
&[y,x]=\tb\langle y,x\rangle-\sum_{s\in S} \cb(s)\langle x,\alpha_s^\vee\rangle\langle y,\alpha_s\rangle s.
\end{split}
\end{equation}

Here $\alpha_s\in \h^*, \alpha_s^\vee\in \h$ are elements that vanish on the fixed point hyperplanes $\h^{s}, (\h^*)^s$
and satisfy $\langle\alpha_s, \alpha_s^\vee\rangle=2$. We remark that the independent variables $\cb(s)$ here
are not exactly the same as in (\ref{eq:1}) -- to get that setting we need to multiply $\cb(s)$ from (\ref{eq:Cherednik_relations})
by $-1/2$. Since it is common to present the rational Cherednik algebras in the form (\ref{eq:Cherednik_relations}),
we will use this version of $\cb(s)$'s from now on.

Many questions considered in the present paper in the case
of Cherednik algebras were studied previously. One goal of this section is to compare our results
and constructions with the existing ones. Another goal is to strengthen some of our results
in this special case.

There is a version of the isomorphism of completions theorem for rational Cherednik algebras
due to Bezrukavnikov and Etingof, \cite{BE}. In the first subsection we will compare their result
with ours. In Subsection \ref{SUBSECTION_assoc_var} we will use the explicit form of the completions isomorphism
obtained in \cite{BE} to relate the associated varieties of the ideals $\I,\I^{\ddag}$.

In Subsection \ref{SUBSECTION_RCA_HC} we will show that our notion of Harish-Chandra bimodules agrees
with one from \cite{BEG_HC}. Subsection \ref{SUBSECTION_functors_more}
contains an alternative definitions of functors $\bullet_{\dagger,b},\bullet^{\dagger,b}$
that is based on the coincidence of the definitions.
In \cite{BE} the isomorphism of completions was used to construct certain {\it induction}
and {\it restriction} functors.  In Subsection \ref{SUBSECTION_ResInd}
we will apply the description of Subsection \ref{SUBSECTION_functors_more}
 to  establish a relationship between those functors and the $\bullet_\dagger,\bullet^{\ddag}$-maps
between the sets of ideals.
We also apply that description in Subsection \ref{SUBSECTION_loc_fin} to study the Harish-Chandra
part of the space of maps between two modules in the category $\mathcal{O}$.

Finally, in Subsection \ref{SUBSECTION_type_A} we describe the two-sided ideals in the
rational Cherednik algebra of type $A$, i.e., corresponding to the symmetric group $S_n$.

Below we will write $W$ for $\Gamma$, and $\underline{W}$ for $\underline{\Gamma}$.

\subsection{Bezrukavnikov-Etingof's isomorphism of completions}\label{SUBSECTION_BE_completions}
As before take the symplectic leaf $\Leaf\subset V^*/W$ corresponding to a subgroup
$\underline{W}\subset W$. Pick a point $b\in \h$ with $W_b=\underline{W}$. In \cite{BE} Etingof and Bezrukavnikov
discovered an isomorphism $\vartheta^b:\Halg^{\wedge_b}\rightarrow \CC(\underline{\Halg}^{\wedge_0})$.
The homomorphism $\vartheta^b$ is the only continuous homomorphism satisfying
\begin{equation}\label{eq:def_iso1}
\begin{split}
& [\vartheta^b(u)f](w)=f(wu),\\
& [\vartheta^b(x_\alpha)f](w)= (\underline{x}_{w\alpha}+\langle b,w\alpha\rangle) f(w),\\
& [\vartheta^b(y_a)f](w)=\underline{y}_{wa} f(w)+\sum_{i=1}^r\sum_{s\in S_i\setminus \underline{W}}\frac{2\cb_i}{1-\lambda_s}\frac{\alpha_s(wa)}{\underline{x}_{\alpha_s}+\langle b,\alpha_s\rangle}(f(sw)-f(w)).\\
& u,w\in W, \alpha\in \h^*, a\in \h, f\in \Fu_{\underline{W}}(W,\underline{\Halg}^{\wedge_0}).
\end{split}
\end{equation}
Let us explain the notation used in the formula. By $x_\alpha,\underline{x}_\alpha$ we mean the images of
$\alpha\in \h^*$ under the embeddings $\h^*\hookrightarrow \Halg,\underline{\Halg}$.
By $\lambda_s$ we denote a unique non-unit eigenvalue of $s$ on $\h^*$.

The homomorphism $\vartheta^b$ is an isomorphism. Indeed, modulo $\param$ the homomorphism
$\vartheta^b$ coincides with the isomorphism induced by $\theta_0$ constructed in Subsection \ref{SUBSECTION_completion_I}. Since $\Halg^{\wedge_b}$
is complete in the $\param$-adic topology, $\vartheta^b$ is surjective, and since $\CC(\underline{\Halg}^{\wedge_0})$
is flat over $\K[[\param^*]]$, $\vartheta^b$ is injective.

We remark that we have a $(\K^\times)^2$-action on $\Halg$ given by $(t_1,t_2).x=t_1^2 x,
(t_1,t_2).y=t_2^2 y, (t_1,t_2).w=w,  (t_1,t_2).\cb_i=t_1^2t_2^2\cb_i$ for $x\in \h^*,y\in \h, w\in W$.
There is  an analogous action on $\underline{\Halg}$.
The isomorphism $\vartheta^b$ is equivariant with respect to the second copy of $\K^\times$.
This is checked directly from the formulas above. Also tracking the construction of the isomorphism
$\theta:\Halg^{\wedge_\Leaf}|_{\Leaf}\rightarrow \CC(\underline{\Halg}^{\wedge_{\underline{\Leaf}}}|^{tw}_{\underline{\Leaf}})^\Xi$, we see that it can be made $(\K^\times)^2$-equivariant. So the isomorphism
$\theta^b:\Halg^{\wedge_b}\rightarrow \CC(\underline{\Halg}^{\wedge_0})$ induced by $\theta$ is also equivariant with respect to the second
copy of $\K^\times$.

We do not know whether it is possible to find $\theta$ with $\theta^b=\vartheta^b$.
However, the following claim holds.

\begin{Lem}\label{Lem:isom_comp}
There is an element $f\in \param\otimes \K[\h/\underline{W}]^{\wedge_0}$ such that $\theta^b=\exp(\frac{1}{\tb}\operatorname{ad}(f))\vartheta^b$.
\end{Lem}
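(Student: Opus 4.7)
The strategy is to study the discrepancy $\phi := \theta^b \circ (\vartheta^b)^{-1}$ between the two completion isomorphisms, show it is inner, and then refine the inner-generating element to the required form.

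First, I will observe that $\phi$ is a $\K[[\param^*]]$-algebra automorphism of $\CC(\underline{\Halg}^{\wedge_0})$ with several good properties. Both $\theta^b$ and $\vartheta^b$ are equivariant for the second copy of $\K^\times$ (as noted just before the lemma statement), so $\phi$ is second-$\K^\times$-equivariant. Both also reduce modulo $\param$ to the localization/completion of $\theta_0$ at $b$, so $\phi \equiv \id$ modulo $\param$. Therefore the series
\[
D := \ln(\phi) = \sum_{k \geq 1} \frac{(-1)^{k+1}}{k}(\phi-\id)^k
\]
converges in the $\param$-adic topology and defines a $\K[[\param^*]]$-linear, second-$\K^\times$-equivariant derivation of $\CC(\underline{\Halg}^{\wedge_0})$ that is trivial modulo $\param$.

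Next I will apply a pointwise analogue of Proposition \ref{Prop:2.3}(A): every $\K[[\param^*]]$-linear derivation of $\CC(\underline{\Halg}^{\wedge_0})$ has the form $\frac{1}{\tb}\ad(a)$ for some $a \in \z^{\tb}(\CC(\underline{\Halg}^{\wedge_0}))$. (The argument of Proposition \ref{Prop:2.3} carries over verbatim, replacing sheaves of sections over open subsets of $\underline{\Leaf}$ by completions at the point $b$.) By Lemma \ref{Lem:auxil}, the triviality of $D$ modulo $\param$ forces $a \in \param\,\z^{\tb}(\CC(\underline{\Halg}^{\wedge_0})) + \K[[\param^*]]$; the scalar summand contributes nothing to $\ad$, so I may take $a \in \param\,\z^{\tb}(\CC(\underline{\Halg}^{\wedge_0}))$.

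Then I will use second-$\K^\times$-equivariance of $D$ (hence of $\ad(a)$) to replace $a$ by its isotypic component of degree $2$ — the degree of $\param$, so that $\frac{1}{\tb}\ad(a)$ has degree $0$. Writing $a = \sum_{i=0}^r \cb_i a_i$ with $a_i$ of degree $0$, each $a_i$ is central modulo $\tb$; the degree-$0$ part of $\CC(\underline{\Halg}^{\wedge_0})/\tb$ is identified with $\CC(\K[\h]^{\wedge_0}\#\underline{W})$, whose center (by Morita equivalence with $\K[\h]^{\wedge_0}\#\underline{W}$, whose center is $\K[\h]^{\underline{W},\wedge_0}$) is precisely $\K[\h/\underline{W}]^{\wedge_0}$. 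Hence modulo $\tb$ the component $a_i$ lies in $\K[\h/\underline{W}]^{\wedge_0}$. An inductive lifting in powers of $\tb$, in which I modify $a$ by elements of the true center $\K[[\param^*]] \subset \z^{\tb}(\CC(\underline{\Halg}^{\wedge_0}))$ (which leave $\ad(a)$ unchanged), produces $f \in \param \otimes \K[\h/\underline{W}]^{\wedge_0}$ with $\frac{1}{\tb}\ad(f) = D$, and so $\phi = \exp(\frac{1}{\tb}\ad(f))$ as required.

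The hard part will be the last paragraph: verifying that representatives in $\K[\h/\underline{W}]^{\wedge_0}$, rather than merely in $\z^{\tb}(\CC(\underline{\Halg}^{\wedge_0}))_{\deg 0}$, can be found at every order of $\tb$, and that the accumulated corrections lie in the central subalgebra $\K[[\param^*]]$ so as not to disturb the condition $\frac{1}{\tb}\ad(f) = D$. This will require careful use of the $\K[[\param^*]]$-flatness of $\CC(\underline{\Halg}^{\wedge_0})$ together with the explicit description of its degree-$0$ subalgebra, in essentially the same spirit as the proof of Lemma \ref{Lem:auxil}.
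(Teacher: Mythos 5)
Your proposal takes essentially the same route as the paper: form $\rho=\theta^b\circ(\vartheta^b)^{-1}$, observe it is the identity modulo $\param$ and equivariant for the second $\K^\times$, take the logarithm, realize the resulting derivation as $\frac{1}{\tb}\ad(f)$ with $f\in\param\,\z^\tb$, then use equivariance to pin $f$ down. The difference is in the last step. The paper additionally invokes $W$-equivariance, which is free because both $\theta^b$ and $\vartheta^b$ are $\K W$-algebra homomorphisms, and then simply asserts that degree $2$ together with $W$-invariance and membership in $\param\,\z^\tb$ forces $f\in\param\otimes\K[\h/\underline{W}]^{\wedge_0}$. Your argument replaces $W$-equivariance with an analysis of the degree-$0$ subalgebra $\CC(\K[\h]^{\wedge_0}\#\underline{W})$ and its center; this also works, but be aware that you need two separate inputs to get all of the coefficients $a_i$ central there: for $i\geq 1$ the condition $a\in\z^\tb$ suffices (write $[a,b]=\sum_i\cb_i[a_i,b]$ for degree-$0$ $b$, and use that this lies in $\tb\,\CC$ together with linear independence of the $\cb_i$ over the degree-$0$ part), while for $i=0$ you must use that $D$ itself vanishes modulo $\param$ (so that $\ad a_0$ kills degree-$0$ elements, being at once degree-$0$ and valued in $\param\,\CC$). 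Finally, the ``hard part'' you flag in your last paragraph is illusory: you have already chosen $a$ to have degree $2$ under the second $\K^\times$, and every $\cb_i$, including $\tb=\cb_0$, has degree $2$ under that action, so the coefficients $a_i$ (which have degree $0$) lie in the $\tb$-free and $\cb$-free subalgebra $\CC(\K[\h]^{\wedge_0}\#\underline{W})$ on the nose. There is nothing to lift order-by-order in $\tb$; once you know $a_i$ is central in that degree-$0$ subalgebra you are done, because its center is exactly $\K[\h/\underline{W}]^{\wedge_0}$.
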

\begin{proof}
Consider the automorphism $\rho:=\theta^b\circ (\vartheta^{b})^{-1}$ of the algebra
$\CC(\underline{\Halg}^{\wedge_0})$. This automorphism is the identity modulo $\param$
and is $\K^\times\times W$-equivariant. So $\rho=\exp(d)$, where $d$ is a $\param$-linear $\K^\times\times W$-equivariant derivation of $\CC(\underline{\Halg}^{\wedge_0})$.
 As in the proof of Proposition \ref{Prop:2.3}, we see that $d=\frac{1}{\tb}\ad(f)$ for some $f\in \param\z^{\tb}(\CC(\underline{\Halg}^{\wedge_0}))$. We may assume that $\frac{1}{\tb}f$ is
$\K^\times\times W$-invariant. But the latter just means that $f\in \param\otimes \K[\h/\underline{W}]^{\wedge_0}$.
\end{proof}

In particular, we see that $\theta^b(x)=\vartheta^b(x)$ for $x\in \h^*$ (in fact, this follows directly
from the $\K^\times$-equivariance condition).

\subsection{Associated varieties and $\bullet^\ddag$}\label{SUBSECTION_assoc_var}
In this subsection we will use the results of the previous one to relate the associated varieties
of the ideals $\Ialg\subset \underline{\tilde{\Halg}}^+$ and $\Ialg^{\ddag}\subset \tilde{\Halg}$.

In the sequel we will need a remarkable {\it Euler} element in $\tilde{\Halg}$.
It is given by the following formula:
\begin{equation}\label{eq:Euler}
\mathbf{h}=\sum_{i=1}^n x_iy_i+\frac{\dim \h}{2}-\sum_{s\in S}\frac{2\cb(s)}{1-\lambda_s}s.
\end{equation}
Here $y_1,\ldots,y_n$ is a basis in $\h$, and $x_1,\ldots,x_n$ is a dual basis in $\h^*$.
A crucial property of $\mathbf{h}$ is that $[\mathbf{h},x_i]=x_i, [\mathbf{h},y_i]=-y_i$
for all $i$. We can define the Euler element $\underline{\mathbf{h}}^+\in \underline{\tilde{\Halg}}^+$
in a similar way.

\begin{Prop}\label{Prop:up_dag_assoc_var}
Let $\Ialg$ be a Poisson ideal in $\underline{\tilde{\Halg}}^+$. Suppose that $\VA(\underline{\tilde{\Halg}}^+/\Ialg)$ coincides with the closure of the symplectic leaf $\underline{\Leaf}_0\subset V_+^*/\underline{W}$ corresponding to a subgroup $W_0\subset \underline{W}$. Then $\VA(\tilde{\Halg}/\Ialg^\ddag)$ coincides with the closure of the leaf
$\Leaf_0\subset V^*/W$ corresponding to $W_0(\subset W)$.
\end{Prop}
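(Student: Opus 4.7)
By Corollary \ref{Cor:fun_b} we have $\Ialg^\ddag=\Ialg^{\ddag,b}$, which by the construction in Subsection \ref{SUBSECTION_Functors_b} is the kernel of
$$\tilde{\Halg}\;\longrightarrow\;\Halg^{\wedge_b}\;\xrightarrow{\vartheta^b}\;\CC(\underline{\tilde{\Halg}}^{\wedge_0})\;\twoheadrightarrow\;\CC\bigl(\underline{\tilde{\Halg}}^{\wedge_0}/\underline{\tilde{\Halg}}^{\wedge_0}\,\Ialg\,\underline{\tilde{\Halg}}^{\wedge_0}\bigr),$$
where $\vartheta^b$ is the explicit Bezrukavnikov--Etingof isomorphism of Subsection \ref{SUBSECTION_BE_completions} (by Lemma \ref{Lem:isom_comp}, replacing $\vartheta^b$ with $\theta^b$ yields the same kernel because the two differ by an inner automorphism). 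Consequently the completed quotient bimodule $(\tilde{\Halg}/\Ialg^\ddag)^{\wedge_{\pi(b)}}$ is identified, via $\vartheta^b$, with $\CC(\underline{\tilde{\Halg}}^{\wedge_0}/\underline{\tilde{\Halg}}^{\wedge_0}\,\Ialg\,\underline{\tilde{\Halg}}^{\wedge_0})$, and the first task is to read off $\VA(\tilde{\Halg}/\Ialg^\ddag)^{\wedge_{\pi(b)}}$ from this presentation.

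Modulo $\param$ (which is central, hence invisible to associated varieties), the tensor decomposition $\underline{\Halg}^{\wedge_0}=\W_\tb^{\wedge_0}(V_0)\widehat{\otimes}\underline{\Halg}^{+,\wedge_0}$ together with the Morita-invariance of associated varieties under the centralizer construction identifies
$$\VA\bigl(\CC(\underline{\Halg}^{\wedge_0}/\underline{\Halg}^{\wedge_0}\,\Ialg\,\underline{\Halg}^{\wedge_0})\bigr)=(V_0^*)^{\wedge_0}\times(\overline{\underline{\Leaf}_0})^{\wedge_0}$$
inside $(V^*/\underline{W})^{\wedge_{\underline{\pi}(b)}}$. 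Since $\pi'$ is \'etale at $\underline{\pi}(b)\in\underline{\X}^0$, it gives an isomorphism $(V^*/\underline{W})^{\wedge_{\underline{\pi}(b)}}\cong(V^*/W)^{\wedge_{\pi(b)}}$, under which the formal subvariety on the left corresponds to the local sheet of $\pi'^{-1}(\overline{\Leaf}_0)$ through $\underline{\pi}(b)$. The elementary stabilizer relation $W_{b+v_0+v_+}=\underline{W}_{v_+}$ (valid because $W_b=\underline{W}$ and $\underline{W}$ fixes $V_0^*$ pointwise), combined with the observation that the non-trivial $\Xi$-translates $\xi\,\overline{\underline{\Leaf}_0}$ with $\xi\notin\underline{W}$ do not pass through $\underline{\pi}(b)$, shows that this sheet is precisely $(\overline{\Leaf}_0)^{\wedge_{\pi(b)}}$. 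This gives the local equality $\VA(\tilde{\Halg}/\Ialg^\ddag)^{\wedge_{\pi(b)}}=(\overline{\Leaf}_0)^{\wedge_{\pi(b)}}$.

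For the global statement, $\VA(\tilde{\Halg}/\Ialg^\ddag)$ is a closed $\K^\times$-stable (hence conic) Poisson subvariety of $V^*/W$, so a finite union of closures of symplectic leaves, each containing $0$; the local step forces $\overline{\Leaf}_0$ to appear among its components. The main obstacle is ruling out extra components that do not meet the formal neighborhood of $\pi(b)$. My plan is to combine two tools. First, Proposition \ref{Prop:3.10.1}(4) applied to $(\tilde{\Halg}/\Ialg^\ddag)_\dagger$ — which by the argument in the proof of Theorem \ref{Thm:4I}(5) equals $\underline{\tilde{\Halg}}^+/\bigcap_{\gamma\in\Xi}\gamma\,\Ialg$, so has $\VA=\bigcup_{\gamma\in\Xi}\gamma\,\overline{\underline{\Leaf}_0}$ — controls the components of $\VA(\tilde{\Halg}/\Ialg^\ddag)$ whose closures intersect $\Leaf$, forcing them all to lie in $\overline{\Leaf}_0$ after passing to $W$-orbits. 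Second, the Bezrukavnikov--Etingof construction admits verbatim generalizations at any base point $b'\in V^*$ (with $\underline{W}$ replaced by $W_{b'}$), giving analogous local equalities at $\pi(b')$; since closed conic subvarieties of $V^*/W$ are determined by their formal completions along a dense subset of $\overline{\Leaf}_0$, this pins down the global identity $\VA(\tilde{\Halg}/\Ialg^\ddag)=\overline{\Leaf}_0$.
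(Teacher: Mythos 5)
Your approach is genuinely different from the paper's, and the first, local, step is broadly in the right spirit (indeed the paper also exploits $\vartheta^b$ and $\theta^b$ via Lemma~\ref{Lem:isom_comp}), but the globalization has a real gap.

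The difficulty is this: the formal computation at $\pi(b)$ only tells you that $\VA(\tilde{\Halg}/\Ialg^\ddag)^{\wedge_{\pi(b)}}=(\overline{\Leaf}_0)^{\wedge_{\pi(b)}}$. That is consistent with $\VA(\tilde{\Halg}/\Ialg^\ddag)$ having additional irreducible components $\overline{\Leaf}'$ whose closure avoids the formal neighbourhood of $\pi(b)$, i.e.\ components whose parabolic $W'$ is not conjugate to a subgroup of $\underline W$. Such components are precisely the ones that are \emph{killed} by $\bullet_\dagger$, so Proposition~\ref{Prop:3.10.1}(4) applied to $(\tilde{\Halg}/\Ialg^\ddag)_\dagger$ cannot detect them either -- it constrains only the components that do see $\Leaf$. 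Your second tool, running the Bezrukavnikov--Etingof isomorphism at other base points $b'$, does not close the gap: the ideal $\Ialg^\ddag$ was \emph{defined} via the slice at $b$, and at a general $b'$ you get an identification of $\Halg^{\wedge_{b'}}$ with a centralizer of a different slice algebra $\underline{\Halg}'$ without any a priori description of the image of $\Ialg^\ddag$ as an ideal related to $\Ialg$. So "the formal completion along a dense subset of $\overline{\Leaf}_0$" is not something you actually have access to, and in any case components disjoint from $\Leaf_0$ would remain invisible.

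The paper sidesteps this entirely by producing a \emph{global} upper bound first: it shows $\Ialg^\ddag\supset I^n\cap\K[\h]^W$ for $I$ the ideal of $\pi_{\underline W}(\h^{W_0})$, which gives $\VA(\tilde{\Halg}/\Ialg^\ddag)\subset\overline{\Leaf}_0$ outright, with no formal-neighbourhood ambiguity. The two ingredients you would need to reproduce this are (i) the comparison $\gr\Ialg=\gr^y\Ialg$ of associated graded ideals for the standard and the ``$y$''-filtration, which uses the inner grading by $\frac{1}{\tb}\ad(\underline{\mathbf h}_+)$ together with the Poisson condition on $\Ialg$, and (ii) the fact that $\vartheta^b$ (hence also $\theta^b$, after Lemma~\ref{Lem:isom_comp}) acts on $\h^*\subset\Halg$ by the simple affine formula $x_\alpha\mapsto\underline x_{w\alpha}+\langle b,w\alpha\rangle$, with no denominators, so that $\K[\h]^W$-elements can be tracked explicitly. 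Once the inclusion $\VA\subset\overline{\Leaf}_0$ is in hand, the reverse inequality is a one-line dimension count via Proposition~\ref{Prop:3.10.1}(4) and $(\Ialg^\ddag)_\dagger\subset\Ialg$. If you want to keep your structure, you should replace the globalization paragraph with a direct proof of the inclusion $\VA(\tilde{\Halg}/\Ialg^\ddag)\subset\overline{\Leaf}_0$ along these lines; the local identification you wrote then becomes unnecessary for the upper bound, and the lower bound is the dimension argument.
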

\begin{proof}
Set $\h_+:=\h\cap V_+$. First, we are going to show that the associated variety of $\Ialg\cap \K[\h_+]$ coincides
with $\underline{W} \h_+^{W_0}$.

To this end let us show that the associated graded ideal $\gr\Ialg$ of $\Ialg$ with respect to the usual grading
coincides with the associated graded $\gr^y\Ialg$ for the grading induced by the $\K^\times$-action considered
in the previous subsection (we will call it the $y$-grading because the degree of $y\in \h_+$ is 2,
while the degree of $x\in \h_+^*$ is 0).

The $\K^\times$-action on $\underline{\tilde{\Halg}}^+$ given by $t.x=t x, t.y=t^{-1}y, t.w=w,t.\hbar=\hbar,
t.\cb_i=\cb_i, x\in \h^*_+, y\in\h_+$ is inner, the corresponding grading is the  inner grading  given by eigenvalues
of $\frac{1}{\tb}\ad(\underline{\mathbf{h}}_+)$.  Being Poisson (and
so $\frac{1}{\tb}\ad(\mathbf{h}_+)$-stable),
the ideal $\Ialg$ is graded with respect to the inner grading. The standard grading and the
$y$-grading differ by the inner grading. This implies the claim in the previous paragraph.

Now $\Ialg\cap \K[\h_+]^{\underline{W}}$ is the 0th component in $\gr^y (\underline{\tilde{\Zalg}}_+\cap\Ialg)$.
To establish the statement in the beginning of the proof it is enough to check that the associated variety of the $\K[\h_+]^{\underline{W}}\,$-module $\K[\h_+]/\K[\h_+]\cap \Ialg$ coincides with $\pi_{\underline{W}}(\h_+^{W_0})$. The latter is nothing else
but the projection of $\VA(\underline{\tilde{\Halg}}^+/\Ialg)=\pi_{\underline{W}}(V_+^{W_0})$ to $\h_+/\underline{W}$ under the natural morphism $V_+/\underline{W}\twoheadrightarrow \h_+/\underline{W}$. So it coincides with the variety of zeros of  $\Ialg\cap \K[\h_+]^{\underline{W}}$.
This implies the claim in the beginning of the proof.

Now let us show that the variety of zeros of $\Ialg^{\ddag}\cap \K[\h]^W$ is contained in
$\pi_W(\h^{W_0})$. Let $\W_\tb$ stand for the Weyl algebra of $V^{\underline{W}}$
and let $I$ be the ideal of $\pi_{\underline{W}}(\h^{W_0})\subset \K[\h]^{\underline{W}}$.
By the above, $(\W_{\tb}\otimes_{\K[\tb]}\Ialg)\cap \K[\h]^{\underline{W}}\supset I^n$ for some $n$.
By the construction of $\Ialg^{\ddag}$ using $\theta^b$ given in Subsection \ref{SUBSECTION_Functors_b}, we see that
$\Ialg^\ddag\supset (\theta^{b})^{-1}\left[(\W_{\tb}\otimes_{\K[\tb]}\Ialg)\cap \K[\h]^{\underline{W}}\right]$.
Now we can use Lemma \ref{Lem:isom_comp} and the explicit form of $\vartheta^b$ to see that
$\Ialg^{\ddag}\supset I^n\cap \K[\h]^{W}$. This implies the claim in the beginning of the paragraph.

So we see that $\VA(\tilde{\Halg}/\Ialg^{\ddag})\subset \pi_W(V^{W_0})$. Let us prove
the equality. The dimension of $\VA(\underline{\tilde{\Halg}}^+/(\Ialg^{\ddag})_{\dagger})$
equals $\dim \VA(\tilde{\Halg}/\Ialg^{\ddag})-\dim V^{\underline{W}}$ by assertion 4 of Proposition \ref{Prop:3.10.1}.
On the other hand, $\dim V^{W_0}-\dim V^{\underline{W}}=\dim V_+^{W_0}=\dim \VA(\underline{\tilde{\Halg}}^+/\Ialg)$.
It remains to notice that $(\Ialg^{\ddag})_{\dagger}\subset \Ialg$
and so $\dim \VA(\underline{\tilde{\Halg}}^+/(\Ialg^{\ddag})_{\dagger})\geqslant \dim \VA(\underline{\tilde{\Halg}}^+/\Ialg)$.
\end{proof}

\subsection{Equivalence of definitions of Harish-Chandra bimodules}\label{SUBSECTION_RCA_HC}
Let us recall the notion of a Harish-Chandra $\Hrm_{1,c}$-$\Hrm_{1,c'}$-bimodule from \cite{BEG_HC}.
Consider the subalgebras $(S\h)^W$, $(S\h^*)^W\subset \Hrm_{1,c},\Hrm_{1,c'}$. In \cite{BEG_HC} an $\Hrm_{1,c}$-$\Hrm_{1,c'}$-bimodule $\M$ is called Harish-Chandra if the operators $\ad(a),\ad(b)$ are
locally nilpotent on $\M$ for any $a\in (S\h^*)^W$ and $b\in (S\h^*)^W$.
The goal of this subsection is to show that this definition  is equivalent to
Definition \ref{defi_HC2} (we remark that no analog
of the Berest-Etingof-Ginzburg definition is known for general SRA).

The easier implication is that a HC bimodule in the sense of Definition \ref{defi_HC2}
is also HC in the sense of \cite{BEG_HC}. This is based on the following proposition.

\begin{Prop}\label{Prop:HC_comparison}
Let $\Malg\in \HC(\tilde{\Halg})$. Then the operators $\ad(a),\ad(b)$ are locally nilpotent for any
$a\in (S\h^*)^W,b\in (S\h)^W$.
\end{Prop}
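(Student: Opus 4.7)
The plan is to combine the central nature of $(S\h^*)^W$ in $SV\#W$ with a Poisson/filtration analysis on the associated graded of $\Malg$.

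First I would establish that $\ad(a)\Malg\subset\param\Malg$. Under the projection $\tilde{\Halg}\twoheadrightarrow\tilde{\Halg}/\param\tilde{\Halg}=SV\#W$, the element $a\in(S\h^*)^W$ lands in $(SV)^W=\z(SV\#W)$. Therefore $\ad(a)$ induces the zero derivation on $\Malg/\param\Malg$, which is exactly the claim. Since $a$ commutes with $\hbar$ and each $\cb_i$, the derivation $\ad(a)$ annihilates the generators of $\param$, and an induction using the Leibniz rule yields $\ad(a)^n\Malg\subset\param^n\Malg$ for every $n$.

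Next I would pass to the classical limit. Choose a good filtration $\F_i\Malg$ compatible with the Bernstein filtration on $\tilde{\Halg}$, so that $\gr\Malg$ is a finitely generated $SV\#W$-module. The Poisson bracket $D_a:=\{a,\cdot\}$ is a derivation of $SV\#W$ that kills the commutative subalgebra $(S\h^*)\#W$ and sends $y\in\h$ to an element of $S^{d-1}\h^*$, which is in turn annihilated by $D_a$. A Leibniz iteration then shows $D_a^{|\beta|+1}(y^\beta)=0$, so $D_a$ is locally nilpotent on $SV\#W$ and, by finite generation, on $\gr\Malg$. Hence for every $m\in\F_i\Malg$ there exists $N$ with $D_a^N\sigma(m)=0$, which means $\sigma(\ad(a)^N m)=0$ and consequently $\ad(a)^N m$ lies in strictly lower Bernstein filtration than predicted by the naive bound.

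The final step is to convert this filtration drop into genuine vanishing. When $a$ has Bernstein degree $1$ or $2$, $\ad(a)$ does not raise the filtration, so the symbol-vanishing argument immediately decreases the filtration degree by $1$ in each cycle of $N$ iterations; repeating drives $\ad(a)^{M}m$ into $\F_{-1}\Malg=0$. For $a$ of higher Bernstein degree, the naive filtration estimate grows after each cycle, and the main technical point is to compensate for this by combining the symbol-drop with the containment $\ad(a)^n\Malg\subset\param^n\Malg$ proved in the first step. In the Rees picture, where $\Malg=R_\hbar(\M)$ for an HC $\Hrm_{1,c}$-bimodule, this becomes $\ad(a)^n\Malg\subset\hbar^{2n}\Malg$ (since $\cb_i=c_i\hbar^2$), and tracking both the power of $\hbar$ and the Bernstein filtration of the underlying element of $\M$ forces $\ad(a)^N m=0$ after finitely many cycles.

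The main obstacle is precisely this bookkeeping for $a$ of high Bernstein degree, where one must balance the $\hbar$-adic estimate against the filtration growth; the argument for $\ad(b)$ with $b\in(S\h)^W$ is entirely symmetric, exchanging the roles of $\h$ and $\h^*$ throughout.
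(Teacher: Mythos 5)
Your opening step reaches a true containment, but for an insufficient reason, and the decisive claim of your second step is a genuine gap. First, the minor point: centrality of the image of $a$ in $SV\#W$ does not by itself make $\ad(a)$ vanish on the quotient bimodule $\Malg/\param\Malg$ — for a bimodule over an algebra the left and right actions of a central element need not agree. What does give the (stronger) containment $[a,m]\in\tb\Malg$ is the Poisson-bimodule structure: $a\in\z^{\tb}(\tilde{\Halg})$ and axiom \eqref{eq:4.11.1} yield $[a,m]=\tb\{a,m\}$, cf.\ assertion (4) of Lemma \ref{Lem:4.2.2}. The major point is the sentence ``so $D_a$ is locally nilpotent on $SV\#W$ and, by finite generation, on $\gr\Malg$.'' Local nilpotency of $\{a,\cdot\}$ on the algebra plus finite generation of a Poisson module does \emph{not} imply local nilpotency on the module: the Leibniz rule transfers nilpotency only once it is known on a generating set, which is precisely what is to be proved. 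Concretely, take $A=\K[x,y]$ with $\{x,y\}=1$ and twist the tautological Poisson module $M=A$ by the Poisson derivation $\partial_x$, i.e.\ set $\{z,m\}_{new}:=\{z,m\}+\partial_x(z)\,m$; this satisfies all the axioms, $M$ is cyclic, $\{x^2,\cdot\}$ is locally nilpotent on $A$, yet $\{x^2,\cdot\}_{new}^{\,n}(1)=(2x)^n\neq 0$. Moreover, the compensation you hope to extract from $\ad(a)^n\Malg\subset\hbar^{2n}\Malg$ in the Rees picture is empty: writing $m=\hbar^i v$, $a=\hbar^d\bar a$, that containment says exactly $[\bar a,\F_i\M]\subset\F_{i+d-2}\M$, i.e.\ it is the filtration compatibility already built into Definition \ref{defi_HC2} and re-encodes the naive bound $\ad(\bar a)^n\F_i\M\subset\F_{i+n(d-2)}\M$; it therefore cannot offset the filtration growth when $\deg a>2$, and the ``finitely many cycles'' bookkeeping has nothing to run on.

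The finiteness mechanism your argument lacks is supplied in the paper by the Euler element $\mathbf{h}$ of \eqref{eq:Euler}. Since $\mathbf{h}\in\z^{\tb}(\tilde{\Halg})$ and has degree $2$, the operator $\tfrac{1}{\tb}\ad(\mathbf{h})=\{\mathbf{h},\cdot\}$ preserves each graded component of $\Malg$, so $\Malg_i=\bigoplus_\alpha\Malg_{i,\alpha}$ splits into generalized eigenspaces; setting $\Malg_{(\beta)}:=\bigoplus_i\Malg_{i,\beta+i}$, finite generation of the graded bimodule shows that the nonzero $\Malg_{(\beta)}$ occur only along finitely many strings $\beta_j-\mathbb{Z}_{\geqslant 0}$, while $x\in\h^*$ preserves $\Malg_{(\beta)}$ and $y\in\h$ lowers $\beta$ by $2$. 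For $a\in(S\h^*)^W$ homogeneous, $\ad(a)$ preserves each $\Malg_{(\beta)}$ and hence $\{a,\cdot\}=\tfrac1\tb\ad(a)$ raises $\beta$ by $2$, so it is locally nilpotent; your Step 1 identity $\ad(a)^n=\tb^n\{a,\cdot\}^n$ then gives local nilpotency of $\ad(a)$ itself, and the case $b\in(S\h)^W$ is symmetric. In short: the quantity that must strictly increase under $\ad(a)$ is not the Bernstein filtration degree (which grows the wrong way for $\deg a>2$) but the difference between the $\mathbf{h}$-weight and the grading, and that is the ingredient your proposal is missing.
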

\begin{proof}
Recall the Euler element $\mathbf{h}\in \tilde{\Halg}$ defined by (\ref{eq:Euler}). Hence $\frac{1}{\tb}[\mathbf{h},\cdot]$ is a grading preserving linear map $\Malg\rightarrow \Malg$. So we have the decomposition of the graded component $\Malg_i$ into the direct sum $\bigoplus_\alpha \Malg_{i,\alpha}$ of generalized eigenspaces of $\frac{1}{\tb}\ad(\mathbf{h})$.
For $\beta\in \K$ set $\Malg_{(\beta)}:=\bigoplus_i \Malg_{i,\beta+i}$. Then $x\Malg_{(\beta)}\subset \Malg_{(\beta)}$
for any $x\in \h^*$, while $y\Malg_{(\beta)}=\Malg_{(\beta-2)}$ for $y\in \h$. Since $\Malg$ is a finitely generated
graded $\Halg$-module, we see that there are finitely many elements $\beta_i\in \K$ with the property
that $\Malg_{(\beta)}=0$ whenever $\beta_i-\beta\not\in \mathbb{Z}_{\geqslant 0}$.

On the other hand,
for $a\in (S\h^*)^W$ we have $\ad(a)\Malg_{(\beta)}\subset\Malg_{(\beta)}$ and hence $\frac{1}{\tb}\ad(a)\Malg_{(\beta)}
\subset \Malg_{(\beta+2)}$. It follows that the operator $\frac{1}{\tb}\ad(a)$ is locally nilpotent.
The proof for $b\in (S\h)^W$ is similar.
\end{proof}

The following claim follows directly from Proposition \ref{Prop:HC_comparison} and the definition
of $_{\, c\!}\HC(\Hrm)_{c'}$.

\begin{Cor}\label{Cor:HC_comparison}
Any $\M\in _{\, c\!}\HC(\Hrm)_{c'}$ is a Harish-Chandra bimodule in the sense of \cite{BEG_HC}.
\end{Cor}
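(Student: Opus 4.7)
My plan is to reduce directly to Proposition \ref{Prop:HC_comparison} via the Rees construction, then specialize $\hbar = 1$. By Definition \ref{defi_HC2}, any $\M \in {}_{\,c\!}\HC(\Hrm)_{c'}$ admits a compatible filtration $\F_\bullet \M$ for which the Rees bimodule $\Malg := R_\hbar(\M)$ lies in $\HC(\tilde{\Halg})$. Proposition \ref{Prop:HC_comparison} applied to $\Malg$ then gives, for every $a \in (S\h^*)^W$ and every $b \in (S\h)^W$, that the derivations $\ad(a)$ and $\ad(b)$ on $\Malg$ are locally nilpotent.

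Next I will transport this local nilpotency from $\Malg$ to $\M$ using the tautological bimodule isomorphism $\Malg/(\hbar - 1)\Malg \cong \M$. Under the surjection $\tilde{\Halg} \twoheadrightarrow R_\hbar(\Hrm_{1,c})$ from Subsection \ref{SUBSECTION_HC_not} (and its analogue for $c'$), specialization at $\hbar = 1$ sends the copy of $a \in (S\h^*)^W \subset \tilde{\Halg}$ to the tautological $a \in \Hrm_{1,c}$ and the tautological $a \in \Hrm_{1,c'}$, so the bimodule commutator $[a, -]$ on $\Malg$ descends to the bimodule commutator $[a, -]$ on $\M$. Given $m \in \M$, I choose $i$ with $m \in \F_i \M$, lift to $\tilde m := \hbar^i m \in \Malg$, apply local nilpotency to pick $N$ with $(\ad a)^N \tilde m = 0$, and reduce modulo $(\hbar - 1)$ to obtain $(\ad a)^N m = 0$ in $\M$. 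The argument for $b$ is identical.

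There is no genuine obstacle here: the proof is pure bookkeeping on top of Proposition \ref{Prop:HC_comparison}. The only small point worth checking is that passing from $\tilde\Halg$ through the Rees-algebra quotients and then setting $\hbar = 1$ preserves both left and right actions coherently, so that the bimodule commutators truly match up; but this is forced by the definition of $R_\hbar(\M) = \bigoplus_i \hbar^i \F_i \M$ together with the explicit presentation of $R_\hbar(\Hrm_{1,c})$ recalled in Subsection \ref{SUBSECTION_HC_not}, and in particular is independent of the choice of compatible filtration $\F_\bullet \M$.
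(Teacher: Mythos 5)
Your argument is correct and is exactly the intended deduction: the paper simply asserts that the corollary ``follows directly from Proposition~\ref{Prop:HC_comparison} and the definition of $_{\,c\!}\HC(\Hrm)_{c'}$,'' and your write-up spells out that bookkeeping (choose a compatible filtration as in Definition~\ref{defi_HC2}, apply Proposition~\ref{Prop:HC_comparison} to $R_\hbar(\M)$, specialize $\hbar=1$). Same route, just made explicit.
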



\begin{Prop}\label{Prop:HC_coinc}
Let a $\Hrm_{1,c}$-$\Hrm_{1,c'}$-bimodule $\M$ be HC in the sense of \cite{BEG_HC}. Then there exists a filtration
$\F_i\M$ satisfying the conditions of Definition \ref{defi_HC2}.
\end{Prop}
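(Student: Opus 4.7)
The plan is to build a suitable filtration on $\M$ from a finite-dimensional generating subspace that has been enlarged so as to be stable under the adjoint action of the commutative generators coming from the BEG hypothesis. First I would pick a finite-dimensional $W$-stable subspace $E_0\subset\M$ which generates $\M$ as an $\Hrm_{1,c}$-$\Hrm_{1,c'}$-bimodule; this is available since in the setting of \cite{BEG_HC} Harish-Chandra bimodules are by convention finitely generated. Now fix finite homogeneous generating sets $a_1,\dots,a_p$ of $(S\h^*)^W$ and $b_1,\dots,b_q$ of $(S\h)^W$. The BEG hypothesis says each $\ad a_i,\ad b_j$ is locally nilpotent on $\M$, so
\[
E := \sum_{I,J}(\ad a)^{I}(\ad b)^{J}E_0
\]
is again finite-dimensional. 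By construction $E$ is $W$-stable, still generates $\M$, and is stable under every $\ad a_i$ and $\ad b_j$.

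With $E$ in hand, I would define
\[
\F_i\M := \sum_{j+k\le i}\F_j\Hrm_{1,c}\cdot E\cdot \F_k\Hrm_{1,c'}.
\]
This is manifestly compatible with the standard filtrations on the two algebras from either side, has finite-dimensional pieces (a finite sum of finite-dimensional products), vanishes in negative degrees, and exhausts $\M$. The corresponding Rees module $R_\hbar(\M)=\bigoplus_i\hbar^i\F_i\M$ is then a graded $\tilde{\Halg}$-bimodule, finitely generated (by $E$ sitting in degree $0$) with finite-dimensional graded components that vanish below some degree.

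The crucial remaining point is that $R_\hbar(\M)$ carries the structure of a Poisson $\tilde{\Halg}$-bimodule: for any $\tilde z\in\z^\tb(\tilde{\Halg})$ lifting $\bar z\in(SV)^W$ of degree $d$ and any $m\in\F_k\M$, one must establish $z_0 m-m z_0'\in\F_{k+d-2}\M$, where $z_0\in\F_d\Hrm_{1,c}$ and $z_0'\in\F_d\Hrm_{1,c'}$ are natural lifts of $\bar z$; this is exactly the condition for $\{z,m\}:=\hbar^{-2}[z,m]$ to take values in $R_\hbar(\M)$. Expanding $m$ as $\sum_\alpha a_\alpha e_\alpha b_\alpha$ and applying Leibniz, the check reduces to two kinds of contributions: internal commutators $[z_0,a_\alpha]\in\F_{d+\deg a_\alpha-2}\Hrm_{1,c}$ and $[b_\alpha,z_0']\in\F_{d+\deg b_\alpha-2}\Hrm_{1,c'}$, which have the required filtration drop thanks to the centrality of $\bar z$ in $\gr\Hrm_{1,c}=SV\# W$ together with (\ref{eq:Cherednik_relations}); and bimodule commutators $[z_0,e]$ for $e\in E$ and $z_0$ a lift of a chosen generator $a_i$ or $b_j$, which by the $\ad$-stability of $E$ lie in $E\subset\F_0\M$, hence in $\F_{d-2}\M$ whenever $d\ge 2$. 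Reducing to the symplectically irreducible setting (as in the proof of Theorem \ref{Thm:6}) allows the $a_i$ and $b_j$ to be chosen in degree $\ge 2$.

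The main obstacle in this plan is the Leibniz bookkeeping: one must chase the reduction of an arbitrary $\bar z\in(SV)^W$ to the generators $a_i,b_j$ through products, including mixed invariants that are not polynomials in the $a_i$'s or $b_j$'s alone, and propagate the drop-by-$2$ across each factor while keeping track of the non-commutativity of $V$ with $W$ in the associated graded. Once this is carried out, $R_\hbar(\M)$ becomes a graded, finitely generated Poisson $\tilde{\Halg}$-bimodule, i.e.\ lies in $\HC(\tilde{\Halg})$, and the filtration $\F_\bullet\M$ satisfies Definition \ref{defi_HC2}.
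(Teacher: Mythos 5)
Your plan has two genuine gaps, both located exactly where you flag ``the main obstacle.''

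First, the space $E := \sum_{I,J}(\ad a)^I(\ad b)^J E_0$ is stable under each $\ad a_i$ (these commute among themselves, being adjoints of commuting elements of $(S\h^*)^W$), but it is \emph{not} stable under $\ad b_j$: commuting $\ad b_j$ past $(\ad a)^I$ produces terms of the form $\ad([b_j,a_i])$, and $[b_j,a_i]\in\Hrm_{1,c}$ is a mixed element of filtration degree $\deg a_i+\deg b_j-2$, generally involving both $x$'s and $y$'s together with group-algebra terms from (\ref{eq:Cherednik_relations}). The BEG hypothesis says nothing about local nilpotence of $\ad$ of such mixed elements, and there is no reason for $E$ to be preserved by them. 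So the assertion that $E$ ``is stable under every $\ad a_i$ and $\ad b_j$'' is false, and the Leibniz step that was supposed to control $[z_0,e]$ for $e\in E$ collapses.

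Second, even if one had a bistable $E$, the elements $a_i\in(S\h^*)^W$ and $b_j\in(S\h)^W$ do not generate $(SV)^W$ as an algebra (already for $W=\mathbb{Z}/2$ acting on $\h\oplus\h^*\cong\K^2$ one needs the mixed invariant $xy$), so reducing an arbitrary $\bar z\in(SV)^W$ to the chosen generators requires introducing \emph{new} generators and a separate argument, not just bookkeeping. The paper's proof addresses both issues at once: it never claims that any single finite-dimensional space is simultaneously $\ad a$- and $\ad b$-stable, but instead builds the filtration asymmetrically in five stages (applying monomials in $\ad a$, then monomials in $\ad b$, then $\ad c_\alpha$ for explicit bihomogeneous generators $c_\alpha$ of the mixed invariants, then right multiplication by generators $e_\beta$ of $\K[\h\oplus\h^*]\#W$ over $\K[\h\oplus\h^*]^W$, then left multiplication by a PBW basis), and then verifies (\ref{eq:filtr1})--(\ref{eq:filtr3}) by a delicate induction on the ``length'' of the resulting ordered monomial expressions $v(C,v_0)$, rewriting commutators so that every term appearing is again correctly ordered with respect to that generating set. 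That induction, together with the introduction of the $c_\alpha$ and $e_\beta$, is the actual content of the proof, and is precisely what your proposal leaves unfilled.
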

\begin{proof}
{\it Step 1.}
Before constructing a filtration on $\M$ let us introduce some notation.

Abusing the notation, by $\mathbf{h}$ we denote the images of $\mathbf{h}$ in $\Hrm_{1,c},\Hrm_{1,c'}$.

Pick free homogeneous generators $a_1,\ldots,a_n$ of $(S\h^*)^W$ and $b_1,\ldots, b_n$ of $(S\h)^W$.
Let $d_1,\ldots,d_n$ be their degrees. Further, let $\underline{c}_1,\ldots,\underline{c}_m$ be bi-homogeneous
generators of the $(S\h^*)^W\otimes (S\h)^W$-module $\K[\h\oplus\h^*]^W$. Let their degrees (of polynomials
on $\h\oplus\h^*$) be $d_1',\ldots,d_m'$. Lift $\underline{c}_1,\ldots,\underline{c}_m$ to $W$-invariant $\ad(\mathbf{h})$-eigenvectors in $\Hrm_{1,c},\Hrm_{1,c'}$. Abusing the notation, we denote both
liftings by $c_1,\ldots,c_m$. We remark that if we have another collection of liftings
$\tilde{c}_1,\ldots,\tilde{c}_m$, then $c_i-\tilde{c}_i\in \F_{d_i'-2}\Hrm_{\bullet}$.
Finally, let us choose  bi-homogeneous generators $\underline{e}_1,\ldots, \underline{e}_k$
of $\K[\h\oplus \h^*]\# W$ over $\K[\h\oplus\h^*]^W$ and lift them to  $\ad(\mathbf{h})$-eigenvectors
$e_1,\ldots,e_k$ of $\Hrm_{1,c},\Hrm_{1,c'}$. Let $d_1'',\ldots, d_k''$ be the degrees
of $\underline{e}_1,\ldots, \underline{e}_k$ viewed as polynomials.

On the next steps we  construct a filtration $\F_i\M$ on $\M$ such that
$\gr\M$ is a finitely generated left $SV\#\Gamma$-module and
\begin{align}\label{eq:filtr1}
&[z, v]\in \F_{i+j-2}\M,\\\label{eq:filtr2}
&vh\subset  \F_{i+j}\M,\\\label{eq:filtr3}
&h v\subset \F_{i+j}\M.
\end{align}
for any $v\in \F_j \M, h\in \F_i \Hrm_{1,\bullet}$ and any $z$ that
is an ordered monomial in $c_k,b_j,a_i$ of total degree $i$ (with the degrees
of $a_i,b_j,c_k$ as above).
This filtration has the properties required in
Definition \ref{defi_HC2}.

{\it Step 2.}
We will define a filtration on $\M$   using the inductive procedure
below.

Let $V_j$ be a finite dimensional subspace in $\M$, and $ A_i,i\in I,$ be linear operators
on $\M$.  Set $V_{j+1}:=\sum_{i\in I} A_i V_j$.
Suppose that $V_j$ is equipped with an increasing filtration $\F_m V_j$.
To each $A_i$ we assign an integral degree $n_i$.
Then we can define a filtration on $V_{j+1}$ by defining $\F_n V_{j+1}$ to be
the linear span of elements $ A_{i}v_0$, where $v_0\in \F_m V_j$
and $m+n_i\leqslant n$.

{\it Step 2.1.} Let $V_0$ be a  subspace
that generates $\M$ as a left $\Hrm_{1,c}$-module. Since $\M$ is a finitely generated
left $\Hrm_{1,c}$-module we can choose $V_0$ to be finite dimensional. For the operators
$A_i$ we take all monomials in $\ad(a_i), i=1,\ldots,n$.
For the degree of $\ad(a_i)$ we take $d_i-2$, and the degree of a monomial
is defined in an obvious way.
Since  $\ad(a_i)$ are pairwise commuting locally nilpotent operators on $\M$, we
see that $V_1$ is finite dimensional.

{\it Step 2.2.} To define $V_2$ we use $V_1$ and the operators that are monomials in $\ad(b_i)$,
whose degrees are defined analogously to Step 3.1. The space $V_2$ is finite dimensional
for the same reason as $V_1$.

{\it Step 2.3.} Now  consider the operators $1,\ad(c_i), i=1,\ldots,m$.
To $\ad(c_i)$ we assign degree $d_i'-2$, and $1$ is supposed to be
of degree 0. We use the filtered
subspace $V_2\subset \M$ and the operators $1,\ad(c_i)$ to define $V_3$,
which is obviously finite dimensional.

{\it Step 2.4.} We define $V_4$ using $V_3$ and the operators of the right
multiplication by $1,e_i$. The operator corresponding to $e_i$ has degree $d_i''$.

{\it Step 2.5.} Consider the basis of the form
$w x_1^{i_1}\ldots x_n^{i_n} y_1^{j_1}\ldots y_n^{j_n}$
in $\Hrm_{1,c}$, where $x_1,\ldots,x_n$ is a basis in $\h^*$, $y_1,\ldots, y_n$ is a basis in $\h$.
Define $V_5$ from $V_4$ using the operators of left multiplication by the basis elements.
The degree of an operator coming from the basis element above is $\sum_{l=1}^n (i_l+j_l)$.
Of course, $V_5=\M$.

Let $\F_\bullet^i$ be the filtration on $V_i, i=1,2,3,4,5$ defined on the corresponding step
above.  By our definition, $\F^{j}_i V_j\subset \F^{j+1}_i V_j$ for all $i,j$.

On the next step we are going to prove that the filtration $\F_\bullet:=\F^5_\bullet$ on $\M$
satisfies all conditions indicated in the end of Step 1.

{\it Step 3.} Consider a collection $C=(\mathbf{i}^1,w,\mathbf{i}^2,\alpha,\beta, \mathbf{j}^1,\mathbf{j}^2)$,
where $\mathbf{i}^1,\mathbf{i}^2,\mathbf{j}^1,\mathbf{j}^2$ are $n$-tuples of non-negative integers, $w\in W$,
$\alpha$ is either an element of $\{1,2,\ldots,m\}$ or the symbol $\emptyset$, and $\beta$ is either
an element of $\{1,2,\ldots,k\}$, or $\emptyset$. To $v_0\in V_0$ and $C$ as above we assign
the element $v(C,v_0)\in \M$ by $$v(C,v_0):=w x^{\mathbf{i}^1} y^{\mathbf{i}^2}[\ad(c)_\alpha\ad(b)^{\mathbf{j}^1}\ad(a)^{\mathbf{j}^1}v_0] e_\beta,$$
where, for instance $\ad(a)^{\mathbf{j}^2}$ means $\prod_{i=1}^n \ad(a_i)^{\mathbf{j}^2_i}$,
and $\ad(c)_\alpha$ is $1$ when $\alpha=\emptyset$, and $\ad(c_\alpha)$ otherwise.

Further, define the ``degree'' $$d(C):=\sum_{r=1}^n (i_r^1+i_r^2+ (j^1_r+j^2_r)(d_r-2))+ d'_\alpha-2+d''_\beta,$$
where $d'_\emptyset:=2, d''_\emptyset:=0$ so that $v(C,v_0)\in \F_{d(C)}\M$
(but it may happen that $v(C,v_0)\in \F_{d(C)-1}\M$). Finally, define the ``length''
$$l(C)=\sum_{r=1}^n (i^1_r+i^2_r+j^1_r+j^2_r)+l_\alpha+l_\beta+l_w,$$ where $l_\alpha=0, l_\beta=0, l_w=0$
when $\alpha=\emptyset, \beta=\emptyset, w=1$ and $l_\alpha=1,l_\beta=1,l_w=1$ otherwise.

By our choice of $V_0$, the elements $v(C,v_0)$ span $\M$ as a vector space.
Moreover, by our definition of the filtration on $\M$, the images of
$v(C,v_0)$ in $\F_{d(C)}\M/\F_{d(C)-1}\M$ span $\gr \M$. From  here it is easy to
see that $\gr\M$ is a finitely generated left $\K[\h\oplus\h^*]\# W$-module.
It remains to check (\ref{eq:filtr1})-(\ref{eq:filtr3}). It is enough to
do this for those $v(C,v_0)$ that do not lie in $\F_{d(C)-1}\M$.

First of all, let us notice that (\ref{eq:filtr3})
follows directly from the construction of the filtration.
We are going to prove (\ref{eq:filtr1}),(\ref{eq:filtr2})
by induction on $l(C)$.

We start  with $l(C)=0$ (so that $v(C,v_0)=v_0$).
Assume that (\ref{eq:filtr1}),(\ref{eq:filtr2})
are proved for all $z,h$ of degree less than $N$ and let us prove the inclusions
for degree $N$ (the base is $N=0$, here there is nothing to prove).
Each $h\in \Hrm_{1,\bullet}$ can be uniquely written as
the sum of ordered monomials in $c_\alpha,b_j,a_i,e_\beta$
(with $c_\alpha, e_\beta$ occurring at most once). For $h\in \F_i\Hrm_{1,\bullet}$
the degrees of monomials do not exceed $i$. Also if $h$ is an eigenvector
for $\ad(\mathbf{h})$ and its image in $\gr \Hrm_{1,\bullet}$ is in the center,
then we see that all monomials of degree exactly $i$ will be in $c_\alpha,b_j,a_i$
(or we can also write the variables in the  opposite order: $a_i,b_j,c_\alpha$),
and all other monomials will have degree $\leqslant i-2$.


To prove $[z,v]\in \F_{N-2}\M$ we may assume that $z$ is an ordered monomial in $a_i,b_j,c_\alpha$ (in this order).
If $z$ is a single variable, then we are done by the definition of the filtration.
If not, write $z$ as a product $z=z_1z_2$, where $z_1$ is a variable. Then rewrite $[z,v_0]=z_1[z_2,v_0]+[z_1,v_0]z_2=z_1[z_2,v_0]+z_2[z_1,v_0]-[z_2,[z_1,v_0]]$.
Then the first  two summands lie in $\F_{N-2}\M$ thanks to the inductive assumption and (\ref{eq:filtr3}).
To show that the third summand lies in $\F_{N-2}\M$ we apply the same procedure for  $z_2=z_2'z_2''$ and $[z_1,v_0]$ instead of $v_0$. We get $[z_2,[z_1,v_0]]=z_2'[z_2'',[z_1,v_0]]+z_2''[z_2',[z_1,v_0]]+[z_2'',[z_2',[z_1,v_0]]]$.
So we need to show that $\ad(z_2')\ad(z_1)v_0, \ad(z_2'')\ad(z_1)v_0, \ad(z_2'')\ad(z_2')\ad(z_1)v_0$
lie in the appropriate filtration components. For the first expression
this follows directly from the definition of the filtration.  We remark that the variables $z_1,z_2'$ still precede
all variables in $z_2''$ in the order prescribed in the beginning of the
paragraph. So we apply the same procedure as before to $z_2''$ and so on.
Finally, we arrive at the sum of  expressions of the form $z^0 \ad(z^k)\ldots \ad(z^1)v_0$, where
$z^k,\ldots,z^1$ are ordered correctly. Tracking the construction and using the definition
 of the filtration, we see that all these expressions
are in $\F_{N-2}\M$.

The inclusion (\ref{eq:filtr2})  for $v\in V_0$ is proved in a similar way.

Now suppose that (\ref{eq:filtr1}),(\ref{eq:filtr2}) hold for all $z,h$ and all
$v(C,v_0)$ with $l(C)<l$. Let us prove (\ref{eq:filtr2}) for $l(C)=l$,
the proof of (\ref{eq:filtr1}) is similar. If $\beta$ in $C$ is not $\emptyset$,
then $v(C,v_0)h=v(C',v_0)e_\beta h$ with $l(C')<l(C)$ and we are done by
the inductive assumption. Suppose now that $\beta=\emptyset$. Then  $v(C,v_0)=[z,v(C',v_0)]$
for appropriate $z$ and $l(C')<l(C)$. Then $v(C,v_0)h=z(v(C',v_0)h)-v(C',v_0)zh$.
By the inductive assumption, $v(C',v_0)h, v(C',v_0)zh$ lie in the appropriate filtration
component. Now the right hand side itself lies in the appropriate filtration component
by (\ref{eq:filtr3}).
\end{proof}

\subsection{Alternative definitions of $\bullet_{\dagger,b},\bullet^{\dagger,b}$}\label{SUBSECTION_functors_more}
Let $\M$ be a Harish-Chandra $\Hrm_{1,c}$-$\Hrm_{1,c'}$-bimodule. Since the adjoint action of $(S\h^*)^W=\K[\h/W]$
on $\M$ is locally nilpotent, the tensor product $\M^{\wedge_b}:=\K[\h/W]^{\wedge_b}\otimes_{\K[\h/W]}\M$
has a natural structure of a bimodule over $\Hrm_{1,c}^{\wedge_b}:=\K[\h/W]^{\wedge_b}\otimes_{\K[\h/W]}\Hrm_{1,c}$
and $\Hrm_{1,c'}^{\wedge_b}$. So $\theta^b_{*}(\M')$ is a bimodule over
$\CC(\underline{\Hrm}_{1,c}^{\wedge_0})$-$\CC(\underline{\Hrm}_{1,c'}^{\wedge_b})$. Consider the subspace
$\M_{\heartsuit,b}\subset e(\underline{W})\theta^b_{*}(\M^{\wedge_b})e(\underline{W})$ consisting of
all elements $v$ that commute with $\h^{\underline{W}},(\h^*)^{\underline{W}}$
and such that $\ad(a)^N v=\ad(b)^N v=0$ for all $a\in S(\h_+^*)^{\underline{W}},b\in S(\h_+)^{\underline{W}},N\gg 0$.

\begin{Prop}\label{Prop:fun_iso}
There is a functorial isomorphism $\M_{\dagger,b}\xrightarrow{\sim}\M_{\heartsuit,b}$.
\end{Prop}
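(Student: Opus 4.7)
The plan is to unwind the chain of functors defining $\bullet_{\dagger,b}$ and match each step with a concrete feature of the definition of $\M_{\heartsuit,b}$. Throughout I would work at the Rees-algebra level, and pass to $\hbar=1$ at the very end. The first step is to replace $\tilde{\Theta}^b$ by $\theta^b$ (or equivalently by $\vartheta^b$): by Lemma \ref{Lem:isom_comp} the two isomorphisms differ by an inner automorphism $\exp(\tfrac{1}{\tb}\ad f)$ with $f\in \param\otimes\K[\h/\underline{W}]^{\wedge_0}$. Because $f$ is $\underline{W}$-invariant, this automorphism commutes with $e(\underline{W})$, and because $f$ depends only on the $\h$-variables modulo $\param$, it preserves (up to conjugation) both the $V_0$-centralizing condition and the $\ad$-local-nilpotency condition used to define $\M_{\heartsuit,b}$. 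Hence, modulo a harmless inner twist, we may safely use $\theta^b$ in all constructions.

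Next I would identify the first half of the chain. The completion $\M^{\wedge_b}$ makes sense (the $\ad$ of $(S\h^*)^W$ is locally nilpotent on $\M$ by Proposition \ref{Prop:HC_comparison}), and $\theta^b_*\M^{\wedge_b}$ is a $\CC(\underline{\Hrm}^{\wedge_0}_{1,c})$-$\CC(\underline{\Hrm}^{\wedge_0}_{1,c'})$-bimodule. Taking $e(\underline{W})\cdot e(\underline{W})$ produces a $\underline{\Hrm}^{\wedge_0}_{1,c}$-$\underline{\Hrm}^{\wedge_0}_{1,c'}$-bimodule, which I denote $\Nalg$. Now apply the decomposition $\underline{\Hrm}^{\wedge_0}_{1,c}\cong D(V_0)^{\wedge_0}\widehat{\otimes}\underline{\Hrm}^{+\wedge_0}_{1,c}$ and the $\hbar=1$ analogue of Proposition \ref{Prop:0.4}: since $\Nalg$ is obtained by completing a finitely generated graded object, it is of the form $D(V_0)^{\wedge_0}\widehat{\otimes}\underline{\M}^{\wedge_0}$, and the subspace of $\Nalg$ commuting with $V_0=\h^{\underline{W}}\oplus(\h^*)^{\underline{W}}$ is exactly $\underline{\M}^{\wedge_0}$, naturally a bimodule over $\underline{\Hrm}^{+\wedge_0}_{1,c}$-$\underline{\Hrm}^{+\wedge_0}_{1,c'}$. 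On the $\bullet_{\dagger,b}$ side, $\underline{\M}^{\wedge_0}$ is precisely $\Fun_6\circ\Fun_5'\circ\tilde{\Theta}^b_*(R_\hbar(\M)^{\wedge_b})$ after killing $\hbar-1$.

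It remains to match the two remaining conditions. On the $\bullet_{\dagger,b}$ side, $\M_{\dagger,b}$ is the image of the $\K^\times$-locally finite part $\Fun_7(\underline{\M}^{\wedge_0})$ (in the Rees picture) modulo $\hbar-1$, which embeds into $\underline{\M}^{\wedge_0}$ as the subspace of elements of bounded filtration degree. On the $\M_{\heartsuit,b}$ side, the remaining condition is $\ad$-local nilpotency under $S(\h_+^*)^{\underline{W}}$ and $S(\h_+)^{\underline{W}}$. The inclusion $\M_{\dagger,b}\subset\M_{\heartsuit,b}$ is immediate from Proposition \ref{Prop:HC_comparison} applied to $\M_{\dagger,b}\in{}_c\HC(\underline{\Hrm}^+)_{c'}$. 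For the converse, given $v\in\underline{\M}^{\wedge_0}$ satisfying the two local nilpotency conditions, I would use the Euler element $\underline{\mathbf{h}}^+$: since $\ad(\underline{\mathbf{h}}^+)$ shifts the filtration degree and the combined local nilpotency conditions (replayed as in Proposition \ref{Prop:HC_comparison}) bound the $\ad(\underline{\mathbf{h}}^+)$-spectrum of $v$ both from above and from below, $v$ decomposes as a finite sum of generalized $\ad(\underline{\mathbf{h}}^+)$-eigenvectors of bounded degree, forcing $v$ to lie in the un-completed $\M_{\dagger,b}$. Functoriality is then obvious since every step in the argument is functorial in $\M$.

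\textbf{Main obstacle.} The one substantive difficulty is the last paragraph: controlling $\ad$-locally nilpotent elements in the completion $\underline{\M}^{\wedge_0}$. A naive generalized-eigenspace decomposition with respect to $\ad(\underline{\mathbf{h}}^+)$ need not converge inside $\underline{\M}^{\wedge_0}$, so one must carefully combine the upper bound (from $\ad(S(\h_+^*)^{\underline{W}})$-local nilpotency on a concrete element, via its iterated brackets with a system of generators) with the lower bound built into the $\underline{\tilde{\p}}^+$-adic completion, to conclude that the element has only finitely many nonzero graded components; this is the step where the positivity of the grading on $\underline{\tilde{\Halg}}^+$ and the reasoning behind Proposition \ref{Prop:HC_coinc} are used in earnest.
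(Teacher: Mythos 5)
Your outline gets the overall shape right (complete, push through the centralizer isomorphism, cut down by $e(\underline{W})$ and by commutation with $V_0$, then compare a "locally finite" condition with a "locally nilpotent" condition), but there are two concrete problems, one of which is a genuine gap.

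First, the missing ingredient. To identify $\M_{\dagger,b}$ with the $\K^\times$-locally-finite part of the relevant centralizer you need the $\K^\times$-grading on the Rees bimodule $R_\hbar(\M)$ to be comparable, up to an integer shift, with the $\ad(\mathbf{h})$-spectrum on $\M$. This fails for an arbitrary filtration $\F_i\M$ compatible with Definition~\ref{defi_HC2}. The paper's proof introduces an auxiliary diagonalizable derivation $D$ (built from a lifting $\K/\ZZ\hookrightarrow\K$ and the generalized $\ad(\mathbf{h})$-eigenspace decomposition) and replaces $\F_\bullet$ by the re-filtration $\F^D_\bullet$; only then does $R_\hbar(\M^{\wedge_b})$ become the $\K^\times$-finite part of $\M_\hbar^{\wedge_b}$. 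You skip this entirely, so the identification "$\M_{\dagger,b}$ is precisely $\Fun_6\circ\Fun_5'\circ\tilde{\Theta}^b_*(R_\hbar(\M)^{\wedge_b})$ modulo $\hbar-1$, viewed as a subspace of bounded filtration degree in $\underline{\M}^{\wedge_0}$" is not justified. Your statement that "$\ad(\underline{\mathbf{h}}^+)$ shifts the filtration degree" is also incorrect — $\underline{\mathbf{h}}^+$ has filtration degree $2$ and $\frac{1}{\tb}\ad(\underline{\mathbf{h}}^+)$ preserves it; the operator grades by the \emph{difference} of $x$- and $y$-degrees, not by total degree, and these two gradings on $\underline{\M}^{\wedge_0}$ are not interchangeable.

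Second, the hard inclusion $\M_{\heartsuit,b}\subset\M_{\dagger,b}$. Your plan is to show directly that the two $\ad$-local-nilpotency conditions bound the $\ad(\underline{\mathbf{h}}^+)$-spectrum of a vector $v\in\underline{\M}^{\wedge_0}$ above and below and conclude $v$ has finitely many graded components. You acknowledge this "replays" the reasoning of Proposition~\ref{Prop:HC_coinc}, but you do not actually carry it out, and it cannot be dispatched in a sentence — that proposition is a multi-step construction of a specific filtration from a system of generators $a_i,b_j,c_k,e_l$, and the inductive degree-bounding there is exactly the hard work. The paper sidesteps all of this by applying Proposition~\ref{Prop:HC_coinc} as a black box: $\M_{\heartsuit,b}$ is by construction a BEG-Harish-Chandra bimodule (local nilpotency of $\ad(a),\ad(b)$ built into its definition), so any finitely generated sub-bimodule of it is Harish-Chandra in the sense of Definition~\ref{defi_HC2}, hence $\ad(\mathbf{h})$ is locally finite on it, hence it lands in the $\K^\times$-locally-finite part $\Ncal^1_\hbar$. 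You should invoke Proposition~\ref{Prop:HC_coinc} here directly rather than trying to rebuild it.

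One more remark: passing from $\tilde{\Theta}^b$ to $\theta^b$ at the outset is harmless, but the invocation of Lemma~\ref{Lem:isom_comp} to move to $\vartheta^b$ is not needed for this proposition — the paper only uses that lemma afterwards to observe that $\bullet_{\heartsuit,b}$ is unchanged if one replaces $\theta^b$ by $\vartheta^b$. The functorial isomorphism $\M_{\dagger,b}\cong\M_{\heartsuit,b}$ is proved entirely with $\theta^b$.
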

\begin{proof}
First of all, let us define a derivation $D$ of $\M$ that is compatible with $\ad(\mathbf{h})$,
is diagonalizable, and has integral eigenvalues.

For this fix a lifting $\iota:\K/\mathbb{Z}\rightarrow \K$ of the natural projection
$\K\twoheadrightarrow \K/\mathbb{Z}$. The action of $\ad(\mathbf{h})$ (where recall we write
$\mathbf{h}$ for the images of $\mathbf{h}\in \Halg$ in $\Hrm_{1,c},\Hrm_{1,c'}$) on $\M$ is locally finite,
this follows from Definition \ref{defi_HC2}, because $\frac{1}{\tb}\ad(\mathbf{h})$
is a degree preserving map for any Harish-Chandra $\tilde{\Halg}$-bimodule. Define $D$
by making it act by $\lambda-\iota(\lambda)$ on the generalized eigenspace of $\ad(\mathbf{h})$
with eigenvalue $\lambda$. Let $\M(j)$ denote the $j$th eigenspace of $D$ in $\M$.

Now choose a filtration $\F_i\M$ on $\M$ as in Definition \ref{defi_HC2}. Consider the filtration
$\F^{D}_i\M$ defined by $\F^{D}_i\M:=\bigoplus_j \F_{i-j}\M(j)$. The filtration $\F^{D}_\bullet$ is compatible
with the doubled $y$-filtration on $\Hrm_{1,c},\Hrm_{1,c'}$. Moreover, the associated graded
$\gr^D \M$ is still a finitely generated $S(\h\oplus\h^*)\# W$-module. Also let us remark that the
Rees bimodules $R_\hbar^D(\M),R_{\hbar}(\M)$ are naturally identified (the gradings differ
by twisting with $D$). We write $\M_\hbar$ for both these bimodules. Unless  specified otherwise,
we consider the grading on $\M_\hbar$ coming from $R^D_\hbar(\M_\hbar)$.

The bimodule $\M^{\wedge_b}$ has the $y$-filtration inherited from $\M$. The corresponding Rees
bimodule $R_\hbar(\M^{\wedge_b})$ coincides with the subspace of $\K^\times$-finite vectors
in $\M_\hbar^{\wedge_b}$, where the latter completion is defined as in Subsection \ref{SUBSECTION_Functors_b}.

Let $\Ncal_\hbar'$ denote the centralizer of $(\h\oplus\h^*)^{\underline{W}}$ in $e(\underline{W})\theta^b_{*}(\M_\hbar^{\wedge_b})e(\underline{W})$. Consider
two subspaces $\Ncal_\hbar^{1},\Ncal_{\hbar}^{2}\subset \Ncal'_\hbar$ defined as follows. The subspace
$\Ncal_\hbar^{1}$ is defined as the subspace of all vectors that are locally finite for
the Euler derivation of $\Ncal_\hbar'$. The bimodule $\M_{\dagger,b}$ is just the quotient
of $\Ncal_\hbar'$ by $\hbar-1$. Further, the subspace $\Ncal_\hbar^2$ consists of all
vectors that are locally finite for the $\K^\times$-action and locally nilpotent for
the operators $\ad(a),\ad(b)$ for all $a\in S(\h_+^*)^{\underline{W}},
b\in S(\h_+)^{\underline{W}}$. It is easy to see that  $\M_{\heartsuit,b}$ is the quotient  of $\Ncal^2_\hbar$
by $\hbar-1$. So it remains to show that $\Ncal_\hbar^1=\Ncal_\hbar^2$.

Since $\Ncal_\hbar^1=R_\hbar(\M_\dagger)$, and the action of $\ad(\mathbf{h})$
on $\M_{\dagger,b}$ is locally finite, we see that $\K^\times$ acts
locally finitely on $\Ncal_\hbar^1$. Since, in addition, the actions of $\ad(a),\ad(b)$ on $\Ncal_\hbar^1$
are locally nilpotent, it follows that $\Ncal_\hbar^1\subset \Ncal_\hbar^2$.

Let us check that $\Ncal_\hbar^2\subset \Ncal_\hbar^1$. Again, it is enough to show that
$\frac{1}{\tb}\ad(\mathbf{h})$ acts locally finitely on $\Ncal_\hbar^2$. It is enough to
show that $\frac{1}{\tb}\ad(\mathbf{h})$  acts locally finitely on the $\K^\times$-eigenspaces
in $\Ncal^2_\hbar$. But each such eigenspace embeds into $\M_{\heartsuit,b}$ under the $\hbar=1$ quotient
map. So it is enough to show that $\ad(\mathbf{h})$ is locally finite on $\M_{\heartsuit,b}$
or on any its finitely generated sub-bimodule. But any such sub-bimodule is Harish-Chandra
in the sense of Definition \ref{defi_HC2}  by  Proposition \ref{Prop:HC_coinc}. And hence $\ad(\mathbf{h})$
acts locally finitely.
\end{proof}

We remark that although we used a lifting $\K/\mathbb{Z}\hookrightarrow \K$ to construct a functorial
isomorphism $\M_{\dagger,b}\xrightarrow{\sim}\M_{\heartsuit,b}$, this isomorphism is independent
of the choice of $\iota$. Also we remark that the functor $\bullet_{\heartsuit,b}$ does not change
(up to an isomorphism) if we identify $\Hrm_{1,\bullet}^{\wedge_b}$ with $\CC(\underline{\Hrm}_{1,\bullet}^{\wedge_0})$ using the isomorphism $\vartheta^b$ instead of $\theta^b$. This follows from Lemma \ref{Lem:isom_comp}.

We can construct a functor $\bullet^{\heartsuit,b}:_{c}\!\HC(\underline{\Hrm}^+)_{c'}\rightarrow _{c}\!\widehat{\HC}(\Hrm)_{c'}$ in a similar way. Namely, let $\Ncal\in _{c}\!\HC(\underline{\Hrm}^+)_{c'}$.
Set $\Ncal'=(\theta^b_{*})^{-1}\left(\CC(W,\underline{W}, \mathcal{D}(\h^{\underline{W}\wedge_0})\otimes \Ncal^{\wedge_0})\right)$. For $\Ncal^{\heartsuit,b}$ take the subspace of all elements, where $\ad(a),\ad(b),a\in S(\h)^W, b\in S(\h^*)^W$ act locally nilpotently. As in the proof of Proposition \ref{Prop:3.15.1}, we see that
$\bullet^{\heartsuit,b}$ is right adjoint to $\bullet_{\heartsuit,b}$. Therefore $\bullet^{\heartsuit,b}=\bullet^{\dagger,b}$.

\subsection{Induction and restriction functors and the annihilators}\label{SUBSECTION_ResInd}
The main application of the isomorphism of completions theorem in \cite{BE} is the construction of
functors between the categories $\Str$ of appropriate Cherednik algebras.

Namely, consider the algebra $\Hrm_{1,c}$. By definition, the corresponding category $\Str$ is the full
subcategory of $\Hrm_{1,c}$-modules consisting of all modules $M$ satisfying the following two conditions:
\begin{itemize}
\item $M$ is finitely generated over $\K[\h]$ (or, equivalently, over $\K[\h/W]$).
\item $\h\subset \Hrm_{1,c}$ acts on $M$ by locally nilpotent endomorphisms.
\end{itemize}
Let $\underline{\Str}^+,\underline{\Str}$ denote the similar categories  for $\underline{\Hrm}_{1,c}^+,\underline{\Hrm}_{1,c}$.
Pick a point $b\in \h$ with $W_b=\underline{W}$. Following \cite{BE}, let us
construct certain functors $\operatorname{Res}_b:\Str\rightarrow \underline{\Str}^+,
\operatorname{Ind}_b:\underline{\Str}^+\rightarrow \Str$.

Recall the (partial) completion $\Hrm_{1,c}^{\wedge_b}:=\K[\h/W]^{\wedge_b}\otimes_{\K[\h/W]}
\Hrm_{1,c}$.
Consider the category $\Str^{\wedge_b}$ of all $\Hrm_{1,c}^{\wedge_b}$-modules that are finitely generated over
$\K[\h/W]^{\wedge_b}$. We have the completion functor $\Str\rightarrow \Str^{\wedge_b}, M\mapsto \K[\h/W]^{\wedge_b}\otimes_{\K[\h/W]}M$. There is an equivalence between $\Str^{\wedge_b}$
and $\underline{\Str}^+$ established in \cite{BE}.

Namely, recall the Bezrukavnikov-Etingof isomorphism
$\vartheta^b: \Hrm_{1,c}^{\wedge_b}\rightarrow \CC(\underline{\Hrm}^{\wedge_0}_{1,c})$, where
$\underline{\Hrm}_{1,c}^{\wedge_0}:=\K[\h/\underline{W}]^{\wedge_0}\otimes_{\K[\h/\underline{W}]}
\underline{\Hrm}_{1,c}$.
The module categories of $\CC(\underline{\Hrm}_{1,c}^{\wedge_0})$
and of $\underline{\Hrm}_{1,c}^{\wedge_0}$ are equivalent, an equivalence
$\rho:\CC(\underline{\Hrm}_{1,c}^{\wedge_0})$-$\operatorname{Mod}\rightarrow
\underline{\Hrm}_{1,c}^{\wedge_0}$-$\operatorname{Mod}$ is given by $M\mapsto e(\underline{W})M$.
So we have an equivalence $\rho\circ \vartheta^b_*:\Str^{\wedge_b}\rightarrow \underline{\Str}^{\wedge_0}$.
Now let us recall an equivalence $\underline{\Str}^{\wedge_0}\rightarrow \underline{\Str}^{+\wedge_0}$
from \cite{BE} sending a module $M\in \underline{\Str}^{\wedge_0}$ to the joint kernel of the operators
from $\h^{\underline{W}}$. Next, we have an equivalence $\underline{\Str}^{+\wedge_0}\rightarrow
\underline{\Str}^+$ sending a module $M\in \underline{\Str}^{+\wedge_0}$ to its subspace of all
vectors, where $\h_+$ acts locally nilpotently. Composing the equivalences we have just constructed
we get a required equivalence $\Str^{\wedge_b}\rightarrow \underline{\Str}^+$. Now the functor $\operatorname{Res}_b$
is obtained by taking the composition of the completion functor $\Str\rightarrow \Str^{\wedge_b}$
with the equivalence $\Str^{\wedge_b}\rightarrow \underline{\Str}^+$.

Let us recall the construction of the functor $\operatorname{Ind}_b:\underline{\Str}^+\rightarrow \Str$.
Consider the equivalence $\underline{\Str}^+\rightarrow \Str^{\wedge_b}$ that is inverse to the
equivalence $\Str^{\wedge_b}\rightarrow \underline{\Str}^+$. Then, as was checked in \cite{BE}, for
any module $M\in \Str^{\wedge_b}$ its subspace $M_{l.n.}$ consisting of all vectors, where $\h\subset \Hrm_{1,c}$
acts locally nilpotently, is a finitely generated $\Hrm_{1,c}$-module. So we have a functor
$\Str^{\wedge_b}\rightarrow \Str, M\mapsto M_{l.n.}$. Composing this functor with the equivalence
$\underline{\Str}^+\rightarrow \Str^{\wedge_b}$ we get the functor $\operatorname{Ind}_b$.

In a subsequent paper we will need the following result on annihilators.

\begin{Prop}\label{Prop:annihilators}
For $M\in \Str, N\in \underline{\Str}_+$ we have the inclusions $$\Ann_{\Hrm_{1,c}}(M)_\dagger\subset
\Ann_{\underline{\Hrm}^+_{1,c}}(\operatorname{Res}_b(M)),
\Ann_{\underline{\Hrm}^+_{1,c}}(N)^\dagger\subset \Ann_{\Hrm_{1,c}}(\operatorname{Ind}_b(N)).$$
\end{Prop}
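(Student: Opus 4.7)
The plan is to deduce both inclusions from the explicit descriptions of the auxiliary functors $\bullet_{\dagger,b}$ and $\bullet^{\dagger,b}$ from Proposition~\ref{Prop:fun_iso} and Subsection~\ref{SUBSECTION_functors_more}, using that the Bezrukavnikov--Etingof functors $\operatorname{Res}_b,\operatorname{Ind}_b$ and the auxiliary functors $\bullet_{\dagger,b},\bullet^{\dagger,b}$ are all built from the same completion $\bullet^{\wedge_b}$ followed by the isomorphism $\vartheta^b$. Thanks to Lemma~\ref{Lem:fun_b}(2) and Corollary~\ref{Cor:fun_b}, it suffices to prove the inclusions with $\bullet_{\dagger,b}$ in place of $\bullet_\dagger$ and $\bullet^{\ddag,b}$ in place of $\bullet^\ddag$.

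For the first inclusion, set $\J := \Ann_{\Hrm_{1,c}}(M)$. By Proposition~\ref{Prop:fun_iso} (and Lemma~\ref{Lem:isom_comp} to replace $\theta^b$ by $\vartheta^b$), $\J_{\dagger,b}$ is realised as $\J_{\heartsuit,b} \subset e(\underline{W})\vartheta^b_*(\J^{\wedge_b})e(\underline{W})$. At the same time, the Bezrukavnikov--Etingof construction places $\operatorname{Res}_b(M)$ as the subspace of $\h_+$-locally nilpotent elements in the $\h^{\underline{W}}$-kernel of $e(\underline{W})\vartheta^b_*(M^{\wedge_b})$, with the $\underline{\Hrm}^+_{1,c}$-action inherited from the ambient $\underline{\Hrm}^{\wedge_0}_{1,c}$-action. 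Under the canonical identification $(\Hrm_{1,c})_{\dagger,b} \cong \underline{\Hrm}^+_{1,c}$ this matches the action of $(\Hrm_{1,c})_{\heartsuit,b}$. Since $\J^{\wedge_b}$ annihilates $M^{\wedge_b}$, the sandwich $e(\underline{W})\vartheta^b_*(\J^{\wedge_b})e(\underline{W})$ annihilates $e(\underline{W})\vartheta^b_*(M^{\wedge_b})$, hence $\J_{\heartsuit,b}$ annihilates $\operatorname{Res}_b(M)$, yielding the inclusion.

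For the second inclusion, set $\I := \Ann_{\underline{\Hrm}^+_{1,c}}(N)$. Using the description of $\bullet^{\dagger,b}$ in Subsection~\ref{SUBSECTION_functors_more}, a short diagram chase identifies $\I^{\ddag,b}$ with $\Hrm_{1,c} \cap (\vartheta^b_*)^{-1}(\tilde{\I})$, where $\tilde{\I}$ is the two-sided ideal in $\CC(\underline{\Hrm}^{\wedge_0}_{1,c})$ generated by $\I^{\wedge_0}$. On the other hand, $\operatorname{Ind}_b(N)$ embeds naturally into the $\Hrm_{1,c}^{\wedge_b}$-module obtained by pulling back, via $\vartheta^b$, the induced $\CC(\underline{\Hrm}^{\wedge_0}_{1,c})$-module built from $\mathcal{D}(\h^{\underline{W}\wedge_0}) \otimes N^{\wedge_0}$. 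Because $\I \subset \underline{\Hrm}^+_{1,c}$ commutes with $\mathcal{D}(\h^{\underline{W}})$, the hypothesis $\I N = 0$ implies that $\I^{\wedge_0}$ annihilates $\mathcal{D}(\h^{\underline{W}\wedge_0}) \otimes N^{\wedge_0}$; the Morita correspondence then shows that $\tilde{\I}$ annihilates the induced $\CC(\underline{\Hrm}^{\wedge_0}_{1,c})$-module. Pulling back and intersecting with $\Hrm_{1,c}$ yields $\I^{\ddag,b} \subset \Ann_{\Hrm_{1,c}}(\operatorname{Ind}_b(N))$.

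The main technical obstacle will be to verify that the $\underline{\Hrm}^+_{1,c}$-action on $\operatorname{Res}_b(M)$ (respectively, the $\Hrm_{1,c}$-action on $\operatorname{Ind}_b(N)$) coming from the Bezrukavnikov--Etingof construction truly coincides with the action inherited from the ambient completion via the canonical identifications above. Because both pictures factor through the same $\bullet^{\wedge_b}$ and $\vartheta^b$, this compatibility is natural, but it is the main piece of bookkeeping that must be spelled out carefully.
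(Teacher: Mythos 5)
Your proposal is correct and takes essentially the same route as the paper: the paper's (very terse) proof also just cites the isomorphisms $\bullet_{\dagger,b}\cong\bullet_{\heartsuit,b}$ and $\bullet^{\dagger,b}=\bullet^{\heartsuit,b}$ together with the fact that $\operatorname{Res}_b,\operatorname{Ind}_b$ and $\bullet_{\heartsuit,b},\bullet^{\heartsuit,b}$ are all built from the same completion $\bullet^{\wedge_b}$ and the isomorphism $\vartheta^b$, so that annihilation is preserved under $e(\underline W)$-sandwiching and intersection; your write-up simply makes explicit the bookkeeping the paper leaves implicit.
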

\begin{proof}
The first inclusion follows from the isomorphism $\bullet_{\dagger,b}\cong \bullet_{\heartsuit,b}$
and the construction of $\bullet_{\heartsuit,b}$ (with $\vartheta^b$ instead of $\theta^b$, see the remarks
after the proof Proposition \ref{Prop:fun_iso}). The second inclusion follows similarly from the isomorphism
$\bullet^{\dagger,b}=\bullet^{\heartsuit,b}$.
%
%
\end{proof}

\subsection{Bifunctor $L(\bullet,\bullet)$}\label{SUBSECTION_loc_fin}
Let $M,N$ be modules over the algebras $\Hrm_{1,c'},\Hrm_{1,c}$, respectively.
The space $\Hom_{\K}(M,N)$ is an $\Hrm_{1,c}$-$\Hrm_{1,c'}$-bimodule. Consider the subspace
$L(M,N)\subset \Hom_{\K}(M,N)$ of all vectors that are locally nilpotent
with respect to the operators $\ad(a),\ad(b)$
for all $ a\in (S\h^*)^W, b\in (S\h)^W$. An analogous construction is very
important in the representation theory of semisimple Lie algebras.

From the definition it follows that the bimodule $L(M,N)$ is the direct limit of its
Harish-Chandra sub-bimodules. The following proposition, which is the main result of this subsection, implies that $L(M,N)$
is Harish-Chandra itself.

\begin{Prop}\label{Prop:L_fin_gen}
For all $M,N$ in the category $\mathcal{O}$ the bimodule $L(M,N)$ is finitely generated.
\end{Prop}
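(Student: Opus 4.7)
The plan is to prove Proposition \ref{Prop:L_fin_gen} by induction on $|W|$, using the Bezrukavnikov--Etingof restriction on modules together with the bimodule restriction functor $\bullet_{\dagger,b}$ of Subsection \ref{SUBSECTION_Functors_b}. The base case $W=\{1\}$ is immediate: $\Hrm_{1,c}$ is then the Weyl algebra of $\h\oplus\h^*$, the category $\Str$ consists of finite-dimensional modules, and so $L(M,N)=\Hom_\K(M,N)$ is finite-dimensional.

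For the inductive step, I first observe that $L(M,N)$ is the directed union of its Harish-Chandra sub-bimodules: every $\phi\in L(M,N)$ generates a finitely generated sub-bimodule which is HC by Proposition \ref{Prop:HC_coinc}, and the sum of two HC sub-bimodules is again HC. It therefore suffices to show that the poset $\mathcal{P}$ of HC sub-bimodules of $L(M,N)$ is Noetherian; combined with Zorn's lemma and directedness, this forces $\mathcal{P}$ to have a maximum equal to $L(M,N)$, which is then itself HC and in particular finitely generated.

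To prove Noetherianity of $\mathcal{P}$, I will use the functors $\bullet_{\dagger,b}$ for $b\in V^*$ running over a finite set of representatives, one from each symplectic leaf of $V^*/W$. For leaves with proper parabolic stabilizer $\underline{W}=W_b\subsetneq W$, the Bezrukavnikov--Etingof restrictions $\Res_b M,\Res_b N$ lie in the category $\Str$ for $\underline{\Hrm}_{1,c}^+$, and by the inductive hypothesis (applied to the smaller group $\underline{W}$) the analogous bimodule $L(\Res_b M,\Res_b N)$, defined for $\underline{\Hrm}_{1,c}^+$ in place of $\Hrm_{1,c}$, is finitely generated, hence Noetherian since $\underline{\Hrm}_{1,c}^+$ is a Noetherian algebra. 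Using the description $\bullet_{\dagger,b}\cong\bullet_{\heartsuit,b}$ from Proposition \ref{Prop:fun_iso}, I will construct for every HC sub-bimodule $\Malg\subset L(M,N)$ a natural injection
\[
\Malg_{\dagger,b}\hookrightarrow L(\Res_b M,\Res_b N):
\]
an element of $\Malg_{\heartsuit,b}\subset e(\underline{W})\theta^b_*(\Malg^{\wedge_b})e(\underline{W})$ centralizes $(\h\oplus\h^*)^{\underline{W}}$ and therefore restricts to a linear map from $\Res_b M$ to $\Res_b N$, while the $\ad$-local-nilpotency built into the $\heartsuit$ condition places the result inside the HC subspace $L(\Res_b M,\Res_b N)$; injectivity follows because $\Res_b M$ generates $e(\underline{W})M^{\wedge_b}$ as an $\underline{\Hrm}_{1,c}^{\wedge_0}$-module, so via $\vartheta^b$ any element of the completed bimodule that annihilates $\Res_b M$ must act trivially on all of $M^{\wedge_b}$. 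For the leaf $\{0\}\subset V^*/W$ with full stabilizer $\underline{W}=W$, any HC sub-bimodule $\Malg$ with $\VA(\Malg)\subset\{0\}$ is itself finite-dimensional, so ascending chains of such $\Malg$ stabilize automatically.

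Putting these pieces together, given an ascending chain $\Malg_1\subset\Malg_2\subset\dots$ in $\mathcal{P}$, exactness of each $\bullet_{\dagger,b}$ produces ascending chains inside the Noetherian bimodules $L(\Res_b M,\Res_b N)$, which therefore stabilize for every representative $b$; by Lemma \ref{Lem:fun_b}(2) and Proposition \ref{Prop:3.10.1}(2), simultaneous vanishing of $(\Malg_{j+1}/\Malg_j)_{\dagger,b}$ for all such $b$ forces $\VA(\Malg_{j+1}/\Malg_j)=\emptyset$ and hence $\Malg_{j+1}=\Malg_j$ for $j$ large enough. The main technical obstacle is the rigorous construction and injectivity of the map $\Malg_{\dagger,b}\hookrightarrow L(\Res_b M,\Res_b N)$, which requires a careful matching between the $\heartsuit$-description of $\bullet_{\dagger,b}$ and the module-level restriction $\Res_b$, as well as the identification of $\Res_b M$ as a generating subspace of $M^{\wedge_b}$ under the completed Cherednik action; once this map is in place, the remainder of the argument reduces to the inductively-established Noetherianity of $L(\Res_b M,\Res_b N)$ and the leaf decomposition supplied by the $\bullet_{\dagger,b}$ formalism.
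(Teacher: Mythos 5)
Your approach is genuinely different from the paper's — you induct on $|W|$ and try to establish an ascending-chain condition on Harish--Chandra sub-bimodules, whereas the paper first uses left-exactness of $L$ to reduce to simple $M,N$, then invokes Lemma \ref{Cor:support} to reduce to the equal-support case, picks $b$ in the open stratum of the common support (so that $\Res_b M,\Res_b N$ are finite-dimensional), and then uses the adjunction $N\hookrightarrow\Ind_b\Res_b N$ together with Proposition \ref{Prop:strong_adjoint} to embed $L(M,N)$ into the finitely generated bimodule $\Hom_\K(\Res_b M,\Res_b N)^{\dagger,b}$, citing Remark \ref{Rem:fin_gen}. This avoids any induction on the group and any chain argument.

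However, your argument has several real gaps. First, the base case is factually wrong: for $W=\{1\}$ the algebra $\Hrm_{1,c}$ is the Weyl algebra, which has \emph{no} nonzero finite-dimensional modules; the category $\mathcal{O}$ has the infinite-dimensional $\K[\h]$ as its unique simple, so ``$L(M,N)=\Hom_\K(M,N)$ is finite-dimensional'' is false, and the conclusion (while true) needs a separate argument. Second, the injectivity of the proposed map $\Malg_{\dagger,b}\hookrightarrow L(\Res_b M,\Res_b N)$, which you concede is ``the main technical obstacle,'' is not a routine matter: it amounts essentially to the content of Proposition \ref{Prop:strong_adjoint} (the identification of the completed Hom-space with $\K[[\h^{\underline{W}}]]$ tensored with $\Fu_{\underline{W}}(W,\cdot)$, and the determination of its locally nilpotent part), which the paper proves in two full steps. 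Third, the smallest leaf $\{0\}$ is not handled: there $\underline{W}=W$, so $\underline{\Hrm}_{1,c}^+=\Hrm_{1,c}$ and the inductive hypothesis is inapplicable; ``finite-dimensional at each stage'' does not give chain stabilization without a uniform bound, so the argument is circular exactly where it would need to close. The paper circumvents all three issues by its reduction to simple modules of equal support and the direct embedding via the restriction-induction adjunction.
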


The proof is based on the following two claims.

\begin{Lem}\label{Cor:support}
Let $M,N$ be simples in $\mathcal{O}$ with different supports. Then $L(M,N)=0$.
\end{Lem}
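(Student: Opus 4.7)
The plan is to argue by contradiction: suppose $L(M,N)\ne 0$ and derive $\operatorname{Supp}(M)=\operatorname{Supp}(N)$. First I would pick $\varphi\in L(M,N)$ nonzero and let $B:=\Hrm_{1,c}\varphi\Hrm_{1,c'}$ be the sub-bimodule it generates. This $B$ is finitely generated as a bimodule, and the local nilpotence of $\ad((S\h^*)^W)$ and $\ad((S\h)^W)$ on $\varphi$ extends to $B$, so $B$ is Harish-Chandra in the sense of \cite{BEG_HC}, hence by Proposition~\ref{Prop:HC_coinc} in the sense of Definition~\ref{defi_HC2}. The natural evaluation $B\otimes_{\Hrm_{1,c'}}M\to N$, $b\otimes m\mapsto b(m)$, is a left $\Hrm_{1,c}$-module homomorphism whose image contains $\varphi(m)\ne 0$, so by simplicity of $N$ it is surjective.

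Next I would prove that every $a\in\Ann_{(S\h^*)^W}(M)$ acts locally nilpotently on $B\otimes_{\Hrm_{1,c'}}M$. Indeed, $am=0$ gives $b\otimes am=0$, hence $ba\otimes m=0$ by the tensor balance; combining this with $(\ad a)^k b=0$ for some $k$, expanding the identity $\sum_{i=0}^k\binom{k}{i}(-1)^i a^{k-i}ba^i=0$, and tensoring with $m$, the terms with $i\ge 1$ become $a^{k-i}b\otimes a^im=0$ (using $am=0$), leaving $a^k b\otimes m=0$. Since $B$ is finitely generated as a bimodule, $M$ is finitely generated over $(S\h^*)^W$, and $(S\h)^W$ acts locally nilpotently on $B\otimes_{\Hrm_{1,c'}}M$ (combining local nilpotence of $\ad((S\h)^W)$ on $B$ with that of $(S\h)^W$ on $M$), $B\otimes_{\Hrm_{1,c'}}M$ is finitely generated over $(S\h^*)^W$, so local nilpotence upgrades to uniform nilpotence. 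Thus $\Ann_{(S\h^*)^W}(M)\subset\sqrt{\Ann_{(S\h^*)^W}(B\otimes_{\Hrm_{1,c'}}M)}$, and combining with the surjection gives $\operatorname{Supp}(N)\subset\operatorname{Supp}(B\otimes_{\Hrm_{1,c'}}M)\subset\operatorname{Supp}(M)$.

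To obtain the reverse inclusion and so close the argument, I would symmetrize via the contragredient duality $\mathbb{D}$ on category $\mathcal{O}$ for the Cherednik algebra (the graded $\K$-dual $\bigoplus_\lambda M_\lambda^*$ with the module structure twisted by the anti-involution interchanging $\h$ and $\h^*$, after fixing a $W$-invariant identification $\h\simeq\h^*$): $\mathbb{D}$ sends simples in $\mathcal{O}$ to simples, preserves supports in $\h/W$, and intertwines $L(M,N)\cong L(\mathbb{D}N,\mathbb{D}M)$ for appropriate dual parameters. Applying the previous paragraph to $L(\mathbb{D}N,\mathbb{D}M)\ne 0$ then gives $\operatorname{Supp}(M)\subset\operatorname{Supp}(N)$, contradicting the hypothesis. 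The main obstacle is precisely this symmetrization: setting up $\mathbb{D}$ at the correct dual parameter and verifying its compatibility with $L$ and with supports. An alternative in the spirit of Subsection~\ref{SUBSECTION_ResInd} would use the restriction functors $\operatorname{Res}_b$: by picking $b$ in a generic stratum of $\operatorname{Supp}(M)\setminus\operatorname{Supp}(N)$ one has $\operatorname{Res}_b N=0$ while $\operatorname{Res}_b M\ne 0$, and an $L$-analog of Proposition~\ref{Prop:annihilators} (showing $L(M,N)_{\dagger,b}$ embeds into $L(\operatorname{Res}_b M,\operatorname{Res}_b N)=0$) together with varying $b$ would force $L(M,N)=0$ directly, sidestepping the duality.
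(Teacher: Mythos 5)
Your first inclusion $\operatorname{Supp}(N)\subset\operatorname{Supp}(M)$ is correct, though it is a somewhat more elaborate version of what the paper does: the paper simply notes that for $\varphi\in L(M,N)$ and $a\in I:=\Ann_{(S\h^*)^W}(M)$, expanding $(\ad a)^k\varphi=0$ and using $aM=0$ shows $a^k$ kills $\operatorname{im}\varphi$, so $\sum_\varphi\operatorname{im}\varphi$ is a nonzero submodule of $N$ annihilated by a power of $I$; simplicity of $N$ finishes it. Your tensor-product formulation $B\otimes_{\Hrm_{1,c'}}M\twoheadrightarrow N$ is fine but not needed.

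The second inclusion, however, is where you have a genuine gap — and you say so yourself. Neither the contragredient duality nor the $\Res_b$-based alternative is carried out; both would require substantial additional machinery (the duality swaps the roles of $\h$ and $\h^*$, so one must check that it respects support in $\h/W$ for $\mathcal O$-modules and compute the dual parameter; the $\Res_b$ route needs an $L$-analog of Proposition~\ref{Prop:annihilators} that is not in the paper). The paper instead proves $\operatorname{Supp}(M)\subset\operatorname{Supp}(N)$ directly and elementarily, and this is the step worth internalizing. Let $J:=\Ann_{(S\h^*)^W}(N)$. For $a\in J$, expand $(\ad a)^k\varphi=0$ and evaluate on $m\in M$: since $a$ kills $N$, all terms $a^{k-i}\varphi(a^im)$ with $i<k$ vanish, so $\varphi(a^km)=0$; hence $J^nM\subset\ker\varphi$ for $n\gg0$. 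Now take any nonzero finitely generated sub-bimodule $X\subset L(M,N)$; it is Harish--Chandra, hence (Lemma~\ref{Lem:3.10.2}) finitely generated as a left $\Hrm_{1,c}$-module, say with generators $\varphi_1,\dots,\varphi_k$. Because right multiplication by $\Hrm_{1,c'}$ stays in $X$, the common kernel $\bigcap_i\ker\varphi_i$ is an $\Hrm_{1,c'}$-submodule of $M$; simplicity of $M$ and $X\neq0$ force it to be zero. Choosing $n$ large enough that $J^nM\subset\ker\varphi_i$ for every $i$ yields $J^nM=0$, i.e.\ $\operatorname{Supp}(M)\subset V(J)=\operatorname{Supp}(N)$. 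This closes the argument with no duality and no extra lemmas; the missing idea in your proposal was to use finite generation of a Harish--Chandra sub-bimodule \emph{as a one-sided module} to pass from local nilpotence on each $\varphi$ to a global nilpotence statement on $M$.
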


We remark that the support of a simple in $\Str$ (considered as a $\K[\h/W]$-module) is irreducible,
see \cite{Ginzburg_irr}, Theorem 6.8.

\begin{Prop}\label{Prop:strong_adjoint}
Let $M$ be a module in the category $\mathcal{O}$ for $\Hrm_{1,c'}$, and $\underline{N}$ be a
finite dimensional module
in the category $\mathcal{O}$ for $\underline{\Hrm}^+_{1,c}$, where $b$ is chosen in the
open stratum in the support of $M$ so that $\Res_b(M)$ is finite dimensional. Then there is a (bi)functorial
isomorphism $$L(M,\Ind_b(\underline{N}))\cong \Hom_\K(\Res_b(M),\underline{N})^{\dagger,b}.$$
\end{Prop}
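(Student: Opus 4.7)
The plan is a Yoneda argument that combines two adjunctions: the tensor-hom adjunction for the bifunctor $L(\bullet,\bullet)$, and the $(\bullet_{\dagger,b},\bullet^{\dagger,b})$ adjunction of Lemma \ref{Lem:fun_b}(1).

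First I would note that $L(M,\Ind_b(\underline N))$ is Harish-Chandra by Proposition \ref{Prop:L_fin_gen} (so it makes sense to compare it with an object of ${}_{\,c\!}\HC(\Hrm)_{c'}$). For any $X\in{}_{\,c\!}\HC(\Hrm)_{c'}$, Proposition \ref{Prop:HC_comparison} shows that the $\ad$-action of $(S\h^*)^W$ and $(S\h)^W$ on $X$ is locally nilpotent, so any bimodule map $X\to\Hom_\K(M,\Ind_b(\underline N))$ has image in $L(M,\Ind_b(\underline N))$, giving
$$\Hom_{\HC}\bigl(X,L(M,\Ind_b(\underline N))\bigr)\;\cong\;\Hom_{\Hrm_{1,c}}\bigl(X\otimes_{\Hrm_{1,c'}}M,\Ind_b(\underline N)\bigr).$$
On the other hand, Lemma \ref{Lem:fun_b}(1) yields
$$\Hom_{\HC}\!\bigl(X,\Hom_\K(\Res_b(M),\underline N)^{\dagger,b}\bigr)\;\cong\;\Hom\!\bigl(X_{\dagger,b},\Hom_\K(\Res_b(M),\underline N)\bigr),$$
and since $\Res_b(M)$ and $\underline N$ are finite dimensional, this is
$$\Hom_{\underline{\Hrm}^+_{1,c}}\!\bigl(X_{\dagger,b}\otimes_{\underline{\Hrm}^+_{1,c'}}\Res_b(M),\underline N\bigr).$$
By the Yoneda lemma it therefore suffices to construct a natural isomorphism
$$\Hom_{\Hrm_{1,c}}\!\bigl(X\otimes_{\Hrm_{1,c'}}M,\Ind_b(\underline N)\bigr)\;\cong\;\Hom_{\underline{\Hrm}^+_{1,c}}\!\bigl(X_{\dagger,b}\otimes_{\underline{\Hrm}^+_{1,c'}}\Res_b(M),\underline N\bigr).$$

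To produce this identification I would use the description $\bullet_{\dagger,b}=\bullet_{\heartsuit,b}$ from Proposition \ref{Prop:fun_iso}, together with the Bezrukavnikov-Etingof equivalence $\Str^{\wedge_b}\cong\underline{\Str}^+$ that underlies $\Res_b,\Ind_b$. Concretely, passing to $b$-completions and pushing forward through $\vartheta^b$ (available by Lemma \ref{Lem:isom_comp}) identifies $(X\otimes_{\Hrm_{1,c'}}M)^{\wedge_b}$ with the centralizer-algebra incarnation of the bimodule-module tensor product on the $\underline{\Hrm}^{\wedge_0}$-side; applying $e(\underline W)\cdot e(\underline W)$, taking the $\h^{\underline W}$-kernel, and then taking $\h$-locally nilpotent vectors on each factor converts this tensor product into $X_{\dagger,b}\otimes_{\underline{\Hrm}^+_{1,c'}}\Res_b(M)$, while the same operations applied to $\Ind_b(\underline N)$ recover $\underline N$. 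Under these matchings, a homomorphism in $\Hom_{\Hrm_{1,c}}(X\otimes M,\Ind_b(\underline N))$ is sent to a homomorphism in $\Hom_{\underline{\Hrm}^+_{1,c}}(X_{\dagger,b}\otimes\Res_b(M),\underline N)$, and finite dimensionality of both $\Res_b(M)$ and $\underline N$ makes this assignment a bijection.

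The main obstacle will be the compatibility in the previous paragraph: verifying that $\bullet_{\dagger,b}$ intertwines the bimodule-module tensor product on the $\Hrm$-side with the analogous tensor product on the $\underline{\Hrm}^+$-side, and that the BE reduction procedure commutes with these tensor products. This is essentially a Frobenius reciprocity calculation inside the centralizer-algebra isomorphism $\Hrm_{1,c}^{\wedge_b}\cong\CC(\underline{\Hrm}_{1,c}^{\wedge_0})$: one has to check that the operations ``$e(\underline W)\cdot e(\underline W)$'', ``$\h^{\underline W}$-kernel'', and ``$\h$-locally nilpotent part'' each commute with the relevant tensor products. The finite dimensionality of $\Res_b(M)$ and $\underline N$ removes all completion and convergence issues on the module side, so each individual commutation is routine in the style of \cite{BE}; the real work is keeping track of how the adjoint actions on $X$ that define $X_{\dagger,b}$ interact with the module-level completions used to define $\Res_b$ and $\Ind_b$.
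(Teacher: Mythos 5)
Your Yoneda strategy is a genuinely different route from the paper's proof. The paper argues directly: it first identifies $L(M,\Ind_b(\underline{N}))$ with $L(M^{\wedge_b},M')$ (where $M'$ is the image of $\underline{N}$ in $\Str^{\wedge_b}$) by a completion--continuity argument, then pushes forward along $\theta^b$ to realize $\Hom_\K(M^{\wedge_b},M')$ explicitly as $\Hom(\K[[\h^{\underline{W}}]],\K[[\h^{\underline{W}}]])\otimes\CC(\Hom(\Res_b(M),\underline{N}))$, reads off its locally nilpotent part as $\Dcal(\h^{\underline{W}})^{\wedge_0}\otimes\CC(\Hom(\Res_b(M),\underline{N}))$, and recognizes this as the $\bullet^{\heartsuit,b}$ description of the right-hand side. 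No tensor-product compatibility enters.

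Two issues with your proposal. First, invoking Proposition \ref{Prop:L_fin_gen} to say $L(M,\Ind_b(\underline{N}))$ is Harish-Chandra is circular: in the paper that proposition is deduced from Proposition \ref{Prop:strong_adjoint}. The fix is cheap --- you only need $L(M,\Ind_b(\underline{N}))$ to lie in the ind-completion $\widehat{\HC}$, which is automatic from the definition of $L$, and Yoneda then runs there; finite generation of the left-hand side follows a posteriori from Remark \ref{Rem:fin_gen}.

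Second, and more seriously, your reduction lands on the natural isomorphism
\[ \Res_b\bigl(X\otimes_{\Hrm_{1,c'}}M\bigr)\;\cong\;X_{\dagger,b}\otimes_{\underline{\Hrm}^+_{1,c'}}\Res_b(M), \]
which, together with the $(\Res_b,\Ind_b)$ adjunction, would close the argument. You call this routine and do not prove it, but it is the entire content of the proposition under your strategy. The operations making up $\Res_b$ and $\bullet_{\dagger,b}$ --- $b$-adic completion, $\theta^b$-push-forward, $e(\underline{W})$-truncation, passage to $\h^{\underline{W}}$-invariants or flat sections, and the locally nilpotent or locally finite truncations --- do not commute with $\otimes$ for free; in particular the two truncation steps are not monoidal in general and require the finite dimensionality of $\Res_b(M)$ and $\underline{N}$ together with appropriate finiteness of $X_{\dagger,b}$ to be invoked at the right moments. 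Spelled out, that verification is no shorter than the paper's direct computation of $L(M^{\wedge_b},M')$, so the Yoneda detour saves nothing here and, as written, leaves a genuine gap.
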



\begin{proof}[Proof of Lemma \ref{Cor:support}]
Suppose $L(M,N)\neq 0$.

Let $I$ be the annihilator of $M$ in $(S\h^*)^W$. Pick $\varphi\in L(M,N)$.
Then the image of $\varphi$ consists of vectors that are locally nilpotent for all $a\in I$.
It follows that $\sum_{\varphi}\im\varphi$ is a submodule in $N$ that is annihilated
by $I^n$ for $n\gg 0$. Since $N$ is simple, this means
that $N$ itself is annihilated by $I^n$. So we see that
$\operatorname{Supp}(N)\subset \operatorname{Supp}(M)$.

Now let $J$ be the annihilator of $N$ in $(S\h^*)^W$. Then for any $\varphi\in L(M,N)$ the inclusion
$J^n M\subset \ker\varphi$ holds for $n\gg 0$. Pick a nonzero finitely generated sub-bimodule $X\subset L(M,N)$.
Then $X$ is a Harish-Chandra bimodule, and so is finitely generated as a left $\Hrm_{1,c}$-module.
Let $\varphi_1,\ldots,\varphi_k$ be generators. Then $\bigcap_{i=1}^k \ker\varphi_i\subset\ker\varphi$
for any $\varphi\in X$. So $\bigcap_{i=1}^k \ker\varphi_i$ is an $\Hrm_{1,c'}$-submodule in
$M$. Since $M$ is simple, we have $\bigcap_{i=1}^k \ker\varphi_i=\{0\}$. But now $J$ acts locally
nilpotently on $M$, which implies $\operatorname{Supp}(M)\subset \operatorname{Supp}(N)$.
\end{proof}

\begin{Rem}\label{Rem:L_diff_supp}
One can strengthen the claim of Lemma \ref{Cor:support} (with the same proof) as follows.
Let $M$ be a simple module and let $N_0$ be the maximal submodule of $N$ supported
on the support of $M$. Then the natural homomorphism $L(M,N_0)\rightarrow L(M,N)$
is an isomorphism. Analogously, let $N$ be simple, and $M_0$ be the maximal quotient
of $M$ supported on the support of $N$. Then $L(M,N)\cong L(M_0,N)$.
\end{Rem}

\begin{proof}[Proof of Proposition \ref{Prop:strong_adjoint}]
Recall the category $\Str^{\wedge_b}$ considered in Subsection
\ref{SUBSECTION_ResInd}, the completion functor
$\bullet^{\wedge_b}: \Str\rightarrow \Str^{\wedge_b}$ and the functor
$\bullet_{l.n.}:\Str^{\wedge_b}\rightarrow \Str$ of taking locally
nilpotent vectors.

{\it Step 1.} Let $M\in \Str_{c'}, M'\in \Str^{\wedge_b}_c$.
We claim that the bimodules $L(M,M'_{l.n.})$ and $L(M^{\wedge_b},M')$
are naturally identified.

Pick $\varphi\in L(M,M'_{l.n.})$. We can view $\varphi$ as an element of
$L(M,M')$. Since the action of $\ad(a), a\in (S\h^*)^W$ on $L(M,M')$ is locally
nilpotent, we see that $\varphi$ is continuous in the $b$-adic topology.
So $\varphi$ uniquely extends by continuity to $M^{\wedge_b}$. The extension
$\psi_\varphi$ obviously lies in $L(M^{\wedge_b},M')$.

Conversely,  take $\psi\in L(M^{\wedge_b},M')$ and consider the composition
of $\psi$ with the natural map $M\rightarrow M^{\wedge_b}$. The composition $\varphi_\psi$
lies in $L(M,M')$. Since $\ad(b), b\in (S\h)^W$, acts locally nilpotently on
$L(M,M')$, we see that the image of $\varphi_\psi$ lies in $M'_{l.n.}$. So we can view
$\varphi_\psi$ as an element of $L(M,M'_{l.n.})$. The maps $\varphi\mapsto \varphi_\psi, \psi\mapsto
\psi_\varphi$ as mutually inverse.

{\it Step 2.} Now for $M'$ take the image of $N$ under the equivalence $\underline{\Str}^+\xrightarrow{\sim}
\Str^{\wedge_b}$. We claim that the bimodules $L(M^{\wedge_b},M')$ and $\Hom(\Res_b(M),N)^{\dagger,b}$
are naturally identified. This will complete the proof.

First of all, let us embed $\Hom(\Res_b(M),N)^{\dagger,b}$ into $\Hom_{\K}(M^{\wedge_b},M')$. For this recall,
that, by the discussion after Proposition \ref{Prop:fun_iso}, $\Hom(\Res_b(M),N)^{\dagger,b}$ is the same as
$$(\theta^b_{*})^{-1}[\Dcal(\h^{\underline{W}})^{\wedge_0}\otimes \CC(\Hom(\Res_b(M),N))]_{l.n.}$$
On the other hand,
\begin{align*}
\theta^b_{*}(M^{\wedge_b})=&\K[[\h^{\underline{W}}]]\otimes \Fu_{\underline{W}}(W,\Res_b(M)),\\
\theta^b_{*}(M')=&\K[[\h^{\underline{W}}]]\otimes \Fu_{\underline{W}}(W,N).
\end{align*}
So $\Hom(M^{\wedge_b},M')=\Hom(\K[[\h^{\underline{W}}]],\K[[\h^{\underline{W}}]])\otimes \CC(\Hom(\Res_b(M),N))$
and the natural action of $\Dcal(\h^{\underline{W}})^{\wedge_0}$ on $\K[[\h^{\underline{W}}]]$ defines
an embedding $\Hom(\Res_b(M),N)^{\dagger,b}\hookrightarrow \Hom(M^{\wedge_b},M')$. We need to check
that the Harish-Chandra (=locally nilpotent) part of $\Hom(M^{\wedge_b},M')$ lies in $\Hom(\Res_b(M),N)^{\dagger,b}$.
For this we notice that $\ad(a)$ acts locally nilpotently on $L(M^{\wedge_b},M')$ for any
$a\in \K[\h/W]^{\wedge_b}$. In particular, the adjoint action of $\K[\h^{\underline{W}}]\subset \theta^b_{*}(\K[\h/W]^{\wedge_b})$ on $\theta^b_{*}(L(M^{\wedge_b},M'))$ is locally nilpotent.
But it is well-known that the locally nilpotent part of
$\Hom(\K[[\h^{\underline{W}}]],\K[[\h^{\underline{W}}]])$ is nothing else but
$\Dcal(\h^{\underline{W}})^{\wedge_0}$. This completes the proof.
\end{proof}

\begin{proof}[Proof of Proposition \ref{Prop:L_fin_gen}]
Let us remark that the bifunctor $L(\bullet,\bullet)$ is left exact.
So it is enough to prove that $L(M,N)$ is finitely generated for simple
$M,N$. Thanks to Lemma \ref{Cor:support}, here we only need to consider
the case when $M,N$ have the same support. Pick $b$ in the open stratum
of the support. Since $\Ind_b$ is a right adjoint
for $\Res_b$, we have a natural homomorphism $N\rightarrow \Ind_b\circ\Res_b(N)$.
Since $N$ is simple, this homomorphism is an inclusion, and so $L(M,N)\hookrightarrow
L(M, \Ind_b\circ \Res_b(N))$. According to Proposition \ref{Prop:strong_adjoint},
the latter bimodule is $L(\Res_b(M),\Res_b(N))^{\dagger,b}$. It is finitely generated
by Remark \ref{Rem:fin_gen}. Being a sub-bimodule in a finitely generated
bimodule, $L(M,N)$ is finitely generated as well.
\end{proof}

\subsection{Ideals in the rational Cherednik algebra of type $A$}\label{SUBSECTION_type_A}
In this section we will describe the structure of the set of two-sided ideals
in the rational Cherednik algebra  $\Hrm_{1,c}$ of type $A$. ``Type A'' means that
$\Gamma=S_n$ is a symmetric group in $n$ letters, $V=\h\oplus \h^*$, where $\h\cong \K^{n-1}$
is the reflection $\Gamma$-module. In this case $c$ is a single number. It is well known,
see, for example \cite{BEG}, that $\Hrm_{1,c}$ has a finite dimensional module
if and only if $c=\frac{r}{n}$, where $n\in \mathbb{Z}_{>1}$  and $r$ is an integer mutually prime with
$n$. In the latter case there is a unique (up to an isomorphism) indecomposable finite dimensional module.

The following theorem describes the structure of the set $\Id(\Hrm_{1,c})$.

\begin{Thm}\label{Thm:appl2}
\begin{enumerate}
\item The algebra $\Hrm_{1,c}$ is not simple if and only if $c=\frac{q}{m}$, where $q,m$
are mutually prime integers, and $1<m\leqslant n$. Below we suppose that $c$ has this form.
\item Set $s=[n/m]$.  The set of proper two-sided ideals of $\Hrm_{1,c}$ consists of $s$ elements, say $\J_i,i=1,\ldots,s$, and $\J_1\supsetneq \J_2\supsetneq\ldots\supsetneq \J_s$.
\item $\VA(\J_j)=\pi(V^{*\underline{\Gamma}_j})$, where $\underline{\Gamma}_j=S_m^{\times j}\hookrightarrow S_n$.
\end{enumerate}
\end{Thm}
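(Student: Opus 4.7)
I would deduce the theorem from Theorem~\ref{Thm:4}, the classification of symplectic leaves of $V^*/S_n$, and the classical fact about finite-dimensional representations of rational Cherednik algebras of type $A$. The symplectic leaves of $V^*/S_n$ correspond to conjugacy classes of stabilizer subgroups, which for the permutation action are precisely the Young subgroups $\underline{\Gamma}_\lambda = S_{\lambda_1}\times\cdots\times S_{\lambda_k}$ indexed by partitions $\lambda = (\lambda_1,\ldots,\lambda_k)$ of $n$; denote the corresponding leaf by $\Leaf_\lambda$. Using (\ref{eq:Cherednik_relations}), one checks that the symplectic $\underline{\Gamma}_\lambda$-module $V_+$ decomposes as the orthogonal sum of the reflection representations of the factors $S_{\lambda_i}$, and accordingly
\begin{equation*}
\underline{\Hrm}^+_{1,c}(\Leaf_\lambda) \;\cong\; \bigotimes_{i:\,\lambda_i \geq 2} \Hrm_{1,c}(A_{\lambda_i-1}).
\end{equation*}

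Next, I would invoke the classical fact (see e.g.\ \cite{BEG}) that $\Hrm_{1,c}(A_{k-1})$ admits a finite-dimensional simple module iff $c = r/k$ with $\gcd(r,k) = 1$ and $k \geq 2$, and that such a module is then unique. For $c = q/m$ in lowest terms with $m > 1$, this forces $\lambda_i \in \{1, m\}$ for each $i$, so the leaves carrying finite-dimensional quotients of $\underline{\Hrm}^+$ are precisely $\Leaf_j := \Leaf_{(m^j,\,1^{n-jm})}$ with $j \in \{0, 1, \ldots, s\}$, the case $j = 0$ being the open leaf with $\underline{\Hrm}^+ = \K$. When $c$ cannot be so written, the only candidate is $j = 0$ and the only fin.-codim. ideal of $\underline{\Hrm}^+$ is $0$, which via Theorem~\ref{Thm:4} produces only the zero ideal of $\Hrm_{1,c}$; this establishes~(1). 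For $j \geq 1$, a short argument (relying on the fact that the annihilator $\mathfrak{m}$ of the unique finite-dimensional simple of $\Hrm_{1,c}(A_{m-1})$ is the only nonzero proper ideal, so in particular $\mathfrak{m}^2 = \mathfrak{m}$) shows that the unique proper finite-codimensional ideal of $\Hrm_{1,c}(A_{m-1})^{\otimes j}$ is the annihilator $\mathfrak{m}_j$ of $L^{\otimes j}$, which is $\Xi$-invariant because $L^{\otimes j}$ is the unique finite-dimensional simple.

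Applying Theorem~\ref{Thm:4}(4) at each $\Leaf_j$ ($j \geq 1$) yields a unique primitive ideal $\J_j \in \Id_{\Leaf_j}(\Hrm_{1,c})$, and for any other partition $\lambda \neq 1^n$ of the forbidden form the algebra $\underline{\Hrm}^+_{1,c}(\Leaf_\lambda)$ admits no finite-dimensional quotient, so the functor $\bullet_\dagger$ from Proposition~\ref{Prop:3.10.1} forces $\Id_{\Leaf_\lambda}(\Hrm_{1,c}) = \emptyset$. Combining this with Ginzburg's irreducibility result (reproved in Subsection~\ref{SUBSECTION_Thm5_proof}) and the Etingof-Stoica theorem of \cite{ES_appendix} (every prime ideal of $\Hrm_{1,c}$ is primitive), the prime ideals of $\Hrm_{1,c}$ are exactly $\{0, \J_1, \ldots, \J_s\}$, and part~(3) of the theorem follows directly from $\VA(\Hrm_{1,c}/\J_j) = \overline{\Leaf_j} = \pi(V^{*\underline{\Gamma}_j})$.

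It remains to show that these primes form a single chain and that every proper nonzero ideal equals one of the $\J_j$. For any proper ideal $\J$, $\VA(\Hrm_{1,c}/\J)$ is a proper closed Poisson subvariety, hence a union of leaf closures; since only the $\overline{\Leaf_j}$ actually support ideals, $\VA(\Hrm_{1,c}/\J) = \overline{\Leaf_{j_0}}$ for some $j_0$, and then $\J_\dagger$ is a proper finite-codimensional $\Xi$-invariant ideal of $\underline{\Hrm}^+_{1,c}(\Leaf_{j_0})$, hence equal to $\mathfrak{m}_{j_0}$; Theorem~\ref{Thm:3.16.1} applied to $\Hrm_{1,c}/\J$ then gives $\J = (\J_\dagger)^\ddag = \J_{j_0}$, inductively using the fact that the only ideals on $\partial\Leaf_{j_0}$ are the already-classified $\J_k$ with $k > j_0$. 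The chain structure follows from the same analysis applied to $\sqrt{\J_j + \J_{j'}}$, which must be one of the $\J_k$'s with $\overline{\Leaf_k} \subset \overline{\Leaf_j} \cap \overline{\Leaf_{j'}} = \overline{\Leaf_{\max(j,j')}}$, forcing $\J_j \subset \J_{\max(j,j')}$ and $\J_{j'} \subset \J_{\max(j,j')}$. The main obstacle I anticipate is the inductive step showing $\J = (\J_\dagger)^\ddag$: naively the inclusion is only up to an ideal supported on $\partial\Leaf_{j_0}$ (Theorem~\ref{Thm:4}(3)), so the equality ultimately rests on verifying that the unique primitive with associated variety $\overline{\Leaf_{j_0}}$ actually controls all ideals in $\Id_{\Leaf_{j_0}}(\Hrm_{1,c})$, which reduces to the tensor-product rigidity statement on fin.-codim.\ ideals of $\Hrm_{1,c}(A_{m-1})^{\otimes j}$ established in the second paragraph.
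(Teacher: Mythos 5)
Your arguments for parts (1), (3), and the uniqueness of the proper finite-codimensional ideal $\mathfrak m_j\subset\A^{\otimes j}$ (the idempotence argument using $\n^2=\n$ to collapse $\mathfrak m_j^N\subset I\subset\mathfrak m_j$) are sound, and the last is in fact a slightly cleaner route to that particular fact than the one the paper takes. The gap is precisely where you flagged it at the end, but your proposed resolution does not close it: the exhaustiveness of the list $\{\J_1,\ldots,\J_s\}$ does \emph{not} follow from the finite-codimensional rigidity at $\Leaf_{j_0}$ alone.

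Concretely, if $\J\in\Id_{\Leaf_{j_0}}(\Hrm_{1,c})$ and $\J\subsetneq(\J_\dagger)^{\ddag}=\J_{j_0}$, then $\J_{j_0}/\J$ is a nonzero HC bimodule with associated variety in $\partial\Leaf_{j_0}$. But $\partial\Leaf_{j_0}$ contains the closures $\overline{\Leaf_k}$ for $k>j_0$, whose slice algebras $\A^{\otimes k}$ \emph{do} have finite-dimensional representations, so restricting $\J_{j_0}/\J$ over those leaves yields no contradiction, and the rigidity statement at $\Leaf_{j_0}$ says nothing about what happens over $\partial\Leaf_{j_0}$. What the paper actually proves (Step 3 of its argument) is strictly stronger: a classification of \emph{all} proper $S_j$-stable two-sided ideals of $\A^{\otimes j}$, showing they form the chain $\J_1(j)\supsetneq\cdots\supsetneq\J_j(j)$ with $\J_p(j)=\sum_{i_1<\cdots<i_p}\n_{i_1}\cdots\n_{i_p}$, each determined by its associated variety; only $\J_1(j)=\mathfrak m_j$ has finite codimension, so this really is more information than your second paragraph establishes. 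Given it, for every $k$ the ideal $\J_\dagger$ computed over $\Leaf_k$ is pinned down by $\VA(\Hrm_{1,c}/\J)$ via Proposition~\ref{Prop:3.10.1}(4); comparing with $(\J_{j_0})_\dagger$ forces $(\J_{j_0}/\J)_\dagger=0$ over every $\Leaf_k$, which pushes $\VA(\J_{j_0}/\J)$ onto leaves whose slices admit no finite-dimensional representation and thereby forces $\J_{j_0}/\J=0$ (the paper does this in one stroke by working over the smallest good leaf). The same classification also yields $\J_p(j)\J_q(j)=\J_{\max(p,q)}(j)$, from which $\J_{i}\J_{i'}=\J_{\max(i,i')}$ and hence the chain follow via monoidality of $\bullet_\dagger$; your alternative argument through $\sqrt{\J_j+\J_{j'}}$ is circular, since it only places $\sqrt{\J_j+\J_{j'}}$ at some $\J_k$ with $k\geq\max(j,j')$ and then needs the chain inclusions to conclude $\J_j\subset\J_{\max(j,j')}$.
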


Part (1) of this theorem as well as the fact that the associated varieties of two-sided ideals in
 $\Hrm_{1,c}$ have the form described in (3) seem to be known before, although we could not find a precise reference
(for example, these facts are easily proved using the claim on finite dimensional representations
and techniques from \cite{BE}). The statement of part (2) was communicated to me by Etingof.

\begin{proof}
{\it Step 1.} Let $\J$ be a proper two-sided ideal of $\Hrm_{1,c}$. Choose a symplectic leaf $\Leaf$
in $V^*/\Gamma$ that is open in $\VA(\J)$. Let $\underline{\Gamma},\widetilde{\Xi},\underline{\Hrm}_{1,c}$ be such as in Subsection \ref{SUBSECTION_1c}. The subgroup $\underline{\Gamma}$ has the form $\prod_{i=1}^j S_{n_i}$, where $n=n_1+\ldots+n_j$. Then $\J_\dagger$ is a $\widetilde{\Xi}$-stable ideal of finite codimension in $\underline{\Hrm}_{1,c}^+$. The algebra $\underline{\Hrm}_{1,c}^+$ is the tensor product of the algebras $\Hrm_{1,c}(\K^{n_i-1},S_{n_i})$ (we suppose that for $n_i=1$ the latter algebra is $\K$).
Each of these algebras has a finite dimensional representation. From the discussion preceding Theorem \ref{Thm:appl2}  we see that $n_i$ is either some $m>1$ or $1$, and $c=\frac{q}{m}$ with $\operatorname{GCD}(q,m)=1$. In particular, $\underline{\Gamma}=S_m^{\times j}$, and the group $\Xi$ is naturally identified with
$S_j\times S_{n-mj}$. There is a section $\Xi\hookrightarrow \widetilde{\Xi}$ of the projection $\widetilde{\Xi}\rightarrow \Xi$, the corresponding subgroup $S_j\subset \widetilde{\Xi}$  permutes the tensor factors of $\underline{\Hrm}_{1,c}$. Set $\A:=\Hrm_{1,c}(\K^{m-1},S_m)$.

{\it Step 2.} For $j=1,\ldots,s(=[n/m])$  let $\Leaf_j$ be the open symplectic
leaf in $\pi(V^{*\underline{\Gamma}_j})$, where $\underline{\Gamma}_j$ is as in part (3).

We claim that there is a unique maximal element $\J_j$ in $\Id_{\Leaf_j}(\Hrm_{1,c})$ and that $\VA(\J_j/\J)\subset \partial\Leaf_j(=\cup_{i>j}\Leaf_i)$ for any $\J\in \Id_{\Leaf_j}(\Hrm_{1,c})$.

Let us prove this claim. If $\J\in \Id_{\Leaf_j}(\Hrm_{1,c})$,  then $\J_\dagger$ is an ideal of finite
codimension in the corresponding slice algebra $\underline{\Hrm}_{1,c}^+=\A^{\otimes j}$.
But the latter algebra has a unique maximal ideal of finite codimension. Indeed, if $I$ is such an ideal,
then the kernels of all homomorphisms $\A\rightarrow \A^{\otimes j}/I$ coincide with a unique ideal of
finite codimension in $\A$ (the uniqueness of such an ideal follows from the fact that $\A$
has a unique indecomposable finite dimensional module). Now the claim follows from the observation that the tensor product of the matrix
algebras is again a matrix algebra and so is simple.

So the ideal $(\J_\dagger)^\ddag\in \Id_{\Leaf_j}(\Hrm_{1,c})$ does not depend on $\J$ and contains $\J$.
By Theorem \ref{Thm:4}, $(\J_\dagger)^\ddag/\J$ is supported on $\partial\Leaf_j$. It remains to put
$\J_j:=(\J_\dagger)^\ddag$.

{\it Step 3.} Fix some $j$. On this step we are going to describe the set of proper $S_j$-stable ideals in $\A^{\otimes j}$. We can apply the results of Step 2 to $\A$ to see that $\A$ has only one proper two-sided ideal,
say $\n$. This ideal has finite codimension.
We claim that any $S_j$-stable  ideal in $\A^{\otimes j}$ is of the form
$\J_p(j)=\sum_{i_1<\ldots<i_p}\n_{i_1}\ldots \n_{i_p}, p=1,\ldots,j,$ where $\n_i:=\A\otimes\ldots\otimes \n\otimes \ldots\otimes \A$, with $\n$ on the $i$-th place.

First,
we claim that any {\it prime}=(primitive) ideal in $\A^{\otimes j}$ has the form $\J_I:=\sum_{i\in I}\n_i$ for some
$I\subset \{1,\ldots,j\}$. First of all, being the tensor product of a matrix algebra and of several
copies of $\A$, the algebra $\A^{\otimes j}/\J_I$ is prime. Now let $\J$ be  a prime ideal
 in $\A^{\otimes j}$ and $\Leaf'$ be the open symplectic
leaf in $\VA(\A^{\otimes j}/\J)\subset V^+/\underline{\Gamma}=(\K^{2m-2}/S_m)^{\times j}$. The algebra
$\A^{\otimes j}$ is an SRA so we can use an argument of Step 1. We see that $\overline{\Leaf}'$ is the product
of $0$'s or $\K^{2m-2}/S_m$'s and moreover $\J=\I^{\dagger}$, where $\I$ is a unique ideal of finite codimension
in the slice algebra (which is again the tensor product of several copies of $\A$). In particular, we have a
unique prime ideal with a given associated variety and this prime ideal must be  some $\J_I$.
This completes the proof of the claim in the beginning of the paragraph.


Now let $\J$ be an arbitrary $S_j$-stable ideal in $\underline{\Hrm}_{1,c}$. Let $\J_I, I\in F,$ be all
minimal prime ideals of $\J$, where $F$ is some (finite) index set. Then $F$ is $S_j$-stable. We claim that
$\bigcap_{\sigma\in S_j}\J_{\sigma.I}=\J_p(j)$, where $p=j-\#\I+1$.

The proof is by the induction on $j$.  Set $$\J^1:=\bigcap_{\sigma\in S_j, j\not\in \sigma.I}\J_{\sigma.I}, \J^2:=\bigcap_{\sigma\in S_j, j\in \sigma.I}\J_{\sigma.I}.$$ Then, using the inductive
assumption, we get $\J^1=\J_{p-1}(j-1)\otimes \A$ and $\J^2=\J_{p}(j-1)\otimes \A+\n_j$. The first equality is trivial,
let us explain why the last one holds. It is clear that $\J^2\supset \n_j$. Using the inductive
assumption, we see that $\J^2/\n_j=\J_{p}(j-1)\otimes (\A/\n)$. The equality for $\J^2$ follows.

 Since $\J_{p-1}(j-1)\otimes\A\cap \n_j=\J_{p-1}(j-1)\otimes \n$,  we see that
$\J^1\cap \J^2=\J_p(j-1)\otimes \A+ ([\J_{p-1}(j-1)\otimes \A]\cap \n_j)=\J_p(j)$.

It follows from the previous paragraph that the radical of any $S_j$-stable ideal in $\underline{\Hrm}_{1,c}$
coincides with some $\J_p(j)$ (we remark that $\J_1(j)\supset \J_2(j)\supset\ldots\supset \J_j(j)$).
Note however, that $\J_p(j)^2=\J_p(j)$, because $\n^2=\n$. So we have proved that any proper $S_j$-stable ideal in
$\underline{\Hrm}_{1,c}$ coincides with some $\J_p(j)$. Moreover, we remark that $\J_p(j)\J_q(j)=\J_{\max(p,q)}(j)$.

{\it Step 4.}
Consider the (smallest) symplectic leaf
$\Leaf_1$  and the algebra $\underline{\Hrm}_{1,c}$ corresponding to this leaf. By  assertion (4) of Proposition \ref{Prop:3.10.1},
$\J_\dagger/\J_j(s)=0$ provided $\VA(\J)=\overline{\Leaf}_j$. Also note that $\J\subset\J', \J_\dagger=\J'_\dagger$
implies $\J=\J'$.  Indeed, $\VA(\J'/\J)\subset \partial\Leaf_1$ and so $\J'/\J=0$.

Let us check that $\J_{i_1}\J_{i_2}=\J_{\max(i_1,i_2)}$.  Consider the ideals $\J_{i_1\dagger},\J_{i_2\dagger}$. The ideals $\J_{i_1\dagger},\J_{i_2\dagger}$
are proper $S_s$-stable ideals in $\A^{\otimes s}$ and so are of the form $\J_{i_1}(s),\J_{i_2}(s)$.
Also we remark that $(\J_{i_1}\J_{i_2})_\dagger=\J_{i_1\dagger}\J_{i_2\dagger}$. It follows that
$(\J_{i_1}\J_{i_2})_\dagger=\J_{\max(i_1,i_2)\dagger}$ and so, by the previous paragraph, $\J_{i_1}\J_{i_2}=\J_{\max(i_1,i_2)}$.  In particular, it follows that
$\J_1\supsetneq \J_2\supsetneq\ldots\supsetneq \J_s$.

It remains to check that any ideal $\J$ of $\Hrm_{1,c}$
coincides with some $\J_i$. Let $\VA(\J)=\overline{\Leaf}_j$. Then $\J\subset\J_j$ by Step 2.
However, $\J_\dagger=\J_j(s)=\J_{j\dagger}$ and hence $\J=\J_j$.
\end{proof}

\end{document}